\newtheorem{theorem}{Theorem}[section]
\newtheorem{lemma}[theorem]{Lemma}
\newtheorem{cor}[theorem]{Corollary}
\theoremstyle{definition}
\newtheorem{definition}[theorem]{Definition}
\theoremstyle{remark}
\newtheorem{remark}[theorem]{Remark}
\numberwithin{equation}{section}
\theoremstyle{conjecture}
\newtheorem{conjecture}[theorem]{Conjecture}
\theoremstyle{problem}
\newcommand\C{\mathbb{C}}
\newcommand\Z{\mathbb{Z}}
\newcommand\R{\mathbb{R}}
\newcommand\N{\mathbb{N}}
\newcommand\T{\mathbb{T}}
\newcommand\cA{\mathcal{A}}
\newcommand\cC{\mathcal{C}}
\newcommand\cD{\mathcal{D}}
\newcommand\cE{\mathcal{E}}
\newcommand\cG{\mathcal{G}}
\newcommand\cO{\mathcal{O}}
\newcommand\cZ{\mathcal{Z}}
\newcommand\Aut{\operatorname{Aut}}
\newcommand\End{\operatorname{End}}
\newcommand\Tube{\operatorname{Tube}}
\newcommand\id{\mathrm{id}}
\newcommand\Ad{\mathrm{Ad}}
\newcommand\Hom{\operatorname{Hom}}
\newcommand{\Irr}{\operatorname{Irr}}
\newcommand{\Inv}{\operatorname{Inv}}
\newcommand\inpr[2]{\langle{#1,#2}\rangle}
\newcommand{\tmu}{\widetilde{\mu}}
\newcommand{\tpi}{\widetilde{\pi}}
\newcommand{\tpsi}{\widetilde{\psi}}
\newcommand{\brho}{\overline{\rho}}
\newcommand{\trho}{\tilde{\rho}}
\newcommand{\tvarphi}{\tilde{\varphi}}
\title[Infinite families of modular data]
{Infinite families of potential modular data related to quadratic categories} 
\author{Pinhas Grossman}
\address{School of Mathematics and Statistics, University of New South Wales,
Sydney NSW 2052, Australia}
\email{p.grossman@unsw.edu.au}
\author{Masaki Izumi}
\address{Department of Mathematics\\ Graduate School of Science\\
Kyoto University\\ Sakyo-ku, Kyoto 606-8502\\ Japan}
\email{izumi@math.kyoto-u.ac.jp}
\subjclass[2010]{ 
Primary 46L37; Secondary 18D10}
\keywords{modular data, subfactors, fusion categories}
\thanks{Supported in part by JSPS KAKENHI Grant Number JP15H03623 and ARC grant DP170103265.}
\begin{document} 

\begin{abstract}
We present several infinite families of potential modular data motivated by examples of Drinfeld centers of quadratic categories. In each case, the input is a pair of involutive metric groups with Gauss sums differing by a sign, along with some conditions on the fixed points of the involutions and the relative sizes of the groups. From this input we construct $S$ and $T$ matrices which satisfy the modular relations and whose Verlinde coefficients are non-negative integers. We also check certain restrictions coming from Frobenius-Schur indicators.

These families generalize Evans and Gannon's conjectures for the modular data associated to generalized Haagerup and near-group categories for odd groups, and include the modular data of the Drinfeld centers of almost all known quadratic categories. In addition to the subfamilies conjecturally realized by centers of quadratic categories, these families include many examples of potential modular data which do not correspond to known types of modular tensor categories.
\end{abstract}

\maketitle

\section{Introduction} 
In this paper we construct several infinite families of potential modular data, motivated by examples of quadratic categories which appeared in the classification of small-index subfactors. Modular data are numerical invariants of modular tensor categories, expressed as a pair of matrices $S$ and $T$ which are the images of the canonical generators $$ \left(  \begin{array}{cc} 0 & -1 \\ 1 & 0 \end{array} \right ) \text{ and } \left(  \begin{array}{cc} 1 & 1 \\ 0 & 1 \end{array} \right )$$ in a 
projective unitary representation of the modular group $SL_2(\Z) $ which is associated to the category. The matrix $S$ is symmetric, the matrix $T$ is diagonal with finite order, and $S^2$ is a permutation matrix commuting with $T$. The $S$ and $T$ matrices are indexed by the simple objects of the category, with the Verlinde coefficients 
\begin{equation}\label{verlinde}
N^k_{ij}=\sum_{r} \limits \displaystyle\frac{S_{i,r}S_{j,r}S_{\overline{k},r}}{S_{0,r}}
\end{equation}
giving the fusion rules $$\text{dim}(\text{Hom}(X_i\otimes X_j, X_k))=N^k_{ij}$$
(see \cite{MR954762,MR1153682,MR1797619}).

Modular data has proven to be a useful invariant of modular tensor categories, with the first examples of distinct modular categories sharing the same modular data discovered only recently \cite{1708.02796}. It is therefore natural to consider construction/classification of modular data as a first step towards construction/classification of modular tensor categories. The properties enjoyed by modular data - in particular non-negative integrality of the Verlinde coefficients - place severe restrictions on the matrices involved, and the modular data in turn encode a wealth of information about any categories which may realize them.

Modular tensor categories first appeared in conformal field theory \cite{MR1153682}, and play an important role in quantum topology \cite{MR1186845}.
The category of representations of a rational vertex operator algebra is a modular tensor category \cite{MR2140309}, and it is a major open problem whether every modular tensor category can be realized this way. Modular tensor categories also arise as categories of representations of quantum groups at roots of unity, and it is also a major open problem whether such categories generate the Witt group of unitary modular tensor categories \cite{MR3039775}.

These two problems reflect the paucity of known examples and constructions of modular tensor categories. Besides quantum groups, one source of examples is the Drinfeld center construction. If $\cC $ is a spherical fusion category, its Drinfeld center $\cZ(\cC) $ is a modular tensor category (by definition belonging to the trivial Witt class). 
A number of examples of fusion categories not arising from representations of groups or quantum groups have been discovered through the classification of small-index subfactors (see \cite{MR1317352,MR3166042,1509.00038}). With one exception (the Extended Haagerup subfactor \cite{MR2979509}), these examples are all related to quadratic categories.

A quadratic category is a fusion category $ \cC$ whose set of simple objects has a unique non-trivial orbit under the tensor product action of the group of invertible objects $\text{Inv}(\cC) $. (Such categories are also sometimes called \textit{generalized near-group categories} \cite{MR3078486}.) Given a quadratic category, one can consider the stabilizer subgroup of $\text{Inv}(\cC) $ for a given non-invertible simple object $X$. If this stabilizer subgroup is all of $\text{Inv}(\cC) $, then $X$ is the only non-invertible simple object, and the category is called a near-group category \cite{MR1997336}. At the other extreme, if the stabilizer subgroup is trivial, then the number of non-invertible simple objects is equal to  $|\text{Inv}(\cC) |$. Examples of such categories are the generalized Haagerup categories introduced in \cite{MR1832764}.

A general method for constructing various types of quadratic categories from endomorphisms of Cuntz algebras was developed by the second named author in \cite{MR1228532,MR1832764,MR3635673,MR3827808}. This construction was used to describe the tube algebra and thereby compute the modular data of the Drinfeld center for a number of examples, including the Haagerup subfactor \cite{MR1832764}.

Evans and Gannon observed that the modular data for the Drinfeld center of the Haagerup categories, as well for the centers of other generalized Haagerup categories associated to odd groups, can be viewed as composed of two distinct blocks ``grafted'' together \cite{MR2837122}. They gave a definition of grafting, noting that the concept could be ``massively generalized." In a subsequent paper, they considered modular data for near-group categories, and conjectured a remarkably simple formula for the modular data for a certain type of near-group category when the associated group is odd \cite{MR3167494}. 

Their formula for odd near-group modular data is expressed entirely in terms a pair of metric groups. A metric group is a finite abelian group equipped with a non-degenerate quadratic form. The category of metric groups is equivalent to the category of non-degenerate braided pointed fusion categories \cite[Section 8.4]{MR3242743}. By \cite[Theorem 12.9]{MR3635673}, generalized Haagerup categories for odd groups are de-equivariantizations of near-group categories, and their modular data could also be determined from this fornula (if the conjecture is correct). 

In this paper, we study a number of generalizations of the Evans-Gannon conjectures motivated by examples of generalized Haagerup categories for even groups and their (de)-equivariantizations, which are discussed in the accompanying paper \cite{GI19_1}. A new ingredient is the introduction of involutions on the metric groups. For even groups, one natural involution is the inverse operation, whose fixed points form the subgroup of order two elements; but other involutions are of interest as well.

The families of potential modular data which we construct from the pairs of metric groups are infinite, but we do not address the difficult question of whether such modular data actually arise from modular tensor categories, beyond known examples. For certain choices of input data (i.e. particular types of groups equipped with particular quadratic forms), we conjecture that the resulting modular data are realized by Drinfeld centers of infinite families of quadratic fusion categories. But it is not known whether the corresponding infinite families of quadratic categories exist. There are also many other choices of input data which do not correspond to any  known modular tensor categories. 

The paper is organized as follows. After this introduction and a section of preliminaries on modular data and metric groups, there are five chapters, each dealing with a different family of potential modular data. In each case, the starting point is a pair of (involutive) metric groups with Gauss sums differing by a sign, and some conditions on the involutive fixed points. From this input, potential modular data are constructed and the modular relations are checked.  Then the Verlinde coefficients are computed, and non-negative integrality gives constraints on the difference in size of the groups. We also check consistency with certain conditions for Frobenius-Schur indicators of modular data, which in some cases give further restrictions on the admissible groups and quadratic forms. 

Then we consider examples of this potential modular data for small groups and various quadratic forms and compare to known examples. In some cases, we can formulate conjectures for the realization of the modular data based on the known examples.

Briefly, the five families are as follows.

\begin{enumerate}
\item The first family (Section 3) generalizes the Evans-Gannon odd near-group conjecture to an arbitrary finite abelian group. Here the input is a pair of involutive metric groups $G $ and $\Gamma $ which are assumed to have isomorphic involutive fixed point subgroups. For the Drinfeld center of a near group category with $A=\text{Inv}(\cC) $ and multiplicity $|A| $, $G$ is conjectured to be $A  \times A$, where the involution is the flip, with $\Gamma $ having order $ |A|(|A|+4)$. But there are potentially other examples where $G$ is not of the form $A \times A $ but rather some other group; such examples would not necessarily correspond to centers of anything (this phenomenon is already present in the odd case). 

\item For the second family (Section 4), the involution on each metric group is the inverse map. It is assumed that the fixed point subgroups (consisting of the order two elements) are both $\Z_2 $, and there are a couple of other assumptions. Here the integrality of coefficients requires that $|\Gamma|=|G|+2$. We conjecture that examples of modular categories realizing this family are given by Drinfeld centers of $\Z_2 $-de-equivariantizations of near-group categories for $\Z_{2^{n+1}}  \times A$ (with one known instance). 

\item  For the third family (Section 5), it is assumed that the involutive fixed point subgroups both have order $4$, but that the quadratic forms only agree on a common order $2$ subgroup (and some other assumptions). Here the integrality of coefficients requires that $|\Gamma|=|G|+4$. There are several cases to consider here, according to whether the nontrivial invertible object is a boson or a fermion, and some other conditions. We conjecture that examples of modular tensor categories realizing this family are given by Drinfeld centers of generalized Haagerup categories for $A$ with $|A_2|=2$, and $ G =A \times \hat{A} $ (with two known instances, and numerical evidence for other instances); or more generally by Drinfeld centers of quadratic categories with two non-invertible simple objects which are stabilized by an odd group of invertible objects. 

\item For the fourth family (Section 6), the group of order two elements of $G$ has size $4$, and $ |\Gamma|=|G|+1$. This family unifies the modular data of the Drinfeld centers of the Asaeda-Haagerup categories (which come from a $\Z_2 $-de-equivariantization of a generalized Haagerup category for $\Z_4 \times \Z_2 $ \cite{MR3859276}), the generalized Haagerup category for $ \Z_2 \times \Z_2$, and the $\Z_2$-de-equivariantizations of generalized Haagerup categories for $\Z_4 $ and $\Z_8 $. These four types of modular data correspond to different forms of the even part of $G$ and different quadratic forms, and can all be generalized to infinite subfamilies.

\item The fifth family (Section 7) comes from the fourth family by switching the roles of $G$ and $\Gamma $. We conjecture that examples of this family can be realized by Drinfeld centers of near-group categories with multiplicity twice the order of the group.

\end{enumerate}

In \cite{MR3641612}, a construction called zesting is introduced whereby a spin-modular category (a modular category containing a fermion) can be twisted to give $8$ different modular closures of its supermodular subcategory (out of a total of $16$). A formula for the zested modular data is given there. 

 For a group of the form $\Z_2 \times A$, with $|A|$ odd, we have conjectural modular data for the centers of both generalized Haagerup categories and near-group categories which contain fermions. In both cases the metric group $G$ used to construct the modular data has even part $\Z_2 \times \Z_2 $. It turns out that in each case, the eight zested modular data belong to the same family where $\Z_2 \times \Z_2 $ is possibly replaced by $\Z_4 $ and the quadratic forms vary (see Remarks \ref{ze2} and \ref{ze1}.)
 
 The verification of the modular relations and the computations of the Verlinde coefficients and Frobenius-Schur indicators for the various families are lengthy and tedious, so we defer them to an online appendix, which is included in \texttt{arxiv} version of the paper. There is a separate section of the appendix for the calculations in each of Sections 3-6 of the paper, which correspond to the first four families (the fifth family is related to the fourth family so new calculations are not needed.)
 The appendix also contains a computation of modular data for Drinfeld centers of $\Z_2 $-de-equivariantizations of $\Z_4 $-near-group categories, verifying an instance of Conjecture \ref{con4}.

We also include with the \texttt{arxiv} source the Mathematica notebook \texttt{ifpmd.nb} which constructs the modular data for one example from each family, verifies the modular relations, and computes the Verlinde coefficients and Frobenius-Schur indicators. The notebook is annotated so that the reader can modify the input (which consists of specifying the pair of involutive metric groups and some associated data, depending on the family) to look at modular data for other examples.

\textbf{Acknowledgements.} We would like to thank the American Institute of Mathematics for its generous hospitality during the SQuaRE Classifying Fusion Categories and the Isaac Newton Institute for its generous hospitality during the semester program Operator algebras: Subfactors and their Applications. We would like to thank Eric Rowell for pointing out that all sixteen rank $10$ modular data related to the center of the near-group category for $\Z_2 $ with multiplicity $2$ can be realized though zesting and complex conjugation (see Section \ref{near1}), and we would like to thank Marcel Bischoff for first pointing out the realization of the $c=-1$ case. We would like to thank Terry Gannon for showing us his further generalizations of grafting in \cite{G18}. We would like to thank Andrew Schopieray for helpful comments on the preprint.

\section{Preliminaries}
\subsection{Modular data}\label{md}
Throughout the paper, we use the notation $$\T=\{z\in \C;\;|z|=1\}.$$
For $n\in \N$, we use the notation $\zeta_n=e^{\frac{2\pi i}{n}}$. 

Let $\cC$ be a unitary modular tensor category with $J$ the set of isomorphism classes of simple objects in $\cC$ (for the definition and basic properties of modular tensor categories, see \cite{MR1797619} or \cite{MR1990929}). 
The modular data of $(S,T)$ of $\cC$ are a pair of $J$ by $J$ matrices. 
Since there are several different normalizations for $(S,T)$ in the literature 
(see \cite{MR3641612} and \cite{MR2164398} for example), we first fix ours.  
We assume that $S$ is a symmetric unitary matrix satisfying $S_{0,j}>0$ for every $j\in J$ and 
$S^2=C$ with $C_{ij}=\delta_{i,\overline{j}}$, and 
$T$ is a unitary diagonal matrix of finite order with $T_{0,0}=1$ and $CT=TC$, 
where $\overline{j}$ is the dual object of $j\in J$, and $0$ is the unit object.  
We are mainly concerned with the following known constraints for the modular data $(S,T)$ (see \cite{MR3641612}, \cite{MR2164398}, and \cite{MR2313527} for example).
\begin{enumerate}
\item  $(ST)^3=cC$, where $c\in \T$ is given by 
$$c=\frac{1}{S_{00}}\sum_{i\in J}S_{0i}^2T_{ii}.$$
\item  For $i,j,k\in J$, let 
$$N_{ijk}=\sum_{a\in J}\frac{S_{ai}S_{aj}S_{a,k}}{S_{0,a}}.$$ 
Then $N_{ij}^k=N_{ij\overline{k}}$ is the fusion coefficient $\dim \Hom(k,i\otimes j)$. 
In particular $N_{ijk}$ is a non-negative integer. \\
\item For $k\in J$ and $n\in \N$, let 
$$\nu_n(k)=\sum_{i,j\in J}N_{ij}^k S_{0i}S_{0j}(\frac{T_{jj}}{T_{ii}})^n.$$
Then $\nu_n(k)$ is the $n$-th Frobenius-Schur indicator of $k$. 
\end{enumerate}
In particular, 
\begin{itemize}
\item[(FS2)] We have $\nu_2(k)=0$ if $k\neq\overline{k}$, and $\nu_2(k)\in \{1,-1\}$ if $k=\overline{k}$. 
\item[(FS3)] The third indicator $\nu_3(k)$ is a sum of $N_{kkk}$ numbers belonging to $\{1,\zeta_3,\zeta_3^2\}$.   
\end{itemize}

In this paper we propose a number of potential modular data: pairs of unitary matrices $(S,T)$
satisfying the above conditions, without knowing whether the underlying modular tensor categories exist. 
More precisely, we take the above formulae (2) and (3) as the definitions of $N_{ijk}$ and $\nu_{n}(k)$ 
for a given $(S,T)$ satisfying (1), and check whether these numbers satisfy the conditions in (2) and (3). 
Although there are other number theoretic constraints known for $(S,T)$, we do not discuss them as 
they suit for case-by-case analysis and it is relatively easy to apply them to our examples. 

\subsection{Metric groups}

Every finite  abelian group $G$ is canonically decomposed as 
$G=G_e\times G_o$, where $G_e$ is a 2-group and $G_o$ is an odd group. 
We set $$G_2=\{g\in G;\; 2g=0\},$$ which is a subgroup of $G_e$.

For a quadratic form $q$ on a finite abelian group $G$, we adopt multiplicative notation. 
Namely, a quadratic form $q$ on $G$ is a map $q:G\to \T=\{z\in \C;\; |z|=1\}$  
satisfying $q(-g)=q(g)$ for all $g\in G$ 
such that the map $\inpr{\cdot}{\cdot}_q:G\times G\to \T$ defined by $$\inpr{g}{h}_q=q(g+h)q(g)^{-1}q(h)^{-1}$$ 
is a bicharacter. 
We say the $q$ is non-degenerate if the associated bicharacter is non-degenerate. 
We often suppress $q$ in $\inpr{\cdot}{\cdot}_q$ if there is no possibility of confusion. 

We define the Gauss sum for non-degenerate $q$ and $k\in \Z$ by 
$$\cG(q,k)=\frac{1}{\sqrt{|G|}}\sum_{g\in G}q(g)^k\in \C,$$
and we denote $\cG(q)=\cG(q,1)\in \T$.  
For a given non-degenerate symmetric bicharacter $\inpr{\cdot}{\cdot}$, we can always find a quadratic form $q$ 
giving $\inpr{\cdot}{\cdot}$ as above, and if moreover $G$ is an odd group, it is unique and of the form $q(g)=\inpr{g}{g}^{1/2}$. 
For the classification of non-degenerate quadratic forms, the reader is referred to \cite{MR3418743}. 
We often use the following facts. 
We have $\cG(q)\in \{1,-1\}$ if $|G|\equiv 1 \mod 4$ and 
$\cG(q)\in \{i,-i\}$ if $|G|\equiv 3 \mod 4$. 
For odd $r$ and integer $n\geq 1$, we have 
$$\frac{1}{\sqrt{2^n}}\sum_{x=0}^{2^n-1}\zeta_{2^{n+1}}^{rx^2}=(-1)^{n\frac{r^2-1}{8}}\zeta_8^r.$$

We say that a pair $(G,q)$ is a metric group if $G$ is a finite abelian group and $q$ is a non-degenerate quadratic 
form on $G$. 
For such a pair, we define unitary matrices 
$$S^{(G,q)}_{g,g'}=\frac{1}{\sqrt{|G|}}\overline{\inpr{g}{g'}_q},\quad T^{(G,q)}_{g,g'}=\delta_{g,g'}q(g),\quad C^{(G,q)}_{g,g'}=\delta_{g+g',0}.$$
Then they satisfy the relations 
$$({S^{(G,q)}})^2=C^{(G,q)}, \quad (S^{(G,q)}T^{(G,q)})^3=\cG(q)C^{(G,q)}, \quad C^{(G,q)}T^{(G,q)}=T^{(G,q)}C^{(G,q)}.$$
It is known that modular data of a pointed modular tensor category is always of the form 
$(S^{(G,q)},T^{(G,q)})$ (see \cite[Section 8.4]{MR3242743}). 
Every metric group $(G,q)$ is canonically decomposed as $(G,q)=(G_e,q_e)\times (G_o,q_o)$.

\begin{definition}
We say that $(G,q,\theta)$ is an involutive metric group if $(G,q)$ is a metric group 
and $\theta\in \Aut(G)$ is an involution (an order 2 automorphism of $G$) preserving $q$. 
For an involution $\theta\in \Aut(G)$, we set $$G^\theta=\{g\in G;\; \theta(g)=g\}.$$
\end{definition}

We recall the following easy but useful lemma. 

\begin{lemma}\label{split} Assume that $(G,q)$ is a metric group and $H$ is a subgroup of $G$. 
If the restriction of $q$ to $H$ is non-degenerate, 
we have $G=H\times H^\perp$ and $q=q|_H\times q|_{H^\perp}$, 
where 
$$H^\perp=\{g\in G;\; \forall h\in H,\;\inpr{g}{h}_q=1\}.$$
If moreover $\theta\in \Aut(G)$ is an involution preserving $q$ and $\theta(H)=H$, 
then we also have $\theta=\theta|_H\times \theta|_{H^\perp}$.  
\end{lemma}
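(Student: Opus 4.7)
The plan is to argue exactly as one would for an orthogonal direct sum decomposition of an inner product space, using the fact that a non-degenerate symmetric bicharacter on a finite abelian group $H$ induces an isomorphism $H\to \hat{H}$, $h\mapsto \inpr{h}{\cdot}_q$.

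First I would establish $H\cap H^\perp=\{0\}$. If $h\in H\cap H^\perp$, then $\inpr{h}{h'}_q=1$ for every $h'\in H$, so non-degeneracy of $q|_H$ forces $h=0$. Next I would show $G=H+H^\perp$. For any $g\in G$, the map $h\mapsto \inpr{g}{h}_q$ is a character on $H$; since $q|_H$ is non-degenerate, there is a unique $h_0\in H$ with $\inpr{g}{h}_q=\inpr{h_0}{h}_q$ for all $h\in H$, which means $g-h_0\in H^\perp$, giving $g=h_0+(g-h_0)\in H+H^\perp$. Combined with the trivial intersection, this yields the internal direct product $G=H\times H^\perp$.

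To see that $q$ splits as $q|_H\times q|_{H^\perp}$, I would use the definition of the bicharacter: for $h\in H$ and $k\in H^\perp$,
\[
q(h+k)=q(h)q(k)\inpr{h}{k}_q=q(h)q(k),
\]
since $\inpr{h}{k}_q=1$ by definition of $H^\perp$. This is exactly the statement that $q$ is the external product of its restrictions.

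For the last assertion, since $\theta$ preserves $q$, it preserves the associated bicharacter $\inpr{\cdot}{\cdot}_q$. If $\theta(H)=H$, then for any $k\in H^\perp$ and any $h\in H$, writing $h=\theta(h')$ with $h'\in H$, we get $\inpr{\theta(k)}{h}_q=\inpr{k}{h'}_q=1$, so $\theta(k)\in H^\perp$. Thus $\theta$ restricts to involutions on both $H$ and $H^\perp$, and the product decomposition $G=H\times H^\perp$ is $\theta$-equivariant, giving $\theta=\theta|_H\times\theta|_{H^\perp}$. No step here is a real obstacle; the only point requiring a moment's thought is the duality argument used to produce $h_0$, which is the standard tool that makes orthogonal complements work in the finite abelian setting.
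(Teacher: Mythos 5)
Your proof is correct. The paper states this lemma without proof (it is introduced as a recalled "easy but useful" fact), and your argument — trivial intersection from non-degeneracy, surjectivity of $H+H^\perp$ via the duality isomorphism $H\to\hat{H}$, the splitting of $q$ from $\inpr{h}{k}_q=1$, and $\theta$-invariance of $H^\perp$ from invariance of the bicharacter — is exactly the standard argument the authors implicitly rely on.
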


\subsection{When $G^{\theta}=\Z_2\times \Z_2 $}
For later use we classify involutive metric groups whose fixed point subgroups are $\Z_2\times \Z_2$.  

\begin{lemma}\label{prelist} Let $(A,q,\theta)$ be an involutive metric group with a 2-group $A$. 
Assume $A^\theta=\{0,a\}\cong \Z_2$. \\
$(1)$ If $q(a)=\pm i$, we have $A=\Z_2$, $q(x)=i^{\pm x^2}$, and $\theta=-1$.\\
$(2)$ If $q(a)=-1$, one of the following holds: 
\begin{itemize}
\item $A=\Z_4$, $q(x)=\zeta_8^{rx^2}$ with odd $r$, and $\theta=-1$.
\item $A=\Z_2\times \Z_2$, $q$ is a flip invariant non-degenerate quadratic form, and $\theta(x,y)=(y,x)$. 
\end{itemize}
$(3)$ If $q(a)=1$, we have $A=\Z_{2^n}$ with $n\geq 3$, $q(x)=\zeta_{2^{n+1}}^{rx^2}$ with odd $r$, and $\theta=-1$. 
\end{lemma}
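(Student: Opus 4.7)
The plan is to split on the value of $q(a)$. Since $2a=0$ and $q(2g)=q(g)^4$ for any quadratic form, $q(a)^4=1$, so $q(a)\in\{1,-1,i,-i\}$. The self-pairing $\inpr{a}{a}_q=q(2a)q(a)^{-2}=q(a)^{-2}$ equals $-1$ in case~(1) and $+1$ in cases~(2) and~(3), and this dichotomy dictates whether Lemma~\ref{split} can be applied directly to $\langle a\rangle$.

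In case~(1) one takes $H=\langle a\rangle$, which is non-degenerate since $\inpr{a}{a}=-1$. Lemma~\ref{split} gives $A=H\oplus H^\perp$ with $\theta=\theta|_H\oplus\theta|_{H^\perp}$, and from $A^\theta=\langle a\rangle$ one concludes $(H^\perp)^{\theta|_{H^\perp}}=0$. The standard observation that $g+\theta(g)\in(H^\perp)^{\theta|_{H^\perp}}$ for every $g$ forces $\theta|_{H^\perp}=-1$, whose fixed set $(H^\perp)_2$ must then vanish; hence $H^\perp=0$ as a $2$-group with no $2$-torsion, and $A=\Z_2$.

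For cases~(2) and~(3) the preliminary step is to bound $|A_2|$. Since $A^\theta\subseteq A_2$ and $\theta$ acts on the $\F_2$-space $A_2$ as an involution with one-dimensional fixed subspace, the identity $(\theta-1)^2=0$ in characteristic $2$ combined with the rank bound $\operatorname{rank}(\theta-1)\le\tfrac{1}{2}\dim A_2$ for nilpotents forces $\dim A_2\le 2$. If $A_2=\Z_2$ then $A$ is cyclic, and a direct inspection of $\operatorname{Aut}(\Z_{2^n})$ shows that among involutions with $|A^\theta|=2$ only $\theta=-1$ preserves a quadratic form $q(x)=\zeta_{2^{n+1}}^{rx^2}$ with odd $r$; computing $q(a)=\zeta_{2^{n+1}}^{r\cdot 2^{2n-2}}$ then distinguishes $n=1$, $n=2$, $n\ge3$, yielding the cyclic rows of cases~(1), (2), (3). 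If $A_2=\Z_2\times\Z_2$, choose $b$ with $\theta(b)=b+a$; $\theta$-invariance of $q$ forces $\inpr{a}{b}=q(a)^{-1}$. In case~(2) this equals $-1$, so the bicharacter on $A_2$ in the basis $\{a,b\}$ has $\F_2$-matrix $\bigl(\begin{smallmatrix}0&1\\1&*\end{smallmatrix}\bigr)$ of nonzero determinant; $q|_{A_2}$ is non-degenerate, Lemma~\ref{split} applies, and the complement argument of case~(1) gives $A=\Z_2\times\Z_2$.

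The main obstacle is ruling out $A_2=\Z_2\times\Z_2$ in case~(3), where $\inpr{a}{b}=1$ makes $q|_{A_2}$ degenerate and no single subgroup is both $\theta$-stable and non-degenerate. The plan is to pick $c\in A$ with $\inpr{c}{a}=-1$ (which exists by non-degeneracy of $q$ and does not lie in $A_2$, where $\inpr{\cdot}{a}$ is trivial), so $c$ has order $2^k$ with $k\ge2$. Writing $c+\theta(c)\in A^\theta=\{0,a\}$ yields two branches. If $\theta(c)=-c$, then $2^{k-1}c$ is a nonzero $\theta$-fixed $2$-torsion element and hence equals $a$; combining $\inpr{c}{a}=q(c)^{2^k}=-1$ with $q(a)=q(c)^{2^{2k-2}}=1$ forces $q(c)$ to have order exactly $2^{k+1}$ while also dividing $2^{2k-2}$, so $k\ge3$ and $q|_{\langle c\rangle}$ is non-degenerate; Lemma~\ref{split} and the complement argument then make $A$ cyclic, contradicting $A_2=\Z_2\times\Z_2$. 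If $\theta(c)=-c+a$, the same identification gives $a=2^{k-1}c$, so $\theta$ acts on $\langle c\rangle$ as multiplication by $2^{k-1}-1$; this fails to preserve $q$ for $k\ge3$ because $(2^{k-1}-1)^2-1\equiv 2^k\pmod{2^{k+1}}$ is nonzero modulo the order $2^{k+1}$ of $q(c)$, while $k=2$ collapses $\theta$ to the identity on $\langle c\rangle$ and forces $c\in A^\theta$, again a contradiction. Hence case~(3) forces $A$ cyclic, completing the classification.
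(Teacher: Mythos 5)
Your proof is correct. It follows the same skeleton as the paper's: split off $\langle a\rangle$ when $q(a)=\pm i$, bound the $2$-rank using the map $x\mapsto x+\theta(x)$ (your $(\theta-1)^2=0$ argument is exactly the paper's homomorphism $\varphi_0$ in disguise), settle the cyclic case by checking which involutions of $\Z_{2^n}$ preserve $\zeta_{2^{n+1}}^{rx^2}$, and split off $A_2$ when $q(a)=-1$. Where you genuinely diverge is on the one delicate point, ruling out $2$-rank $2$ when $q(a)=1$. The paper decomposes $A=\ker\varphi\times\{0,b\}$ and compares $q(x+b)$ with $q(\theta(x+b))$ to force $\inpr{x}{a}=1$ for all $x\in\ker\varphi$, which together with $\inpr{a}{b}=1$ puts $a$ in the radical and contradicts non-degeneracy. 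You instead take a witness $c$ with $\inpr{c}{a}=-1$ supplied by non-degeneracy, note $c+\theta(c)\in\{0,a\}$, deduce $a=2^{k-1}c$ in either branch, and extract the contradiction from the order of $q(c)$ and the action of $\theta$ on $\langle c\rangle$ (via Lemma~\ref{split} when $\theta(c)=-c$, and a direct $q$-preservation/fixed-point check when $\theta(c)=-c+a$); your order computation $q(c)^{2^k}=-1$ versus $q(c)^{2^{2k-2}}=1$ is correct, as is the congruence $(2^{k-1}-1)^2\equiv 1+2^k\pmod{2^{k+1}}$ for $k\ge 3$. The paper's version of this step has the advantage of being reusable verbatim in the rank-$3$ subcases of Theorem~\ref{list}; yours localizes the entire obstruction inside one cyclic subgroup and leans more on the splitting lemma, which makes it somewhat more transparent but less portable. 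Two small omissions are harmless: you do not explicitly check that $\theta$ is the flip in the basis $\{b,\theta(b)\}$ when $A=\Z_2\times\Z_2$, nor that a $b$ with $\theta(b)=b+a$ exists (it does, since $\theta-1$ has rank exactly $1$ on $A_2$ when $\dim A_2=2$); both follow immediately from what you have written.
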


\begin{proof} If $q(a)=\pm i$, the restriction of $q$ to $B=\{0,a\}$ is non-degenerate. 
Thus we have the factorization $A=B\times B^\perp$ as an involutive metric group. 
If $B^\perp$ is not trivial, the involution $\theta$ has a non-trivial fixed point as $B^\perp$ is an even group, which is a contradiction. 
Thus (1) holds. 

Let $\varphi:A\to A^\theta$ be a group homomorphism given by $\varphi(x)=x+\theta(x)$, and let $\varphi_0$ be the restriction of $\varphi$ to $A_2$. 
Then $\ker \varphi_0=A^\theta$, and $|A_2|/|A^\theta|=|\varphi_0(A)|\leq 2$, which shows $|A_2|\leq 4$. 
Thus the 2-rank of $A$ is at most 2. 

If the 2-rank of $A$ is 1, the group $A$ is cyclic, say $\Z_{2^n}$, and we have $q(x)=\zeta_{2^{n+1}}^{rx^2}$ with odd $r$ and $a=2^{n-1}$. 
Since $\theta$ preserves $q$, there exists an integer $s$ with $s^2-1\equiv 0 \mod 2^{n+1}$ with $\theta(x)=sx$. 
Since $A^\theta=\{0,2^{n-1}\}$, the number $(s-1)/2$ is odd and $s+1\equiv 0\mod 2^n$, which implies $\theta=-1$. 

Assume that the 2-rank of $A$ is 2. 
Then $|A_2|/|A^\theta|=2$, and $\varphi_0$ is a surjection.  
Thus there exists $b\in A_2$ satisfying $b+\varphi(b)=a$. 
If $q(a)=-1$, we have $A_2=\{0,a,b,\theta(b)\}$, and $\{q(a),q(b),q(\theta(a))\}=\{-1,q(b),q(b)\}$. 
This shows that $q$ restricted to $A_2$ is non-degenerate, and we get $A=A_2$ as before. 
Assume $q(a)=1$ now.  
Since $|A|=2|\ker \varphi|$ and $\ker \varphi \cap \{0,b\}=\{0\}$, we have the decomposition $A=\ker \varphi\times \{0,b\}$. 
Let $x\in \ker \varphi$. 
Then we have $q(x+b)=q(x)q(b)\inpr{x}{b}$, and on the other hand, we have 
\begin{align*}
\lefteqn{q(\theta(x+b))=q(-x+a+b)} \\
 &= q(-x+a)q(b)\inpr{-x+a}{b}=q(x)q(a)q(b)\inpr{-x}{a}\inpr{-x+a}{b}\\
 &=q(x)q(b)\inpr{x}{a}\inpr{x}{b}\inpr{a}{b}.
\end{align*}
We have 
$$\inpr{a}{b}=\frac{q(a+b)}{q(a)q(b)}=\frac{q(\theta(b))}{q(b)}=1.$$
Since $q(\theta(x+b))=q(x+b)$, we get $\inpr{x}{a}=1$, and we also have $\inpr{a}{b}=1$. 
Since $A$ is generated by $\ker \varphi$ and $b$ this means that $q$ is degenerate, which is a contradiction. 
\end{proof}

\begin{theorem}\label{list} Let $(A,q,\theta)$ be an involutive metric group with a 2-group $A$. 
Assume $A^\theta=\{0,a_0,a_1,a_2\}\cong \Z_2\times \Z_2$. 
Then the following hold up to group automorphism.\\
$(1)$ If $q(a_0)=-1$, $q(a_1)=q(a_2)=1$, we have $A=\Z_2^2$, $q(x,y)=(-1)^{xy}$, and $\theta=-1$. \\
$(2)$ If $q(a_0)=q(a_1)=q(a_2)=-1$, we have $A=\Z_2^2$, $q(x,y)=(-1)^{x^2+xy+y^2}$, and $\theta=-1$.\\ 
$(3)$ If $q(a_0)=-1$, $q(a_1)=q(a_2)=\pm i$, we have $A=\Z_2^2$, $q(x,y)=i^{\pm (x^2+y^2)}$, and $\theta=-1$.\\  
$(4)$ If $q(a_0)=1$, $q(a_1)=i$, $q(a_2)=-i$, we have $A=\Z_2^2$, $q(x,y)=i^{x^2-y^2}$, and $\theta=-1$.\\ 
$(5)$ If $q(a_0)=-1$, $q(a_1)=i$, and $q(a_2)=-i$, one of the following holds: 
\begin{itemize}
\item $A=\Z_2\times \Z_4$, $q(x,y)=\zeta_4^{r_1x^2}\zeta_8^{r_2y^2}$ with odd $r_1,r_2$, and $\theta=-1$. 
\item $A=\Z_2\times \Z_2\times \Z_2$, $q(x,y,z)=\zeta_4^{r_1x^2}q'(y,z)$ with odd $r$, and $\theta(x,y,z)=(x,z,y)$, where 
$q'$ is a flip invariant non-degenerate quadratic form on $\Z_2\times \Z_2$. 
\end{itemize}
$(6)$ If $q(a_0)=1$, $q(a_1)=-1$, $q(a_2)=-1$, one of the following holds: 
\begin{itemize}
\item $A=\Z_4\times \Z_{2^m}$ with $m\geq 2$, $q(x,y)=\zeta_8^{r_1x^2}\zeta_{2^{m+1}}^{r_2y^2}$ with odd $r_1,r_2$, and $\theta=-1$ for $n=2$, 
and $\theta=-1$ or $\theta(x,y)=(-x+2y,2^{n-1}x+(-1+2^{n-1})y)$ for $n\geq 3$.   
\item $A=\Z_2\times \Z_2\times \Z_{2^m}$ with $m\geq 2$, $q(x,y,z)=q'(x,y)\zeta_{2^{m+1}}^{rz^2}$ with odd $r$, and $\theta(x,y,z)=(y,x,-z)$, 
where $q'$ is a flip invariant non-degenerate quadratic form on $\Z_2\times \Z_2$. 
\item $A=\Z_2^4$, $q(x_1,x_2,x_3,x_4)=q'(x_1,x_3) q''(x_2,x_4)$, where $q'$ and $q''$ are flip invariant non-degenerate 
quadratic forms on $\Z_2\times \Z_2$, and $\theta(x_1,x_2,x_3,x_4)=(x_3,x_4,x_1,x_2)$. 

\end{itemize}
$(7)$ If $q(a_0)=1$, $q(a_1)=q(a_2)=\pm i$, we have $A=\Z_2\times \Z_{2^n}$ with $n\geq 3$, $q(x,y)=i^{\pm x^2}\zeta_{2^{n+1}}^{ry^2}$, and $\theta=-1$.\\ 
$(8)$ If $q(a_0)=q(a_1)=q(a_2)=1$, one of the following holds: 
\begin{itemize}
\item $A=\Z_{2^n}^2$ with $n\geq 2$, $q(x,y)=\zeta_{2^n}^{xy}$ or $q(x,y)=\zeta_{2^n}^{x^2+xy+y^2}$, and $\theta=-1$ for $n=2$, and $\theta=-1$ or 
$\theta=-1+2^{n-1}$ for $n\geq 3$. 
\item $A=\Z_{2^m}\times \Z_{2^n}$ with $3\leq m\leq n$, $q(x,y)=\zeta_{2^{m+1}}^{r_1x^2}\zeta_{2^{n+1}}^{r_2y^2}$ with odd $r_1,r_2$, and $\theta=-1$ 
or $\theta(x,y)=(-x+2^{m-1}y,2^{n-1}x-y)$. 
\item $A=\Z_2^4$, $q(x_1,x_2,x_3,x_4)=(-1)^{x_1x_4+x_2x_3}$ or $q(x_1,x_2,x_3,x_4)=(-1)^{x_1x_4+x_2x_3}i^{(x_1+x_3)^2}$, 
and $\theta(x_1,x_2,x_3,x_4)=(x_3,x_4,x_1,x_2)$. 
\end{itemize}
\end{theorem}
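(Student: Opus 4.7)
The plan is to follow the proof of Lemma~\ref{prelist}, using Lemma~\ref{split} as the main structural tool. Applying the argument from that lemma to the homomorphism $\varphi_0: A_2 \to A^\theta$, $\varphi_0(x) = x + \theta(x)$, one finds $\ker \varphi_0 = A^\theta$ and $\mathrm{image}(\varphi_0) \subseteq A^\theta$, giving $|A_2| \leq |A^\theta|^2 = 16$, so the $2$-rank of $A$ is at most $4$. Next, in each of the eight cases I would compute the symmetric bicharacter $\inpr{\cdot}{\cdot}_q$ on $A^\theta$, using $\inpr{a}{a}_q = q(a)^{-2}$ (which is $1$ for $q(a) \in \{\pm 1\}$ and $-1$ for $q(a) \in \{\pm i\}$) and $\inpr{a_i}{a_j}_q = q(a_k)/(q(a_i)q(a_j))$ where $\{i,j,k\} = \{0,1,2\}$.

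In cases (1) and (2) the resulting form on $A^\theta$ is non-degenerate; Lemma~\ref{split} then gives $A = A^\theta \times (A^\theta)^\perp$, and the complement must be trivial since it is a $2$-group on which $\theta$ acts without non-trivial fixed points. Thus $A \cong \Z_2 \times \Z_2$ with the quadratic form identified by matching the prescribed values. In cases (3), (4), (5) and (7), some $a_i$ has $q(a_i) = \pm i$, so $\langle a_i\rangle \cong \Z_2$ is $\theta$-invariant with $q$ non-degenerate; Lemma~\ref{split} splits off this $\Z_2$ factor. The orthogonal complement has fixed-point subgroup $A^\theta \cap \langle a_i\rangle^\perp \cong \Z_2$ (the kernel of the non-trivial pairing-with-$a_i$ functional on $A^\theta$), so Lemma~\ref{prelist} classifies it; the listed options are then read off after recording which $a_j$ survives into the complement and the value of $q$ there.

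The main obstacle is cases (6) and (8), where every $q(a_i) \in \{\pm 1\}$ and $q|_{A^\theta}$ has trivial bilinear form, so neither splitting applies directly. I would subdivide by the $2$-rank of $A$, which is $2$, $3$, or $4$. For $2$-rank $2$, $A = \Z_{2^m} \times \Z_{2^n}$, and one enumerates non-degenerate quadratic forms together with $q$-preserving involutions whose fixed-point subgroup matches the prescribed triple; this yields the cyclic entries, including both $\theta = -1$ and the listed twisted involutions when cyclic factors are sufficiently large. For $2$-rank $3$ or $4$, I would pick a lift $b \in A_2 \setminus A^\theta$ of a non-trivial element of $\mathrm{image}(\varphi_0)$, determine $q(b)$, $\inpr{b}{\theta(b)}_q$, and the pairings $\inpr{b}{a_i}_q$ using $\theta$-invariance of $q$, and locate a $\theta$-invariant elementary abelian summand of rank $2$ on which $q$ is non-degenerate; splitting this off via Lemma~\ref{split} reduces to a residue handled by Lemma~\ref{prelist} or by a further short check. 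Piecing together the resulting subcases produces the remaining families, including the swap-type involutions on $\Z_2 \times \Z_2 \times \Z_{2^m}$ and on $\Z_2^4$. The bulk of the work lies in systematically enumerating these subcases; each individual verification is a direct computation once the decomposition has been set.
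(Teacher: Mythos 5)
Your overall architecture matches the paper's for most of the theorem: the bound on the $2$-rank via $\varphi_0$, the immediate splitting in cases (1)--(4) (the paper uses non-degeneracy of $q$ on all of $A^\theta$ there, you use it only in (1)--(2) and fall back on splitting off $\langle a_i\rangle$ in (3)--(4), which is equivalent), and the reduction of (5) and (7) to Lemma \ref{prelist} by splitting off a $\Z_2$ on which $q(a_i)=\pm i$. These parts are fine. The rank-$2$ subcases of (6) and (8), and the rank-$4$ subcase of (6), also go through as you describe: in (6) one has $q(a_1)=-1$, so for $b\in A_2$ with $b+\theta(b)=a_1$ one gets $\inpr{a_1}{b}=q(\theta(b))/(q(a_1)q(b))=-1$ and the subgroup $\{0,a_1,b,\theta(b)\}$ is a non-degenerate $\theta$-invariant summand.

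The gap is in your strategy for the remaining subcases, namely case (8) with $2$-rank $3$ or $4$ and case (6) with $2$-rank $3$ when $\varphi_0(A_2)=\{0,a_0\}$. There your plan is to ``locate a $\theta$-invariant elementary abelian summand of rank $2$ on which $q$ is non-degenerate,'' but no such summand exists: whenever $b+\theta(b)=a$ with $q(a)=1$ (which is forced in case (8), where all three fixed points are bosons, and in the subcase of (6) where $\varphi_0$ lands on $a_0$), one computes $\inpr{a}{b}=q(a+b)/(q(a)q(b))=q(\theta(b))/q(b)=1$, so $a$ lies in the radical of $q$ restricted to $\{0,a,b,\theta(b)\}$; and $q$ is totally degenerate on $A^\theta$ itself since all $\inpr{a_i}{a_j}=q(a_k)=1$. (Concretely, for $A=\Z_2^4$ with $q=(-1)^{x_1x_4+x_2x_3}$ and the swap involution --- which \emph{is} one of the outputs of case (8) --- every $\theta$-invariant rank-$2$ elementary abelian subgroup carries a degenerate form, so Lemma \ref{split} cannot be applied.) The paper's actual proof of these subcases uses different tools that your proposal does not supply: in (6) rank $3$ it first tries to split off a $\theta$-invariant $\Z_4$ generated by a square root $d$ of $a_1$ in $\ker\varphi$ (non-degenerate because $q(2d)=-1$), and when that fails it runs a page of explicit computation with the auxiliary map $\psi(x)=x-\theta(x)$ to show $q$ would be degenerate, ruling the configuration out entirely; in (8) rank $4$ it uses a purely group-theoretic (non-orthogonal) factorization $A=\ker\varphi\times\{0,b_0,b_1,b_2\}$, pins down $\ker\varphi\cong\Z_2\times\Z_2$, then enumerates the $\theta$-invariant forms on $\Z_2^4$, decides non-degeneracy by a Gauss-sum computation, and computes the orbits under $C_{\Aut(A)}(\theta)$; and in (8) rank $3$ it again shows by direct computation that no non-degenerate form exists. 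These arguments constitute the bulk of the proof and cannot be replaced by the splitting step you propose.
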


\begin{proof}
In cases (1)-(4), the restriction of $q$ to $\{0,a_0,a_1,a_2\}$ is non-degenerate, and we get the statement. 
If $a_1=\pm i$, the restriction of $q$ to $B=\{0,a_1\}$ is non-degenerate, and we get $A=B\times B^\perp$ 
as an involutive metric group. 
Thus the restriction of $\theta$ to $B^\perp$ has exactly one non-trivial fixed point, and the statement follows from the previous lemma. 
The remaining cases are (6) and (8). 

Let $\varphi:A\to A^{\theta}$ be a group homomorphism given by $\varphi(x)=x+\theta(x)$, and let $\varphi_0$ be the restriction of $\varphi$ 
to $A_2$. 
Then $\ker \varphi_0=A^{\theta}$ and $|A_2|/|A^\theta|=|\varphi_0(A_2)|\leq 4$. 
Thus $|A_2|\leq 16$, and the 2-rank of $A$ is at most 4. 
When it is 4, the map $\varphi_0$ is surjection, and when it is $3$, we get $|\varphi_0(A_2)|=2$. 
If it is 2, we have $A_2=A^\theta$, and the possible structure of the metric group $(A,q)$ is determined from the classification of 
metric groups. 

(6) 
Assume that the 2-rank of $A$ is 2. 
Then we may assume that $A=\Z_4\times \Z_{2^n}$ with $n\geq 2$ and $q(x,y)=\zeta_8^{r_1x^2}\zeta_{2^{n+1}}^{r_2y^2}$ with odd $r_1,r_2$. 
Since $\varphi(A)\subset A_2$, the involution $\theta$ is of the form $\theta(x,y)=(\varepsilon x+2sy,2^{n-1}tx+(-1+2^{n-1}u)y)$ 
with $\varepsilon\in \{1,-1\}$, $s,t,u\in \{0,1\}$. 
Since $\theta$ preserves $q$ and $A^\theta=A_2$, we get the statement. 

Assume that the 2-rank of $A$ is 4. Then there exists $b\in A_2$ satisfying $b+\theta(b)=a_1$. 
Let $B=\{0,a_1,b,\theta(b)\}\cong \Z_2\times \Z_2$. 
Since $\{q(a_1),q(b),q(b)\}=\{-1,q(b),q(b)\}$, the restriction of $q$ to $B$ is non-degenerate, and we get the factorization 
$A=B\times B^\perp$ as an involutive metric group. 
Now the statement follows from the previous lemma.

Now assume that the 2-rank of $A$ is 3. 
If $\varphi_0(A_2)=\{0,a_1\}$ or $\{0,a_2\}$, the statement follows from the same argument as above, 
and we assume $\varphi_0(A)=\{0,a_0\}$. 
If there exists $d\in \ker\varphi$ satisfying $2d=a_1$, the restriction of $q$ to $B=\langle d\rangle\cong \Z_4$ 
is non-degenerate as $q(2d)=-1$, and $B$ is globally preserved by $\theta$. 
Thus $A=B\times B^\perp$ as an involutive metric group, and we get the statement. 
Therefore we assume that neither $a_1$ nor $a_2$ has a square root in $\ker\varphi$. 
We choose $b\in A_2$ satisfying $b+\theta(b)=a_0$. 
Then $A_2$ is generated by $a_0$, $a_1$, and $b$. 
For $\varphi(A)$, we have two possibilities: $\{0,a_0\}$ and $\{0,a_0,a_1,a_2\}$. 

First we assume $\varphi(A)=\{0,a_0\}$. 
Then $|A|=2|\ker\varphi|$. 
Since $\ker\varphi\cap \{0,b\}=\{0\}$, we get $A=\ker \varphi\times \{0,b\}$. 
Let $x\in \ker \varphi$. 
Then $q(x+b)=\inpr{x}{b}q(x)q(b)$. 
On the other hand, 
\begin{align*}
\lefteqn{q(\theta(x+b))=q(-x+a_0+b)} \\
 &=\inpr{-x+a_0}{b}q(-x+a_0)q(b) \\
 &=\inpr{x}{b}\inpr{a_0}{b}\inpr{-x}{a_0}q(a_0)q(b) \\
 &=\inpr{x}{b}\inpr{a_0}{b}\inpr{x}{a_0}q(b)
\end{align*}
We can show that $\inpr{a_0}{b}=1$ as before, and since $\theta$ preserves $q$, we get $\inpr{x}{a_0}=1$. 
Since $A$ is generated by $\ker\varphi$ and $b$, we see that $q$ is degenerate, which is a contradiction. 
Thus $\varphi(A)=\{0,a_0\}$ does not occur. 

Assume $\varphi(A)=A^{\theta}$. 
Then there exists $c\in A\setminus A_2$ satisfying $c+\theta(c)=a_1$. 
Note that we have $2c\in \ker\varphi\setminus\{0\}$, and $A$ is generated by $\ker\varphi$, $b$, and $c$. 
Let $\psi:A\to \ker \varphi$ be a group homomorphism given by $\psi(x)=x-\theta(x)$. 
Then $\ker\psi=A^\theta$. 
Thus $|\psi(A)|=|A|/4$, and we also have $|\ker\varphi|=|A|/|\varphi(A)|=|A|/4$. 
Thus $\psi$ is a surjection from $A$ onto $\ker\varphi$. 
We have $\psi(\ker\varphi)=2\ker\varphi$, $\psi(b)=a_0$, and $\psi(c)=2c-a_1$. 
Since $|\ker \varphi \cap A_2|=|A^\theta|=4$, the 2-rank of $\ker \varphi$ is 2, and 
we get $\ker \varphi=\langle a_0\rangle\times \langle 2c+a_1\rangle$.  
We express $a_1$ as $ma_0+n(2c+a_1)$ with integers $m,n$. 
Then $a_2=(m+1)a_0+n(2c+a_1)$. 
Since neither $a_1$ nor $a_2$ has a square root in $\ker\varphi$, the number $n$ is odd, and we also have 
$\ker\varphi=\langle a_0\rangle\times \langle a_1\rangle\cong \Z_2\times \Z_2$. 
Thus we have $2c+a_1=a_2$ as $2c\neq 0$, and $2c=a_0$. 
This means that we have $A=\langle a_1\rangle\times \langle b\rangle\times \langle c\rangle\cong \Z_2\times \Z_2\times \Z_4$ with 
$\theta(a_1)=a_1$, $\theta(b)=b+a_0=b+2c$, and $\theta(c)=a_1-c$. 
We express $xa_1+yb+zc$ as $(x,y,z)$. 
Then since $q(a_1)=-1$, we have 
$q(x,y,z)=(-1)^xi^{r_1y^2}\zeta_8^{r_2z^2}(-1)^{sxy+txz+uyz}$ with $r_1\in \{0,1\}$, $r_2\in \{0,\pm 1,\pm 2,\pm 3\}$, and $s,t,u\in \{0,1\}$. 
Since $q(a_0)=1$ and $q(a_2)=-1$, we have $1=q(0,0,2)=(-1)^{r_2}$, and $-1=q(1,0,2)=-(-1)^{r_2}$, which shows that $r_2$ is even. 
However this implies $q(x,y,z+2)=q(x,y,z)$, and $q$ is degenerate, which is a contradiction. 

(8) If the 2-rank of $A$ is 2, we can show the statement as in (6). 

Assume the 2-rank of $A$ is 4. 
Then we have $\varphi_0(A_2)=A^\theta$. 
Thus we can choose $b_0,b_1\in A_2$ satisfying $b_0+\theta(b_0)=a_0$ and $b_1+\theta(b_1)=a_1$. 
Then $b_2=b_0+b_1$ satisfies $b_2+\theta(b_2)=a_2$. 
Since $|\ker \varphi|=|A|/4$ and $\ker\varphi\cap \{0,b_0,b_1,b_2\}=\{0\}$, we have a factorization 
$A=\ker\varphi\times \{0,b_0,b_1,b_2\}$. 
Since $\ker\varphi \cap A_2=A^\theta$, the 2-rank of $\ker \varphi$ is 2. 
Thus we may assume that $\ker \varphi =\Z_{2^m}\times \Z_{2^n}$ with $1\leq m\leq n$ and $a_0=(2^{m-1},0)$ and $a_1=(0,2^{n-1})$. 
Identifying $\{0,b_0,b_1,b_2\}$ with $\langle b_0\rangle\times \langle b_1\rangle$, we are in the following situation: 
$A=\Z_{2^m}\times \Z_{2^n}\times \Z_2\times \Z_2$ and
$$\theta(x_1,x_2,x_3,x_4)=(-x_1+2^{m-1}x_3,-x_2+2^{n-1}x_4,x_3,x_4).$$
If $n\geq2$, we have $(0,2^{n-1},0,1)\in A^\theta$, which is a contradiction. 
Thus $m=n=2$. 
Since $q(1,0,0,0)=q(0,1,0,0)=q(1,1,0,0)=1$, we have
$$q(x_1,x_2,x_3,x_4)=i^{r_1x_3^2+r_2x_4^2}(-1)^{s_{13}x_1x_3+s_{14}x_1x_4+s_{23}x_2x_3+s_{24}x_2x_4+s_{34}x_3x_4},$$
with $r_1,r_2\in \{0,\pm 1,2\}$ and $s_{13},s_{14},s_{23},s_{24},s_{34}\in \{0,1\}$. 
Since $q$ is preserved by $\theta$, we get $s_{13}=s_{24}=0$ and $s_{14}=s_{23}$, and then
$$q(x_1,x_2,x_3,x_4)=i^{r_1x_3^2+r_2x_4^2}(-1)^{s x_3x_4} (-1)^{t(x_1x_4+x_2x_3)},$$
with $s,t\in \{0,1\}$. 
The Gauss sum is 
\begin{align*}
 &\cG(q)=\frac{1}{4}\sum_{x_3,x_4}i^{r_1x_3^2+r_2x_4^2}(-1)^{s x_3x_4}(1+(-1)^{tx_3})(1+(-1)^{tx_4}) \\
 &=\left\{
\begin{array}{ll}
1+i^{r_1}+i^{r_2}+i^{r_1+r_2}(-1)^s , &\quad t=0 \\
1 , &\quad t=1
\end{array}
\right.,
\end{align*}
which shows that $q$ is non-degenerate if and only if $t=1$. 
The centralizer of $\theta$ in $\Aut(A)=GL(4,2)$ is 
$$C_{\Aut(A)}(\theta)=\{\left(
\begin{array}{cc}
X &Y  \\
0 &X 
\end{array}
\right)\in GL(4,2);\; X\in GL(2,2)\},$$
and there are exactly two $C_{\Aut(A)}(\theta)$-orbits in the non-degenerate quadratic forms preserved by $\theta$. 
The two orbits are represented by $q(x_1,x_2,x_3,x_4)=(-1)^{x_1x_4+x_2x_3}$ and $q(x_1,x_2,x_3,x_4)=i^{x_3^2}(-1)^{x_1x_4+x_2x_3}$. 
Expressing $q$ and $\theta$ with respect to the  basis 
$$\{(0,0,1,0),(0,0,0,1),(1,0,1,0),(0,1,0,1)\},$$
we get the statement. 

Assume that the 2-rank of $A$ is 3. 
Since $|\varphi_0(A_2)|=2$, we may assume that there exists $b\in A_2$ satisfying $b+\theta(b)=a_0$. 
We can show that $|\varphi(A)|=2$ does not occur as in (6), and we assume $\varphi(A)=A^\theta$. 
Then there exists $c\in A\setminus A_2$ satisfying $c+\theta(c)=a_1$, $2c\in \ker \varphi$, and $A$ 
is generated by $\ker\varphi$, $b$, and $c$.  
As before we have $\psi(A)=\ker \varphi$, and $\ker \varphi$ is generated by 
$\psi(\ker\varphi)=2\ker\varphi$, $\psi(b)=a_0$, and $\psi(c)=2c+a_1$. 
Since $\ker\varphi\cap A_2=A^\theta$, the 2-rank of $\ker \varphi$ is 2, and we get 
$\ker\varphi=\langle  b\rangle\times \langle 2c+a_2\rangle\cong \Z_2\times \Z_{2^m}$. 
We can show that $m=1$ does not occur as in (6), and we assume $m\geq 2$.
Then we have either $a_1=2^{m-1}(2c+a_1)=2^mc$ or $a_1=a_0+2^{m-1}(2c+a_1)=a_0+2^mc$. 
In any case, we have $a_1\in \langle a_0,c\rangle$, and 
$$A=\langle a_0\rangle\times \langle b\rangle\times \langle c\rangle\cong \Z_2\times \Z_2\times \Z_{2^{m+1}}.$$
Since $q(1,0,0)=q(0,0,2^m)=(1,0,2^m)=1$, we get 
$$q(x,y,z)=i^{r_2y^2}\zeta_{2^{m+2}}^{r_3z^2}(-1)^{sx_1x_2+tx_1x_3+ux_2x_3},$$
with $r_2\in \{0,\pm 1,2\}$, $r_3\in \{0,\pm 1\pm 2,\pm 3\}$, and $s,t,u\in \{0,1\}$. 

Assume $a_1=2^mc$ first. 
Then since $\theta(a_0)=a_0$, $\theta(b)=b+a_0$, and $\theta(c)=-c+a_1=(2^m-1)c$, we have 
$\theta(x,y,z)=(x+y,y,(2^m-1)z)$. 
Since $\theta$ preserves $q$, we see that $r_3$ is even and $q(x,y,z)=q(x,y,z+2^m)$. 
This shows that $q$ is degenerate, and this case does not occur. 
Assume $a_1=a_0+2^mc$ now. 
Then we have $\theta(x,y,z)=(x+y+z,y,(2^m-1)z)$. 
Again we can see that $r_3$ is even and $q$ is degenerate. 
Thus this case does not occur either.  
\end{proof}
\subsection{Quadratic categories}
Let $P$ and $Q$ be finite abelian groups and let $m$ be a natural number. 
We say that a fusion category $\cC$ is a quadratic category of type $(P,Q,m)$ 
if the set of equivalence classes of simple objects in $\cC$ are represented by   
$$\{\alpha_{p}\otimes\beta_q\}_{p\in P,\;q\in Q}\cup \{\alpha_p\otimes\rho\}_{p\in P},$$
and they satisfy the following fusion rules, 
$$[\alpha_p][\alpha_{p'}]=[\alpha_{p+p'}],\quad [\beta_q][\beta_{q'}]=[\beta_{q+q'}],\quad 
[\alpha_p][\beta_q]=[\beta_q][\alpha_p],$$
$$[\alpha_p][\rho]=[\rho][\alpha_{-p}],\quad [\beta_q][\rho]=[\rho],$$
$$[\rho^2]=\sum_{q\in Q}[\beta_q]+m|Q|\sum_{p\in P} [\alpha_p][\rho].$$
Thanks to $[\beta_q][\rho]=[\rho]$, we can see that $\{\beta_q\}_{q\in Q}$ has a trivial associator. 
Irrational near-group categories are of type $(\{0\},Q,m)$ and generalized Haagerup categories are of type 
$(P,\{0\},1)$. 

We can also consider a non-self-dual version. 
Namely, we say that a fusion category $\cC$ is a quadratic category of type $(P,Q,m)$ with non-self-dual $\rho$ 
if the set of equivalence classes of simple objects in $\cC$ is as above, 
but satisfying the following fusion rules: 
$$[\alpha_p][\alpha_{p'}]=[\alpha_{p+p'}],\quad [\beta_q][\beta_{q'}]=[\beta_{q+q'}],\quad 
[\alpha_p][\beta_q]=[\beta_q][\alpha_p],$$
$$[\alpha_p][\rho]=[\rho][\alpha_{-p}],\quad [\beta_q][\rho]=[\rho],$$
$$[\rho\overline{\rho}]=\sum_{q\in Q}[\beta_q]+m|Q|\sum_{p\in P} [\alpha_p][\rho];$$
and there exists $p_0\in P$ such that 
$$[\brho]=[\alpha_{p_0}][\rho].$$
The $\Z_2$-de-equivariantizaiton of a near-group category for $\Z_{4m}$ with multiplicity $4m$ is a quadratic category 
of type $(\Z_2,\Z_m,1)$ with non-self-dual $\rho$ (\cite[Theorem 12.5]{MR3635673}). 

\section{Near-group family}

\subsection{General formulae} 
The modular data $(S,T)$ of the Drinfeld center $\cZ(\cC)$ of a near-group category $ \cC$ with a finite abelian group $A$ 
and multiplicity $|A|$ (i.e. a quadratic category of type $(\{0\},A,1)$ was computed 
in \cite{MR1832764} and \cite{MR3167494}. 
Evans-Gannon \cite[Conjecture 2]{MR3167494} conjectured an explicit formula of $(S,T)$ 
for odd $A$, and they verified the conjecture for $|A|\leq 13$. 
Their formula is surprising simple and it involves only two metric groups $(A,q)$ and $(A',q')$ satisfying 
$|A'|=|A|+4$ and $\cG(q')=-\cG(q)$. 
It is immediate to describe the first metric group $(A,q)$ in terms of $\cC$, 
while the second metric group $(A',q')$ is rather mysterious, and it is captured only through 
heavy computation. 
In this section, we generalize their formula to general finite abelian groups. 
Our family of potential modular data also include examples which are not associated to Drinfeld centers of fusion categories. 

Throughout this section, we assume that $(G,q_1,\theta_1)$ and $(\Gamma,q_2,\theta_2)$ are involutive metric groups 
satisfying $c:=\cG(q_1)=-\cG(q_2)$ and   
$$(U,q_0):=(G^{\theta_1},q_1|_{G^{\theta_1}})\cong (\Gamma^{\theta_2},q_2|_{\Gamma^{\theta_2}}).$$
We fix an isomorphism of $G^{\theta_1}$ and $\Gamma^{\theta_2}$ as above, and regard 
$U$ as a common subgroup of $G$ and $\Gamma$.

We set $a=1/\sqrt{|G|}$, $b=1/\sqrt{|\Gamma|}$. 
We choose and fix subsets $G_*\subset G$ and $\Gamma_*\subset \Gamma$ satisfying 
$$G=U\sqcup G_*\sqcup \theta_1(G_*),\quad \Gamma=U\sqcup \Gamma_*\sqcup \theta_2(\Gamma_*),$$
and set
$$J=(U\times \{0,\pi\})\sqcup G_*\sqcup \Gamma_*.$$
We use letters $u,u',u'',v$ for elements of $U$, $g,g',g'',h$ for elements of $G$ and $\gamma,\gamma',\gamma''
,\xi$ for elements of $\Gamma$. 
We introduce an involution of $J$ by setting $\overline{(u,0)}=(-u,0)$, $\overline{(u,\pi)}=(-u,\pi)$, and
$$\overline{g}=\left\{
\begin{array}{ll}
-g , &\quad \textrm{if $-g\in G_*$}  \\
\theta_1(-g) , &\quad \textrm{otherwise}
\end{array}
\right.,
$$
$$\overline{\gamma}=\left\{
\begin{array}{ll}
-\gamma , &\quad \textrm{if $-\gamma\in \Gamma_*$}  \\
\theta_2(-\gamma) , &\quad \textrm{otherwise}
\end{array}
\right..
$$
We often suppress the subscript $i$ in $\theta_i$ and $q_i$ in $\inpr{\cdot}{\cdot}_{q_i}$ 
if there is no possibility of confusion.

\begin{definition}\label{ST3}
Let $S$, $T$, and $C$ be $J$ by $J$ matrices defined by  
\begin{align*}
S=
&\left(
\begin{array}{cccc}
\frac{a-b}{2}\overline{\inpr{u}{u'}}& \frac{a+b}{2}\overline{\inpr{u}{u'}}&a\overline{\inpr{u}{g'}} &b\overline{\inpr{u}{\gamma'}}  \\
\frac{a+b}{2}\overline{\inpr{u}{u'}}& \frac{a-b}{2}\overline{\inpr{u}{u'}}&a\overline{\inpr{u}{g'}} &-b\overline{\inpr{u}{\gamma'}}  \\
a\overline{\inpr{g}{u'}}            &a\overline{\inpr{g}{u'}}             &a(\overline{\inpr{g}{g'}+\inpr{\theta(g)}{g'}})&0  \\
b\overline{\inpr{\gamma}{u'}}       &-b\overline{\inpr{\gamma}{u'}}       &0&-b(\overline{\inpr{\gamma}{\gamma'}+\inpr{\theta(\gamma)}{\gamma'}}) 
\end{array}
\right),
\end{align*}
$$T=\mathrm{Diag}(q_0(u),q_0(u),q_1(g),q_2(\gamma)),$$
and $C_{x,y}=\delta_{x,\overline{y}}$ (where the four blocks of $S$ and $T$ are indexed by $U \times \{0\}$, $U \times \{\pi \}$, $G_*$, and $\Gamma_*$, respectively.) 
\end{definition}

Lemma \ref{split} immediately implies the following. 

\begin{lemma}\label{factorization} 
Let the notation be as in Definition \ref{ST3}. 
Assume that $L$ is a subgroup of $U$ such that the restriction of $q_0$ to $L$ is non-degenerate, 
and let 
$$G'=\{g\in G;\; \forall l\in L\; \inpr{g}{l}=1\},$$
$$\Gamma'=\{\gamma\in \Gamma;\; \forall l\in L\; \inpr{\gamma}{l}=1\},$$
$q_1'=q_1|_{G'}$, $\theta_1'=\theta_1|_{G'}$, $q_2'=q_2|_{\Gamma'}$, $\theta_2'=\theta_2|_{\Gamma'}$, and $q_0'=q_0|_L$. 
Then $(G',q_1',\theta_1')$ and $(\Gamma',q_2',\theta_2')$ also satisfy the assumptions of this section.
If we let $(S',T')$ denote the unitary matrices defined in Definition \ref{ST3} for $G'$ and $\Gamma'$, 
we have  
$$(S,T)=(S^{(L,q_0')}\otimes S',T^{(L,q_0')}\otimes T').$$
\end{lemma}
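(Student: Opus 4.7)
The plan is to apply Lemma \ref{split} to each of the ambient metric groups and then verify the tensor product identity block by block. Since $L\subset U=G^{\theta_1}$, the subgroup $L$ is fixed pointwise by $\theta_1$, in particular $\theta_1(L)=L$; similarly $\theta_2(L)=L$. By hypothesis $q_0|_L=q_1|_L$ is non-degenerate, so Lemma \ref{split} gives splittings of involutive metric groups
\begin{equation*}
(G,q_1,\theta_1)=(L,q_0|_L,\id_L)\times (G',q_1',\theta_1'),\qquad (\Gamma,q_2,\theta_2)=(L,q_0|_L,\id_L)\times (\Gamma',q_2',\theta_2').
\end{equation*}
From these, the Gauss sums factor as $\cG(q_i)=\cG(q_0|_L)\cG(q_i')$ for $i=1,2$, and since $\cG(q_1)=-\cG(q_2)$ we obtain $\cG(q_1')=-\cG(q_2')$. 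Moreover $U=L\times (G')^{\theta_1'}=L\times (\Gamma')^{\theta_2'}$, so the fixed point subgroups of $\theta_1'$ and $\theta_2'$ are canonically identified (as metric groups) with $U/L$, verifying that $(G',q_1',\theta_1')$ and $(\Gamma',q_2',\theta_2')$ satisfy the hypotheses of this section.

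Next I would index $J$ compatibly with these splittings. Because $\theta_1$ acts trivially on $L$, the $\theta_1$-orbits in $G$ are products of the form $\{l\}\times O$, where $O$ is a $\theta_1'$-orbit in $G'$. Thus we may take $G_*=L\times G_*'$ and $\Gamma_*=L\times \Gamma_*'$ for fundamental domains $G_*'\subset G'$ and $\Gamma_*'\subset \Gamma'$. Writing $U'=(G')^{\theta_1'}=(\Gamma')^{\theta_2'}$, the label set becomes
\begin{equation*}
J=L\times J',\qquad J'=(U'\times\{0,\pi\})\sqcup G_*'\sqcup \Gamma_*'.
\end{equation*}
Set $a'=1/\sqrt{|G'|}$, $b'=1/\sqrt{|\Gamma'|}$; then $a=a'/\sqrt{|L|}$ and $b=b'/\sqrt{|L|}$.

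The identification of $T$ is immediate: for $u=l+u'$ with $l\in L$, $u'\in U'$ we have $q_0(u)=q_0(l)q_0(u')$ since $L\perp U'$ in $U$, and likewise $q_1(l+g')=q_0(l)q_1'(g')$ for $l+g'\in L\times G_*'$, and similarly for $\Gamma$. Hence $T=T^{(L,q_0|_L)}\otimes T'$.

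For $S$, the key observation is that the bicharacter on $G$ factors through the product: if $l_1+x,\, l_2+y\in L\times G$ with $x,y\in G'$, then $\inpr{l_1+x}{l_2+y}_{q_1}=\inpr{l_1}{l_2}_{q_0|_L}\inpr{x}{y}_{q_1'}$, because $L$ and $G'$ are orthogonal in the factorization. The same holds in $\Gamma$, and also $\inpr{l+u}{l'+g}_{q_1}=\inpr{l}{l'}_{q_0|_L}\inpr{u}{g}_{q_1'}$ for the mixed $U$–$G$ entries. Combining these with $a=a'/\sqrt{|L|}$, $b=b'/\sqrt{|L|}$, one checks each of the sixteen blocks of $S$ equals the corresponding block of $S^{(L,q_0|_L)}\otimes S'$; for instance the $G_*\times G_*$ block gives
\begin{equation*}
a\,\overline{\bigl(\inpr{l_1+x}{l_2+y}+\inpr{\theta(l_1+x)}{l_2+y}\bigr)}=\tfrac{1}{\sqrt{|L|}}\overline{\inpr{l_1}{l_2}}\cdot a'\,\overline{\bigl(\inpr{x}{y}+\inpr{\theta_1'(x)}{y}\bigr)},
\end{equation*}
since $\theta_1$ fixes $L$. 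All other blocks are analogous, with the two off-diagonal zeros preserved trivially. There is no real obstacle here beyond careful bookkeeping; the only subtle point is ensuring the chosen identification $(G')^{\theta_1'}\cong (\Gamma')^{\theta_2'}$ is compatible with the original identification of $G^{\theta_1}$ with $\Gamma^{\theta_2}$, which is automatic from the canonical isomorphism $U/L\cong U/L$.
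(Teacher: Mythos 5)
Your proof is correct and follows essentially the same route as the paper, which simply asserts that the lemma is an immediate consequence of Lemma \ref{split}: you apply that lemma to split $(G,q_1,\theta_1)$ and $(\Gamma,q_2,\theta_2)$ off the common non-degenerate factor $(L,q_0|_L,\id)$, deduce the Gauss-sum and fixed-point conditions for the complements, and verify the tensor factorization of $S$ and $T$ block by block. The details you supply (orthogonality of $L$ and $G'$, the compatible choice of fundamental domains $G_*=L\times G_*'$, and the normalization $a=a'/\sqrt{|L|}$) are exactly the bookkeeping the paper leaves to the reader.
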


\begin{remark}\label{even-odd}
Recall that $G$ (and hence $q_1$ and $\theta_1$ too) is canonically decomposed as 
$G_e\times G_o$, where $G_e$ is a 2-group and $G_o$ is an odd group. 
Moreover, we can diagonalize $\theta_1$ on $G_o$. 
Thus we have the canonical decomposition 
$$(G,q_1,\theta_1)=(G_e,q_{1,e},\theta_{1,e})\times (G_{o,+},q_{1,o,+},1)\times (G_{o,-},q_{1,o,-},-1),$$
$$(\Gamma,q_2,\theta_2)=(\Gamma_e,q_{2,e},\theta_{2,e})\times (\Gamma_{o,+},q_{2,o,+},1)\times 
(\Gamma_{o,-},q_{2,o,-},-1).$$
Under the identification $(U,q_0)=(G^{\theta},q_1|_{G^{\theta_1}})=(\Gamma^{\theta_2},q_2|_{\Gamma^{\theta_2}}),$
we get 
$$(U,q_0)=(G_e^{\theta_{1,e}},q_1|_{G_e^{\theta_{1,e}}})\times (G_{o,+},q_{1,o+})
=(\Gamma_e^{\theta_{2,e}},q_2|_{\Gamma_2^{\theta_{2,e}}})\times (\Gamma_{o,+},q_{2,o+}).$$
Thus we can always apply Lemma \ref{factorization} to $L=G_{o,+}=\Gamma_{o,+}$. 
Therefore for the purposes of constructing new modular data, we may assume that $G_{o,+}=\Gamma_{o,+}=\{0\}$. 
\end{remark}

Direct computation shows the following.
\begin{lemma}\label{relations3} Let the notation be as in Definition \ref{ST3}. 
Then $S$, $T$, and $C$ are unitary matrices satisfying $S^2=C$, $(ST)^3=cC$, and $T C=CT$. 
\end{lemma}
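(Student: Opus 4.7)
The plan is to realize $(S,T,C)$ as the conjugate, by an explicit unitary isomorphism $W$, of the direct sum of the pointed modular data of $(G,q_1)$ and $(\Gamma,q_2)$ (with a sign twist on the second $S$-matrix), restricted to the subspace of $\theta_1$- and $\theta_2$-invariants. Once this identification is set up, the four relations to be checked—unitarity, $S^2=C$, $TC=CT$, and $(ST)^3=cC$—all transfer immediately from the relations satisfied by $(S^{(G,q_1)},T^{(G,q_1)})$ and $(S^{(\Gamma,q_2)},T^{(\Gamma,q_2)})$ recorded in Section \ref{md}.

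Concretely, on $\ell^2(G)\oplus\ell^2(\Gamma)$ set
$$S_0:=S^{(G,q_1)}\oplus(-S^{(\Gamma,q_2)}),\quad T_0:=T^{(G,q_1)}\oplus T^{(\Gamma,q_2)},\quad C_0:=C^{(G,q_1)}\oplus C^{(\Gamma,q_2)}.$$
Because $\theta_i$ preserves $q_i$ and commutes with inversion, each of $S_0,T_0,C_0$ restricts to the subspace $H:=\ell^2(G)^{\theta_1}\oplus\ell^2(\Gamma)^{\theta_2}$, which has dimension $(|U|+|G_*|)+(|U|+|\Gamma_*|)=|J|$. I would then define a unitary isomorphism $W\colon\ell^2(J)\to H$ by
\begin{align*}
W e_{(u,0)} &= \tfrac{1}{\sqrt{2}}(e_u^G - e_u^\Gamma), & W e_{(u,\pi)} &= \tfrac{1}{\sqrt{2}}(e_u^G + e_u^\Gamma),\\
W e_g &= \tfrac{1}{\sqrt{2}}(e_g^G + e_{\theta_1(g)}^G), & W e_\gamma &= \tfrac{1}{\sqrt{2}}(e_\gamma^\Gamma + e_{\theta_2(\gamma)}^\Gamma),
\end{align*}
and verify entry-wise that $W^*S_0W=S$, $W^*T_0W=T$, $W^*C_0W=C$. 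For instance, the $(g,g')$-block with $g,g'\in G_*$ yields
$$\langle We_g,S_0We_{g'}\rangle=\tfrac12\!\!\sum_{x\in\{g,\theta_1(g)\}}\sum_{y\in\{g',\theta_1(g')\}}\!\!S^{(G,q_1)}_{x,y}=a\,\overline{\inpr{g}{g'}+\inpr{\theta_1(g)}{g'}},$$
using $\theta_1$-invariance of the bicharacter. The sign of $-S^{(\Gamma,q_2)}$ in $S_0$ is chosen to reproduce the $-b$ entries of $S$; the sign of the $e_u^\Gamma$ term in $W e_{(u,0)}$ is then forced by the $\tfrac{a-b}{2}$ versus $\tfrac{a+b}{2}$ pattern on the $U\times\{0,\pi\}$ block, together with the unitarity of $W$.

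With the identification in place, the conclusion is automatic: $S_0,T_0,C_0$ are unitary, $S_0^2=C^{(G,q_1)}\oplus C^{(\Gamma,q_2)}=C_0$, $T_0C_0=C_0T_0$, and
$$(S_0T_0)^3=(S^{(G,q_1)}T^{(G,q_1)})^3\oplus\bigl(-(S^{(\Gamma,q_2)}T^{(\Gamma,q_2)})^3\bigr)=\cG(q_1)C^{(G,q_1)}\oplus(-\cG(q_2))C^{(\Gamma,q_2)}=cC_0,$$
using $\cG(q_1)=-\cG(q_2)=c$. Conjugating by the unitary $W$ transfers these relations to $(S,T,C)$. The only real work is the entry-wise matching: with four row/column block types there are ten independent blocks, each a short calculation, but the bookkeeping of signs and $\sqrt{2}$ factors (arising from the doubling on $U$ and the averaging over $\theta_i$-orbits) is where mistakes are easiest to make, and will be the main delicate point.
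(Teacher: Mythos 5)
Your proof is correct, but it takes a genuinely different route from the paper's. The paper's own argument (in the online appendix) is a brute-force entry-wise verification: for each of the ten pairs of block types it computes $\sum_x S_{j,x}\overline{S_{j',x}}$ to get unitarity and $\sum_x S_{j,x}S_{j',x}T_{x,x}=cS_{j,j'}\overline{T_{j,j}T_{j',j'}}$ to get $(ST)^3=cC$, each reduction ultimately resting on completing a sum over $G_*$ or $\Gamma_*$ to a full Gauss-type sum over $G$ or $\Gamma$. You instead exhibit $(S,T,C)$ as the compression of $S^{(G,q_1)}\oplus(-S^{(\Gamma,q_2)})$, $T^{(G,q_1)}\oplus T^{(\Gamma,q_2)}$, $C^{(G,q_1)}\oplus C^{(\Gamma,q_2)}$ to the $\theta_i$-invariant subspace, conjugated by an explicit unitary $W$; I checked the remaining blocks of $W^*S_0W$, as well as $W^*T_0W=T$ and $W^*C_0W=C$ (the latter using that $\theta_i(-x)=-\theta_i(x)$ and the definition of $\overline{g}$, $\overline{\gamma}$), and they all match, with the $U$-block identities relying on the fixed identification $(U,q_0)=(G^{\theta_1},q_1|_{G^{\theta_1}})\cong(\Gamma^{\theta_2},q_2|_{\Gamma^{\theta_2}})$. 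Since $S_0,T_0,C_0$ all commute with the permutation operators induced by $\theta_1,\theta_2$, they preserve $H$, products of restrictions are restrictions of products, and the relations $S_0^2=C_0$, $T_0C_0=C_0T_0$, $(S_0T_0)^3=(\cG(q_1)\oplus(-\cG(q_2)))C_0=cC_0$ transfer verbatim; note the cube of the sign on the $\Gamma$-summand is exactly what makes $c=\cG(q_1)=-\cG(q_2)$ come out right. What your approach buys is a structural explanation of where Definition \ref{ST3} comes from (the matrices are an orbifold-style folding of two pointed modular data glued along $U$), and it replaces twenty block computations by four short ones; what the paper's approach buys is that the same Gauss-sum manipulations are reused immediately afterwards to compute the Verlinde coefficients in Lemma \ref{Verlinde3}, where no such shortcut is available because the $S_{0,x}$ denominators destroy the direct-sum structure.
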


\begin{lemma}\label{Verlinde3} Let the notation be as Definition \ref{ST3}. 
Then we have 
$$N_{(u,0),(u',0),(u'',0)}=N_{(u,0),(u',\pi),(u'',\pi)}=\delta_{u+u'+u'',0},$$
$$N_{(u,0),(u',0),(u'',\pi)}=N_{(u,0),(u',0),g}=N_{(u,0),(u',0),\gamma}=N_{(u,0),(u',\pi),g}
=N_{(u,0),(u',\pi),\gamma}=0,$$
$$N_{(u,0),g,g'}=\delta_{u+g+g',0}+\delta_{u+\theta(g)+g',0},\quad 
N_{(u,0),g,\gamma}=0,\quad  
N_{(u,0),\gamma,\gamma'}=\delta_{u+\gamma+\gamma',0}+\delta_{u+\theta(\gamma)+\gamma',0},$$
$$N_{(u,\pi),(u',\pi),(u'',\pi)}
=\frac{4|U|}{|\Gamma|-|G|}\frac{1}{|U|}\sum_{v\in U}\inpr{u+u'+u''}{v},$$ 
$$N_{(u,\pi),(u',\pi),g}
=\frac{4|U|}{|\Gamma|-|G|}\frac{1}{|U|}\sum_{v\in U}\inpr{u+u'+g}{v},$$
$$N_{(u,\pi),(u',\pi),\gamma}
=\frac{4|U|}{|\Gamma|-|G|}\frac{1}{|U|}\sum_{v\in U}\inpr{u+u'+\gamma}{v},$$
$$N_{(u,\pi),g,g'}=
\frac{4|U|}{|\Gamma|-|G|}\frac{1}{|U|}\sum_{v\in U}\inpr{u+g+g'}{v} +\delta_{u+g+g',0}+\delta_{u+\theta(g)+g',0},$$
$$N_{(u,\pi),g,\gamma}=\frac{4|U|}{|\Gamma|-|G|}\frac{1}{|U|}\sum_{v\in U}\inpr{u}{v}\inpr{g}{v}\inpr{\gamma}{v},$$
$$N_{(u,\pi),\gamma,\gamma'}
 =\frac{4|U|}{|\Gamma|-|G|}\frac{1}{|U|}\sum_{v\in U}\inpr{u+\gamma+\gamma'}{v}
 -\delta_{u+\gamma+\gamma',0}-\delta_{u+\theta(\gamma)+\gamma',0},$$
\begin{align*}
N_{g,g',g''}&= \frac{4|U|}{|\Gamma|-|G|}\frac{1}{|U|}\sum_{v\in U}\inpr{g+g'+g''}{v} \\
 &+\delta_{g+g'+g'',0}+\delta_{\theta(g)+g'+g'',0}
 +\delta_{g+\theta(g')+g'',0}+\delta_{g+g'+\theta(g''),0},
\end{align*}
$$N_{g,g',\gamma}=\frac{4|U|}{|\Gamma|-|G|}\frac{1}{|U|}\sum_{v\in U}\inpr{g+g'}{v}\inpr{\gamma}{v},$$
$$N_{g,\gamma,\gamma'}=\frac{4|U|}{|\Gamma|-|G|}\frac{1}{|U|}\sum_{v\in U}\inpr{g}{v}\inpr{\gamma+\gamma'}{v},$$
\begin{align*}
 N_{\gamma,\gamma',\gamma''} &=\frac{4|U|}{|\Gamma|-|G|}\frac{1}{|U|}\sum_{v\in U}\inpr{\gamma+\gamma'+\gamma''}{v} \\
 &-\delta_{\gamma+\gamma'+\gamma'',0}-\delta_{\theta(\gamma)+\gamma'+\gamma'',0}
 -\delta_{\gamma+\theta(\gamma')+\gamma'',0}-\delta_{\gamma+\gamma'+\theta(\gamma''),0}.
\end{align*}
\end{lemma}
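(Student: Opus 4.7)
The plan is to apply the Verlinde formula
$$N_{xyz}=\sum_{a\in J}\frac{S_{ax}S_{ay}S_{az}}{S_{0,a}}$$
case by case for each of the twenty types of triples $(x,y,z)$ arising from the partition $J=(U\times\{0,\pi\})\sqcup G_*\sqcup\Gamma_*$. For each triple I split the sum into the four corresponding pieces and evaluate each using character orthogonality on $U$, $G$, and $\Gamma$. The vanishing block $S_{g,\gamma}=0$ immediately kills many terms: triples supported entirely on $G$-type indices never see the $\Gamma_*$-sum, and vice versa.

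The easiest cases are the ten triples containing an index of the form $(u,0)$. On every block of the partition the ratio $S_{(u,0),x}/S_{0,x}$ equals $\overline{\inpr{u}{x}}$, so $(u,0)$ is invertible in the Verlinde ring and acts by translation by $u$ on the underlying group. This produces all the delta-function formulas for triples with a $(u,0)$-entry directly from the tensor rules and character orthogonality on $U$.

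For the genuinely new cases, the core device is to extend sums over $G_*$ to sums over $G$ via the $\theta$-symmetry of the summands, then apply character orthogonality on $G$. For example, in the triple $(g_1,g_2,g_3)\in G_*^3$, the summand
$$\overline{\prod_{i=1}^{3}\bigl(\inpr{h}{g_i}+\inpr{\theta h}{g_i}\bigr)}$$
is invariant under $h\mapsto\theta(h)$, so the $G_*$-sum equals half the sum over $G\setminus U$. Expanding the product yields a sum of eight characters on $G$ whose orthogonality contributions collapse to exactly the four $\theta$-orbit deltas $\delta_{g+g'+g'',0}$, $\delta_{\theta(g)+g'+g'',0}$, $\delta_{g+\theta(g')+g'',0}$, $\delta_{g+g'+\theta(g''),0}$ displayed in the lemma. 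Subtracting the $U$-part of the $G$-sum leaves a remainder proportional to $\sum_{u\in U}\overline{\inpr{u}{g_1+g_2+g_3}}$, which then combines with the analogous contributions from the $U\times\{0\}$ and $U\times\{\pi\}$ pieces of the Verlinde sum; using $a^2-b^2=(|\Gamma|-|G|)/(|G||\Gamma|)$, the combined coefficient of the $U$-sum simplifies to $\frac{4}{|\Gamma|-|G|}$, which agrees with $\frac{4|U|}{|\Gamma|-|G|}\cdot\frac{1}{|U|}$. The triple $(\gamma_1,\gamma_2,\gamma_3)\in\Gamma_*^3$ runs the same way, but the overall minus sign in the $\Gamma_*\times\Gamma_*$ block of $S$ flips the four deltas to negative contributions. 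Three-way mixed triples such as $((u,\pi),g,\gamma)$ or $(g,g',\gamma)$ are simpler, since the $G_*$-sum, the $\Gamma_*$-sum, or both vanish, leaving only the $U\times\{0,\pi\}$ pieces.

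The main obstacle is purely bookkeeping: every one of the twenty cases follows the same template, but each requires careful tracking of signs --- the $\pm b$ in the $(U\times\{\pi\})$-row of $S$, the minus in the $\Gamma_*$-block --- and of the three distinct denominators $(a\pm b)/2$, $a$, $b$ occurring in $S_{0,a}$. Once the pattern is fixed by the $G_*^3$ case, the other cases amount to re-running the same three steps --- $\theta$-extension, character orthogonality on the enlarged group, and combination with the $U\times\{0,\pi\}$ contributions --- with the appropriate sign and denominator adjustments.
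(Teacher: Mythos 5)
Your proposal is correct and follows essentially the same route as the paper's own computation in the appendix: split the Verlinde sum over the four blocks of $J$, use the $\theta$-invariance of the summands to extend the $G_*$- and $\Gamma_*$-sums to full group sums, apply character orthogonality, and combine the residual $U$-sum coefficients via $\frac{(a+b)^4+(a-b)^4}{4(a^2-b^2)}-\frac{a^2-b^2}{2}=\frac{4a^2b^2}{a^2-b^2}=\frac{4}{|\Gamma|-|G|}$ and its variants. Your observation that $S_{(u,0),x}=\overline{\inpr{u}{x}}S_{0,x}$ makes $(u,0)$ a simple current is a mild streamlining of the ten cases containing a $(u,0)$-index (and is the same device the paper invokes explicitly for the analogous lemma in Section 5); just note that the resulting orthogonality there is on $G$ and $\Gamma$, not only on $U$.
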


The above computation shows that in order for $(S,T)$ to be modular data, 
the number $4|U|/(|\Gamma|-|G|)$ has to be a positive integer. 
On the other hand, since $U$ is a common subgroup of $G$ and $\Gamma$, the number 
$(|\Gamma|-|G|)/|U|$ is a positive integer too. 
Thus we have only three possibilities: $|\Gamma|=|G|+|U|$, $|\Gamma|=|G|+2|U|$, and $|\Gamma|=|G|+4|U|$. 

\begin{theorem}\label{STtheorem3} Let the notation be as in Definition \ref{ST3}. 
All the fusion coefficients in Lemma \ref{Verlinde3} are non-negative integers if and only of  
one of the following occurs: 
\begin{itemize}
\item $|\Gamma|=|G|+4|U|$,
\item $|\Gamma|=|G|+2|U|$, and $|G|/|U|$ is an even number. 
\end{itemize}
\end{theorem}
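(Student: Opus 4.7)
The argument is a case analysis on the positive integer $m := (|\Gamma|-|G|)/|U|$. Since $U$ is a common subgroup of both $G$ and $\Gamma$, $m$ is a positive integer; and since the prefactor $k := 4|U|/(|\Gamma|-|G|) = 4/m$ appearing throughout Lemma~\ref{Verlinde3} must also be a positive integer, we are reduced to $m \in \{1,2,4\}$, i.e.\ $|\Gamma|-|G|\in\{4|U|,2|U|,|U|\}$.  The goal is then to verify NEI for $m=4$ (unconditionally), for $m=2$ (under $|G|/|U|$ even), and to show NEI fails in the remaining cases.

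I would first observe that integrality of each $N_{ijk}$ in Lemma~\ref{Verlinde3} is automatic. Each character sum $\frac{1}{|U|}\sum_{v\in U}\inpr{x}{v}$ equals the indicator of whether $x$ pairs trivially with $U$ (equivalently, lies in the appropriate $U^\perp$ in $G$, $\Gamma$, or $U$); this is $\{0,1\}$-valued. The Kronecker $\delta$s are $\{0,1\}$-valued, and $k$ is an integer. Hence each coefficient is an integer, and the theorem reduces entirely to non-negativity. Non-negativity is immediate for every formula whose right-hand side has no subtracted $\delta$s, so the only formulas that require genuine analysis are $N_{(u,\pi),\gamma,\gamma'}$ and $N_{\gamma,\gamma',\gamma''}$.

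The crucial structural observation for these two formulas is: whenever any of the subtracted $\delta$s equals $1$, the leading character-sum indicator is forced to equal $1$ as well. For example, $\delta_{\theta(\gamma)+\gamma'+\gamma''=0}=1$ implies $\gamma+\gamma'+\gamma''=\gamma-\theta(\gamma)$, and $\gamma-\theta(\gamma)\in U^\perp_\Gamma$ because $\theta$ preserves the pairing and fixes $U$ pointwise. Thus NEI can only fail when the total $\delta$-subtraction exceeds $k$. The combinatorial heart of the proof is then to bound the number of $\delta$s that can simultaneously equal $1$ on a given triple in $\Gamma_*^3$ (and analogously for $(u,\gamma,\gamma')\in U\times\Gamma_*^2$). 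For a triple of pairwise distinct entries, a direct subtraction argument shows that any two $\delta$s being $1$ forces some entry into $U=\Gamma^\theta$, contradicting membership in $\Gamma_*$. The delicate situation is triples with repeated entries: then several of the four $\delta$s collapse to identical conditions but still count with multiplicity in the formula, and it is here that the distinction between the three values of $m$ arises.

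The bulk of the remaining work is the case-by-case verification. For $m=4$ ($k=1$), I expect to show by a careful orbit count in $\Gamma_*$, using the partition $\Gamma=U\sqcup\Gamma_*\sqcup\theta(\Gamma_*)$ and the reduction to $2$-power data in Remark~\ref{even-odd}, that every collapsing configuration forces the shared condition to put some entry into $U$, so the effective subtraction is at most $1$ and NEI holds. For $m=2$ ($k=2$), the analogous count in $N_{\gamma,\gamma',\gamma''}$ produces configurations of the form $\gamma''\in -2\gamma+U$ whose admissibility (i.e., whether $\gamma''$ really sits in $\Gamma_*$ rather than $U$) is controlled by the $2$-torsion of $\gamma-\theta(\gamma)$; this structure, together with the identity $|G|=|U|+2|G_*|$, translates into the condition that $|G|/|U|$ be even if the effective subtraction is to stay $\leq k=2$. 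For $m=1$ ($k=4$), the same mechanism produces effective subtractions exceeding $4$ regardless of parity. The hardest part will be the clean identification of these obstructing configurations, and in particular the reduction of their existence to the parity of $|G|/|U|$ in the $m=2$ case.
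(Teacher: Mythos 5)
Your reduction to $m=(|\Gamma|-|G|)/|U|\in\{1,2,4\}$, the observation that integrality is automatic, and the remark that a firing $\delta$ forces the leading character-sum indicator to equal $1$ are all correct and consistent with the paper. The gap is in the heart of your plan: the claim that the distinction between the three values of $m$, and the parity condition on $|G|/|U|$, emerges from counting collapsing $\delta$-configurations. It does not, because at most one of the subtracted $\delta$'s can ever fire. Checking the six pairs of conditions in $N_{\gamma,\gamma',\gamma''}$, any two holding simultaneously force one of $\gamma,\gamma',\gamma''$ to be $\theta$-fixed (for instance $\theta(\gamma)+\gamma'+\gamma''=0$ together with $\gamma+\theta(\gamma')+\gamma''=0$ gives $\theta(\gamma'')=\gamma''$), hence to lie in $U$ rather than $\Gamma_*$; this covers the repeated-entry configurations as well, since e.g.\ $2\gamma+\theta(\gamma)=0$ already implies $3\gamma=0$ and $\gamma=\theta(\gamma)$. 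The same holds for $N_{(u,\pi),\gamma,\gamma'}$. Consequently the effective subtraction is always at most $1\leq 4|U|/(|\Gamma|-|G|)$, non-negativity holds uniformly for all $m\in\{1,2,4\}$, and there is no negative or non-integral fusion coefficient to be found in the cases you want to exclude. Your program for the ``only if'' direction therefore cannot close; in particular the claimed ``effective subtractions exceeding $4$'' for $m=1$ are impossible on their face, since there are only four $\delta$'s in the formula.

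The actual reason $m=1$, and $m=2$ with $|G|/|U|$ odd, are excluded is that no input data satisfying the standing hypotheses of the section exists in those cases: the conditions $\cG(q_1)=-\cG(q_2)$ and $(G^{\theta_1},q_1|_{G^{\theta_1}})\cong(\Gamma^{\theta_2},q_2|_{\Gamma^{\theta_2}})$ cannot be met. This is what the paper proves. For $m=1$, one of $|G|/|U|$, $|\Gamma|/|U|$ is odd, say the former; then $\theta_{1,e}$ is trivial, $q_0$ is non-degenerate, and Lemma \ref{factorization} splits $(\Gamma,q_2,\theta_2)$ as $(U,q_0)\times(\Gamma',q_2',\theta_2')$ with $\theta_2'$ a fixed-point-free involution on a group of even order $|G|/|U|+1$ --- impossible, since a fixed-point-free involution on a finite abelian group must be $-1$ and then fixes the nontrivial $2$-torsion. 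For $m=2$ with $|G|/|U|$ odd, both groups split off $(U,q_0)$, leaving odd metric groups $G_{o,-}$ and $\Gamma_{o,-}$ with $|\Gamma_{o,-}|=|G_{o,-}|+2$ and $\cG(q_{1,o-})=-\cG(q_{2,o-})$, which is impossible because the two orders differ by $2$ modulo $4$, so one Gauss sum lies in $\{1,-1\}$ and the other in $\{i,-i\}$. You would need to replace your combinatorial count by this non-existence argument to obtain the stated equivalence.
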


\begin{proof} 
We first assume $|\Gamma|=|G|+|U|$. 
Then either $|\Gamma|/|U|$ or $|G|/|U|$ is an odd number. We assume that $|G|/|U|$ is odd as the other case can be treated in the same way. 
Let 
$$(G,q_1,\theta_1)=(G_e,q_{1,e},\theta_{1,e})\times (G_{o,+},q_{1,o+},1)\times (G_{o,-},q_{1,o-},-1)$$
be the decomposition as in Remark \ref{even-odd}. 
Then $|G|/|U|$ being odd implies that $\theta_{1,e}$ is trivial, and 
$$(U,q_0)=(G_{1,e},q_{1,e})\times (G_{o,+},q_{1,o+}).$$  
In particular $q_0$ is non-degenerate, and Lemma \ref{factorization} implies that 
we have a factorization 
$$(\Gamma,\theta_2)=(U,q_0)\times (\Gamma',\theta_2'),$$
with $\theta_2'$ a fixed-point-free involution. 
However $|\Gamma'|=|G|/|U|+1$ implies that $\Gamma'$ is an even group, which never allows a 
fixed-point-free involution, and we get contradiction. 

Assume now that $|\Gamma|=|G|+2|U|$, and $|G|/|U|$ is odd. 
Then $|\Gamma|/|U|$ is odd too, and we have the decompositions 
$$(G,q_1,\theta_1)=(G_e,q_{1,e},\theta_{1,e})\times (G_{o,+},q_{1,o+},1)\times (G_{o,-},q_{1,o-},-1),$$
$$(\Gamma,q_2,\theta_2)=(\Gamma_e,q_{2,e},\theta_{2,e})\times (\Gamma_{o,+},q_{1,o+},1)\times (\Gamma_{o,-},q_{1,o-},-1), $$
with trivial $\theta_{1,e}$ and $\theta_{2,e}$. 
Thus 
$$(G,q_1)=(U,q_0)\times (G_{o,-},q_{1,o-}),$$
$$(\Gamma,q_2)=(U,q_0)\times (\Gamma_{o,-},q_{2,o-}),$$
and $\cG(q_{1,o-})=-\cG(q_{2,o-})$. 
However, this is impossible as $G_{o,-}$ and $\Gamma_{o,-}$ are odd groups with 
$|\Gamma_{o,-}|=|G_{o,-}|+2$. 
\end{proof}

Recall that a near-group category for a finite abelian group 
$A$ with multiplicity $|A|$ comes with a non-degenerate symmetric bicharacter, 
say $\langle\langle\cdot, \cdot\rangle\rangle$ 
(we avoid the symbol $\inpr{\cdot}{\cdot}$ used in \cite{MR1832764}, \cite{MR3635673} to prevent possible confusion). 
We generalize Evans-Gannon's conjecture. 
It is easy to see from Lemma \ref{factorization} and Remark \ref{even-odd} that the following statement actually 
generalizes \cite[Conjecture 2]{MR3167494} from the odd group case to the general case.

\begin{conjecture}\label{near-group} The modular data of the Drinfeld center of a near-group category for a finite abelian group $A$ 
with multiplicity $|A|$ and a non-degenerate symmetric bicharacter $\langle\langle\cdot, \cdot\rangle\rangle$  
is given by $(S,T)$ of Definition \ref{ST3} 
with $G=A\times A$, $q_1(a_1,a_2)=\langle\langle a_1,a_2\rangle\rangle$, $\theta_1(a_1,a_2)=(a_2,a_1)$, and
$$|\Gamma|=|G|+4|U|=|A|(|A|+4).$$  
\end{conjecture}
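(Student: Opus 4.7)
The plan is to compute the modular data $(S, T)$ of $\cZ(\cC)$ directly via the tube algebra $\Tube(\cC)$, using Izumi's approach from \cite{MR1832764}, and then match the result against the formula in Definition \ref{ST3}. The simple objects of $\cZ(\cC)$ correspond to irreducible modules of $\Tube(\cC)$, which split into ``group-like'' objects whose underlying object in $\cC$ is a sum of invertibles $\alpha_p$, and ``exotic'' objects involving $\rho$. My first task would be to organize the group-like objects into three families matching $J$: two families indexed by $U=\{(a,a)\}\subset A\times A$, coming from the two classes of half-braidings on $\bigoplus_p\alpha_p$ (distinguished by their interaction with $\rho$ and labelled by $0$ and $\pi$), and one family indexed by $G_*$, consisting of flip-orbit pairs $\{(a,a'),(a',a)\}$ with $a\neq a'$. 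A direct half-braiding computation together with the standard pointed-category formulas should reproduce the corresponding upper-left blocks of $S$ in Definition \ref{ST3} and the $q_0,q_1$ values of $T$.

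The substantive work is identifying the remaining ``exotic'' family with an involutive metric group $(\Gamma,q_2,\theta_2)$ of order $|A|(|A|+4)$. Following \cite{MR1832764}, the exotic simples arise from solutions of a system of polynomial equations whose coefficients are determined by the associativity data of $\cC$. One first counts these solutions, using consistency with the total categorical dimension of $\cZ(\cC)$, to obtain the correct cardinality after identifying the $\theta_2$-fixed points with the $(u,\pi)$ family. The $T$-values on the exotic simples are read directly from the polynomial solutions; one must then prove that the resulting $|A|(|A|+4)$ scalars assemble into a non-degenerate quadratic form $q_2$ on a single abelian group $\Gamma$ admitting an involution $\theta_2$ with fixed subgroup $U$. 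The Gauss sum identity $\cG(q_2)=-\cG(q_1)$ should then fall out of $(ST)^3=cC$ once the rest of the data is in place, and the off-diagonal $S$-blocks should follow from orthogonality of characters of $\Tube(\cC)$.

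The main obstacle is the existence of the abelian group structure on $\Gamma$. While $G=A\times A$ with its flip involution arises transparently from the group of invertibles and the duality $\alpha_p\mapsto\alpha_{-p}$ imposed by $\rho$, the set $\Gamma$ is defined only implicitly, via the roots of a polynomial system with no visible group action. Assembling $|A|(|A|+4)$ scalars into a non-degenerate quadratic form on an actual abelian group is an arithmetic problem that has so far resisted structural proof; in the odd case of Evans-Gannon it has been verified only numerically for $|A|\leq 13$. A natural strategy would be to realize $\Gamma$ as the pointed part of some larger modular category---perhaps a $\Z_2$-equivariantization in the spirit of \cite[Theorem 12.9]{MR3635673}, where the de-equivariantized near-group already produces a generalized Haagerup category---so that the metric group structure on $\Gamma$ emerges categorically rather than from brute solution of the polynomial system. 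Producing such a realization in general is, in my view, the decisive difficulty.
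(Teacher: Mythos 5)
The statement you are asked to prove is Conjecture \ref{near-group}, and the paper offers no proof of it: it is stated as a generalization of Evans--Gannon's \cite[Conjecture 2]{MR3167494} and is only asserted to hold for odd $A$ with $|A|\leq 13$ (by Evans--Gannon's numerical work) and for $A=\Z_2$ and $A=\Z_4$ (by explicit Cuntz-algebra computations of the kind carried out in the appendix for related cases). Your proposal is therefore being measured against nothing, and it does not close the gap either: as you yourself concede in the final paragraph, the decisive step --- showing that the $|A|(|A|+4)$ twists arising from the exotic sector of $\Tube(\cC)$ organize themselves into a non-degenerate quadratic form $q_2$ on an honest abelian group $\Gamma$, equipped with an involution $\theta_2$ whose fixed subgroup is isometric to $(U,q_0)$ and satisfying $\cG(q_2)=-\cG(q_1)$ --- is precisely the content of the conjecture, not a lemma on the way to it. A plan that terminates in "producing such a realization in general is the decisive difficulty" is an accurate description of the state of the art, but it is not a proof.

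Two secondary points. First, your suggestion that the identity $\cG(q_2)=-\cG(q_1)$ "should fall out of $(ST)^3=cC$ once the rest of the data is in place" is circular: the modular relation $(ST)^3=cC$ for the matrices of Definition \ref{ST3} is \emph{verified} in Lemma \ref{relations3} \emph{assuming} $\cG(q_1)=-\cG(q_2)$, so you cannot both use the putative form of $(S,T)$ and derive the Gauss sum condition from it; the condition must instead be extracted from the tube algebra (e.g. from the Gauss sum $\frac{1}{S_{00}}\sum_i S_{0i}^2T_{ii}=c$ computed on the actual center, together with the known $T$-values on the $G$-sector). Second, the group-like/exotic dichotomy you describe does not align cleanly with the block structure of $J$: in the known examples (cf.\ the appendix computation for $\cZ(\cC)$ of the $\Z_2$-quadratic category, and Izumi's Haagerup computation) the objects indexed by $U\times\{\pi\}$ are \emph{not} supported on invertibles but on direct sums such as $\id\oplus\rho\oplus\alpha\rho$, so your first family already requires solving part of the polynomial system. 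These are repairable presentational issues; the unrepaired issue is the one you named, and it is the whole conjecture.
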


The conjecture is true for odd groups $A$ with $|A|\leq 13$, as well as $A=\Z_2$ and $A=\Z_4$. 

We compute the Frobenius-Schur indicators now. 
For $u\in U$ and $m\in \Z$, let 
$$\cG(q_1,u,m)=\frac{1}{\sqrt{|G|}}\sum_{g\in G}\inpr{u}{g}q_1(g)^m,\quad \cG(q_2,u,m)=\frac{1}{\sqrt{|\Gamma|}}\sum_{\gamma\in \Gamma}\inpr{u}{\gamma}q_2(\gamma)^m,$$
and
$$\cG(q_1,q_2,v,m)=\frac{1}{\sqrt{|G|}}\sum_{g\in G}\inpr{v}{g}q_1(g)^m
+\frac{1}{\sqrt{|\Gamma|}}\sum_{\gamma\in \Gamma}\inpr{v}{\gamma}q_2(\gamma)^m.$$

\begin{lemma}\label{FS3} Let the notation be as above. 
Then 
$$\nu_m((u,0))=\delta_{mu,0}q_0(u)^m,$$
$$\nu_m((u,\pi))=\frac{1}{|\Gamma|-|G|}\sum_{v\in U}\inpr{u}{v}|\cG(q_1,q_2,v,m)|^2,$$
$$\nu_m(g)=\frac{1}{|\Gamma|-|G|}\sum_{v\in U}\inpr{g}{v}|\cG(q_1,q_2,v,m)|^2+\delta_{mg,0}q_1(g)^m,$$
$$\nu_m(\gamma)=\frac{1}{|\Gamma|-|G|}\sum_{v\in U}\inpr{\gamma}{v}|\cG(q_1,q_2,v,m)|^2-\delta_{m\gamma,0}q_1(\gamma)^m.$$
\end{lemma}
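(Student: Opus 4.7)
The plan is to apply the Verlinde formula inside the definition of $\nu_n(k)$ to rewrite it as a sum over columns $r$ of $S$. Substituting $N_{ij}^k=\sum_r S_{ir}S_{jr}\overline{S_{kr}}/S_{0r}$ and interchanging summations yields
$$\nu_n(k)=\sum_{r\in J}\frac{\overline{S_{kr}}}{S_{0r}}\,B_{-n}(r)\,B_n(r),\qquad B_n(r):=\sum_{j\in J}S_{0j}T_{jj}^n S_{jr}.$$
This reduces the task to (i) computing the column sums $B_n(r)$ explicitly, and (ii) assembling $\sum_r\frac{\overline{S_{kr}}}{S_{0r}}|B_n(r)|^2$ for each of the four types of $k\in J$.

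Step (i) is a straightforward block expansion. Using the identities $\bigl(\tfrac{a-b}{2}\bigr)^2+\bigl(\tfrac{a+b}{2}\bigr)^2=\tfrac{a^2+b^2}{2}$ and $\tfrac{(a-b)(a+b)}{2}=\tfrac{a^2-b^2}{2}$ to combine the two $U$-blocks, pairing the $\inpr{g}{g'}$ and $\inpr{\theta(g)}{g'}$ terms of $S_{gg'}$ via the change of variable $g\mapsto\theta(g)$ (which preserves $q_1$), and extending $\sum_{G_*}$ to $\sum_G$ via $\sum_G=\sum_U+2\sum_{G_*}$, one obtains
\begin{align*}
B_n((u,0))&=\tfrac{1}{2}\bigl[a\,\cG(q_1,u,n)+b\,\cG(q_2,u,n)\bigr],\\
B_n((u,\pi))&=\tfrac{1}{2}\bigl[a\,\cG(q_1,u,n)-b\,\cG(q_2,u,n)\bigr],\\
B_n(g)&=a\,\cG(q_1,g,n),\qquad B_n(\gamma)=-b\,\cG(q_2,\gamma,n).
\end{align*}
Since $a,b$ are real and $\overline{\cG(q,u,n)}=\cG(q,u,-n)$, we have $B_{-n}(r)=\overline{B_n(r)}$, so $B_{-n}(r)B_n(r)=|B_n(r)|^2$.

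The engine of step (ii) is the Gauss-sum identity
$$\sum_{g\in G}\inpr{u}{g}\,|\cG(q_1,g,n)|^2=|G|\,q_1(u)^{-n}\,\delta_{nu,0}\qquad(u\in G),$$
proved by expanding $|\cG(q_1,g,n)|^2$ as a double sum, applying character orthogonality to collapse the $g$-sum, and simplifying via $q_1(g'+u)=q_1(g')q_1(u)\inpr{g'}{u}$; the analogous identity holds over $\Gamma$. For $k=(u,0)$, the coefficients $\overline{S_{kr}}/S_{0r}=\inpr{u}{r}$ are simple characters; the parallelogram identity gives $|B_n((v,0))|^2+|B_n((v,\pi))|^2=\tfrac{a^2}{2}|\cG(q_1,v,n)|^2+\tfrac{b^2}{2}|\cG(q_2,v,n)|^2$, and the $U$-restricted pieces cancel exactly against the $G_*,\Gamma_*$ contributions, leaving $\nu_n((u,0))=q_0(u)^{-n}\delta_{nu,0}$. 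The identity $q_0(u)^{-n}=q_0(u)^n$ when $nu=0$ then follows from $\inpr{u}{u}^n=\inpr{nu}{u}=1$ combined with $q_0(nu)=q_0(u)^n\inpr{u}{u}^{n(n-1)/2}=1$, which forces $q_0(u)^{2n}=1$.

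For $k=(u,\pi)$, $g\in G_*$, and $\gamma\in\Gamma_*$ the structure is the same but with the asymmetric ratios $\tfrac{a\pm b}{a\mp b}$ on the pointed block (for $k=(u,\pi)$) or the $\theta$-symmetrized term $\inpr{k}{\cdot}+\inpr{\theta(k)}{\cdot}$ on the diagonal block (for $k=g$ or $\gamma$). After expanding via $|X\pm Y|^2=|X|^2+|Y|^2\pm 2\,\mathrm{Re}(X\bar Y)$, the coefficients of $|\cG(q_1,v,n)|^2$, $|\cG(q_2,v,n)|^2$, and $2\,\mathrm{Re}(\cG(q_1,v,n)\overline{\cG(q_2,v,n)})$ all collapse to $\tfrac{a^2b^2}{a^2-b^2}=\tfrac{1}{|\Gamma|-|G|}$, so they reassemble into $|\cG(q_1,q_2,v,n)|^2$. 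The bulk Gauss-sum pieces then give the residue terms: in the $(u,\pi)$ case the $G$- and $\Gamma$-bulks cancel, whereas for $g$ (resp.\ $\gamma$) only the $G$-bulk (resp.\ $\Gamma$-bulk) survives with sign $+$ (resp.\ $-$, arising from the $-b$ in the $\gamma$-row of $S$). The main obstacle is bookkeeping these cancellations between $U$-pieces, pointed-block pieces, and bulk $G$- and $\Gamma$-pieces; they work precisely because of the key coincidence $\tfrac{a^2b^2}{a^2-b^2}=\tfrac{1}{|\Gamma|-|G|}$.
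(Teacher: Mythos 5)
Your proof is correct, and it takes a genuinely different route from the paper's. The paper proves this lemma by substituting the explicit fusion coefficients of Lemma \ref{Verlinde3} into the definition $\nu_m(k)=\sum_{i,j}N_{ij}^k S_{0i}S_{0j}(T_{jj}/T_{ii})^m$ and evaluating the double sum over $(i,j)$ term by term, organizing the bookkeeping through the auxiliary sums $A(g,m)=2a\sum_{h\in G_*}\inpr{g}{h}q_1(h)^m$, $B(\gamma,m)=2b\sum_{\xi\in\Gamma_*}\inpr{\gamma}{\xi}q_2(\xi)^m$, $C(v,m)=(a+b)\sum_{u\in U}\inpr{v}{u}q_0(u)^m$, whose total is $\cG(q_1,q_2,v,m)$. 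You instead re-expand $N_{ij}^k$ via the Verlinde formula and interchange summations first, reducing everything to the column sums $B_n(r)=(ST^nS)_{0,r}$ and the single orthogonality identity $\sum_{g\in G}\inpr{u}{g}|\cG(q_1,g,n)|^2=|G|\,q_1(u)^{-n}\delta_{nu,0}$; I checked the block computation of $B_n(r)$, the cancellation of the $U$-restricted pieces, and the collapse of all three coefficients to $a^2b^2/(a^2-b^2)=1/(|\Gamma|-|G|)$, and they are all right. What your approach buys is that it never needs the fusion coefficients of Lemma \ref{Verlinde3} at all — only $S$, $T$, their unitarity, and one Gauss-sum identity — and the appearance of $|\cG(q_1,q_2,v,m)|^2$ is forced structurally by $|X\pm Y|^2$ expansions rather than assembled by hand; the paper's approach has the advantage of reusing data it has already computed for Theorem \ref{STtheorem3}. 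One small remark: your derivation yields $-\delta_{m\gamma,0}q_2(\gamma)^m$ in the last formula, where the statement as printed has $q_1(\gamma)^m$; since $\gamma\in\Gamma$, your version is the intended one (it is the one used later, e.g.\ in Section 3.2), so this is a typo in the statement rather than an error on your part.
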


Note that if $G$ is an odd group and $(S,T)$ comes from a modular tensor category, 
we have $|\Gamma|=|G|+4$, and then
$$\nu_m(\pi)=\frac{1}{4}|\cG(q_1,m)+\cG(q_2,m)|^2,$$ 
$$\nu_m(g)=\frac{1}{4}|\cG(q_1,m)+\cG(q_2,m)|^2+\delta_{mg,0}q_1(g)^m,$$
$$\nu_m(\gamma)=\frac{1}{4}|\cG(q_1,m)+\cG(q_2,m)|^2-\delta_{m\gamma,0}q_1(\gamma)^m.$$

\begin{lemma}\label{restriction}
Assume $G$ is an odd group with $\theta_1=-1$ and $|\Gamma|=|G|+4$. 
\begin{itemize}
\item[(1)] We have $\nu_2(x)=1$ for any $x\in J$, and this does not give any restriction for the pair $(G,q_1)$ and $(\Gamma,q_2)$. 
\item[(2)] If the 3-rank of $G$ and that of $\Gamma$ are both less than or equal to 1, the values of $\nu_3(x)$ are 
consistent with $N_{x,x,x}$ for any $x\in J$ . 
\item[(3)] If either the 3-rank of $G$ or that of $\Gamma$ is greater than or equal to 3, the value of $\nu_3(\pi)$ is 
not consistent with $N_{\pi,\pi,\pi}=1$, and the pair $(S,T)$ never comes from a modular tensor category. 
\end{itemize}
\end{lemma}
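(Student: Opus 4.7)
The plan is to specialize the formulas of Lemma \ref{FS3} to $U=\{0\}$ (forced by $\theta_1 = -1$ on odd $G$), recovering the simplified expressions in the remark immediately following Lemma \ref{FS3}, and then to invoke classical facts about quadratic Gauss sums. The central workhorse is the Jacobi-symbol identity $\cG(q,k) = \left(\frac{k}{|G|}\right)\cG(q)$ for any non-degenerate quadratic form $q$ on an odd abelian group $G$ and any $k$ coprime to $|G|$; this follows from Gauss's closed evaluation of $\sum_{x=0}^{N-1}\zeta_N^{x^2}$ together with the multiplicativity of Gauss sums across the prime-power decomposition of $G$.

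For (1), since odd groups have no nontrivial $2$-torsion, the $\delta_{2g,0}$ and $\delta_{2\gamma,0}$ terms vanish for every $g \in G_*$ and $\gamma \in \Gamma_*$, so each $\nu_2(x)$ reduces to $\tfrac{1}{4}|\cG(q_1,2) + \cG(q_2,2)|^2$. Applying the Jacobi identity with $k=2$ and using $\left(\frac{2}{n}\right) = (-1)^{(n^2-1)/8}$ (which depends only on $n \bmod 8$), the hypothesis $|\Gamma| = |G| + 4$ forces $\left(\frac{2}{|\Gamma|}\right) = -\left(\frac{2}{|G|}\right)$; combined with $\cG(q_1) = -\cG(q_2)$ this gives $\cG(q_1,2) = \cG(q_2,2) \in \T$, so $\nu_2(x) = 1$ identically and no restriction on the input pair arises.

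For (2) and (3), decompose $(G,q_1) = (G_{(3)}, q_{1,(3)}) \times (G^{(3)}, q_1^{(3)})$ into $3$-part and prime-to-$3$ part (and similarly for $\Gamma$), so that $\cG(q_i,3) = \cG(q_{i,(3)},3) \cdot \cG(q_i^{(3)},3)$. The prime-to-$3$ factor has unit modulus and is controlled by the Jacobi identity. Using $\zeta_{3^k}^{3x^2} = \zeta_{3^{k-1}}^{x^2}$ together with Gauss's formula, a short calculation shows $|\cG(q_{(3)},3)| = 3^{r/2}$, where $r$ is the $3$-rank. For (3), if $r_1 \geq 3$ the congruence $|\Gamma| \equiv 4 \pmod{27}$ forces $r_2 = 0$, so the triangle inequality gives $|\cG(q_1,3) + \cG(q_2,3)| \geq 3^{r_1/2} - 1 \geq 3\sqrt{3} - 1 > 2$, whence $\nu_3(\pi) > 1$. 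Since $N_{\pi,\pi,\pi} = 1$ by Lemma \ref{Verlinde3} and $\nu_3(\pi)$ is real and non-negative, the requirement $\nu_3(\pi) \in \{1, \zeta_3, \zeta_3^2\}$ of FS3 is violated, establishing (3).

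For (2), when both $3$-ranks are $0$ the mod-$12$ pattern of $\left(\frac{3}{\cdot}\right)$ forces $\left(\frac{3}{|\Gamma|}\right) = -\left(\frac{3}{|G|}\right)$, so the Jacobi argument again yields $\cG(q_1,3) = \cG(q_2,3)$ and hence $\nu_3(\pi) = 1$. When one or both $3$-ranks equal $1$, the Gauss sums take values in $\sqrt{3}\cdot\{\pm 1, \pm i\}$ and/or $\{\pm 1, \pm i\}$, and a case-by-case phase comparison (depending on congruences of $|G|$ and $|\Gamma|$ modulo small powers of $2$ and $3$) confirms $|\cG(q_1,3) + \cG(q_2,3)|^2 = 4$. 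The $\delta_{3g,0}$ and $\delta_{3\gamma,0}$ terms contribute only for the at most two nontrivial $3$-torsion elements in each group; for such $g$ one has $q_1(g)^3 = 1$ (since $q_1(g)^3 \inpr{g}{g}^3 = q_1(3g) = 1$ and $\inpr{g}{g}^3 = \inpr{g}{3g} = 1$), so $\nu_3(g) = 2$ and $\nu_3(\gamma) = 0$, matching $N_{g,g,g} = 2$ and $N_{\gamma,\gamma,\gamma} = 0$ computed from Lemma \ref{Verlinde3}. The main obstacle is the phase comparison in the mixed-rank subcases, which requires careful bookkeeping of the $\{\pm 1, \pm i\}$-prefactors of the Gauss sums across the congruence classes of $|G|$ and $|\Gamma|$.
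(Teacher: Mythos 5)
Your proposal is correct and follows essentially the same route as the paper: part (1) via the identity $\cG(q,2)=\bigl(\tfrac{2}{|G|}\bigr)\cG(q)$ and the sign flip of $(-1)^{(n^2-1)/8}$ under $n\mapsto n+4$, part (3) via $|\cG(q,3)|=3^{r/2}$ on the $3$-part together with the triangle inequality, and part (2) via quadratic reciprocity in the rank-zero case. The only place your write-up is heavier than the paper's is the $3$-rank-one subcase of (2): the "careful bookkeeping of phases" you flag as the main obstacle is unnecessary once one observes that exactly one of $|G|/3$ and $|\Gamma|$ is $\equiv 1 \pmod 4$, so one of $\cG(q_1,3),\cG(q_2,3)$ is real and the other purely imaginary, giving $|\cG(q_1,3)+\cG(q_2,3)|^2=3+1=4$ with no case analysis.
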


\begin{proof} (1)
We have 
\begin{align*}
\lefteqn{\nu_2(\pi)=|\frac{\cG(q_1,2)+\cG(q_2,2)}{2}|^2} \\
 &=|\frac{\cG(q_1)(-1)^{\frac{|G|^2-1}{8}}+\cG(q_2)(-1)^{\frac{(|G|+4)^2-1}{8}}}{2}|^2 \\
 &=|\frac{\cG(q_1)-\cG(q_2)}{2}|^2=|\cG(q_1)|^2=1. 
\end{align*}
We can similarly show $\nu_2(x)=1$ for the other $x\in J$ from this computation.  

(2) 
If either $G$ or $\Gamma$ has no 3-component (i.e. $|G|\equiv 1 \mod 3$), 
then using the Jacobi symbol and the quadratic reciprocity law, we get 
\begin{align*}
\lefteqn{\nu_3(\pi)=|\frac{\cG(q_1,3)+\cG(q_2,3)}{2}|^2} \\
 &=|\frac{\cG(q_1)(\frac{3}{|G|})+\cG(q_2)(\frac{3}{|\Gamma|})}{2}|^2
 =|\frac{\cG(q_1)(-1)^{\frac{|G|-1}{2}}(\frac{|G|}{3})+\cG(q_2)(-1)^{\frac{|G|+3}{2}}(\frac{|G|+4}{3})}{2}|^2\\
 &=|\frac{\cG(q_1)-\cG(q_2)}{2}|^2=|\cG(q_1)|^2=1.
\end{align*}
If $G$ has 3-rank 1, we have either $\cG(q_1,3)\in \{\sqrt{3},-\sqrt{3}\}$ 
and $\cG(q_2,3)\in \{i,-i\}$, or $\cG(q_1,3)\in \{\sqrt{3}i,-\sqrt{3}i\}$ and $\cG(q_2,3)\in \{1,-1\}$.In either case we have $\nu_3(\pi)=1$. 
We get the same conclusion if $\Gamma$ has 3-rank 1. 
Consistency of $\nu_3(x)$ and $N_{x,x,x}$ for other $x$ follows from the same computation. 

(3) If $G$ has 3-rank larger than or equal to 3, 
we get $|\cG(q_1,3)|\geq 3\sqrt{3}$ and $|\cG(q_2,3)|=1$. 
Then $\nu_3(\pi)> 1$, which is not consistent with $N_{\pi,\pi,\pi}=1$. 
We get the same conclusion if $\Gamma$ has 3-rank larger than equal to 3. 
\end{proof}

\begin{remark}\label{3rank2} 
The most interesting case is of course when either the 3-rank of $G$ or that of $\Gamma$ is 2. 
In this case, our formulae for $\nu_3(\pi)$ and $N_{\pi,\pi,\pi}=1$ give a restriction of quadratic forms. 
For example, assume $G=\Z_5$ and $\Gamma=\Z_3\times \Z_3$. 
Then 
$$\nu_3(\pi)=|\frac{\cG(q_1,3)+3}{2}|^2=|\frac{-\cG(q_1)+3}{2}|^2=|\frac{\cG(q_2)+3}{2}|^2.$$
Thus $\cG(q_2)=1$ is forbidden, and only $\cG(q_2)=-1$ (and consequently $\cG(q_1)=1$) is consistent with 
$N_{\pi,\pi,\pi}=1$. 
This explains why there are only 3 solutions for the polynomial equations for the near-group category 
for $\Z_5$ with multiplicity 5 (see \cite[Table 2]{MR3167494}). In fact there are two solutions corresponding to $\Gamma=\Z_9$  
with $q_2(x)=\zeta_9^{x^2}, \zeta_9^{-x^2}$, and one solution corresponding to $\Gamma=\Z_3\times \Z_3$ with $q_2(x,y)=\zeta_3^{x^2+y^2}$. 
In a similar way, for $G=\Z_3\times \Z_3$ and $\Gamma=\Z_{13}$, only $\cG(q_1)=1$ is possible, 
which gives the modular data of the Drinfeld center of the Haagerup category. 
\end{remark}

\begin{cor} If Conjecture \ref{near-group} is true, there exists no generalized Haagerup category 
with an odd abelian group $A$ whose 3-rank is larger than or equal to 2. 
\end{cor}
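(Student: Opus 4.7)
The plan is to argue by contradiction. Suppose a generalized Haagerup category for an odd abelian group $A$ with $3$-rank at least $2$ exists. By \cite[Theorem 12.9]{MR3635673} it is a $\Z_2$-de-equivariantization of a near-group category for $A$ with multiplicity $|A|$, so the latter category must also exist, and Conjecture \ref{near-group} forces the modular data of its Drinfeld center to be $(S,T)$ of Definition \ref{ST3} with $G=A\times A$, $\theta_1$ the flip, $q_1(a_1,a_2)=\langle\langle a_1,a_2\rangle\rangle$, and $|\Gamma|=|A|(|A|+4)$. In particular $U=G^{\theta_1}$ is the diagonal copy of $A$.

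Next I would invoke Lemma \ref{factorization} with $L=U$. Since $A$ is odd, $q_0|_U$ is non-degenerate, so $(S,T)$ factorizes as a tensor product of the pointed datum for $(U,q_0)$ with a reduced datum $(S',T')$ coming from involutive metric groups $(G',q_1',-1)$ and $(\Gamma',q_2',-1)$, where $G'\cong A$ is identified with the antidiagonal $G_{o,-}$, $|\Gamma'|=|A|+4$, and $U'=\{0\}$. Under this factorization the full indicator $\nu_3((0,\pi))$ coincides with the reduced $\nu_3^{\mathrm{red}}(\pi)$, and the fusion coefficient satisfies $N_{(0,\pi),(0,\pi),(0,\pi)}=N_{\pi,\pi,\pi}=1$. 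Hence (FS3) forces $\nu_3^{\mathrm{red}}(\pi)\in\{1,\zeta_3,\zeta_3^2\}$, and since this quantity is a non-negative real number, $\nu_3^{\mathrm{red}}(\pi)=1$.

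The $3$-rank of $G'$ equals the $3$-rank of $A$. When the latter is at least $3$, Lemma \ref{restriction}(3) applied to the reduced data immediately gives $\nu_3^{\mathrm{red}}(\pi)>1$, contradicting the previous paragraph and settling that case.

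The main obstacle is the $3$-rank $2$ case, where $|\cG(q_1',3)|=3$ while the divisibility $|\Gamma'|\equiv 4\pmod 9$ forces the $3$-rank of $\Gamma'$ to be at most $1$ and hence $|\cG(q_2',3)|\in\{1,\sqrt 3\}$. The triangle inequality then only gives $\nu_3^{\mathrm{red}}(\pi)\geq 1$, with equality forcing an antipodal phase relation between $\cG(q_1',3)$ and $\cG(q_2',3)$. To finish, I would combine this phase constraint with the Gauss sum equality $\cG(q_1')=-\cG(q_2')$ (using the Jacobi-symbol and quadratic reciprocity formulas to express $\cG(q_i',3)$ in terms of $\cG(q_i')$, as in the proof of Lemma \ref{restriction}(2)) and with the $\nu_3$-indicators of the non-invertible simples in $G'_*$ and $\Gamma'_*$, in order to rule out any consistent choice of quadratic forms. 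This finer sign/phase analysis, in the spirit of the small-rank discussion in Remark \ref{3rank2}, is the hard part of the proof.
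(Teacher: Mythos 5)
There is a genuine gap, and it originates in your very first step. You claim that a generalized Haagerup category for an odd group $A$ is a $\Z_2$-de-equivariantization of a near-group category \emph{for $A$ with multiplicity $|A|$}. That is not what \cite[Theorem 12.9]{MR3635673} says (and a $\Z_2$-de-equivariantization makes no sense for odd $A$): the correct relationship, which is exactly what the paper's one-line proof uses, is that $\cC\boxtimes \mathrm{Vec}_A$ is Morita equivalent to a near-group category for $A\times \hat{A}$ with multiplicity $|A|^2$ (equivalently, $\cC$ equivariantizes by $\Inv(\cC)\cong A$ to such a category, and $\cZ(\cC^A)\cong \cZ(\cC)\boxtimes\cZ(\mathrm{Vec}_A)$). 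This misidentification is not cosmetic: applying Conjecture \ref{near-group} to the correct near-group category gives $G=(A\times\hat A)\times(A\times\hat A)$, so after factoring out the diagonal via Lemma \ref{factorization} the reduced odd group is $A\times\hat A$, whose $3$-rank is \emph{twice} that of $A$, hence at least $4\geq 3$ whenever the $3$-rank of $A$ is at least $2$. Lemma \ref{restriction}(3) then applies immediately and the proof is done in one step, with no separate treatment of the $3$-rank $2$ case.

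Your setup instead leaves you with a reduced group $G'\cong A$ of $3$-rank exactly $2$ in the critical case, and the ``finer sign/phase analysis'' you defer cannot succeed: as Remark \ref{3rank2} makes explicit, $3$-rank $2$ is \emph{compatible} with the conditions $\nu_3(\pi)=1$ and $N_{\pi,\pi,\pi}=1$ — the constraint merely pins down the Gauss sum of one of the quadratic forms, and the surviving choice (e.g.\ $G=\Z_3\times\Z_3$, $\Gamma=\Z_{13}$, $\cG(q_1)=1$) is realized by the Drinfeld center of the Haagerup category. So the route you propose is not just incomplete at the hard step; that step has no contradiction to be found. Your handling of the $3$-rank $\geq 3$ case via Lemma \ref{restriction}(3) and the reduction via Lemma \ref{factorization} are fine in themselves, but the argument only closes once you start from the correct near-group category, at which point the rank-doubling makes the whole case division unnecessary.
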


\begin{proof} If $\cC$ is such a category, $\cC\boxtimes \mathrm{Vec}_A$ is Morita equivalent to 
a near-group category for $A\times A$ with multiplicity $|A|^2$ (see \cite[Theorem 12.9]{MR3635673}), which contradicts Lemma \ref{restriction},(3). 
\end{proof}

We get back to the general case. 
Recall that if $m$ is an odd number and $(A,q)$ is a metric group with $|A|=2^n$, we have 
$$\frac{\cG(q,m)}{\cG(q)^m}=(-1)^{n\frac{m^2-1}{8}}.$$

\begin{lemma}\label{nu3computation3} Let the notation be as above. 
Let $m\in \Z$ and let $u\in U$. 
\begin{itemize}
\item[(1)] For odd $m$, we have
$$|\cG(q_1,q_2,u,m)| =|\cG(q_1,q_2,0,m)|=|(-1)^{n_1\frac{m^2-1}{8}}\frac{\cG(q_{1,o},m)}{\cG(q_{1,o})^m}-(-1)^{n_2\frac{m^2-1}{8}}\frac{\cG(q_{2,o},m)}{\cG(q_{2,o})^m}|,$$
where $|G_e|=2^{n_1}$ and $|\Gamma_e|=2^{n_2}$. 
If moreover $(m,|G|)=(m,|\Gamma|)=1$, we have
$$|\cG(q_1,q_2,u,m)| 
=|(-1)^{n_1\frac{m^2-1}{8}}(\frac{|G_o|}{m})
 -(-1)^{n_2\frac{m^2-1}{8}}(\frac{|\Gamma_o|}{m})|,$$
where $(\frac{|G_o|}{m})$ is the Jacobi symbol. 
\item[(2)] For $m=2$, we have
$$|\cG(q_1,q_2,u,2)|=|\frac{\cG(q_{1,e},u,2)}{\cG(q_{1,e})}(-1)^{\frac{|G_o|^2-1}{8}}-\frac{\cG(q_{2,e},u,2)}{\cG(q_{2,e})}(-1)^{\frac{|\Gamma_o|^2-1}{8}}|.$$
\end{itemize}
\end{lemma}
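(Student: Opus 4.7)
The plan is to reduce both formulas to standard Gauss-sum identities for 2-groups and odd cyclic groups, exploiting the observation that the $u$-dependence in the two Gauss sums associated with $G$ and $\Gamma$ accumulates identical phases that cancel in the absolute value.

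First I would invoke Remark~\ref{even-odd} to reduce to the case $G_{o,+}=\Gamma_{o,+}=\{0\}$, so that $U\subseteq G_e\cap\Gamma_e$ and the pairing $\inpr{u}{g_o}$ is trivial for every $u\in U$ and every $g_o\in G_{j,o}$. The tensor-product decomposition of the metric group $(G,q_1)=(G_e,q_{1,e})\times(G_o,q_{1,o})$ then yields
$$\cG(q_j,u,m)=\cG(q_{j,e},u,m)\cdot\cG(q_{j,o},m),$$
and the same for $j=2$, so the even and odd parts decouple.

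For odd $m$, multiplication by $m$ is a bijection on the 2-group $G_{j,e}$, giving a unique $u_j'\in G_{j,e}$ with $mu_j'=u$. The character substitution $g\mapsto g+u_j'$ combined with the identity $\inpr{u}{g}q_{j,e}(g)^m=q_{j,e}(u_j')^{-m}q_{j,e}(g+u_j')^m$ converts the $u$-twisted Gauss sum to $q_{j,e}(u_j')^{-m}\cG(q_{j,e},m)$. The crucial step is then to observe that $\theta_{j,e}(u_j')$ is also a preimage of $\theta_{j,e}(u)=u$ under multiplication by $m$, so by uniqueness $u_j'\in U$; the same uniqueness inside the 2-group $U$ (on which odd $m$ acts invertibly) forces $u_1'=u_2'$, and hence $q_{1,e}(u_1')=q_0(u_1')=q_{2,e}(u_2')$. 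After applying the stated 2-group identity $\cG(q_{j,e},m)=(-1)^{n_j(m^2-1)/8}\cG(q_{j,e})^m$ and using $\cG(q_{j,e})\cG(q_{j,o})=\cG(q_j)$ together with $\cG(q_1)=-\cG(q_2)=c$, the sum $\cG(q_1,u,m)+\cG(q_2,u,m)$ factors as $c^m q_0(u_1')^{-m}$ times the bracketed expression on the right-hand side; passing to absolute values yields the first formula in (1). For the Jacobi-symbol version, the additional hypothesis $(m,|G|)=(m,|\Gamma|)=1$ lets me diagonalize each $q_{j,o}$ into cyclic factors and invoke the classical identity $G(a,N)=(\frac{a}{N})G(1,N)$ together with quadratic reciprocity to identify $\cG(q_{j,o},m)/\cG(q_{j,o})^m$ with the Jacobi symbols $(\frac{|G_o|}{m})$ and $(\frac{|\Gamma_o|}{m})$ respectively.

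For part (2), the same even/odd factorization gives $\cG(q_j,u,2)=\cG(q_{j,e},u,2)\cdot\cG(q_{j,o},2)$. No shift is required since the $u$-dependence is intended to remain explicitly inside $\cG(q_{j,e},u,2)/\cG(q_{j,e})$ on the right-hand side. Combining the classical Jacobi-symbol identity $\cG(q_o,2)/\cG(q_o)=(-1)^{(|A_o|^2-1)/8}$ for odd $|A_o|$ (this is $(\frac{2}{|A_o|})$) with $\cG(q_{j,o})=\cG(q_j)/\cG(q_{j,e})$ pulls out a common factor of $c$ from $\cG(q_1,u,2)+\cG(q_2,u,2)$, and taking the absolute value gives the stated formula.

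The main obstacle is the identification $u_1'=u_2'$ in part (1): without it the two $u$-dependent shift phases would not cancel and the right-hand side would depend on $u$, contradicting the first claimed equality. Establishing it requires carefully tracking $\theta$-invariance of the preimages and using that $U$ lies in both $G_e$ and $\Gamma_e$ after the reduction of Remark~\ref{even-odd}. The remaining arithmetic---the 2-group Gauss sum identity for odd $m$ and the classical Jacobi-symbol formulas---is standard and only requires bookkeeping.
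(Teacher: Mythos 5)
Your proposal is correct and follows essentially the same route as the paper: factor each Gauss sum into even and odd parts, absorb the $u$-twist by the substitution $g\mapsto g+u'$ with $mu'=u$, cancel the resulting phase $q_0(u')^{-m}$ using $q_1|_U=q_2|_U$, and finish with the $2$-group identity, $\cG(q_1)=-\cG(q_2)$, and quadratic reciprocity. The one point you flag as the main obstacle, $u_1'=u_2'$, is handled in the paper more directly by simply taking $u'$ to be the unique preimage of $u$ under multiplication by $m$ inside the common $2$-group $U$ itself, which makes your $\theta$-invariance argument unnecessary though not incorrect.
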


\begin{proof} (1) Since $U$ is a 2-group, for any $u\in U$ there exists a unique element $u'\in U$ satisfying $mu'=u$. 
Thus
\begin{align*}
\lefteqn{|\cG(q_1,q_2,u,m)|} \\
 &=|\frac{1}{\sqrt{|G_e|}}\sum_{g\in G_e}\inpr{u}{g}_{q_{1,e}}q_{1,e}(g)^m\cG(q_{1,o},m)
 +\frac{1}{\sqrt{|\Gamma_e|}}\sum_{\gamma\in \Gamma_e}\inpr{u}{\gamma}_{q_{2,e}}q_{2,e}(\gamma)^m \cG(q_{2,o},m)| \\
 &=|\frac{1}{\sqrt{|G_e|}}\sum_{g\in G_e}\inpr{u'}{g}_{q_{1,e}^m}q_{1,e}^m(g)\cG(q_{1,o},m)
 +\frac{1}{\sqrt{|\Gamma_e|}}\sum_{\gamma\in \Gamma_e}\inpr{u'}{\gamma}_{q_{2,e}^m}q_{2,e}^m(\gamma) \cG(q_{2,o},m)| \\
 &=|\frac{1}{\sqrt{|G_e|}}\sum_{g\in G_e}\frac{q_{1,e}^m(g+u')}{q_{1,e}^m(u')}\cG(q_{1,o},m)
 +\frac{1}{\sqrt{|\Gamma_e|}}\sum_{\gamma\in \Gamma_e}\frac{q_{2,e}^m(\gamma+u')}{q_{2,e}^m(u')} \cG(q_{2,o},m)| \\
 &=|\cG(q_1,m)+\cG(q_2,m)|.
\end{align*}
Since $\cG(q_1)=-\cG(q_2)$, this is equal to 
$$|\frac{\cG(q_1,m)}{\cG(q_1)^m}-\frac{\cG(q_2,m)}{\cG(q_2)^m}|
=|\frac{\cG(q_{1,e},m)}{\cG(q_{1,e})^m}\frac{\cG(q_{1,o},m)}{\cG(q_{1,o})^m}-\frac{\cG(q_{2,e},m)}{\cG(q_{2,e})^m}\frac{\cG(q_{2,o},m)}{\cG(q_{2,o})^m}|,$$
which shows the first part. 

If moreover $(m,|G|)=(m,|\Gamma|)=1$, the quadratic reciprocity law implies 
$$\frac{\cG(q_{1,o},m)}{\cG(q_{1,o})^m}=\frac{(-1)^{\frac{m-1}{2}\frac{|G_o|-1}{2}}}{\cG(q_{1,o})^{m-1}}(\frac{|G_o|}{m})=
(\frac{(-1)^{\frac{|G_o|-1}{2}}}{\cG(q_{1,o})^2})^{\frac{m-1}{2}}(\frac{|G_o|}{m})=(\frac{|G_o|}{m}),$$
and we get the second part. 

(2) Since $\cG(q_1)=-\cG(q_2)$, we get 
\begin{align*}
\lefteqn{|\cG(q_1,u,2)+\cG(q_2,u,2)|} \\
 &=|\frac{\cG(q_{1,e},u,2)\cG(q_{1,o},2)}{\cG(q_{1,e})\cG(q_{1,o})}-\frac{\cG(q_{2,e},u,2)\cG(q_{2,o},2)}{\cG(q_{2,e})\cG(q_{2,o})}| \\
 &=|\frac{\cG(q_{1,e},u,2)}{\cG(q_{1,e})}(-1)^{\frac{|G_o|^2-1}{8}}-\frac{\cG(q_{2,e},u,2)}{\cG(q_{2,e})}(-1)^{\frac{|\Gamma_o|^2-1}{8}}|.
\end{align*}
\end{proof}
\subsection{Examples with $|\Gamma|=|G|+4|U|$}
We give examples of involutive metric groups $(G,q_1,\theta_1)$ and $(\Gamma,q_2,\theta_2)$ satisfying the assumptions of 
Definition \ref{ST3} with$|\Gamma|=|G|+4|U|$ such that (FS2) and (FS3) from Section \ref{md} are satisfied. 
Taking Lemma \ref{factorization} and Remark \ref{even-odd} into account, 
we assume 
$$(G,q_1,\theta_1)=(G_e,q_{1,e},\theta_{1,e})\times (G_o,q_{1,o},-1),$$
$$(\Gamma,q_2,\theta_2)=(\Gamma_e,q_{2,e},\theta_{2,e})\times (\Gamma_o,\theta_{2,o},-1).$$
We also assume that $q_0$ is degenerate on any non-trivial subgroup of $U$. 

Lemma \ref{Verlinde3} implies that
$$N_{(u,\pi),(u,\pi),(u,\pi)}=\frac{1}{|U|}\sum_{v\in U}\inpr{u}{v},$$
$$N_{g,g,g}=\frac{1}{|U|}\sum_{v\in U}\inpr{g}{v}+\delta_{3g,0}+3\delta_{2g+\theta(g),0},$$
$$N_{\gamma,\gamma,\gamma}=\frac{1}{|U|}\sum_{v\in U}\inpr{\gamma}{v}-\delta_{3\gamma,0}-3\delta_{2\gamma+\theta(\gamma),0}.$$
On the other hand, Lemma \ref{FS3} and \ref{nu3computation3} show that
$$\nu_3(u,\pi)=\frac{|\cG(q_1,q_2,0,3)|^2}{4}\frac{1}{|U|}\sum_{v\in U}\inpr{u}{v},$$
$$\nu_3(g)=\frac{|\cG(q_1,q_2,0,3)|^2}{4}\frac{1}{|U|}\sum_{v\in U}\inpr{g}{v}+\delta_{3g,0}q_1(g)^3,$$
$$\nu_3(\gamma)=\frac{|\cG(q_1,q_2,0,3)|^2}{4}\frac{1}{|U|}\sum_{v\in U}\inpr{\gamma}{v}-\delta_{3\gamma,0}q_2(\gamma)^3.$$
Thus the condition (FS3) is equivalent to $|\cG(q_1,q_2,0,3)|=2$. 
Direct computation using Lemma \ref{nu3computation3} shows that restriction coming from (FS3) in the following examples 
is very similar to that discussed in Lemma \ref{restriction} and Remark \ref{3rank2}, and we will not mention it in what follows.

In the following subsections we consider different forms of $G_e$ and $\theta_{1,e} $. 

\subsubsection{Assume $G_e=\Z_2\times \Z_2$ with $\theta_{1,e}(x,y)=(y,x)$.}  \label{near1}
In this case, we have $U\cong \Z_2$ 
and $|\Gamma|=4|G_o|+8=4(|G_o|+2)$, and hence $|\Gamma_e|=4$, $|\Gamma_o|=|G_o|+2$. 
Thus $\cG(q_{2,o})/\cG(q_{1,o})\in \{i,-i\}$, and so $\cG(q_{2,e})/\cG(q_{1,e})\in \{i,-i\}$ too. 
This implies $\Gamma_{e}=\Z_2\times \Z_2$. 
Since $\theta_{2,e}$ is non-trivial, we may assume $\theta_{2,e}(x,y)=(y,x)$. 
Note that the only flip-invariant quadratic forms of $\Z_2\times \Z_2$ are $q(x,y)=(-1)^{xy}$, $(-1)^{x^2+xy+y^2}$, and  
$i^{\pm(x^2+y^2)}$. We consider these quadratic forms separately.\\

\paragraph{Assume $q_{1,e}(x,y)=(-1)^{xy}$}
In this case we have $\cG(q_{1,e})=1$ and $q_{2,e}(x,y)=i^{\pm (x^2+y^2)}$. 
To fulfill $\cG(q_2)=-\cG(q_1)$, we need $\cG(q_{2,o})=\pm i \cG(q_{1,o})$. 
Direct computation shows that (FS2) gives no restriction.
In the simplest example $G=\Z_2\times \Z_2$, $\Gamma=\Z_2\times \Z_2\times \Z_3$, 
the pair $(S,T)$ is the modular data of the Drinfeld center 
of the near-group category for $\Z_2$ with multiplicity 2 (see \cite{MR1832764}). 
More generally, we conjecture that the modular data of the Drinfeld center of a near-group category for $\Z_2\times G_o$ 
with multiplicity $2|G_o|$ is given by 
$$(S^{(G_o,\overline{q_{1,o}})}\otimes S, T^{(G_o,\overline{q_{1,o}})}\otimes T).$$

\paragraph{Assume $q_{1,e}(x,y)=(-1)^{x^2+xy+y^2}$} 
In this case we have $\cG(q_{1,e})=-1$ and $q_{2,e}(x,y)=i^{\pm (x^2+y^2)}$. 
To fulfill $\cG(q_2)=-\cG(q_1)$, we need $\cG(q_{2,o})=\mp i \cG(q_{1,o})$. 
Direct computation shows that (FS2) gives no restriction. \\

\paragraph{Assume $q_{1,e}(x,y)=i^{\pm(x^2+y^2)}$} 
In this case $\cG(q_{1,e})=\pm i$ and $q_{2,e}(x,y)$ is either $(-1)^{xy}$ or 
$(-1)^{x^2+xy+y^2}$. 
We have $\cG(q_{2,o})=\mp i\cG(q_{1,o})$ for $q_{2,e}(x,y)=(-1)^{xy}$, and 
$\cG(q_{2,o})=\pm i\cG(q_{1,o})$ for $q_{2,e}(x,y)=(-1)^{x^2+xy+y^2}$. 
Direct computation shows that (FS2) gives no restriction. 
\subsubsection{Assume $G_e=\Z_4$.} 
Since $\theta_{1,e}$ is non-trivial, we have $\theta_{1,e}(x)=-x$. 
In this case we have $U\cong \Z_2$, 
and $|\Gamma|=4|G_o|+8=4(|G_o|+2)$, and hence $|\Gamma_e|=4$, $|\Gamma_o|=|G_o|+2$. 
Thus $\cG(q_{2,o})/\cG(q_{1,o})\in \{i,-i\}$, and so $\cG(q_{2,e})/\cG(1,e)\in \{i,-i\}$ too. 
This implies that $\Gamma_{e}=\Z_4$. 
Since $\theta_{2,e}$ is non-trivial, we have $\theta_{2,e}(x)=-x$. 
Non-degenerate quadratic forms on $\Z_4$ are of the form $q(x)=\zeta_8^{rx^2}$ with odd $r$, 
and $\cG(q)=\zeta_8^r$. 
Possible combinations are $q_{1,e}(x)=\zeta_8^{r_1x^2}$, $q_{2,e}(x)=\zeta_8^{r_2x^2}$, and $\cG(q_{2,o})=-\zeta_8^{r_1-r_2} \cG(q_{1,o})$, with $r_1-r_2\equiv 2 \mod 4$. 
Direct computation shows that (FS2) gives no restriction.

\begin{remark} \label{ze2} In the above examples with $|G_e|=4$, we fix two odd metric groups $(G_o,q_{1,o})$ and $(\Gamma_o,q_{2,o})$. 
Then we can show that all the above potential modular data $(S,T)$ can be obtained from that for $q_{1,e}(x,y)=(-1)^{xy}$ 
by applying the zesting construction introduced in \cite[Theorem 3.15]{MR3641612}. 
\end{remark}
\subsubsection{Assume $G_e=\Z_4\times \Z_4$ and $\theta_{1,e}(x,y)=(y,x)$.} 
In this case we have $U\cong \Z_4$ and $|\Gamma|=2^4(|G_o|+1)$. 
Thus there exists $n\geq 3$ satisfying $|\Gamma_e|=2^{n+2}$, and we have $|G_o|=2^{n-2}|\Gamma_o|-1$. 
We assume that $\Gamma_e=\Z_{2^{n+1}}\times \Z_2$, $q_{2,e}(x,y)=\zeta_{2^{n+2}}^{rx^2}\zeta_4^{r(1-2^{n-2})y^2}$, and 
$$\theta_{2,e}(x,y)=((2^{n-1}-1)x+2^ny,x+y),$$
with $r\in \{1,-1,3,-3\}$.  
Then $\theta_{2,e}$ preserves $q_{2,e}$ and $\Gamma_e^{\theta_{2,e}}=<(2^{n-1},1)>\cong \Z_4$. 
We have $$q_{2,e}(2^{n-1},1)=\zeta_{2^{n+2}}^{r2^{2n-2}}\zeta_4^{r(1-2^{n-2})}=\zeta_4^r.$$
We have 
$$\cG(q_{2,e})=(-1)^{n\frac{r^2-1}{8}}(-1)^{\frac{2^{n-3}(2^{n-3}-1)}{2}}i^{r(1-2^{n-3})}.$$
To fulfill the condition $(G^{\theta_1},q_1|_{G^{\theta_1}})\cong (\Gamma^{\theta_2},q_2|_{\Gamma^{\theta_2}})$, we have 3 possibilities: $q_{1,e}(x,y)=\zeta_4^{rxy}$, $\zeta_4^{-r(x^2+xy+y^2)}$, or $\zeta_8^{r(x^2+y^2)}$. We consider these quadratic forms separately.\\

\paragraph{Assume $q_{1,e}(x,y)=\zeta_4^{rxy}$}
In this case $\cG(q_{1,e})=1$ and 
$$\cG(q_{1,o})=-(-1)^{n\frac{r^2-1}{8}}(-1)^{\frac{2^{n-3}(2^{n-3}-1)}{2}}i^{r(1-2^{n-3})}\cG(q_{2,o}).$$
For $n=3$, we have $\cG(q_{1,o})/\cG(q_{2,o})\in \{1,-1\}$ and $|G_o|\equiv |\Gamma_o|\mod 4$. 
This together with $|G_o|=2|\Gamma_o|-1$ implies that $|G_o|\equiv 1\mod 8$. 
For $n\geq 4$, we have $\cG(q_{1,o})/\cG(q_{2,o})\in \{i,-i\}$ and $|G_o|\equiv |\Gamma_o|-2 \mod 4$. 
This together with $|G_o|=2^{n-2}|\Gamma_o|-1$ implies that $|G_o|\equiv 3\mod 4$ and $|\Gamma_o|\equiv 1\mod 4$.  

Direct computation shows that (FS2) gives no restriction.  

For the smallest example $G=\Z_4\times \Z_4$, $\Gamma=\Z_{16}\times \Z_2$, 
there are two possibilities: $r=\pm 3$. These give the modular data for the Drinfeld centers of the two complex-conjugate near-group categories for $ \Z_4$ with multiplicity $4$. This can be verified using Theorem 6.8 of \cite{MR1832764}, after solving Equations (6.18)-(6.21) for $\Z_4 $ using the structure constants found in Example A.2 there.

More generally, we conjecture that the pair 
$$(S^{(G_o,\overline{q_{1,o}})}\otimes S,T^{(G_o,\overline{q_{1,o}})}\otimes T),$$
is the modular data of the Drinfeld center of a near-group category for 
$\Z_4\times G_o$ with multiplicity $4|G_o|$. \\

\paragraph{Assume $q_{1,e}(x,y)=\zeta_4^{-r(x^2+xy+y^2)}$} In this case $\cG(q_{1,e})=1$, and the computation is very similar to the previous case. \\

\paragraph{Assume $q_{1,e}(x,y)=\zeta_8^{r(x^2+y^2)}$}
In this case $\cG(q_{1,e})=i^r$ and 
$$\cG(q_{2,o})=-(-1)^{n\frac{r^2-1}{8}}(-1)^{\frac{2^{n-3}(2^{n-3}-1)}{2}}i^{r2^{n-3}}\cG(q_{1,o}).$$
For $n=3$, we have $\cG(q_{1,e})/\cG(q_{2,e})\in \{i,-i\}$ and $|G_o|\equiv |\Gamma_e|-2 \mod 4$. 
This together with $|G_o|=2|\Gamma_e|-1$ shows that $|G_o|\equiv 5\mod 8$ and $|\Gamma_o|\equiv 3\mod 8$. 
For $n\geq 4$, we have  $\cG(q_{1,e})/\cG(q_{2,e})\in \{1,-1\}$ and $|G_o|\equiv |\Gamma_e| \mod 4$. 
This together with $|G_o|=2|\Gamma_e|-1$ shows that $|G_o|\equiv |\Gamma_e|\equiv 3\mod 4$. 

Direct computation shows that (FS2) gives no restriction.  

For the smallest example $G=\Z_4\times \Z_4\times \Z_5$, $\Gamma=\Z_{16}\times \Z_2\times \Z_3$, 
there are 8 possibilities: two choices of $q_{1,o}$ and four choices of $r$. We have $\cG(q_{2,o})=-(-1)^{\frac{r^2-1}{8}}i^r\cG(q_{1,o}).$  
\subsubsection{Assume $G_e=\Z_2^4$ and $\theta_{1,e}(x_1,x_2,y_1,y_2)=(y_1,y_2,x_1,x_2)$.} 
In this case we have $U\cong \Z_2\times \Z_2$ and $|\Gamma|=2^4(|G_o|+1)$. Again we consider different quadratic forms.\\

\paragraph{Assume $q_{1,e}(x_1,x_2,y_1,y_2)=(-1)^{x_1y_1+x_2y_2}$} We choose $n\geq 3$ with 
$|\Gamma_e|=2^{n+2}$ and $|G_o|=2^{n-2}|\Gamma_o|-1$. 
Since $\cG(q_{2,e})^4=\cG(q_{1,e})^4=1$, it follows from Theorem \ref{list} that $\Gamma_e=\Z_{2^n}\times \Z_4$ is the only possibility. We have $q_{2,e}(x,y)=\zeta_{2^{n+1}}^{rx^2}\zeta_8^{sy^2}$ with $r,s\in\{1,-1,3,-3\}$, 
and $\theta_{2,e}(x,y)=(-x,-y)$ or $((-1+2^{n-1})x+2^{n-1}y,2^{n-1}x-y)$. 
We denote the latter by $\tau(x,y)$. 

We have $q_2(2^{n-1},0)=1$, $q_2(0,2)=-1$, $q_2(2^{n-1},2)=-1$, and $(G^{\theta_1},q_1)\cong (\Gamma^{\theta_2},q_2)$. 
We have $\cG(q_{1,e})=1$ and $\cG(q_{2,e})=(-1)^{n\frac{r^2-1}{8}}\zeta_8^{r+s}$, and so 
$$\cG(q_{1,o})+(-1)^{n\frac{r^2-1}{8}}\zeta_8^{r+s}\cG(q_{2,o})=0.$$
The group automorphism $(x,y)\to (x+2^{n-2}y,\pm x+y)$ of $\Z_{2^n}\times \Z_4$ transforms $(r,s)$ to $(r+2^{n-2}s,2^{n-2}r+s)$. 

Direct computation shows that $|\cG(q_1,q_2,k,2)|^2=4$ for $k\neq 0$, and 
$$|\cG(q_1,q_2,0,2)|^2=|4-2(-1)^{\frac{r^2-1}{8}}(-1)^{\frac{s^2-1}{8}}(-1)^{\frac{|G_o|^2-1}{8}}(-1)^{\frac{|\Gamma_o|^2-1}{8}}|^2,$$
which is compatible with $\nu_2(0,\pi)=\pm 1$ if and only if 
$$(-1)^{\frac{r^2-1}{8}}(-1)^{\frac{s^2-1}{8}}(-1)^{\frac{|G_o|^2-1}{8}}(-1)^{\frac{|\Gamma_o|^2-1}{8}}=1.$$
Assuming this condition, we have 
$$\nu_2(\gamma)=\frac{1}{|U|}\sum_{u\in U}\inpr{\gamma}{u}-\delta_{2\gamma,0}q_2(\gamma)^2.$$
This shows that (FS2) is satisfied only for $\theta_{2,e}=\tau$. 

For the smallest example $G=\Z_2^4$, $\Gamma=\Z_{8}\times \Z_4$, we have either $(r,s)=(3,-3)$ or $(-3,3)$, 
which are transformed to each other by a group automorphism. 

We conjecture that the pair 
$$(S^{(G_o,\overline{q_{1,o}})}\otimes S,T^{(G_o,\overline{q_{1,o}})}\otimes T)$$
with $\theta_{2,e}=\tau$ is the modular data of the Drinfeld center of a near-group category for $\Z_2\times \Z_2\times G_o$ with multiplicity 
$4|G_o|$. \\

\paragraph{Assume $q_{1,e}(x_1,x_2,y_1,y_2)=(-1)^{x_1y_2+x_2y_1}$} Note that we have $q_0(k)=1$ in this case. 
Theorem \ref{list} shows that only $\Gamma_e=\Z_{2^{m}}\times \Z_{2^{n}}$ with $3\leq m\leq n$ is possible for $\Gamma_e$, 
and we have
$$\Gamma_e^{\theta_{2,e}}=\{0, (2^{m-1},0),(0,2^{n-1}),(2^{m-1},2^{n-1})\}.$$
Since $|G_o|=2^{m+n-4}|\Gamma_o|-1$, we have $|G_o|\equiv 3 \mod 4$. 
Thus $c=\cG(q_{1,o})=-\cG(q_{2,e})\cG(q_{2,o})$ implies $\cG(q_{2,e})^2\cG(q_{2,o})^2=-1$. 
We have the following three possibilities for $(q_{2,e},\theta_{2,e})$:
\begin{itemize}
\item $q_{2,e}(x,y)=\zeta_{2^{m+1}}^{rx^2}\zeta_{2^{n+1}}^{sy^2}$ with $r,s\in \{1,-1,3,-3\}$, and 
$\theta_{2,e}(x,y)=(-x,-y)$ or $\theta_{2,e}(x,y)=(-x+2^{m-1}y,2^{n-1}x-y)$. 
We denote the latter by $\tau'(x,y)$. 
\item $m=n$, $q_{2,e}(x,y)=\zeta_{2^m}^{xy}$, and $\theta=-1$ or $\theta=-1+2^{m-1}$. 
\item $m=n$, $q_{2,e}(x,y)=\zeta_{2^m}^{x^2+xy+y^2}$, and $\theta=-1$ or $\theta=-1+2^{m-1}$.
\end{itemize}
If $|\Gamma_o|\equiv 1\mod 4$, only the first of these is possible, and we have
$$\cG(q_{1,o})+(-1)^{m\frac{r^2-1}{8}}(-1)^{n\frac{s^2-1}{8}}\zeta_8^{r+s}\cG(q_{2,o})=0.$$

Direct computation shows that $|\cG(q_1,q_2,l,2)|^2=4$ for $l\neq 0$, and 
$$|\cG(q_1,q_2,0,2)|^2
=|4-2(-1)^{\frac{r^2-1}{8}}(-1)^{\frac{s^2-1}{8}}(-1)^{\frac{|G_o|^2-1}{8}}(-1)^{\frac{|\Gamma_o|^2-1}{8}}|^2,$$
which is possible only if 
$$(-1)^{\frac{r^2-1}{8}}(-1)^{\frac{s^2-1}{8}}(-1)^{\frac{|G_o|^2-1}{8}}(-1)^{\frac{|\Gamma_o|^2-1}{8}}=1.$$
Assuming this condition, we see that (FS2) is satisfied only for $\theta_{2,e}=\tau'$. 

For the smallest example $G=\Z_2^4\times \Z_3$, we have $m=n=3$ and $\Gamma_o=\{0\}$.  
Up to group automorphism, there are two possibilities: $(\cG(q_{1,o}),r,s)=\pm (i,3,-1).$

We conjecture that the pair 
$$(S^{(G_o,\overline{q_{1,o}})}\otimes S,T^{(G_o,\overline{q_{1,o}})}\otimes T)$$
with $\theta_{2,e}=\tau'$ is the modular data of the Drinfeld center of a near-group category for $\Z_2\times \Z_2\times G_o$ with multiplicity 
$4|G_o|$. \\

\paragraph{Other quadratic forms}There are various other $\theta_{1,e}$-invariant non-degenerate quadratic forms on $\Z_2^4$ (see Theorem \ref{list}). 
\subsubsection{Assume $G_e=\Z_{2^n}\times \Z_{2^n}$ with $n\geq 3$ and $\theta_{1,e}(x,y)=(y,x)$.} 
In this case we have $U\cong \Z_{2^n}$ and $|\Gamma|=2^{2n}+2^{n+2}|G_o|$, and hence 
$|\Gamma_e|=2^{n+2}$ and $|\Gamma_o|=|G_o|+2^{n-2}$. 
Let $\Gamma_e=\Z_{2^{n+1}}\times \Z_2$, let 
$$q_{2,e}(x,y)=\zeta_{2^{n+2}}^{sx^2}\zeta_4^{-(1+2^{n-2})sy^2}$$
with odd $s$, and let $\theta_{2,e}(x,y)=((1+2^{n-1})x+2^n y,x+y)$. 
Then $\theta_{2,e}$ is an involution preserving $q_{2,e}$, and
$$\Gamma_e^{\theta_{2,e}}=<(2,1)>\cong \Z_{2^n}.$$
We have $q_{2,e}(2,1)=\zeta_{2^n}^{s(1-2^{n-2}-2^{2n-4})}$ and
$$\cG(q_{2,e})=i^{-2^{n-3}s}(-1)^{\frac{2^{n-3}(2^{n-3}-1)}{2}}(-1)^{n\frac{s^2-1}{8}}.$$
Thus $\cG(q_{2,e})\in \{i,-i\}$ for $n=3$, and $\cG(q_{2,e})\in \{1,-1\}$ for $n\geq 4$. 
Again we consider different quadratic forms.\\

\paragraph{Assume $q_{1,e}(x,y)=\zeta_{2^n}^{rxy}$ with $r\in \{1,-1,3,-3\}$}  \label{caseabove} 
We set 
$$s=(1+2^{n-2})r+2^nt,$$
with $t\in \{0,1,2,3\}$. 
Then 
$$s(1-2^{n-2}-2^{2n-4})\equiv r\mod 2^n,$$
and $q_{1,e}(l,l)=q_{2,e}(2l,l)$.  
Since the automorphism $(x,y)\mapsto ((1+2^n)x,y)$ of $\Z_{2^{n+1}}\times \Z_2$ commutes with $\theta_{2,e}$ and 
transforms $t$ to $t+2$, we need to consider only $t=0$ and $t=1$.  
We have 
$$\cG(q_{1,o})=-i^{-2^{n-3}s}(-1)^{\frac{2^{n-3}(2^{n-3}-1)}{2}}(-1)^{n\frac{s^2-1}{8}}\cG(q_{2,o}).$$
Since $|\Gamma_e|=2^{n-2}|G_o|+1$, and $|\Gamma_o|\equiv |G_o|+2\mod 4$ for $n=3$ and $|\Gamma_o|\equiv |G_o|$ for $n\geq 4$, 
we have $|G_o|\equiv 1\mod 4$, and $|\Gamma_o|\equiv 3 \mod 4$ for $n=3$ and $|\Gamma_o|\equiv 1\mod 4$ for $n\geq 4$. 

Direct computation shows that (FS2) gives no restriction. 

We give a few examples. 
\begin{itemize}
\item[(i)] $G_o=\{0\}$ with $n=3$. 
In this case we have $\Gamma_o=\Z_3$. 
There are four possibilities up to group automorphism. We have $\cG(q_{2,o})=-(-1)^{\frac{r^2-1}{8}}i^r$. 
\item[(ii)] $G_o=\{0\}$ with $n=4$. In this case we have $\Gamma_o=\Z_5$. 
There are four possibilities, and we have $\cG(q_{2,o})=-1$. 
\end{itemize}

We conjecture that the pair 
$$(S^{(G_o,\overline{q_{1,o}})}\otimes S,T^{(G_o,\overline{q_{1,o}})}\otimes T).$$
is the modular data of the Drinfeld center of a near-group category for $\Z_{2^n}\times G_o$ 
with multiplicity $2^n|G_o|$.\\ 

\paragraph{Assume $q_{1,e}(x,y)=\zeta_{2^n}^{r(x^2+xy+y^2)}$}This case can be treated in a similar way as the previous case. \\

\paragraph {Assume $q_{1,e}(x,y)=\zeta_{2^{n+1}}^{r(x^2+y^2)}$ with $r\in \{1,-1,3,-3\}$}
We set 
$$s=(1+2^{n-2})r+2^nt,$$
with $t\in \{0,1,2,3\}$. 
Then 
$$s(1-2^{n-2}-2^{2n-4})\equiv r\mod 2^n,$$
and $q_{1,e}(l,l)=q_{2,e}(2l,l)$.  
Since the automorphism $(x,y)\mapsto ((1+2^n)x,y)$ of $\Z_{2^{n+1}}\times \Z_2$ commutes with $\theta_{2,e}$ and 
transforms $t$ to $t+2$, again we only need to consider $t=0$ and $t=1$.  
Since $\cG(q_{1,e})=i^r$, we have 
$$\cG(q_{1,o})
=-i^{-(1+2^{n-3}+2^{2n-5})r}(-1)^{\frac{2^{n-3}(2^{n-3}-1)}{2}}(-1)^{n\frac{(1+2^{n-2})^2r^2-1}{8}}\cG(q_{2,o}).$$
Thus $|\Gamma_o|\equiv |G_o|\mod 4$ for $n=3$ and $|\Gamma_o|\equiv |G_o|+2\mod 4$ for $n\geq 4$. 
Now $|\Gamma_o|=2^{n-2}|G_o|+1$ implies that $|G_o|\equiv 3\mod 4$, and $|\Gamma_o|\equiv 3\mod 4$ for $n=3$ and 
$|\Gamma_o|\equiv 1\mod 4$ for $n\geq 4$. 
The rest of computation is similar to \ref{caseabove} above.
\subsubsection{Assume $G_e=\Z_{2^n}$ with $n\geq 3$, and $\theta_{1,e}(x)=-x$.} 
In this case $U\cong \Z_2$, and $|\Gamma|=2^n|G_o|+2^3=2^3(2^{n-3}|G_o|+1)$. 
Let $q_{1,e}(x)=\zeta_{2^{n+1}}^{rx^2}$ with $r\in \{1,-1,3,-3\}$. 
We have $\cG(q_{1,e})=\zeta_8^r(-1)^{n\frac{r^2-1}{8}}$, and  $q_1(2^{n-1})=\zeta_{2^{n+1}}^{r2^{2n-2}}=1$. 
We consider the cases $n=3$ and $n>3 $ separately.\\

\paragraph{Assume n=3} In this case $G_e=\Z_8$, $\Gamma_e=\Z_{2^m}$, and $|G_o|=2^{m-3}|\Gamma_o|-1$ with $m\geq 4$. 
Assume $q_{2,e}(x)=\zeta_{2^{m+1}}^{sx^2}$ with $s\in \{1,-1,3,-3\}$ and $\theta_2(x)=-x$. 
Then $\cG(q_{1,e})=(-1)^{\frac{r^2-1}{8}}\zeta_8^r$ and $\cG(q_{2,e})=(-1)^{m\frac{s^2-1}{8}}\zeta_8^s$. 
Thus 
$$\cG(q_{2,o})=-(-1)^{\frac{r^2-1}{8}}(-1)^{m\frac{s^2-1}{8}}\zeta_8^{r-s}\cG(q_{1,o}).$$ 
Direct computation shows that (FS2) gives no restriction. 

For the smallest example $G=\Z_8$, $\Gamma=\Z_{16}$, 
there are four possibilities for $(r,s)$: $(1,-3)$, $(-1,3)$, $(3,3)$, and $(-3,-3)$.  \\

\paragraph{Assume $n > 3$} In this case $|\Gamma_e|=8$ and $|\Gamma_o|=2^{n-3}|G_o|+1$.  
Assume $\Gamma_e=\Z_8$, $q_{2,e}(x)=\zeta_{16}^{sx^2}$, and $\theta_{2,e}(x)=-x$. 
Then $\cG(q_1)=(-1)^{n\frac{r^2-1}{8}}\zeta_8^r\cG(q_{1,o})$ and $\cG(q_2)=(-1)^{\frac{s^2-1}{8}}\zeta_8^s\cG(q_{2,o})$, 
and hence 
$$ (-1)^{n\frac{r^2-1}{8}}\zeta_8^r\cG(q_{1,o})+(-1)^{\frac{s^2-1}{8}}\zeta_8^s\cG(q_{2,o})=0.$$
Direct computation shows that (FS2) gives no restriction. 

For the smallest example $G=\Z_{16}$, $\Gamma=\Z_8\times \Z_3$, 
there are four possibilities, we have $r-s\equiv 2 \mod 4$, and $\cG(q_{2,o})=(-1)^{\frac{s^2-1}{8}}\zeta_8^{r-s}$.

\section{Near-group over $\Z_2$ family}
\subsection{General formulae} 
Throughout this section we assume that $(G,q_1)$ and $(\Gamma,q_2)$ are metric groups satisfying 
$G_2\cong \Gamma_2\cong \Z_2$ and $c:=\cG(q_1)=-\cG(q_2)$.   
We denote $G_2=\{0,g_0\}$, $\Gamma_2=\{0,\gamma_0\}$, and assume that $\inpr{g_0}{g_0}=-\inpr{\gamma_0}{\gamma_0}$ 
and $c^2=q_1(g_0)^{-1}q_2(\gamma_0)$. 
We set 
$$s=c\frac{q_1(g_0)^{-1}+q_2(\gamma_0)^{-1}}{\sqrt{2}}.$$ 
Since $q_1(g_0)^2=\inpr{g_0}{g_0}$ and $q_2(\gamma_0)^2=\inpr{\gamma_0}{\gamma_0}$, we get
$$s^2=c^2\frac{\inpr{g_0}{g_0}+2q_1(g_0)^{-1}q_2(\gamma_0)^{-1}+\inpr{\gamma_0}{\gamma_0}}{2}=q_1(g_0)^{-2}=\inpr{g_0}{g_0}.$$

We set $a=1/\sqrt{|G|}$, $b=1/\sqrt{|\Gamma|}$. 
We choose and fix subsets $G_*\subset G$ and $\Gamma_*\subset \Gamma$ satisfying 
$$G=G_2\sqcup G_*\sqcup -G_*,\quad \Gamma=\Gamma_2\sqcup \Gamma_*\sqcup -\Gamma_*.$$
We set 
$$J=\{0,\pi\}\sqcup (\{g_0\}\times \{1,-1\})\sqcup G_*\sqcup (\{\gamma_0\}\times \{1,-1\})\sqcup \Gamma_*.$$
We denote $J_1=\{g_0\}\times \{1,-1\}$ and $J_2=\{\gamma_0\}\times \{1,-1\}$. 
We use letters, $g,g',g'',h\cdots$ for elements of $G$, and $\gamma,\gamma',\gamma'',\xi\cdots$ for elements of $\Gamma$. 

We introduce an involution of $J$ by setting $\overline{(g_0,\varepsilon)}=(g_0,\inpr{g_0}{g_0}\varepsilon)$, 
$\overline{(\gamma_0,\varepsilon)}=(\gamma_0,\inpr{g_0}{g_0}\varepsilon)$, and leaving the other indices fixed. 

\begin{definition}\label{ST4} Under the above assumptions, we define $J$ by $J$ matrices $S$, $T$, and $C$ by  
\begin{align*}
S=&\tiny\left(
\begin{array}{cccccc}
\frac{a-b}{2}&\frac{a+b}{2}&\frac{a}{2}&a&\frac{b}{2}&b  \\
\frac{a+b}{2}&\frac{a-b}{2}&\frac{a}{2}&a&-\frac{b}{2}&-b\\
\frac{a}{2}&\frac{a}{2}&(\frac{a}{2}+\frac{\varepsilon\varepsilon's}{2\sqrt{2}})\inpr{g_0}{g_0}&a\inpr{g_0}{g'}&\frac{\varepsilon\varepsilon's}{2\sqrt{2}}&0  \\
a&a&a\inpr{g}{g_0}&a(\inpr{g}{g'}+\overline{\inpr{g}{g'}})&0&0  \\
\frac{b}{2}&-\frac{b}{2}&\frac{\varepsilon\varepsilon' s}{2\sqrt{2}}&0&
-(\frac{b}{2}-\frac{\varepsilon\varepsilon's}{2\sqrt{2}})\inpr{\gamma_0}{\gamma_0} &-b\inpr{\gamma_0}{\gamma'}  \\
b&-b&0&0&-b\inpr{\gamma}{\gamma_0} &-b(\inpr{\gamma}{\gamma'}+\overline{\inpr{\gamma}{\gamma'}}) 
\end{array}
\right),
\end{align*}
$$T=\mathrm{Diag}(1,1,q_1(g_0),q_1(g),q_2(\gamma_0),q_2(\gamma)),$$
and $C_{j,j'}=\delta_{\overline{j},j'}=\delta_{j,\overline{j'}}$ (where the six blocks in $S$ and $T$ are indexed by $\{0\}$,$\{\pi \}$,$J_1 $, $G_*$, $J_2$, and $\Gamma_*$, respectively).  
\end{definition}

Direct computation shows the following.
\begin{lemma}\label{relations4} Let the notation be as in Definition \ref{ST4}. 
Then $S$, $T$, and $C$ are unitary matrices satisfying $S^2=C$, $(ST)^3=cC$ and $T C=CT$. 
\end{lemma}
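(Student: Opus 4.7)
The plan is to verify the three identities $S^2 = C$, $(ST)^3 = cC$, and $TC = CT$ by a direct block-wise computation, treating $S$ as a $6\times 6$ block matrix with row/column indices $\{0\}, \{\pi\}, J_1, G_*, J_2, \Gamma_*$. The guiding principle is that, up to the gluing data at the fixed points $g_0$ and $\gamma_0$, the matrix $S$ is essentially a ``doubled and folded'' version of the pointed modular $S$-matrices $S^{(G,q_1)}$ and $S^{(\Gamma,q_2)}$ under the inversion involution $g\mapsto -g$, so each required identity should reduce to the corresponding pointed identity plus a bounded amount of bookkeeping around $g_0$ and $\gamma_0$. The three scalar relations that make the construction work---$\cG(q_1) = -\cG(q_2) = c$, $s^2 = \inpr{g_0}{g_0}$, and $c^2 = q_1(g_0)^{-1}q_2(\gamma_0)$---are precisely what is needed to glue these ingredients.

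I would first dispatch $CT = TC$ trivially: $T$ is diagonal, $q_i(g) = q_i(-g)$ on $G_*$ and $\Gamma_*$, both elements of $J_1$ share $T$-value $q_1(g_0)$, and both of $J_2$ share $q_2(\gamma_0)$. Next, for $S^2 = C$ I would compute each block of $S^2$ and match it against the corresponding block of $C$. On pure-$G$ blocks such as $G_*\times G_*$, character orthogonality $\sum_{g\in G}\inpr{g}{h} = |G|\delta_{h,0}$ handles the bulk, with the rows indexed by $\{0,\pi\}$ and $J_1$ together reconstituting the contributions of $h\in G_2$: the $(a\pm b)/2$ pattern at $\{0,\pi\}$ and the $\pm\varepsilon\varepsilon' s/(2\sqrt 2)$ coefficients on $J_1$ precisely reproduce the $h=0$ and $h=g_0$ terms, and analogously on the $\Gamma$-side. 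The most delicate checks are the vanishing cross-block entries $(S^2)_{J_1,J_2}$, $(S^2)_{G_*,\Gamma_*}$, and $(S^2)_{0,\pi}$: on these the $G$-side and $\Gamma$-side partial character sums must cancel, which is arranged by the counts $a^2(1+|G_*|) = 1/2 = b^2(1+|\Gamma_*|)$ together with, for the $J_1\times J_2$ entry, the identity $s^2 = \inpr{g_0}{g_0} = -\inpr{\gamma_0}{\gamma_0}$.

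For $(ST)^3 = cC$, I would work entry-by-entry and reduce to the standard pointed Gauss sum identity $(S^{(G,q_1)}T^{(G,q_1)})^3 = \cG(q_1)C^{(G,q_1)}$ applied to $G$ and to $\Gamma$ separately. The pure $G$-blocks produce a factor $\cG(q_1) = c$; the pure $\Gamma$-blocks produce $\cG(q_2) = -c$, but the sign is reconciled by the $(+,-)$ alternation of the $\pi$-row versus the $0$-row across the $\Gamma$-columns of $S$, so that the net contribution from the $\Gamma$-side is again $+c$. The genuinely new input appears when the triple product touches $J_1$ or $J_2$: here $c^2 = q_1(g_0)^{-1}q_2(\gamma_0)$ forces the Gauss sum factors on the two sides into compatible phases at $g_0$ and $\gamma_0$, and the defining formula $s = c(q_1(g_0)^{-1} + q_2(\gamma_0)^{-1})/\sqrt 2$ must be substituted wherever $s$ appears so that the mixed $J_1\times J_2$ contribution aligns with the pointed identity. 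The main obstacle is bookkeeping length rather than conceptual difficulty: no new ingredient is needed beyond the three algebraic relations on $c$, $s$, and $q_1(g_0), q_2(\gamma_0)$, combined with character orthogonality and the pointed Gauss sum identity.
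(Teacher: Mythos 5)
Your plan is correct and follows essentially the same route as the paper: a direct block-by-block verification using character orthogonality for $S^2=C$ and $TC=CT$, and an entrywise reduction of $(ST)^3=cC$ to Gauss sums over $G$ and $\Gamma$, with the relations $\cG(q_1)=-\cG(q_2)=c$, $s^2=\inpr{g_0}{g_0}=-\inpr{\gamma_0}{\gamma_0}$, $c^2=q_1(g_0)^{-1}q_2(\gamma_0)$, and the defining formula for $s$ supplying exactly the cancellations you identify. The only cosmetic difference is that the paper verifies unitarity $SS^*=I$ and then deduces $S^2=C$ from the symmetry $\overline{S_{x,y}}=S_{\overline{x},y}$, rather than computing $S^2$ directly, but the computations are the same.
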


\begin{lemma}\label{Verlinde4} Let the notation be as in Definition \ref{ST4}. Then we have 
$$N_{\pi,\pi,\pi}=\frac{4}{|\Gamma|-|G|},\; 
N_{\pi,\pi,(g_0,\varepsilon)}=\frac{2}{|\Gamma|-|G|},\;
N_{\pi,\pi,g}=\frac{4}{|\Gamma|-|G|},$$
$$N_{\pi,\pi,(\gamma_0,\varepsilon)}=\frac{2}{|\Gamma|-|G|},\;
N_{\pi,\pi,\gamma}=\frac{4}{|\Gamma|-|G|},$$
$$N_{\pi,(g_0,\varepsilon),(g_0,\varepsilon')}=\frac{1}{|\Gamma|-|G|}+\frac{1}{2},\;
N_{\pi,(g_0,\varepsilon),g}=\frac{2}{|\Gamma|-|G|},$$
$$N_{\pi,(g_0,\varepsilon),(\gamma_0,\varepsilon')}
=\frac{1}{|\Gamma|-|G|}+\frac{\varepsilon\varepsilon'}{2}, \:
N_{\pi,(g_0,\varepsilon),\gamma}=\frac{2}{|\Gamma|-|G|},$$
$$N_{\pi,g,g'}=\frac{4}{|\Gamma|-|G|}+\delta_{g,g'},\;
N_{\pi,g,(\gamma_0,\varepsilon)}=\frac{2}{|\Gamma|-|G|},\;
N_{\pi,g,\gamma}=\frac{4}{|\Gamma|-|G|},$$
$$N_{\pi,(\gamma_0,\varepsilon),(\gamma_0,\varepsilon')}=\frac{1}{|\Gamma|-|G|}-\frac{1}{2},\;
N_{\pi,(\gamma_0,\varepsilon),\gamma}=\frac{2}{|\Gamma|-|G|},\; 
N_{\pi,\gamma,\gamma'} =\frac{4}{|\Gamma|-|G|}-\delta_{\gamma,\gamma'},$$
$$N_{(g_0,\varepsilon),(g_0,\varepsilon'),(g_0',\varepsilon'')}
=\frac{1}{2(|\Gamma|-|G|)}
 +\frac{\varepsilon\varepsilon'+\varepsilon'\varepsilon''+\varepsilon''\varepsilon}{4},$$
$$N_{(g_0,\varepsilon),(g_0,\varepsilon'),g} 
=\frac{1}{|\Gamma|-|G|}+\frac{\varepsilon\varepsilon'\inpr{g_0+g}{g_0}}{2},$$
$$N_{(g_0,\varepsilon),(g_0,\varepsilon'),(\gamma_0,\varepsilon'')}
 =\frac{1}{2(|\Gamma|-|G|)}
 +\frac{\varepsilon''(\varepsilon+\varepsilon')\inpr{g_0}{g_0}+\varepsilon \varepsilon'}{4},$$
$$N_{(g_0,\varepsilon),(g_0,\varepsilon'),\gamma}
 =\frac{1}{|\Gamma|-|G|}-\frac{\varepsilon\varepsilon' \inpr{g_0}{g_0}\inpr{\gamma_0}{\gamma}}{2},$$
$$N_{(g_0,\varepsilon),g,g'}
 =\frac{2}{|\Gamma|-|G|}+\delta_{g_0+g+g',0}+\delta_{g_0+g-g',0},$$
$$N_{(g_0,\varepsilon),g,(\gamma_0,\varepsilon')}
=\frac{1}{|\Gamma|-|G|}+\frac{\varepsilon\varepsilon'\inpr{g}{g_0}}{2},\;
N_{(g_0,\varepsilon),g,\gamma}=\frac{2}{|\Gamma|-|G|},$$
$$N_{(g_0,\varepsilon''),(\gamma_0,\varepsilon),(\gamma_0,\varepsilon')}
=\frac{1}{2(|\Gamma|-|G|)}+\frac{\varepsilon\varepsilon'-\varepsilon''(\varepsilon+\varepsilon')\inpr{g_0}{g_0}}{4},$$
$$N_{(g_0,\varepsilon),\gamma,\gamma'}=\frac{2}{|\Gamma|-|G|},$$
$$N_{(g_0,\varepsilon),(\gamma_0,\varepsilon'),\gamma}
  =\frac{1}{|\Gamma|-|G|}+\frac{\varepsilon\varepsilon'\inpr{\gamma}{\gamma_0}}{2},$$
$$N_{g,g',g''}=\frac{4}{|\Gamma|-|G|} 
 +\delta_{g+g'+g'',0}+\delta_{-g+g'+g'',0}
 +\delta_{g-g'+g'',0}+\delta_{g+g'-g'',0},$$
$$N_{g,g',(\gamma_0,\varepsilon)}
 =\frac{2}{|\Gamma|-|G|},\;
N_{g,g',\gamma}=\frac{4}{|\Gamma|-|G|},$$
$$N_{g,(\gamma_0,\varepsilon),(\gamma_0,\varepsilon')}
 =\frac{1}{|\Gamma|-|G|}+\frac{\varepsilon\varepsilon'\inpr{g_0+g}{g_0} }{2},$$
 $$N_{g,(\gamma_0,\varepsilon),\gamma}=\frac{2}{|\Gamma|-|G|},\;
 N_{g,\gamma,\gamma'}=\frac{4}{|\Gamma|-|G|},$$
$$N_{(\gamma_0,\varepsilon),(\gamma_0,\varepsilon'),(\gamma_0,\varepsilon'')}
=\frac{1}{2(|\Gamma|-|G|)}+\frac{\varepsilon\varepsilon'+\varepsilon'\varepsilon''+\varepsilon''\varepsilon}{4},$$
$$N_{(\gamma_0,\varepsilon),(\gamma_0,\varepsilon'),\gamma}
=\frac{1}{|\Gamma|-|G|}+\frac{\varepsilon\varepsilon'\inpr{\gamma_0+\gamma}{\gamma_0}}{2},$$
$$N_{(\gamma_0,\varepsilon),\gamma,\gamma'}
=\frac{2}{|\Gamma|-|G|}-\delta_{\gamma_0+\gamma+\gamma',0}-\delta_{\gamma_0+\gamma-\gamma',0},$$
$$N_{\gamma,\gamma',\gamma''}
=\frac{4}{|\Gamma|-|G|}
 -\delta_{\gamma+\gamma'+\gamma'',0}-\delta_{-\gamma+\gamma'+\gamma'',0}
 -\delta_{\gamma-\gamma'+\gamma'',0}-\delta_{\gamma+\gamma'-\gamma'',0}.$$
\end{lemma}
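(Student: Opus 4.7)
The proof is a direct computation via the Verlinde formula $N_{ijk} = \sum_{a \in J} \frac{S_{ai} S_{aj} S_{ak}}{S_{0,a}}$ using the explicit matrix $S$ of Definition~\ref{ST4}. The plan is to split the sum over $a \in J$ according to the block decomposition $\{0\} \sqcup \{\pi\} \sqcup J_1 \sqcup G_* \sqcup J_2 \sqcup \Gamma_*$, on which the quantum dimensions $S_{0,a}$ take the constant values $\frac{a-b}{2}$, $\frac{a+b}{2}$, $\frac{a}{2}$, $a$, $\frac{b}{2}$, $b$ respectively, and then to identify each of the listed coefficients as an appropriate sum of contributions from these six blocks.

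The universal factor $\frac{1}{|\Gamma|-|G|}$ appearing in the bulk of every fusion coefficient arises from combinations with $a^2 - b^2 = \frac{|\Gamma|-|G|}{|G||\Gamma|}$ in the denominator. Combining the $\{0\}$-block contribution with the $\{\pi\}$-block contribution, and similarly the $J_1$ and $J_2$ blocks, produces denominators of the form $a^2 - b^2$ which, together with appropriate contributions from the $G_*$ and $\Gamma_*$ blocks, reduce to multiples of $\frac{1}{|\Gamma|-|G|}$. As a sanity check, in the $N_{\pi,\pi,\pi}$ case the $\{0\}\cup\{\pi\}$ blocks contribute $\frac{a^4+6a^2b^2+b^4}{2(a^2-b^2)}$ while the middle four blocks contribute $-\frac{a^2-b^2}{2}$, summing to $\frac{4a^2b^2}{a^2-b^2} = \frac{4}{|\Gamma|-|G|}$, matching the claim.

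The delta-function terms $\delta_{\pm g \pm g' \pm g'',0}$ (and analogues on $\Gamma$) in fusion coefficients involving indices in $G_*$ or $\Gamma_*$ arise from character-sum arguments. The $G_*$-block rows of $S$ involve the symmetrized bicharacter $\inpr{g}{g'}+\overline{\inpr{g}{g'}}$, so summing a product of three such entries against a triple of external indices and extending $\sum_{g \in G_*}$ to $\sum_{g \in G}$ by inversion symmetry (up to the $G_2$ correction) produces a sum of monomials $\inpr{g}{g_1 \pm g_2 \pm g_3}$, each of which collapses by Pontryagin orthogonality to a Kronecker delta. The analogous sums over $\Gamma_*$ yield corresponding delta-terms with the overall minus sign visible in the $\Gamma_*$-row of $S$.

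For coefficients involving the split indices $(g_0,\varepsilon) \in J_1$ or $(\gamma_0,\varepsilon) \in J_2$, one must track the sign factors $\varepsilon\varepsilon'$ and the scalar $s$, using the identity $s^2 = \inpr{g_0}{g_0} = -\inpr{\gamma_0}{\gamma_0}$ derived just before Definition~\ref{ST4}; the hypothesis $c^2 = q_1(g_0)^{-1} q_2(\gamma_0)$ is precisely what guarantees that the various phase factors combine consistently into the rational numbers displayed. The main obstacle is combinatorial bookkeeping rather than any conceptual subtlety: there are roughly thirty distinct trilinear coefficients, each decomposing into six block contributions, and the $J_1, J_2$ cross-sums interact with the $G_*$, $\Gamma_*$ sums through the middle entries of $S$. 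This is why the paper defers the full verification to the online appendix; the organization above reduces each case to an elementary character-sum calculation.
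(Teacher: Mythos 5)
Your proposal is correct and follows essentially the same route as the paper: the appendix proof of this lemma is exactly the block-by-block evaluation of the Verlinde sums you describe, with the $\{0\}\sqcup\{\pi\}$ and $J_1\sqcup J_2$ contributions producing the $\frac{4a^2b^2}{a^2-b^2}=\frac{4}{|\Gamma|-|G|}$-type terms and the $G_*$, $\Gamma_*$ sums completed to full character sums yielding the Kronecker deltas, and your sanity check for $N_{\pi,\pi,\pi}$ reproduces the paper's computation verbatim. The only caveat is that your write-up is a strategy with one case verified rather than the full thirty-odd cases, but the remaining cases are the same elementary bookkeeping and are carried out in the paper's online appendix.
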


\begin{theorem} Let the notation be as in Definition \ref{ST4}. 
Then all the fusion coefficients $N_{ijk}$ are non-negative integers if and only if $|\Gamma|-|G|=2$. 
\end{theorem}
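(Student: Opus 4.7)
The plan is to prove both implications separately; the forward direction (``only if'') follows quickly by inspecting two specific formulas, while the backward direction (``if'') is a routine but lengthy case-by-case verification. For the forward direction, $N_{\pi,\pi,\pi} = \frac{4}{|\Gamma|-|G|}$ being a non-negative integer forces $|\Gamma|-|G|$ to be a positive integer divisor of $4$, so $|\Gamma|-|G| \in \{1, 2, 4\}$. The requirement that $N_{\pi,(g_0,\varepsilon),(g_0,\varepsilon')} = \frac{1}{|\Gamma|-|G|} + \frac{1}{2}$ also be an integer then rules out the two extremes (evaluating to $\frac{3}{2}$ and $\frac{3}{4}$ respectively), leaving only $|\Gamma|-|G| = 2$.

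For the backward direction, substitute $|\Gamma|-|G| = 2$ into each formula of Lemma \ref{Verlinde4}. The purely fractional building blocks become $\frac{4}{|\Gamma|-|G|} = 2$, $\frac{2}{|\Gamma|-|G|} = 1$, $\frac{1}{|\Gamma|-|G|} \pm \frac{1}{2} \in \{0, 1\}$, and $\frac{1}{2(|\Gamma|-|G|)} = \frac{1}{4}$. For the formulas involving sign combinations such as $\frac{\varepsilon\varepsilon' \inpr{g_0}{g_0} \inpr{\gamma_0}{\gamma}}{2}$, $\frac{\varepsilon\varepsilon' + \varepsilon'\varepsilon'' + \varepsilon''\varepsilon}{4}$, or $\frac{\varepsilon''(\varepsilon + \varepsilon')\inpr{g_0}{g_0} + \varepsilon\varepsilon'}{4}$, note that the brackets $\inpr{g_0}{g_0}$, $\inpr{\gamma_0}{\gamma_0}$, $\inpr{g}{g_0}$, $\inpr{\gamma}{\gamma_0}$ all lie in $\{\pm 1\}$ since $g_0$ and $\gamma_0$ are of order two. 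A short case split (for the three-$\varepsilon$ formulas, distinguishing between $\varepsilon = \varepsilon'$ and $\varepsilon = -\varepsilon'$) then shows that each such expression evaluates to $0$ or $1$. Formulas with \emph{added} Kronecker deltas, such as $N_{\pi,g,g'}$, $N_{(g_0,\varepsilon),g,g'}$, and $N_{g,g',g''}$, are immediately non-negative integers.

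The only genuine subtlety concerns non-negativity for the three formulas with subtracted Kronecker deltas: $N_{\pi,\gamma,\gamma'} = 2 - \delta_{\gamma,\gamma'}$, $N_{(\gamma_0,\varepsilon),\gamma,\gamma'} = 1 - \delta_{\gamma_0+\gamma+\gamma',0} - \delta_{\gamma_0+\gamma-\gamma',0}$, and $N_{\gamma,\gamma',\gamma''} = 2 - \sum_{k=1}^{4} \delta_k$. The first is trivially in $\{1,2\}$. For the other two, the key observation is that $\Gamma_* \cap \Gamma_2 = \emptyset$ by construction, so $2\gamma \neq 0$ for every $\gamma \in \Gamma_*$. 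A pairwise check shows that any two simultaneously nonzero subtracted deltas would force $2\gamma_j = 0$ for some index $j$ --- for example, in $N_{\gamma,\gamma',\gamma''}$, combining the first and third delta identities yields $2\gamma' = 0$, and all six pairs of deltas lead to an analogous contradiction. Hence at most one subtracted delta is nonzero, and the values lie in $\{0,1\}$ and $\{1,2\}$ respectively. The main obstacle in the argument is the bookkeeping required to enumerate all formulas in Lemma \ref{Verlinde4}; no individual case presents a conceptual difficulty.
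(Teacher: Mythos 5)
Your proof is correct and follows exactly the route the paper intends: the theorem is stated as an immediate consequence of Lemma \ref{Verlinde4}, with the "only if" direction forced by $N_{\pi,\pi,\pi}=\tfrac{4}{|\Gamma|-|G|}$ together with $N_{\pi,(g_0,\varepsilon),(g_0,\varepsilon')}=\tfrac{1}{|\Gamma|-|G|}+\tfrac12$, and the "if" direction by checking each formula at $|\Gamma|-|G|=2$. Your handling of the subtracted Kronecker deltas via $2\gamma\neq 0$ for $\gamma\in\Gamma_*$ correctly supplies the one detail the paper leaves implicit.
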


We assume that $|\Gamma|=|G|+2$ in the rest of this section. 

\begin{conjecture}\label{con4} Let $A$ be an odd abelian group and let $G=\Z_{2^n}\times A$ with $n \geq 1 $. 
Then the pair
$$(S^{(G,\overline{q_1})}\otimes S,T^{(G,\overline{q_1})}\otimes T)$$
is the modular data of the Drinfeld center of the $\Z_2$-de-equivariantization of a near-group category for 
$\Z_{2^{n+1}}\times A$ with multiplicity $2^{n+1}|A|$. 
\end{conjecture}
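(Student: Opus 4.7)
The plan is to compute the Drinfeld center of the stated de-equivariantized quadratic category directly and match it with the proposed tensor product formula, following the same strategy used in the appendix to verify an instance with $n = 1$.

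First, I would realize the near-group category $\mathcal{N}$ for $\Z_{2^{n+1}} \times A$ with multiplicity $2^{n+1}|A|$ as endomorphisms of a Cuntz algebra, in the framework of \cite{MR1832764, MR3827808}, and then form its $\Z_2$-de-equivariantization $\cC$. By \cite[Theorem 12.5]{MR3635673}, $\cC$ is a quadratic category of type $(\Z_2, \Z_{2^{n-1}} \times A, 1)$ with non-self-dual $\rho$ (with $\Z_{2^{n-1}}$ interpreted as trivial when $n = 1$). Next, I would compute $\cZ(\cC)$ via the tube algebra approach, listing the irreducible representations of $\Tube(\cC)$ to identify the simple objects of $\cZ(\cC)$ and extract its modular data; equivalently, one has $\cZ(\cC) \simeq \cZ(\mathcal{N})^0_B$ for the étale algebra $B$ corresponding to the $\Z_2$ action, so granting Conjecture \ref{near-group} for $\mathcal{N}$ one can start from the known modular data of $\cZ(\mathcal{N})$ and compute the category of local modules. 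Finally, I would exhibit a $\widehat{G}$-grading on $\cC$ with $G = \Z_{2^n} \times A$, coming from the residual invertibles of $\mathcal{N}$ that survive the de-equivariantization, and use it to factor the resulting modular data as $(S^{(G,\overline{q_1})} \otimes S, T^{(G,\overline{q_1})} \otimes T)$, with the second factor matching Definition~\ref{ST4} for a specific choice of input $(G, q_1)$ and $(\Gamma, q_2)$ determined by $A$ and $n$.

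The main obstacle is the first step: the near-group categories $\mathcal{N}$ in question have not been shown to exist for most pairs $(n, A)$, so the tube algebra computation can only be carried out unconditionally in the known cases. Even assuming existence (and assuming Conjecture \ref{near-group} for $\mathcal{N}$), matching the explicit block structure of Definition~\ref{ST4}, with its $\sqrt{2}$-scaled entries and $\{\pm 1\}$ labels attached to the order-two elements $g_0$ and $\gamma_0$, to concrete representations of $\Tube(\cC)$ involves delicate sign and basis choices coming from the de-equivariantization. A realistic first pass is to verify the conjecture for $n = 1$ and a few small odd $A$ by direct Cuntz algebra computation, extending the appendix calculation.
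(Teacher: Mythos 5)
This statement is a conjecture, and the paper offers no general proof --- only a verification of the single instance $n=1$, $A=\{0\}$ in the online appendix (Section E), carried out exactly as you propose: realizing the relevant category via Cuntz-algebra endomorphisms, computing the tube algebra and its half-braidings, and matching the resulting $(S,T)$ against Definition~\ref{ST4}. Your plan, including the honest identification of the unknown existence of the input near-group categories as the obstruction to anything beyond case-by-case verification, matches the paper's approach.
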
 

The conjecture is true for $n=1$ and trivial $A$ (see the online appendix). 
More generally, we have the following. 

\begin{conjecture} Let $\cC$ be a quadratic category of type $(\Z_2,Q,1)$ with non-self-dual $\rho$ and with the $\text{Vec}_{\Z_2}$ subcategory having 
non-trivial associator. 
Then the modular data of the Drinfeld center $\cZ(\cC)$ is given by 
$$(S^{G,\overline{q_1}}\otimes S,T^{(G,\overline{q_1})}\otimes T)$$
with $G=\Z_2\times Q$. 
\end{conjecture}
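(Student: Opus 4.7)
The plan is to extend the verification of Conjecture \ref{con4} for $n=1$ (performed in the online appendix for the generalized Haagerup category for $\Z_2$) to the general quadratic setting, combining the tube-algebra computations of \cite{MR1832764,MR3635673} with a Müger-style factorization of $\cZ(\cC)$. The guiding idea is that the non-trivial associator on the $\text{Vec}_{\Z_2}$ subcategory should force the pointed part of $\cZ(\cC)$ to become a \emph{modular} subcategory, pinning down the first tensor factor $(G,\overline{q_1})$; the rank of the residual factor is then forced to be $|\Gamma|+|G|+4$ and its structure will match Definition \ref{ST4}.

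First I would realise $\cC$ via endomorphisms of a Cuntz algebra following \cite{MR1228532,MR1832764,MR3635673,MR3827808}: the fusion data of type $(\Z_2,Q,1)$ with non-self-dual $\rho$ give a system of polynomial equations for the structural constants, and from any solution one constructs $\Tube(\cC)$ explicitly, with simple objects of $\cZ(\cC)$ parametrised by its irreducible representations. Next I would extract the pointed subcategory of $\cZ(\cC)$ by lifting the invertible simple objects of $\cC$, equipped with all their half-braidings indexed by $\Z_2\times Q$; using the hypothesis that the associator on $\text{Vec}_{\Z_2}$ is nontrivial, the induced quadratic form on the $\Z_2$-summand of $G$ takes values in $\{\pm i\}$, hence is non-degenerate, and combined with the non-degeneracy of the $Q$-part this yields a pointed modular subcategory identified with $\text{Vec}_{(G,\overline{q_1})}$. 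Applying Müger's centralizer theorem then produces a braided factorization
\[
\cZ(\cC)\;\simeq\;\mathrm{Vec}_{(G,\overline{q_1})}\boxtimes \cD
\]
for a uniquely determined modular tensor category $\cD$ of rank $|\Gamma|+|G|+4$.

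It remains to compute the modular data of $\cD$ from the tube algebra and match it with $(S,T)$ of Definition \ref{ST4}. The distinguished simple objects $0$, $\pi$, $(g_0,\pm 1)$, $(\gamma_0,\pm 1)$ should correspond to the representations of $\Tube(\cC)$ supported on the identity and on $\rho$ (the two split components appearing because $g_0$ and $\gamma_0$ have order two), while $G_*$ and $\Gamma_*$ parametrise the remaining orbits. The signs $\varepsilon,\varepsilon'$ and the scalar $s$ arise from the eigenvalues of the half-braiding of $\rho$ restricted to order-two invariant subspaces, and the constraint $|\Gamma|=|G|+2$ emerges from the global dimension equation $\dim\cZ(\cC)=(\dim\cC)^2$ combined with the Gauss sum matching $\cG(q_2)=-\cG(q_1)$.

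The main obstacle will be the explicit identification of the second metric group $(\Gamma,q_2)$: whereas $(G,q_1)$ is visible directly from the fusion and associator data of $\cC$, the group $\Gamma$ governs the half-braidings on $\rho$ and only appears after solving the tube-algebra eigenvalue problem. Showing that every solution of that eigenvalue problem organises into a single involutive metric group satisfying the hypotheses of Section 4, and that the resulting $S$-matrix entries precisely reproduce the $\tfrac{\varepsilon\varepsilon' s}{2\sqrt{2}}$ terms and the cross-block zeros in Definition \ref{ST4}, is the step where substantial new work will be required. The Cuntz algebra realisation should help here: the structural constants for $\rho$ encode $(\Gamma,q_2)$ in essentially the same way that the near-group data encode the corresponding metric group in \cite[Theorem 12.5]{MR3635673}, and one can hope to transfer that computation to the present setting via the de-equivariantization functor.
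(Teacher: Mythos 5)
This statement is a \emph{conjecture} in the paper, and the paper offers no proof of it: the authors only verify the single known instance ($Q$ trivial, i.e.\ the $\Z_2$-de-equivariantization of the $\Z_4$ near-group category, giving $G=\Z_2$, $\Gamma=\Z_4$) by an explicit Cuntz-algebra and tube-algebra computation in the online appendix. Your strategy does mirror that computation in spirit --- realize $\cC$ on a Cuntz algebra, compute $\Tube(\cC)$, split off the pointed modular part generated by the lift of $\alpha_1$ (the appendix does exactly this with $\cZ(\cC)=\{0,\tilde\alpha\}\boxtimes\cD$ and $\cE_\alpha(\alpha)=\zeta^2=\pm i$), and then match the residual factor against Definition \ref{ST4}. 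So the approach is the right one, but what you have written is a research program, not a proof, and the steps you defer are precisely the open content of the conjecture.

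Concretely: (i) the categories $\cC$ of type $(\Z_2,Q,1)$ with non-self-dual $\rho$ are not known to exist for general $Q$, so there is nothing to run the tube-algebra machine on; (ii) your Müger factorization step is circular as stated --- non-degeneracy of the induced quadratic form on the lifted pointed subcategory, and even the identification of that subcategory with $\Z_2\times Q$ rather than some extension or proper subgroup, can only be read off \emph{after} solving the half-braiding equations for the invertibles, and for the $Q$-part one must also show the relevant lifts mutually centralize; (iii) the identification of $(\Gamma,q_2)$ from the half-braidings of $\rho$ is exactly the ``heavy computation'' the authors describe as mysterious even in the near-group case, and no uniform argument for it is known --- this is why the statement is a conjecture rather than a theorem. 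Two smaller points: the rank of the residual factor $\cD$ is $4+(|G|+|\Gamma|)/2=|G|+5$, not $|\Gamma|+|G|+4$ as you wrote; and the constraint $|\Gamma|=|G|+2$ is obtained in the paper from integrality of the Verlinde coefficients (Theorem 4.4), not from a global-dimension count, though a dimension count would also constrain $|\Gamma|$ once the factorization is established.
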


Note that we have 
$$N_{\pi,\pi,\pi}=2, \quad N_{(g_0,\varepsilon),(g_0,\varepsilon),(g_0,\varepsilon)}=1,\quad 
N_{g,g,g}=2+\delta_{3g,0},$$
$$N_{(\gamma_0,\varepsilon),(\gamma_0,\varepsilon),(\gamma_0,\varepsilon)}=1,\quad  
N_{\gamma,\gamma,\gamma}=2-\delta_{3\gamma,0}.$$ 
Direct computation shows the following.
\begin{lemma}\label{FS4} We have
$$\nu_m(\pi)=\frac{|\cG(q_1,m)+\cG(q_2,m)|^2}{2},$$
$$\nu_m((g_0,\varepsilon))=\frac{|\cG(q_1,m)+\cG(q_2,m)|^2}{4}+\frac{\delta_{mg_0,0}q_1(g_0)^m}{2},$$
$$\nu_m(g)=\frac{|\cG(q_1,m)+\cG(q_2,m)|^2}{2}+\delta_{mg,0}q_1(g)^m,$$
$$\nu_m((\gamma_0,\varepsilon))=\frac{|\cG(q_1,m)+\cG(q_2,m)|^2}{4}-\frac{\delta_{m\gamma_0,0}q_1(\gamma_0)^m}{2},$$
$$\nu_m(\gamma)=\frac{|\cG(q_1,m)+\cG(q_2,m)|^2}{2}-\delta_{m\gamma,0}q_2(\gamma)^m.$$
In particular, (FS2) and (FS3) are equivalent to the following conditions, respectively:  
\begin{equation}\label{C1}
|\cG(q_1,2)+\cG(q_2,2)|=\sqrt{2},
\end{equation}
\begin{equation}\label{C2}
|\cG(q_1,3)+\cG(q_2,3)|=2.
\end{equation}
\end{lemma}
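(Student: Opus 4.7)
The plan is direct substitution: plug the Verlinde coefficients from Lemma \ref{Verlinde4} (specialized to $|\Gamma|-|G|=2$, so the factor $\tfrac{4}{|\Gamma|-|G|}$ equals $2$) together with the entries of $S$ and $T$ from Definition \ref{ST4} into
$$\nu_m(k)=\sum_{i,j\in J}N_{ij\bar k}\,S_{0i}S_{0j}\,(T_{jj}/T_{ii})^m$$
and simplify case by case for each of the six types of $k\in J$. Each Verlinde entry in Lemma \ref{Verlinde4} decomposes naturally into a \emph{bulk} piece proportional to $\tfrac{1}{|\Gamma|-|G|}$ plus \emph{correction} terms (either delta-functions in the group, or $\varepsilon\varepsilon'$-dependent cross terms on the $J_1\cup J_2$-indices). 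I evaluate these contributions to $\nu_m(k)$ separately.

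For the bulk, the crucial identity (a consequence of $q_i(-x)=q_i(x)$ and the decompositions $G=G_2\sqcup G_*\sqcup(-G_*)$, $\Gamma=\Gamma_2\sqcup\Gamma_*\sqcup(-\Gamma_*)$) is
$$a\sum_{g\in G_*}q_1(g)^m=\tfrac12\bigl(\cG(q_1,m)-a-aq_1(g_0)^m\bigr),$$
and its $\Gamma$-analogue. Combining this with the Verlinde entries involving the unit (of the form $N_{\bar k,0,j}=\delta_{j,k}$) and with the $\pi$-coefficients from Lemma \ref{Verlinde4}, the bulk part of $\nu_m(k)$ collapses to $\tfrac12|\cG(q_1,m)+\cG(q_2,m)|^2$ when $k\in\{\pi\}\cup G_*\cup\Gamma_*$ and to a quarter of this when $k\in J_1\cup J_2$. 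All $\varepsilon\varepsilon'$-dependent cross corrections vanish since $\sum_{\varepsilon,\varepsilon'\in\{\pm 1\}}\varepsilon\varepsilon'=0$, so they never affect the bulk.

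For the delta corrections, only finitely many terms survive in each case. For instance, in $\nu_m(g)$ with $g\in G_*$ the $\delta$-terms inside $N_{g,g',g''}$ reduce after summation to the single contribution $\delta_{mg,0}q_1(g)^m$, while the $(g_0,\varepsilon)$-analogue yields half of this, and the $\Gamma$-halves inherit the opposite sign from the minus signs appearing throughout the $\Gamma$-block of Lemma \ref{Verlinde4} (ultimately reflecting $\cG(q_2)=-\cG(q_1)$). Assembling bulk and corrections produces the five displayed formulae for $\nu_m(k)$.

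Once the formulae are in hand, the equivalence of (FS2) with (\ref{C1}) and of (FS3) with (\ref{C2}) is immediate: $\pi=\bar\pi$, so (FS2) demands $\nu_2(\pi)\in\{\pm1\}$, but $\nu_2(\pi)=\tfrac12|\cG(q_1,2)+\cG(q_2,2)|^2$ is a non-negative real, forcing $\nu_2(\pi)=1$ and hence (\ref{C1}); and $N_{\pi\pi\pi}=2$ makes $\nu_3(\pi)$ a sum of two cube roots of unity by (FS3), whose only non-negative real value is $2$, giving (\ref{C2}). The remaining $\nu_2$- and $\nu_3$-values are then automatically consistent with the constraints. The main obstacle is the bulk accounting: many pair-types $(i,j)$ must be handled, the blocks are asymmetric ($S_{00}=\tfrac{a-b}{2}\ne S_{0\pi}=\tfrac{a+b}{2}$), the $J_1,J_2$-indices are doubled, and the $\varepsilon$-cancellation must be checked throughout. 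The algebra is routine but voluminous, which is why the analogous computations for other families of the paper are deferred to the online appendix.
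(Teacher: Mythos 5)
Your proposal is correct and follows essentially the same route as the paper's appendix proof: direct substitution of the Verlinde coefficients and the $S,T$ entries into the indicator formula, using the identity $2a\sum_{g\in G_*}q_1(g)^m=\cG(q_1,m)-a-aq_1(g_0)^m$ (and its $\Gamma$-analogue) to recognize the bulk as $|\cG(q_1,m)+\cG(q_2,m)|^2/(|\Gamma|-|G|)$, with the $\varepsilon\varepsilon'$-cancellation and the surviving delta terms exactly as you describe, and the (FS2)/(FS3) equivalences extracted from the self-dual object $\pi$ with $N_{\pi\pi\pi}=2$. The only organizational difference is that the paper computes $\nu_m(\pi)$ in full and then evaluates the differences $2\nu_m((g_0,\varepsilon))-\nu_m(\pi)$, $\nu_m(g)-\nu_m(\pi)$, etc., so that the bulk cancels once and for all, whereas you redo the bulk accounting for each type of index — both are routine bookkeeping of the same terms.
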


\subsection{Examples}
We give a list of metric groups $(G,q_1)$, $(\Gamma,q_2)$ satisfying $G_2=\{0,g_0\}\cong \Gamma_2=\{0,\gamma_0\}\cong \Z_2$, 
$|\Gamma|=|G|+2$, $c:=\cG(q_1)=-\cG(q_2)$, $c^2=q_1(g_0)^{-1}q_2(\gamma_0)$, 
$\inpr{g_0}{g_0}=-\inpr{\gamma_0}{\gamma_0}$, and Eq.(\ref{C1})-(\ref{C2}). 

A similar computation as in the proof of Lemma \ref{nu3computation3} shows that 
Eq.(\ref{C1}) is equivalent to 
$$|\frac{\cG(q_{1,e},2)}{\cG(q_{1,e})}(-1)^{\frac{|G_o|^2-1}{8}}-\frac{\cG(q_{2,e},2)}{\cG(q_{2,e})}(-1)^{\frac{|\Gamma_o|^2-1}{8}}|=\sqrt{2},$$
and Eq.(\ref{C2}) is equivalent to 
$$|(-1)^{n_1}\frac{\cG(q_{1,o},3)}{\cG(q_{1,o})^3}-(-1)^{n_2}\frac{\cG(q_{2,o},3)}{\cG(q_{2,o})^3}|=2,$$
where $|G_e|=2^{n_1}$ and $|\Gamma_e|=2^{n_2}$. 
Moreover, if neither $G$ nor $\Gamma$ has a 3-component, Eq.(\ref{C2}) is further equivalent to 
$$|(-1)^{n_1}(\frac{|G_o|}{3})-(-1)^{n_2}(\frac{|\Gamma_o|}{3})|=2.$$
Direct computation shows that the restriction coming from (FS3) in the following examples is very similar to 
that discussed in Lemma \ref{restriction} and Remark \ref{3rank2}, and we will not mention it in what follows. 

We consider separately the cases $G_e=\Z_2 $ and $G_e=\Z_{2^n} $ with $n > 1 $.

\subsubsection{Assume $G_e=\Z_2$}
In this case there exists $n\geq 2$ satisfying $\Gamma_e=\Z_{2^n}$ and $|G_o|=2^{n-1}|\Gamma_o|-1$. 
We have $n=2$ for $|G_o|\equiv 1 \mod 4$, and $n\geq 3$ for $|G_o|\equiv 3\mod 4$. 
We may assume $q_{1,e}(x)=i^{r_1x^2}$ with $r_1\in \{1,-1\}$ and $q_{2,e}(x)=\zeta_{2^{n+1}}^{r_2x^2}$ with $r_2\in \{1,-1,3,-3\}$. 
Then $\cG(q_{1,e})=\zeta_8^{r_1}$, $\cG(q_{2,e})=\zeta_8^{r_2}(-1)^{n\frac{r_2^2-1}{8}}$, and
$$c=\zeta_8^{r_1}\cG(q_{1,o})=-\zeta_8^{r_2}(-1)^{n\frac{r_2^2-1}{8}}\cG(q_{2,o}).$$ 
Since $\cG(q_{1,e},2)=0$ and $\cG(q_{2,e},2)=\sqrt{2}\zeta_8^{r_2}(-1)^{(n-1)\frac{r_2^2-1}{8}}$, 
Eq.(\ref{C1}) always holds. 

For the smallest example $G=\Z_2$, $\Gamma=\Z_4$, there are two possibilities: $(r_1,r_2)=(\pm 1,\mp 3)$. 
These two modular data  
$$(S^{(\Z_2,\overline{q_1})}\otimes S,T^{(\Z_2,\overline{q_1})}\otimes T),$$
are the modular data of the Drinfeld center of the two categories constructed in \cite{MR3635673}, \cite{MR3306607} 
(see the online appendix for a proof of this).

\subsubsection{Assume $G_e=\Z_{2^n}$ with $n\geq 2$.}
In this case we have $\Gamma_e=\Z_2$ and $|\Gamma_o|=2^{n-1}|G_o|+1$. 
Thus $|\Gamma_o|\equiv 3\mod 4$ for $n=2$, and $|\Gamma_o|\equiv 1\mod 4$ for $n\geq 3$. 
We may assume $q_{1,e}(x)=\zeta_{2^{n+1}}^{r_1x^2}$ with $r_1\in \{1,-1,3,-3\}$ and $q_{2,e}(x)=i^{r_2x^2}$. 
Then $\cG(q_{1,e})=\zeta_8^{r_1}(-1)^{n\frac{r_1^2-1}{8}}$, $\cG(q_{2,e})=\zeta_8^{r_2}$, and
$$c=\zeta_8^{r_1}(-1)^{n\frac{r_1^2-1}{8}}\cG(q_{1,o})=-\zeta_8^{r_2}\cG(q_{2,o}).$$
As above, Eq.(\ref{C1}) always holds. 
For the smallest example $G=\Z_4$, $\Gamma=\Z_6$, 
there are four possibilities determined by $\cG(q_{2,o})=-\zeta_8^{r_1-r_2}$ and $r_1-r_2\equiv 2\mod 4$. 
\section{Even generalized Haagerup family}
\subsection{General formulae}
Throughout this section we assume that $(G,q_1,\theta_1)$ and $(\Gamma,q_2,\theta_1)$ are involutive metric groups satisfying the following conditions: 
$$c:=\cG(q_1)=-\cG(q_2), \quad  
| G^{\theta_1}|=|\Gamma^{\theta_2}|=4,$$ 
and there exist order 2 elements $k_0\in G^{\theta_1}\setminus \{0\}$, $\sigma_0\in \Gamma^{\theta_2}\setminus\{0\}$ with $q_1(k_0)=q_2(\sigma_0)$. 
We identify $\{0,k_0\}$ and $\{0,\sigma_0\}$, and denote them by $U=\{0,u_0\}$, which is regarded as a common subgroup 
of $G^{\theta_1}$ and $\Gamma^{\theta_2}$. 
We denote by $q_0$ the restriction of $q_1$ (and $q_2$ too) to $U$. 
We denote by $\inpr{\cdot}{\cdot}_i$ the bicharacter associated with $q_i$, 
and we often suppress subscript $i$ in $\inpr{\cdot}{\cdot}_i$ and $\theta_i$ if there is no possibility of confusion. 

We denote $K_*:=\Gamma_2\setminus \{0,k_0\}=\{k_1,k_2\}$ and $\Sigma_*:=\Gamma_2\setminus\{0,\sigma_0\}=\{\sigma_1,\sigma_2\}$. 
We assume that one of the following holds:\\
\begin{itemize}
\item[(A1)] $q_1(k_1)=q_1(k_2)$ and $q_2(\sigma_1)=-q_2(\sigma_2)$. In this case, we have 
$$\inpr{k_0}{k_1}=\inpr{k_0}{k_2}=q_0(u_0)^{-1} \text{ and } \inpr{\sigma_0}{\sigma_1}=\inpr{\sigma_0}{\sigma_2}=-q_0(u_0)^{-1}.$$ 
We set $s=c/q_1(k_1)$, and assume $s^4=1$.\\  
\item[(A2)] $q_1(k_1)=-q_1(k_2)$ and $q_2(\sigma_1)=q_2(\sigma_2)$. In this case, we have 
$$\inpr{k_0}{k_1}=\inpr{k_0}{k_2}=-q_0(u_0)^{-1} \text{ and }\inpr{\sigma_0}{\sigma_1}=\inpr{\sigma_0}{\sigma_2}=q_0(u_0)^{-1}.$$ 
We set $s=c/q_2(\sigma_1)$, and assume $s^4=1$.\\
\end{itemize}
In either case, we have 
\begin{equation}\label{E1}
\inpr{u_0}{u_0}=1,
\end{equation}
\begin{equation}\label{E2}
\inpr{k_1}{k_1}=\inpr{k_2}{k_2},\quad \inpr{\sigma_1}{\sigma_1}=\inpr{\sigma_2}{\sigma_2},
\end{equation}
\begin{equation}\label{E3}
q_0(u_0)\in \{1,-1\}.
\end{equation}
\begin{equation}\label{E4}
s(q_1(k_1)+q_1(k_2)+q_2(\sigma_1)+q_2(\sigma_2))=2c.
\end{equation}
Indeed, since $k_0+k_1=k_2$, Eq.(\ref{E1}) is an easy consequence of the assumptions (A1) and (A2) above. 
Since $\inpr{g}{-g}=q_1(0)/q_1(g)^2$, we have $q_1(g)^2=\inpr{g}{g}$, and so Eq.(\ref{E2}) and Eq.(\ref{E3}) hold. 
Eq.(\ref{E4}) is obvious. 

Later, we will assume 
\begin{equation}\label{E5}
(1+\inpr{u_0}{k})(1-s^2\inpr{k}{k})=0,\quad \forall k\in K_*,
\end{equation}
and 
\begin{equation}\label{E6}
(1+\inpr{u_0}{\sigma})(1+s^2\inpr{\sigma}{\sigma})=0,\quad \forall \sigma\in \Sigma_*,
\end{equation}
too, but for the moment we do not assume these. 

We set $a=1/\sqrt{|G|}$, $b=1/\sqrt{|\Gamma|}$. 
We choose and fix subsets $G_*\subset G$ and $\Gamma_*\subset \Gamma$ satisfying 
$$G=G_2\sqcup G_*\sqcup \theta_1(G_*),\quad \Gamma=\Gamma_2\sqcup \Gamma_*\sqcup \theta_2(\Gamma_*),$$
and set
$$J=(U\times\{0,\pi\})\sqcup (K_*\times \{1,-1\})\sqcup G_*\sqcup (\Sigma_*\times \{1,-1\})\sqcup \Gamma_*.$$
We denote $J_1=K_*\times \{1,-1\}$ and $J_2=\Sigma_*\times \{1,-1\}$. 
We use letters $u,u',u'',v,\cdots$ for elements in $U$, $k,k',k'',l\cdots$ for elements of $K_*$, 
$g,g',g'',h\cdots$ for elements of $G$, $\sigma,\sigma'\sigma'',\tau,\cdots$ for elements in $\Sigma_*$, 
and $\gamma,\gamma',\gamma'',\xi\cdots$ for elements of $\Gamma$. 

We choose a function $f:K_*\times \Sigma_*\to \{s,-s\}$ satisfying 
$f(k+k_0,\sigma)=f(k,\sigma)\overline{\inpr{u_0}{\sigma}}$, $f(k,\sigma+\sigma_0)=f(k,\sigma)\overline{\inpr{k}{u_0}}$. 
Such a choice is possible thanks to $q_1(k_0)=q_2(\sigma_0)$. 
(We may assume $f(k_1,\sigma_1)=s$ if it is convenient.) 
We have $f(k,\sigma)^2=s^2$ for any choice of $f$. 
We introduce an involution of $J$ by setting $\overline{(u,0)}=(u,0)$, $\overline{(u,\pi)}=(u,\pi)$, 
$\overline{(k,\varepsilon)}=(-k,s^2\varepsilon)$, 
$\overline{(\sigma,\varepsilon)}=(-\sigma,s^2\varepsilon)$, and
$$\overline{g}=\left\{
\begin{array}{ll}
-g , &\quad \textrm{if $-g\in G_*$}  \\
\theta_1(-g) , &\quad \textrm{otherwise}
\end{array}
\right.,
$$
$$\overline{\gamma}=\left\{
\begin{array}{ll}
-\gamma , &\quad \textrm{if $-\gamma\in \Gamma_*$}  \\
\theta_2(-\gamma) , &\quad \textrm{otherwise}
\end{array}
\right..
$$

\begin{definition}\label{ST5} Under the above assumptions, we define $J$ by $J$ matrices $S$, $T$, and $C$ by  
\begin{align*}
S=&\tiny\left(
\begin{array}{cccccc}
\frac{a-b}{2}\overline{\inpr{u}{u'}}&\frac{a+b}{2}\overline{\inpr{u}{u'}}&\frac{a}{2}\overline{\inpr{u}{k'}}&a\overline{\inpr{u}{g'}}&\frac{b}{2}\overline{\inpr{u}{\sigma'}}&b\overline{\inpr{u}{\gamma'}}  \\
\frac{a+b}{2}\overline{\inpr{u}{u'}}&\frac{a-b}{2}\overline{\inpr{u}{u'}}&\frac{a}{2}\overline{\inpr{u}{k'}}&a\overline{\inpr{u}{g'}}&-\frac{b}{2}\overline{\inpr{u}{\sigma'}}&-b\overline{\inpr{u}{\gamma'}}\\
\frac{a}{2}\overline{\inpr{k}{u'}}&\frac{a}{2}\overline{\inpr{k}{u'}}&(\frac{a}{2}+\frac{\varepsilon\varepsilon's}{4})\overline{\inpr{k}{k'}}&a\overline{\inpr{k}{g'}}&\frac{\varepsilon\varepsilon'f(k,\sigma')}{4}&0  \\
a\overline{\inpr{g}{u'}}&a\overline{\inpr{g}{u'}}&a\overline{\inpr{g}{k'}}&a(\overline{\inpr{g}{g'}}+\overline{\inpr{g}{\theta(g')}})&0&0  \\
\frac{b}{2}\overline{\inpr{\sigma}{u'}}&-\frac{b}{2}\overline{\inpr{\sigma}{u'}}&\frac{\varepsilon\varepsilon'f(k',\sigma)}{4}&0&
-(\frac{b}{2}+\frac{-\varepsilon\varepsilon's}{4})\overline{\inpr{\sigma}{\sigma'}} &-b\overline{\inpr{\sigma}{\gamma'}}  \\
b\overline{\inpr{\gamma}{u'}} &-b\overline{\inpr{\gamma}{u'}}&0&0&-b\overline{\inpr{\gamma}{\sigma'}} &-b(\overline{\inpr{\gamma}{\gamma'}}+\overline{\inpr{\gamma}{\theta(\gamma')}}) 
\end{array}
\right),
\end{align*}
$$T=\mathrm{Diag}(q_0(u),q_1(k),q_1(g),q_2(\sigma),q_2(\gamma)),$$
and $C_{j,j'}=\delta_{\overline{j},j'}=\delta_{j,\overline{j'}}$ (where the six blocks in $S$ and $T$ are indexed by $U \times \{ 0\} $, $U \times \{ \pi \} $, $J_1$, $G_* $, $J_2$, and $\Gamma_* $,  respectively. ) 
\end{definition}

\begin{remark} As an alternative to assumptions (A1) and (A2) above, we could consider a third possibility corresponding to $\inpr{u_0}{u_0}=-1$ together with an appropriate condition. 
In this case the quadratic form $q_0$ is non-degenerate and $S$ and $T$ factorize as 
$S=S^{(U,q_0)}\otimes S'$ and $T=T^{(U,q_0)}\otimes T'$. 
The pair $(S',T')$ arising in this way is none other than the one that we discussed in the previous section. It is more convenient to treat this case separately. 
\end{remark}

\begin{lemma} \label{eqns} Let the notation be as above. Then the following equations hold. 
\begin{itemize}
\item[$(1)$]
$$\sum_{l\in K_*}\inpr{k'-k}{l}+\sum_{\sigma\in \Sigma_*}f(k,\sigma)\overline{f(k',\sigma)}=4\delta_{k,k'}.$$
\item[$(2)$]
$$\sum_{\tau\in \Sigma_*}\inpr{\sigma'-\sigma}{\tau}+\sum_{k\in K_*}f(k,\sigma)\overline{f(k,\sigma')}=4\delta_{\sigma,\sigma'},$$
\item[$(3)$]
$$s\sum_{l\in K_*}\overline{\inpr{k}{l}}\overline{f(l,\sigma)}+\overline{s}\sum_{\tau\in \Sigma_*}\inpr{\sigma}{\tau}f(k,\tau)=0.$$
\item[$(4)$]
$$s^2\sum_{l\in K_*}\overline{\inpr{k+k'}{l}}q_1(l)
 +\sum_{\sigma\in \Sigma_*}f(k,\sigma)f(k',\sigma) q_2(\sigma)=2cs\overline{\inpr{k}{k'}}\overline{q_1(k)q_1(k')}.$$
\item[$(5)$]
$$s(\sum_{l\in K_*}f(l,\sigma)\overline{\inpr{k}{l}}q_1(l)
 +\sum_{\tau\in \Sigma_*}f(k,\tau)\overline{\inpr{\sigma}{\tau}}q_2(\tau))=2cf(k,\sigma)\overline{q_1(k)q_2(\sigma)}.$$
\item[$(6)$]
$$\sum_{l\in K_*}f(l,\sigma)f(l,\sigma')q_1(l)
 +s^2\sum_{\tau \in \Sigma_*}\overline{\inpr{\sigma+\sigma'}{\tau}}q_2(\tau)=2cs\overline{\inpr{\sigma}{\sigma'}}\overline{q_2(\sigma)q_2(\sigma')}.$$
\end{itemize}
\end{lemma}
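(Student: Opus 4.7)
The plan is to prove all six identities by direct computation, exploiting the fact that $K_*$ and $\Sigma_*$ each contain exactly two elements. Writing $K_* = \{k_1, k_1+k_0\}$ and $\Sigma_* = \{\sigma_1, \sigma_1+\sigma_0\}$, every sum on the left-hand side collapses to a pair of terms, which I can manipulate using the cocycle relations $f(k+k_0,\sigma) = f(k,\sigma)\overline{\inpr{u_0}{\sigma}}$ and $f(k,\sigma+\sigma_0) = f(k,\sigma)\overline{\inpr{k}{u_0}}$, together with $|f(k,\sigma)|^2 = 1$ and $\inpr{u_0}{u_0} = 1$ from (\ref{E1}).

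For (1) and (2), I would split into the diagonal case ($k=k'$, resp.\ $\sigma=\sigma'$) and the off-diagonal case ($k' = k+k_0$, resp.\ $\sigma' = \sigma+\sigma_0$). In the diagonal case each sum contributes $2$, giving $4$. In the off-diagonal case for (1), the first sum simplifies to $2\inpr{k_0}{k_1}$, and the second, via the cocycle relation, to $2\inpr{u_0}{\sigma_1}$; these cancel because the case assumption (A1) or (A2), combined with the identification of $u_0$ with both $k_0$ and $\sigma_0$, forces $\inpr{u_0}{\sigma_1} = -\inpr{k_0}{k_1}$. Identity (2) is symmetric to (1) and handled in the same way.

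For (4), (5), and (6), the central tool is the scalar identity (\ref{E4}), which together with $q(g+h) = q(g)q(h)\inpr{g}{h}$ produces the factor $2c$ on the right-hand side once all terms are grouped correctly. In each of these, I would first apply the cocycle relations for $f$ to pull common factors such as $f(k,\sigma_1)$ or $\overline{f(k_1,\sigma)}$ outside the sums, then reduce the remaining brackets to expressions of the form $q_1(k_1)+q_1(k_2)+q_2(\sigma_1)+q_2(\sigma_2)$ by invoking the sign relations in (A1)/(A2) together with $\inpr{k}{k} = q_1(k)^{-2}$, $\inpr{\sigma}{\sigma} = q_2(\sigma)^{-2}$; (\ref{E4}) then converts this bracket to $2c/s$, which yields the right-hand side after multiplying back by the pulled-out factor.

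Equation (3) has no source term, so the argument is purely one of cancellation: after using the cocycle structure to write the first sum as $\overline{\inpr{k}{k_1}}\,\overline{f(k_1,\sigma)}\bigl(1+\overline{\inpr{k}{k_0}}\inpr{u_0}{\sigma}\bigr)$ and the second as $\inpr{\sigma}{\sigma_1}f(k,\sigma_1)\bigl(1+\inpr{\sigma}{\sigma_0}\overline{\inpr{k}{u_0}}\bigr)$, one checks that the two contributions weighted by $s$ and $\bar s$ add to zero. The main obstacle will be the bookkeeping of signs and complex conjugations across the two cases (A1) and (A2), and the need to invoke the correct identity among (\ref{E1})--(\ref{E4}) at each step; there is no single conceptual leap, but each equation decomposes into several sub-cases that must be handled consistently.
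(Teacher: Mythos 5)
Your proposal follows essentially the same route as the paper's proof: reindex each two-term sum over the coset $k+U$ (resp. $\sigma+U$), use the cocycle relations for $f$ together with $q(g+h)=q(g)q(h)\inpr{g}{h}$, and invoke (A1)/(A2) and Eq.~(\ref{E1})--(\ref{E4}) to make the character sums cancel in (1)--(3) and to produce the factor $2c$ in (4)--(6). The outline is correct.

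Two points of imprecision are worth flagging, both concentrated in identities (4) and (6). First, $\inpr{k}{k}=q_1(k)^{2}$, not $q_1(k)^{-2}$; these agree when $2k=0$ but differ when $G^{\theta_1}\cong\Z_4$ (where $q_1(k_1)^2=\pm i$). Second, and more substantively, after pulling out $\overline{\inpr{k}{k'}q_1(k)q_1(k')}$ in (4) the remaining bracket is $q_1(k_1)+q_1(k_2)+\inpr{k}{2k'}\bigl(q_2(\sigma_1)+q_2(\sigma_2)\bigr)$, not the symmetric sum $q_1(k_1)+q_1(k_2)+q_2(\sigma_1)+q_2(\sigma_2)$ to which you propose to reduce everything. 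When $G^{\theta_1}\subset G_2$ the factor $\inpr{k}{2k'}$ is $1$ and Eq.~(\ref{E4}) applies directly; but when $G^{\theta_1}\cong\Z_4$ one needs the extra observation that $k_1=-k_2$ forces case (A1), hence $q_2(\sigma_1)+q_2(\sigma_2)=0$, so the stray factor is harmless. The analogous split (on $\Gamma^{\theta_2}$) is needed for (6). This is exactly the case distinction the paper makes, and your plan as stated would not close without it; with it, everything goes through.
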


\begin{proof} (1) Let $u=k'-k\in U$. Then 
$$\sum_{l\in K_*}\inpr{k'-k}{l}+\sum_{\sigma\in \Sigma_*}f(k,\sigma)\overline{f(k',\sigma)}
=\sum_{l\in K_*}\inpr{u}{l}+\sum_{\sigma\in \Sigma_*}\inpr{u}{\sigma}=4\delta_{u,0}.$$
Eq. (2) can be shown in the same way. 

(3) Note that we have $s\overline{f(k,\sigma)}=\overline{s}f(k,\sigma)$. 
\begin{align*}
\lefteqn{s\sum_{l\in K_*}\overline{\inpr{k}{l}}\overline{f(l,\sigma)}+\overline{s}\sum_{\tau\in \Sigma_*}\inpr{\sigma}{\tau}f(k,\tau)} \\
 &=s\sum_{u\in U}\overline{\inpr{k}{k+u}}\overline{f(k+u,\sigma)}+\overline{s}\sum_{v\in U}\inpr{\sigma}{\sigma+v}f(k,\sigma+v) \\
 &=s\sum_{u\in U}\overline{\inpr{k}{k+u}} \overline{f(k,\sigma)} \inpr{u}{\sigma} +\overline{s}\sum_{v\in U}\inpr{\sigma}{\sigma+v}f(k,\sigma)\overline{\inpr{k}{v}}  \\
 &=s\overline{f(k,\sigma)}(\overline{\inpr{k}{k}}\sum_{u\in U}\overline{\inpr{k}{u}}\inpr{u}{\sigma}+\inpr{\sigma}{\sigma}\sum_{v\in U}\inpr{\sigma}{v}\overline{\inpr{k}{v}})\\
 &=s\overline{f(k,\sigma)}(\overline{\inpr{k}{k}}+\inpr{\sigma}{\sigma})(1+\inpr{k}{u_0}\inpr{\sigma}{u_0})=0.
\end{align*}

(4) Let $u=k'-k$. 
\begin{align*}
\lefteqn{s^2\sum_{l\in K_*}\overline{\inpr{k+k'}{l}}q_1(l)
 +\sum_{\sigma\in \Sigma_*}f(k,\sigma)f(k',\sigma) q_2(\sigma)} \\
 &=s^2\sum_{l\in K_*}q_1(k+k'-l)\overline{q_1(k+k')}
 +\sum_{\sigma\in \Sigma_*}f(k,\sigma)^2\overline{\inpr{u}{\sigma}} q_2(\sigma) \\
 &=s^2(\overline{\inpr{k}{k'}q_1(k)q_1(k')}\sum_{l\in K_*}q_1(l)+\sum_{\sigma\in \Sigma_*}q_2(u-\sigma)\overline{q_2(u)}) \\
 &=\overline{\inpr{k}{k'}q_1(k)q_1(k')}s^2(\sum_{l\in K_*}q_1(l)+\inpr{k}{2k'}\sum_{\sigma\in \Sigma_*}q_2(\sigma))\\
 &=\overline{\inpr{k}{k'}q_1(k)q_1(k')}s^2(q_1(k_1)+q_2(k_2)+\inpr{k}{2k'}(q_2(\sigma_1)+q_2(\sigma_2))).
\end{align*}
If $G^{\theta_1}\subset G_2$, the statement follows from Eq.(\ref{E4}). 
If not, we have $G^{\theta_1}\cong \Z_4$, and $k_1=-k_2$. 
Hence $q_1(k_1)=q_1(k_2)$ and $q_2(\sigma_1)+q_2(\sigma_2)=0$, 
and the statement holds too. Eq. (6) can be shown in the same way.  

(5) We have:
\begin{align*}
\lefteqn{s(\sum_{l\in K_*}f(l,\sigma)\overline{\inpr{k}{l}}q_1(l)
 +\sum_{\tau\in \Sigma_*}f(k,\tau)\overline{\inpr{\sigma}{\tau}}q_2(\tau))} \\
 &=s(\sum_{l\in K_*}f(l,\sigma)q_1(l-k)\overline{q_1(k)}
 +\sum_{\tau\in \Sigma_*}f(k,\tau)    q_2(\sigma-\tau)\overline{q_2(\sigma)})\\
 &=s(\overline{q_1(k)}\sum_{u\in U}f(k+u,\sigma)q_1(u)
 +\overline{q_2(\sigma)}\sum_{v\in U}f(k,\sigma+v)q_2(v)) \\
 &=sf(k,\sigma)(\overline{q_1(k)}\sum_{u\in U} \inpr{u}{-\sigma}  q_0(u)
 +\overline{q_2(\sigma)}\sum_{v\in U}\inpr{-k}{v}q_0(v))\\ 
 &=sf(k,\sigma)(\overline{q_1(k)}\sum_{u\in U}   q_2(u-\sigma)\overline{q_2(\sigma)}
 +\overline{q_2(\sigma)}\sum_{v\in U}  q_1(v-k)\overline{q_1(k)})\\
 &=sf(k,\sigma)\overline{q_1(k)q_2(\sigma)}(q_1(k_1)+q_1(k_2)+q_2(\sigma_1)+q_2(\sigma_2))\\
 &=2cf(k,\sigma)\overline{q_1(k)q_2(\sigma)}. 
\end{align*}
\end{proof}

Using Lemma \ref{eqns}, we can show the following by direct computation. 

\begin{lemma}\label{relations5} Let the notation be as in Definition \ref{ST5}. 
Then $S$, $T$, and $C$ are unitary matrices satisfying $S^2=C$, $(ST)^3=cC$ and $T C=CT$. 
\end{lemma}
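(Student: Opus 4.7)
The plan is a block-by-block verification, using the decomposition of $J$ into the six blocks $U\times\{0\}$, $U\times\{\pi\}$, $J_1$, $G_*$, $J_2$, $\Gamma_*$ that defines the $6\times 6$ block form of $S$ and $T$. The unitarity of $T$ is obvious (diagonal with entries in $\T$), and $C$ is unitary since it is the permutation matrix of an involution on $J$. The identity $TC=CT$ reduces to checking that $T$ is constant on each $C$-orbit. For $x\in U\times\{0,\pi\}$ this is trivial since $\bar x=x$; for $\overline{(k,\varepsilon)}=(-k,s^2\varepsilon)$ and $\overline{(\sigma,\varepsilon)}=(-\sigma,s^2\varepsilon)$ it follows from $q_1(-k)=q_1(k)$, $q_2(-\sigma)=q_2(\sigma)$; and for $g\in G_*$ and $\gamma\in\Gamma_*$ it follows from $\theta_1$-invariance of $q_1$ and $\theta_2$-invariance of $q_2$.

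Next I would check by inspection of the six block-pairs that $S=S^{T}$ (using that $\inpr{\cdot}{\cdot}$ is symmetric and that the mixed $J_1\times J_2$ block $\tfrac{\varepsilon\varepsilon' f(k,\sigma')}{4}$ is invariant under the natural index swap). Unitarity of $S$ is then equivalent to $S^2=C$. For each pair $(x,y)$ of block indices I would compute
\[
(S^2)_{x,y}=\sum_{z\in J}S_{x,z}S_{z,y}
\]
and compare with $C_{x,y}=\delta_{x,\bar y}$. Most of the $36$ block sums reduce to standard character orthogonality on $G$, $\Gamma$, $U$, or to the elementary fact that $|G|=|U|+2|G_*|$ and $|\Gamma|=|U|+2|\Gamma_*|$. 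The non-routine cases are precisely the ones involving the sign factors $\varepsilon$ and the function $f$: the $(J_1,J_1)$ block is handled by Lemma~\ref{eqns}(1), the $(J_2,J_2)$ block by Lemma~\ref{eqns}(2), and the mixed $(J_1,J_2)$ block (which must vanish, since $C$ has no entries there) by Lemma~\ref{eqns}(3). The contributions from $G_*$ and $\Gamma_*$ into these blocks vanish separately because $\inpr{u_0}{\cdot}$ is a character of $G$ (resp.\ $\Gamma$) whose sum over $G_*$ and $\theta(G_*)$ combines into a sum over $G\setminus G_2$.

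For $(ST)^3=cC$, I would use $S^{2}=C$, $CT=TC$ and symmetry of $S$ to rewrite the identity in the equivalent form $STS=c\,T^{-1}\bar S\,T^{-1}C$, and then verify it block by block by computing
\[
(STS)_{x,y}=\sum_{z\in J}S_{x,z}q(z)S_{z,y}.
\]
The diagonal blocks and the blocks involving $\{0,\pi\}$ reduce to the Gauss sum identities
\[
\frac{1}{\sqrt{|G|}}\sum_{g\in G}\inpr{g}{h}q_1(g)=c\,\overline{q_1(h)},\qquad
\frac{1}{\sqrt{|\Gamma|}}\sum_{\gamma\in\Gamma}\inpr{\gamma}{\xi}q_2(\gamma)=-c\,\overline{q_2(\xi)},
\]
together with $\cG(q_1)=-\cG(q_2)=c$. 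The genuinely new input is needed for the three blocks that feel both the $K_*/\Sigma_*$ sum and the quadratic forms: the $(J_1,J_1)$ block is exactly Lemma~\ref{eqns}(4), the mixed $(J_1,J_2)$ block is Lemma~\ref{eqns}(5), and the $(J_2,J_2)$ block is Lemma~\ref{eqns}(6); the normalizations $s$, $f(k,\sigma)$ in Definition~\ref{ST5} are designed so that the right-hand sides match $c\,\overline{\inpr{\cdot}{\cdot}}\,\overline{q(\cdot)q(\cdot)}$ up to the correct prefactors.

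The main obstacle is purely bookkeeping: carefully tracking the sign $\varepsilon$, the definition of $\overline{(k,\varepsilon)}=(-k,s^2\varepsilon)$ (which interacts with the constraint $s^4=1$ from (A1)/(A2)), and the relation $q_1(k_0)=q_2(\sigma_0)$ that makes the function $f$ well-defined. Once the six block sums involving $J_1,J_2$ are reduced to the identities in Lemma~\ref{eqns}, the remaining blocks follow from standard Gauss sum manipulations, so no conceptually new ingredient is required.
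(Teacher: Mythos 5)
Your proposal is correct and follows essentially the same route as the paper's proof in the online appendix: a block-by-block verification in which the routine blocks reduce to character orthogonality and the Gauss sum identity $\sum_{g}\inpr{h}{g}q_i(g)=\sqrt{|G|}\,\cG(q_i)\overline{q_i(h)}$, while the blocks touching $J_1$ and $J_2$ are handled exactly by Lemma \ref{eqns}(1)--(3) for $S^2=C$ and Lemma \ref{eqns}(4)--(6) for $(ST)^3=cC$ (the paper verifies the equivalent entrywise identity $\sum_{x}S_{j,x}S_{j',x}T_{x,x}=cS_{j,j'}\overline{T_{j,j}T_{j',j'}}$, which is the same as your reformulation). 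The only blemish is the parenthetical count $|G|=|U|+2|G_*|$, which should read $|G|=|G_2|+2|G_*|=|U|+|K_*|+2|G_*|$; this does not affect the argument.
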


\begin{lemma}\label{Verlinde5} Let the notation be as in Definition \ref{ST5}. Then we have
$$N_{(u,0),(u',0),(u'',0)}=N_{(u,0),(u',\pi),(u'',\pi)}=\delta_{u+u'+u'',0},$$
$$N_{(u,0),(u',0),(u'',\pi)}=N_{(u,0),(u',0),(k,\varepsilon)}=N_{(u,0),(u',0),g}=N_{(u,0),(u',0),(\sigma,0)}
=N_{(u,0),(u',0),\gamma}=0,$$
$$N_{(u,0),(u',\pi,),(k,\varepsilon)}=N_{(u,0),(u',\pi,),g}=N_{(u,0),(u',\pi,),(\sigma,\varepsilon)}
=N_{(u,0),(u',\pi),\gamma}=0,$$
$$N_{(u,0),(k,\varepsilon),(k',\varepsilon')}=\delta_{u+k+k',0}\delta_{\varepsilon,\varepsilon'},\quad N_{(u,0),g,g'}=\delta_{u+g+g',0}+\delta_{u+g-g',0},$$
$$N_{(u,0),(\sigma,\varepsilon),(\sigma',\varepsilon')}=\delta_{u+\sigma+\sigma',0}\delta_{\varepsilon,\varepsilon'},
\quad N_{(u,0),\gamma,\gamma'}=\delta_{u+\gamma+\gamma',0}+\delta_{u+\gamma-\gamma',0},$$
$$N_{(u,0),(k,\varepsilon),g}=N_{(u,0),(k,\varepsilon),(\sigma,\varepsilon')}=N_{(u,0),(k,\epsilon),\gamma}
=N_{(u,0),g,(\sigma,\varepsilon)}=N_{(u,0),g,\gamma}=N_{(u,0),(\sigma,\varepsilon),\gamma}=0,$$ 
$$N_{(u,\pi),(u',\pi),(u'',\pi)}
=\frac{8}{|\Gamma|-|G|},$$ 
$$N_{(u,\pi),(u',\pi),(k,\varepsilon)}
=\frac{2}{|\Gamma|-|G|}(1+\inpr{k}{u_0}),$$ 
$$N_{(u,\pi),(u',\pi),g}
=\frac{4}{|\Gamma|-|G|}(1+\inpr{g}{u_0}),$$
$$N_{(u,\pi),(u',\pi),(\sigma,\varepsilon)}
=\frac{2}{|\Gamma|-|G|}(1+\inpr{\sigma}{u_0}),$$ 
$$N_{(u,\pi),(u',\pi),\gamma}
=\frac{4}{|\Gamma|-|G|}(1+\inpr{\gamma}{u_0}),$$
\begin{align*}
\lefteqn{N_{(u,\pi),(k,\varepsilon),(k,\varepsilon')}} \\
 &=\frac{2}{|\Gamma|-|G|}+\frac{\delta_{u+k+k',0}}{2}+
 \frac{\varepsilon\varepsilon's^2}{4}(\inpr{u+k+k'}{k_1}-\inpr{u+k+k'}{\sigma_1}\inpr{2k}{\sigma_1}).
\end{align*}
$$N_{(u,\pi),(k,\varepsilon),g} =\frac{2}{|\Gamma|-|G|}(1+\inpr{k+g}{u_0}),$$
$$N_{(u,\pi),(k,\varepsilon),(\sigma,\varepsilon')}=0,$$
$$N_{(u,\pi),(k,\varepsilon),\gamma}
=\frac{2}{|\Gamma|-|G|}(1+\inpr{k}{u_0}\inpr{\gamma}{u_0}),$$
$$N_{(u,\pi),g,g'}=
\frac{4}{|\Gamma|-|G|}(1+\inpr{g+g'}{u_0}) +\delta_{u+g+g',0}+\delta_{u+g+\theta(g'),0},$$
$$N_{(u,\pi),g,(\sigma,\varepsilon)}=\frac{2}{|\Gamma|-|G|}(1+\inpr{g}{u_0}\inpr{\sigma}{u_0}),$$
$$N_{(u,\pi),g,\gamma}=\frac{4}{|\Gamma|-|G|}(1+\inpr{g}{u_0}\inpr{\gamma}{u_0}),$$
\begin{align*}
\lefteqn{N_{(u,\pi),(\sigma,\varepsilon),(\sigma',\varepsilon')}} \\
 &=\frac{2}{|\Gamma|-|G|}-\frac{\delta_{u+\sigma+\sigma',0}}{2}
 +\frac{\varepsilon\varepsilon' s^2}{4}(\inpr{u+\sigma+\sigma'}{k_1}\inpr{2\sigma}{k_1}-\inpr{u+\sigma+\sigma'}{\sigma}),
\end{align*}
$$N_{(u,\pi),(\sigma,\varepsilon),\gamma}
=\frac{2}{|\Gamma|-|G|}(1+\inpr{\sigma+\gamma}{u_0}),$$
$$N_{(u,\pi),\gamma,\gamma'}
 =\frac{4}{|\Gamma|-|G|}(1+\inpr{\gamma+\gamma'}{u_0})
 -\delta_{u+\gamma+\gamma',0}-\delta_{u+\gamma+\theta(\gamma'),0},$$
\begin{align*}
\lefteqn{N_{(k,\varepsilon),(k',\varepsilon'),(k'',\varepsilon'')}} \\
&=(1+\inpr{u_0}{k+k'+k''})(\frac{1}{2(|\Gamma|-|G|)}
 +\frac{s^2(\varepsilon\varepsilon'+\varepsilon'\varepsilon''+\varepsilon''\varepsilon)}{8}\overline{\inpr{k+k'+k''}{k+k'+k''}}),\\
\end{align*}
$$
N_{(k,\varepsilon),(k',\varepsilon'),g}
=(\frac{1}{|\Gamma|-|G|}+\frac{s^2\varepsilon\varepsilon'\overline{\inpr{k}{k+k'+g}}}{4})(1+\inpr{g}{u_0}),$$
\begin{align*}
\lefteqn{N_{(k,\varepsilon),(k',\varepsilon'),(\sigma,\varepsilon'')}} \\
 &=(1+\inpr{\sigma}{u_0})(\frac{1}{2(|\Gamma|-|G|)}+\frac{\varepsilon''(\varepsilon+ \varepsilon')s}{8}\inpr{k+k'}{k}f(k,\sigma)
 -\frac{s^2\varepsilon\varepsilon'}{8}\inpr{k-k'}{\sigma}\overline{\inpr{\sigma}{\sigma}}),
\end{align*}
$$N_{(k,\varepsilon),(k',\varepsilon),\gamma}
 =(1+\inpr{\gamma}{u_0})(\frac{1}{|\Gamma|-|G|}-\frac{\varepsilon\varepsilon's^2}{4}\inpr{k-k'}{\sigma_1}\overline{\inpr{\gamma}{\sigma_1}}),$$
$$N_{(k,\varepsilon),g,g'}=\frac{2}{|\Gamma|-|G|}(1+\inpr{k+g+g'}{u_0})+\delta_{k+g+g',0}+\delta_{k+g+\theta(g'),0},$$ 
$$N_{(k,\varepsilon),g,(\sigma,\varepsilon')}
=(\frac{1}{|\Gamma|-|G|}+\frac{\varepsilon\varepsilon' s f(k,\sigma)\overline{\inpr{k+g}{k}}}{4})(1-\inpr{g}{u_0}),$$
$$N_{(k,\varepsilon),g,\gamma}
=\frac{2}{|\Gamma|-|G|}(1+\inpr{k+g}{u_0}\inpr{\gamma}{u_0}),$$
\begin{align*}
\lefteqn{N_{(k,\varepsilon''),(\sigma,\varepsilon),(\sigma',\varepsilon')}} \\
 &=(\frac{1}{2(|\Gamma|-|G|)}+\frac{\varepsilon\varepsilon's^2\inpr{\sigma-\sigma'}{k}\overline{\inpr{k}{k}}}{8}  
 -\frac{s\varepsilon''(\varepsilon+\varepsilon')f(k,\sigma)\overline{\inpr{\sigma+\sigma'}{\sigma}}}{8}) (1+\inpr{k}{u_0}),
\end{align*}
$$N_{(k,\varepsilon),(\sigma,\varepsilon'),\gamma}
 =(\frac{1}{|\Gamma|-|G|}-\frac{\varepsilon \varepsilon' s f(k,\sigma)\overline{\inpr{\sigma+\gamma}{\sigma}}}{4})(1-\inpr{\gamma}{u_0}),$$
$$N_{(k,\varepsilon),\gamma,\gamma'}
 =\frac{2}{|\Gamma|-|G|}(1+\inpr{k}{u_0}\inpr{\gamma+\gamma'}{u_0}),$$
\begin{align*}
N_{g,g',g''}&= \frac{4}{|\Gamma|-|G|}(1+\inpr{g+g'+g''}{u_0}) \\
 &+\delta_{g+g'+g'',0}+\delta_{\theta(g)+g'+g'',0}
 +\delta_{g+\theta(g')+g'',0}+\delta_{g+g'+\theta(g''),0},
\end{align*}
$$N_{g,g',(\sigma,\varepsilon)}
 =\frac{2}{|\Gamma|-|G|}(1+\inpr{g+g'}{u_0}\inpr{\sigma}{u_0}),$$
$$N_{g,g',\gamma}=\frac{4}{|\Gamma|-|G|}(1+\inpr{g+g'}{u_0}\inpr{\gamma}{u_0}),$$
$$N_{g,(\sigma,\varepsilon),(\sigma',\varepsilon')}
=(\frac{1}{|\Gamma|-|G|}+\frac{\varepsilon\varepsilon's^2\overline{\inpr{g}{k_1}}\inpr{k_1}{\sigma-\sigma'} }{4})(1+\inpr{g}{u_0}).$$
$$N_{g,(\sigma,\varepsilon),\gamma}=\frac{2}{|\Gamma|-|G|}(1+\inpr{g}{u_0}\inpr{\sigma+\gamma}{u_0}),$$
$$N_{g,\gamma,\gamma'}=\frac{4}{|\Gamma|-|G|}(1+ \inpr{g}{u_0}\inpr{\gamma+\gamma'}{u_0}),$$
\begin{align*}
\lefteqn{N_{(\sigma,\varepsilon),(\sigma',\varepsilon'),(\sigma'',\varepsilon'')}} \\
 &=(1+\inpr{\sigma+\sigma'+\sigma''}{u_0})(\frac{1}{2(|\Gamma|-|G|)}- \frac{s^2(\varepsilon\varepsilon'+\varepsilon'\varepsilon''+\varepsilon''\varepsilon)}{8}
 \overline{\inpr{\sigma+\sigma'+\sigma''}{\sigma+\sigma'+\sigma''}}),
\end{align*}
$$N_{(\sigma,\varepsilon),(\sigma',\varepsilon'),\gamma}
 =(\frac{1}{|\Gamma|-|G|}-\frac{\varepsilon\varepsilon's^2\overline{\inpr{\sigma+\sigma'+\gamma}{\sigma}}}{4})(1+\inpr{\sigma+\sigma'+\gamma}{u_0}),$$
$$N_{(\sigma,\varepsilon),\gamma,\gamma'}
=\frac{2}{|\Gamma|-|G|}(1+\inpr{\sigma+\gamma+\gamma'}{u_0})-\delta_{\sigma+\gamma+\gamma',0}-\delta_{\sigma+\gamma+\theta(\gamma'),0},$$
\begin{align*}
N_{\gamma,\gamma',\gamma''} &=\frac{4}{|\Gamma|-|G|}(1+\inpr{\gamma+\gamma'+\gamma''}{u_0}) \\
 &-\delta_{\gamma+\gamma'+\gamma'',0}-\delta_{\theta(\gamma)+\gamma'+\gamma'',0}
 -\delta_{\gamma+\theta(\gamma')+\gamma'',0}-\delta_{\gamma+\gamma'+\theta(\gamma''),0}.
\end{align*}
\end{lemma}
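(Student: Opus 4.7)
The plan is to apply the Verlinde formula
$$N_{ijk} = \sum_{r\in J}\frac{S_{i,r}S_{j,r}S_{k,r}}{S_{0,r}}$$
directly, splitting the sum according to the six-block decomposition of $J$. Since $S_{0,r}$ is constant on each block, taking the values $(a-b)/2,\,(a+b)/2,\,a/2,\,a,\,b/2,\,b$ respectively, each piece becomes a weighted character sum over that block, multiplied by an elementary prefactor.

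First I would dispose of the contributions from the $G_*$ and $\Gamma_*$ blocks via standard orthogonality. A sum over $r\in G_*$ against an $S$-entry of the form $a(\overline{\inpr{g}{r}}+\overline{\inpr{g}{\theta(r)}})$ is essentially half of a sum over all of $G$, so that $\sum_{g\in G}\inpr{h}{g}=|G|\delta_{h,0}$ produces precisely the delta terms $\delta_{h+g',0}+\delta_{h+\theta(g'),0}$ visible in the stated formulas, and analogously on $\Gamma_*$. Next, the two $U$-block contributions combine rationally through identities like $\tfrac{(a+b)/2}{(a-b)/2}+\tfrac{(a-b)/2}{(a+b)/2}=\tfrac{2(a^2+b^2)}{a^2-b^2}$, and because $a^2-b^2=(|\Gamma|-|G|)/(|G||\Gamma|)$ this is exactly how the factor $1/(|\Gamma|-|G|)$ enters the stated coefficients. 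Finally, when the triple $(i,j,k)$ involves $J_1$ or $J_2$ indices, summing $r$ over $J_1\cup J_2$ produces cross-terms involving products of $f(k,\sigma)$ values.

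The genuinely non-trivial input, already isolated as Lemma \ref{eqns}, handles exactly these cross-terms: the six identities there take precisely the shapes that arise when collapsing a mixed $J_1$/$J_2$ summation, and their right-hand sides ($4\delta_{k,k'}$, $0$, and the four terms of the form $2cs\overline{\inpr{k}{k'}q_1(k)q_1(k')}$ etc.) are exactly what is needed to produce the coefficients $\varepsilon\varepsilon' s^2$, $\varepsilon\varepsilon' s\,f(k,\sigma)$ and so on appearing in the statement. Everything else reduces to elementary orthogonality on $U$: since $\inpr{u_0}{u_0}=1$, the bicharacter restricted to $U\times U$ is trivial, so $\sum_{v\in U}\overline{\inpr{w}{v}}$ is always either $0$ or $|U|=2$; this dichotomy is precisely what produces the characteristic factors $1+\inpr{\cdot}{u_0}$ that decorate many of the stated coefficients.

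The main obstacle is purely bookkeeping. There are on the order of fifty inequivalent fusion coefficients, and for each one the signs carried by $\varepsilon,\varepsilon',\varepsilon''\in\{\pm1\}$ and by the fourth root of unity $s$ must be tracked carefully, while also being consistent with both possible choices (A1) or (A2) for the relative signs of the quadratic form on $K_*$ and $\Sigma_*$. I would organize the case analysis by first disposing of entries containing a $(u,0)$ index, which decouple almost entirely from the other blocks; then the $(u,\pi)$ entries, which decouple similarly but with $1+\inpr{\cdot}{u_0}$ factors; then the pure $G_*$/$\Gamma_*$ combinations; and finally the mixed $J_1$-$J_2$ entries, where parts (3)--(6) of Lemma \ref{eqns} are essential. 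About half of the cases are obtainable from the others via the symmetry $(G,q_1,\theta_1)\leftrightarrow(\Gamma,q_2,\theta_2)$, $a\leftrightarrow b$, accompanying the sign flip $c=\cG(q_1)=-\cG(q_2)$. Given the volume of computation the full verification is best deferred to the online appendix, as the authors indicate in the introduction.
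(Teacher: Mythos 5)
Your proposal is correct and follows essentially the same route as the paper's own proof in the online appendix: a direct block-by-block evaluation of the Verlinde sum, with the $U\times\{0\}$ and $U\times\{\pi\}$ contributions combining rationally to produce the $1/(|\Gamma|-|G|)$ terms, character orthogonality over $G$ and $\Gamma$ producing the delta terms, the sums $\sum_{v\in U}\overline{\inpr{w}{v}}=1+\inpr{w}{u_0}$ producing the decorating factors, and the covariance of $f$ under $U$-translation (the content of Lemma \ref{eqns}) collapsing the mixed $J_1$/$J_2$ cross-terms. The only cosmetic difference is that the paper reuses the $f(k+v,\sigma)=f(k,\sigma)\overline{\inpr{u_0}{\sigma}}$ identities directly inside each computation rather than quoting Lemma \ref{eqns} verbatim, which does not change the substance.
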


From the above computations (in particular from the formulas for
$N_{(k,\varepsilon),(k',\varepsilon'),(k",\varepsilon'')}$ and $N_{(\sigma,\varepsilon),(\sigma',\varepsilon'),(\sigma",\varepsilon'')}$), we find the following.

\begin{theorem}\label{STtheorem5} Under the assumptions of Definition \ref{ST5}, all the fusion coefficients are non-negative integers if and only if the following five conditions are satisfied.

\begin{itemize}
\item[$(1)$] $|\Gamma|=|G|+4$.
\item[$(2)$] $(1+\inpr{u_0}{k})(1-s^2\inpr{k}{k})=0$ for all $k\in K_*$.
\item[$(3)$] $(1+\inpr{u_0}{\sigma})(1+s^2\inpr{\sigma}{\sigma})=0$ for all $\sigma\in \Sigma_*$.
\item[$(4)$] If $G^{\theta_1}\cong \Z_4$, we have $q_0(u_0)=-1$.
\item[$(5)$] If $\Gamma^{\theta_2}\cong \Z_4$, we have $q_0(u_0)=-1$.
\end{itemize}
Under these conditions, the fusion coefficients are 
$$N_{(u,0),(u',0),(u'',0)}=N_{(u,0),(u',\pi),(u'',\pi)}=\delta_{u+u'+u'',0},$$
$$N_{(u,0),(u',0),(u'',\pi)}=N_{(u,0),(u',0),(k,\varepsilon)}=N_{(u,0),(u',0),g}=N_{(u,0),(u',0),(\sigma,0)}
=N_{(u,0),(u',0),\gamma}=0,$$
$$N_{(u,0),(u',\pi,),(k,\varepsilon)}=N_{(u,0),(u',\pi,),g}=N_{(u,0),(u',\pi,),(\sigma,\varepsilon)}
=N_{(u,0),(u',\pi),\gamma}=0,$$
$$N_{(u,0),(k,\varepsilon),(k',\varepsilon')}=\delta_{u+k+k',0}\delta_{\varepsilon,\varepsilon'},\quad N_{(u,0),g,g'}=\delta_{u+g+g',0}+\delta_{u+g-g',0},$$
$$N_{(u,0),(\sigma,\varepsilon),(\sigma',\varepsilon')}=\delta_{u+\sigma+\sigma',0}\delta_{\varepsilon,\varepsilon'},
\quad N_{(u,0),\gamma,\gamma'}=\delta_{u+\gamma+\gamma',0}+\delta_{u+\gamma-\gamma',0},$$
$$N_{(u,0),(k,\varepsilon),g}=N_{(u,0),(k,\varepsilon),(\sigma,\varepsilon')}=N_{(u,0),(k,\epsilon),\gamma}
=N_{(u,0),g,(\sigma,\varepsilon)}=N_{(u,0),g,\gamma}=N_{(u,0),(\sigma,\varepsilon),\gamma}=0,$$ 
$$N_{(u,\pi),(u',\pi),(u'',\pi)}=2, \quad 
N_{(u,\pi),(u',\pi),(k,\varepsilon)}
=\frac{1+\inpr{k}{u_0}}{2},\quad  
N_{(u,\pi),(u',\pi),g}=1+\inpr{g}{u_0},$$
$$N_{(u,\pi),(u',\pi),(\sigma,\varepsilon)}
=\frac{1+\inpr{\sigma}{u_0}}{2},\quad 
N_{(u,\pi),(u',\pi),\gamma}
=1+\inpr{\gamma}{u_0},$$
$$N_{(u,\pi),(k,\varepsilon),(k,\varepsilon')}
 =\frac{1+\delta_{u+k+k',0}}{2}+
 \frac{\varepsilon\varepsilon's^2}{4}(\inpr{u+k+k'}{k_1}-\inpr{u+k+k'}{\sigma_1}),$$
$$N_{(u,\pi),(k,\varepsilon),g} =\frac{1+\inpr{k+g}{u_0}}{2},\quad 
N_{(u,\pi),(k,\varepsilon),(\sigma,\varepsilon')}=0,\quad 
N_{(u,\pi),(k,\varepsilon),\gamma}=\frac{1+\inpr{k}{u_0}\inpr{\gamma}{u_0}}{2},$$
$$N_{(u,\pi),g,g'}=1+\inpr{g+g'}{u_0} +\delta_{u+g+g',0}+\delta_{u+g+\theta(g'),0},$$
$$N_{(u,\pi),g,(\sigma,\varepsilon)}=\frac{1+\inpr{g}{u_0}\inpr{\sigma}{u_0}}{2},\quad 
N_{(u,\pi),g,\gamma}=1+\inpr{g}{u_0}\inpr{\gamma}{u_0},$$
$$N_{(u,\pi),(\sigma,\varepsilon),(\sigma',\varepsilon')}=\frac{1-\delta_{u+\sigma+\sigma',0}}{2}
 +\frac{\varepsilon\varepsilon' s^2}{4}(\inpr{u+\sigma+\sigma'}{k_1}-\inpr{u+\sigma+\sigma'}{\sigma},$$
$$N_{(u,\pi),(\sigma,\varepsilon),\gamma}=\frac{1+\inpr{\sigma+\gamma}{u_0}}{2},\quad 
N_{(u,\pi),\gamma,\gamma'}
 =1+\inpr{\gamma+\gamma'}{u_0} -\delta_{u+\gamma+\gamma',0}-\delta_{u+\gamma+\theta(\gamma'),0},$$
$$N_{(k,\varepsilon),(k',\varepsilon'),(k'',\varepsilon'')}
=\frac{(1+\inpr{u_0}{k+k'+k''})(1+\varepsilon\varepsilon'+\varepsilon'\varepsilon''+\varepsilon''\varepsilon)}{8},$$
$$
N_{(k,\varepsilon),(k',\varepsilon'),g}
=\frac{(1+s^2\varepsilon\varepsilon'\overline{\inpr{k}{k+k'+g}})(1+\inpr{g}{u_0})}{4},$$
$$N_{(k,\varepsilon),(k',\varepsilon'),(\sigma,\varepsilon'')}
=\frac{(1+\inpr{\sigma}{u_0})(1+\varepsilon\varepsilon'+\varepsilon''(\varepsilon+ \varepsilon')\inpr{k+k'}{k}sf(k,\sigma))}{8},$$
$$N_{(k,\varepsilon),(k',\varepsilon),\gamma}
 =\frac{(1+\inpr{\gamma}{u_0})(1-\varepsilon\varepsilon's^2\inpr{k-k'}{\sigma_1}\overline{\inpr{\gamma}{\sigma_1}})}{4},$$
$$N_{(k,\varepsilon),g,g'}=\frac{1+\inpr{k+g+g'}{u_0}}{2}+\delta_{k+g+g',0}+\delta_{k+g+\theta(g'),0},$$ 
$$N_{(k,\varepsilon),g,(\sigma,\varepsilon')}
=\frac{(1+\varepsilon\varepsilon' s f(k,\sigma)\overline{\inpr{k+g}{k}})(1-\inpr{g}{u_0})}{4},$$
$$N_{(k,\varepsilon),g,\gamma}=\frac{1+\inpr{k+g}{u_0}\inpr{\gamma}{u_0}}{2},$$
$$N_{(k,\varepsilon''),(\sigma,\varepsilon),(\sigma',\varepsilon')}
=\frac{(1+\varepsilon\varepsilon'   -\varepsilon''(\varepsilon+\varepsilon')sf(k,\sigma)\overline{\inpr{\sigma+\sigma'}{\sigma}}) (1+\inpr{k}{u_0})}{8},$$
$$N_{(k,\varepsilon),(\sigma,\varepsilon'),\gamma}
 =\frac{(1-\varepsilon \varepsilon' s f(k,\sigma)\overline{\inpr{\sigma+\gamma}{\sigma}})(1-\inpr{\gamma}{u_0})}{4},$$
$$N_{(k,\varepsilon),\gamma,\gamma'} =\frac{1+\inpr{k}{u_0}\inpr{\gamma+\gamma'}{u_0}}{2},$$
$$N_{g,g',g''}=1+\inpr{g+g'+g''}{u_0}+\delta_{g+g'+g'',0}+\delta_{\theta(g)+g'+g'',0}
 +\delta_{g+\theta(g')+g'',0}+\delta_{g+g'+\theta(g''),0},$$
$$N_{g,g',(\sigma,\varepsilon)} =\frac{1+\inpr{g+g'}{u_0}\inpr{\sigma}{u_0}}{2},\quad
N_{g,g',\gamma}=1+\inpr{g+g'}{u_0}\inpr{\gamma}{u_0},$$
$$N_{g,(\sigma,\varepsilon),(\sigma',\varepsilon')}
=\frac{(1+\varepsilon\varepsilon's^2\overline{\inpr{g}{k_1}}\inpr{k_1}{\sigma-\sigma'} )(1+\inpr{g}{u_0})}{4},$$
$$N_{g,(\sigma,\varepsilon),\gamma}=\frac{1+\inpr{g}{u_0}\inpr{\sigma+\gamma}{u_0}}{2},\quad 
N_{g,\gamma,\gamma'}=1+ \inpr{g}{u_0}\inpr{\gamma+\gamma'}{u_0},$$
$$N_{(\sigma,\varepsilon),(\sigma',\varepsilon'),(\sigma'',\varepsilon'')}
=\frac{(1+\inpr{\sigma+\sigma'+\sigma''}{u_0})(1+ \varepsilon\varepsilon'+\varepsilon'\varepsilon''+\varepsilon''\varepsilon))}{8},$$
$$N_{(\sigma,\varepsilon),(\sigma',\varepsilon'),\gamma}
 =\frac{(1-\varepsilon\varepsilon's^2\overline{\inpr{\sigma+\sigma'+\gamma}{\sigma}})(1+\inpr{\sigma+\sigma'+\gamma}{u_0})}{4},$$
$$N_{(\sigma,\varepsilon),\gamma,\gamma'}
=\frac{1+\inpr{\sigma+\gamma+\gamma'}{u_0}}{2}-\delta_{\sigma+\gamma+\gamma',0}-\delta_{\sigma+\gamma+\theta(\gamma'),0},$$
$$N_{\gamma,\gamma',\gamma''}=1+\inpr{\gamma+\gamma'+\gamma''}{u_0} 
 -\delta_{\gamma+\gamma'+\gamma'',0}-\delta_{\theta(\gamma)+\gamma'+\gamma'',0}
 -\delta_{\gamma+\theta(\gamma')+\gamma'',0}-\delta_{\gamma+\gamma'+\theta(\gamma''),0}.$$
\end{theorem}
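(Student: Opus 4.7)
The strategy is to extract necessary conditions from the fusion coefficient formulas of Lemma \ref{Verlinde5} and then verify sufficiency by direct substitution. The ``only if'' direction proceeds as a chain of case analyses on individual coefficients, each of which must be a non-negative integer; the ``if'' direction is essentially bookkeeping.

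First, the coefficient $N_{(u,\pi),(u',\pi),(u'',\pi)}=8/(|\Gamma|-|G|)$ forces $|\Gamma|-|G|$ to divide $8$. To pin down the precise value I will examine $N_{(k,\varepsilon),(k',\varepsilon'),(k'',\varepsilon'')}$, which factors as $(1+\inpr{u_0}{k+k'+k''})$ times an expression containing the fractional piece $1/(2(|\Gamma|-|G|))$ and a piece $\pm s^2/8$. Since $\varepsilon\varepsilon'+\varepsilon'\varepsilon''+\varepsilon''\varepsilon\in\{3,-1\}$ and $s^2\in\{1,-1\}$, requiring both sign choices to yield non-negative integers after multiplication by the outer factor $2$ excludes $|\Gamma|-|G|\in\{1,2,8\}$ (the candidate values $\tfrac{1}{2}\pm\tfrac{3}{4}$, $\tfrac{1}{4}\pm\tfrac{3}{4}$, $\tfrac{1}{8}\pm\tfrac{3}{4}$ are not non-negative integers) and forces $|\Gamma|-|G|=4$, which is condition (1).

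Next, with $|\Gamma|-|G|=4$, the same triple-$J_1$ coefficient simplifies to $(1+\inpr{u_0}{l})\cdot(1+s^2\overline{\inpr{l}{l}}(\varepsilon\varepsilon'+\varepsilon'\varepsilon''+\varepsilon''\varepsilon))/8$, where $l=k+k'+k''\in K_*$ (inside $G_2\cong\Z_2\times\Z_2$ one has $K_*+K_*\subset U$, so any triple sum from $K_*$ returns to $K_*$). When $(1+\inpr{u_0}{l})=2$, the inner factor---evaluated at both $\varepsilon\varepsilon'+\varepsilon'\varepsilon''+\varepsilon''\varepsilon=3$ and $-1$---is integral only if $s^2\inpr{l}{l}=1$. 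This is condition (2); the symmetric analysis of the triple-$J_2$ coefficient gives condition (3). Conditions (4) and (5) I expect to emerge from considering order-$4$ fixed points $e$ of $\theta_i$, which exist in $G_*$ or $\Gamma_*$ precisely when $G^{\theta_1}\cong\Z_4$ or $\Gamma^{\theta_2}\cong\Z_4$. For such $e$ the identities $\inpr{e}{u_0}=q_0(u_0)$ and $\inpr{e}{k_j}^2=q_0(u_0)^{-1}$ hold, so unless $q_0(u_0)=-1$ (which forces a factor of the form $(1+\inpr{e}{u_0})$ in certain mixed coefficients such as $N_{(k_j,\varepsilon),e,(\sigma_j,\varepsilon')}$ to vanish) those coefficients acquire non-real contributions from $f(k_j,\sigma_j)$ combined with the fourth-root-of-unity value of $\inpr{e}{k_j}$.

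For sufficiency, I substitute $|\Gamma|-|G|=4$ and apply conditions (2)--(5) to each of the roughly thirty formulas of Lemma \ref{Verlinde5}. In each case, conditions (2) and (3) convert every $s^2\overline{\inpr{\cdot}{\cdot}}$ expression into a $\pm1$ sign that combines cleanly with the outer $(1+\inpr{u_0}{\cdot})$ factor to yield a manifestly non-negative integer, and the resulting expressions match the displayed formulas. The main obstacle is organizational: the large number of coefficient types mixing $K_*,\Sigma_*,G_*,\Gamma_*$ indices with the twisted sign function $f$ from Lemma \ref{eqns} means each case must be tracked individually. The most delicate step is the derivation of conditions (4)--(5): the decomposition $G=G_2\sqcup G_*\sqcup\theta_1(G_*)$ needs careful reinterpretation when $G^{\theta_1}$ contains order-$4$ fixed elements (since $\theta_1$ then fixes points outside $G_2$), and one must pinpoint the precise fusion coefficient whose integrality is obstructed in the wrong-sign case.
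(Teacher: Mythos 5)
Your derivation of conditions (1)--(3) is essentially sound and is a legitimate variant of the paper's argument: the paper gets (1) more directly from $N_{(u,\pi),(u',\pi),(k,\varepsilon)}=\frac{2}{|\Gamma|-|G|}(1+\inpr{k}{u_0})$ together with the observation that $|G|$ and $|\Gamma|$ are both multiples of $4$, whereas you exclude $|\Gamma|-|G|\in\{1,2,8\}$ via the triple-$J_1$ coefficient. One caveat there: exactly one of the families $K_*$, $\Sigma_*$ satisfies $\inpr{u_0}{\cdot}=1$, so your exclusion argument only produces information from whichever triple coefficient has nonvanishing outer factor; you need to run it on the $J_2$ triple when $\inpr{u_0}{k}=-1$ for all $k\in K_*$. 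That is easily repaired.

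The genuine gap is in conditions (4)--(5). When $G^{\theta_1}\cong\Z_4$, the two order-$4$ fixed points of $\theta_1$ are precisely the elements $k_1=-k_2$ of $K_*$: they index the pairs $(k,\varepsilon)$, while $G_*$ by construction contains only representatives of the free $\theta_1$-orbits. So there is no order-$4$ fixed point $e\in G_*$, the coefficient $N_{(k_j,\varepsilon),e,(\sigma_j,\varepsilon')}$ you propose to obstruct does not exist, and your plan for (4)--(5) collapses. The actual source of (4) is $N_{(u,\pi),(k,\varepsilon),(k',\varepsilon')}$, whose formula in Lemma \ref{Verlinde5} carries the factor $\inpr{u+k+k'}{\sigma_1}\inpr{2k}{\sigma_1}$ (suppressed in the theorem's display precisely because (4) makes $\inpr{2k}{\sigma_1}=1$): evaluating at $u+k+k'=0$ and at $u+k+k'=u_0$ shows integrality forces $\inpr{2k}{\sigma_1}=1$, and in the $\Z_4$ case $2k=u_0$ and (A1) holds, so $\inpr{u_0}{\sigma_1}=-q_0(u_0)^{-1}$ and the condition becomes $q_0(u_0)=-1$; (5) is the mirror statement from $N_{(u,\pi),(\sigma,\varepsilon),(\sigma',\varepsilon')}$. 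Your sufficiency sketch is also too optimistic: the mixed coefficients $N_{(k,\varepsilon),(k',\varepsilon'),(\sigma,\varepsilon'')}$, $N_{(k,\varepsilon),g,(\sigma,\varepsilon')}$, $N_{(k,\varepsilon),(\sigma,\varepsilon'),(\sigma',\varepsilon'')}$, $N_{(k,\varepsilon),(\sigma,\varepsilon'),\gamma}$ do not become non-negative integers merely by substituting (2)--(3); one must use (3) to get $s^2\overline{\inpr{\sigma}{\sigma}}=-1$ when $\inpr{u_0}{\sigma}=1$ so that the expression factors as $\frac{(1+\varepsilon\varepsilon')(1+\varepsilon\varepsilon''sf(k,\sigma)\inpr{k+k'}{k})}{4}$, and one must separately prove (using (4) in the $\Z_4$ case, via $\inpr{k}{k}^2=-1$) that $\inpr{g}{u_0}=-1$ implies $\inpr{k+g}{k}^2=1$, so that $sf(k,\sigma)\overline{\inpr{k+g}{k}}$ is a genuine sign.
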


\begin{proof} Note that $|G|$ and $|\Gamma|$ are multiples of 4. 
Since $\inpr{k}{u_0}\inpr{\sigma}{u_0}=-1$, either $N_{(\pi,u),(\pi,u'),(k,\varepsilon)}$ or $N_{(\pi,u),(\pi,u'),(\sigma,\varepsilon)}$ is 
$\frac{4}{|\Gamma|-|G|}$, and so (1) is necessary for the fusion coefficients to be non-negative integers. 
We assume (1) for the rest of the proof. 

Necessity of (2) and (3) follows from the formulas for 
$N_{(k,\varepsilon),(k,\varepsilon),(-k,-\varepsilon)}$ and  $N_{(\sigma,\varepsilon),(\sigma,\varepsilon),(-\sigma,-\varepsilon)}$ 
respectively. 
For (4), we have 
$$N_{(u,\pi),(k,\varepsilon),(k',\varepsilon')}=\frac{1+\delta_{u+k+k'}}{2}+\frac{\varepsilon\varepsilon's^2}{4}(\inpr{u+k+k'}{k_1}-\inpr{u+k+k'}{\sigma_1}\inpr{2k}{\sigma}),$$
which is $1+\frac{\varepsilon\varepsilon's^2}{4}(1-\inpr{2k}{\sigma})$ if $u+k+k'=0$, and is 
$$\frac{1}{2}+\frac{\varepsilon\varepsilon's^2}{4}(\inpr{u_0}{k_1}-\inpr{u_0}{\sigma_1}\inpr{2k}{\sigma})
=\frac{1}{2}+\frac{\varepsilon\varepsilon's^2\inpr{u_0}{k_1}}{4}(1+\inpr{2k}{\sigma}),$$
if $u+k+k'=u_0$. 
In the both cases $N_{(u,\pi),(k,\varepsilon),(k',\varepsilon')}$ is a non-negative integer if and only if 
$\inpr{2k}{\sigma}=1$. 
If $G^{\theta_1}\cong \Z_2\times \Z_2$, this does not give any restriction. 
Assume $G^{\theta_1}\cong\Z_4$. 
Then we have $2k=u_0$.  
Since $k_1=-k_2$, we have $q_1(k_1)=q_2(k_2)$, and we are in the case (A1). 
Thus $\inpr{2k}{\sigma}=\inpr{u_0}{\sigma}=-1/q_0(u_0)$. 
This shows that  $\inpr{2k}{\sigma}=1$ if and only if $q_0(u_0)=-1$, and we get (4). 
Necessity of (5) follows from the formula for
$N_{(u,\pi),(\sigma,\varepsilon),(\sigma,\varepsilon')}$.  

Assume (1)-(5) on the other hand. 
Then it is easy to show that the fusion coefficients are non-negative integers except for 
$$N_{(k,\varepsilon),(k',\varepsilon'),(\sigma,\varepsilon'')},
N_{(k,\varepsilon),g,(\sigma,\varepsilon')},
N_{(k,\varepsilon),(\sigma,\varepsilon'),(\sigma',\varepsilon'')},
N_{(k,\varepsilon),(\sigma,\varepsilon'),\gamma}.$$
We show that the first two are non-negative integers, and the last two can be treated in the same way.  
We have 
$$N_{(k,\varepsilon),(k',\varepsilon'),(\sigma,\varepsilon'')}=(1+\inpr{u_0}{\sigma})\frac{1+\varepsilon''(\varepsilon+\varepsilon')sf(k,\sigma)\inpr{k+k'}{k}-\varepsilon \varepsilon'\inpr{k'-k}{\sigma}s^2\overline{\inpr{\sigma}{\sigma}}}{8}.$$
If $\inpr{u_0}{\sigma}=-1$, there is nothing to show, so we assume $\inpr{u_0}{\sigma}=1$. 
Then $\inpr{k'-k}{\sigma}=1$ and (3) implies that $s^2\overline{\inpr{\sigma}{\sigma}}=-1$. 
Thus 
\begin{align*}
&N_{(k,\varepsilon),(k',\varepsilon'),(\sigma,\varepsilon'')}=\frac{1+\varepsilon \varepsilon'+\varepsilon''(\varepsilon+\varepsilon')sf(k,\sigma)\inpr{k+k'}{k}}{4} \\&=\frac{(1+\varepsilon\varepsilon')(1+\varepsilon\varepsilon''sf(k,\sigma)\inpr{k+k'}{k})}{4}, 
\end{align*}
which is either $0$ or $1$. 

Since
$$N_{(k,\varepsilon),g,(\sigma,\varepsilon')}=\frac{(1-\inpr{g}{u_0})(1+\varepsilon\varepsilon'sf(k,\sigma)\overline{\inpr{k+g}{k}})}{4},$$
to show that this is a non-negative integer, it suffices to show that $\inpr{g}{u_0}=-1$ implies that $\inpr{k+g}{k}\in \{1,-1\}$, or equivalently $\inpr{k+g}{k}^2=1$. 
If $G^{\theta_1}\cong \Z_2\times \Z_2$, there is nothing to show. 
Assume $G^{\theta_1}\cong \Z_4$. 
Since $2k=u_0$ and $q_1(u_0)=-1$, the restriction of $q_1$ to $G^{\theta_1}$ is non-degenerate, and we get $\inpr{k}{k}^2=-1$. 
Thus $$\inpr{k+g}{k}^2=\inpr{k}{k}^2\inpr{g}{u_0}=1.$$ 
\end{proof}

In the rest of this section, we assume that the conditions in Theorem \ref{STtheorem5} are satisfied. 

\begin{lemma}\label{FS5} We have the following formulas for the Frobenius-Schur indicators:
$$\nu_m((u,0))=\delta_{mu,0}q_0(u)^m,$$
$$\nu_m((u,\pi))=\frac{1}{4}\sum_{v\in U}\inpr{u}{v}|\cG(q_1,q_2,v,m)|^2,$$
$$\nu_m((k,\varepsilon))=\frac{1}{8}\sum_{v\in U}\inpr{k}{v}|\cG(q_1,q_2,v,m)|^2+\frac{\delta_{mk,0}q_1(k)^m}{2},$$
$$\nu_m(g)=\frac{1}{4}\sum_{v\in U}\inpr{g}{v}|\cG(q_1,q_2,v,m)|^2+\delta_{mg,0}q_1(g)^m,$$
$$\nu_m((\sigma,\varepsilon))=\frac{1}{8}\sum_{v\in U}\inpr{\sigma}{v}|\cG(q_1,q_2,v,m)|^2-\frac{\delta_{m\sigma,0}q_2(\sigma)^m}{2},$$
$$\nu_m(\gamma)=\frac{1}{4}\sum_{v\in U}\inpr{\gamma}{v}|\cG(q_1,q_2,v,m)|^2-\delta_{m\gamma,0}q_1(\gamma)^m,$$
where 
$$\cG(q_1,q_2,v,m)=\frac{1}{\sqrt{|G|}}\sum_{g\in G}\inpr{v}{g}q_1(g)^m
+\frac{1}{\sqrt{|\Gamma|}}\sum_{\gamma\in \Gamma}\inpr{v}{\gamma}q_2(\gamma)^m.$$
In particular, (FS2) implies 
$$|\cG(q_1,q_2,0,2)|^2+|\cG(q_1,q_2,u_0,2)|^2=4.$$
\end{lemma}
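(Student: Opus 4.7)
The plan is to evaluate the defining formula
$$\nu_m(x)=\sum_{i,j\in J}N_{ij}^x S_{0i}S_{0j}\left(\frac{T_{jj}}{T_{ii}}\right)^m$$
from Section \ref{md} directly, substituting the explicit expressions for $N^x_{ij}$ in Theorem \ref{STtheorem5} together with $S$ and $T$ from Definition \ref{ST5}. I would handle each of the six block types of $x$ (namely $(u,0)$, $(u,\pi)$, $(k,\varepsilon)$, $g$, $(\sigma,\varepsilon)$, $\gamma$) separately, decomposing $N^x_{ij}$ into a \emph{bulk} part (proportional to the $(1+\inpr{\cdot}{u_0})$ factors and the $\varepsilon\varepsilon'$/$f(k,\sigma)$ terms) and a \emph{boundary} part (the $\delta_{\cdot,0}$ terms).

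For the bulk contribution, use the identity $1+\inpr{h}{u_0}=\sum_{v\in U}\inpr{h}{v}$ to rewrite every $(1+\inpr{\cdot}{u_0})$ factor as a sum over $v\in U$. After pushing this sum over $v$ to the outside, the remaining double sum over $i,j$ factorizes through the block structure into a sum over the $G$-side blocks ($U\times\{0,\pi\}$, $K_*\times\{\pm\}$, $G_*$) and a sum over the $\Gamma$-side blocks. Recombining $G_*$ with $\theta_1(G_*)$ and adjoining $G_2$, and similarly on the $\Gamma$-side, each of these sums equals (up to a universal constant) the Gauss sum $\cG(q_1,v,m)$ or $\cG(q_2,v,m)$. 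The sign flips between the $\pi$-row and $0$-row of $S$, and between the $G$- and $\Gamma$-block rows, ensure the two factors assemble precisely into $|\cG(q_1,v,m)+\cG(q_2,v,m)|^2=|\cG(q_1,q_2,v,m)|^2$. The overall prefactor is $1/4$ when $x\in\{(u,0),(u,\pi),g,\gamma\}$ and $1/8$ when $x\in\{(k,\varepsilon),(\sigma,\varepsilon)\}$, the extra $1/2$ arising from the sum over $\varepsilon'$.

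The boundary contribution from the Kronecker deltas collapses the double sum to a few terms: $i$ and $j$ are forced by the delta conditions (up to $\theta$) to be determined by $x$, and the surviving $T$-ratios collapse to $q_1(x)^m$ or $q_2(x)^m$. The sign flip on the $\Gamma$-block rows of $S$ produces the minus sign in front of $\delta_{m\gamma,0}q_2(\gamma)^m$ and $\delta_{m\sigma,0}q_2(\sigma)^m$. In the $(k,\varepsilon)$ and $(\sigma,\varepsilon)$ cases, the $\varepsilon\varepsilon'$ and $f(k,\sigma)$-dependent cross terms cancel by orthogonality of $\{\pm 1\}$ combined with the support of the deltas, leaving only the diagonal-in-$\varepsilon$ contribution of half the size. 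Adding bulk and boundary yields the five displayed formulas.

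For the (FS2) consequence, specialize the $\nu_m((u,\pi))$ formula to $m=2$ and $u=0$: since $\inpr{0}{v}=1$,
$$\nu_2((0,\pi))=\tfrac{1}{4}\bigl(|\cG(q_1,q_2,0,2)|^2+|\cG(q_1,q_2,u_0,2)|^2\bigr).$$
Since $(0,\pi)$ is self-dual (from $\overline{(u,\pi)}=(u,\pi)$) and the right-hand side is nonnegative, (FS2) forces this quantity to equal $1$, which is the stated identity. The main obstacle is the bookkeeping in the $(k,\varepsilon)$ and $(\sigma,\varepsilon)$ cases, where $f(k,\sigma)$ and the $\varepsilon$-signs enter nontrivially; the defining identities $f(k+k_0,\sigma)=f(k,\sigma)\overline{\inpr{u_0}{\sigma}}$ and $f(k,\sigma)^2=s^2$, together with $\inpr{u_0}{u_0}=1$, are exactly what is needed to make the cross terms drop out.
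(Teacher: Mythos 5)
Your proposal is correct and follows essentially the same route as the paper's own (appendix) proof: a direct block-by-block evaluation of the defining sum, splitting each Verlinde coefficient into the bulk part (rewritten via $1+\inpr{h}{u_0}=\sum_{v\in U}\inpr{h}{v}$ and reassembled into $|\cG(q_1,q_2,v,m)|^2$ with the signs of the $\Gamma$-blocks supplying the relative sign in $\cG(q_1,q_2,v,m)$) and the boundary delta part collapsing to $\pm\delta_{mx,0}q(x)^m$, with the $\varepsilon$-linear cross terms killed by summing over $\{\pm1\}$, and the (FS2) identity obtained exactly as in the paper from self-duality of $(0,\pi)$ and nonnegativity. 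The only trivial imprecision is listing $(u,0)$ among the $1/4$-prefactor cases, where in fact the bulk contribution is absent and only the delta terms survive.
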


\begin{proof} Direct computation gives the formulae for $\nu_m(x)$. 
Since $(0,\pi)$ is self-dual, we get the last formula. 
\end{proof}

Recall that we have 
$$N_{(u,\pi),(u,\pi),(u,\pi)}=2,\quad N_{(k,\varepsilon),(k,\varepsilon),(k,\varepsilon)}=\frac{1+\inpr{k}{u_0}}{2},$$
$$N_{g,g,g}=1+\inpr{g}{u_0}+\delta_{3g,0},\quad 
N_{(\sigma,\varepsilon),(\sigma,\varepsilon),(\sigma,\varepsilon)}=\frac{1+\inpr{\sigma}{u_0}}{2},$$
$$N_{\gamma,\gamma,\gamma}=1+\inpr{\gamma}{u_0}-\delta_{3\gamma,0}.$$

We can show the following lemma as in the proof of Lemma \ref{nu3computation3}. 

\begin{lemma}\label{nu3} Let the notation be as above. \\
$(1)$ If $m$ is an odd number,  then
$$|\cG(q_1,q_2,u,m)|=|\cG(q_1,m)+\cG(q_2,m)|=|\frac{\cG(q_1,m)}{\cG(q_1)^m}-\frac{\cG(q_2,m)}{\cG(q_2)^m}|.$$
$(2)$ The condition (FS3) is equivalent to  
$$|(-1)^{n_1}\frac{\cG(q_{1,o},3)}{\cG(q_{1,o})^3}-(-1)^{n_2}\frac{\cG(q_{2,o},3)}{\cG(q_{2,o})^3}|=2,$$
where $|G_e|=2^{n_1}$ and $|\Gamma_e|=2^{n_2}$. 
If neither $G$ nor $\Gamma$ has a 3-component, this is further equivalent to 
$$(-1)^{n_1+n_2}(\frac{|G_o|}{3})(\frac{|\Gamma_o|}{3})=-1.$$ 
$(3)$ We have
$$|\cG(q_1,q_2,u,2)|=|\frac{\cG(q_{1,e},u,2)}{\cG(q_{1,e})}(-1)^{\frac{|G_o|^2-1}{8}}-\frac{\cG(q_{2,e},u,2)}{\cG(q_{2,e})}(-1)^{\frac{|\Gamma_o|^2-1}{8}}|.$$
\end{lemma}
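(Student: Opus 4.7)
The strategy is to mirror the proof of Lemma \ref{nu3computation3}, exploiting the fact that $U \cong \Z_2$ is a $2$-group so that multiplication by any odd integer acts as the identity on $U$; this makes the change-of-variable in the Gauss sums work cleanly.

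For part (1), fix odd $m$ and for each $u \in U$ take $u' = u$ (since $mu = u$ on $\Z_2$). Writing $\inpr{u}{g}_{q_i}q_i(g)^m = \inpr{u'}{g}_{q_i^m}q_i^m(g) = q_i(u'+g)^m q_i(u')^{-m}$ and shifting $g \mapsto g - u'$ gives $\cG(q_i,u,m) = q_i(u')^{-m}\cG(q_i,m)$. Since $q_1|_U = q_2|_U = q_0$, the common unit prefactor $q_0(u')^{-m}$ factors out of $\cG(q_1,q_2,u,m)$, yielding $|\cG(q_1,q_2,u,m)| = |\cG(q_1,m)+\cG(q_2,m)|$. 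The second equality then follows from $\cG(q_2)^m = -\cG(q_1)^m$ (odd $m$), which rewrites the stated difference as $\cG(q_1)^{-m}(\cG(q_1,m)+\cG(q_2,m))$, whose modulus is the same.

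For part (2), Lemma \ref{FS5} combined with part (1) gives $\nu_3((0,\pi)) = \tfrac{1}{2}|\cG(q_1,3)+\cG(q_2,3)|^2$. Since $N_{(0,\pi),(0,\pi),(0,\pi)}=2$, (FS3) forces $\nu_3((0,\pi))$ to be a sum of two elements of $\{1,\zeta_3,\zeta_3^2\}$; the only non-negative real possibility is $2$, so (FS3) is equivalent to $|\cG(q_1,3)+\cG(q_2,3)|=2$. Factoring $\cG(q_i) = \cG(q_{i,e})\cG(q_{i,o})$ and using the identity from Section~2.2 to compute $\cG(q_{i,e},3)/\cG(q_{i,e})^3 = (-1)^{n_i(9-1)/8} = (-1)^{n_i}$ gives the first equivalence. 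If $(3,|G_o|)=(3,|\Gamma_o|)=1$, the quadratic reciprocity calculation already carried out in the proof of Lemma \ref{nu3computation3}(1) reduces $\cG(q_{i,o},3)/\cG(q_{i,o})^3$ to the Jacobi symbol $(|G_o|/3)$ (resp.\ $(|\Gamma_o|/3)$); modulus $2$ then forces the two $\pm 1$ terms to have opposite signs, i.e.\ $(-1)^{n_1+n_2}(|G_o|/3)(|\Gamma_o|/3) = -1$.

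For part (3), the factorization $q_i(g_e + g_o)^2 = q_{i,e}(g_e)^2 q_{i,o}(g_o)^2$ (the mixed bicharacter squares to $1$), combined with the vanishing of $\inpr{u}{g_o}$ for $u \in G_e$ and $g_o \in G_o$ (order argument on pairings between coprime-order subgroups), yields $\cG(q_i,u,2) = \cG(q_{i,e},u,2)\cG(q_{i,o},2)$. Applying the standard odd-order identity $\cG(q_o,2)/\cG(q_o) = (-1)^{(|G_o|^2-1)/8}$ and then using $\cG(q_1) = -\cG(q_2)$ to align denominators produces the stated formula. The main technical obstacle throughout is not conceptual but rather sign bookkeeping when combining the even/odd factorizations with the hypothesis $\cG(q_1) = -\cG(q_2)$, since the factors $(-1)^{n_i}$ and $(-1)^{(|\cdot|^2-1)/8}$ move between numerator and denominator depending on normalization; writing each computation symmetrically in the two metric groups, as in Lemma \ref{nu3computation3}, keeps this under control.
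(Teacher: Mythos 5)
Your proof is correct and follows exactly the route the paper intends: the paper gives no separate argument for this lemma, stating only that it is proved ``as in the proof of Lemma \ref{nu3computation3},'' and your adaptation (taking $u'=u$ since $U\cong\Z_2$ and $m$ is odd, factoring out $q_0(u)^{-m}$, reading off the $m=3$ case at $(0,\pi)$ from Lemma \ref{FS5} with $N_{(0,\pi),(0,\pi),(0,\pi)}=2$, and the even/odd factorization for $m=2$) is precisely that argument specialized to the Section 5 setting.
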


\subsection{Examples}
We give a list of examples satisfying the conditions in Definition \ref{ST5}, Theorem \ref{Verlinde5}, 
(FS2), and (FS3). 
Note that the restriction of $\theta_1$ (resp. $\theta_2$) to $G_o$ (resp. $\Gamma_o$) is always multiplying by $-1$. 
Direct computation using Lemma \ref{nu3} shows that the constraint coming from (FS3) in the following examples is very similar to 
that discussed in Lemma \ref{restriction} and Remark \ref{3rank2}, and we will not mention it in what follows. 

Recall that we have either $\inpr{u_0}{k_1}=\inpr{u_0}{k_2}=-1$ or $\inpr{u_0}{\sigma_1}=\inpr{u_0}{\sigma_2}=-1$. 

\begin{lemma}\label{nondegenerate} Let the notation be as above. \\
$(1)$ Assume $\inpr{u_0}{k_1}=\inpr{u_0}{k_2}=-1$. 
Then we have either $G_e=\Z_2\times \Z_2$ or $G_e=\Z_4$, and $|\Gamma_e|$ is a multiple of 8. 
The 2-rank of $\Gamma$ is either 2 or 4 for $G_e=\Z_2\times \Z_2$ and it is 3 for $G_e=\Z_4$. \\
$(2)$ Assume $\inpr{u_0}{\sigma_1}=\inpr{u_0}{\sigma_2}=-1$. 
Then we have either $\Gamma_e=\Z_2\times \Z_2$ or $\Gamma_e=\Z_4$, and $|G_e|$ is a multiple of 8. 
The 2-rank of $G$ is either 2 or 4 for $\Gamma_e=\Z_2\times \Z_2$ and it is 3 for $\Gamma_e=\Z_4$. \\
\end{lemma}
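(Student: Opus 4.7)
The plan is to prove part $(1)$; part $(2)$ then follows by exchanging the roles of $G$ and $\Gamma$, which interchanges cases (A1) and (A2) together with conditions $(2)$ and $(3)$ of Theorem \ref{STtheorem5}.

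The first step is to show $|G_e|=4$. Under the hypothesis $\inpr{u_0}{k_1}=\inpr{u_0}{k_2}=-1$, every non-zero element of $G^{\theta_1}$ pairs non-trivially with $u_0$ (using also Eq.~(\ref{E1})), so $q_1|_{G^{\theta_1}}$ is non-degenerate. Lemma \ref{split} produces an orthogonal factorization $G=G^{\theta_1}\times (G^{\theta_1})^\perp$ of involutive metric groups in which $\theta_1$ has no non-trivial fixed point on the complement. Since any involution on a non-trivial finite abelian $2$-group has non-trivial fixed points (on the $2$-torsion subgroup, $\theta-1$ is an $\F_2$-linear map with $(\theta-1)^2=0$, so its kernel has dimension at least half the rank), the even part of $(G^{\theta_1})^\perp$ is trivial. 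Hence $G_e=G^{\theta_1}$ has order $4$, so $G_e\in\{\Z_4,\Z_2\times\Z_2\}$. The divisibility $|\Gamma_e|\in 8\Z$ then follows from $|\Gamma|=|G|+4=4(|G_o|+1)$ with $|G_o|$ odd.

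Next I would establish $\Gamma^{\theta_2}\cong\Z_2\times\Z_2$. In case (A1), the alternative $\Gamma^{\theta_2}\cong\Z_4$ would give $\sigma_2=-\sigma_1$ and hence $q_2(\sigma_1)=q_2(\sigma_2)$, contradicting $q_2(\sigma_1)=-q_2(\sigma_2)$; in case (A2), the formula $\inpr{u_0}{k_i}=-q_0(u_0)^{-1}$ combined with the hypothesis forces $q_0(u_0)=1$, contradicting condition $(5)$ of Theorem \ref{STtheorem5} for $\Gamma^{\theta_2}\cong\Z_4$. Theorem \ref{list} then restricts the $2$-rank of $\Gamma_e$ to lie in $\{2,3,4\}$.

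The final and most delicate step is a parity argument using the identity $\cG(q_1)=-\cG(q_2)$. Any non-degenerate metric $2$-group decomposes as a product of indecomposable blocks of type $\Z_{2^n}$ (with form $\zeta_{2^{n+1}}^{rx^2}$, $r$ odd) or $\Z_2\times\Z_2$ (with one of the indecomposable flip-invariant non-degenerate forms $(-1)^{xy}$, $(-1)^{x^2+xy+y^2}$ appearing in Theorem \ref{list}); a block-wise Gauss-sum computation shows that each cyclic block contributes a factor in $\zeta_8^{\mathrm{odd}}$ while each $\Z_2\times\Z_2$ block contributes one in $\zeta_4=\zeta_8^{\mathrm{even}}$. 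Consequently the parity of the $\zeta_8$-exponent of $\cG(q_{i,e})$ equals the number of cyclic blocks modulo $2$, and since the $2$-rank equals the number of cyclic blocks plus twice the number of $\Z_2\times\Z_2$ blocks, it shares the same parity. Because $\cG(q_{1,o}),\cG(q_{2,o})\in\zeta_4\subset\zeta_8^{\mathrm{even}}$, the identity $\cG(q_{1,e})\cG(q_{1,o})=-\cG(q_{2,e})\cG(q_{2,o})$ forces $\cG(q_{1,e})$ and $\cG(q_{2,e})$ to have matching $\zeta_8$-exponent parities, so the $2$-ranks of $G_e$ and $\Gamma_e$ agree modulo $2$. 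Combined with the restriction to $\{2,3,4\}$, $G_e=\Z_4$ (of $2$-rank $1$) forces the $2$-rank of $\Gamma_e$ to be $3$, while $G_e=\Z_2\times\Z_2$ (of $2$-rank $2$) forces it to be $2$ or $4$. The main obstacle I anticipate is the block-wise Gauss-sum computation across the full classification, especially verifying uniformly that every indecomposable $\Z_2\times\Z_2$ block appearing in Theorem \ref{list} contributes a $\zeta_8$-even factor.
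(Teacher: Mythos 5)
Your proof is correct and follows essentially the same route as the paper's: non-degeneracy of $q_1|_{G^{\theta_1}}$ together with Lemma \ref{split} and the impossibility of a fixed-point-free involution on a non-trivial $2$-group gives $G_e=G^{\theta_1}$ of order $4$, the divisibility $8\mid|\Gamma_e|$ comes from $|\Gamma|=4(|G_o|+1)$, and the rank parity comes from $\cG(q_{1,e})^4=\cG(q_{2,e})^4$; you in fact supply two steps the paper leaves implicit, namely that $\Gamma^{\theta_2}\cong\Z_2\times\Z_2$ so that Theorem \ref{list} pins the $2$-rank of $\Gamma_e$ between $2$ and $4$, and the block-wise verification that $\cG(q_e)^4=(-1)^{\mathrm{rank}_2}$. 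Two harmless slips: $u_0$ pairs \emph{trivially} with itself by Eq.~(\ref{E1}), so non-degeneracy should instead be phrased as the radical being trivial (each of $u_0,k_1,k_2$ pairs non-trivially with some element of $G^{\theta_1}$); and the indecomposable metric $2$-groups also include $\Z_{2^n}\times\Z_{2^n}$ with $q=\zeta_{2^n}^{xy}$ or $\zeta_{2^n}^{x^2+xy+y^2}$ for $n\geq 2$, which however contribute $\pm1$ to the Gauss sum and have even rank, so your parity argument is unaffected.
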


\begin{proof} (1) 
If $\inpr{u_0}{k_1}=\inpr{u_0}{k_2}=-1$, since we have $\inpr{k_1}{k_1}=\inpr{k_2}{k_2}$, 
the restriction of $q_1$ to $G^{\theta_1}$ is non-degenerate, and we have the factorization $G=G^{\theta_1}\times {G^{\theta_1}}^\perp$ 
as an involutive metric group. 
If ${G^{\theta_1}}^\perp$ were an even group, the restriction of $\theta_1$ to it would have a non-trivial fixed point, 
which is a contradiction. 
Thus we get $G_e=G^{\theta_1}$, and $G_e$ is either $\Z_2\times \Z_2$ or $\Z_4$. 
Since $|\Gamma|=4(|G_o|+1)$ in this case, the order of $\Gamma_e$ is a multiple of 8. 
Since $\cG(q_1)=-\cG(q_2)$, we have $\cG(q_{1,e})^4=\cG(q_{2,e})^4$, 
and the difference of the 2-rank of $G$ and that of $\Gamma$ is even. 
Part (2) can be shown in the same way. 
\end{proof}

\textbf{Division into cases:} Since there are a number of different cases to consider, we briefly outline the division. We consider four different general cases according to whether (A1) or (A2) holds, and whether $u_0$ is a fermion, i.e. $q_0(u_0)=-1$, or 
a boson, i.e. $q_0(u_0)=1$. These four cases are treated in subsections 5.2.1-5.2.4. Each of these four cases is further subdivided into two different subcases according to values of the quadratic forms on the fixed point subgroups of $G$ and $\Gamma $. These subdivisions are treated in paragraphs, e.g. 5.2.1.1 and 5.2.1.2. Finally, each of these subcases is further subdivided according to the forms of the even groups $G_e $ and $ \Gamma_e$ and the associated quadratic forms $q_{1,e} $ and $q_{2,e} $, as in previous sections. These further subcases are treated in subparagraphs, e.g. 5.2.1.1.1. 

\subsubsection{Assume (A1) and $q_0(u_0)=-1$.}  
We necessarily have $\inpr{u_0}{k_1}=\inpr{u_0}{k_2}=-1$ and $\inpr{u_0}{\sigma_1}=\inpr{u_0}{\sigma_2}=1$. 
Thus $|G_e|=4$ and the restriction of $\theta_1$ to $G_e$ is trivial. 
Since $s=c/q_1(k)$, Theorem \ref{STtheorem5} implies $s^2\inpr{\sigma}{\sigma}=-1$ for $\sigma \in \Sigma_*$, and so 
$c^2=-\inpr{k}{k}\inpr{\sigma}{\sigma}$ for $k\in K_*$ and $\sigma\in \Sigma_*$. 
Since 
$$\{q_1(k_0),q_1(k_1),q_1(k_2)\}=\{-1,q_1(k_1),q_1(k_1)\},$$
we have the following possibilities: 
\begin{itemize} 
\item If $G_e=\Z_2\times \Z_2$, we may assume $k_0=(1,1)$, $k_1=(1,0)$, and $k_2=(0,1)$. 
We have four possibilities for $q_{1,e}(x,y)$: 
(1) $(-1)^{xy}$, (2) $(-1)^{x^2+xy+y^2}$, (3) $i^{x^2+y^2}$, and (4) $i^{-(x^2+y^2)}$. 
\item If $G_e=\Z_4$, we may assume $k_1=1$, $k_0=2$, $k_2=3$, and $q_{1,e}(x)=\zeta_8^{rx^2}$ with $r\in \{1,-1,3,-3\}$. 
\end{itemize}

The set $\{q_1(\sigma_0),q_1(\sigma_1),q_1(\sigma_2)\}$ is either $\{-1,i,-i\}$, or $\{-1,1,-1\}$, and 
since $q_2(\sigma_1)\neq q_2(\sigma_2)$, the group $\Gamma^{\theta_2}$ is isomorphic to $\Z_2\times \Z_2$. 
Thus Theorem \ref{list} shows that the possibilities for $ \Gamma_e$ are as follows: 
\begin{itemize} 
\item $|G_o|\equiv 1 \mod 4$ and $\Gamma_e=\Z_2\times \Z_4$ or $\Gamma_e=\Z_2\times \Z_2\times \Z_2$. 
\item $|G_o|\equiv 3 \mod 4$ and $\Gamma_e=\Z_4\times \Z_{2^m}$ or $\Gamma_e=\Z_2\times \Z_2\times \Z_{2^m}$ with $m\geq 2$ 
or $\Z_2^4$. 
\end{itemize}
We now consider the two possibilities for $q_2(\Gamma^\theta) $ separately.\\

\paragraph{$q_2(\sigma_1)=i$ and $q_2(\sigma_2)=-i$} In this case, we have $s^2=1$, $c^2=\inpr{k}{k}$ for $k\in K_*$. 
We have $|G_o|\equiv 1 \mod 4$ and $|G_o|=2|\Gamma_o|-1$. 
More precisely, $|\Gamma_o|\equiv 1\mod 4$ if and only if $|G|\equiv 1\mod 8$, and $|\Gamma_o|\equiv 3\mod 4$ if and only if $|G_o|\equiv 5\mod 8$.  
We have either $G_e=\Z_2\times \Z_2$ and $\Gamma_e=\Z_2\times \Z_4$, or $G_e=\Z_4$ and $\Gamma_e=\Z_2^3$. 

If $\Gamma_e=\Z_2\times \Z_4$, we may assume $\sigma_0=(0,2)$, $\sigma_1=(1,0)$, $\sigma_2=(1,2)$, $q_{2,e}(x,y)=i^x\zeta_8^{ry^2}$ with 
$r\in \{1,-1,3,-3\}$, and $\theta=-1$ up to group automorphism. 
If $\Gamma_e=\Z_2^3$, we may assume $\sigma_0=(1,1,0)$, $\sigma_1=(0,0,1)$, $\sigma_2=(1,1,1)$, 
$q(x,y,z)=q'(x,y)i^{z^2}$, and $\theta(x,y,z)=(y,x,z)$, where $q'(x,y)$ is one of the following: $(-1)^{xy}$, $(-1)^{x^2+xy+y^2}$, 
$i^{\pm (x^2+y^2)}$.  

Since $\cG(q_{2,e},0,2)=\cG(q_{2,e},\sigma_0,2)=0,$ direct computation shows that (FS2) gives no restriction. 

We now consider the different possibilities for $G_e $ separately.\\

\subparagraph{Assume $G_e=\Z_2\times \Z_2$ and $q_{1,e}(x,y)=(-1)^{xy}$}
In this case we have $\cG(q_{1,e})=1$, $c^2=1$, and $c=\cG(q_{1,o})=-\zeta_8^{1+r}\cG(q_{2,o})$. 
We conjecture that the modular data of the Drinfeld center of a generalized Haagerup category for $\Z_2\times A$ with odd $A$ is 
$(S,T)$ with $G_e=\Z_2\times \Z_2$, $q_{1,e}(x,y)=(-1)^{xy}$, $G_o=A\times \hat{A}$, and $q_{1,o}(p,\chi)=\chi(p)$. 
There are at least two possibilities of $(\Gamma,q_2)$ as we will see in (i) below: 
$q_{2,e}(x,y)=i^{x^2}\zeta_8^{-y^2}$ with $\cG(q_{2,o})=-1$ and $q_{2,e}(x,y)=i^{x^2}\zeta_8^{3y^2}$ with $\cG(q_{2,o})=1$ 
(with the exception of $A=\{0\}$, where the first case does not occur).  
The conjecture is true for $A=\{0\}$, and numerical evidence is given for $\Z_3,\Z_5$ (see  \cite[Section 4.2]{GI19_1}). 
More generally, we conjecture that the pair 
$$(S^{(G_o,\overline{q_{1,o}})}\otimes S,T^{(G_o,\overline{q_{1,o}})}\otimes T)$$
is the modular data of the Drinfeld center $\cZ(\cC)$ of a quadratic category $\cC$ of type $(\Z_2,G_o,1)$ with self-dual $\rho$ 
such that $\alpha_1$ lifts to a fermion in $\cZ(\cC)$ .  
We can consider the following two possibilities:

(i) Assume $|G_o|\equiv 1 \mod 8$. In this case we have $|\Gamma_o|\equiv 1 \mod 4$, and $r \in \{-1,3 \}$. 
The smallest example is $G=\Z_2 \times \Z_2$ and $\Gamma=\Z_2\times \Z_4$, and the only possibility is $r=3$. This indeed comes from 
the Drinfeld center of the unique generalized Haagerup category for $\Z_2$, 
or equivalently the even part of the $A_7$ subfactor (see \cite[Example 4.2]{GI19_1}). 

(ii) Assume $|G_o|\equiv 5\mod 8$. 
In this case we have $|\Gamma_o|\equiv 3 \mod 4$, and $r \in \{1,-3 \}$.  
Thus $\cG(q_{1,o})/\cG(q_{2,o})\in \{i,-i\}$ and so $\cG(q_{2,e})=\{i,-i\}$. 
The smallest example is $G_o=\Z_5$ and $\Gamma_o=\Z_3$, and there are four possibilities. \\

\subparagraph {Assume $G_e=\Z_2\times \Z_2$ and $q_{1,e}(x,y)=(-1)^{x^2+xy+y^2}$} 
Then we have $\cG(q_{1,e})=-1$, and $c^2=1$, and $c=-\cG(q_{1,o})=-\zeta_8^{1+r}\cG(q_{2,o})$. 
We have $r \in \{-1,3\}$ for $|G_o|\equiv 1 \mod 8$, and $r\in \{1,-3 \}$ for $|G_o|\equiv 5\mod 8$.\\

\subparagraph{ Assume $G_e=\Z_2\times \Z_2$ and $q_{1,e}(x,y)=i^{x^2+y^2}$}
Then we have $\cG(q_{1,e})=i$,  $c^2=-1$, and $c=i\cG(q_{1,o})=-\zeta_8^{1+r}\cG(q_{2,o})$. 
We have $r=1,-3$ for $|G_o|\equiv 1 \mod 8$, and  $r=-1,3$ for $|G_o|\equiv 5\mod 8$. \\

\subparagraph{Assume $G_e=\Z_2\times \Z_2$ and $q_{1,e}(x,y)=i^{-(x^2+y^2)}$} This is the complex conjugate of the previous case. \\

\subparagraph{Assume $G_e=\Z_4$ and $q_{1,e}(x)=\zeta_8^{rx^2}$ with $r\in \{1,-1,3,-3\}$}
Then we have $\cG(q_{1,e})=\zeta_8^r$, $c^2=i^r$, and $c=\zeta_8^r\cG(q_{1,o})=-\cG(q')\zeta_8\cG(q_{2,o})$. 
We have $i^{r-1}=\cG(q')^2$ for $|G_o|\equiv 1 \mod 8$, and  $i^{r-1}=-\cG(q')^2$ for $|G_o|\equiv 5\mod 8$. 

\begin{remark} \label{ze1} In the above examples with $|G_e|=4$, fix two odd metric groups $(G_o,q_{1,o})$ and $(\Gamma_o,q_{2,o})$. 
Then we can show that all the above potential modular data $(S,T)$ can be obtained from that for $q_{1,e}(x,y)=(-1)^{xy}$ 
by applying the zesting construction introduced in \cite[Theorem 3.15]{MR3641612}. 
\end{remark}

\paragraph{Assume $q_2(\sigma_1)=1$ and $q_2(\sigma_2)=-1$}
In this case we have $s^2=-1$, $c^2=-\inpr{k}{k}$ for $k\in K_*$, and there is an $n\geq 2$ such that
$|\Gamma_e|=2^{n+2}$ and $|G_o|=2^n|\Gamma_o|-1$. 
In particular, we have $|G_o|\equiv 3 \mod 8$ for $n=2$, i.e. $|\Gamma_e|=16$, and $|G_o|\equiv 7 \mod 8$ for $n\geq 3$. 

\begin{lemma}\label{nu2I} Under the assumptions of this section, the following hold. \\
$(1)$ If $G_e=\Z_2^2$, then (FS2) implies that the 2-rank of $\Gamma$ is 2 and $\theta_2=-1$. 
Under this additional assumption, we have the following. 
\begin{itemize}
\item If either $q_{1,e}(x,y)=(-1)^{xy}$ or $q_{1,e}(x,y)=(-1)^{x^2+xy+y^2}$, then (FS2) is equivalent to 
$$|\cG(q_{2,e},\sigma_0,2)|=2\text{ and }\frac{\cG(q_{2,e},0,2)}{\cG(q_{2,e})}=\frac{2}{\cG(q_{1,e})}(-1)^{\frac{|G_o|^2-1}{8}+\frac{|\Gamma_o|^2-1}{8}}.$$
\item If $q_{1,e}(x,y)=i^{\pm (x^2+y^2)}$, then (FS2) is equivalent to $$|\cG(q_{2,e},0,2)|=2\text{ and }\frac{\cG(q_{2,e},\sigma_0,2)}{\cG(q_{2,e})}=2(\mp i)(-1)^{\frac{|G_o|^2-1}{8}+\frac{|\Gamma_o|^2-1}{8}}.$$
\end{itemize}
$(2)$ If $G_e=\Z_4$, then (FS2) implies $|\cG(q_1,q_2,0,2)|=|\cG(q_1,q_2,u_0,2)|=\sqrt{2}$. 
Under this additional assumption, (FS2) is equivalent to the restriction coming from 
$$\nu_2(\gamma)=\frac{1+\inpr{\gamma}{u_0}}{2}-\delta_{2\gamma,0}q_2(\gamma)^2.$$ 
\end{lemma}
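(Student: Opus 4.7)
My plan is to derive both parts by systematically applying the (FS2) constraint $\nu_2(x)\in\{0,\pm 1\}$ (with $0$ for non-self-dual and $\pm 1$ for self-dual $x$) to each type of simple object in $J$, using the explicit formulas of Lemma~\ref{FS5}. The starting point is the object $(0,\pi)$, which is self-dual: since $\nu_2((0,\pi))=\frac{1}{4}(|\cG(q_1,q_2,0,2)|^2+|\cG(q_1,q_2,u_0,2)|^2)$ is non-negative, it must equal $+1$, giving the basic sum identity $|\cG(q_1,q_2,0,2)|^2+|\cG(q_1,q_2,u_0,2)|^2=4$.

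For part~(2), where $G_e=\Z_4$, the elements $k\in K_*$ have order~$4$, so $\delta_{2k,0}=0$ and the $q_1(k)^2/2$ term in the formula for $\nu_2((k,\varepsilon))$ vanishes. Since $s^2=-1$ in this paragraph, every $(k,\varepsilon)$ is non-self-dual, forcing $\nu_2((k,\varepsilon))=0$; together with $\inpr{k}{u_0}=-1$ (from (A1) with $q_0(u_0)=-1$), this yields $|\cG(q_1,q_2,0,2)|=|\cG(q_1,q_2,u_0,2)|$, so both equal $\sqrt{2}$ by the sum identity. Substituting back into the Lemma~\ref{FS5} formula for $\nu_2(\gamma)$ produces the simplified expression claimed in the lemma, while a routine verification shows that the (FS2) conditions for $(u,\pi)$, $g\in G_*$, and $(\sigma,\varepsilon)\in J_2$ are automatic under this normalization.

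For part~(1), where $G_e=\Z_2^2$, the elements $k\in K_*$ have order~$2$, so $\delta_{2k,0}=1$: one computes $q_1(k)^2=1$ in the first bullet and $q_1(k)^2=-1$ in the second. Solving $\nu_2((k,\varepsilon))=0$ together with the sum identity yields $(|\cG(q_1,q_2,0,2)|^2,|\cG(q_1,q_2,u_0,2)|^2)=(0,4)$ in the first bullet and $(4,0)$ in the second. Using the factorization $\cG(q_i,u,2)=\cG(q_{i,e},u,2)\cG(q_{i,o},2)$ with $|\cG(q_{i,o},2)|=1$, the explicit values $\cG(q_{1,e},0,2)=2$ and $\cG(q_{1,e},u_0,2)=0$ (or their swap for the second bullet), and the standard Gauss sum identities relating $\cG(q_{i,o},2)$ to $\cG(q_{i,o})$ via the sign $(-1)^{(|G_o|^2-1)/8}$ (and similarly for $|\Gamma_o|$) together with $\cG(q_1)=-\cG(q_2)$, straightforward algebra translates both equations into the stated conditions on $\cG(q_{2,e},\sigma_0,2)$ and $\cG(q_{2,e},0,2)/\cG(q_{2,e})$.

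The restriction that the $2$-rank of $\Gamma$ is $2$ and $\theta_2=-1$ comes from applying (FS2) to objects $\gamma=(\gamma_e,0)\in\Gamma_*$ with $\gamma_e\in\Gamma_{e,2}\setminus\Gamma^{\theta_{2,e}}$: any such $\gamma$ is non-self-dual, so $\nu_2(\gamma)=0$ forces the identity $\inpr{\gamma_e}{u_0}=q_2(\gamma_e)^2$. I expect the hard part to be a case-by-case check against Theorem~\ref{list}(6) showing that this identity fails whenever $\Gamma_e$ has $2$-rank exceeding $2$ or $\theta_{2,e}\neq -1$, while for the two-factor case $\Gamma_e=\Z_{2^m}\times\Z_{2^n}$ with $\theta_{2,e}=-1$ one has $\Gamma^{\theta_{2,e}}=\Gamma_{e,2}$, so no problematic $\gamma_e$ exists and the constraint is vacuous. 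Careful tracking of Gauss sum phase factors in part~(1) and the tedious enumeration of the (FS2) checks across the remaining classification cases will be the most technical steps.
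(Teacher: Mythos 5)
Your treatment of the Gauss-sum half of the lemma matches the paper's: from $\nu_2((0,\pi))=1$ and $\nu_2((k,\varepsilon))=0$ one gets $|\cG(q_1,q_2,0,2)|^2=2(1-q_{1,e}(k)^2\delta_{2k,0})$ and $|\cG(q_1,q_2,u_0,2)|^2=2(1+q_{1,e}(k)^2\delta_{2k,0})$, which yields $(0,4)$ or $(4,0)$ in part (1) and $(2,2)$ in part (2), and the translation via Lemma \ref{nu3}(3) is exactly the paper's. That much is fine.

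The gap is in the step ``(FS2) implies the 2-rank of $\Gamma$ is 2 and $\theta_2=-1$.'' First, the elements you propose to test, $\gamma\in\Gamma_*$ with $2\gamma=0$, are \emph{self-dual}, not non-self-dual: since $-\gamma=\gamma\in\Gamma_*$, the involution on $J$ gives $\overline{\gamma}=-\gamma=\gamma$. So the constraint is $\nu_2(\gamma)=\pm 1$, not $\nu_2(\gamma)=0$, and the identity $\inpr{\gamma}{u_0}=q_2(\gamma)^2$ you plan to check case-by-case is not what (FS2) forces. (Had you used the correct self-duality you would find $\nu_2(\gamma)=\inpr{\gamma}{\sigma_0}-q_2(\gamma)^2\in\{0,\pm 2\}$, which can never equal $\pm 1$, so such $\gamma$ cannot exist at all; the paper instead rules out $\Gamma_e=\Z_2^4$ by computing $|\cG(q_{2,e},0,2)|$ and $|\cG(q_{2,e},\sigma_0,2)|$ and invoking Theorem \ref{list}.) Second, and more seriously, order-2 elements cannot establish $\theta_2=-1$: in the surviving case $\Gamma_e=\Z_4\times\Z_{2^m}$ of Theorem \ref{list}(6) one has $\Gamma_{e,2}=\Gamma_e^{\theta_{2,e}}$ for \emph{both} admissible involutions, so your test set is empty and, as you yourself note, ``the constraint is vacuous'' — which means your method proves nothing about $\theta_{2,e}$ precisely where it matters. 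The paper's argument here is essential and different: if $\theta_{2,e}\neq -1$ there is a non-self-dual $\gamma\in\Gamma_*$ with $2\gamma\neq 0$, for which $\nu_2(\gamma)=\inpr{\gamma}{\sigma_0}=\pm 1\neq 0$, contradicting (FS2) for non-self-dual objects. You need to add this order-$>2$ test; without it, together with the deferred and incorrectly set up case check, this part of the lemma is not proved.
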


\begin{proof} 
Since $s^2=-1$, we have $\nu_2(k,\varepsilon)=0$. 
Since $\inpr{k}{u_0}=-1$ for $k\in K_*$, Lemma \ref{FS5} implies that
$$|\cG(q_1,q_2,0,2)|^2=2(1-q_{1,e}(k)^2\delta_{2k,0}) \text{ and } |\cG(q_1,q_2,u_0,2)|^2=2(1+q_{1,e}(k)^2\delta_{2k,0}).$$

(1) Assume $G_e=\Z_2^2$. 
If either $q_{1,e}(x,y)=(-1)^{xy}$ or $(-1)^{x^2+xy+y^2}$, we have $q_1(k)^2=1$, which implies  
$$0=|\cG(q_1,q_2,0,2)|=|\frac{2}{\cG(q_{1,e})}(-1)^{\frac{|G_o|^2-1}{8}}-\frac{\cG(q_{2,e},0,2)}{\cG(q_{2,e})}(-1)^{\frac{|\Gamma_o|^2-1}{8}}|,$$
$$2=|\cG(q_1,q_2,u_0,2)|=|\cG(q_{2,e},\sigma_0,2)|.$$
Assume that this holds. 
Since $|\cG(q_{2,e},0,2)|=|\cG(q_{2,e},\sigma_0,2)|=2$, the case $\Gamma_e=\Z_2^4$ in the list of Theorem \ref{list} never occurs, 
and the 2-rank of $\Gamma_e$ is 2. 
Thus $\Gamma^{\theta_2}=\Gamma_2$. 
Lemma \ref{FS5} shows that $\nu_2(k,\varepsilon)=0$, $\nu_2(g)=\inpr{k_0}{g}$,  and
$\nu_2(\gamma)=\inpr{\gamma}{\sigma_0}-\delta_{2\gamma,0}q_2(\gamma)^2$. 
If $\theta_{2,e}\neq -1$, there would exist a non-self-dual $\gamma\in \Gamma_*$ which satisfies $2\gamma\neq 0$. 
However, this implies $\nu_2(\gamma)=0$, which is  contradiction. 
Therefore we get (1) in the case of $q_{1,e}(x,y)=(-1)^{xy}$, $(-1)^{x^2+xy+y^2}$.

If $G_e=\Z_2^2$ and $q_{1,e}(x,y)=i^{\pm (x^2+y^2)}$, we get 
$$2=|\cG(q_1,q_2,0,2)|=|\cG(q_{2,e},0,2)|,$$
$$0=|\cG(q_1,q_2,u_0,2)|=|2(\mp i)(-1)^{\frac{|G_o|^2-1}{8}}-\frac{\cG(q_{2,e},u_0,2)}{\cG(q_{2,e})}(-1)^{\frac{|\Gamma_o|^2-1}{8}}|.$$
In the same way as above, we can show the statement. 

(2) Since $2k\neq 0$ and $q_1(k)^2=\pm i$, we get $2=|\cG(q_1,q_2,0,2)|=|\cG(q_1,q_2,u_0,2)|$. 
Under this condition, Lemma \ref{FS5} implies $$\nu_2(g)=\frac{1+\inpr{g}{u_0}}{2}, \ \nu_2(k,\varepsilon)=0,\text{ and } \nu_2(\gamma)=\frac{1+\inpr{\gamma}{u_0}}{2}-\delta_{2\gamma,0}q_2(\gamma)^2,$$ which shows the statement. 
\end{proof}

Theorem \ref{list}, Lemma \ref{nondegenerate}, and Lemma \ref{nu2I} show that we have only the following possibilities for  $G_e$, which we treat separately.\\

\subparagraph{Assume $G_e=\Z_2\times \Z_2$ and $q_{1,e}(x,y)=(-1)^{xy}$}
We may assume $\Gamma_e=\Z_4\times \Z_{2^n}$ with $n\geq 2$, $q_{2,e}(x,y)=\zeta_8^{r_1x^2}\zeta_{2^{n+1}}^{r_2y^2}$ 
with $r_1,r_2\in \{1,-1,3,-3\}$, and $\theta_{2,e}=-1$. 
We may also assume $\sigma_0=(2,0)$ up to involutive metric group isomorphism.   
We have $c^2=-1$ and $$c=\cG(q_{1,o})=-(-1)^{n\frac{r_2^2-1}{8}}\zeta_8^{r_1+r_2}\cG(q_{2,o}).$$ 
Then (FS2) is equivalent to $(-1)^{\frac{|G_o|^2-1+|\Gamma_o|^2-1+r_1^2-1+r_2^2-1}{8}}=1$.

The smallest example is $G=\Z_2\times \Z_2\times \Z_3$ and $\Gamma=\Z_4\times \Z_4$, and  
there are four possibilities for $(r_1,r_2)$: $(3,-1)$, $(-1,3)$,$(1,-3)$,and $(-3,1)$, which give two metric group 
isomorphism classes (consider the transformation $(x,y)\mapsto (x+2y,2x+y)$).  

We conjecture that the pair 
$$(S^{(G_o,\overline{q_{1,o}})}\otimes S,T^{(G_o,\overline{q_{1,o}})}\otimes T)$$
is the modular data of the Drinfeld center $\cZ(\cC)$ of a quadratic category $\cC$ of type $(\Z_2,G_o,1)$ with non-self-dual $\rho$ 
such that $\alpha_1$ lifts to a fermion in $\cZ(\cC)$.  
We also conjecture that there are exactly two such fusion categories $\cC$ for $G_o=\Z_3$. 

\paragraph{Other quadratic forms for $G_e=\Z_2\times \Z_2$} We can treat the other quadratic forms for $G_e=\Z_2\times \Z_2$, namely $q_{1,e}(x,y)=(-1)^{x^2+xy+y^2}$, $q_{1,e}(x,y)=i^{x^2+y^2}$, and $q_{1,e}(x,y)=i^{-x^2-y^2}$, in a similar way. \\

\paragraph{Assume $G_e=\Z_4$, $\Gamma_e=\Z_2\times \Z_2\times \Z_{2^n}$ with $n\geq 2$, $q_{1,e}(x)=\zeta_8^{r_1x^2}$, 
$q_{2,e}(x,y,z)=q'(x,y)\zeta_{2^{n+1}}^{r_2z^2}$ 
with $r_1, r_2\in \{1,-1,3,-3\}$ and a flip invariant non-degenerate quadratic form $q'$ on $\Z_2\times \Z_2$, and 
$\theta_{2,e}(x,y,z)=(y,x,-z)$}
We have $\sigma_0=(1,1,0)$ or $(0,0,2)$ for $n=2$, and $\sigma_0=(1,1,0)$ or $(1,1,2)$ for $n\geq 3$. 
Then $c^2=-i^{r_1}$, and $$c=\zeta_8^{r_1}\cG(q_{1,o})=-(-1)^{n\frac{r_2^2-1}{8}}\zeta_8^{r_2}\cG(q')\cG(q_{2,o}).$$

Lemma \ref{nu2I} implies that $$|\cG(q_1,q_2,0,2)|^2=|\cG(q_1,q_2,u_0,2)|^2=2,$$ and 
$$\nu_2(\gamma)=\frac{1+\inpr{\gamma}{u_0}}{2}-\delta_{2\gamma,0}q_2(\gamma)^2.$$ 
We choose $\gamma_0\in \Gamma_0\setminus \{0\}$, and set $\gamma_1=(1,0,0,\gamma_0)$. 
Then $\gamma_1\in \Gamma_*$ and it is not self-dual. 
Thus $0=\nu_2(\gamma)$, and $\inpr{(1,0,0)}{\sigma_0}=-1$. 
Let $\gamma_2=(0,0,1,0)\in \Gamma_*$. 
Then $\theta_2(\gamma_2)=-\gamma_2$, and $\gamma_2$ is self-dual. 
Since $2\gamma_2\neq 0$, we get $$\nu_2(\gamma_2)=\frac{1+\inpr{(0,0,1)}{\sigma_0}}{2}\text{ and }\inpr{(0,0,1)}{\sigma_0}=1.$$
Thus we get $\sigma_0=(1,1,0)$.  

The relation
$$\sqrt{2}=|\cG(q_1,q_2,0,2)|=|\sqrt{2}(-1)^{\frac{r_1^2-1}{8}+\frac{|G_o|^2-1}{8}}-\sqrt{2}\frac{\cG(q',2)}{\cG(q')}(-1)^{\frac{r_2^2-1}{8}+\frac{|\Gamma_o|^2-1}{8}}|$$
implies that either $\cG(q',2)=0$ or 
$$\cG(q',2)=2 \cG(q')(-1)^{\frac{r_1^2-1}{8}+\frac{r_2^2-1}{8}+\frac{|G_o|^2-1}{8}+\frac{|\Gamma_o|^2-1}{8}},$$ 
and the relation 
$$\sqrt{2}=|\cG(q_1,q_2,u_0,2)|=|\sqrt{2}i^{-r_1}(-1)^{\frac{r_1^2-1}{8}+\frac{|G_o|^2-1}{8}}-\sqrt{2}\frac{\cG(q',(1,1),2)}{\cG(q')}(-1)^{\frac{r_2^2-1}{8}+\frac{|\Gamma_o|^2-1}{8}}|$$
implies that either $\cG(q',(1,1),2)=0$ or 
$$\cG(q',(1,1),2)=2i^{-r_1}\cG(q')(-1)^{\frac{r_1^2-1}{8}+\frac{r_2^2-1}{8}+\frac{|G_o|^2-1}{8}+\frac{|\Gamma_o|^2-1}{8}}.$$

In summary, we get the following conditions: 
\begin{itemize}
\item $c^2=-i^{r_1}$, $c=\zeta_8^{r_1}\cG(q_{1,o})=-(-1)^{n\frac{r_2^2-1}{8}}\zeta_8^{r_2}\cG(q')\cG(q_{2,o})$. 
\item $\cG(q')=(-1)^{\frac{r_1^2-1}{8}+\frac{r_2^2-1}{8}+\frac{|G_e|^2-1}{8}+\frac{|\Gamma_e|^2-1}{8}}$ if $q'(x,y)=(-1)^{xy}$ or $q'(x,y)=(-1)^{x^2+xy+y^2}$.  
\item $\cG(q')=i^{r_1}(-1)^{\frac{r_1^2-1}{8}+\frac{r_2^2-1}{8}+\frac{|G_e|^2-1}{8}+\frac{|\Gamma_e|^2-1}{8}}$ if $q'(x,y)=i^{\pm(x^2+y^2)}$. 
\end{itemize}

The smallest example is $G_o=\Z_3$, and we have $n=2$ and $\Gamma_o=\{0\}$ in this case. 
There are 16 possibilities, and 8 isomorphism classes of involutive metric groups 
(consider the transformation $(x,y,z)\mapsto (x+z,y+z,2(x+y)+z)$.) 

\subsubsection{Assume (A2) and $q_0(u_0)=-1$.} 
We necessarily have $\inpr{u_0}{k}=1$ for $k\in K_*$ and $\inpr{u_0}{\sigma}=-1$ for $\sigma\in \Sigma_*$. 
Thus one of the following holds:
\begin{itemize} 
\item $\Gamma_e=\Z_2\times \Z_2$, $q_{2,e}(x,y)$ is $(-1)^{xy}$, $(-1)^{x^2+xy+y^2}$,  
$i^{x^2+y^2}$, or $i^{-x^2-y^2}$, and $\theta_2=-1$.  
We may assume $\sigma_0=(1,1)$, $\sigma_1=(1,0)$, and $\sigma_2=(0,1)$. 
\item $\Gamma_e=\Z_4$, $q_{2,e}(x)=\zeta_8^{rx^2}$ with $r\in \{1,-1,3,-3\}$, and $\theta_{2,e}=1$. 
We may assume $\sigma_0=2$, $\sigma_1=1$, and $\sigma_2=3$. 
\end{itemize} 
Since $s=c/q_2(\sigma)$, Theorem \ref{STtheorem5} implies $s^2=\inpr{k}{k}$ and $c^2=\inpr{k}{k}\inpr{\sigma}{\sigma}$.
The order $|G_e|$ is a multiple of 8 and the set $\{q_1(k_0),q_1(k_1),q_1(k_2)\}$ is either $\{-1,i,-i\}$ or $\{-1,-1,1\}$.  As before we treat these two cases separately.\\

\paragraph{Assume $q_1(k_1)=i $ and $q_1(k_2)=-i $}
In this case we have $s^2=-1$ and $c^2=-\inpr{\sigma}{\sigma}$.  
Since $$-\inpr{\sigma}{\sigma}=c^2=\cG(q_{2.e})^2\cG(q_{2.o})^2,$$ we have $\cG(q_{2.o})^2=-1$ and $|\Gamma_o|\equiv 3\mod 4$. 
Since $\cG(q_{1,e})^4=\cG(q_{2,4})^4$, we have only the following combinations: 
\begin{itemize} 
\item $G_e=\Z_4\times \Z_2$ and $\Gamma_e=\Z_2\times \Z_2$. 
\item $G_e=\Z_2^3$ and $\Gamma_e=\Z_4$. 
\end{itemize}
Since $$\cG(q_{1,e},0,2)=\cG(q_{1,e},k_0,2)=0,$$ we can show that (FS2) does not give any restriction. 
The smallest examples are $$G=\Z_4\times \Z_2 \text{ and }\Gamma=\Z_2\times \Z_2\times \Z_3$$ and $$G=\Z_2^3 \text{ and } \Gamma=\Z_4\times \Z_3.$$

\paragraph{Assume $q_1(k_1)=-1 $ and $q_1(k_2)=1$}  
In this case we have $s^2=1$ and $c^2=\inpr{\sigma}{\sigma}$.

\begin{lemma}\label{nu2II} Under the assumptions of this section, the following hold. \\
$(1)$ If $\Gamma_e=\Z_2^2$, then (FS2) implies that the 2-rank of $G$ is 2 and $\theta_1=-1$. 
Under this additional assumption, we have the following. 
\begin{itemize}
\item If either $q_{2,e}(x,y)=(-1)^{xy}$ or $(-1)^{x^2+xy+y^2}$, then (FS2) is equivalent to 
$$|\cG(q_{1,e},k_0,2)|=2 \text{ and }\frac{\cG(q_{1,e},0,2)}{\cG(q_{1,e})}=\frac{2}{\cG(q_{2,e})}(-1)^{\frac{|G_o|^2-1}{8}+\frac{|\Gamma_o|^2-1}{8}}.$$
\item If $q_{2,e}(x,y)=i^{\pm (x^2+y^2)}$, then (FS2) is equivalent to $$|\cG(q_{1,e},0,2)|=2\text{ and }\frac{\cG(q_{1,e},k_0,2)}{\cG(q_{1,e})}=2(\mp i)(-1)^{\frac{|G_o|^2-1}{8}+\frac{|\Gamma_o|^2-1}{8}}.$$ 
\end{itemize}
$(2)$ If $\Gamma_e=\Z_4$, (FS2) implies $|\cG(q_1,q_2,0,2)|=|\cG(q_1,q_2,u_0,2)|=\sqrt{2}$. 
Under this additional assumption, (FS2) is equivalent to the restriction coming from 
$$\nu_2(g)=\frac{1+\inpr{g}{u_0}}{2}+\delta_{2g,0}q_2(g)^2.$$
\end{lemma}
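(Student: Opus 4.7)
The proof will be a mirror of Lemma \ref{nu2I} with the roles of $G$ and $\Gamma$ (and of $K_*$ and $\Sigma_*$) interchanged. In the present case (A2) we have $s=c/q_2(\sigma_1)$, so $q_1(k_1)=-1$ gives $s^2=q_1(k_1)^2=1$; then $\overline{(\sigma,\varepsilon)}=(-\sigma,\varepsilon)$, and $(\sigma,\varepsilon)$ is self-dual precisely when $2\sigma=0$. Using $\inpr{u_0}{\sigma}=-1$ for $\sigma\in\Sigma_*$, Lemma \ref{FS5} specializes to
$$\nu_2((\sigma,\varepsilon))=\frac{|\cG(q_1,q_2,0,2)|^2-|\cG(q_1,q_2,u_0,2)|^2}{8}-\frac{\delta_{2\sigma,0}q_2(\sigma)^2}{2},$$
which is to be combined with the identity $|\cG(q_1,q_2,0,2)|^2+|\cG(q_1,q_2,u_0,2)|^2=4$ from Lemma \ref{FS5}.

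For part (1), $\Gamma_e=\Z_2^2$ forces $2\sigma=0$ for all $\sigma\in\Sigma_*$, so $(\sigma,\varepsilon)$ is self-dual and FS2 requires $\nu_2((\sigma,\varepsilon))=\pm1$. Writing $\eta=q_2(\sigma)^2\in\{1,-1\}$ and solving together with the sum identity pins down the pair $(|\cG(q_1,q_2,0,2)|^2,|\cG(q_1,q_2,u_0,2)|^2)$ to $(0,4)$ when $\eta=1$ and to $(4,0)$ when $\eta=-1$. Next I would argue that no $g\in G_*$ can satisfy both $2g\neq0$ and $g+\theta_1(g)\neq0$: such $g$ is not self-dual, so $\nu_2(g)=0$ would force $|\cG(q_1,q_2,0,2)|^2+\inpr{g}{u_0}|\cG(q_1,q_2,u_0,2)|^2=0$, requiring both moduli to equal $2$ and contradicting the previous step. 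This means every $g\notin G^{\theta_1}$ has $\theta_1(g)=-g$, while any $g\in G^{\theta_1}\setminus G_2$ would give $g=\theta_1(g)=-g$, a contradiction; together these yield $G^{\theta_1}=G_2$ of order $4$ and $\theta_1=-1$. Under this structural conclusion, Lemma \ref{nu3computation3}(2) rewrites the two moduli in terms of $\cG(q_{i,e},u,2)/\cG(q_{i,e})$ and the sign factors $(-1)^{(|G_o|^2-1)/8}$ and $(-1)^{(|\Gamma_o|^2-1)/8}$; identifying which of the two coordinates $u\in\{0,u_0\}$ carries the vanishing versus the $4$-valued Gauss sum then yields the two stated equivalences, with the split according to whether $q_{2,e}\in\{(-1)^{xy},(-1)^{x^2+xy+y^2}\}$ or $q_{2,e}=i^{\pm(x^2+y^2)}$.

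For part (2), $\Sigma_*=\{1,3\}\subset\Z_4$ gives $2\sigma\neq0$, so $(\sigma,\varepsilon)$ is not self-dual and FS2 forces $\nu_2((\sigma,\varepsilon))=0$; the displayed formula immediately yields $|\cG(q_1,q_2,0,2)|=|\cG(q_1,q_2,u_0,2)|=\sqrt{2}$. Substituting these common values into the remaining indicator formulae of Lemma \ref{FS5} collapses every $v$-sum to $(1+\inpr{\cdot}{u_0})$, reducing $\nu_2(g)$ to the expression displayed in the lemma and rendering the Frobenius-Schur indicators of $(u,\pi)$, $(k,\varepsilon)$, $(\sigma,\varepsilon)$, and $\gamma$ automatically admissible; thus FS2 becomes equivalent to the single remaining constraint arising from this formula for $\nu_2(g)$. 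The main obstacle is part (1): ruling out larger $2$-rank for $G$ demands the careful elimination of any $g\in G_*$ that is neither of order two nor anti-invariant under $\theta_1$, since any such element would produce a value of $\nu_2(g)$ inconsistent with the $(0,4)$/$(4,0)$ dichotomy extracted from the $\nu_2((\sigma,\varepsilon))$ analysis. Once that structural constraint is secured, the remaining Gauss-sum arithmetic is a routine application of Lemma \ref{nu3computation3}.
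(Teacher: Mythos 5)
Your reduction of (FS2) for $(\sigma,\varepsilon)$ is exactly the paper's: combining $\nu_2((\sigma,\varepsilon))=\frac{1}{8}\bigl(|\cG(q_1,q_2,0,2)|^2-|\cG(q_1,q_2,u_0,2)|^2\bigr)-\frac{\delta_{2\sigma,0}q_2(\sigma)^2}{2}$ with $|\cG(q_1,q_2,0,2)|^2+|\cG(q_1,q_2,u_0,2)|^2=4$ does give the $(0,4)$/$(4,0)$ dichotomy in part (1) and forces both moduli to equal $\sqrt{2}$ in part (2), and your treatment of part (2) is fine. The problem is in how you extract the structural conclusion of part (1).

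Your route to ``the $2$-rank of $G$ is $2$ and $\theta_1=-1$'' rests entirely on producing a non-self-dual $g\in G_*$ with $2g\neq 0$ and deriving $\nu_2(g)\neq 0$. By Lemma \ref{nondegenerate} the $2$-rank of $G$ is $2$ or $4$, and by Theorem \ref{list}(6) the only $2$-rank-$4$ possibility is $G_e=\Z_2^4$ with $q_{1,e}=q'q''$ and $\theta_{1,e}$ the double flip. If moreover $G_o=\{0\}$ (e.g.\ $G=\Z_2^4$, $\Gamma=\Z_2^2\times\Z_5$, which satisfies all the standing hypotheses of this subsection), then \emph{every} element of $G$ has order $2$, so every $g\in G_*$ is self-dual and your argument is vacuous: it excludes nothing, and your claimed conclusion $G^{\theta_1}=G_2$ is simply false for this candidate ($|G_2|=16$ while $|G^{\theta_1}|=4$). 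The paper closes this case differently: from the dichotomy one first deduces that \emph{both} $|\cG(q_{1,e},0,2)|$ and $|\cG(q_{1,e},k_0,2)|$ equal $2$, and then checks against the $\Z_2^4$ forms in Theorem \ref{list} that these second Gauss sums there only take the values $0$ and $4$ — this is what kills the $2$-rank-$4$ case. Only after the $2$-rank is pinned down to $2$ (so that $G^{\theta_1}=G_2$ and $G_e$ contains elements of order $\geq 4$) does the non-self-dual-element argument, which you use, legitimately force $\theta_{1}=-1$. A secondary, fixable slip: your phrase ``requiring both moduli to equal $2$'' only treats $\inpr{g}{u_0}=-1$; for $\inpr{g}{u_0}=1$ the contradiction is instead $0=|\cG(q_1,q_2,0,2)|^2+|\cG(q_1,q_2,u_0,2)|^2=4$.
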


\begin{proof}
Since $s^2=1$, we have $\nu_2(\sigma,\varepsilon)=\pm 1$, and since $\inpr{\sigma}{u_0}=-1$ for $\sigma\in \Sigma_*$, Lemma \ref{FS5} implies  
$$|\cG(q_1,q_2,0,2)|^2=2(1+2\nu_2(\sigma,\varepsilon)+q_{2,e}(\sigma)^2\delta_{2\sigma,2}),$$
$$|\cG(q_1,q_2,u_0,2)|^2=2(1-2\nu_2(\sigma,\varepsilon)-q_{2,e}(\sigma)^2\sigma,2).$$
If $q_{2,e}(x,y)=(-1)^{xy}$ or $(-1)^{x^2+xy+y^2}$, we get 
$$|\cG(q_1,q_2,0,2)|^2=4(1+\nu_2(\sigma,\varepsilon)),\quad |\cG(q_1,q_2,u_0,2)|^2=-4\nu_2(\sigma,\varepsilon),$$
which implies $\nu_2(\sigma,\varepsilon)=-1$, and 
$$0=|\cG(q_1,q_2,0,2)|=|\frac{\cG(q_{1,e},0,2)}{\cG(q_{1,e})}(-1)^{\frac{|G_o|^2-1}{8}}-\frac{2}{\cG(q_{2,e})}(-1)^{\frac{|\Gamma_o|^2-1}{8}}|,$$
$$2=|\cG(q_1,q_2,u_0,2)|=|\cG(q_{1,e},k_0,2)|.$$
If $q_{2,e}(x,y)=i^{\pm (x^2+y^2)}$, we get 
$$|\cG(q_1,q_2,0,2)|^2=4\nu_2(\sigma,\varepsilon),\quad |\cG(q_1,q_2,u_0,2)|^2=4(2-\nu_2(\sigma,\varepsilon)),$$
which implies $\nu_2(\sigma,\varepsilon)=1$, and 
$$2=|\cG(q_1,q_2,0,2)|=|\cG(q_{1,e},0,2)|,$$
$$0=|\cG(q_1,q_2,u_0,2)|=|\frac{\cG(q_{1,e},k_0,2)}{\cG(q_{1,e})}(-1)^{\frac{|G_o|^2-1}{8}}-2(\mp i)(-1)^{\frac{|\Gamma_o|^2-1}{8}}|.$$

The rest of the proof is the same as the proof of Lemma \ref{nu2I}. 
\end{proof}

Theorem \ref{list} and Lemma \ref{nondegenerate} show that we have the following possibilities: 
\begin{itemize}
\item $G_e=\Z_4\times \Z_{2^n}$ with $n\geq 2$ and $\Gamma_e=\Z_2\times \Z_2$. 
\item $G_e=\Z_2\times \Z_2\times \Z_{2^n}$ with $n\geq 2$ and $\Gamma_e=\Z_4$. 
\end{itemize}

\subsubsection{(A1) and $q_0(u_0)=1$.} 
We necessarily have $\inpr{u_0}{k}=1$ for $k\in K_*$ and $\inpr{u_0}{\sigma}=-1$ for $\sigma\in \Sigma_*$, 
and hence $\Gamma_e=\Z_2\times \Z_2$. 
Since $s=c/q_1(k)$, Theorem \ref{STtheorem5} implies that $c^2=1$. 
Since 
$$\{q_2(\sigma_0),q_2(\sigma_1),q_2(\sigma_2)\}=\{1,q_2(\sigma_1),-q_2(\sigma_1)\},$$ 
we have the following two possibilities: 
\begin{itemize}
\item $q_{2,e}(x,y)=(-1)^{xy}$ with $\sigma_0=(1,0)$, $\sigma_1=(0,1)$, $\sigma_2=(1,1)$. 
\item $q_{2,e}(x,y)=i^{x^2-y^2}$ with $\sigma_0=(1,1)$, $\sigma_1=(1,0)$, $\sigma_2=(0,1)$. 
\end{itemize}
Either way, we have $\cG(q_{2,e})=1$. Also, $c^2=1$ implies that $|\Gamma_o|\equiv 1 \mod 4$. 
Thus $|G_e|$ is a multiple of 16.
 For $\{q_1(k_0),q_1(k_1),q_1(k_2)\}$ we have 4 possibilities: $\{1,1,1\}$, $\{1,-1,-1\}$, $\{1,i,i\}$, and $\{1,-i,-i\}$. 
As before, we treat these cases separately.\\

\paragraph{Assume $q_1(k_1)=q_1(k_2)= \pm i$  }
Then we have $s^2=-1$. 
We may assume  $G_e=\Z_2\times \Z_{2^m}$ with $m\geq 3$,
$q_{1,e}(x,y)=i^{r_1 x^2}\zeta_{2^{m+1}}^{r_2y^2} $ with  $ r_1\in \{1,-1\}$ and $ r_2\in \{1,-1,3,-3\}$, $\theta_1=-1$, and $k_1=(0,2^{n-1})$.
Hence $$2^{m-1}|G_{1,o}|+1=|\Gamma_{2,o}|\text{ and } c=\zeta_8^{r\pm 1}(-1)^{m\frac{r^2-1}{8}}\cG(q_{1,o})=-\cG(q_{2,o}).$$ 
Since $\cG(q_{1,e},2)=\cG(q_{1,e},k_0,2)=0$, we can show that (FS2) does not give any restriction. 
The smallest example is $G=\Z_2\times \Z_8$ and $\Gamma=\Z_2\times \Z_2\times \Z_5$. \\

\paragraph{Assume $q_1(k_1)=q_1(k_2)= \pm 1$ }
Then we have $s^2=1$, which implies that $(\sigma,\varepsilon)$ is self-dual, and hence $\nu_2(\sigma,\varepsilon)=\pm 1$. 
We can show the following Lemma as before. 

\begin{lemma}\label{nu2III} Under the above assumption, (FS2) implies the 2-rank of $G$ is 2 and $\theta_{1,e}=-1$. 
Under this additional assumption, we have the following: 
\begin{itemize}
\item If $q_{2,e}(x,y)=(-1)^{xy}$, (FS2) is equivalent to $$|\cG(q_{1,e},k_0,2)|=2 \text{ and }\frac{\cG(q_{1,e},0,2)}{\cG(q_{1,e})}=2(-1)^{\frac{|G_o|^2-1}{8}+\frac{|\Gamma_o|^2-1}{8}}.$$
\item If $q_{2,e}(x,y)=i^{x^2-y^2}$ (FS2) is equivalent to $$|\cG(q_{1,e},0,2)|=2\text{ and }\frac{\cG(q_{1,e},k_0,2)}{\cG(q_{1,e})}=2(-1)^{\frac{|G_o|^2-1}{8}+\frac{|\Gamma_o|^2-1}{8}}.$$
\end{itemize}
\end{lemma}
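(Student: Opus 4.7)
My plan is to follow the template of Lemma \ref{nu2II} very closely, adapting for the present assumptions. From $q_1(k_1) = q_1(k_2) = \pm 1$ we immediately get $s^2 = 1$, so by the definition of the involution on $J$ in Definition \ref{ST5} every $(\sigma,\varepsilon)$ is self-dual and (FS2) requires $\nu_2((\sigma,\varepsilon)) \in \{1,-1\}$. Combining Lemma \ref{FS5} with $\inpr{\sigma}{u_0} = -1$ for $\sigma \in \Sigma_*$ (which holds in the present (A1), $q_0(u_0) = 1$ regime), one obtains
$$\nu_2((\sigma,\varepsilon)) = \frac{|\cG(q_1,q_2,0,2)|^2 - |\cG(q_1,q_2,u_0,2)|^2}{8} - \frac{\delta_{2\sigma,0}\, q_2(\sigma)^2}{2},$$
while the analogous formula applied to $(0,\pi)$ and $(u_0,\pi)$ yields $|\cG(q_1,q_2,0,2)|^2 + |\cG(q_1,q_2,u_0,2)|^2 = 4$. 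These two relations pin down both magnitudes uniquely once the value of $\nu_2((\sigma,\varepsilon))$ is fixed.

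Next I would split on the form of $q_{2,e}$; in both cases $\cG(q_{2,e}) = 1$. For $q_{2,e}(x,y) = (-1)^{xy}$ with $\sigma_0 = (1,0)$ and $\sigma_1 = (0,1)$ we have $2\sigma_1 = 0$, $q_2(\sigma_1)^2 = 1$, and a direct computation gives $\cG(q_{2,e},0,2) = 2$, $\cG(q_{2,e},\sigma_0,2) = 0$. This forces $\nu_2((\sigma_1,\varepsilon)) = -1$, hence $|\cG(q_1,q_2,u_0,2)|^2 = 4$ and $|\cG(q_1,q_2,0,2)|^2 = 0$. For $q_{2,e}(x,y) = i^{x^2 - y^2}$ with $\sigma_0 = (1,1)$ the analogous computation yields $\cG(q_{2,e},0,2) = 0$ and $\cG(q_{2,e},\sigma_0,2) = 2$, so the roles of $0$ and $\sigma_0$ are exchanged, giving $\nu_2((\sigma_1,\varepsilon)) = 1$ and the opposite distribution. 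Feeding these equalities into the Gauss sum expansion of Lemma \ref{nu3computation3}(2) and solving for the remaining $\cG(q_{1,e},\cdot,2)$ terms produces the two displayed equations of the lemma; the absence of a phase factor beyond $(-1)^{\frac{|G_o|^2-1}{8} + \frac{|\Gamma_o|^2-1}{8}}$ is due to $\cG(q_{2,e}) = 1$ in each case (contrast with the $\mp i$ appearing in Lemma \ref{nu2II}).

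Finally, for the structural statements, I would feed the now-known values of $|\cG(q_1,q_2,0,2)|^2$ and $|\cG(q_1,q_2,u_0,2)|^2$ into the formula of Lemma \ref{FS5} for $\nu_2(g)$ with $g \in G$; this collapses to $\nu_2(g) = \pm \frac{1 - \inpr{g}{u_0}}{2} + \delta_{2g,0}q_1(g)^2$ in one case and the sign-flipped analogue in the other. Any $g \in G_*$ with $2g \neq 0$ and $\theta_1(g) \neq -g$ is non-self-dual, so (FS2) requires $\nu_2(g) = 0$; however the formula yields $\pm 1$, contradiction. Scanning cases (6) and (8) of Theorem \ref{list} (the only ones compatible with $q_0(u_0) = 1$ and $q_1(k_1) = q_1(k_2) = \pm 1$), the higher-2-rank groups and the non-$(-1)$ involutions all admit such a $g$, so only the $\theta = -1$ choice with 2-rank exactly $2$ survives.

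The main obstacle will be the final step of systematically producing the required non-self-dual witness $g$ in each of the higher-rank or twisted-involution branches of Theorem \ref{list}; once that enumeration is in place the contradiction is mechanical. The rest of the argument is, in spirit and notation, essentially parallel to Lemma \ref{nu2II}, with the two small differences being the exchanged roles of $G$ and $\Gamma$ and the need to accommodate the new quadratic form $q_{2,e}(x,y) = i^{x^2-y^2}$ in place of $i^{\pm(x^2+y^2)}$.
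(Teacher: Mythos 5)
Your derivation of the two displayed Gauss-sum conditions is correct and follows exactly the route the paper intends (its own "proof" is the single sentence "We can show the following Lemma as before", referring to Lemmas \ref{nu2I} and \ref{nu2II}): $s^2=1$ makes $(\sigma,\varepsilon)$ self-dual, the formula $\nu_2((\sigma,\varepsilon))=\tfrac{1}{8}\bigl(|\cG(q_1,q_2,0,2)|^2-|\cG(q_1,q_2,u_0,2)|^2\bigr)-\tfrac{1}{2}q_2(\sigma)^2$ together with $|\cG(q_1,q_2,0,2)|^2+|\cG(q_1,q_2,u_0,2)|^2=4$ and positivity forces the distribution $(0,4)$ when $q_2(\sigma)^2=1$ and $(4,0)$ when $q_2(\sigma)^2=-1$, and your values $\cG(q_{2,e},0,2)$, $\cG(q_{2,e},\sigma_0,2)$ and the factorization of Lemma \ref{nu3}(3) with $\cG(q_{2,e})=1$ then give precisely the two bullet points.

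The structural claim is where your write-up has two concrete problems. First, your formula for $\nu_2(g)$ is wrong: from Lemma \ref{FS5} with the distribution $(0,4)$ one gets $\nu_2(g)=\inpr{g}{u_0}+\delta_{2g,0}q_1(g)^2$, and with $(4,0)$ one gets $\nu_2(g)=1+\delta_{2g,0}q_1(g)^2$; your expression $\pm\tfrac{1-\inpr{g}{u_0}}{2}+\delta_{2g,0}q_1(g)^2$ is the shape that arises when both magnitudes equal $\sqrt{2}$ (as in Lemma \ref{nu2II}(2)), which is not the present situation — and as written it evaluates to $0$ for non-self-dual $g$ with $\inpr{g}{u_0}=1$, so it would \emph{not} produce the contradiction you need. (With the correct formula the contradiction does hold for every non-self-dual $g$ with $2g\neq 0$.) Second, the witness argument alone cannot establish that the $2$-rank of $G$ is $2$: in the branches of Theorem \ref{list} with $G_e=\Z_2^4$ and $G_o=\{0\}$ there is no element $g$ with $2g\neq 0$ at all, so no non-self-dual witness of the kind you invoke exists. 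The rank statement should instead be extracted from the condition you have already derived — $|\cG(q_{1,e},k_0,2)|=2$ (resp.\ $|\cG(q_{1,e},0,2)|=2$) — since for a $2$-group of $2$-rank $3$ or $4$ these Gauss sums can only take the values $0$, $2\sqrt{2}$ or $0$, $4$; this is exactly the mechanism used in the proof of Lemma \ref{nu2I}. The witness argument is then the right tool for forcing $\theta_{1,e}=-1$ once the rank is $2$.
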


Among involutive metric groups in Theorem \ref{list}, the only cases with $\theta_{1,e}=-1$ up to 
isomorphism of involutive metric groups are the following: 
\begin{itemize}
\item $G_e=\Z_{2^m}\times \Z_{2^m}$ with $m\geq 2$, $q_{1,e}(x,y)=\zeta_{2^m}^{xy}$, and 
$k_0=(2^{m-1},2^{m-1})$ or $(2^{m-1},0)$.  

\item $G_e=\Z_{2^m}\times \Z_{2^m}$ with $m\geq 2$, $q_{1,e}(x,y)=\zeta_{2^m}^{x^2+xy+y^2}$, and  $k_0=(2^{m-1},2^{m-1})$ or $(2^{m-1},0)$. 
\item $G_e=\Z_{2^m}\times \Z_{2^n}$ with $n\geq m\geq 3$, $q_{1,e}(x,y)=\zeta_{2^{m+1}}^{r_1x^2}\zeta_{2^{n+1}}^{r_2y^2}$ with 
$r_1,r_2\in \{1,-1,3,-3\}$, and $k_0=(2^{m-1},2^{n-1})$ or $k_0=(0,2^{n-1})$. 

\item $G_e=\Z_4\times \Z_{2^n}$ with $n\geq 3$, $q_{1,e}(x,y)=\zeta_8^{r_1x^2}\zeta_{2^{n+1}}^{r_2y^2}$ with $r_1,r_2\in \{1,-1,3,-3\}$, 
and $k_0=(0,2^{n-1})$. 

\item $G_e=\Z_4\times \Z_4$, $q_{1,e}(x,y)=\zeta_8^{r_1x^2+r_2y^2}$ with $r_1,r_2\in \{1,-1,3,-3\}$, 
and $k_0=(2,2)$. 
\end{itemize}

We only look at the first case in detail. 
Assume $G_e=\Z_{2^m}\times \Z_{2^m}$ with $m\geq 2$ and $q_{1,e}(x,y)=\zeta_{2^m}^{xy}$. 
Then we have $2^{2m-2}|G_o|+1=|\Gamma_o|$, $\cG(q_{1,e})=1$, and $c=\cG(q_{1,o})=-\cG(q_{2,o}), \text{ with } c^2=1$. 
We have $$\cG(q_{1,e},0,2)=\cG(q_{1,e},(2^{m-1},0),2)=2 \text{ and }\cG(q_{1,e},(2^{m-1},2^{m-1}),2)=2(-1)^{2^{m-2}}.$$
So we get $|\cG(q_{1,e},2)|=|\cG(q_{2,e},k_0,2)|=2$. We consider different quadratic forms separately.\\

\subparagraph{Assume $q_{2,e}(x,y)=i^{x^2-y^2}$}
Lemma \ref{nu2III} implies 
$$\cG(q_{1,e},k_0,2)=2(-1)^{\frac{|G_o|^2-1}{8}+\frac{|\Gamma_o|^2-1}{8}}=2(-1)^{\frac{|G_o|^2-1}{8}}(-1)^{2^{2(m-2)}}.$$ 
On the other hand, for $k_0=(2^{m-1},2^{m-1})$, we have 
$\cG(q_{1,e},k_0,2)=2(-1)^{2^{m-2}}$, and we get $(-1)^{\frac{|G_o|^2-1}{8}}=1$. 
Thus (FS2) is satisfied if and only if $|G_o|\equiv 1\mod 8$.

For $\sigma_0=(2^{m-1},0)$, we have $\cG(q_{1,e},k_0,2)=2$, and 
$$|G_o|\equiv \left\{
\begin{array}{ll}
5 \mod 8, &\quad m=2 \\
1 \mod 8 , &\quad m\geq 3.
\end{array}
\right.
$$

We conjecture that the modular data of the Drinfeld center of a generalized Haagerup category for $\Z_{2^m}\times A$ 
with odd $A$ is given by $(S,T)$ in this case with $G_o=A\times \hat{A}$, $q_{1,o}(p,\chi)=\chi(p)$ and 
$k_0=(2^{m-1},2^{m-1})$ for non-trivial $\epsilon_{h}(g)$ and $k_0=(2^{m-1},0)$ for trivial $\epsilon_h(g)$ (see \cite[Example 4.3]{GI19_1}). 
The conjecture is true for $m=2$ and $A=\{0\}$. 
More generally, the pair 
$$(S^{(G_o,\overline{q_{1,o}})}\otimes S,T^{(G_o,\overline{q_{1,o}})}\otimes T)$$
may possibly arise from the Drinfeld center of a quadratic category of type $(\Z_{2^m},G_o,1)$. \\
\subparagraph{Assume $q_{2,e}(x,y)=(-1)^{xy}$}
Then Lemma \ref{nu2III} implies 
$$\cG(q_{1,e},2)=2(-1)^{\frac{|G_o|^2-1+|\Gamma_o|^2-1}{8}}=2(-1)^{\frac{|G_o|^2-1}{8}+2^{2(m-2)}},$$
and 
$$|G_o|\equiv \left\{
\begin{array}{ll}
5 \mod 8, &\quad m=2 \\
1 \mod 8 , &\quad m\geq 3.
\end{array}
\right.
$$

\subsubsection{(A2) and $q_0(u_0)=1$.} 
We necessarily have $\inpr{u_0}{k}=-1$ for $k\in K_*$ and $\inpr{u_0}{\sigma}=1$ for $\sigma\in \Sigma_*$. 
Thus $G_e=\Z_2\times \Z_2$. 
Theorem \ref{STtheorem5} implies $s^2\inpr{\sigma}{\sigma}=-1$, 
and since $s=c/q_2(\sigma)$, we get $c^2=-1$.  
Since
$$\{q_1(k_0),q_1(k_1),q_1(k_2)\}=\{1, q_1(k_1),-q_1(k_1)\}.$$
we have two possibilities for $q_{1,e}$: 
\begin{itemize}
\item $q_{1,e}(x,y)=(-1)^{xy}$  with $k_0=(1,0)$, $k_1=(0,1)$, $k_2=(1,1)$. 
\item $q_{1,e}(x,y)=i^{x^2-y^2}$ with $k_0=(1,1)$, $k_1=(1,0)$, $k_2=(0,1)$.   
\end{itemize}
Either way we have $\cG(q_{1,e})=1$. 
Since $c^2=-1$, we get $\cG(q_{1,o})^2=-1$ and $|G_o|\equiv 3 \mod 4$. 
Therefore $|\Gamma_e|$ is a multiple of 16. 

For $\{q_2(\sigma_0),q_2(\sigma_1),q_2(\sigma_2)\}$, we have the following possibilities: $\{1,\pm i,\pm i\}$, 
$\{1,-1,-1\}$, $\{1,1,1\}$. As before, we treat these separately.\\ 

\paragraph{Assume $q_2(\sigma_1)=q_2(\sigma_2)=\pm i$}
Then $s^2=1$. 
We may assume that $\Gamma_{e}=\Z_2\times \Z_{2^n}$ with $n\geq 2$, $q_{2,e}(x,y)=\zeta_4^{r_1x^2}\zeta_{2^{n+1}}^{r_2y^2} $ with $ r_1\in \{1,-1\}$ and $r_2\in \{1,-1,3,-3\}$, $\theta_2=-1$, and $\sigma_0=(0,2^{n-1})$. 
We have $$c=\cG(q_{1,o})=-(-1)^{n\frac{r_2^2-1}{8}}\zeta_8^{r_1+r_2}\cG(q_{2,o}).$$
Since $\cG(q_{2,e},2)=\cG(q_{2,e},\sigma_0,2)=0$, we can show that (FS2) does not give any restriction. 

The smallest example is $G=\Z_2\times \Z_2\times \Z_3$ and $\Gamma=\Z_2\times \Z_8$. 
There are four possibilities: $(r_1,r_2)=(1,1)$, $(-1,-1)$, $(1,-3)$, or $(-1,3)$; and 2 isomorphism classes 
(consider the transformation $(x,y)\mapsto (x+y,2x+y)$). 

We conjecture that for $q_{1,e}(x,y)=(-1)^{xy}$, the pair 
$$(S^{(G_o,\overline{q_{1,o}})}\otimes S,T^{(G_o,\overline{q_{1,o}})}\otimes T),$$
is given by the Drinfeld center $\cZ(\cC)$ of a quadratic category $\cC$ of type $(\Z_2,G_o,1)$ with self-dual $\rho$ 
such that $\alpha_1$ lifts to a boson in $\cZ(\cC)$.  
We also conjecture that there are exactly two such categories $\cC$ for $G_o=\Z_3$. \\

\paragraph{Assume $q_2(\sigma_1)=q_2(\sigma_2)=\pm i$} 
Then $s^2=-1$, and $\nu_2(k,\varepsilon)=0$. 
As before, we can show the following lemma.

\begin{lemma}\label{nu2III} Under the assumptions of this section, (FS2) implies that the 2-rank of $\Gamma$ is 2 and $\theta_{2,e}=-1$. 
Under this additional assumption, we have the following: 
\begin{itemize}
\item If $q_{1,e}(x,y)=(-1)^{xy}$, then (FS2) is equivalent to $$|\cG(q_{2,e},\sigma_0,2)|=2 \text{ and }\frac{\cG(q_{2,e},0,2)}{\cG(q_{2,e})}=2(-1)^{\frac{|G_o|^2-1}{8}+\frac{|\Gamma_o|^2-1}{8}}.$$
\item If $q_{1,e}(x,y)=i^{x^2-y^2}$ then (FS2) is equivalent to $$|\cG(q_{2,e},0,2)|=2 \text{ and }\frac{\cG(q_{2,e},\sigma_0,2)}{\cG(q_{2,e})}=2(-1)^{\frac{|G_o|^2-1}{8}+\frac{|\Gamma_o|^2-1}{8}}.$$
\end{itemize}
\end{lemma}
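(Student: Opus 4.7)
The plan is to adapt the argument of Lemmas \ref{nu2I} and \ref{nu2II} to the present case (A2) with $q_0(u_0)=1$ and $s^2=-1$. The key starting observation is that $s^2=-1$ makes every element of $(K_*\cup\Sigma_*)\times\{\pm 1\}$ non-self-dual, since $\overline{(k,\varepsilon)}=(-k,-\varepsilon)$. Hence (FS2) forces $\nu_2((k,\varepsilon))=\nu_2((\sigma,\varepsilon))=0$. Feeding these into the formulas of Lemma \ref{FS5}, using $\inpr{k}{u_0}=-1$, $\inpr{\sigma}{u_0}=1$, $\delta_{2\sigma,0}=1$, and $q_2(\sigma)^2=1$ (the standing assumption of this paragraph), one obtains the linear system
\begin{align*}
|\cG(q_1,q_2,0,2)|^2+|\cG(q_1,q_2,u_0,2)|^2 &= 4,\\
|\cG(q_1,q_2,0,2)|^2-|\cG(q_1,q_2,u_0,2)|^2 &= -4\,q_1(k)^2.
\end{align*}

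A direct check on $G_e=\Z_2\times\Z_2$ gives $q_1(k)^2=1$ for every $k\in K_*$ when $q_{1,e}(x,y)=(-1)^{xy}$, and $q_1(k)^2=-1$ when $q_{1,e}(x,y)=i^{x^2-y^2}$; solving the system yields $(|\cG(q_1,q_2,0,2)|,|\cG(q_1,q_2,u_0,2)|)=(0,2)$ in case one and $(2,0)$ in case two. A short computation gives $\cG(q_{1,e})=1$ with $(\cG(q_{1,e},0,2),\cG(q_{1,e},k_0,2))=(2,0)$ in case one and $(0,2)$ in case two. Substituting into the identity of Lemma \ref{nu3}(3) then translates each absolute-value condition into the exact equality on $\cG(q_{2,e},0,2)/\cG(q_{2,e})$ and $|\cG(q_{2,e},\sigma_0,2)|$ (or the swapped pair) claimed in the lemma; in particular, in either case both $|\cG(q_{2,e},0,2)|$ and $|\cG(q_{2,e},\sigma_0,2)|$ must equal $2$.

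With this in hand, the structural assertions that $\Gamma$ has 2-rank $2$ and $\theta_{2,e}=-1$ follow by the same template used in Lemmas \ref{nu2I} and \ref{nu2II}. One runs through the admissible $(\Gamma_e,q_{2,e},\theta_{2,e})$ from Theorem \ref{list} of 2-rank $3$ or $4$ with the present fixed-point pattern and checks case by case that one of the two required Gauss sums fails to attain absolute value $2$: for 2-rank $4$ the squared quadratic form is either identically $1$ or a non-trivial character, producing Gauss sums of absolute value only $0$ or $4$; for 2-rank $3$ with a flip-like $\theta_{2,e}$ a similar parity computation excludes absolute value $2$. Once 2-rank is $2$, any nontrivial candidate $\theta_{2,e}\ne -1$ from Theorem \ref{list}(6)--(8) furnishes a non-self-dual $\gamma\in\Gamma_*$ whose $\nu_2(\gamma)$, computed from Lemma \ref{FS5} using the already-determined values of $|\cG(q_1,q_2,v,2)|$, is non-zero, contradicting $\nu_2(\gamma)=0$. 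The main obstacle is this final exhaustive case analysis; it is mechanical but longer than the corresponding steps in Lemmas \ref{nu2I}--\ref{nu2II} because Theorem \ref{list} lists several admissible pairs in the relevant fixed-point regime, and no new technique beyond those already employed is required.
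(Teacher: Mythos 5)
Your proposal is correct and follows essentially the same route the paper intends: the paper's own "proof" is the statement "as before", deferring to the template of Lemmas \ref{nu2I} and \ref{nu2II}, and you reproduce that template faithfully — non-self-duality from $s^2=-1$ gives $\nu_2((k,\varepsilon))=\nu_2((\sigma,\varepsilon))=0$, the resulting linear system pins down $(|\cG(q_1,q_2,0,2)|,|\cG(q_1,q_2,u_0,2)|)$ according to $q_1(k)^2=\pm1$, Lemma \ref{nu3}(3) converts this into the stated conditions on $\cG(q_{2,e},\cdot,2)$, and the rank/involution restrictions follow by excluding the higher-rank entries of Theorem \ref{list} and producing a non-self-dual $\gamma$ with $\nu_2(\gamma)\neq 0$ when $\theta_{2,e}\neq -1$. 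The only superfluous step is your 2-rank $3$ analysis: since $\inpr{u_0}{k}=-1$ for $k\in K_*$ and $G_e=\Z_2\times\Z_2$ here, Lemma \ref{nondegenerate}(1) already restricts the 2-rank of $\Gamma$ to $2$ or $4$.
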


Among the involutive metric groups in Theorem \ref{list}, the only possible cases with $\theta_{2,e}=-1$ are the following, up to 
isomorphism: 
\begin{itemize}
\item $\Gamma_e=\Z_{2^m}\times \Z_{2^m}$ with $m\geq 2$, $q_{2,e}(x,y)=\zeta_{2^m}^{xy}$, and 
$\sigma_0=(2^{m-1},2^{m-1})$ or $(2^{m-1},0)$.  

\item $\Gamma_e=\Z_{2^m}\times \Z_{2^m}$ with $m\geq 2$, $q_{2,e}(x,y)=\zeta_{2^m}^{x^2+xy+y^2}$, and  $\sigma_0=(2^{m-1},2^{m-1})$ or $(2^{m-1},0)$. 
\item $\Gamma_e=\Z_{2^m}\times \Z_{2^n}$ with $n\geq m\geq 3$, $q_{2,e}(x,y)=\zeta_{2^{m+1}}^{r_1x^2}\zeta_{2^{n+1}}^{r_2y^2}$ with 
$r_1,r_2\in \{1,-1,3,-3\}$, and $\sigma_0=(2^{m-1},2^{n-1})$ or $(0,2^{n-1})$. 

\item $\Gamma_e=\Z_4\times \Z_{2^n}$ with $n\geq 3$, $q_{2,e}(x,y)=\zeta_8^{r_1x^2}\zeta_{2^{n+1}}^{r_2y^2}$ with $r_1,r_2\in \{1,-1,3,-3\}$, 
and $\sigma_0=(0,2^{n-1})$. 

\item $\Gamma_e=\Z_4\times \Z_4$, $q_{2,e}(x,y)=\zeta_8^{r_1x^2+r_2y^2}$ with $r_1,r_2\in \{1,-1,3,-3\}$, 
and $\sigma_0=(2,2)$. 
\end{itemize}

We conjecture that for $q_{1,e}(x,y)=(-1)^{xy}$, the pair 
$$(S^{(G_o,\overline{q_{1,o}})}\otimes S,T^{(G_o,\overline{q_{1,o}})}\otimes T)$$
is the modular data of the Drinfeld center $\cZ(\cC)$ of a quadratic category $\cC$ of type $(\Z_2,G_o,1)$ with non-self-dual $\rho$ 
such that $\alpha_1$ lifts to a boson in $\cZ(\cC)$. 
We also conjecture that there are exactly two such categories $\cC$ for $G_o=\Z_3$.  

The first test case is $G=\Z_2\times \Z_2\times \Z_3$, $\Gamma=\Z_4\times \Z_4$, $q_{1,e}(x,y)=(-1)^{xy}$, and $q_{2,e}(x,y)=\zeta_8^{r_1x^2+r_2y^2}$ 
with $(r_1,r_2)=(1,-3)$ or $(r_1,r_2)=(-1,3)$. 


\subsection{Conjectures}
We summarize the conjectures stated so far in this section. 

\begin{conjecture} Let $\cC$ be a generalized Haagerup category with an abelian group $A$ satisfying 
$A_2\cong \Z_2$. 
We denote $A_2=\{0,a_0\}$ and $(\hat{A})_2=\{0,\chi_0\}$. 
Then the modular data of the Drinfeld center $\cZ(\cC)$ is given by $(S,T)$ in Definition \ref{ST5} with $G=A\times \hat{A}$, 
$q_1(a,\chi)=\chi(a)$, $k_0=(a_0,\chi_0)$ for non-trivial $\epsilon_{h}(g)$ and $k_0=(a_0,0)$ for trivial $\epsilon_h(g)$. 
If moreover $A_e=\Z_{2^m}$ with $m\geq 2$, we have $\Gamma_2=\Z_2\times \Z_2$ and $q_{2,e}(x,y)=i^{x^2-y^2}$. 
\end{conjecture}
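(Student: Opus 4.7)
The plan is to use the tube algebra approach developed by the second author, applied to the concrete realization of a generalized Haagerup category $\cC$ for $A$ via endomorphisms of the Cuntz algebra $\cO_{|A|+1}$. The irreducible representations of $\Tube(\cC)$ are in bijection with the simple objects of $\cZ(\cC)$, and the modular data can be extracted as traces on the tube algebra. My first step would be to enumerate the simples of $\cZ(\cC)$ and match them (as a set) to the index set $J$ of Definition \ref{ST5}: the pointed part should contribute $|U|$ bosons and $|U|$ objects of weight $\pi$, the half-braidings on $\alpha_a\rho$ should give a family indexed by $G_*$ (with $|G|/|U|$ orbits under $\theta_1$), two self-dual families coming from the $A_2$-fixed simples should give $(k,\varepsilon)\in K_*\times \{\pm 1\}$ and $(\sigma,\varepsilon)\in \Sigma_*\times \{\pm 1\}$, and the remaining simples should constitute the non-trivial orbits in $\Gamma_*$. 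A rank count then forces $|\Gamma|=|G|+4$, consistent with Theorem \ref{STtheorem5}.

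Second, I would compute the $T$-matrix and the pointed block of the $S$-matrix directly. The invertible objects of $\cZ(\cC)$ generated by half-braidings on $\alpha_a\in \text{Inv}(\cC)$ form a pointed modular subcategory isomorphic to the metric group $(A\times \hat A, q_1)$ with $q_1(a,\chi)=\chi(a)$: this identification proceeds by computing the double braiding on such objects, which is given exactly by the evaluation pairing. The distinguished order-two element $k_0$ is characterized as the unique fermion (or non-trivial boson) arising from the choice of half-braiding $\epsilon_h$, so the dichotomy $k_0=(a_0,\chi_0)$ versus $k_0=(a_0,0)$ reflects whether $\epsilon_h$ is non-trivial. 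This should be read off from the explicit value of $T$ on the half-braidings of $\alpha_{a_0}$.

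Third, I would analyze the remaining block indexed by $\Gamma$. The strategy is to use the Morita equivalence of \cite[Theorem 12.9]{MR3635673}: when $A$ has trivial odd part and is of the form $\Z_{2^m}\times A$, the category $\cC\boxtimes \mathrm{Vec}_A$ is Morita equivalent (via a $\Z_2$-de-equivariantization) to a near-group category for $A\times \hat A$ with multiplicity $|A|$. This gives $\cZ(\cC\boxtimes \mathrm{Vec}_A)\cong \cZ(\cC)\boxtimes \cZ(\mathrm{Vec}_A)$ the center of the corresponding near-group category, from which one can hope to transfer information about the metric group appearing in the near-group family to our setting. For $A_e=\Z_{2^m}$ with $m\geq 2$, matching dimensions and Frobenius--Schur indicators against Lemma \ref{FS5} and Lemma \ref{nu2III} pins down the $2$-primary part of $(\Gamma, q_2)$ to $\Z_2\times \Z_2$ with $q_{2,e}(x,y)=i^{x^2-y^2}$, as claimed.

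The main obstacle is the direct identification of the odd part $(\Gamma_o, q_{2,o})$ of the second metric group from the structure constants of $\cC$. Unlike $G$, which arises tautologically from the grading group of $\cC$, the group $\Gamma$ emerges only after solving the polynomial equations for half-braidings on the simple $\rho$, and closed-form solutions are known only in sporadic small cases (this is the same difficulty Evans--Gannon encountered in \cite{MR3167494}). The most promising route is to avoid solving these equations directly by instead using Morita/de-equivariantization equivalences to transport $(\Gamma, q_2)$ from a more tractable companion category, e.g.\ a near-group category whose modular data is known, together with the action of the quadratic reciprocity constraints arising from the Gauss sum identity $\cG(q_1)=-\cG(q_2)$ and the fermion/boson dichotomy for $k_0$.
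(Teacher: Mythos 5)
You should first be aware that the statement you are addressing is stated in the paper as a \emph{conjecture}: the authors give no proof, and only record supporting evidence in isolated instances (the generalized Haagerup category for $\Z_2$, i.e.\ the even part of the $A_7$ subfactor, numerical evidence for $\Z_2\times\Z_3$ and $\Z_2\times\Z_5$, and the $\Z_4$ generalized Haagerup category for the second assertion). So there is no proof in the paper to compare against, and your text has to be judged as a programme. As such, it correctly identifies the crux — extracting the second metric group $(\Gamma,q_2)$ from the category — and correctly concedes that this step is out of reach except by solving the half-braiding equations case by case; that is precisely why the statement remains a conjecture. Your proposal therefore does not constitute a proof, and two of its intermediate steps contain genuine errors.

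First, in your step 2 you assert that the invertible objects of $\cZ(\cC)$ lying over $\Inv(\cC)$ form a pointed modular subcategory isomorphic to $(A\times \hat{A},q_1)$. This is false. A half-braiding on $\alpha_a$ requires a unitary in $\Hom(\alpha_a\rho,\rho\alpha_a)=\Hom(\alpha_a\rho,\alpha_{-a}\rho)$, which vanishes unless $2a=0$; hence only $\alpha_a$ with $a\in A_2\cong\Z_2$ can lift to invertibles of the center, and the pointed part of $\cZ(\cC)$ is far smaller than $|A|^2$ — in the model of Definition \ref{ST5} it is the $U\times\{0\}$ block, of order $2$. The simples indexed by $G_*$ have quantum dimension $2a/(a-b)>2$ and are not invertible; the identification of their twists with $q_1(a,\chi)=\chi(a)$ must come from an explicit tube-algebra computation of half-braidings supported on sums of the $\alpha_a\rho$, not from a double braiding on a pointed subcategory. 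Second, in your step 3 you invoke \cite[Theorem 12.9]{MR3635673} to pass to a near-group category, but that theorem concerns generalized Haagerup categories for \emph{odd} groups. For $A$ with $A_2\cong\Z_2$ the relevant operation (equivariantization by the odd part $A_o$, as discussed at the end of Section 5) produces a quadratic category of type $(\Z_2,A_o\times\hat{A_o},1)$, not a near-group category; and even in the near-group case the modular data you would want to import is itself only conjectural (Conjecture \ref{near-group}). So the proposed transport of $(\Gamma,q_2)$ has no known source to transport from, and the essential gap — determining $(\Gamma,q_2)$, including the claimed $\Gamma_2\cong\Z_2\times\Z_2$ with $q_{2,e}(x,y)=i^{x^2-y^2}$ for $A_e=\Z_{2^m}$, $m\geq 2$ — remains open.
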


\begin{remark} If the above conjecture is true, it is easy to compute the Frobenius-Schur indicator as 
$$\nu_k(\rho)=\frac{\cG(q_1,k)+\cG(q_2,k)}{2}=\frac{|A_k|+\cG(q_{2,e},k)\cG(q_{2,o},k)}{2},$$
using \cite{MR2313527}. 
For $k=2$, we know $\nu_2(\rho)=1$, and the above formula would imply $\cG(q_{2,e},2)=0$. 
This shows that if $A_2=\Z_{2^m}$ with $m\geq 2$, then $q_{2,e}(x,y)=(-1)^{xy}$ is not consistent. 
For $k=3$, we can directly show $\nu_3(\rho)=1$. 
\end{remark}

Recall that a generalized Haagerup category $ \cC$ for an odd group $A$ can be equivariantized by the conjugation action of $\Inv(\cC) $ to produce a near group category $\cC^A$ (of quadratic type $(\{0\},A \times \hat{A},1) $), and the Drinfeld center splits as $\cZ(\cC^A)= \cZ(\cC)\boxtimes \cZ(\text{Vec}_{A})$. In a similar way, a generalized Haagerup category $ \cC$ for the group $\Z_2 \times A$ with $|A|$ odd can be equivariantized by the action of $A$. The resulting category $\cZ(\cC^A)$ is of quadratic type $(\Z_2,A \times \hat{A},1) $. Again the Drinfeld center splits as  $\cZ(\cC^A)=\cZ(\cC)\boxtimes \cZ(\text{Vec}_{A})$.
Therefore the above conjecture is a special case of the following one. 

\begin{conjecture} 
Let $A$ be a finite abelian group of odd order, 
and let $\cC$ be a quadratic category of type $(\Z_2,A,1)$ with the $\text{Vec}_{\Z_2} $ subcategory having trivial associator.  
Then the modular data of the Drinfeld center $\cZ(\cC)$ is given by 
$$(S^{(G_o,\overline{q_{1,o}})}\otimes S,T^{(G_o,\overline{q_{1,o}})}\otimes T),$$ 
where $(S,T)$ is as in Definition \ref{ST5} with $G_e=\Z_2\times \Z_2$, 
$G_o=A$, $q_{1,e}(x,y)=(-1)^{xy}$. 
Moreover, \\
$(1)$ If $\alpha_1$ lifts to a fermion in $\cZ(\cC)$ and $\rho$ is self-dual, we have $|A|\equiv 1 \mod 4$ and $k_0=(1,1)$. \\
$(2)$ If $\alpha_1$ lifts to a fermion in $\cZ(\cC)$ and $\rho$ is non-self-dual, we have $|A|\equiv 3 \mod 4$ and $k_0=(1,1)$.
For $A=\Z_3$, there are exactly two such categories. \\
$(3)$ If $\alpha_1$ lifts to a boson in $\cZ(\cC)$, we have $|A|\equiv 3 \mod 4$ and $k_0=(1,0)$. 
For $A=\Z_3$, there are exactly 2 such categories for self-dual $\rho$, and 2 for non-self-dual $\rho$. 
\end{conjecture}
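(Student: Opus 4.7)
The plan is to compute the modular data of $\cZ(\cC)$ directly via the tube algebra approach developed in \cite{MR1832764,MR3635673,MR3827808} for quadratic categories, and match it against the formula in Definition \ref{ST5}. The first step is to decompose $\Irr(\cZ(\cC))$ according to the underlying objects in $\cC$: the block over the pointed subcategory $\{\alpha_p\otimes\beta_q\}$ yields a pointed modular subcategory whose associated metric group should be identified with $(G,q_1)$, while the blocks over $\rho$ (and $\overline{\rho}$ if distinct) yield the ``non-rational'' simples corresponding to the $\pi$-component, the $(k,\varepsilon)$-components, and the $(\sigma,\varepsilon)$-components in Definition \ref{ST5}. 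I would identify the odd part $(G_o,q_{1,o})$ with the metric group $(A\times\hat A,\chi(p))$ coming from $\cZ(\text{Vec}_A)$, and argue that $G_e=\Z_2\times\Z_2$ arises from the $\Z_2$-grading on $\Inv(\cC)$ together with a lift of $\alpha_1$ through the central projection.

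Next I would separate the three cases using categorical invariants of $\cC$. The assumption that $\text{Vec}_{\Z_2}\subset\cC$ has trivial associator should force $q_{1,e}(x,y)=(-1)^{xy}$, ruling out the three other flip-invariant non-degenerate quadratic forms on $\Z_2\times\Z_2$ permitted by Theorem \ref{list}. Whether $\alpha_1$ lifts to a boson or a fermion in $\cZ(\cC)$ is encoded by the twist $q_1(k_0)$: a fermionic lift forces $q_0(u_0)=-1$ (cases (A1)--(A2) with $q_0(u_0)=-1$ in Section 5.2.1--5.2.2), while a bosonic lift forces $q_0(u_0)=1$ (Section 5.2.3--5.2.4); this pins down whether $k_0=(1,1)$ or $k_0=(1,0)$. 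Self-duality versus non-self-duality of $\rho$ selects between (A1) and (A2), and the congruences $|A|\equiv 1,3\bmod 4$ then fall out of $c=\cG(q_1)=-\cG(q_2)$ together with $|\Gamma|=|G|+4$ and the sign constraints $c^2=\pm 1$ coming from Theorem \ref{STtheorem5}. The specific multiplicity claims for $A=\Z_3$ would be verified by direct enumeration of solutions to the tube-algebra polynomial equations, as sketched in Remark \ref{3rank2}.

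The main obstacle is that this is an existence-plus-identification statement: it simultaneously asserts both the existence of quadratic categories of type $(\Z_2,A,1)$ for all odd $A$ in each of the three cases (which is open even for the subclass of generalized Haagerup categories, once one goes beyond the sporadic examples of \cite[Example 4.2]{GI19_1}) and the exact shape of the mysterious group $(\Gamma,q_2,\theta_2)$. A realistic proof strategy is therefore case-by-case: first settle the conjecture for small $A$ such as $\Z_3,\Z_5,\Z_7$ by explicit Cuntz algebra endomorphism computation, paralleling the appendix-style verification of the $n=1$ instance of Conjecture \ref{con4}, and then attempt to package the output uniformly via an analogue of \cite[Theorem 12.9]{MR3635673}. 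The hardest step is locating $(\Gamma,q_2,\theta_2)$: the $G$-side is directly visible from the pointed part of $\cZ(\cC)$, but $\Gamma$ is accessible only through the non-pointed half-braiding equations for $\rho$, and the numerical evidence mentioned in Section 5.2.1 is essentially the only current route to pinning down its quadratic form.
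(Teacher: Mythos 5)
The statement you were given is a conjecture, and the paper contains no proof of it: Section 5.3 merely collects and restates the conjectures formulated case by case in Section 5.2, and the only support offered is verification in the smallest instances (e.g.\ the generalized Haagerup category for $\Z_2$, the even part of the $A_7$ subfactor), numerical evidence for $\Z_3$ and $\Z_5$ cited from the companion paper, and the consistency checks (non-negative integrality of the Verlinde coefficients, Frobenius--Schur indicators) carried out in Sections 5.1--5.2. So there is no paper proof to compare against, and your proposal is, as you yourself concede in the final paragraph, a research program rather than a proof: it leaves open precisely the two points the authors leave open, namely the existence of quadratic categories of type $(\Z_2,A,1)$ for general odd $A$ in each case, and the identification of $(\Gamma,q_2,\theta_2)$. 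In particular the claim that there are exactly two such categories for $A=\Z_3$ in cases (2) and (3) is itself part of the conjecture, not something the paper derives, so it cannot be discharged by appeal to the paper. Your account of where the various constraints come from is otherwise broadly faithful to how the paper arrives at the conjecture.

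One point to correct in your case analysis: the dichotomy between (A1) and (A2) is not governed by self-duality of $\rho$. With $G_e=\Z_2\times\Z_2$ and $q_{1,e}(x,y)=(-1)^{xy}$, the choice $k_0=(1,1)$ gives $q_1(k_0)=-1$ and $q_1(k_1)=q_1(k_2)=1$, so the fermionic cases sit in (A1) with $q_0(u_0)=-1$ (Section 5.2.1), whereas $k_0=(1,0)$ gives $q_1(k_0)=1$ and $q_1(k_1)=-q_1(k_2)$, so the bosonic cases sit in (A2) with $q_0(u_0)=1$ (Section 5.2.4); that is, the boson/fermion alternative already determines which of (A1)/(A2) applies. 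Self-duality of $\rho$ instead corresponds to the further sub-split on the values of $q_2$ on $\Sigma_*$ (namely $\{i,-i\}$ versus $\{1,-1\}$ in the fermionic case, and analogously in the bosonic case), and it is this sub-split, combined with $|\Gamma|=|G|+4$ and $\cG(q_1)=-\cG(q_2)$, that produces the congruences $|A|\equiv 1$ or $3 \bmod 4$ — not the sign constraints of Theorem \ref{STtheorem5} alone, as your sketch suggests.
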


\section{Asaeda-Haagerup family}
Throughout this section, we assume that $(G,q_1)$ and $(\Gamma,q_2)$ are metric groups satisfying 
$c:=\cG(q_1)=-\cG(q_2)$.   
We assume that $G$ is an even group and $\Gamma$ is an odd group. 
We denote $K=G_2$ for simplicity. 
We will eventually show that $K\cong \Z_2\times \Z_2$ is the only relevant case for our purposes, 
but for the moment we do not make this assumption. 

We set $a=1/\sqrt{|G|}$ and $b=1/\sqrt{|\Gamma|}$. 
We choose subsets $G_*\subset G$ and $\Gamma_*\subset \Gamma$ satisfying 
$$G=K\sqcup G_*\sqcup (-G_*),\quad \Gamma=\{0\}\sqcup \Gamma_*\sqcup (-\Gamma_*),$$
and set
$$J=\{0,\pi\}\sqcup J_1\sqcup G_*\sqcup \Gamma*,$$
where $J_1=(K\setminus\{0\})\times \{1,-1\}$. 
We use letters $k,k',k'',l$ for elements of $K$, $g,g',g'',h$ for elements of $G$ and $\gamma,\gamma',\gamma''
,\xi$ for elements of $\Gamma$. 
We introduce an involution of $J$ by setting $\overline{(k,\varepsilon)}=(k,c^2\inpr{k}{k}\varepsilon)$ for 
$(k,\varepsilon) \in J_1$, and leaving the other elements fixed. 
Note that since $\Gamma$ is odd, we have $\cG(q_2)^4=1$ and $c^2\in \{1,-1\}$. 

\begin{definition}\label{ST6} 
Let $S$, $T$, and $C$ be $J$ by $J$ matrices defined by  
\begin{align*}
S=&\left(
\begin{array}{ccccc}
\frac{a-b}{2} & \frac{a+b}{2}&\frac{a}{2} &a &b  \\
\frac{a+b}{2}& \frac{a-b}{2}&\frac{a}{2} &a &-b  \\
\frac{a}{2} &\frac{a}{2} &\frac{a\inpr{k}{k'}+c\varepsilon\varepsilon'q_1(k)\delta_{k,k'}}{2} &a\inpr{k}{g'} &0  \\
a &a &a\inpr{g}{k'}&a(\inpr{g}{g'}+\overline{\inpr{g}{g'}}) &0  \\
b &-b &0 &0 &-b(\inpr{\gamma}{\gamma'}+\overline{\inpr{\gamma}{\gamma'}}) 
\end{array}
\right),
\end{align*}
$$T=\mathrm{Diag}(1,1,q_1(k),q_1(g),q_2(\gamma)),$$
and $C_{x,y}=\delta_{x,\overline{y}}$ (where the block in $S$ and $T$ are indexed by $\{0\}$, $\{ \pi \} $, $J_1 $, $G_* $, and $\Gamma_* $.) 
\end{definition}

Direct computation gives the following results.
\begin{lemma}\label{relations6} Let the notation be as in Definition \ref{ST6}. 
Then $S$, $T$, and $C$ are unitary matrices satisfying $S^2=C$, $(ST)^3=cC$ and $T C=CT$. 
\end{lemma}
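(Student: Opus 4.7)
The plan is to carry out a direct block-by-block verification, organized so that the character-sum identities on $(G,q_1)$ and $(\Gamma,q_2)$ can be invoked as efficiently as possible. First I would record the easy structural facts: $T$ is diagonal with entries in $\T$, hence unitary; the map $x\mapsto\overline{x}$ on $J$ is an involution, so $C$ is a permutation matrix and hence unitary; and $S$ is symmetric since $\inpr{k}{k'}=\inpr{k'}{k}$, $\inpr{g}{g'}=\inpr{g'}{g}$, and the off-diagonal blocks are arranged to match. The identity $TC=CT$ then reduces to the single non-trivial check that the diagonal entry of $T$ on $(k,\varepsilon)$ and on $\overline{(k,\varepsilon)}=(k,c^2\inpr{k}{k}\varepsilon)$ coincide, which is immediate as both equal $q_1(k)$.

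Next I would verify $S^2=C$ block by block. Using orthogonality $\sum_{g\in G}\inpr{x}{g}\,\overline{\inpr{y}{g}}=|G|\delta_{x,y}$ (and the analogue for $\Gamma$), most blocks collapse in the standard way: the $(G_*,G_*)$ block of $S^2$ recovers $\delta_{g,\overline{g'}}$ after combining the $K$-sum with the two copies of the $g\mapsto -g$ sum on $G_*$, and the $(\Gamma_*,\Gamma_*)$ block proceeds analogously. The cross-sums between the $\{0,\pi\}$ block and the $\Gamma_*$, $J_1$, and $G_*$ blocks generate contributions that telescope against the $\frac{a\pm b}{2}$ entries. The only delicate block is $J_1\times J_1$, where the anomalous terms $c\varepsilon\varepsilon' q_1(k)\delta_{k,k'}$ produce mixed contributions in $S^2$; unpacking these and using $q_1(k)^2=\inpr{k}{k}$ together with $c^2\in\{1,-1\}$ gives exactly $\delta_{k,k'}\delta_{\varepsilon',c^2\inpr{k}{k}\varepsilon}$, which matches the prescribed involution on $J_1$. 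Unitarity of $S$ then follows from $S$ being symmetric with $S^2=C$ a permutation matrix.

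For $(ST)^3=cC$, I would exploit the classical Gauss sum identities $(S^{(G,q_1)}T^{(G,q_1)})^3=\cG(q_1)\,C^{(G,q_1)}$ and $(S^{(\Gamma,q_2)}T^{(\Gamma,q_2)})^3=\cG(q_2)\,C^{(\Gamma,q_2)}$, together with the hypothesis $\cG(q_1)=-\cG(q_2)=c$. The rows/columns split naturally into a ``$G$-side'' ($\{0,\pi\}\sqcup J_1\sqcup G_*$) and a ``$\Gamma$-side'' ($\{0,\pi\}\sqcup\Gamma_*$) that overlap on the $\{0,\pi\}$ block; on each side, after expanding $(ST)^3$, the bulk of the contributions reduces to the above Gauss identities, and the sign flip between $\cG(q_1)$ and $\cG(q_2)$ is precisely what is needed for the sign $-b$ in the $\{\pi\}\times\Gamma_*$ entries to produce a common factor $c$ on both sides.

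The main obstacle will be the $J_1\times J_1$ block in the $(ST)^3$ computation, since the restriction of $q_1$ to $K$ is in general degenerate and so one cannot directly invoke the metric-group modular identity there. One must instead expand $(ST)^3$ on this block by hand, track the terms involving $\inpr{k}{k'}$ against those involving $\varepsilon\varepsilon' q_1(k)\delta_{k,k'}$, and collapse the resulting expression using $q_1(k)^2=\inpr{k}{k}$ and the precise value of $c^2$ to recover $c\cdot C$ restricted to $J_1$. The cross-block identities $(ST)^3_{J_1,G_*}$, $(ST)^3_{J_1,\Gamma_*}$, and $(ST)^3_{\pi,J_1}$ require similar but less intricate bookkeeping.
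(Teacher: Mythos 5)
Your overall strategy — direct block-by-block verification using orthogonality of characters for the quadratic relations and Gauss sums for the cubic relation — is the same as the paper's, and most of your plan is sound: the $TC=CT$ check, the collapse of the $J_1\times J_1$ block of $S^2$ to $\delta_{k,k'}\delta_{\varepsilon',c^2\inpr{k}{k}\varepsilon}$ via $q_1(k)^2=\inpr{k}{k}$ and the vanishing of the cross terms under the sum over $t=\pm1$, and the identification of $\cG(q_1)=-\cG(q_2)=c$ as the source of the common factor $c$ on the $\Gamma$-side all check out. One caveat on the cubic relation: the matrices here are not direct sums or tensor products of the pointed data $(S^{(G,q_1)},T^{(G,q_1)})$ and $(S^{(\Gamma,q_2)},T^{(\Gamma,q_2)})$ (because of the folding $G_*\leftrightarrow -G_*$, the doubling of $K\setminus\{0\}$ into $J_1$, and the shared $\{0,\pi\}$ block), so the classical identities $(S^{(G,q)}T^{(G,q)})^3=\cG(q)C^{(G,q)}$ cannot be invoked as black boxes; what actually gets used in every block is the underlying Gauss-sum identity $\sum_{h\in G}\inpr{x}{h}q_1(h)=\sqrt{|G|}\,\cG(q_1)\overline{q_1(x)}$, applied after reducing $(ST)^3=cC$ to the entrywise identity $\sum_x S_{j,x}S_{j',x}T_{x,x}=cS_{j,j'}\overline{T_{j,j}T_{j',j'}}$. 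Your second half acknowledges this, so I read it as a harmless overstatement rather than an error.

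There is, however, one genuine logical gap: you assert that unitarity of $S$ follows from $S$ being symmetric together with $S^2=C$. It does not. For a symmetric $S$, unitarity means $S\overline{S}=1$, i.e.\ $\overline{S}=S^{-1}=SC$, which is the additional conjugation relation $\overline{S_{x,y}}=S_{x,\overline{y}}$; this is not implied by $S^2=C$ and symmetry (e.g.\ $\left(\begin{smallmatrix}\cosh\theta&\sinh\theta\\ \sinh\theta&-\cosh\theta\end{smallmatrix}\right)$ is symmetric with square the identity but is not unitary). The paper runs the implication in the opposite direction: it verifies unitarity directly by computing $\sum_x S_{j,x}\overline{S_{j',x}}$, then observes that every block of $S$ except the $J_1\times J_1$ block is real (note $\inpr{k}{g}\in\{1,-1\}$ since $2k=0$) and that on the $J_1\times J_1$ block $\overline{S_{(k,\varepsilon),(k',\varepsilon')}}=S_{\overline{(k,\varepsilon)},(k',\varepsilon')}$ because $c^2$ and $q_1(k)^2=\inpr{k}{k}$ lie in $\{1,-1\}$; combined with symmetry and unitarity this yields $S^2=C$ for free. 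Your route is salvageable — you have already computed $S^2=C$ directly, so you need only add the (easy) verification of $\overline{S_{x,y}}=S_{x,\overline{y}}$ to conclude unitarity — but as written the deduction is invalid and must be repaired.
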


\begin{lemma}\label{Verlinde} Let the notation be as in Definition \ref{ST6}. Then we have
$$N_{\pi,\pi,\pi}=N_{\pi,\pi,\gamma}=N_{\pi,\pi,g}=N_{\pi,g,\gamma}=N_{\gamma,\gamma',g}=\frac{4}{|\Gamma|-|G|},$$
$$ N_{\pi,\pi,(k.\varepsilon)}= N_{\pi,\gamma,(k.\varepsilon)}= N_{\gamma,\gamma',(k.\varepsilon)}=\frac{2}{|\Gamma|-|G|},$$
$$N_{\pi,\gamma,\gamma'}=\frac{4}{|\Gamma|-|G|}-\delta_{\gamma,\gamma'},$$
$$N_{\pi,(k,\varepsilon),(k',\varepsilon')}=\delta_{(k,\varepsilon),\overline{(k',\varepsilon')}}+\frac{1}{|\Gamma|-|G|},
\quad N_{\gamma,(k,\varepsilon),(k',\varepsilon')}=\frac{1}{|\Gamma|-|G|},$$
$$N_{\pi,(k,\varepsilon),g}=N_{\gamma,(k,\varepsilon),g}=\frac{2}{|\Gamma|-|G|},$$
$$N_{\pi,g,g'}=\delta_{g,g'}+\frac{4}{|\Gamma|-|G|},
\quad N_{\gamma,g,g'}=\frac{4}{|\Gamma|-|G|},$$
$$N_{\gamma,\gamma',\gamma''}=\frac{4}{|\Gamma|-|G|}
-(\delta_{\gamma+\gamma'+\gamma'',0}+\delta_{\gamma+\gamma',\gamma''}
 +\delta_{\gamma'+\gamma'',\gamma}+\delta_{\gamma''+\gamma,\gamma'}),$$
\begin{align*}
\lefteqn{
N_{(k,\varepsilon),(k',\varepsilon'),(k'',\varepsilon'')}=\frac{1}{2}(\frac{1}{|\Gamma|-|G|}+\delta_{k+k'+k'',0})}\\
 &+c^2\frac{\varepsilon\varepsilon'\inpr{k}{k+k''}\delta_{k,k'}+\varepsilon'\varepsilon''\inpr{k'}{k'+k}\delta_{k',k''}+
\varepsilon''\varepsilon\inpr{k''}{k''+k'}\delta_{k'',k}}{2},
\end{align*}
$$N_{(k,\varepsilon),(k',\varepsilon'),g}
=\frac{1}{|\Gamma|-|G|}+c^2\varepsilon\varepsilon'\inpr{k}{k+g}\delta_{k,k'},$$
$$N_{(k,\varepsilon),g,g'}
 =\frac{2}{|\Gamma|-|G|}+\delta_{k+g+g',0}+\delta_{k+g-g',0}, $$
 $$N_{g,g',g''}
 =\frac{4}{|\Gamma|-|G|}+(\delta_{g+g'+g'',0}+\delta_{g+g',g''}+\delta_{g'+g'',g}+\delta_{g''+g,g'}).$$
\end{lemma}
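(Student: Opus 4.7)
The plan is to apply the Verlinde formula $N_{ijk} = \sum_{r\in J} S_{ir}S_{jr}S_{kr}/S_{0,r}$ to each triple listed in the statement. Although the table of fusion coefficients is long, all computations follow the same pattern, so what I describe is a uniform strategy rather than a case-by-case grind.

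First I would observe that $S_{0,r}$ takes only five distinct values, one per block of the partition $J = \{0\} \sqcup \{\pi\} \sqcup J_1 \sqcup G_* \sqcup \Gamma_*$, namely $\tfrac{a-b}{2}$, $\tfrac{a+b}{2}$, $\tfrac{a}{2}$, $a$, and $b$ respectively. Splitting each Verlinde sum according to these five blocks reduces the problem to evaluating five partial sums. The two singleton blocks $\{0\}$ and $\{\pi\}$ each contribute a single explicit term; their sum has the form $X/\tfrac{a-b}{2} \pm X/\tfrac{a+b}{2}$ for a common numerator $X$, and after combining over the denominator $\tfrac{a^2-b^2}{4} = \tfrac{|\Gamma|-|G|}{4|G||\Gamma|}$ these yield, together with the diagonal contributions from the $G_*$ and $\Gamma_*$ blocks, the universal term $\tfrac{4}{|\Gamma|-|G|}$ (possibly halved) appearing in every fusion coefficient. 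The sign depends on whether the triple contains an even or odd number of $\pi$'s.

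Next I would handle the sums over $G_*$ and $\Gamma_*$. The summands arising in the Verlinde formula are invariant under $r\mapsto -r$, so that
\[\sum_{r\in G_*} F(r) = \tfrac12\sum_{r\in G\setminus K}F(r), \qquad \sum_{r\in \Gamma_*} F(r) = \tfrac12\sum_{r\in \Gamma\setminus\{0\}}F(r).\]
After these extensions the summands become products of characters of $G$ or $\Gamma$, and orthogonality of characters reduces them to Kronecker delta expressions such as $\delta_{g+g'+g'',0}$ or $\delta_{\gamma+\gamma',\gamma''}$. The correction terms coming from extending through $r=0$ (for $\Gamma$) and through the elements of $K$ (for $G$) cancel cleanly against part of the $J_1$ contribution; this is where the $c^2 = \cG(q_1)^2$ factors enter the triples involving $J_1$-indices. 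The sum over the finite set $J_1$ is evaluated directly: summing over $\varepsilon \in \{1,-1\}$ either doubles or kills the $\varepsilon$-dependent cross terms, producing the $c^2\varepsilon\varepsilon'\inpr{k}{k+g}\delta_{k,k'}$-type contributions.

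The main obstacle is not conceptual but bookkeeping: carefully tracking how the five partial sums combine, and in particular how the $\pm$ signs and the $\varepsilon,\varepsilon',\varepsilon''$ dependencies interact in the triples involving one, two, or three $J_1$-indices. It will also be important to verify that $c^2 \in \{1,-1\}$ (which holds because $\Gamma$ is odd, so $\cG(q_2)^4 = 1$) so that the $c^2 \inpr{k}{k}$ factor appearing in the definition of the involution on $J_1$ is a sign and the $(k,\varepsilon)$-indexed expressions simplify correctly. Once this accounting is done systematically, each entry in the statement falls out by collecting the contributions from the five blocks and matching the resulting character sums to the stated Kronecker deltas.
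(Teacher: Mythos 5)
Your proposal is correct and would yield every entry in the table; it is precisely the block-by-block Verlinde computation that the paper carries out for the analogous lemmas in Sections 3--5 (Lemmas \ref{Verlinde3}, \ref{Verlinde4}, \ref{Verlinde5}). For this particular lemma, though, the paper's proof uses a shortcut that bypasses most of the bookkeeping you flag as the main obstacle. Since $N_{0,y,z}=\sum_x S_{y,x}S_{z,x}=(S^2)_{y,z}=\delta_{y,\overline{z}}$, and since the rows of $S$ indexed by $0$ and $\pi$ agree except on $\{0\}\sqcup\{\pi\}\sqcup\Gamma_*$, the difference $N_{\pi,y,z}-N_{0,y,z}$ collapses to the short expression $b\left(\frac{S_{y,0}S_{z,0}}{S_{0,0}}-\frac{S_{y,\pi}S_{z,\pi}}{S_{0,\pi}}\right)-2\sum_{\gamma\in\Gamma_*}S_{y,\gamma}S_{z,\gamma}$, and that last sum is then evaluated by unitarity of $S$ rather than by character orthogonality; similarly $N_{\gamma,y,z}$ for $y,z\notin\{0,\pi\}\sqcup\Gamma_*$ receives contributions only from $x\in\{0,\pi\}$ because $S_{\gamma,x}$ and $S_{y,x}$ have disjoint supports elsewhere, and the triples with all indices in $J_1\sqcup G_*$ lose the entire $\Gamma_*$ block for the same reason. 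Your route costs more computation but needs no new idea; if you carry it out, be aware that the two singleton terms do not literally share a common numerator once the triple involves $\pi$ or $\Gamma_*$-indices (only the common denominator $\frac{a^2-b^2}{4}=\frac{|\Gamma|-|G|}{4|G|\,|\Gamma|}$ is uniform), and that the universal constant is divided by $2$, $4$ or $8$ according to how many of the three indices lie in $J_1$, not merely ``possibly halved.''
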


\begin{theorem}\label{STtheorem6} Let the notation be as above. 
Then all the fusion coefficients $N_{ijk}$ are non-negative integers if and only if 
$|\Gamma|=|G|+1$ and $K\cong \Z_2\times \Z_2$. 
\end{theorem}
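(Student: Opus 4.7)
The plan is to identify the coefficients in Lemma \ref{Verlinde} that carry the binding integrality constraints, derive the two asserted conditions as necessary, and then verify they are sufficient. First, I would pin down $|\Gamma|-|G|$. Since $N_{\pi,\pi,\pi}=\frac{4}{|\Gamma|-|G|}$ must be a non-negative integer, $|\Gamma|>|G|$; and $N_{\pi,(k,\varepsilon),(k',\varepsilon')}=\delta_{(k,\varepsilon),\overline{(k',\varepsilon')}}+\frac{1}{|\Gamma|-|G|}$ together with $N_{\gamma,(k,\varepsilon),(k',\varepsilon')}=\frac{1}{|\Gamma|-|G|}$ force $|\Gamma|-|G|$ to divide $1$. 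Hence $|\Gamma|=|G|+1$, and every remaining coefficient of the form $\frac{m}{|\Gamma|-|G|}$ with $m\in\{1,2,4\}$ becomes an integer automatically.

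Second, to control $|K|$, I would substitute $|\Gamma|-|G|=1$ into the formula for $N_{(k,\varepsilon),(k',\varepsilon'),(k'',\varepsilon'')}$ and specialize to pairwise distinct nonzero $k,k',k''\in K$. The three Kronecker deltas $\delta_{k,k'},\delta_{k',k''},\delta_{k'',k}$ in the bracket then all vanish, leaving $N=\frac{1}{2}(1+\delta_{k+k'+k'',0})$; integrality forces $k+k'+k''=0$. Picking a fourth distinct nonzero element $k'''$ and comparing the relations $k+k'+k''=0=k+k'+k'''$ gives $k''=k'''$, a contradiction, so $|K\setminus\{0\}|\leq 3$ and $|K|\leq 4$. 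Since $G$ is even, $|K|\geq 2$, leaving $|K|\in\{2,4\}$.

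The third step --- ruling out $|K|=2$ --- is the main obstacle and uses the standing hypothesis $c=\cG(q_1)=-\cG(q_2)$ rather than a single fusion coefficient. If $|K|=2$, then $G_e$ is cyclic, say $G_e\cong\Z_{2^n}$, and any non-degenerate $q_{1,e}$ takes the form $\zeta_{2^{n+1}}^{rx^2}$ with $r$ odd. The Gauss sum identity $\frac{1}{\sqrt{2^n}}\sum_{x=0}^{2^n-1}\zeta_{2^{n+1}}^{rx^2}=(-1)^{n(r^2-1)/8}\zeta_8^r$ recorded in Section 2.2 then puts $\cG(q_{1,e})\in\{\zeta_8^j:j\text{ odd}\}$, and multiplying by $\cG(q_{1,o})\in\{\pm 1,\pm i\}=\{\zeta_8^j:j\text{ even}\}$ preserves odd parity of the exponent, so $\cG(q_1)$ is a primitive $8$-th root of unity. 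But $\Gamma$ odd forces $\cG(q_2)\in\{\pm 1,\pm i\}$, hence $-\cG(q_2)\in\{\zeta_8^j:j\text{ even}\}$, contradicting $\cG(q_1)=-\cG(q_2)$. Therefore $|K|=4$, i.e., $K\cong\Z_2\times\Z_2$.

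For the converse, the inclusion $\Z_2\times\Z_2\cong K\subset G$ gives $4\mid|G|$, so $|\Gamma|=|G|+1\equiv 1\pmod 4$ and $c^2=\cG(q_2)^2=1$. All bicharacter values $\inpr{k}{\cdot}$ with $k\in K$ lie in $\{\pm 1\}$, and the three nonzero elements of $K$ sum to zero. Using these facts, together with a short case analysis on the sum $\varepsilon\varepsilon'+\varepsilon'\varepsilon''+\varepsilon''\varepsilon\in\{-1,3\}$ for the triple $N_{(k,\varepsilon),(k',\varepsilon'),(k'',\varepsilon'')}$, the formulas in Lemma \ref{Verlinde} evaluate to non-negative integers throughout.
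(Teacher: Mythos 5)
Your proof is correct and follows essentially the same route as the paper's: the divisibility of $1$ by $|\Gamma|-|G|$ from the $J_1$-indexed coefficients, the formula $N_{(k,\varepsilon),(k',\varepsilon'),(k'',\varepsilon'')}=\tfrac{1}{2}(1+\delta_{k+k'+k'',0})$ for distinct nonzero $k,k',k''$ to pin down $K\cong\Z_2^2$, the incompatibility of $\cG(q_1)=-\cG(q_2)$ with $K\cong\Z_2$ (which the paper states without the Gauss-sum details you supply), and a routine verification of sufficiency. The only cosmetic difference is the order of the last two necessity steps.
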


\begin{proof} 
The above computation shows that $|\Gamma|-|G|=1$ is necessary to have 
non-negative integer fusion coefficients. 
Assume this condition. 
Since the condition $\cG(q_1)=-\cG(q_2)$ is not fulfilled by $K=\Z_2$, we get $K\cong \Z_2^s$ with $s>1$. 
Let $k,k',k''\in K\setminus \{0\}$ be mutually distinct elements. 
Then we have 
$$N_{(k,\varepsilon),(k',\varepsilon'),(k'',\varepsilon'')}=\frac{1+\delta_{k+k'+k'',0}}{2}.$$ 
For $N_{(k,\varepsilon),(k',\varepsilon'),(k'',\varepsilon'')}$ to be a non-negative integer, 
the only possibility is $k+k'+k''=0$. 
This means that the only possible case is $K\cong \Z_2\times \Z_2$. 

On the other hand it is routine work to show that all the fusion coefficients $N_{ijk}$ are non-negative 
integers if $|\Gamma|=|G|+1$ and $K\cong \Z_2\times \Z_2$. 
\end{proof}

The explicit formula for $(S,T)$ in Definition \ref{ST6} stemmed out of an attempt to unify formulae 
appearing in the following three conjectures. 

\begin{conjecture} Let $\cC$ be a generalized Haagerup category with an abelian group $A$ satisfying 
$A_e\cong \Z_2\times \Z_2$.  
Then the modular data of the Drinfeld center $\cZ(\cC)$ is given by 
$$(S^{(G_e,q_{1,e})}\otimes S,T^{(G_{e},q_{1,e})} \otimes T)$$ 
with $G=\Z_2^2\times A_o\times \hat{A_o}$, $q_{1,e}(x,y)=(-1)^{x^2+xy+y^2}$, and $q_{1,o}(a,\chi)=\chi(a)$. 
\end{conjecture}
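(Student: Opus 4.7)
The plan is to compute the modular data of $\cZ(\cC)$ via the tube-algebra approach developed in \cite{MR1832764, MR3635673}. First, one would realize the generalized Haagerup category $\cC$ as a $C^*$-tensor subcategory of endomorphisms of a Cuntz algebra, using Izumi's framework: simple objects are $\alpha_a$ and $\alpha_a\rho$ for $a\in A$, with $\rho$ satisfying polynomial equations whose solutions encode the structure constants. Existence of $\cC$ for general $A$ with $A_e\cong \Z_2\times \Z_2$ is itself open (only the case $A_o$ trivial is settled, via the Asaeda-Haagerup subfactor; see \cite{MR3859276}), so the argument is contingent on this. Granting existence, one computes $\Tube(\cC)$ as a finite-dimensional $C^*$-algebra whose irreducible representations parametrize simple objects of $\cZ(\cC)$, and reads off $S$ and $T$ from characters and the canonical pairing.

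I would then seek a tensor-product structure on $\cZ(\cC)$ matching the right-hand side of the conjecture. The pointed factor $(S^{(G_e,q_{1,e})},T^{(G_e,q_{1,e})})$ with $q_{1,e}(x,y)=(-1)^{x^2+xy+y^2}$ should appear as a pointed modular subcategory generated by characters detecting $A_e\cong \Z_2\times \Z_2$, while the complementary factor $(S,T)$ of Definition \ref{ST6} encodes the $\rho$-sector together with the odd pointed piece coming from $A_o\times \hat{A_o}$. A natural intermediate step is to identify the two blocks indexed by $\{0,\pi\}$ as lifting the near-group-like decomposition of $\cC$, and the $J_1$, $G_*$, $\Gamma_*$ blocks as carrying the remaining half-braidings; the specific entries of $S$ in Definition \ref{ST6} should emerge from solving for characters on the appropriate tube-algebra idempotents.

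The hard part will be determining the mysterious odd group $\Gamma$ (with $|\Gamma|=|G|+1$ forced by Theorem \ref{STtheorem6}) and its quadratic form $q_2$. Historically such $\Gamma$'s have surfaced only after heavy explicit tube-algebra diagonalization and are not visible from the fusion rules or the polynomial structure constants of $\cC$; extracting $\Gamma$ uniformly in $A_o$ is the technical heart of the problem. A realistic path is to verify the formula numerically for several small $A_o$ (beyond $A_o=\{0\}$) in order to conjecture a uniform description of $(\Gamma,q_2)$ in terms of $A_o$, and then attempt a general computation. Once $\Gamma$ is pinned down, the modular relations themselves are already guaranteed by Lemma \ref{relations6} and Theorem \ref{STtheorem6}, so the core difficulty lies entirely in matching the tube-algebra output against the abstract formulas from Definition \ref{ST6}.
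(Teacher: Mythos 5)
This statement is a \emph{conjecture} in the paper, not a theorem: the authors supply no proof, and they verify it only in the single smallest instance $A_o=\{0\}$ (the $\Z_2\times\Z_2$ generalized Haagerup category, whose center is computed in the companion paper; note this is a different object from the Asaeda--Haagerup categories you cite, which arise from $\Z_4\times\Z_2$). Your proposal correctly describes the strategy one would follow --- realize $\cC$ on a Cuntz algebra, compute $\Tube(\cC)$, diagonalize, and match against Definition \ref{ST6} --- and it is candid about where the difficulties lie. But as written it is a research program, not a proof: every mathematically substantive step is deferred. You assume existence of $\cC$ for general $A$ with $A_e\cong\Z_2\times\Z_2$ (open); you do not carry out the tube-algebra computation for any case; and, most importantly, you offer no mechanism for identifying the odd metric group $(\Gamma,q_2)$ uniformly in $A_o$, which you yourself flag as "the technical heart of the problem." Numerical verification for a few small $A_o$ would at best refine the conjecture, not establish it.

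One further caution: your closing remark that "the modular relations themselves are already guaranteed by Lemma \ref{relations6} and Theorem \ref{STtheorem6}" conflates two different things. Those results show that the \emph{candidate} pair $(S,T)$ built from a suitable $(G,q_1)$ and $(\Gamma,q_2)$ satisfies the modular relations and has non-negative integral Verlinde coefficients; they say nothing about whether the Drinfeld center of an actual category $\cC$ produces that data. The content of the conjecture is precisely that identification, and nothing in the proposal addresses it. The statement remains open, and your write-up should present it as such rather than as a proof.
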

This conjecture is true for trivial $A_o=\{0\}$.

\begin{conjecture} Let $\cC$ be a generalized Haagerup category with an abelian group $A$ satisfying 
$A_e\cong \Z_{2^{n+1}}$.  
Then the modular data of the Drinfeld center $\cZ(\cC/\Z_2)$ of the $\Z_2$-de-equivariantization $\cC/\Z_2$ 
of $\cC$ is given by $(S,T)$ with $G=\Z_{2^n}^2\times A_o\times \hat{A_o}$, $q_{1,e}(x,y)=\zeta_{2^{n+1}}^{x^2-y^2}$, 
and $q_{1,o}(a,\chi)=\chi(a)$. 

\end{conjecture}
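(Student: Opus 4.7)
The plan is to compute the modular data of $\cZ(\cC/\Z_2)$ directly, following the tube algebra approach of \cite{MR1832764,MR3635673} using an explicit realization of $\cC$ by endomorphisms of a Cuntz algebra. Since $A_e \cong \Z_{2^{n+1}}$, the element $a_0 := 2^n \in A_e$ generates a distinguished order-two subgroup, and one first checks that $\alpha_{a_0}$ lifts to an invertible object of $\cZ(\cC)$ suitable for de-equivariantization. The fusion rules of $\cC/\Z_2$ can then be read off from those of $\cC$ by identifying $\alpha_a$ with $\alpha_{a+a_0}$ and splitting $\rho$ accordingly; the resulting category is a quadratic category whose structure is completely determined by the associator data of $\cC$.

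Next I would compute the tube algebra of $\cC/\Z_2$ and enumerate its central idempotents. These should partition naturally into the five blocks of Definition \ref{ST6}: a unit, a distinguished $\pi$, pairs in $J_1$ indexed by $K \setminus \{0\}$ with signs (here $K \cong \Z_2 \times \Z_2$ as forced by Theorem \ref{STtheorem6}), a family indexed by $G_*$, and a family indexed by $\Gamma_*$. The metric group $G = \Z_{2^n}^2 \times A_o \times \hat{A_o}$ should arise from characters of $A/\langle a_0 \rangle$ paired with $\hat{A_o}$; the odd form $q_{1,o}(a,\chi) = \chi(a)$ comes from the canonical pairing, while the even hyperbolic form $q_{1,e}(x,y) = \zeta_{2^{n+1}}^{x^2 - y^2}$ on $\Z_{2^n}^2$ should emerge from evaluating ribbon twists on half-braidings along the $\Z_{2^{n+1}}$-graded pointed subcategory, the signature $x^2 - y^2$ encoding the compatibility between the two $\Z_{2^n}$ factors coming from $\alpha$-labels and their duals. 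Given the correct parametrization, matching the $S$ and $T$ entries of Definition \ref{ST6} becomes a matter of computing Hopf link pairings and ribbon twists in the tube algebra, with the base case $n=1$ recovering the Asaeda-Haagerup and $\Z_2$-de-equivariantizations of $\Z_4$- and $\Z_8$-Haagerup computations of Section 6.

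The main obstacle is existence: generalized Haagerup categories with $A_e \cong \Z_{2^{n+1}}$ are currently known only for $n \leq 2$, so for $n \geq 3$ the conjecture is contingent on the category existing at all. Even granting existence, solving the associator polynomial equations of \cite{MR1832764} in a family uniform in $n$ --- and extracting the precise form $\zeta_{2^{n+1}}^{x^2 - y^2}$ from the ribbon structure on the $\Z_{2^n}^2$ subquotient, rather than case by case --- is the chief technical hurdle, since the $2^{n+1}$-th roots of unity appearing in the associator propagate into the ribbon twist in a way that depends sensitively on the normalization of the Cuntz algebra generators. A realistic first step is therefore to establish the conjecture case by case for small $n$ and to identify an inductive structure, e.g.\ by exhibiting an appropriate equivariantization that relates $\cC/\Z_2$ at level $n$ to a near-group or quadratic category at level $n+1$, which would reduce the uniform computation to a single finite calculation plus structural compatibility.
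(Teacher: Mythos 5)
The statement you are addressing is labelled a \emph{conjecture} in the paper, and the paper offers no proof of it: the authors only (i) construct the candidate pair $(S,T)$ in Definition \ref{ST6}, verify the modular relations (Lemma \ref{relations6}), non-negative integrality of the Verlinde coefficients (Theorem \ref{STtheorem6}, which is what forces $K=G_2\cong\Z_2\times\Z_2$ and $|\Gamma|=|G|+1$), and the Frobenius--Schur constraints for this choice of $q_{1,e}$, and (ii) record that the conjecture has been checked for $A=\Z_4$ and $A=\Z_8$ by explicit tube-algebra computations carried out in the companion paper. So there is no ``paper's own proof'' to compare against; any evaluation of your text has to be as a research program rather than as a proof.

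As a program it is sensible and consistent with how the authors verified the known cases: realize $\cC$ by Cuntz-algebra endomorphisms, identify the Tannakian $\Z_2$ generated by $\alpha_{2^n}$, pass to the de-equivariantization, compute the tube algebra and its half-braidings, and match central idempotents to the blocks $\{0\},\{\pi\},J_1,G_*,\Gamma_*$ of Definition \ref{ST6}. You also correctly name the two genuine obstructions --- existence of the input categories for $n\ge 3$, and extracting the quadratic form $\zeta_{2^{n+1}}^{x^2-y^2}$ uniformly in $n$ from the ribbon twists rather than case by case --- which is precisely why the statement is left as a conjecture. Two corrections: first, nothing in your outline actually derives the form $q_{1,e}(x,y)=\zeta_{2^{n+1}}^{x^2-y^2}$; the phrase ``should emerge from evaluating ribbon twists'' is the entire content of the conjecture and cannot be treated as a step. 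Second, the Asaeda--Haagerup modular data does not arise as a base case of \emph{this} conjecture; it belongs to the parallel conjecture for $A_e\cong\Z_{2^n}\times\Z_2$ with $q_{1,e}(x,y)=\zeta_{2^n}^{xy}$ (and $\Gamma=\Z_{17}$ with $G=\Z_4\times\Z_4$ is realized there, not here). The base cases of the present conjecture are $A=\Z_4$ ($n=1$, $q_{1,e}(x,y)=i^{x^2-y^2}$) and $A=\Z_8$ ($n=2$, $q_{1,e}(x,y)=\zeta_8^{x^2-y^2}$) only.
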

This conjecture is true for $A=\Z_4$ and $A=\Z_8$. 

\begin{conjecture} Let $\cC$ be a generalized Haagerup category with an abelian group $A$ satisfying 
$A_e\cong \Z_{2^n}\times \Z_2$ with $n\geq 2$.  
Then the modular data of the Drinfeld center $\cZ(\cC/\Z_2)$ of the $\Z_2$-de-equivariantization $\cC/\Z_2$ of $\cC$ 
is given by $(S,T)$ with $G=\Z_{2^n}^2\times A_o\times \hat{A_o}$, $q_{1,e}(x,y)=\zeta_{2^n}^{xy}$, 
and $q_{1,o}(a,\chi)=\chi(a)$. 
The modular data of the Drinfeld center $\cZ(\cC)$ is given by 
$$(S^{(\Z_2\times \Z_2,Q)} \otimes S,T^{(\Z_2\times \Z_2,Q)}\otimes T),$$ 
where $Q(x,y)=(-1)^{xy}$.

\end{conjecture}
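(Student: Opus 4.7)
The plan is to follow the tube algebra approach developed in \cite{MR1832764} and refined in subsequent work of the second author. First, I would construct the generalized Haagerup category $\cC$ with invertible group $A$ where $A_e \cong \Z_{2^n}\times \Z_2$ using the Cuntz algebra endomorphism method, producing explicit numerical structure data (connection/associator data satisfying the polynomial equations of \cite{MR1832764}). For a fixed $A$, this reduces the modular data computation to a finite but large calculation in the associated tube algebra. The two parts of the conjecture should be attacked in the order opposite to how they are stated: first compute $\cZ(\cC/\Z_2)$, then lift to $\cZ(\cC)$.

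For the first part, I would compute $\cC/\Z_2$ directly, observing that the relevant $\Z_2$ (the direct factor of $A_e$) acts trivially on $\rho$ by the generalized Haagerup fusion rules, so $\cC/\Z_2$ is naturally a quadratic category whose invertibles form $A/\Z_2 \cong \Z_{2^n}\times A_o$. Its tube algebra should admit a block decomposition that matches the block pattern of Definition \ref{ST6}: a pointed block corresponding to the invertibles, a block of size $|G|/2$ coming from $G_* \subset G = \Z_{2^n}^2\times A_o\times \hat A_o$ with quadratic form $q_1(x,y,a,\chi)=\zeta_{2^n}^{xy}\chi(a)$, and a block of size $|\Gamma|$ coming from the ``mysterious'' odd metric group $\Gamma$ of order $|G|+1$. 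Identification with Definition \ref{ST6} amounts to matching eigenvalues of $T$ and entries of $S$ block by block; the $G$ part is forced by the $A/\Z_2$-grading and by the known action of invertibles on the tube algebra, while the $\Gamma$ part will have to be extracted from the non-pointed tube algebra summands.

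For the second part, I would then use the equivariantization/de-equivariantization correspondence. Since $\cC = (\cC/\Z_2)^{\Z_2}$, there is an action of $\Z_2$ on $\cC/\Z_2$ and a corresponding $\Z_2$-crossed braided extension of $\cZ(\cC/\Z_2)$ whose equivariantization realizes $\cZ(\cC)$. The factor $\cZ(\Vec_{\Z_2\times\Z_2}, Q)$ with $Q(x,y)=(-1)^{xy}$ should arise as the Müger centralizer of $\cZ(\cC/\Z_2)$ inside $\cZ(\cC)$: one $\Z_2$ comes from $\Rep(\Z_2)$ built into the equivariantization, while the second $\Z_2$ comes from the invertible in $\cZ(\cC/\Z_2)$ that becomes transparent on passing to $\cZ(\cC)$ (the self-dual boson labeled $(u,0)$ in Section 5's notation, or its analogue here). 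The form $Q$ is dictated by the requirement that this $\Z_2\times\Z_2$ subcategory be modular; its hyperbolic shape $(-1)^{xy}$ reflects the fact that one $\Z_2$ pairs non-trivially with the other.

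The main obstacle will be Step 2's identification of $\Gamma$: Evans--Gannon already noted that the ``second'' metric group is captured only through heavy computation, and here the situation is compounded by the fact that the relevant generalized Haagerup categories for $\Z_{2^n}\times \Z_2$ with $n\geq 3$ are not yet constructed, so any uniform proof must proceed from polynomial equations rather than from a known solution. A secondary difficulty is showing that the tensor factorization of $\cZ(\cC)$ in Step 3 is actually a factorization as modular tensor categories and not merely as abstract modular data; this requires checking that the $\Z_2\times\Z_2$ subcategory has trivial Müger centralizer inside itself, which should follow from non-degeneracy of $Q$ but must be verified against the concrete block structure produced by the tube algebra.
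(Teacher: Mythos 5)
The statement you are trying to prove is stated in the paper as a \emph{conjecture}, and the paper offers no proof of it: the authors' only evidence is the verification in the single instance $n=2$, $A_o=\{0\}$ (where $\cC$ is the $\Z_4\times\Z_2$ generalized Haagerup category and $\cC/\Z_2$ is the Asaeda--Haagerup category, with $G=\Z_4\times\Z_4$, $\Gamma=\Z_{17}$), together with the family-wide consistency checks of Section 6 (modular relations, non-negative integrality of the Verlinde coefficients, Frobenius--Schur indicators). Your proposal is therefore not being measured against a proof in the paper, and, as written, it is not a proof either: it is a research program whose decisive steps are left undone. You yourself name the two fatal gaps. First, the generalized Haagerup categories with $A_e\cong\Z_{2^n}\times\Z_2$ are not known to exist for $n\geq 3$ (nor for general odd $A_o$), so there is no object whose tube algebra you could compute; ``proceeding from the polynomial equations'' is not a uniform argument but an open problem in its own right. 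Second, even granting existence, the identification of the odd metric group $(\Gamma,q_2)$ of order $|G|+1$ is exactly the part that Evans and Gannon observed can only be captured by heavy case-by-case computation; no mechanism in your outline produces $\Gamma$ for general $n$ and $A_o$.

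That said, the architecture of your plan is sound and matches how the known cases were actually verified (compare the tube algebra computation in the appendix for the $\Z_2$-de-equivariantization of the $\Z_4$ near-group category, and the references to \cite{GI19_1} for the Asaeda--Haagerup and $\Z_8$ cases). Your second step is also the right way to relate the two halves of the conjecture: since $\cC=(\cC/\Z_2)^{\Z_2}$, the center $\cZ(\cC)$ contains a Tannakian $\text{Rep}(\Z_2)$, and the asserted tensor factorization of the modular data is equivalent to the $\Z_2\times\Z_2$ pointed subcategory generated by $\text{Rep}(\Z_2)$ and the distinguished boson being non-degenerate with form $Q(x,y)=(-1)^{xy}$, whence $\cZ(\cC)\simeq \cZ(\text{Vec}_{\Z_2})\boxtimes\cZ(\cC/\Z_2)$ by M\"uger's centralizer theorem (the dimensions $\dim\cZ(\cC)=4\dim\cZ(\cC/\Z_2)$ are consistent). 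But this reduction, like the first step, is conditional on data that is only available in the lowest case. In short: your outline correctly reproduces the strategy one would use to verify further instances, but it does not close the conjecture, and you should not present it as a proof of the general statement.
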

This conjecture is true for $n=2$ and $A_o=\{0\}$.

We assume $|\Gamma|=|G|+1$ and $K\cong \Z_2\times \Z_2$ in the rest of this section. 
Note that we have 
$$N_{\pi,\pi,\pi}=4, \quad N_{(k,\varepsilon),(k,\varepsilon),(k,\varepsilon)}=2,\quad N_{g,g,g}=4+\delta_{3g,0},\quad 
N_{\gamma,\gamma,\gamma}=4-\delta_{3\gamma,0}.$$ 
Direct computation shows the following.
\begin{lemma}\label{FS} We have
$$\nu_m(\pi)=|\cG(q_1,m)+\cG(q_2,m)|^2,$$
$$\nu_m((k,\varepsilon))=\frac{|\cG(q_1,m)+\cG(q_2,m)|^2+\delta_{mk,0}q_1(k)^m}{2},$$
$$\nu_m(g)=|\cG(q_1,m)+\cG(q_2,m)|^2+\delta_{mg,0}q_1(g)^m,$$
$$\nu_m(\gamma)=|\cG(q_1,m)+\cG(q_2,m)|^2-\delta_{m\gamma,0}q_2(\gamma)^m.$$
In particular, (FS2) and (FS3) are equivalent to the following conditions respectively:  
\begin{equation}\label{D1}
|\cG(q_1,2)+\cG(q_2,2)|=1,
\end{equation}
\begin{equation}\label{D2}
|\cG(q_1,3)+\cG(q_2,3)|=2.
\end{equation}
\end{lemma}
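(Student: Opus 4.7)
I would compute each $\nu_m(x)$ directly from the definition $\nu_m(x) = \sum_{i,j\in J}N_{ij}^{x}S_{0i}S_{0j}(T_{jj}/T_{ii})^m$. It is cleanest to first apply the Verlinde formula $N_{ij}^{x} = \sum_{r\in J}S_{ir}S_{jr}\overline{S_{xr}}/S_{0r}$, which recasts this as
$$\nu_m(x) = \sum_{r\in J}\frac{\overline{S_{xr}}}{S_{0r}}\,|P_m(r)|^2,\qquad P_m(r) := \sum_{i\in J}S_{ir}S_{0i}T_{ii}^m,$$
so that the main task is to evaluate $P_m(r)$ for each of the five index types $r\in\{0\}\sqcup\{\pi\}\sqcup J_1\sqcup G_*\sqcup\Gamma_*$. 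Substituting the explicit entries of Definition \ref{ST6} and using $\sum_{g\in G}\inpr{g}{h}q_1(g)^m = \sqrt{|G|}\,\cG(q_1,h,m)$ and its $\Gamma$-analogue (together with $\sum_\varepsilon\varepsilon = 0$ and $q_i(-x) = q_i(x)$ to extend sums over $G_*$ or $\Gamma_*$ to full group sums), direct substitution collapses each $P_m(r)$ into a single Gauss sum; for example one expects $P_m(0) = \tfrac12(a\cG(q_1,m)+b\cG(q_2,m))$ and $P_m(\pi) = \tfrac12(a\cG(q_1,m)-b\cG(q_2,m))$, with analogous expressions in terms of $\cG(q_i,h,m)$ for $r\in J_1\cup G_*\cup\Gamma_*$ (with the $\varepsilon$-dependence in $J_1$ vanishing).

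Step two is to assemble $\nu_m(x)$ by summing over $r$. The ``pointed'' contributions from $r\in\{0,\pi\}$ combine via elementary $(a\pm b)$ algebra into $|\cG(q_1,m)+\cG(q_2,m)|^2$ plus correction terms of the form $\pm\tfrac{a^2}{2}|\cG(q_1,m)|^2\mp\tfrac{b^2}{2}|\cG(q_2,m)|^2$. These corrections should cancel exactly against the off-diagonal contributions from $r\in J_1\cup G_*\cup\Gamma_*$ via the Plancherel identity $\sum_{h\in G}|\cG(q_1,h,m)|^2 = |G|$ (and its $\Gamma$-analogue) combined with the arithmetic constraint $|\Gamma|=|G|+1$. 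What survives is $|\cG(q_1,m)+\cG(q_2,m)|^2$ multiplied by the coefficient dictated by $x$, plus a ``diagonal'' $\delta_{mx,0}q_i(x)^m$ correction arising from the single non-character entry of $S$ at $r=x$.

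For the reduction of (FS2) and (FS3) to the two Gauss-sum equations, applying the formulas to the self-dual object $\pi$ (with $N_{\pi\pi\pi}=4$) yields both equations as necessary conditions: $\nu_2(\pi) = |\cG(q_1,2)+\cG(q_2,2)|^2$ is a nonnegative real that must lie in $\{1,-1\}$, forcing it to equal $1$; and $\nu_3(\pi) = |\cG(q_1,3)+\cG(q_2,3)|^2$ is a nonnegative real sum of four cube roots of unity, whose only candidates are $4$ and $1$, with the value $1$ ruled out by the resulting non-integer $\nu_3((k,\varepsilon)) = \tfrac12$ for $0\ne k\in K\cong\Z_2\times\Z_2$ with $3k=k$. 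For sufficiency, the assumption $K\cong\Z_2\times\Z_2$ forces $|G|\equiv 0\pmod 4$, hence $|\Gamma|\equiv 1\pmod 4$, hence $c^2 = \cG(q_2)^2 = 1$; with this in hand, (FS2) and (FS3) for every remaining simple object follow routinely from the two Gauss-sum equations, using that $2g\ne 0$ for $g\in G_*$ and $2\gamma\ne 0$ for $\gamma\in\Gamma_*$ (the latter because $\Gamma$ is odd).

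The main obstacle is the bookkeeping in step one, particularly for $r\in J_1$ where the off-diagonal correction $c\varepsilon\varepsilon' q_1(k)\delta_{k,k'}$ in $S$ must be tracked, and in step two where the cancellation between pointed and off-diagonal contributions depends delicately on $|\Gamma|-|G|=1$ together with careful splitting of sums over $G$ into the $K$-part and its complement. These calculations are mechanical but lengthy and parallel the Frobenius--Schur computations in Lemmas \ref{FS3}, \ref{FS4}, and \ref{FS5}; they can reasonably be deferred to the online appendix.
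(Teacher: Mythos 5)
Your proposal is correct, and it takes a genuinely different route from the paper. The paper's proof (in the online appendix) substitutes the explicit fusion coefficients of Lemma \ref{Verlinde} into the defining sum $\nu_m(x)=\sum_{i,j}N_{ij}^xS_{0i}S_{0j}(T_{jj}/T_{ii})^m$ and grinds through term by term, organizing the answer via $A_m=2a\sum_{G_*}q_1^m$, $B_m=2b\sum_{\Gamma_*}q_2^m$, $C_m=a\sum_{K\setminus\{0\}}q_0^m$. Your reformulation $\nu_m(x)=\sum_r \overline{S_{xr}}S_{0r}^{-1}|P_m(r)|^2$ (valid here since $\overline{P_m(r)}=P_{-m}(r)$, using $\overline{S_{rj}}=S_{r\overline{j}}$, $S_{0\overline{j}}=S_{0j}$, $T_{\overline{j}\overline{j}}=T_{jj}$) bypasses the fusion coefficients entirely: each $P_m(r)$ does collapse to a twisted Gauss sum exactly as you predict (I checked $P_m(0)=\tfrac12(a\cG(q_1,m)+b\cG(q_2,m))$, $P_m((k,\varepsilon))=\tfrac{a}{2}\cG(q_1,k,m)$ with the $\varepsilon$-dependence killed by $\sum_{\varepsilon'}\varepsilon'=0$, etc.), Plancherel for $\sum_h|\cG(q_i,h,m)|^2$ handles the off-pointed block, the character sum $\sum_{h}q_1(h)^m\overline{q_1(h+x)^m}=|G|\delta_{mx,0}q_1(x)^m$ produces the diagonal corrections, and the cancellation against the pointed block reduces to $a^2-b^2=a^2b^2$, i.e.\ $|\Gamma|=|G|+1$, making the role of that hypothesis more transparent than in the paper's version. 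Your necessity/sufficiency analysis for (\ref{D1})--(\ref{D2}) is also sound and is actually more than the paper writes down (the paper just asserts the "in particular"); the only step I would make explicit in the sufficiency direction of (FS3) is that for $3g=0$ one has $q_1(g)^3=1$ automatically, since $q_1(g)=q_1(-g)=q_1(2g)=q_1(g)^4$ (and likewise on $\Gamma$), so that $\nu_3(g)=4+\delta_{3g,0}$ and $\nu_3(\gamma)=4-\delta_{3\gamma,0}$ really are sums of $N_{ggg}$ resp.\ $N_{\gamma\gamma\gamma}$ ones; without this observation $4-\zeta_3$ would not be an admissible value.
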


\subsection{Examples}
We give a list of metric groups $(G,q_1)$, $(\Gamma,q_2)$ satisfying conditions in 
Theorem \ref{STtheorem6} and Lemma \ref{FS}. 
More precisely, the conditions are: $G_2\cong \Z_2\times \Z_2$, $|\Gamma|=|G|+1$, $\cG(q_1)=-\cG(q_2)$, and Eq.(\ref{D1})-(\ref{D2}). 
The condition $G_2\cong\Z_2\times \Z_2$ means that $G$ is of the form $\Z_{2^m}\times \Z_{2^n}\times G_o$ 
with $m,n>0$. 

As in Lemma \ref{nu3computation3}, 
Eq.(\ref{D1}) is equivalent to
\begin{equation}\label{D11}
|\frac{\cG(q_{1,e},2)}{\cG(q_{1,e})}-(-1)^{\frac{|G_o|^2-1}{8}}(-1)^{2^{m+n-2}}|=1,
\end{equation}
and Eq.(\ref{D2}) is equivalent to 
\begin{equation}\label{D21}
|(-1)^{m+n}\frac{\cG(q_{1,o},3)}{\cG(q_{1,o})^3}-\frac{\cG(q_2,3)}{\cG(q_2)^3}|=2.
\end{equation}
If neither $G$ nor $\Gamma$ has a 3-component (that is $|G_o|\equiv 1\mod 3$ and $|\Gamma|\equiv 2\mod 3$ for even $m+n$ 
or $|G_o|\equiv |\Gamma|\equiv 2 \mod 3$ for odd $m+n$), then we get  
\begin{align*}
\lefteqn{(-1)^{m+n}\frac{\cG(q_{1,o},3)}{\cG(q_{1,o})^3}-\frac{\cG(q_2,3)}{\cG(q_2)^3}} \\
 &=|(-1)^{m+n}\frac{(-1)^{\frac{|G_o|-1}{2}}(\frac{|G_o|}{3})}{\cG(q_{1,o})^2}
 -\frac{(-1)^{\frac{|\Gamma|-1}{2}}(\frac{|\Gamma|}{3})}{\cG(q_2)^2}|\\
 &=|(-1)^{m+n}(\frac{|G_o|}{3})-(\frac{|\Gamma|}{3})|=2.\\
\end{align*}
Direct computation with the above formulae shows that restriction coming from (FS3) in the following examples 
is very similar to that discussed in Lemma \ref{restriction} and Remark \ref{3rank2}, and we will not mention it in what follows 
except for one case.

Since 
$$|\Gamma|=2^{m+n}|G_o|+1\equiv 1 \mod 4,$$
we have $\cG(q_2)\in \{1,-1\}$, and $\cG(q_{1,e})\cG(q_{1,o})\in \{1,-1\}$. 
Thus we have either $\cG(q_{1,e})\in \{1,-1\}$ and $|G_o|\equiv 1 \mod 4$, 
or $\cG(q_{1,e})\in \{i,-i\}$ and $|G_o|\equiv 3 \mod 4$. 

We treat the different forms of $G_e $ separately.

\subsubsection{$G_e=\Z_2\times \Z_2$, $q_{1,e}(x,y)=(-1)^{x^2+xy+y^2}$.}

In this case we have $\cG(q_{1,e})=-1$ and $\cG(q_{1,e},2)=2$. 
Thus $|G_o|\equiv 1\mod 4$ and Eq.(\ref{D11}) implies $(-1)^{\frac{|G_o|^2-1}{8}}=1$. 
This is possible if and only if $|G_o|\equiv 1\mod 8$. 

The smallest example is $G=\Z_2\times \Z_2$ and $\Gamma=\Z_5$ with $\cG(q_2)=-1$.  
The pair 
$$(S^{(G,q_1)}\otimes S,T^{(G,q_1)}\otimes T)$$ 
is the modular data of the Drinfeld center of the $\Z_2\times \Z_2$ generalized Haagerup category 
(see \cite[Example 5.2]{GI19_1}). 

We conjecture that the modular data of the Drinfeld center of the generalized 
Haagerup category for $\Z_2\times \Z_2\times A$ with odd $A$ is 
$$(S^{(\Z_2\times \Z_2,q_{1,e})}\otimes S,T^{(\Z_2\times \Z_2,q_{1,e})}\otimes T)$$ 
with $G_o=A\times \hat{A}$ and $q_{1,o}(a,\chi)=\chi(a)$. 
More generally, the pair 
$$(S^{(G,\overline{q_1})}\otimes S,T^{(G,\overline{q_1})}\otimes T)$$ 
possibly arises from the Drinfeld center of a quadratic category of type $(\Z_2\times \Z_2,G_o,1)$ with the $\text{Vec}_{\Z_2 \times \Z_2} $ subcategory 
having trivial associator. 
\subsubsection{$G_e=\Z_2\times \Z_2$, $q_{1,e}(x,y)=(-1)^{xy}$.} 
In this case we have $\cG(q_{1,e})=1$ and $\cG(q_{1,e},2)=2$. 
Thus $|G_o|\equiv 1\mod 4$ and Eq.(\ref{D11}) implies $(-1)^{\frac{|G_o|^2-1}{8}}=-1$. 
This is possible if and only if $|G_o|\equiv 5\mod 8$. 

The smallest example is $G_o=\Z_5$ and $\Gamma=\Z_{21}$. 
There are four possible combinations of $q_{1,o}$ and $q_2$, up to group automorphism.

\subsubsection{$G_e=\Z_2\times \Z_2$, $q_{1,e}(x,y)=i^{x^2-y^2}$.}
In this case we have $\cG(q_{1,e})=1$ and $\cG(q_{1,e},2)=0$. 
Thus $|G_o|\equiv 1\mod 4$ and (FS2) gives no restriction. 

The smallest example is $G=\Z_2\times \Z_2$ and $\Gamma=\Z_5$ with $\cG(q_2)=-1$. 
The pair $(S,T)$ is the modular data of the Drinfeld center of the $\Z_2$-de-equivariantization of the generalized Haagerup 
category for $\Z_4$ (see \cite[Example 6.9]{GI19_1}). 

We conjecture that the modular data of the Drinfeld center of the $\Z_2$-de-equivariantization of a generalized 
Haagerup category for $\Z_4\times A$ with odd $A$ is $(S,T)$ 
with $G_o=A\times \hat{A}$ and $q_{1,o}(a,\chi)=\chi(a)$. 
More generally, the pair 
$$(S^{(G_o,\overline{q_{1,o}})}\otimes S,T^{(G_o,\overline{q_{1,o}})}\otimes T)$$ 
may possibly arise from the Drinfeld center of the $\Z_2$-de-equivariantization a quadratic category of type $(\Z_4,G_o,1)$. 

\subsubsection{$G_e=\Z_{2^n}\times \Z_{2^n}$ with $n\geq 2$ and $q_{1,e}(x,y)=\zeta_{2^n}^{x^2+xy+y^2}$.} 
In this case we have $\cG(q_{1,e})=(-1)^n$ and $\cG(q_{1,e},2)=2(-1)^{n-1}$. 
Thus $|G_o|\equiv 1 \mod 4$, and $(-1)^{\frac{|G_o|^2-1}{8}}=-1$. 
This is possible if and only if $|G_o|\equiv 5\mod 8$. 

The smallest example is $G_o=\Z_4\times \Z_4\times \Z_5$, and Eq.(\ref{D2}) implies that 
$\Gamma$ is either $\Z_{81}$, $\Z_3\times \Z_{27}$, or $\Z_9\times \Z_9$ with restricted possibilities 
of quadratic forms coming from (FS3).

\subsubsection{$G_e=\Z_{2^n}\times \Z_{2^n}$ with $n\geq 2$ and $q_{1,e}(x,y)=\zeta_{2^n}^{xy}$.}
In this case we have $\cG(q_{1,e})=1$ and $\cG(q_{1,e},2)=2$.  
Thus $|G_o|\equiv 1\mod 4$, and $(-1)^{\frac{|G_o|^2-1}{8}}=1$. 
This is possible if and only if $|G_o|\equiv 1\mod 8$. 

The smallest example is $G=\Z_4\times \Z_4$ and $\Gamma=\Z_{17}$ with $\cG(q_2)=-1$. 
The pair $(S,T)$ is the modular data 
of the Drinfeld center of the Asaeda-Haagerup category, namely the $\Z_2$ de-equivariantization 
of the $\Z_4\times \Z_2$ generalized Haagerup category   
(or a quadratic category of type $(\Z_4,\{0\},2)$ with the $\text{Vec}_{\Z_4} $ subcategory having trivial associator) 
(see \cite[Example 5.5]{GI19_1}). 

We conjecture that the modular data of the Drinfeld center $\cZ(\cC/\Z_2)$ of the $\Z_2$-de-equivariantization 
of a generalized Haagerup category for $\Z_{2^n}\times \Z_2\times A$ with odd $A$ 
(or a quadratic category $\cC$ of type $(\Z_{2^n}\times A,\{0\},2)$ with the $\text{Vec}_{\Z_{2^n}\times A} $ subcategory having trivial associator) is 
$(S,T)$ with $G_o=A\times \hat{A}$ and $q_2(a,\chi)=\chi(a)$. 
More generally, the pair 
$$(S^{(G_o,\overline{q_{1,o}})}\otimes S,T^{(G_o,\overline{q_{1,o}})}\otimes T)$$
may possibly arises from a quadratic category of type $(\Z_{2^n},G_o,2)$ with the $\text{Vec}_{\Z_{2^n}} $ subcategory  having trivial associator. 

\subsubsection{$G_e=\Z_{2^n}\times \Z_{2^n}$ with $n\geq 2$ and $q_{1,e}(x,y)=\zeta_{2^{n+1}}^{r(x^2-y^2)}$ with odd $r$.}
We may assume $r=1$, up to group automorphism. 
In this case we have $\cG(q_{1,e})=1$ and $\cG(q_{1,e},2)=2$. 
Thus $|G_o|\equiv 1 \mod 4$ and $(-1)^{\frac{|G_o|^2-1}{8}}=1$. 
This is possible if and only if $|G_o|\equiv 1\mod 8$.

The smallest example is $G=\Z_4\times \Z_4$ and $\Gamma=\Z_{17}$ with $\cG(q_2)=-1$.  
The pair $(S,T)$ is the modular data of the Drinfeld center of the $\Z_2$-de-equivariantization 
of the $\Z_8$ generalized Haagerup category   
(or a quadratic category of type $(\Z_4,\{0\},2)$ with  the $\text{Vec}_{\Z_{4}} $ subcategory  having non-trivial associator) - see \cite[ Example 6.10]{GI19_1}.

We conjecture that the modular data of the Drinfeld center $\cZ(\cC/\Z_2)$ of 
the $\Z_2$-de-equivariantization $\cC/\Z_2$ of a generalized Haagerup category $\cC$ 
for $\Z_{2^{n+1}}\times A$ with odd $A$ is 
$(S,T)$ with $G_o=A\times \hat{A}$ and $q_2(a,\chi)=\chi(a)$. 
More generally, the pair 
$$(S^{(G_o,\overline{q_{1,o}})}\otimes S,T^{(G_o,\overline{q_{1,o}})}\otimes T)$$
possibly arises from a quadratic category of type $(\Z_{2^{n}},G_o,2)$ with the $\text{Vec}_{\Z_{2^n}} $ subcategory having non-trivial associator.

\subsubsection{$G_e=\Z_2\times \Z_{2^n}$ with $n\geq 1$ and $q_{1,e}(x,y)=i^{rx^2}\zeta_{2^{n+1}}^{sy^2}$ with $r\in \{1,-1\}$ and $s\in \{1,-1,3,-3\}$.}
In this case we have $\cG(q_{1,e})=\zeta_8^{r+s}(-1)^{n\frac{s^2-1}{8}}$ and $\cG(q_{1,e},2)=0$. 
Thus $|G_o|\equiv 1\mod4$ if $r+s\equiv 0\mod 4$, and $|G_o|\equiv 3\mod 4$ if $r+s\equiv 2\mod 4$. 
Eq,(\ref{D11}) gives no restriction. 
The case with $n=1$ and $s=-r$ was already treated, and the first new example is 
$G_o=\Z_2\times\Z_2\times \Z_3$, and $\Gamma=\Z_{13}$. 
In this example $r=s=\pm1$, and there are two possibilities determined by 
the relation $i^r\cG(q_{1,o})=-\cG(q_2)$. 
\subsubsection{$G_e=\Z_{2^m}\times \Z_{2^n}$ with $2\leq m\leq n$ and $q_{1,e}(x,y)=\zeta_{2^{m+1}}^{rx^2}\zeta_{2^{n+1}}^{sy^2}$ 
with $r,s\in \{1,-1,3,-3\}$.} 
In this case we have $$\cG(q_{1,e})=\zeta_8^{r+s}(-1)^{m\frac{r^2-1}{8}+n\frac{s^2-1}{8}} 
\text{ and } \cG(q_{1,e},2)=2\zeta_8^{r+s}(-1)^{(m-1)\frac{r^2-1}{8}+(n-1)\frac{s^2-1}{8}},$$
and Eq.(\ref{D11}) implies that $(-1)^{\frac{r^2-1}{8}+\frac{s^2-1}{8}}=(-1)^{\frac{|G_o|^2-1}{8}}$. We get the following sub-cases:

(1) $|G_o|\equiv 1\mod 8$; $r+s=0$. 

(2) $|G_o|\equiv 5\mod 8$; $(r,s)=(1,3),(-1,-3),(3,1),(-3,-1)$. 

(3) $|G_o|\equiv 3\mod 8$; $(r,s)=(1,-3),(-1,3),(3,-1),(-3,1)$. 

(4) $|G_o|\equiv 7\mod 8$;  $(r,s)=(1,1), (-1,-1), (3,3),(-3,-3)$. 

\section{Switching the roles of $G$ and $\Gamma$}
As in the last section, we assume that $(G,q_1)$ and $(\Gamma,q_2)$ are metric group satisfying 
$c:=\cG(q_1)=-\cG(q_2)$.   
We assume that $G$ is an even group and $\Gamma$ is an odd group. 
We denote $K=G_2$ for simplicity. 

\begin{definition}\label{S'T'}
Let $S$ and $T$ be the unitary matrices defined in Definition \ref{ST6}, and let 
$R=\mathrm{Diag}(1,-1,-1_{J_1},-1_{G_*},-1_{\Gamma_*})$. 
We set $S'=-RS R$, $T'=T$, and $C'=C$; that is
\begin{align*}
S= &\left(
\begin{array}{ccccc}
\frac{-a+b}{2} & \frac{a+b}{2}&\frac{a}{2} &a &b  \\
\frac{a+b}{2}& \frac{-a+b}{2}&\frac{-a}{2} &-a &b  \\
\frac{a}{2} &\frac{-a}{2} &-\frac{a\inpr{k}{k'}+c\varepsilon\varepsilon'q_1(k)\delta_{k,k'}}{2} &-a\inpr{k}{g'} &0  \\
a &-a &-a\inpr{g}{k'} &-a(\inpr{g}{g'}+\overline{\inpr{g}{g'}}) &0  \\
b &b &0 &0 &b(\inpr{\gamma}{\gamma'}+\overline{\inpr{\gamma}{\gamma'}}) 
\end{array}
\right).
\end{align*} 
\end{definition}

Note that we have $S'_{0,x}>0$ if $|\Gamma|<|G|$. The following are consequences of the calculations in the previous section. 
\begin{cor} We have $S'^2=C'$, $(S'T')^3=-cC'$ and $T'C'=C'T'$. 
\end{cor}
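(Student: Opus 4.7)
The plan is to deduce the relations for $(S',T',C')$ directly from the relations for $(S,T,C)$ established in Lemma \ref{relations6}, exploiting two elementary properties of the diagonal sign matrix $R$.

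First I would record two preliminary observations. Since $R$ is diagonal with $\pm 1$ entries, we have $R^2 = I$. Since both $R$ and $T$ are diagonal, they commute: $RT = TR$. Next I would verify the compatibility of $R$ with the charge conjugation: $RCR = C$. Recalling that $C_{x,y} = \delta_{x,\overline{y}}$, this amounts to checking $R_{xx}R_{\overline{x},\overline{x}} = 1$ for every $x \in J$. This follows from the block structure: the involution $x \mapsto \overline{x}$ from Definition \ref{ST6} fixes $0$ and $\pi$, sends $J_1$ to $J_1$, $G_*$ to $G_*$, and $\Gamma_*$ to $\Gamma_*$, so $R_{\overline{x},\overline{x}} = R_{xx}$, and hence $R_{xx}R_{\overline{x},\overline{x}} = R_{xx}^2 = 1$.

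With these in hand the three identities follow by short manipulations. For $S'^2 = C'$ I would compute
\[
S'^2 = (-RSR)(-RSR) = RS(R^2)SR = RS^2R = RCR = C = C'.
\]
For $T'C' = C'T'$ there is nothing to do: $T'C' = TC = CT = C'T'$ by Lemma \ref{relations6}. For the modular relation, the key is that conjugation by $R$ on $ST$ lands cleanly because $R$ commutes with $T$:
\[
S'T' = -RSR\,T = -RS\,TR = -R(ST)R,
\]
so
\[
(S'T')^3 = \bigl(-R(ST)R\bigr)^3 = -R(ST)^3R = -R(cC)R = -c\,RCR = -cC = -cC'.
\]

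There is really no obstacle here — all the work has already been done in Lemma \ref{relations6}, and the corollary is a clean formal consequence of the identity $RCR = C$ together with $[R,T]=0$ and $R^2=I$. The only point requiring a moment's thought is verifying $RCR = C$, which reduces to the block-preservation property of the involution $x \mapsto \overline{x}$ on $J$.
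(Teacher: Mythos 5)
Your proof is correct and follows essentially the same route as the paper: both arguments conjugate by $R$ and rely on $R^2=I$, $[R,T]=0$, and $RCR=C$ (which you rightly reduce to the block-preservation of $x\mapsto\overline{x}$). The only cosmetic difference is that the paper verifies the modular relation in the equivalent form $S'T'S'=-c\,\overline{T'}S'\overline{T'}$ rather than cubing $S'T'=-R(ST)R$ directly; the content is identical.
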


\begin{proof} Since $R$ commutes with $C$ and $T$, we get $S'^2=RCR=C$ and 
$$S'T'S'=RS RT RS R=RSTS R=cR\overline{T}S\overline{T}R
=c\overline{T}RS R\overline{T}=-c\overline{T'}S'\overline{T'}.$$ 
\end{proof}

Let
$$N'_{u,v,w}=\sum_{x}\frac{S'_{u,x}S'_{v,x}S'_{w,x}}{S'_{0,x}}.$$

\begin{cor} We have
$$N'_{\pi,\pi,\pi}=N'_{\pi,\pi,\gamma}=N'_{\pi,\pi,g}=N'_{\pi,\gamma,g}=N'_{\gamma,\gamma',g}=\frac{4}{|G|-|\Gamma|},$$
$$ N'_{\pi,\pi,(k.\varepsilon)}= N'_{\pi,\gamma,(k.\varepsilon)}= N'_{\gamma,\gamma',(k.\varepsilon)}=\frac{2}{|G|-|\Gamma|},$$
$$N'_{\pi,\gamma,\gamma'}=\frac{4}{|G|-|\Gamma|}+\delta_{\gamma,\gamma'},$$
$$N'_{\pi,(k,\varepsilon),(k',\varepsilon')}=-\delta_{(k,\varepsilon),\overline{(k',\varepsilon')}}+\frac{1}{|G|-|\Gamma|},
\quad N'_{\gamma,(k,\varepsilon),(k',\varepsilon')}=\frac{1}{|G|-|\Gamma|},$$
$$N'_{\pi,(k,\varepsilon),g}=N'_{\gamma,(k,\varepsilon),g}=\frac{2}{|G|-|\Gamma|},$$
$$N'_{\pi,g,g'}=-\delta_{g,g'}+\frac{4}{|G|-|\Gamma|},
\quad N'_{\gamma,g,g'}=\frac{4}{|G|-|\Gamma|},$$
$$N'_{\gamma,\gamma',\gamma''}=\frac{4}{|G|-|\Gamma|}
+(\delta_{\gamma+\gamma'+\gamma'',0}+\delta_{\gamma+\gamma',\gamma''}
 +\delta_{\gamma'+\gamma'',\gamma}+\delta_{\gamma''+\gamma,\gamma'}),$$
\begin{align*}
\lefteqn{
N'_{(k,\varepsilon),(k',\varepsilon'),(k'',\varepsilon'')}=\frac{1}{2}(\frac{1}{|G|-|\Gamma|}-\delta_{k+k'+k'',0})}\\
 &-c^2\frac{\varepsilon\varepsilon'\inpr{k}{k+k''}\delta_{k,k'}+\varepsilon'\varepsilon''\inpr{k'}{k'+k}\delta_{k',k''}+
\varepsilon''\varepsilon\inpr{k''}{k''+k'}\delta_{k'',k}}{2},
\end{align*}
$$N'_{(k,\varepsilon),(k',\varepsilon'),g}
=\frac{1}{|G|-|\Gamma|}-c^2\varepsilon\varepsilon'\inpr{k}{k+g}\delta_{k,k'},$$
$$N'_{(k,\varepsilon),g,g'}
 =\frac{2}{|G|-|\Gamma|}-\delta_{k+g+g',0}-\delta_{k+g-g',0}, $$
$$N'_{g,g',g''}
 =\frac{4}{|G|-|\Gamma|}-(\delta_{g+g'+g'',0}+\delta_{g+g',g''}+\delta_{g'+g'',g}+\delta_{g''+g,g'}).$$
\end{cor}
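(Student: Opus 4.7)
The approach is to reduce the computation of $N'_{u,v,w}$ to the already-computed $N_{u,v,w}$ of Lemma \ref{Verlinde} via the diagonal conjugation $S'=-RSR$. First, I would write $R_{x,y}=r_x\delta_{x,y}$ where $r_0=1$ and $r_x=-1$ for every other index $x\in\{\pi\}\sqcup J_1\sqcup G_*\sqcup\Gamma_*$; then $S'_{u,x}=-r_ur_xS_{u,x}$.

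Substituting into the Verlinde-type sum and using $r_x^3=r_x$ (since $r_x=\pm 1$) together with $r_0=1$ yields the identity
\begin{equation*}
N'_{u,v,w}=\sum_x\frac{(-r_ur_xS_{u,x})(-r_vr_xS_{v,x})(-r_wr_xS_{w,x})}{-r_xS_{0,x}}=r_ur_vr_w\sum_x\frac{S_{u,x}S_{v,x}S_{w,x}}{S_{0,x}}=r_ur_vr_wN_{u,v,w}.
\end{equation*}
In every triple $(u,v,w)$ appearing in the statement, none of the indices is $0$, so $r_ur_vr_w=(-1)^3=-1$, and we obtain $N'_{u,v,w}=-N_{u,v,w}$.

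The remaining step is bookkeeping: negate each of the formulas for $N_{u,v,w}$ listed in Lemma \ref{Verlinde} and rewrite $-\frac{k}{|\Gamma|-|G|}$ as $\frac{k}{|G|-|\Gamma|}$. The Kronecker-delta contributions (the $\delta_{\gamma+\gamma'+\gamma'',0}$ type terms, the $\delta_{(k,\varepsilon),\overline{(k',\varepsilon')}}$ term, and the $c^2\varepsilon\varepsilon'$ terms) flip sign accordingly, which exactly reproduces the asserted expressions. Since this is a purely mechanical substitution, there is no genuine obstacle; the only point requiring a moment of care is checking that the sign of the diagonal entry $R_{0,0}=+1$ (as opposed to $-1$ for every other index) is what forces the cancellation of the extra factor of $r_x$ between numerator and denominator, so that the identity $N'_{u,v,w}=r_ur_vr_wN_{u,v,w}$ comes out cleanly without residual $x$-dependence.
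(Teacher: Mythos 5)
Your proposal is correct and is essentially identical to the paper's own proof: the paper also observes $S'_{x,y}=-f(x)f(y)S_{x,y}$ with $f(0)=1$ and $f(j)=-1$ otherwise, deduces $N'_{u,v,w}=f(u)f(v)f(w)N_{u,v,w}$, and then negates the formulas of Lemma \ref{Verlinde} since no index in the listed triples equals $0$. No gaps.
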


\begin{proof} 
Note that we have $S'_{x,y}=-f(x)f(y)S_{x,y}$, where $f(0)=1$ and $f(j)=-1$ for the other $j\in J$. 
Thus we get 
$$N'_{u,v,w}=f(u)f(v)f(w)\sum_{x}\frac{S_{u,x}S_{v,x}S_{w,x}}{S_{0,x}}=f(u)f(v)f(w)N_{u,v,w}.$$
\end{proof}

As in the proof of Theorem \ref{STtheorem6}, we can show the following.
\begin{theorem}\label{STtheorem'} Let the notation be as in Definition \ref{S'T'}. 
Then all the fusion coefficients $N'_{ijk}$ are non-negative integers 
if and only if $|\Gamma|=|G|-1$ and $G_2\cong \Z_2\times \Z_2$. 
\end{theorem}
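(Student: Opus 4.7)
The strategy is to mimic the proof of Theorem \ref{STtheorem6} almost verbatim, exploiting the fact that the formulas for $N'_{ijk}$ listed in the corollary just above differ from those for $N_{ijk}$ only by an overall sign on certain Kronecker delta terms and by replacing $|\Gamma|-|G|$ with $|G|-|\Gamma|$ in the denominators. Concretely, I would first inspect the coefficients $N'_{\pi,(k,\varepsilon),(k',\varepsilon')}$ and $N'_{\gamma,(k,\varepsilon),(k',\varepsilon')}$, which both equal $\frac{1}{|G|-|\Gamma|}$ (up to a single Kronecker delta of value $0$ or $\pm 1$). For these to be non-negative integers, $|G|-|\Gamma|$ must divide $1$ and be positive, forcing $|\Gamma|=|G|-1$. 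In particular $|G|$ is even since $\Gamma$ is odd.

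Having fixed $|G|=|\Gamma|+1$, I would next pin down $K=G_2$. Rule out $K=\Z_2$ exactly as in Theorem \ref{STtheorem6}: the requirement $\cG(q_1)=-\cG(q_2)$ with $\Gamma$ odd forces $\cG(q_1)$ to be a fourth root of unity, but for $G_e=\Z_{2^n}$ with any non-degenerate quadratic form $q_{1,e}(x)=\zeta_{2^{n+1}}^{rx^2}$ one has $\cG(q_{1,e})=\zeta_8^{r}(-1)^{n(r^2-1)/8}$, which is a fourth root of unity only when $r$ is even, contradicting non-degeneracy. So $K\cong \Z_2^s$ with $s\geq 2$. Now pick three pairwise distinct nonzero elements $k,k',k''\in K$, and evaluate
$$N'_{(k,\varepsilon),(k',\varepsilon'),(k'',\varepsilon'')}=\tfrac{1}{2}(1-\delta_{k+k'+k'',0}),$$
which is either $0$ or $\tfrac{1}{2}$; for it to be a non-negative integer we need $k+k'+k''=0$ for every such triple, and this forces $s=2$, i.e.\ $K\cong \Z_2\times \Z_2$.

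For the converse direction, assuming $|\Gamma|=|G|-1$ and $K\cong \Z_2\times \Z_2$, I would verify that every entry in the list of $N'_{ijk}$ is a non-negative integer. With $|G|-|\Gamma|=1$, the terms of the form $\tfrac{k}{|G|-|\Gamma|}$ become ordinary integers, and the only remaining sign-sensitive coefficients are $N'_{\pi,(k,\varepsilon),(k',\varepsilon')}$, $N'_{\pi,g,g'}$, $N'_{(k,\varepsilon),(k',\varepsilon'),(k'',\varepsilon'')}$, $N'_{(k,\varepsilon),(k',\varepsilon'),g}$, $N'_{(k,\varepsilon),g,g'}$, and $N'_{g,g',g''}$; in each of these the Kronecker-delta corrections take values in $\{0,1\}$ (using $|K|=4$ for the triple-$k$ case, where exactly the relation $k_1+k_2+k_3=0$ among nonzero elements of $K$ forces the bracketed term to balance). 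This step is routine case-checking of the formulas.

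The only conceptual obstacle is the elimination of $K=\Z_2$; once that is handled, the rest is the same mechanical sign/integer bookkeeping as in Theorem \ref{STtheorem6}, and indeed the excerpt explicitly invokes that earlier argument.
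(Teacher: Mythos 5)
Your plan is correct and is essentially the paper's own argument: the paper proves this theorem simply by transplanting the proof of Theorem \ref{STtheorem6} (force $|G|-|\Gamma|=1$ from the coefficients with that denominator, rule out $K=\Z_2$ because a non-degenerate form on a cyclic $2$-group has Gauss sum a primitive eighth root of unity while $\cG(q_2)$ is a fourth root of unity, and use the distinct-triple coefficient $\frac{1}{2}(1-\delta_{k+k'+k'',0})$ to force $K\cong\Z_2\times\Z_2$), exactly as you propose. The one detail worth flagging in your ``routine'' sufficiency check is $N'_{(k,\varepsilon),(k,\varepsilon),(k,\varepsilon)}=\frac{1}{2}-\frac{3c^2}{2}$, which is non-negative only because $|\Gamma|=|G|-1\equiv 3 \bmod 4$ forces $c^2=-1$.
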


In the rest of this section, we assume $|\Gamma|=|G|-1$ and $G_2\cong \Z_2\times \Z_2$. 

Let 
$$\nu'_n(k)=\sum_{i,j\in J}N_{ij}^k S'_{0i}S'_{0j}(\frac{T'_{jj}}{T'_{ii}})^n.$$
As before, we have the following.
\begin{theorem}\label{FS'} Let the notation be as above. We have 
$$\nu'_m(\pi)=|\cG(q_1,m)+\cG(q_2,m)|^2,$$
$$\nu'_m((k,\varepsilon))=\frac{|\cG(q_1,m)+\cG(q_2,m)|^2-\delta_{mk,0}q_1(k)^m}{2},$$
$$\nu'_m(g)=|\cG(q_1,m)+\cG(q_2,m)|^2-\delta_{mg,0}q_1(g)^m,$$
$$\nu'_m(\gamma)=|\cG(q_1,m)+\cG(q_2,m)|^2+\delta_{m\gamma,0}q_2(\gamma)^m.$$
In particular, (FS2) and (FS3) are equivalent to the following respectively: 
$$|\cG(q_1,2)+\cG(q_2,2)|^2=1,$$
$$|\cG(q_1,3)+\cG(q_2,3)|^2=4.$$
\end{theorem}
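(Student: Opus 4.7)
Proof proposal:

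The proof proceeds by direct computation, entirely analogous to that of Lemma \ref{FS} which it mirrors. The starting point is that $S' = -RSR$ and $T' = T$; more explicitly,
$$S'_{0i} = -\epsilon(i) S_{0i}, \qquad T'_{ii} = T_{ii}, \qquad N'^k_{ij} = \epsilon(i)\epsilon(j)\epsilon(k) N^k_{ij}$$
for all $i,j,k \in J$, where $\epsilon(0) = 1$ and $\epsilon(j) = -1$ for $j \in J\setminus\{0\}$; the last identity is immediate from the preceding Corollary, as is the fact that $\bar{k}$ has the same $\epsilon$-value as $k$. Note that $\epsilon$ is well-defined on $J$ precisely because the involution $x \mapsto \bar x$ preserves the decomposition $\{0\} \sqcup (J\setminus\{0\})$.

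Substituting these relations into
$$\nu'_n(k) = \sum_{i,j \in J} N'^k_{ij}\, S'_{0i}\, S'_{0j}\, (T'_{jj}/T'_{ii})^n,$$
the factors $\epsilon(i)^2 \epsilon(j)^2$ coming from the three substitutions collapse to $1$, leaving an expression of the same structural form as that in Lemma \ref{FS} but modified by an outer $\epsilon(k)$ factor that interacts with the Kronecker-delta terms in $N^k_{ij}$. The summations over $G_*$ and $\Gamma_*$ produce the Gauss sums $\cG(q_1,m)$ and $\cG(q_2,m)$ in the same manner as in Lemma \ref{FS}, giving the bulk $|\cG(q_1,m)+\cG(q_2,m)|^2$ contribution; meanwhile the $\delta_{mk,0}$-type corrections (which trace back to the $\delta_{k+k'+k'',0}$ terms in Lemma \ref{Verlinde}) acquire the outer $\epsilon(k)$ factor, yielding the sign flips on the $\delta_{mk,0}q_1(k)^m$, $\delta_{mg,0}q_1(g)^m$, and $\delta_{m\gamma,0}q_2(\gamma)^m$ terms relative to Lemma \ref{FS}. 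This yields the four formulas for $\nu'_m(\pi)$, $\nu'_m((k,\varepsilon))$, $\nu'_m(g)$, and $\nu'_m(\gamma)$ claimed in the theorem.

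The characterizations of (FS2) and (FS3) then follow at once. For $m=2$: since every object in $J$ is self-dual, (FS2) requires $\nu'_2(k) \in \{\pm 1\}$ for all $k$, which by the formulas reduces to a single constraint $|\cG(q_1,2)+\cG(q_2,2)|^2 = 1$ (the Kronecker-delta terms automatically respect the $\pm 1$ requirement once this is imposed). For $m=3$: the inequality $|\nu'_3(\pi)| \leq N'_{\pi\pi\pi} = 4$ combined with the fact that $\nu'_3(\pi)$ must be a sum of four cube roots of unity forces $|\cG(q_1,3)+\cG(q_2,3)|^2 = 4$; consistency for the other objects is then automatic.

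The main obstacle is careful bookkeeping of signs: one must verify block by block that the outer $\epsilon(k)$ only flips the sign of the Kronecker-delta correction and not the Gauss-sum bulk term. The identity $\nu'_n(0) = 1$ (which must hold in any modular data and corresponds to $\epsilon(0) = 1$) provides a useful built-in consistency check on the computation.
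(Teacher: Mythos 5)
Your overall reduction is the right one, and it is exactly the paper's: from $S'_{0,i}=-\epsilon(i)S_{0,i}$ and $N'_{ijk}=\epsilon(i)\epsilon(j)\epsilon(k)N_{ijk}$ one gets $\nu'_m(k)=\epsilon(k)\,\nu_m(k)$, where $\nu_m$ is the Frobenius--Schur indicator built from the unprimed data $(S,T)$ of Definition \ref{ST6}. The problem is the sign mechanism you then invoke. The factor $\epsilon(k)$ is a single global scalar multiplying the entire double sum; it cannot ``only flip the sign of the Kronecker-delta correction and not the Gauss-sum bulk term,'' and the block-by-block verification you propose would in fact show that it flips both. What actually restores the $+$ sign on the bulk term is the ingredient you never use: in this section $|\Gamma|=|G|-1$, so the unprimed indicators must be evaluated with $|\Gamma|-|G|=-1$ rather than $+1$. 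That is, you need the identities from the proof of Lemma \ref{FS} \emph{before} specialization, namely $\nu_m(\pi)=\frac{|\cG(q_1,m)+\cG(q_2,m)|^2}{|\Gamma|-|G|}$, $\nu_m(g)=\frac{|\cG(q_1,m)+\cG(q_2,m)|^2}{|\Gamma|-|G|}+\delta_{mg,0}q_1(g)^m$, etc. With $|\Gamma|-|G|=-1$ the bulk terms are negative, and the overall factor $\epsilon(k)=-1$ (for $k\neq 0$) then makes them positive while flipping the delta corrections --- exactly the pattern in the statement. As written, your argument quotes the formulas of Lemma \ref{FS} in a regime where their hypothesis $|\Gamma|=|G|+1$ fails, and your stated reason for the surviving $+$ sign is not a reason at all.

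A secondary point: your derivation of the FS3 condition from $\pi$ alone does not close. The value $\nu'_3(\pi)=|\cG(q_1,3)+\cG(q_2,3)|^2$ is a nonnegative real which must be a sum of $N'_{\pi\pi\pi}=4$ cube roots of unity, and this allows the value $1$ (as $1+1+\zeta_3+\zeta_3^2$) as well as $4$. Ruling out $1$ requires another object; for instance, $\nu'_3((k,\varepsilon))=\tfrac{1}{2}|\cG(q_1,3)+\cG(q_2,3)|^2$ with $N'_{(k,\varepsilon),(k,\varepsilon),(k,\varepsilon)}=2$ (using $c^2=-1$, which holds here since $|\Gamma|\equiv 3 \bmod 4$), and $\tfrac{1}{2}$ is not a sum of two cube roots of unity, so the value $4$ is forced there.
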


\begin{conjecture} The modular data of the Drinfeld center of a near-group category with a finite abelian group 
$A$ with multiplicity $2|A|$ is given by 
$$(S^{(\Gamma,\overline{q_2})}\otimes S',T^{(\Gamma,\overline{q_2})}\otimes T')$$
with $\Gamma=A$. 
\end{conjecture}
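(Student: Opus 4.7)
The plan is to adapt the tube-algebra approach developed in \cite{MR1832764} and refined in \cite{MR3635673}, which was used successfully to identify the modular data of the Drinfeld centers of the near-group categories with multiplicity $|A|$ that motivated Conjecture \ref{near-group}. The idea is that switching $G$ and $\Gamma$ on the modular-data side should correspond on the categorical side to replacing a multiplicity-$|A|$ near-group by a multiplicity-$2|A|$ near-group, where a larger pointed subcategory splits off from the center.

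First, I would realize a near-group category $\cC$ for $A$ with multiplicity $2|A|$ concretely as a system of endomorphisms of a Cuntz algebra, solving the associated polynomial equations for the structure constants parametrized by the non-degenerate symmetric bicharacter on $A$. From the solution one reads off an involutive metric group $(G,q_1)$ with $|G|=|A|+1$ and $G_2\cong \Z_2\times \Z_2$ (matching the hypotheses of Theorem \ref{STtheorem'}), together with the pointed datum $(\Gamma,q_2)=(A,q_2)$, with $\cG(q_1)=-\cG(q_2)$ forced by the consistency of the modular relations and (FS2). Next I would compute the half-braidings on a complete system of simple objects of $\cZ(\cC)$ via the tube algebra $\Tube(\cC)$, decomposing it into matrix blocks indexed by orbits of simples under tensoring with $\Inv(\cC)=A$. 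The character table and twists of $\Tube(\cC)$ then yield candidate $S$- and $T$-matrices, which I would match block-by-block against the formula of Definition \ref{S'T'} (for the non-pointed part) tensored with the pointed $S^{(\Gamma,\overline{q_2})},T^{(\Gamma,\overline{q_2})}$ (for the orbit of the units under $A$).

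The main obstacle is twofold. First, it is not known that near-group categories for a general abelian $A$ with multiplicity $2|A|$ actually exist, and even writing down the polynomial equations and producing a solution is a substantial problem on its own; in fact, part of the conjecture's interest is that it should constrain which $A$ can occur, through (FS2), (FS3), and the integrality conditions behind Theorem \ref{STtheorem'}. Second, even granting existence, extracting the metric group $(G,q_1)$ from the half-braiding calculation is delicate: the analogue in the multiplicity-$|A|$ case (Conjecture \ref{near-group}) is already known only for small $A$, and the $G$ appearing here is ``mysterious'' in exactly the same sense that the group $\Gamma$ was mysterious in the Evans--Gannon conjecture. A sensible first step is therefore to verify the conjecture numerically for the smallest cases $A=\Z_2,\Z_3,\Z_2\times \Z_2$, where one may hope to reduce the polynomial equations to a manageable system by exploiting the observation of Rowell cited in the acknowledgements that all sixteen rank-$10$ candidates for the center of the $\Z_2$ multiplicity-$2$ near-group are realized via zesting and complex conjugation of one concrete example.
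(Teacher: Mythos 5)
The statement you are asked about is a \emph{conjecture}; the paper offers no proof of it, only the surrounding consistency checks (Theorem \ref{STtheorem'} on integrality of the $N'_{ijk}$ and Theorem \ref{FS'} on indicators) and a short list of admissible pairs $(G,q_1)$, $(\Gamma,q_2)$. Your submission is likewise a research program rather than a proof, and you are right that the two genuine obstacles are existence of the categories and the extraction of the ``mysterious'' metric group from the tube algebra. Your overall strategy --- realize the category on a Cuntz algebra, compute half-braidings in $\Tube(\cC)$, and match blocks against Definition \ref{S'T'} tensored with the pointed part --- is exactly the method the paper uses in its appendix to verify an instance of Conjecture \ref{con4}, so as a program it is the right one; but it can only ever deliver case-by-case verification, not the general statement.

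There are, however, two concrete errors in your plan. First, Theorem \ref{STtheorem'} forces $|\Gamma|=|G|-1$ and $G_2\cong \Z_2\times \Z_2$, hence $4\mid |G|=|A|+1$, i.e.\ $|A|\equiv 3 \pmod 4$ (equivalently, Section 7.1 notes $|\Gamma|\equiv 3 \pmod 4$). So of your proposed test cases $A=\Z_2,\Z_3,\Z_2\times\Z_2$, only $A=\Z_3$ (with $G=\Z_2\times\Z_2$, the near-group of multiplicity $6$) is even compatible with the family; for $A=\Z_2$ and $A=\Z_2\times\Z_2$ the required $G$ would have odd order and the potential modular data simply does not exist in this family. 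Second, the Rowell observation cited in the acknowledgements concerns the sixteen rank-$10$ modular data attached to the center of the near-group category for $\Z_2$ with multiplicity $2$, which is the multiplicity-$|A|$ family of Section \ref{near1} (with $G=\Z_2\times\Z_2$, $\Gamma=\Z_2\times\Z_2\times\Z_3$), not the multiplicity-$2|A|$ family of Section 7; it therefore gives you no leverage here. A minor point: the metric groups in Sections 6--7 carry no involution $\theta$, so $(G,q_1)$ should not be described as an involutive metric group.
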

\subsection{Examples}
We give a list of pairs of metric groups $(G,q_1)$ and $(\Gamma,q_2)$ satisfying the conditions in Theorem \ref{STtheorem'} and Theorem \ref{FS'}. 
More precisely, the conditions are: $G_2\cong \Z_2\times \Z_2$, $|\Gamma|=|G|-1$, $\cG(q_1)=-\cG(q_2)$, Eq.(\ref{D1}) and Eq.(\ref{D2}).
The condition $G_2\cong\Z_2\times \Z_2$ means that $G$ is of the form $\Z_{2^m}\times \Z_{2^n}\times G_o$ 
with $m,n>0$. 
As in the last section, Eq.(\ref{D1}) is equivalent to Eq.(\ref{D11}). 

Direct computation with the above formulae shows that the restriction coming from (FS3) through in the following examples 
is very similar to that discussed in Lemma \ref{restriction} and Remark \ref{3rank2}, and we will not mention it in what follows 
except for one case. 

Since 
$$|\Gamma|=2^{m+n}|G_o|-1\equiv 3 \mod 4,$$
we have $\cG(q_2)\in \{i,-i\}$, and $\cG(q_{1,e})\cG(q_{1,o})\in \{i,-i\}$. 
Thus we have either $\cG(q_{1,e})\in \{1,-1\}$ and $|G_o|\equiv 3 \mod 4$ 
or $\cG(q_{1,e})\in \{i,-i\}$ and $|G_o|\equiv 1 \mod 4$.
\subsubsection{$G_e=\Z_2\times \Z_2$, $q_{1,e}(x,y)=(-1)^{x^2+xy+y^2}$.}
In this case we have $\cG(q_{1,e})=-1$ and $\cG(q_{1,e},2)=2$. 
Thus $|G_o|\equiv 3\mod 4$ and Eq.(\ref{D11}) implies that $(-1)^{\frac{|G_o|^2-1}{8}}=1$. 
This is possible if and only if $|G_o|\equiv 7\mod 8$. 
The smallest example is $G_o=\Z_7$, and Eq.(\ref{D2}) allows only $\Gamma=\Z_{27}$ and $\Gamma=\Z_3\times \Z_9$ 
with restricted quadratic forms.  
\subsubsection{$G_e=\Z_2\times \Z_2$, $q_{1,e}(x,y)=(-1)^{xy}$.} 
In this case we have $\cG(q_{1,e})=1$ and $\cG(q_{1,e},2)=2$. 
Thus $|G_o|\equiv 3\mod 4$ and Eq.(\ref{D11}) implies that $(-1)^{\frac{|G_o|^2-1}{8}}=-1$. 
This is possible if and only if $|G_o|\equiv 3\mod 8$. 
The smallest example is $G_o=\Z_3$ and $\Gamma=\Z_{11}$. 
There are two possible combinations of $q_{1,o}$ and $q_2$ up to group automorphism.

\subsubsection{$G_e=\Z_2\times \Z_2$, $q_{1,e}(x,y)=i^{x^2-y^2}$.}
In this case we have $\cG(q_{1,e})=1$ and $\cG(q_{1,e},2)=0$. 
Thus $|G_o|\equiv 3\mod 4$ and Eq.(\ref{D11}) gives no restriction. 
The smallest example is $G_o=\Z_3$ and $\Gamma=\Z_{11}$, and there are two possible combinations of $q_{1,o}$ and $q_2$. 
\subsubsection{$G_e=\Z_{2^n}\times \Z_{2^n}$ with $n\geq 2$ and $q_{1,e}(x,y)=\zeta_{2^n}^{x^2+xy+y^2}$.} 
In this case we have $\cG(q_{1,e})=(-1)^n$ and $\cG(q_{1,e},2)=2(-1)^{n-1}$. 
Thus $|G_o|\equiv 3 \mod 4$, and $(-1)^{\frac{|G_o|^2-1}{8}}=-1$. 
This is possible if and only if $|G_o|\equiv 3\mod 8$. 
The smallest example is $n=2$, $G_o=\Z_3$, and $\Gamma=\Z_{47}$.

\subsubsection{$G_e=\Z_{2^n}\times \Z_{2^n}$ with $n\geq 2$ and $q_{1,e}(x,y)=\zeta_{2^n}^{xy}$.}
In this case we have $\cG(q_{1,e})=1$, $\cG(q_{1,e},2)=2$.  
Thus $|G_o|\equiv 3\mod 4$ and $(-1)^{\frac{|G_o|^2-1}{8}}=1$. 
This is possible if and only if $|G_o|\equiv 7\mod 8$. 
The smallest example is $n=2$, $G_o=\Z_7$, and $\Gamma =\Z_{111}$.  
\subsubsection{$G_e=\Z_{2^n}\times \Z_{2^n}$ with $n\geq 2$ and $q_{1,e}(x,y)=\zeta_{2^{n+1}}^{r(x^2-y^2)}$ with odd $r$.}
We may assume $r=1$, up to group automorphism. 
In this case, we have $\cG(q_{1,e})=1$ and $\cG(q_{1,e},2)=2$. 
Thus $|G_o|\equiv 3 \mod 4$, and $(-1)^{\frac{|G_o|^2-1}{8}}=1$. 
This is possible if and only if $|G_o|\equiv 7\mod 8$. 
The smallest example is $n=2$, $G_o=\Z_7$, and $\Gamma=\Z_{111}$.
\subsubsection{$G_e=\Z_2\times \Z_{2^n}$ with $n\geq 1$ and $q_{1,e}(x,y)=i^{rx^2}\zeta_{2^{n+1}}^{sy^2}$ with $r\in \{1,-1\}$ and $s\in \{1,-1,3,-3\}$.}
If $n=1$, we further assume $r+s\neq 0$. 
In this case we have $\cG(q_{1,e})=\zeta_8^{r+s}(-1)^{n\frac{s^2-1}{8}}$ and $\cG(q_{1,e},2)=0$. 
Thus $|G_o|\equiv 3\mod4$ if $r+s\equiv 0\mod 4$, and $|G_o|\equiv 1\mod 4$ if $r+s\equiv 2\mod 4$. 
Eq,(\ref{D11}) gives no restriction. 

We give a few examples. \\
(1) $n=1$, $G_o=\{0\}$, $\Gamma=\Z_3$, $r=s=\pm1$, and there are two possibilities determined by 
the relation $\cG(q_2)=-i^r$. \\
(2) $n=1$, $G_o=\Z_5$, $\Gamma=\Z_{19}$, $r=s=\pm1$, and there are two possibilities determined by 
the relation $\cG(q_2)=-i^r\cG(q_{1,o})$. \\
(3) $n=2$, $G_o=\{0\}$, and $\Gamma=\Z_7$. 
We have $\cG(q_2)=-\zeta_{8}^{s+t}$ with $r+s\equiv 2\mod 4$.  \\
(4) $n=3$, $G_o=\{0\}$, and $\Gamma=\Z_{15}$. We have 
$\cG(q_2)=-(-1)^{\frac{s^2-1}{8}}\zeta_8^{r+s}$ with $r+s\equiv 2\mod 4$. 
\subsubsection{$G_e=\Z_{2^m}\times \Z_{2^n}$ with $2\leq m\leq n$ and $q_{1,e}(x,y)=\zeta_{2^{m+1}}^{rx^2}\zeta_{2^{n+1}}^{sy^2}$ 
with $r,s\in \{1,-1,3,-3\}$.} 
If $m=n$, we further assume $r+s\neq 0$. 
In this case we have $\cG(q_{1,e})=\zeta_8^{r+s}(-1)^{m\frac{r^2-1}{8}+n\frac{s^2-1}{8}}$, 
$$\cG(q_{1,e},2)=2\zeta_8^{r+s}(-1)^{(m-1)\frac{r^2-1}{8}+(n-1)\frac{s^2-1}{8}},$$
and Eq.(\ref{D11}) implies that $(-1)^{\frac{r^2-1}{8}+\frac{s^2-1}{8}}=(-1)^{\frac{|G_o|^2-1}{8}}$. 

We get the following sub-cases. \\
(1) $|G_o|\equiv 1\mod 8$ and $r=s$. 
The smallest example is $n=2$, $G_o=\{0\}$, and $\Gamma=\Z_{15}$.\\ 
(2) $|G_o|\equiv 3\mod 8$ and $(r,s)=(1,3)$, $(-1,-3)$, $(3,1)$, or $(-3,-1)$. \\
(3) $|G_o|\equiv 5\mod 8$. and $(r,s)=(1,-3)$, $(-1,3)$, $(3,-1)$, or $(-3,1)$. \\
(4) $|G_o|\equiv 7\mod 8$ and  $r+s=0$. \\

\appendix

The first four sections of this (online only) appendix contain calculations for checking the modular relations and computing Verlinde coefficients and Frobenius-Schur indicators for the families of modular data in Sections 3-6, respectively. The last section of the appendix computes the modular data for the $\Z_2 $-de-equivariantizations of the $\Z_4 $-near-group categories referred to in Section 4.
\section{Calculations for Section 3}

\begin{proof}[Proof of Lemma 3.4]
\begin{align*}
\lefteqn{\sum_{x}S_{(u,0),x}\overline{S_{(u',0),x}}=\sum_{x}S_{(u,\pi),x}\overline{S_{(u',\pi),x}}} \\
 &=\frac{a^2+b^2}{2}\sum_{v\in U}\inpr{u'-u}{v}+a^2\sum_{g\in G_*}\inpr{u'-u}{g}
 +b^2\sum_{\gamma\in \Gamma_*}\inpr{u'-u}{\gamma} \\
 &=\frac{a^2}{2}\sum_{g\in G}\inpr{u'-u}{g}+ \frac{b^2}{2}\sum_{\gamma\in \Gamma}\inpr{u'-u}{\gamma} \\
 &=\delta_{u,u'}. 
\end{align*}
\begin{align*}
\lefteqn{\sum_{x}S_{(u,0),x}\overline{S_{(u',\pi),x}}} \\
 &=\frac{a^2-b^2}{2}\sum_{v\in U}\inpr{u'-u}{v}
+a^2\sum_{g\in G^*}\inpr{u'-u}{g}-b^2\sum_{\gamma\in \Gamma_*}\inpr{u'-u}{\gamma} \\
 &=\frac{a^2}{2}\sum_{g\in G}\inpr{u'-u}{g}-\frac{b^2}{2}\sum_{\gamma\in \Gamma}\inpr{u'-u}{\gamma}
 =\frac{\delta_{u,u'}}{2}-\frac{\delta_{u,u'}}{2}=0.\\
\end{align*}
\begin{align*}
\lefteqn{\sum_{x}S_{(u,0),x}\overline{S_{g,x}}=\sum_{x}S_{(u,\pi),x}\overline{S_{g,x}}} \\
 &=a^2\sum_{v\in U}\inpr{g-u}{v}+a^2\sum_{h\in G_*}(\inpr{g-u}{h}+\inpr{\theta(g)-u}{h})\\
 &=a^2\sum_{h\in G}\inpr{g-u}{h}=\delta_{g,u}=0. \\
\end{align*}
\begin{align*}
\lefteqn{\sum_{x}S_{(u,0),x}\overline{S_{\gamma,x}}=-\sum_{x}S_{(u,\pi),x}\overline{S_{\gamma,x}}} \\
 &=-b^2\sum_{v\in U}\inpr{\gamma-u}{v}-b^2\sum_{\xi\in \Gamma_*}(\inpr{\gamma-u}{\xi}+\inpr{\theta(\gamma)-u}{\xi})\\
 &=-b^2\sum_{\xi\in \Gamma}\inpr{\gamma-u}{\xi}=-\delta_{\gamma,u}=0. \\
\end{align*}
\begin{align*}
\lefteqn{\sum_{x}S_{g,x}\overline{S_{g',x}}} \\
 &=2a^2\sum_{v\in U}\inpr{g'-g}{v}
 +a^2\sum_{h\in G_*}(\inpr{-g}{h}+\inpr{-\theta(g)}{h}_1)(\inpr{g'}{h}+\inpr{\theta(g')}{h}) \\
 &=a^2(\sum_{h\in G}\inpr{g'-g}{v}+\sum_{h\in G}\inpr{g'-\theta(g)}{h}\\
 &=\delta_{g,g'}+\delta_{\theta(g),g'}=\delta_{g,g'}. 
\end{align*}
$\sum_{x}S_{g,x}\overline{S_{\gamma,x}}=0$ is obvious. 
\begin{align*}
\lefteqn{\sum_{x}S_{\gamma,x}\overline{S_{\gamma',x}}} \\
 &=2b^2\sum_{v\in U}\inpr{\gamma'-\gamma}{v}+b^2\sum_{\xi\in \Gamma_*}
 (\inpr{-\gamma}{\xi}+\inpr{-\theta(\gamma)}{\xi}) (\inpr{\gamma'}{\xi}+\inpr{\theta(\gamma')}{\xi})\\
 &=\delta_{\gamma,\gamma'}+\delta_{\theta(\gamma),\gamma'}=\delta_{\gamma,\gamma'}. 
\end{align*}
Thus $S$ is unitary. 
It is obvious that $T$ and $C$ are unitary.  
Since $\overline{S_{x,y}}=S_{\overline{x},y}=S_{x,\overline{y}}$ and $S$ is symmetric, we have $S^2=C$. 
Since $T_{x,x}=T_{\overline{x},\overline{x}}$, we have $CT=T C$. 

The only remaining condition is $(ST)^3=cC$, and it suffices to verify 
$$\sum_{x}S_{j,x}S_{j',x}T_{x,x}=cS_{j,j'}\overline{T_{j,j}T_{j',j'}}$$
for all $j,j'\in J$. 
\begin{align*}
\lefteqn{\sum_{x}S_{(u,0),x}S_{(u',0),x}T_{x,x}=\sum_{x}S_{(u,\pi),x}S_{(u',\pi),x}T_{x,x}} \\
 &=\frac{a^2+b^2}{2}\sum_{v\in U}\inpr{u+u'}{-v}q_0(v)
 +a^2\sum_{g\in G_*}\inpr{u+u'}{-g}q_1(g)
 +b^2\sum_{\gamma\in \Gamma_*}\inpr{u+u'}{-\gamma}q_2(\gamma) \\
 &=\frac{a^2}{2}\sum_{g\in G} q_1(u+u'-g)\overline{q_0(u+u)}
  +\frac{b^2}{2}\sum_{\gamma\in \Gamma}q_2(u+u'-\gamma)\overline{q_0(u+u')} \\
 &=(\frac{a}{2}\cG(q_1)+\frac{b}{2}\cG(q_2))\overline{\inpr{u}{u'}q_0(u)q_0(u')}\\
 &=cS_{(u,0),(u',0)}\overline{T_{(u,0),(u',0)}}=cS_{(u,\pi),(u',\pi)}\overline{T_{(u,\pi),(u',\pi)}}.
\end{align*}
\begin{align*}
\lefteqn{\sum_{x}S_{(u,0),x}S_{(u',\pi),x}T_{x,x}} \\
 &=\frac{a^2-b^2}{2}\sum_{v\in U}\inpr{u+u'}{-v}q_1(v)
 +a^2\sum_{g\in G_*}\inpr{u+u'}{-g}q_1(g)
 -b^2\sum_{\gamma\in \Gamma_*}\inpr{u+u'}{-\gamma}q_2(\gamma)\\
 &=\frac{a^2}{2}\sum_{g\in G} q_1(u+u'-g)\overline{q_0(u+u)}
  -\frac{b^2}{2}\sum_{\gamma\in \Gamma}q_2(u+u'-\gamma)\overline{q_0(u+u')} \\
 &=(\frac{a}{2}\cG(q_1)-\frac{b}{2}\cG(q_2))\overline{\inpr{u}{u'}q_0(u)q_0(u')}\\
 &=\frac{a\cG(q_1)}{2}-\frac{b\cG(q_2)}{2}
 =cS_{(u,0),(u',\pi)}\overline{T_{(u,0),(u,0)}T_{(u',\pi),(u',\pi)}}.
\end{align*}
\begin{align*}
\lefteqn{\sum_{x}S_{(u,0),x}S_{g,x}T_{x,x}=S_{(u,\pi),x}S_{g,x}T_{x,x}} \\
 &=a^2\sum_{v\in U}\inpr{u+g}{-v}q_1(v)+a^2\sum_{h\in G_*}(\inpr{u+g}{-h}+\inpr{u+\theta(g)}{-h})q_1(h)\\
 &=a^2\sum_{h\in G}q_1(u+g-h)\overline{q_1(u+g)}
 =a\cG(q_1)\overline{\inpr{u}{g}q_0(u)q_1(g)}\\
 &=cS_{(u,0),g}\overline{T_{(u,0),(u,0)}T_{g,g}}
 =cS_{(u,\pi),g}\overline{T_{(u,\pi),(u,\pi)}T_{g,g}}.
\end{align*}
\begin{align*}
\lefteqn{\sum_{x}S_{(u,0),x}S_{\gamma,x}T_{x,x}=-\sum_{x}S_{(u,\pi),x}S_{\gamma,x}T_{x,x}} \\
 &=-b^2\sum_{v\in U}\inpr{u+\gamma}{-v}q_0(v)-b^2\sum_{\xi\in \Gamma_*}(\inpr{u+\gamma}{-\xi}
 +\inpr{u+\theta(\gamma)}{-\xi})q_2(\xi)\\
 &=-b^2\sum_{\xi\in \Gamma}q_2(u+\gamma-\xi)\overline{q_2(u+\gamma)}
 =-b\cG(q_2)\overline{\inpr{u}{\gamma}q_0(u)q_2(\gamma)}\\
 &=cS_{(u,0),\gamma}\overline{T_{(u,0),(u,0)}T_{\gamma,\gamma}}
 =-cS_{(u,\pi),\gamma}\overline{T_{(u,\pi),(u,\pi)}T_{\gamma,\gamma}}.
\end{align*}
 \begin{align*}
\lefteqn{\sum_{x}S_{g,x}S_{g',x}T_{x,x}} \\
 &=2a^2\sum_{v\in U}\inpr{g+g'}{-v}_1q_1(v)
 +a^2\sum_{h\in G_*}(\inpr{g}{-h}+\inpr{\theta(g)}{-h})(\inpr{g'}{-h}+\inpr{\theta(g')}{-h})q_1(h)\\
 &=a^2\sum_{h\in G}(q_1(g+g'-h)\overline{q_1(g+g')}+q_1(\theta(g)+g'-h)\overline{q_1(\theta(g)+g')})\\
 &=ca\overline{(\inpr{g}{g'}q_1(g)q_1(g')+\inpr{\theta(g)}{g'}q_1(g)q_1(g')}\\
 &=cS_{g,g'}\overline{T_{g,g}T_{g',g'}}.
\end{align*}
$$\sum_{x}S_{g,x}S_{\gamma,x}T_{x,x}=0=cS_{g,\gamma}\overline{T_{g,g}T_{\gamma,\gamma}}.$$
 \begin{align*}
\lefteqn{\sum_{x}S_{\gamma,x}S_{\gamma',x}T_{x,x}} \\
 &=2b^2\sum_{v\in U}\inpr{\gamma+\gamma'}{-v} 
 +b^2\sum_{\xi\in \Gamma_*} (\inpr{\gamma}{-\xi}+\inpr{\theta(\gamma)}{-\xi})
 (\inpr{\gamma'}{-\xi}\inpr{\theta(\gamma')}{-\xi})q_2(\xi)\\
 &=b^2\sum_{\xi\in \Gamma}(q_2(\gamma+\gamma'-\xi)\overline{q_2(\gamma+\gamma')}
 +q_2(\theta(\gamma)+\gamma'-\xi)\overline{q_2(\theta(\gamma)+\gamma')})\\
 &=b\cG(q_2)(\overline{q_2(\gamma+\gamma')+q_2(\gamma-\gamma')})\\
 &=cS_{\gamma,\gamma'}\overline{T_{\gamma,\gamma}T_{\gamma',\gamma'}}.
\end{align*}
\end{proof}

\begin{proof}[Proof of Lemma 3.5] 
\begin{align*}
\lefteqn{N_{(u,0),(u',0),(u'',0)}=N_{(u,0),(u',\pi),(u'',\pi)}} \\
 &=(\frac{a-b}{2})^2\sum_{v\in U}\inpr{u+u'+u''}{-v}+(\frac{a+b}{2})^2\sum_{v\in U}\inpr{u+u'+u''}{-v} \\
 &+a^2\sum_{g\in G_*}\inpr{u+u'+u''}{-g}+b^2\sum_{\gamma\in \Gamma_*}\inpr{u+u'+u''}{-\gamma}\\
 &=\frac{a^2}{2}\sum_{g\in G}\inpr{u+u'+u''}{-g}+\frac{b^2}{2}\sum_{\gamma\in \Gamma}\inpr{u+u'+u''}{-\gamma}\\
 &=\delta_{u+u'+u'',0}.
\end{align*}
\begin{align*}
\lefteqn{N_{(u,0),(u',0),(u'',\pi)}} \\
 &=\frac{a^2-b^2}{4}\sum_{v\in U}\inpr{u+u'+u''}{-v}+\frac{a^2-b^2}{4}\sum_{v\in U}\inpr{u+u'+u''}{-v} \\
 &+a^2\sum_{g\in G_*}\inpr{u+u'+u''}{-g}-b^2\sum_{\gamma\in \Gamma_*}\inpr{u+u'+u''}{-\gamma}\\
 &=\frac{a^2}{2}\sum_{g\in G}\inpr{u+u'+u''}{g}-\frac{b^2}{2}\sum_{\gamma\in \Gamma}\inpr{u+u'+u''}{\gamma}=0.
\end{align*}
\begin{align*}
\lefteqn{N_{(u,0),(u',0),g}=N_{(u,0),(u',\pi),g}} \\
 &=\frac{a(a-b)}{2}\sum_{v\in U}\inpr{u+u'+g}{-v}+\frac{a(a+b)}{2}\sum_{v\in U}\inpr{u+u'+g}{-v} \\
 &+a^2\sum_{h\in G_*}\inpr{u+u'}{-h}(\inpr{g}{-h}+\inpr{\theta(g)}{-h})\\
 &=a^2\sum_{h\in G}\inpr{u+u'+g}{h}=\delta_{u+u'+g,0}=0.
\end{align*}
\begin{align*}
\lefteqn{N_{(u,0),(u',0),\gamma}=-N_{(u,0),(u',\pi),\gamma}} \\
 &=\frac{b(a-b)}{2}\sum_{v\in U}\inpr{u+u'+\gamma}{-v}-\frac{b(a+b)}{2}\sum_{v\in U}\inpr{u+u'+\gamma}{-v} \\
 &-b^2\sum_{\xi\in \Gamma_*}\inpr{u+u'}{-\xi}(\inpr{\gamma}{-\xi}+\inpr{\theta(\gamma)}{-\xi})\\
 &=-b^2\sum_{\xi\in \Gamma}\inpr{u+u'+\gamma}{\xi}=-\delta_{u+u'+\gamma,0}=0.
\end{align*}
\begin{align*}
\lefteqn{N_{(u,0),g,g'}} \\
 &=2a^2\sum_{v\in U}\inpr{u+g+g'}{-v}+a^2\sum_{h\in G_*}\inpr{u}{-h}(\inpr{g}{-h}+\inpr{\theta(g)}{-h})(\inpr{g'}{-h}+\inpr{\theta(g')}{-h}) \\
 &=a^2\sum_{h\in G}(\inpr{u+g+g'}{h}+\inpr{u+\theta(g)+g'}{h})\\
 &=\delta_{u+g+g',0}+\delta_{u+\theta(g)+g',0}.
\end{align*}
$$N_{(u,0),g,\gamma}=ab\sum_{v\in U}\inpr{u}{-v}\inpr{g}{-v}\inpr{\gamma}{-v}-ab\sum_{v\in U}\inpr{u}{-v}\inpr{g}{-v}\inpr{\gamma}{-v}=0.$$
\begin{align*}
\lefteqn{N_{(u,0),\gamma,\gamma'}} \\
 &=2b^2\sum_{v\in U}\inpr{u+\gamma+\gamma'}{-v}+b^2\sum_{\xi\in \Gamma_*}\inpr{u}{-\xi}(\inpr{\gamma}{-\xi}+\inpr{\theta(\gamma)}{-\xi})(\inpr{\gamma'}{-\xi}+\inpr{\theta(\gamma')}{-\xi}) \\
 &=b^2\sum_{\xi\in \Gamma}(\inpr{u+\gamma+\gamma'}{\xi}+\inpr{u+\theta(\gamma)+\gamma'}{\xi})\\
 &=\delta_{u+\gamma+\gamma',0}+\delta_{u+\theta(\gamma)+\gamma',0}.
\end{align*}
\begin{align*}
\lefteqn{N_{(u,\pi),(u',\pi),(u'',\pi)}} \\
 &=\frac{(a+b)^3}{4(a-b)}\sum_{v\in U}\inpr{u+u'+u''}{-v}+\frac{(a-b)^3}{4(a+b)} \sum_{v\in U}\inpr{u+u'+u''}{-v} \\
 &+a^2\sum_{g\in G_*}\inpr{u+u'+u''}{-g}-b^2\sum_{\gamma\in \Gamma_*}\inpr{u+u'+u''}{-\gamma}\\
 &=(\frac{(a+b)^4+(a-b)^4}{4(a^2-b^2)}-\frac{a^2-b^2}{2}\sum_{v\in U}\inpr{u+u'+u''}{v}\\
 &+\frac{a^2}{2}\sum_{g\in G}\inpr{u+u'+u''}{g}-\frac{b^2}{2}\sum_{\gamma\in \Gamma}\inpr{u+u'+u''}{\gamma}\\
 &=\frac{4a^2b^2}{a^2-b^2}\sum_{v\in U}\inpr{u+u'+u''}{v}+\frac{\delta_{u+u'+u'',0}}{2}-\frac{\delta_{u+u'+u'',0}}{2}\\
 &=\frac{4|U|}{|\Gamma|-|G|}\frac{1}{|U|}\sum_{v\in U}\inpr{u+u'+u''}{v}. 
\end{align*}
\begin{align*}
\lefteqn{N_{(u,\pi),(u',\pi),g}} \\
 & \frac{a(a+b)^2}{2(a-b)}\sum_{v\in U}\inpr{u+u'+g}{-v}+\frac{a(a-b)^2}{2(a+b)}\sum_{v\in U}\inpr{u+u'+g}{-v} \\
 &+a^2\sum_{h\in G_*}\inpr{u+u'}{-h}(\inpr{g}{-h}+\inpr{\theta(g)}{-h})\\
 &=(a\frac{(a+b)^3+(a-b)^3}{2(a^2-b^2)}-a^2)\sum_{v\in U}\inpr{u+u'+g}{v}+a^2\sum_{h\in G}\inpr{u+u'+g}{h} \\
 &=\frac{4a^2b^2}{a^2-b^2}\sum_{v\in U}\inpr{u+u'+g}{v}+\delta_{u+u'+g,0} \\
 &=\frac{4|U|}{|\Gamma|-|G|}\frac{1}{|U|}\sum_{v\in U}\inpr{u+u'+g}{v}.
\end{align*}
\begin{align*}
\lefteqn{N_{(u,\pi),(u',\pi),\gamma}} \\
 &=\frac{b(a+b)^2}{2(a-b)}\sum_{v\in U}\inpr{u+u'+\gamma}{-v}-\frac{b(a-b)^2}{2(a+b)}\sum_{v\in U}\inpr{u+u'+\gamma}{-v} \\
 &-b^2\sum_{\xi\in \Gamma_*}\inpr{u+u'}{-\xi}(\inpr{\gamma}{-\xi}+\inpr{\theta(\gamma)}{-\xi})\\
 &=(b\frac{(a+b)^3-(a-b)^3}{2(a^2-b^2)}+b^2)\sum_{v\in U}\inpr{u+u'+\gamma}{v}-b^2\sum_{\xi\in \Gamma}\inpr{u+u'+\gamma}{\xi} \\
 &=\frac{4a^2b^2}{a^2-b^2}\sum_{v\in U}\inpr{u+u'+\gamma}{v}-\delta_{u+u'+\gamma,0} \\
 &=\frac{4|U|}{|\Gamma|-|G|}\frac{1}{|U|}\sum_{v\in U}\inpr{u+u'+\gamma}{v}.\\
\end{align*}
\begin{align*}
\lefteqn{N_{(u,\pi),g,g'}} \\
 &=a^2\frac{a+b}{a-b}\sum_{v\in U}\inpr{u+g+g'}{-v}+a^2\frac{a-b}{a+b}\sum_{v\in U}\inpr{u+g+g'}{-v}\\
 &+a^2\sum_{h\in G_*}\inpr{u}{-h}(\inpr{g}{-h}+\inpr{\theta(g)}{-h})(\inpr{g'}{-h}+\inpr{\theta(g')}{-h}) \\
 &=a^2(\frac{(a+b)^2+(a-b)^2}{a^2-b^2}-2)\sum_{v\in U}\inpr{u+g+g'}{v}
 +a^2\sum_{h\in G}(\inpr{u+g+g'}{h}+\inpr{u+\theta(g)+g'}{h})\\
 &=\frac{4a^2b^2}{a^2-b^2}\sum_{v\in U}\inpr{u+g+g'}{v} +\delta_{u+g+g',0}+\delta_{u+\theta(g)+g',0}\\
 &=\frac{4|U|}{|\Gamma|-|G|}\frac{1}{|U|}\sum_{v\in U}\inpr{u+g+g'}{v} +\delta_{u+g+g',0}+\delta_{u+\theta(g)+g',0}.
\end{align*}
\begin{align*}
\lefteqn{N_{(u,\pi),g,\gamma}} \\
 &=\frac{ab(a+b)}{a-b}\sum_{v\in U}\inpr{u}{-v}\inpr{g}{-v}\inpr{\gamma}{-v}
-\frac{ab(a-b)}{a+b}\sum_{v\in U}\inpr{u}{-v}\inpr{g}{-v}\inpr{\gamma}{-v} \\
 &=ab\frac{(a+b)^2-(a-b)^2}{a^2-b^2}\sum_{v\in U}\inpr{u}{v}\inpr{g}{v}\inpr{\gamma}{v}\\
 &=\frac{4a^2b^2}{a^2-b^2}\sum_{v\in U}\inpr{u}{v}\inpr{g}{v}\inpr{\gamma}{v}\\
 &=\frac{4|U|}{|\Gamma|-|G|}\frac{1}{|U|}\sum_{v\in U}\inpr{u}{v}\inpr{g}{v}\inpr{\gamma}{v}.
\end{align*}
\begin{align*}
\lefteqn{N_{(u,\pi),\gamma,\gamma'}} \\
 &=b^2\frac{a+b}{a-b}\sum_{v\in U}\inpr{u+\gamma+\gamma'}{-v}+b^2\frac{a-b}{a+b}\sum_{v\in U}\inpr{u+\gamma+\gamma'}{-v}\\
 &-b^2\sum_{\xi\in \Gamma_*}\inpr{u}{-\xi}(\inpr{\gamma}{-\xi}+\inpr{\theta(\gamma)}{-\xi})(\inpr{\gamma'}{-\xi}+\inpr{\theta(\gamma')}{-\xi}) \\
 &=b^2(\frac{(a+b)^2+(a-b)^2}{a^2-b^2}+2)\sum_{v\in U}\inpr{u+\gamma+\gamma'}{v}
 -b^2\sum_{\xi\in \Gamma}(\inpr{u+\gamma+\gamma'}{\xi}+\inpr{u+\theta(\gamma)+\gamma'}{\xi})\\
 &=\frac{4a^2b^2}{a^2-b^2}\sum_{v\in U}\inpr{u+\gamma+\gamma'}{v}-\delta_{u+\gamma+\gamma',0}-\delta_{u+\theta(\gamma)+\gamma',0}\\
 &=\frac{4|U|}{|\Gamma|-|G|}\frac{1}{|U|}\sum_{v\in U}\inpr{u+\gamma+\gamma'}{v}
 -\delta_{u+\gamma+\gamma',0}-\delta_{u+\theta(\gamma)+\gamma',0}.
\end{align*}
\begin{align*}
\lefteqn{N_{g,g',g''}} \\
 &=\frac{2a^3}{a-b}\sum_{v\in U}\inpr{g+g'+g''}{-v}+ \frac{2a^3}{a+b}\sum_{v\in U}\inpr{g+g'+g''}{-v}\\
 &+a^2\sum_{h\in G_*}(\inpr{g}{-h}+\inpr{\theta(g)}{-h})(\inpr{g'}{-h}+\inpr{\theta(g')}{-h})(\inpr{g''}{-h}+\inpr{\theta(g'')}{-h}) \\
 &=(\frac{4a^4}{a^2-b^2}-4a^2)\sum_{v\in U}\inpr{g+g'+g''}{-v}\\
 &+a^2\sum_{h\in G}(\inpr{g+g'+g''}{h}+\inpr{\theta(g)+g'+g''}{h}+\inpr{g+\theta(g')+g''}{h}+\inpr{g+g'+\theta(g'')}{h})\\
 &=\frac{4a^2b^2}{a^2-b^2}\sum_{v\in U}\inpr{g+g'+g''}{v}+\delta_{g+g'+g'',0}+\delta_{\theta(g)+g'+g'',0}
 +\delta_{g+\theta(g')+g'',0}+\delta_{g+g'+\theta(g''),0}\\
 &=\frac{4|U|}{|\Gamma|-|G|}\frac{1}{|U|}\sum_{v\in U}\inpr{g+g'+g''}{v}\\
 &+\delta_{g+g'+g'',0}+\delta_{\theta(g)+g'+g'',0}
 +\delta_{g+\theta(g')+g'',0}+\delta_{g+g'+\theta(g''),0}.
\end{align*}
\begin{align*}
\lefteqn{N_{g,g',\gamma}} \\
 &=\frac{2a^2b}{a-b}\sum_{v\in U}\inpr{g+g'}{-v}\inpr{\gamma}{-v}-\frac{2a^2b}{a+b}\sum_{v\in U}\inpr{g+g'}{-v}\inpr{\gamma}{-v} \\
 &=\frac{4a^2b^2}{a^2-b^2}\sum_{v\in U}\inpr{g+g'}{v}\inpr{\gamma}{v}\\
 &=\frac{4|U|}{|\Gamma|-|G|}\frac{1}{|U|}\sum_{v\in U}\inpr{g+g'}{v}\inpr{\gamma}{v}.
\end{align*}
\begin{align*}
\lefteqn{N_{g,\gamma,\gamma'}} \\
 &=\frac{2ab^2}{a-b}\sum_{v\in U}\inpr{g}{-v}\inpr{\gamma+\gamma'}{-v}+\frac{2ab^2}{a+b}\sum_{v\in U}\inpr{g}{-v}\inpr{\gamma+\gamma'}{-v} \\
 &=\frac{4a^2b^2}{a^2-b^2}\sum_{v\in U}\inpr{g}{v}\inpr{\gamma+\gamma'}{v}\\
 &=\frac{4|U|}{|\Gamma|-|G|}\frac{1}{|U|}\sum_{v\in U}\inpr{g}{v}\inpr{\gamma+\gamma'}{v}.
\end{align*}
\begin{align*}
\lefteqn{N_{\gamma,\gamma',\gamma''}} \\
 &=\frac{2b^3}{a-b}\sum_{v\in U}\inpr{\gamma+\gamma'+\gamma''}{-v}-\frac{2b^3}{a+b}\sum_{v\in U}\inpr{\gamma+\gamma'+\gamma''}{-v} \\
 &-b^2\sum_{\xi\in \Gamma_*}(\inpr{\gamma}{-\xi}+\inpr{\theta(\gamma)}{-\xi})(\inpr{\gamma'}{-\xi}+\inpr{{\gamma}'^\theta}{-\xi})(\inpr{\gamma}{-\xi}+\inpr{\theta(\gamma'')}{-\xi}) \\
 &=(\frac{4b^4}{a^2-b^2}+4b^2)\sum_{v\in U}\inpr{\gamma+\gamma'+\gamma''}{v}\\
 &-b^2\sum_{\xi\in \Gamma}(\inpr{\gamma+\gamma'+\gamma''}{v}+\inpr{\theta(\gamma)+\gamma'+\gamma''}{v}
 +\inpr{\gamma+\theta(\gamma')+\gamma''}{v}+\inpr{\gamma+\gamma'+\theta(\gamma'')}{v})\\
 &=\frac{4a^2b^2}{a^2-b^2}\sum_{v\in U}\inpr{\gamma+\gamma'+\gamma''}{v}
 -\delta_{\gamma+\gamma'+\gamma'',0}-\delta_{\theta(\gamma)+\gamma'+\gamma'',0}
 -\delta_{\gamma+\theta(\gamma')+\gamma'',0}-\delta_{\gamma+\gamma'+\theta(\gamma''),0}\\
 &=\frac{4|U|}{|\Gamma|-|G|}\frac{1}{|U|}\sum_{v\in U}\inpr{\gamma+\gamma'+\gamma''}{v}\\
 &-\delta_{\gamma+\gamma'+\gamma'',0}-\delta_{\theta(\gamma)+\gamma'+\gamma'',0}
 -\delta_{\gamma+\theta(\gamma')+\gamma'',0}-\delta_{\gamma+\gamma'+\theta(\gamma''),0}.
\end{align*}
\end{proof}

\begin{proof}[Proof of Lemma 3.8] 
\begin{align*}
\lefteqn{\nu_m((u,0))} \\
 &=\sum_{u'\in U}N^{(u,0)}_{(u',0),(u-u',0)}\frac{(a-b)^2}{4}\frac{q_0(u')^m}{q_0(u-u')^m}
 + \sum_{u'\in U}N^{(u,0)}_{(u',\pi),(u-u',\pi)}\frac{(a+b)^2}{4}\frac{q_0(u')^m}{q_0(u-u')^m}\\
 &=\sum_{g\in G_*}N^{(u,0)}_{g,u-g}a^2\frac{q_1(g)^m}{q_1(u-g)^m}
 +\sum_{\gamma\in \Gamma_*}N^{(u,0)}_{\gamma,u-\gamma}b^2\frac{q_2(\gamma)^m}{q_2(u-\gamma)^m} \\
 &=\frac{a^2}{2}\sum_{g\in G}\frac{q_0(g)^m}{q_0(u-g)^m}
 +\frac{b^2}{2}\sum_{\gamma\in \Gamma}\frac{q_1(\gamma)^m}{q_2(u-\gamma)^m}. 
\end{align*}
Since $q_1(g)/q_1(u-g)=\inpr{u}{g}\overline{q_0(u)}$, we have 
$$\frac{a^2}{2}\sum_{g\in G}\frac{q_0(g)^m}{q_0(u-g)^m}=\frac{1}{2|G|}\sum_{g\in G}\inpr{mu}{g}q_0(u)^{-m}=\frac{\delta_{mu,0}q_0(u)^m}{2},$$
and $\nu_m((u,0))=\delta_{mu,0}q_0(u)^m$. 

Note that we have $\cG(q_1,q_2,v,m)=\cG(q_1,q_2,-v,m)$ and $\cG(q_1,q_2,v,-m)=\overline{\cG(q_1,q_2,v,m)}$. 
We set 
$$A(g,m)=2a\sum_{h\in G_*}\inpr{g}{h}q_1(h)^m,\quad B(\gamma,m)=2b\sum_{\xi\in \Gamma_*}\inpr{\gamma}{\xi}q_2(\xi)^m.$$
$$C(v,m)=(a+b)\sum_{u\in U}\inpr{v}{u}q_0(u)^m.$$
Then 
$$\cG(q_1,q_2,v,m)=A(v,m)+B(v,m)+C(v,m).$$

For $\nu_m((u,\pi))$, we have 
\begin{align*}
\lefteqn{\nu_m((u,\pi))} \\
 &=\sum_{u'\in U}N^{(u,\pi)}_{(u',0),(u-u',\pi)}\frac{a^2-b^2}{4}(\frac{q_0(u')^m}{q_0(u-u')^m}+\frac{q_0(u-u')^m}{q_0(u')^m}) 
 +\sum_{u',u''\in U}N^{(u,\pi)}_{(u',\pi),(u'',\pi)}\frac{(a+b)^2}{4}\frac{q_0(u')^m}{q_0(u'')^m} \\
 &+\sum_{u'\in U,\;g\in G_*}N^{(u,\pi)}_{(u',\pi),g}\frac{a(a+b)}{2}(\frac{q_0(u')^m}{q_1(g)^m}+\frac{q_1(g)^m}{q_0(u')^m}) \\
 &+\sum_{u'\in U,\;\gamma\in \Gamma_*}N^{(u,\pi)}_{(u',\pi),\gamma}\frac{b(a+b)}{2}(\frac{q_0(u')^m}{q_2(\gamma)^m}+\frac{q_2(\gamma)^m}{q_0(u')^m})  \\
 &+\sum_{g,h\in G_*}N^{(u,\pi)}_{g,h}a^2\frac{q_1(g)^m}{q_1(h)^m}
 +\sum_{g\in G_*,\;\gamma\in \Gamma_*}N^{(u,\pi)}_{g,\gamma}ab(\frac{q_1(g)^m}{q_2(\gamma)^m}+\frac{q_2(\gamma)^m}{q_1(g)^m})
 +\sum_{\gamma,\xi\in \Gamma_*}N^{(u,\pi)}_{\gamma,\xi}b^2\frac{q_2(\gamma)^m}{q_2(\xi)^m},
\end{align*}
and \begin{align*}
\lefteqn{\sum_{u'\in U}N^{(u,\pi)}_{(u',0),(u-u',\pi)}\frac{a^2-b^2}{4}(\frac{q_0(u')^m}{q_0(u-u')^m}+\frac{q_0(u-u')^m}{q_0(u')^m}) } \\
 &=\frac{a^2-b^2}{4}\sum_{u'\in U}(\frac{q_0(u')^m}{q_0(u-u')^m}+\frac{q_0(u-u')^m}{q_0(u')^m})
 =\frac{a^2-b^2}{2}\sum_{u'\in U}\frac{q_0(u')^m}{q_0(u-u')^m},
\end{align*}
\begin{align*}
\lefteqn{\sum_{u',u''\in U}N^{(u,\pi)}_{(u',\pi),(u'',\pi)}\frac{(a+b)^2}{4}\frac{q_0(u')^m}{q_0(u'')^m} } \\
 &=\frac{(a+b)^2}{4}\sum_{u',u''\in U}\frac{4}{|\Gamma|-|G|}\sum_{v\in U}\inpr{u'+u''-u}{v}\frac{q_0(u')^m}{q_0(u'')^m}\\  
 &=\frac{1}{|\Gamma|-|G|}\sum_{v\in U}\inpr{-u}{v}C(u',m)C(u'',-m), 
\end{align*} 
\begin{align*}
\lefteqn{\sum_{u'\in U,\;g\in G_*}N^{(u,\pi)}_{(u',\pi),g}\frac{a(a+b)}{2}(\frac{q_0(u')^m}{q_1(g)^m}+\frac{q_1(g)^m}{q_0(u')^m})} \\
 &=\frac{a(a+b)}{2}\sum_{u'\in U,\;g\in G_*}\frac{4}{|\Gamma|-|G|}\sum_{v\in U}\inpr{u'-u+g}{v}(\frac{q_0(u')^m}{q_1(g)^m}+\frac{q_1(g)^m}{q_0(u')^m}) \\
 &=\frac{1}{|\Gamma|-|G|}\sum_{v\in U}\inpr{-u}{v}(C(v,m)A(v,-m)+A(v,m)C(v,-m)), 
\end{align*}
\begin{align*}
\lefteqn{\sum_{u'\in U,\;\gamma\in \Gamma_*}N^{(u,\pi)}_{(u',\pi),\gamma}\frac{b(a+b)}{2}(\frac{q_0(u')^m}{q_2(\gamma)^m}+\frac{q_2(\gamma)^m}{q_0(u')^m}) } \\
 &=\frac{b(a+b)}{2}\sum_{u'\in U,\;\gamma\in \Gamma_*}\frac{4}{|\Gamma|-|G|}\sum_{v\in U}\inpr{u'-u+\gamma}{v}(\frac{q_0(u')^m}{q_2(\gamma)^m}+\frac{q_2(\gamma)^m}{q_0(u')^m})  \\
 &=\frac{1}{|\Gamma|-|G|}\sum_{v\in U}\inpr{-u}{v}(C(v,m)B(v,-m)+B(v,m)C(v,-m)),
\end{align*}
\begin{align*}
\lefteqn{\sum_{g,h\in G_*}N^{(u,\pi)}_{g,h}a^2\frac{q_1(g)^m}{q_1(h)^m}} \\
 &=a^2\sum_{g,h\in G_*}(\frac{4}{|\Gamma|-|G|}\sum_{v\in U}\inpr{g+h-u}{v}+\delta_{g+h,u}+\delta_{g+\theta(h),u}) \frac{q_1(g)^m}{q_1(h)^m} \\
 &=\frac{1}{|\Gamma|-|G|}\sum_{v\in U}\inpr{-u}{v}A(v,m)A(v,-m)+a^2\sum_{g,h\in G_*} (\delta_{g+h,u}+\delta_{g+\theta(h),u}) \frac{q_1(g)^m}{q_1(h)^m}\\
 &=\frac{1}{|\Gamma|-|G|}\sum_{v\in U}\inpr{-u}{v}A(v,m)A(v,-m)+a^2\sum_{g\in G_*} \frac{q_1(g)^m}{q_1(u-g)^m} \\
 &=\frac{1}{|\Gamma|-|G|}\sum_{v\in U}\inpr{-u}{v}A(v,m)A(v,-m)+\frac{a^2}{2}\sum_{g\in G} \frac{q_1(g)^m}{q_1(u-g)^m}-\frac{a^2}{2}\sum_{g\in U} \frac{q_1(g)^m}{q_1(u-g)^m}\\
 &=\frac{1}{|\Gamma|-|G|}\sum_{v\in U}\inpr{-u}{v}A(v,m)A(v,-m)+\frac{\delta_{mu,0}q_0(u)^m}{2}-\frac{a^2}{2}\sum_{g\in U} \frac{q_1(g)^m}{q_1(u-g)^m}.
\end{align*}
\begin{align*}
\lefteqn{\sum_{g\in G_*,\;\gamma\in \Gamma_*}N^{(u,\pi)}_{g,\gamma}ab(\frac{q_1(g)^m}{q_2(\gamma)^m}+\frac{q_2(\gamma)^m}{q_1(g)^m})} \\
 &=ab\sum_{g\in G_*,\;\gamma\in \Gamma_*}\frac{4}{|\Gamma|-|G|}\sum_{v\in U}\inpr{-u}{v}\inpr{g}{v}\inpr{\gamma}{v}(\frac{q_1(g)^m}{q_2(\gamma)^m}+\frac{q_2(\gamma)^m}{q_1(g)^m}) \\
 &=\frac{1}{|\Gamma|-|G|}\sum_{v\in U}\inpr{-u}{v}(A(v,m)B(v,-m)+B(v,m)A(v,-m)), \\
\end{align*}
\begin{align*}
\lefteqn{\sum_{\gamma,\xi\in \Gamma_*}N^{(u,\pi)}_{\gamma,\xi}b^2\frac{q_2(\gamma)^m}{q_2(\xi)^m}} \\
 &=b^2\sum_{\gamma,\xi\in \Gamma_*}(\frac{4}{|\Gamma|-|G|}\sum_{v\in U}\inpr{-u+\gamma+\xi}{v}-\delta_{\gamma+\xi,u}-\delta_{\gamma+\theta(\xi),u})
 \frac{q_2(\gamma)^m}{q_2(\xi)^m} \\
 &=\frac{1}{|\Gamma|-|G|}B(v,m)B(v,-m)-b^2\sum_{\gamma\in \Gamma_*}\frac{q_2(\gamma)^m}{q_2(u-\gamma)^m}  \\
 &=\frac{1}{|\Gamma|-|G|}B(v,m)B(v,-m)-\frac{\delta_{mu,0}q_0(u)^m}{2}+\frac{b^2}{2}\sum_{\gamma\in U}\frac{q_2(\gamma)^m}{q_2(u-\gamma)^m}.
\end{align*}
Thus we get 
\begin{align*}
\lefteqn{\nu_m((u,\pi))} \\
 &=\frac{1}{|\Gamma|-|G|}\sum_{v\in U}\inpr{-u}{v}\cG(q_1,q_2,v,m)\cG(q_1,q_2,v,-m) 
 =\frac{1}{|\Gamma|-|G|}\sum_{v\in U}\inpr{u}{v}|\cG(q_1,q_2,v,m)|^2. 
\end{align*}

For $\nu_m(g)$, we have 
\begin{align*}
\lefteqn{\nu_m(g)} \\
 &=\sum_{u\in U}N^g_{(u,0),g-u}\frac{a(a-b)}{2}(\frac{q_0(u)^m}{q_1(g-u)^m}+\frac{q_1(g-u)^m}{q_0(u)^m})+ 
 \sum_{u,u'\in U}N^g_{(u,\pi),(u',\pi)}\frac{(a+b)^2}{4}\frac{q_0(u)^m}{q_0(u')^m}\\
 &+\sum_{u\in U\; h\in G_*}N^g_{(u,\pi),h}\frac{a(a+b)}{2}(\frac{q_0(u)^m}{q_1(h)^m}+\frac{q_1(h)^m}{q_0(u)^m}) \\
 &+\sum_{u\in U\; \gamma\in \Gamma_*}N^g_{(u,\pi),\gamma}\frac{b(a+b)}{2}(\frac{q_0(u)^m}{q_2(\gamma)^m}+\frac{q_2(\gamma)^m}{q_0(u)^m})\\
 &+\sum_{h,h'\in G_*}N^g_{h,h'}a^2\frac{q_1(h)^m}{q_1(h')^m}
 +\sum_{h\in G_*,\;\gamma\in \Gamma_*}N^g_{h,\gamma}ab(\frac{q_1(h)^m}{q_2(\gamma)^m}+\frac{q_2(\gamma)^m}{q_1(h)^m})
 +\sum_{\gamma,\xi\in \Gamma_*}N^g_{\gamma,\xi}b^2\frac{q_2(\gamma)^m}{q_2(\xi)^m},
\end{align*}
and 
$$\sum_{u\in U}N^g_{(u,0),g-u}\frac{a(a-b)}{2}(\frac{q_0(u)^m}{q_1(g-u)^m}+\frac{q_1(g-u)^m}{q_0(u)^m})
=\frac{a(a-b)}{2}\sum_{u\in U}(\frac{q_0(u)^m}{q_1(g-u)^m}+\frac{q_1(g-u)^m}{q_0(u)^m}),$$
\begin{align*}
\lefteqn{\sum_{u,u'\in U}N^g_{(u,\pi),(u',\pi)}\frac{(a+b)^2}{4}\frac{q_0(u)^m}{q_1(u')^m}} \\
 &=\frac{(a+b)^2}{4}\sum_{u,u'\in U}\frac{4}{|\Gamma|-|G|}\sum_{v\in U}\inpr{u+u'-g}{v}\frac{q_0(u)^m}{q_0(u')^m} \\
 &=\frac{1}{|\Gamma|-|G|}\sum_{v\in U}\inpr{-g}{v}C(v,m)C(v,-m).
\end{align*}
\begin{align*}
\lefteqn{\sum_{u\in U\; h\in G_*}N^g_{(u,\pi),h}\frac{a(a+b)}{2}(\frac{q_0(u)^m}{q_1(h)^m}+\frac{q_1(h)^m}{q_0(u)^m}) } \\
 &=\frac{a(a+b)}{2}\sum_{u\in U\; h\in G_*}(\frac{4}{|\Gamma|-|G|}\sum_{v\in U}\inpr{u+h-g}{v}+\delta_{g-h,u}+\delta_{g-\theta(h),u})(\frac{q_0(u)^m}{q_1(h)^m}+\frac{q_1(h)^m}{q_0(u)^m})  \\
 &=\frac{1}{|\Gamma|-|G|}\sum_{v\in U}(C(v,m)A(v,-m)+A(v,m)C(v,-m))\\
 &+\frac{a(a+b)}{2}\sum_{u\in U}(\frac{q_0(u)^m}{q_1(g-u)^m}+\frac{q_1(g-u)^m}{q_0(u)^m})
\end{align*}
\begin{align*}
\lefteqn{\sum_{u\in U\; \gamma\in \Gamma_*}N^g_{(u,\pi),\gamma}\frac{b(a+b)}{2}(\frac{q_0(u)^m}{q_2(\gamma)^m}+\frac{q_2(\gamma)^m}{q_0(u)^m})} \\
 &=\frac{b(a+b)}{2}\sum_{u\in U\; \gamma\in \Gamma_*}\frac{4}{|\Gamma|-|G|}\sum_{v\in U}\inpr{u}{v}\inpr{\gamma}{v}\inpr{-g}{v}
 (\frac{q_0(u)^m}{q_2(\gamma)^m}+\frac{q_2(\gamma)^m}{q_0(u)^m}) \\
 &=\frac{1}{|\Gamma|-|G|}\sum_{v\in U}(C(v,m)B(v,-m)+B(v,m)C(v,-m).
\end{align*}
\begin{align*}
\lefteqn{\sum_{h,h'\in G_*}N^g_{h,h'}a^2\frac{q_1(h)^m}{q_1(h')^m}} \\
 &=a^2\sum_{h,h'\in G_*}\frac{4}{|\Gamma|-|G|}\sum_{v\in U}\inpr{h+h'-g}{v})
 \frac{q_1(h)^m}{q_1(h')^m} \\
 &+a^2\sum_{h,h'\in G_*}(\delta_{h+h'-g,0}+\delta_{\theta(h)+h'-g,0}+\delta_{h+\theta(h')-g,0}+\delta_{h+h'-\theta(g),0})\frac{q_1(h)^m}{q_1(h')^m}\\
 &=\frac{1}{|\Gamma|-|G|}\sum_{v\in U}A(v,m)A(v,-m)\\
 &+\frac{a^2}{4}\sum_{h,h'\in G\setminus U}(\delta_{h+h'-g,0}+\delta_{\theta(h)+h'-g,0}+\delta_{h+\theta(h')-g,0}+\delta_{h+h'-\theta(g),0})\frac{q_1(h)^m}{q_1(h')^m}\\
 &=\frac{1}{|\Gamma|-|G|}\sum_{v\in U}A(v,m)A(v,-m)\\
 &+\frac{a^2}{4}\sum_{h,h'\in G}(\delta_{h+h'-g,0}+\delta_{\theta(h)+h'-g,0}+\delta_{h+\theta(h')-g,0}+\delta_{h+h'-\theta(g),0})\frac{q_1(h)^m}{q_1(h')^m}\\
 &-\frac{a^2}{4}\sum_{h\in U; h'\in G}(\delta_{h+h'-g,0}+\delta_{\theta(h)+h'-g,0}+\delta_{h+\theta(h')-g,0}+\delta_{h+h'-\theta(g),0})(\frac{q_1(h)^m}{q_1(h')^m}+\frac{q_1(h')^m}{q_1(h)^m})\\
 &+\frac{a^2}{4}\sum_{h,h'\in U}(\delta_{h+h'-g,0}+\delta_{\theta(h)+h'-g,0}+\delta_{h+\theta(h')-g,0}+\delta_{h+h'-\theta(g),0})\frac{q_1(h)^m}{q_1(h')^m}\\
 &=\frac{1}{|\Gamma|-|G|}\sum_{v\in U}A(v,m)A(v,-m)\\
 &+\frac{a^2}{4}\sum_{h,h'\in G\setminus U}(\delta_{h+h'-g,0}+\delta_{\theta(h)+h'-g,0}+\delta_{h+\theta(h')-g,0}+\delta_{h+h'-\theta(g),0})\frac{q_1(h)^m}{q_1(h')^m}\\
 &=\frac{1}{|\Gamma|-|G|}\sum_{v\in U}A(v,m)A(v,-m)
 +a^2\sum_{h\in G}\frac{q_1(h)^m}{q_1(g-h)^m}
 -a^2\sum_{h\in U}(\frac{q_1(h)^m}{q_1(g-h)^m}+\frac{q_1(g-h)^m}{q_1(h)^m})\\
 &=\frac{1}{|\Gamma|-|G|}\sum_{v\in U}A(v,m)A(v,-m)
 +\delta_{mg,0}q_1(g)^m-a^2\sum_{h\in U}(\frac{q_1(h)^m}{q_1(g-h)^m}+\frac{q_1(g-h)^m}{q_1(h)^m}),
\end{align*}
\begin{align*}
\lefteqn{\sum_{h\in G_*,\;\gamma\in \Gamma_*}N^g_{h,\gamma}ab(\frac{q_1(h)^m}{q_2(\gamma)^m}+\frac{q_2(\gamma)^m}{q_1(h)^m})} \\
 &=ab\sum_{h\in G_*,\;\gamma\in \Gamma_*}\frac{4}{|\Gamma|-|G|}\sum_{v\in U}\inpr{h-g}{v}\inpr{\gamma}{v}(\frac{q_1(h)^m}{q_2(\gamma)^m}+\frac{q_2(\gamma)^m}{q_1(h)^m}) \\
 &=\frac{1}{|\Gamma|-|G|}\sum_{v\in U}\inpr{-g}{v}(A(v,m)B(v,-m)+B(v,m)A(v,-m)), 
\end{align*}
\begin{align*}
\lefteqn{\sum_{\gamma,\xi\in \Gamma_*}N^g_{\gamma,\xi}b^2\frac{q_2(\gamma)^m}{q_2(\xi)^m}} \\
 &=b^2\sum_{\gamma,\xi\in \Gamma_*}\frac{4}{|\Gamma|-|G|}\sum_{v\in U}\inpr{-g}{v}\inpr{\gamma+\xi}{v}\frac{q_2(\gamma)^m}{q_2(\xi)^m} 
 =\frac{1}{|\Gamma|-|G|}\sum_{v\in U}\inpr{-g}{v}B(v,m)B(v,-m).
\end{align*}
Thus we get \begin{align*}
\lefteqn{\nu_m(g)} \\
 &=\frac{1}{|\Gamma|-|G|}\sum_{v\in U}\inpr{-g}{v}\cG(q_1,q_2,v,m)\cG(q_1,q_2,v,-m)+\delta_{mg,0}q_1(g)^m \\
 &=\frac{1}{|\Gamma|-|G|}\sum_{v\in U}\inpr{g}{v}|\cG(q_1,q_2,v,m)|^2+\delta_{mg,0}q_1(g)^m. 
\end{align*}

In a similar way we can compute $\nu_m(\gamma)$. 

\end{proof}

\section{Calculations for Section 4}

\begin{proof}[Proof of Lemma 4.2]
$$\sum_{x}S_{0,x}\overline{S_{0,x}}
=\sum_{x}S_{\pi,x}\overline{S_{\pi,x}} \\
=\frac{a^2+b^2}{2}+\frac{a^2}{2}+a^2|G_*|+\frac{b^2}{2} +b^2|\Gamma_*|=1. $$
$$\sum_{x}S_{0,x}\overline{S_{\pi,x}}
=\frac{a^2-b^2}{2}+\frac{a^2}{2}+a^2|G_*|
 -\frac{b^2}{2}-b^2|\Gamma_*|=0. $$
\begin{align*}
\lefteqn{\sum_{x}S_{0,x}\overline{S_{(g_0,\varepsilon),x}}=\sum_{x}S_{\pi,x}\overline{S_{(g_0,\varepsilon),x}}} \\
 &=\frac{a^2}{2}+\frac{a}{2}\sum_{\delta}(\frac{a}{2}+\frac{\varepsilon \delta s}{2\sqrt{2}})\inpr{-g_0}{l}
 +a^2\sum_{g\in G_*}\inpr{-g_0}{g}=\frac{\delta_{g_0,0}}{2}=0. 
\end{align*}
\begin{align*}
\lefteqn{\sum_{x}S_{0,x}\overline{S_{g,x}}=\sum_{x}S_{\pi,x}\overline{S_{g,x}}} \\
 &=a^2+a^2\inpr{-g}{g_0}+ a^2\sum_{h\in G_*}(\inpr{-g}{h}+\inpr{g}{h})
 =a^2\sum_{h\in G}\inpr{-g}{h}=\delta_{0,g}=0. \\
\end{align*}
\begin{align*}
\lefteqn{\sum_{x}S_{0,x}\overline{S_{(\gamma_0,\varepsilon),x}}=-\sum_{x}S_{\pi,x}\overline{S_{(\gamma_0,\varepsilon),x}}} \\
 &=-\frac{b^2}{2}
 -\frac{b}{2}\sum_{\delta}(\frac{b}{2}-\frac{\varepsilon\delta s}{2\sqrt{2}})\inpr{-\gamma_0}{\gamma_0}
 -b^2\sum_{\gamma\in \Gamma_*}\inpr{-\gamma_0}{\gamma} 
 =-\frac{b^2}{2}\sum_{\gamma\in \Gamma_*}\inpr{u-\gamma_0}{\gamma}=-\frac{\delta_{u,\gamma_0}}{2}=0. 
\end{align*}
\begin{align*}
\lefteqn{\sum_{x}S_{0,x}\overline{S_{\gamma,x}}=-\sum_{x}S_{\pi,x}\overline{S_{\gamma,x}}} \\
 &=-b^2-b^2\sum_{\delta}\inpr{-\gamma}{\gamma_0}
 -b^2\sum_{\xi\in \Gamma_*}(\inpr{-\gamma}{\xi}+\inpr{\gamma}{\xi})
 =-b^2\sum_{\xi\in \Gamma}\inpr{-\gamma}{\xi}=-\delta_{\gamma,0}=0. \\
\end{align*}
\begin{align*}
\lefteqn{\sum_{x}S_{(g_0,\varepsilon),x}\overline{S_{(g_0,\varepsilon'),x}}} \\
 &=\frac{a^2}{2}
 +\sum_{\delta}
 (\frac{a}{2}+\frac{\varepsilon \delta s}{2\sqrt{2}}) 
 (\frac{a}{2}+\frac{\varepsilon'\delta\overline{s}}{2\sqrt{2}})+a^2|G_*|+\frac{\varepsilon\varepsilon'}{4} 
 =\frac{1}{2}+\frac{\varepsilon\varepsilon'}{2}=\delta_{\varepsilon,\varepsilon}.
\end{align*}
$\sum_{x}S_{(\gamma_0,\varepsilon),x}\overline{S_{(\gamma_0,\varepsilon'),x}}=\delta_{\gamma_0,\gamma_0}\delta_{\varepsilon,\varepsilon'}$ 
can be shown in the same way. 
\begin{align*}
\lefteqn{\sum_{x}S_{(g_0,\varepsilon),x}\overline{S_{g,x}}} \\
 &=a^2+a\sum_{\delta}(\frac{a}{2}+\frac{\varepsilon\delta}{2\sqrt{2}})\inpr{g_0-g}{g_0}
 +a^2\sum_{h\in G_*}(\inpr{g_0-g}{h}+\inpr{g_0+g}{h})\\
 &=a^2\sum_{h\in G}\inpr{g_0-g}{h}=\delta_{g_0,g}=0. 
\end{align*}
$\sum_{x}S_{(\gamma_0,\varepsilon),x}\overline{S_{\gamma_0,x}}=0$ can be shown in the same way. 
\begin{align*}
\lefteqn{\sum_{x} S_{(g_0,\varepsilon),x} \overline{S_{(\gamma_0,\epsilon'),x}}   } \\
 &=\sum_{\delta}\inpr{g_0}{g_0}(\frac{a}{2}+\frac{\varepsilon\delta s}{2\sqrt{2}})\frac{\varepsilon'\delta \overline{s}}{4}
 -\sum_{\delta}\frac{\varepsilon \delta s}{4}
 \inpr{-\gamma_0}{\gamma_0}(\frac{b}{2}-\frac{\varepsilon'\delta\overline{s}}{2\sqrt{2}}) \\
 &=\frac{\varepsilon\varepsilon'}{4\sqrt{2}}(\inpr{g_0}{g_0}+\inpr{\gamma_0}{\gamma_0})=0. 
\end{align*}
$\sum_{x} S_{(g_0,\varepsilon),x} \overline{S_{\gamma,x}}=0$, 
$\sum_{x}S_{g,x}\overline{S_{(\gamma_0,\varepsilon),x}}=0$, and 
$\sum_{x}S_{g,x}\overline{S_{\gamma,x}}=0$ are obvious.   
\begin{align*}
\lefteqn{\sum_{x}S_{g,x}\overline{S_{g',x}}} \\
 &=2a^2+2a^2\inpr{g-g'}{g_0}
 +a^2\sum_{h\in G_*}(\inpr{g}{h}+\inpr{-g}{h})(\inpr{g'}{h}+\inpr{-g'}{h}) \\
 &=a^2(\sum_{h\in G}\inpr{g+g'}{h}+\sum_{h\in G}\inpr{g-g'}{h})\\
 &=\delta_{g+g',0}+\delta_{g,g'}=\delta_{g,g'}. 
\end{align*}
$\sum_{x}S_{\gamma,x}\overline{S_{\gamma',x}}=\delta_{\gamma,\gamma'}$ can be shown in the same way. 
Thus $S$ is unitary. 

It is obvious that $T$ and $C$ are unitary.  
Since $\overline{S_{x,y}}=S_{\overline{x},y}=S_{x,\overline{y}}$ and $S$ is symmetric, we have $S^2=C$. 
Since $T_{x,x}=T_{\overline{x},\overline{x}}$, we have $CT=T C$. 

The only remaining condition is $(ST)^3=cC$, and it suffices to verify 
$$\sum_{x}S_{j,x}S_{j',x}T_{x,x}=cS_{j,j'}\overline{T_{j,j}T_{j',j'}}$$
for all $j,j'\in J$. 
\begin{align*}
\lefteqn{\sum_{x}S_{0,x}S_{0,x}T_{x,x}=\sum_{x}S_{\pi,x}S_{\pi,x}T_{x,x}} \\
&=\frac{a^2+b^2}{2}+\frac{a^2}{2}q_1(g_0)
 +a^2\sum_{g\in G_*}q_1(g)
 +\frac{b^2}{2}q_2(\gamma_0)+b^2\sum_{\gamma\in \Gamma_*}q_2(\gamma) \\
 &=\frac{a}{2}\cG(q_1)+\frac{b}{2}\cG(q_2)
 =cS_{0,0}\overline{T_{0,0}T_{0,0}}
 =cS_{\pi,\pi}\overline{T_{\pi,\pi}T_{\pi,\pi}}.
\end{align*}
\begin{align*}
\lefteqn{\sum_{x}S_{0,x}S_{\pi,x}T_{x,x}} \\
 &=\frac{a^2-b^2}{2}+\frac{a^2}{2}q_1(g_0)
 +a^2\sum_{g\in G_*}q_1(g)
 -\frac{b^2}{2}q_2(\gamma_0)-b^2\sum_{\gamma\in \Gamma_*}q_2(\gamma) \\
 &=(\frac{a}{2}\cG(q_1)-\frac{b}{2}\cG(q_2))
 =cS_{0,\pi}\overline{T_{0,0}T_{\pi,\pi}}.
\end{align*}
\begin{align*}
\lefteqn{\sum_{x}S_{0,x}S_{(g_0,\varepsilon),x}T_{x,x}=\sum_{x}S_{\pi,x}S_{(g_0,\varepsilon),x}T_{x,x}} \\
 &=\frac{a^2}{2}+\frac{a^2}{2} \inpr{g_0}{g_0} q_1(g_0)
 +a^2\sum_{g\in G_*}\inpr{g_0}{g}q_1(g) 
 =\frac{a^2}{2}\sum_{g\in G}\inpr{g_0}{g}q_1(g)
 =\frac{a^2}{2}\sum_{g\in G}q_1(g_0+g)\overline{q_1(g_0)} \\
 &=\frac{a}{2}\cG(q_1)\overline{q_1(g_0)}
 =cS_{0,(g_0,\varepsilon)}\overline{T_{0,0}T_{(g_0,\varepsilon),(g_0,\varepsilon)}}
 =cS_{\pi,(g_0,\varepsilon)}\overline{T_{\pi,\pi}T_{(g_0,\varepsilon),(g_0,\varepsilon)}}.
\end{align*}
In a similar way, we can show
\begin{align*}
\lefteqn{\sum_{x}S_{0,x}S_{(\gamma_0,\varepsilon),x}T_{x,x}=-\sum_{x}S_{\pi,x}S_{(\gamma_0,\varepsilon),x}T_{x,x}} \\
 &=cS_{0,(\gamma_0,\varepsilon)}\overline{T_{0,0}T_{(\gamma_0,\varepsilon),(\gamma_0,\varepsilon)}}
 =-cS_{\pi,(\gamma_0,\varepsilon)}\overline{T_{\pi,\pi}T_{(\gamma_0,\varepsilon),(\gamma_0,\varepsilon)}}.
\end{align*}
\begin{align*}
\lefteqn{\sum_{x}S_{0,x}S_{g,x}T_{x,x}=S_{\pi,x}S_{g,x}T_{x,x}} \\
 &=a^2+a^2\inpr{g}{g_0}q_1(g_0)
 +a^2\sum_{h\in G_*}(\inpr{g}{h}+\inpr{-g}{h})q_1(h)
 =a^2\sum_{h\in G}q_1(g+h)\overline{q_1(g)}\\
 &=a\cG(q_1)\overline{q_1(g)}
 =cS_{0,g}\overline{T_{0,0}T_{g,g}}
 =cS_{\pi,g}\overline{T_{\pi,\pi}T_{g,g}}.
\end{align*}
In a similar way, we can show 
$$\sum_{x}S_{0,x}S_{\gamma,x}T_{x,x}=-\sum_{x}S_{\pi,x}S_{\gamma,x}T_{x,x}
=cS_{0,\gamma}\overline{T_{0,0}T_{\gamma,\gamma}}=-cS_{\pi,\gamma}\overline{T_{\pi,\pi}T_{\gamma,\gamma}}.$$
\begin{align*}
\lefteqn{\sum_{x}S_{(g_0,\varepsilon),x}S_{(g_0,\varepsilon'),x}T_{x,x}} \\
 &=\frac{a^2}{2}+\sum_{\delta}(\frac{a}{2}+\frac{\varepsilon\delta s}{2\sqrt{2}}) 
 (\frac{a}{2}+\frac{\varepsilon'\delta s}{2\sqrt{2}})q_1(g_0)+a^2\sum_{g\in G_*}q_1(g)
 +\frac{\varepsilon\varepsilon's^2}{4}q_2(\gamma_0)\\
 &=\frac{a^2}{2}\sum_{g\in G}q_1(g)
 +\frac{\varepsilon\varepsilon' s^2}{4}(q_1(g_0)+q_2(\gamma_0))
 =c\frac{a}{2}+\frac{\varepsilon\varepsilon's^2}{4}\sqrt{2}\overline{s}c\\
 &=cS_{(g_0,\varepsilon),(g_0,\varepsilon')}\overline{T_{(g_0,\varepsilon),(g_0,\varepsilon)}T_{(g_0,\varepsilon'),(g_0,\varepsilon')}}. 
\end{align*}
\begin{align*}
\lefteqn{\sum_{x}S_{(g_0,\varepsilon),x}S_{g,x}T_{x,x}} \\
 &=a^2+2a^2\inpr{g_0+g}{g_0}q_1(g_0)
 +a^2\sum_{h\in G_*}(\inpr{g_0+g}{h}+\inpr{g_0-g}{h})q_1(h)\\
 &=a^2\sum_{h\in G}\inpr{g_0+g}{h}q_1(h)=a\cG(q_1)\overline{q_1}(g_0+g)=ca\overline{\inpr{g_0}{g}q_1(g_0)q_1(g)}\\
 &=cS_{(g_0,\varepsilon),g}\overline{T_{(g_0,\varepsilon),(g_0,\varepsilon)}T_{g,g}}.
\end{align*}
In a similar way, we can show
$\sum_{x}S_{(\gamma_0,\varepsilon),x}S_{\gamma,x}T_{x,x}
=cS_{(\gamma_0,\varepsilon),\gamma}\overline{T_{(\gamma_0,\varepsilon),(\gamma_0,\varepsilon)}T_{\gamma,\gamma}}$.
\begin{align*}
\lefteqn{\sum_{x}S_{(g_0,\varepsilon),x}S_{(\gamma_0,\varepsilon'),x}T_{x,x}} \\
 &=\sum_{\delta}(\frac{a}{2}+\frac{\varepsilon\delta s}{2\sqrt{2}})\inpr{g_0}{g_0}
 \frac{\varepsilon'\delta s}{2\sqrt{2}}q_1(g_0)
 -\sum_{\delta}\frac{\varepsilon\delta s}{2\sqrt{2}}
 (\frac{b}{2}-\frac{\varepsilon'\delta s}{2\sqrt{2}})\inpr{\gamma_0}{\gamma_0}q_2(\gamma_0)\\
 &=\frac{\varepsilon\varepsilon's^2}{4}(\inpr{g_0}{g_0}q_1(g_0)
 +\inpr{\gamma_0}{\gamma_0}q_2(\gamma_0))
 =\frac{\varepsilon\varepsilon's^2}{2\sqrt{2}}\frac{\overline{q_1(g_0)}+\overline{q_2(\gamma_0)}}{\sqrt{2}}\\
 &=\frac{\varepsilon\varepsilon's^2}{2\sqrt{2}}\frac{q_1(g_0)+q_2(\gamma_0)}{\sqrt{2}}\overline{q_1(g_0)q_2(\gamma_0)}
 =cS_{(g_0,\varepsilon),(\gamma_0,\varepsilon')}\overline{T_{(g_0,\varepsilon),(g_0,\varepsilon)}T_{(\gamma_0,\varepsilon'),(\gamma_0,\varepsilon')}}.
\end{align*}
It is easy to show 
$$\sum_{x}S_{(g_0,\varepsilon),x}S_{\gamma,x}T_{x,x}=0
=cS_{(g_0,\varepsilon),\gamma}\overline{T_{(g_0,\varepsilon),(g_0,\varepsilon)}T_{\gamma,\gamma}},$$
$$\sum_{x}S_{g,x}S_{(\gamma_0,\varepsilon),x}T_{x,x}=0
=cS_{(g,(\gamma_0,\varepsilon)}\overline{T_{g,g}T_{\gamma_0,\varepsilon),(\gamma_0,\varepsilon)}}.$$
$$\sum_{x}S_{g,x}S_{\gamma,x}T_{x,x}=0
=cS_{g,\gamma}\overline{T_{g,g}T_{\gamma,\gamma}}.$$
\begin{align*}
\lefteqn{\sum_{x}S_{g,x}S_{g',x}T_{x,x}} \\
 &=2a^2+2a^2\inpr{g+g'}{g_0}_1q_1(g_0)
 +a^2\sum_{h\in G_*}(\inpr{g}{h}+\inpr{-g}{h})(\inpr{g'}{h}+\inpr{-g'}{h})q_1(h)\\
 &=a^2\sum_{h\in G}(q_1(g+g'+h)\overline{q_1(g+g')}+q_1(g-g'+h)\overline{q_1(g-g')})\\
 &=ca\overline{(\inpr{g}{g'}q_1(g)q_1(g')+\inpr{g}{-g'}q_1(g)q_1(g')}\\
 &=cS_{g,g'}\overline{T_{g,g}T_{g',g'}}.
\end{align*}
In a similar way, we can show 
$\sum_{x}S_{\gamma,x}S_{\gamma',x}T_{x,x}
=cS_{\gamma,\gamma'}\overline{T_{\gamma,\gamma}T_{\gamma',\gamma'}}.$
\begin{align*}
\lefteqn{\sum_{x}S_{(\gamma_0,\varepsilon),x}S_{(\gamma_0,\varepsilon'),x}T_{x,x}} \\
 &=\frac{b^2}{2}+\frac{\varepsilon\varepsilon's^2}{4}q_1(g_0)
 +\sum_{\delta}(\frac{b}{2}-\frac{\varepsilon\delta s}{2\sqrt{2}})(\frac{b}{2}-\frac{\varepsilon'\delta s}{2\sqrt{2}})q_2(\gamma_0)
 +b^2\sum_{\gamma\in \Gamma_*}q_2(\gamma)\\
 &=\frac{b^2}{2}\sum_{\gamma\in \Gamma}q_2(\gamma)
 +\frac{\varepsilon\varepsilon's^2}{4}(q_1(g_0)+q_2(\gamma_0))\\
 &=-c\frac{b}{2}+c\frac{\varepsilon\varepsilon's}{2\sqrt{2}}
 =cS_{(\gamma_0,\varepsilon),(\gamma_0,\varepsilon')}
 \overline{T_{(\gamma_0,\varepsilon),(\gamma_0,\varepsilon)}T_{(\gamma_0,\varepsilon'),(\gamma_0,\varepsilon')}}. 
\end{align*}
\end{proof}

\begin{proof}[Proof of Lemma 4.3]
\begin{align*}
\lefteqn{N_{\pi,\pi,\pi}} \\
 &=(\frac{(a+b)^3}{4(a-b)}+\frac{(a-b)^3}{4(a+b)})+\frac{a^2}{2} 
 +a^2|G_*|-\frac{b^2}{2}-b^2|\Gamma_*|\\
 &=(\frac{(a+b)^4+(a-b)^4}{4(a^2-b^2)}-\frac{a^2-b^2}{2})+\frac{a^2}{2}|G|-\frac{b^2}{2}|\Gamma|
 =\frac{4a^2b^2}{a^2-b^2}=\frac{4}{|\Gamma|-|G|}. 
\end{align*}
\begin{align*}
\lefteqn{N_{\pi,\pi,(g_0,\varepsilon)}} \\
 &=(\frac{a(a+b)^2}{4(a-b)}+\frac{a(a-b)^2}{4(a+b)}) +\frac{a^2}{2}\inpr{g_0}{g_0}+a^2\sum_{h\in G_*}\inpr{g_0}{h}\\
 &=(a\frac{(a+b)^3+(a-b)^3}{4(a^2-b^2)}-\frac{a^2}{2})
 +\frac{a^2}{2}\sum_{h\in G}\inpr{g_0}{h} 
 =\frac{2a^2b^2}{a^2-b^2}+\frac{1}{2}\delta_{g_0,0} 
 =\frac{2}{|\Gamma|-|G|}.
\end{align*}
\begin{align*}
\lefteqn{N_{\pi,\pi,g}} \\
 &=(\frac{a(a+b)^2}{2(a-b)}+\frac{a(a-b)^2}{2(a+b)})+a^2\inpr{g}{g_0}+a^2\sum_{h\in G_*}(\inpr{g}{h}+\inpr{g}{-h})\\
 &=(a\frac{(a+b)^3+(a-b)^3}{2(a^2-b^2)}-a^2)+a^2\sum_{h\in G}\inpr{g}{h}
 =\frac{4a^2b^2}{a^2-b^2}+\delta_{g,0}=\frac{4}{|\Gamma|-|G|}.
\end{align*}
\begin{align*}
\lefteqn{N_{\pi,\pi,(\gamma_0,\varepsilon)}} \\
 &=(\frac{b(a+b)^2}{4(a-b)}-\frac{b(a-b)^2}{4(a+b)})
 -\frac{b^2}{2}\inpr{\gamma_0}{\gamma_0}-b^2\sum_{\gamma\in \Gamma_*}\inpr{\gamma_0}{\gamma}\\
 &=(\frac{b^2(3a^2+b^2)}{2(a^2-b^2)}+\frac{b^2}{2})-\frac{b^2}{2}\sum_{\gamma\in \Gamma}\inpr{\gamma_0}{\gamma}
 =\frac{2a^2b^2}{a^2-b^2}-\frac{\delta_{\gamma,0}}{2}=\frac{2}{|\Gamma|-|G|}.
\end{align*}
\begin{align*}
\lefteqn{N_{\pi,\pi,\gamma}} \\
 &=(\frac{b(a+b)^2}{2(a-b)}-\frac{b(a-b)^2}{2(a+b)}) -b^2\sum_{\gamma_0\in \Sigma_*}\inpr{\gamma}{\gamma_0} 
 -b^2\sum_{\xi\in \Gamma_*}(\inpr{\gamma}{\xi}+\inpr{\gamma}{-\xi})\\
 &=(b\frac{(a+b)^3-(a-b)^3}{2(a^2-b^2)}+b^2)-b^2\sum_{\xi\in \Gamma}\inpr{\gamma}{\xi} 
 =\frac{4a^2b^2}{a^2-b^2}-\delta_{\gamma,0} 
 =\frac{4}{|\Gamma|-|G|}.\\
\end{align*}
\begin{align*}
\lefteqn{N_{\pi,(g_0,\varepsilon),(g_0,\varepsilon')}} \\
 &=(\frac{a^2(a+b)}{4(a-b)}+\frac{a^2(a-b)}{4(a+b)})
 +\sum_{\delta}(\frac{a}{2}+\frac{\varepsilon\delta s}{2\sqrt{2}})
 (\frac{a}{2}+\frac{\varepsilon'\delta s}{2\sqrt{2}})
 +a^2|G_*|-\sum_{\delta}\frac{\varepsilon\delta s}{2\sqrt{2}}\frac{\varepsilon'\delta s}{2\sqrt{2}}\\
 &=(a^2\frac{(a+b)^2+(a-b)^2}{4(a^2-b^2)}-\frac{a^2}{2})+\frac{1}{2}
 =\frac{1}{|\Gamma|-|G|}+\frac{1}{2}\\
\end{align*}
\begin{align*}
\lefteqn{N_{\pi,(g_0,\varepsilon),g}} \\
 &=(\frac{a^2(a+b)}{2(a-b)}+\frac{a^2(a-b)}{2(a+b)})
 +a\sum_{\delta}\inpr{g_0+g}{g_0}(\frac{a}{2}+\frac{\varepsilon \delta s}{2\sqrt{2}})
 +a^2\sum_{h\in G_*}\inpr{g_0}{h}(\inpr{g}{h}+\inpr{-g}{h})\\
 &=a^2(\frac{(a+b)^2+(a-b)^2}{2(a^2-b^2)}-1)+a^2\sum_{h\in G}\inpr{g_0+g}{h}=\frac{2}{|\Gamma|-|G|}.
\end{align*}
\begin{align*}
\lefteqn{N_{\pi,(g_0,\varepsilon),(\gamma_0,\varepsilon')}} \\
 &=(\frac{ab(a+b)}{4(a-b)}-\frac{ab(a-b)}{4(a+b)})
 +\sum_{\delta}(\frac{a}{2}+\frac{\varepsilon\delta s}{2\sqrt{2}})\inpr{g_0}{g_0}\frac{\varepsilon'\delta s}{2\sqrt{2}} 
 +\sum_{\delta}\frac{\varepsilon\delta s}{2\sqrt{2}}(\frac{b}{2}-\frac{\varepsilon'\delta s}{2\sqrt{2}})\inpr{\gamma_0}{\gamma_0} \\
 &=\frac{a^2b^2}{a^2-b^2}
 +\frac{\varepsilon\varepsilon's^2}{4}(\inpr{g_0}{g_0}-\inpr{\gamma_0}{\gamma_0})
 =\frac{1}{|\Gamma|-|G|}+\frac{\varepsilon\varepsilon's^2}{4} (\inpr{g_0}{g_0}-\inpr{\gamma_0}{\gamma_0})\\
\end{align*}
$$N_{\pi,(g_0,\varepsilon),\gamma}=\frac{ab}{2}(\frac{a+b}{a-b}-\frac{a-b}{a+b})
 +b\sum_{\delta}\frac{\varepsilon \delta s}{4}\inpr{\gamma}{\gamma_0} 
 =\frac{2}{|\Gamma|-|G|}.$$
\begin{align*}
\lefteqn{N_{\pi,g,g'}} \\
 &=(a^2\frac{a+b}{a-b}+a^2\frac{a-b}{a+b})+2a^2
 +a^2\sum_{h\in G_*}(\inpr{g}{h}+\inpr{g}{-h})(\inpr{g'}{h}+\inpr{g'}{-h}) \\
 &=a^2(\frac{(a+b)^2+(a-b)^2}{a^2-b^2}-2)+a^2\sum_{h\in G}(\inpr{g+g'}{h}+\inpr{g-g'}{h})\\
 &=\frac{4a^2b^2}{a^2-b^2} +\delta_{g+g',0}+\delta_{g-g',0}
 =\frac{4}{|\Gamma|-|G|} +\delta_{g,g'}.
\end{align*}
$$N_{\pi,g,(\gamma_0,\varepsilon)}=\frac{ab}{2}(\frac{a+b}{a-b}-\frac{a-b}{a+b})
 +a\sum_{\delta}\inpr{g}{g_0}\frac{\varepsilon \delta s}{2\sqrt{2}} 
 =\frac{2}{|\Gamma|-|G|}. $$
$$N_{\pi,g,\gamma}=(\frac{ab(a+b)}{a-b}-\frac{ab(a-b)}{a+b})=ab\frac{(a+b)^2-(a-b)^2}{a^2-b^2}
 =\frac{4a^2b^2}{a^2-b^2}=\frac{4}{|\Gamma|-|G|}.$$
\begin{align*}
\lefteqn{N_{\pi,(\gamma_0,\varepsilon),(\gamma_0,\varepsilon')}} \\
 &=\frac{b^2}{4}(\frac{a+b}{a-b}+\frac{a-b}{a+b})
 +\sum_{\delta}\frac{\varepsilon \delta s}{2\sqrt{2}}\frac{\varepsilon'\delta s}{2\sqrt{2}} 
 -\sum_{\delta}(\frac{b}{2}-\frac{\varepsilon\delta s}{2\sqrt{2}}) (\frac{b}{2}-\frac{\varepsilon\delta s}{2\sqrt{2}}) 
 -b^2|\Gamma_*|\\
 &=\frac{b^2}{2}(\frac{a^2+b^2}{a^2-b^2}+1)
 -\frac{1}{2}
 =\frac{1}{|\Gamma|-|G|}-\frac{1}{2}.
\end{align*}
\begin{align*}
\lefteqn{N_{\pi,(\gamma_0,\varepsilon),\gamma}} \\
 &=\frac{b^2}{2}(\frac{a+b}{a-b}+\frac{a-b}{a+b})
 -b\sum_{\delta}(\frac{b}{2}-\frac{\varepsilon \delta s}{4})\inpr{\gamma_0+\gamma}{\gamma_0} 
 -b^2\sum_{\xi\in \Gamma_*}\inpr{\gamma_0}{\xi}(\inpr{\gamma}{\xi}+\inpr{\gamma}{-\xi}) \\
 &=(\frac{b^2(a^2+b^2)}{a^2-b^2}+b^2)-b^2\sum_{\xi\in \Gamma}\inpr{\gamma_0+\gamma}{\xi}=\frac{2}{|\Gamma|-|G|}. 
\end{align*}
\begin{align*}
\lefteqn{N_{\pi,\gamma,\gamma'}} \\
 &=(b^2\frac{a+b}{a-b}+b^2\frac{a-b}{a+b})-2b^2\inpr{\gamma+\gamma'}{\gamma_0}
 -b^2\sum_{\xi\in \Gamma_*}(\inpr{\gamma}{\xi}+\inpr{\gamma}{-\xi})(\inpr{\gamma'}{\xi}+\inpr{\gamma'}{-\xi}) \\
 &=b^2(\frac{(a+b)^2+(a-b)^2}{a^2-b^2}+2)
 -b^2\sum_{\xi\in \Gamma}(\inpr{\gamma+\gamma'}{\xi}+\inpr{\gamma-\gamma'}{\xi})\\
 &=\frac{4a^2b^2}{a^2-b^2}-\delta_{\gamma+\gamma',0}-\delta_{\gamma-\gamma',0}=\frac{4}{|\Gamma|-|G|}-\delta_{\gamma,\gamma'}.
\end{align*}
\begin{align*}
\lefteqn{N_{(g_0,\varepsilon),(g_0,\varepsilon'),(g_0,\varepsilon'')}} \\
 &=(\frac{a^3}{4(a-b)}+\frac{a^3}{4(a+b)})
 +\frac{2}{a}\sum_{\delta}(\frac{a}{2}+\frac{\varepsilon \delta s}{2\sqrt{2}})(\frac{a}{2}+\frac{\varepsilon' \delta s}{2\sqrt{2}}) 
 (\frac{a}{2}+\frac{\varepsilon'' \delta s}{2\sqrt{2}})\inpr{g_0}{g_0}\\
 &+a^2\sum_{g\in G_*}\inpr{g_0}{g} 
 +\frac{2}{b}\sum_{\delta}\frac{\varepsilon\delta s}{2\sqrt{2}}\frac{\varepsilon'\delta s}{2\sqrt{2}}\frac{\varepsilon''\delta s}{2\sqrt{2}}\\
 &=(\frac{a^4}{2(a^2-b^2)}-\frac{a^2}{2})+\frac{a^2}{2}\sum_{g\in G}\inpr{g_0}{g} 
 +\frac{(\varepsilon\varepsilon'+\varepsilon'\varepsilon''+\varepsilon''\varepsilon)s^2}{4}\inpr{g_0}{g_0} \\
 &=\frac{a^2b^2}{2(a^2-b^2)}
 +\frac{\varepsilon\varepsilon'+\varepsilon'\varepsilon''+\varepsilon''\varepsilon}{4}
 =\frac{1}{2(|\Gamma|-|G|)}
 +\frac{\varepsilon\varepsilon'+\varepsilon'\varepsilon''+\varepsilon''\varepsilon}{4}.
\end{align*}
\begin{align*}
\lefteqn{N_{(g_0,\varepsilon),(g_0,\varepsilon'),g}} \\
 &=(\frac{a^3}{2(a-b)}+\frac{a^3}{2(a+b)})
 +2\sum_{\delta}(\frac{a}{2}+\frac{\varepsilon\delta s}{2\sqrt{2}})(\frac{a}{2}+\frac{\varepsilon'\delta s}{2\sqrt{2}})\inpr{g}{g_0}
 +a^2\sum_{h\in G_*}(\inpr{g}{h}+\inpr{g}{-h}) \\
 &=(\frac{a^4}{a^2-b^2}-a^2)+a^2\sum_{h\in G}\inpr{g}{h}
 +\frac{\varepsilon\varepsilon's^2}{2}\inpr{g}{g_0} 
 =\frac{1}{|\Gamma|-|G|}+\frac{\varepsilon\varepsilon'\inpr{g_0+g}{g_0}}{2}.
\end{align*}
\begin{align*}
\lefteqn{N_{(g_0,\varepsilon),(g_0,\varepsilon'),(\gamma_0,\varepsilon'')}} \\
 &=(\frac{a^2b}{4(a-b)}-\frac{a^2b}{4(a+b)})
 +\frac{2}{a}\sum_{\delta}(\frac{a}{2}+\frac{\varepsilon\delta s}{2\sqrt{2}})(\frac{a}{2}+\frac{\varepsilon'\delta s}{2\sqrt{2}})
 \frac{\varepsilon''\delta s}{2\sqrt{2}} \\
 &-\frac{2}{b}\sum_{\delta}\frac{\varepsilon\delta s}{2\sqrt{2}}\frac{\varepsilon'\delta s}{2\sqrt{2}}
 (\frac{b}{2}-\frac{\varepsilon''\delta s}{2\sqrt{2}})\inpr{\gamma_0}{\gamma_0} \\
 &=\frac{a^2b^2}{2(a^2-b^2)} +\frac{\varepsilon''(\varepsilon+\varepsilon')s^2}{4}
 -\frac{\varepsilon \varepsilon's^2}{4}\inpr{\gamma_0}{\gamma_0}
 =\frac{1}{2(|\Gamma|-|G|)}
 +\frac{\varepsilon''(\varepsilon+\varepsilon')\inpr{g_0}{g_0}+\varepsilon \varepsilon'}{4}.
\end{align*}
\begin{align*}
\lefteqn{N_{(g_0,\varepsilon),(g_0,\varepsilon),\gamma}} \\
 &=\frac{a^2b}{2(a-b)}-\frac{a^2b}{2(a+b)}
 -2\sum_{\delta}\frac{\varepsilon\delta s}{2\sqrt{2}}\frac{\varepsilon'\delta s}{2\sqrt{2}}\inpr{\gamma}{\gamma_0}\\
 &=\frac{a^2b^2}{a^2-b^2}-\frac{\varepsilon\varepsilon' s^2}{2}\inpr{\gamma_0}{\gamma} 
 =\frac{1}{|\Gamma|-|G|}-\frac{\varepsilon\varepsilon' \inpr{g_0}{g_0}\inpr{\gamma_0}{\gamma}}{2}.
\end{align*}
\begin{align*}
\lefteqn{N_{(g_0,\varepsilon),g,g'}} \\
 &=(\frac{a^3}{a-b}+\frac{a^3}{a+b})+2a\sum_{\delta}(\frac{a}{2}+\frac{\varepsilon \delta s}{2\sqrt{2}})\inpr{g_0+g+g'}{g_0} \\
 &+a^2\sum_{h\in G_*}\inpr{g_0}{h}(\inpr{g}{h}+\inpr{g}{-h})(\inpr{g'}{h}+\inpr{g'}{-h}) \\
 &=a^2(\frac{2a^2}{a^2-b^2}-2)+a^2\sum_{h\in G}(\inpr{g_0+g+g'}{h}+\inpr{g_0+g-g'}{h}) \\
 &=\frac{2}{|\Gamma|-|G|}+\delta_{g_0+g+g',0}+\delta_{g_0+g-g',0}. 
\end{align*}
\begin{align*}
\lefteqn{N_{(g_0,\varepsilon),g,(\gamma_0,\varepsilon')}} \\
 &=(\frac{a^2b}{2(a-b)}-\frac{a^2b}{2(a+b)})
 +2\sum_{\delta}(\frac{a}{2}+\frac{\varepsilon \delta s}{2\sqrt{2}})\frac{\varepsilon'\delta s}{2\sqrt{2}}\inpr{g_0+g}{g_0} \\
 &=\frac{1}{|\Gamma|-|G|}+\frac{\varepsilon \varepsilon' s^2}{2}\inpr{g_0+g}{g_0} 
 =\frac{1}{|\Gamma|-|G|}+\frac{\varepsilon\varepsilon'\inpr{g}{g_0}}{2}.
\end{align*}
$$N_{(g_0,\varepsilon),g,\gamma}=(\frac{a^2b}{a-b}-\frac{a^2b}{a+b})=\frac{2}{|\Gamma|-|G|}.$$
\begin{align*}
\lefteqn{N_{(g_0,\varepsilon''),(\gamma_0,\varepsilon),(\gamma_0,\varepsilon')}} \\
 &=(\frac{ab^2}{4(a-b)}+\frac{ab^2}{4(a+b)})
 +\frac{2}{a}\sum_{\delta}(\frac{a}{2}+\frac{\varepsilon''\delta s}{2\sqrt{2}})\inpr{g_0}{g_0}
 \frac{\varepsilon \delta s}{2\sqrt{2}}\frac{\varepsilon' \delta s}{2\sqrt{2}}\\
 &+\frac{2}{b}\sum_{\delta}\frac{\varepsilon''\delta s}{2\sqrt{2}}(\frac{b}{2}-\frac{\varepsilon\delta s}{2\sqrt{2}}) (\frac{b}{2}-\frac{\varepsilon'\delta s}{2\sqrt{2}})\\
 &=\frac{a^2b^2}{2(a^2-b^2)}
 +\frac{\varepsilon\varepsilon's^2}{4}\inpr{g_0}{g_0} -\frac{\varepsilon''(\varepsilon+\varepsilon')s^2}{4} 
 =\frac{1}{2(|\Gamma|-|G|)}+\frac{\varepsilon\varepsilon'-\varepsilon''(\varepsilon+\varepsilon')\inpr{g_0}{g_0}}{4}.
\end{align*}
\begin{align*}
\lefteqn{N_{(g_0,\varepsilon),(\gamma_0,\varepsilon'),\gamma}} \\
 &=(\frac{ab^2}{2(a-b)}+\frac{ab^2}{2(a+b)})
 +2\sum_{\delta}\frac{\varepsilon\delta s}{2\sqrt{2}}(\frac{b}{2}-\frac{\varepsilon'\delta s}{2\sqrt{2}})
 \inpr{\gamma_0+\gamma}{\gamma_0} \\
 &=\frac{1}{|\Gamma|-|G|}-\frac{\varepsilon\varepsilon's^2}{2}\inpr{\gamma_0+\gamma}{\gamma_0} 
  =\frac{1}{|\Gamma|-|G|}+\frac{\varepsilon\varepsilon'\inpr{\gamma}{\gamma_0}}{2}. 
\end{align*}
\begin{align*}
\lefteqn{N_{(g_0,\varepsilon),\gamma,\gamma'}} \\
 &=\frac{ab^2}{a-b}+\frac{ab^2}{a+b}+2b\inpr{\gamma_0}{\gamma+\gamma'}\sum_{\delta}\frac{s\varepsilon\delta}{2\sqrt{2}} \\
 &=\frac{2}{|\Gamma|-|G|}. 
\end{align*}
\begin{align*}
\lefteqn{N_{g,g',g''}} \\
 &=(\frac{2a^3}{a-b}+\frac{2a^3}{a+b})+4a^2\inpr{g+g'+g''}{g_0}\\
 &+a^2\sum_{h\in G_*}(\inpr{g}{h}+\inpr{-g}{h})(\inpr{g'}{h}+\inpr{-g'}{h})(\inpr{g''}{h}+\inpr{-g''}{h}) \\
 &=(\frac{4a^4}{a^2-b^2}-4a^2)\\
 &+a^2\sum_{h\in G}(\inpr{g+g'+g''}{h}+\inpr{-g+g'+g''}{h}+\inpr{g-g'+g''}{h}+\inpr{g+g'-g''}{h})\\
 &=\frac{4}{|\Gamma|-|G|}
 +\delta_{g+g'+g'',0}+\delta_{-g+g'+g'',0}+\delta_{g-g'+g'',0}+\delta_{g+g'-g'',0}.
\end{align*}
$$N_{g,g',(\gamma_0,\varepsilon)}
=(\frac{a^2b}{a-b}-\frac{a^2b}{a+b})+2a\sum_{\delta}\inpr{g+g'}{g_0}\frac{\delta\varepsilon s}{2\sqrt{2}} 
 =\frac{2}{|\Gamma|-|G|}.$$
$$N_{g,g',\gamma}=(\frac{2a^2b}{a-b}-\frac{2a^2b}{a+b})=\frac{4a^2b^2}{a^2-b^2}=\frac{4}{|\Gamma|-|G|}.$$
\begin{align*}
\lefteqn{N_{g,(\gamma_0,\varepsilon),(\gamma_0,\varepsilon')}} \\
 &=(\frac{ab^2}{2(a-b)}+\frac{ab^2}{2(a+b)})
 +2\sum_{\delta}\inpr{g}{g_0}\frac{\varepsilon\delta s}{2\sqrt{2}}\frac{\varepsilon' \delta s}{2\sqrt{2}} \\
 &=\frac{1}{|\Gamma|-|G|}+\frac{\varepsilon\varepsilon's^2}{2}\inpr{g}{g_0} 
 =\frac{1}{|\Gamma|-|G|}+\frac{\varepsilon\varepsilon'\inpr{g_0+g}{g_0} }{2}. 
\end{align*}
$$N_{g,(\gamma_0,\varepsilon),\gamma}=(\frac{ab^2}{a-b}+\frac{ab^2}{a+b})=\frac{2}{|\Gamma|-|G|}.$$
$$N_{g,\gamma,\gamma'}=(\frac{2ab^2}{a-b}+\frac{2ab^2}{a+b})=\frac{4a^2b^2}{a^2-b^2}=\frac{4}{|\Gamma|-|G|}.$$
\begin{align*}
\lefteqn{N_{(\gamma_0,\varepsilon),(\gamma_0,\varepsilon'),(\gamma_0,\varepsilon'')}} \\
 &=(\frac{b^3}{4(a-b)}-\frac{b^3}{4(a+b)})
 +\frac{2}{a}\sum_{(l,\delta)\in J_1}\frac{\varepsilon\delta s}{2\sqrt{2}}\frac{\varepsilon'\delta s}{2\sqrt{2}}\frac{\varepsilon''\delta s}{2\sqrt{2}} \\
 &-\frac{2}{b}\sum_{\delta}(\frac{b}{2}-\frac{\varepsilon\delta s}{2\sqrt{2}})(\frac{b}{2}-\frac{\varepsilon'\delta s}{2\sqrt{2}})(\frac{b}{2}-\frac{\varepsilon''\delta s}{2\sqrt{2}})
 \inpr{\gamma_0}{\gamma_0} 
 -b^2\sum_{\gamma\in \Gamma_*} \inpr{\gamma_0}{\gamma} \\
 &=(\frac{b^4}{2(a^2-b^2)}+\frac{b^2}{2})
 -\frac{b^2}{2}\sum_{\gamma\in \Gamma}\inpr{\gamma_0}{\gamma}
 -\frac{(\varepsilon\varepsilon'+\varepsilon'\varepsilon''+\varepsilon''\varepsilon)s^2}{4}\inpr{\gamma_0}{\gamma_0}\\
 &=\frac{1}{2(|\Gamma|-|G|)}+\frac{\varepsilon\varepsilon'+\varepsilon'\varepsilon''+\varepsilon''\varepsilon}{4}.
\end{align*}
\begin{align*}
\lefteqn{N_{(\gamma_0,\varepsilon),(\gamma_0,\varepsilon'),\gamma}} \\
 &=\frac{b^3}{2(a-b)}-\frac{b^3}{2(a+b)}
 -2\sum_{\delta}(\frac{b}{2}-\frac{\varepsilon \delta s}{2\sqrt{2}})(\frac{b}{2}-\frac{\varepsilon'\delta s}{2\sqrt{2}})\inpr{\gamma}{\gamma_0} 
 -b^2\sum_{\xi\in \Gamma_*}(\inpr{\gamma}{\xi}+\inpr{\gamma}{-\xi}) \\
 &=(\frac{b^4}{a^2-b^2}+b^2)-b^2\sum_{\xi\in \Gamma}\inpr{\gamma}{\xi}
 -\frac{\varepsilon\varepsilon's^2}{2}\inpr{\gamma}{\gamma_0}
 =\frac{1}{|\Gamma|-|G|}+\frac{\varepsilon\varepsilon'\inpr{\gamma_0+\gamma}{\gamma_0}}{2}.
\end{align*}
\begin{align*}
\lefteqn{N_{(\gamma_0,\varepsilon),\gamma,\gamma'}} \\
 &=\frac{b^3}{a-b}-\frac{b^3}{a+b}
 -2b\sum_{\delta}(\frac{b}{2}-\frac{\varepsilon\delta s}{2\sqrt{2}})\inpr{\gamma_0+\gamma+\gamma'}{\gamma_0} \\
 &-b^2\sum_{\xi\in \Gamma_*}\inpr{\gamma_0}{\xi}(\inpr{\gamma}{\xi}+\inpr{\gamma}{-\xi})(\inpr{\gamma'}{\xi}+\inpr{\gamma'}{-\xi}) \\
 &=(\frac{2b^4}{a^2-b^2}+2b^2)-b^2\sum_{\xi\in \Gamma}(\inpr{\gamma_0+\gamma+\gamma'}{\xi}+\inpr{\gamma_0+\gamma-\gamma'}{\xi})\\
 &=\frac{2}{|\Gamma|-|G|}-\delta_{\gamma_0+\gamma+\gamma',0}-\delta_{\gamma_0+\gamma-\gamma',0}.
\end{align*}
\begin{align*}
\lefteqn{N_{\gamma,\gamma',\gamma''}} \\
 &=(\frac{2b^3}{a-b}-\frac{2b^3}{a+b})-4b^2\inpr{\gamma+\gamma'+\gamma''}{\gamma_0} \\
 &-b^2\sum_{\xi\in \Gamma_*}(\inpr{\gamma}{\xi}+\inpr{-\gamma}{\xi})(\inpr{\gamma'}{\xi}+\inpr{-\gamma'}{\xi})(\inpr{\gamma}{\xi}+\inpr{-\gamma''}{\xi}) \\
 &=(\frac{4b^4}{a^2-b^2}+4b^2)\\
 &-b^2\sum_{\xi\in \Gamma}(\inpr{\gamma+\gamma'+\gamma''}{\xi}+\inpr{-\gamma+\gamma'+\gamma''}{\xi}
 +\inpr{\gamma-\gamma'+\gamma''}{\xi}+\inpr{\gamma+\gamma'-\gamma''}{\xi})\\
 &=\frac{4}{|\Gamma|-|G|}
 -\delta_{\gamma+\gamma'+\gamma'',0}-\delta_{-\gamma+\gamma'+\gamma'',0}
 -\delta_{\gamma-\gamma'+\gamma'',0}-\delta_{\gamma+\gamma'-\gamma'',0}.
\end{align*}
\end{proof}

\begin{proof}[Proof of Lemma 4.7]
We set 
$$A_m=2a\sum_{g\in G_*}q_1(g)^m,\quad B_m=2b\sum_{\gamma\in \Gamma_*}q_2(\gamma)^m.$$
Then 
$$\cG(q_1,m)+\cG(q_2,m)=a(1+q_1(g_0)^m)+b(1+q_2(\gamma_0)^m)+A_m+B_m.$$

We have 
\begin{align*}
\lefteqn{\nu_m(\pi)} \\
 &=N^{\pi}_{0,\pi}\frac{a^2-b^2}{2} 
 +N^{\pi}_{\pi,\pi}\frac{(a+b)^2}{4}+\sum_{\varepsilon}N^\pi_{\pi,(g_0,\varepsilon)}\frac{a(a+b)}{4}(q_1(g_0)^m+q_1(g_0)^{-m}) \\
 &+\sum_{g\in G_*}N^{\pi}_{\pi,g}\frac{a(a+b)}{2}(q_1(g)^m+q_1(g)^{-m})
 +\sum_{\varepsilon}N^\pi_{\pi,(\gamma_0,\varepsilon)}\frac{b(a+b)}{4}(q_2(\gamma_0)^m+q_2(\gamma_0)^{-m})\\
 &+\sum_{\gamma\in \Gamma_*}N^{\pi}_{\pi,\gamma}\frac{b(a+b)}{2}(q_2(\gamma)^m+q_2(\gamma)^{-m})  
 +\sum_{\varepsilon,\varepsilon'}N^\pi_{(g_0,\varepsilon),(g_0,\varepsilon')}\frac{a^2}{4} +\sum_{\varepsilon,\varepsilon'}N^\pi_{(\gamma_0,\varepsilon),(\gamma_0,\varepsilon')}\frac{b^2}{4}\\
 &+\sum_{g\in G_*,\varepsilon}(N^\pi_{(g_0,\varepsilon),g}\frac{a^2}{2}(\frac{q_1(g_0)^m}{q_1(g)^m}+\frac{q_1(g)^m}{q_1(g_0)})
 +N^\pi_{(\gamma_0,\varepsilon),g}\frac{ab}{2}(\frac{q_2(\gamma_0)^m}{q_1(g)^m}+\frac{q_1(g)^m}{q_2(\gamma_0)^m}))\\
 &+\sum_{\gamma\in \Gamma_*,\varepsilon}(N^\pi_{(g_0,\varepsilon),\gamma}\frac{ab}{2}(\frac{q_1(g_0)^m}{q_2(\gamma)^m}+\frac{q_2(\gamma)^m}{q_1(g_0)^m})
 +N^\pi_{(\gamma_0,\varepsilon),\gamma}\frac{ab}{2}(\frac{q_2(\gamma_0)^m}{q_2(\gamma)^m}+\frac{q_2(\gamma)^m}{q_2(\gamma_0)^m}))\\
 &+\sum_{\varepsilon,\varepsilon'}N^\pi_{(g_0,\varepsilon),(\gamma_0,\varepsilon')}\frac{ab}{4}(\frac{q_1(g_0)^m}{q_2(\gamma_0)^m}+\frac{q_2(\gamma_0)^m}{q_1(g_0)^m})\\  
 &+\sum_{g,h\in G_*}N^{\pi}_{g,h}a^2\frac{q_1(g)^m}{q_1(h)^m}
 +\sum_{g\in G_*,\;\gamma\in \Gamma_*}N^{\pi}_{g,\gamma}ab(\frac{q_1(g)^m}{q_2(\gamma)^m}+\frac{q_2(\gamma)^m}{q_1(g)^m})
 +\sum_{\gamma,\xi\in \Gamma_*}N^{\pi}_{\gamma,\xi}b^2\frac{q_2(\gamma)^m}{q_2(\xi)^m}\\
 &=\frac{a^2-b^2}{2} 
 +\frac{(a+b)^2}{|\Gamma|-|G|}+\frac{a(a+b)}{|\Gamma|-|G|}(q_1(g_0)^m+q_1(g_0)^{-m})\\ 
 &+\frac{2a(a+b)}{|\Gamma|-|G|}\sum_{g\in G_*}(q_1(g)^m+q_1(g)^{-m})
 +\frac{b(a+b)}{|\Gamma|-|G|}(q_2(\gamma_0)^m+q_2(\gamma_0)^{-m})\\
 &+\frac{2b(a+b)}{|\Gamma|-|G|}\sum_{\gamma\in \Gamma_*}(q_2(\gamma)^m+q_2(\gamma)^{-m})
 +a^2(\frac{1}{|\Gamma|-|G|}+\frac{1}{2})+b^2(\frac{1}{|\Gamma|-|G|}-\frac{1}{2})\\
 &+\sum_{g\in G_*}(\frac{2a^2}{|\Gamma|-|G|}(\frac{q_1(g_0)^m}{q_1(g)^m}+\frac{q_1(g)^m}{q_1(g_0)})
 +\frac{2ab}{|\Gamma|-|G|}(\frac{q_2(\gamma_0)^m}{q_1(g)^m}+\frac{q_1(g)^m}{q_2(\gamma_0)^m}))\\
 &+\sum_{\gamma\in \Gamma_*,\varepsilon}(\frac{2ab}{|\Gamma|-|G|}(\frac{q_1(g_0)^m}{q_2(\gamma)^m}+\frac{q_2(\gamma)^m}{q_1(g_0)^m})
 +\frac{2ab}{|\Gamma|-|G|}(\frac{q_2(\gamma_0)^m}{q_2(\gamma)^m}+\frac{q_2(\gamma)^m}{q_2(\gamma_0)^m}))\\
 &+\frac{ab}{|\Gamma|-|G|}(\frac{q_1(g_0)^m}{q_2(\gamma_0)^m}+\frac{q_2(\gamma_0)^m}{q_1(g_0)^m})  
 +\frac{4a^2}{|\Gamma|-|G|}\sum_{g,h\in G_*}\frac{q_1(g)^m}{q_1(h)^m}+a^2|G_*|\\
 &+\frac{4ab}{|\Gamma|-|G|}\sum_{g\in G_*,\;\gamma\in \Gamma_*}(\frac{q_1(g)^m}{q_2(\gamma)^m}+\frac{q_2(\gamma)^m}{q_1(g)^m})
 +\frac{4b^2}{|\Gamma|-|G|}\sum_{\gamma,\xi\in \Gamma_*}\frac{q_2(\gamma)^m}{q_2(\xi)^m}-b^2|\Gamma_*|\\
 &=\frac{(a+b)^2+(a+b)(q_1(g_0)^m+q_1(g_0)^{-m}+q_2(\gamma_0)^m+q_2(\gamma_0)^{-m}+A_m+A_{-m}+B_m+B_{-m})}{|\Gamma|-|G|}\\
 &+\frac{a^2+b^2+A_mA_{-m}+A_mB_{-m}+B_mA_{-m}+B_mB_{-m}}{|\Gamma|-|G|}\\
 &=\frac{|\cG(q_1,m)+\cG(q_2,m)|^2}{|\Gamma|-|G|}.
\end{align*}

\begin{align*}
\lefteqn{2\nu_m((g_0,\varepsilon))-\nu_m(\pi)} \\
 &=2N^{(g_0,\varepsilon)}_{0,(g_0,\varepsilon)}\frac{a(a-b)}{4}(q_1(g_0)^m+q_1(g_0)^{-m})-N^\pi_{0,\pi}\frac{a^2-b^2}{2}\\
 &+\sum_{\delta}(2N^{(g_0,\varepsilon)}_{\pi,(g_0,\delta)}-N^\pi_{\pi,(g_0,\delta)})\frac{a(a+b)}{4}(q_1(g_0)^m+q_1(g_0)^{-m}) \\
 &+\sum_{\delta}(2N^{(g_0,\varepsilon)}_{\pi,(\gamma_0,\delta)}-N^\pi_{\pi,(\gamma_0,\delta)})\frac{b(a+b)}{4}(q_1(\gamma_0)^m+q_1(\gamma_0)^{-m})  \\
 &+\sum_{\delta,\delta'}(2N^{(g_0,\varepsilon)}_{(g_0,\delta),(g_0,\delta')}- N^\pi_{(g_0,\delta),(g_0,\delta')})\frac{a^2}{4}
  +\sum_{\delta,\delta'}(2N^{(g_0,\varepsilon)}_{(\gamma_0,\delta),(\gamma_0,\delta')}- N^\pi_{(\gamma_0,\delta),(\gamma_0,\delta')})\frac{b^2}{4}\\
 &+\sum_{\delta,\delta'}(2N^{(g_0,\varepsilon)}_{(g_0,\delta),(\gamma_0,\delta')}- N^\pi_{(g_0,\delta),(\gamma_0,\delta')})\frac{ab}{4}(\frac{q_1(g_0)^m}{q_2(\gamma_0)^m}+\frac{q_2(\gamma_0)^m}{q_1(g_0)^m})\\
 &+\sum_{h\in G_*,\delta}(2N^{(g_0,\varepsilon)}_{(g_0,\delta),h}- N^\pi_{(g_0,\delta),h})\frac{a^2}{2}(\frac{q_1(g_0)^m}{q_1(h)^m}+\frac{q_1(h)^m}{q_1(g_0)^m})\\
 &+\sum_{\xi\in \Gamma_*,\delta}(2N^{(g_0,\varepsilon)}_{(g_0,\delta),\xi}- N^\pi_{(g_0,\delta),\xi})\frac{ab}{2}(\frac{q_1(g_0)^m}{q_2(\xi)^m}+\frac{q_2(\xi)^m}{q_1(g_0)^m})\\
 &+\sum_{h\in G_*,\delta}(2N^{(g_0,\varepsilon)}_{h,(\gamma_0,\delta)}- N^\pi_{h,(\gamma_0,\delta)})\frac{ab}{2}(\frac{q_1(h)^m}{q_2(\gamma_0)^m}+\frac{q_2(\gamma_0)^m}{q_1(h)^m})\\
 &+\sum_{\xi\in \Gamma_*,\delta}(2N^{(g_0,\varepsilon)}_{(\gamma_0,\delta),\gamma}- N^\pi_{(\gamma_0,\delta),\gamma})\frac{b^2}{2}(\frac{q_2(\gamma_0)^m}{q_2(\gamma)^m}+\frac{q_2(\gamma)^m}{q_2(\gamma_0)^m})\\
 &+\sum_{h,h'\in G_*}(2N^{(g_0,\varepsilon)}_{h,h'}-N^\pi_{h,h'})a^2\frac{q_1(h)^m}{q_1(h')^{m}}
 +\sum_{\xi,\xi'\in \Gamma_*}(2N^{(g_0,\varepsilon)}_{\xi,\xi'}-N^\pi_{\xi,\xi'})b^2\frac{q_2(\xi)^m}{q_2(\xi')^{m}}\\
 &=\frac{a(a-b)}{2}(q_1(g_0)^m+q_1(g_0)^{-m})-\frac{a^2-b^2}{2}+\frac{a(a+b)}{2}(q_1(g_0)^m+q_1(g_0)^{-m})-\frac{a^2-b^2}{2} \\
 &+\sum_{h,h'\in G_*}(2\delta_{g_0+h+h',0}+2\delta_{g_0+h-h',0}-\delta_{h,h'})a^2\frac{q_1(h)^m}{q_1(h')^{m}}
 +\sum_{\xi,\xi'\in \Gamma_*}\delta_{\xi,\xi'}b^2\frac{q_2(\xi)^m}{q_2(\xi')^{m}}\\
 &=a^2(q_1(g_0)^m+q_1(g_0)^{-m})-a^2+b^2+2a^2\sum_{h\in G_*}\frac{q_1(h)^m}{q_1(g_0+h)^{m}}-a^2|G_*|+b^2|\Gamma_*|\\
 &=\delta_{mg_0,0}q_1(g_0)^m. 
\end{align*}

\begin{align*}
\lefteqn{\nu_m(g)-\nu_m(\pi)} \\
 &=N^g_{0,g}\frac{a(a-b)}{2}(q_1(g)^m+q_1(g)^{-m})-N^\pi_{0,\pi}\frac{a^2-b^2}{2}\\
 &+\sum_{\delta}(N^g_{\pi,(g_0,\delta)}-N^\pi_{\pi,(g_0,\delta)})\frac{a(a+b)}{4}(q_1(g_0)^m+q_1(g_0)^{-m}) \\
 &+\sum_{h\in G_*}(N^g_{\pi,h}-N^\pi_{\pi,h})\frac{a(a+b)}{2}(q_1(h)^m+q_1(h)^{-m}) \\
 &+\sum_{\delta}(N^g_{\pi,(\gamma_0,\delta)}-N^\pi_{\pi,(\gamma_0,\delta)})\frac{b(a+b)}{4}(q_1(\gamma_0)^m+q_1(\gamma_0)^{-m})  \\
 &+\sum_{\delta,\delta'}(N^g_{(g_0,\delta),(g_0,\delta')}- N^\pi_{(g_0,\delta),(g_0,\delta')})\frac{a^2}{4}
  +\sum_{\delta,\delta'}(N^g_{(\gamma_0,\delta),(\gamma_0,\delta')}- N^\pi_{(\gamma_0,\delta),(\gamma_0,\delta')})\frac{b^2}{4}\\
 &+\sum_{\delta,\delta'}(N^g_{(g_0,\delta),(\gamma_0,\delta')}- N^\pi_{(g_0,\delta),(\gamma_0,\delta')})\frac{ab}{4}(\frac{q_1(g_0)^m}{q_2(\gamma_0)^m}+\frac{q_2(\gamma_0)^m}{q_1(g_0)^m})\\
 &+\sum_{h\in G_*,\delta}(N^g_{(g_0,\delta),h}- N^\pi_{(g_0,\delta),h})\frac{a^2}{2}(\frac{q_1(g_0)^m}{q_1(h)^m}+\frac{q_1(h)^m}{q_1(g_0)^m})\\
 &+\sum_{\xi\in \Gamma_*,\delta}(N^g_{(g_0,\delta),\xi}- N^\pi_{(g_0,\delta),\xi})\frac{ab}{2}(\frac{q_1(g_0)^m}{q_2(\xi)^m}+\frac{q_2(\xi)^m}{q_1(g_0)^m})\\
 &+\sum_{h\in G_*,\delta}(N^g_{h,(\gamma_0,\delta)}- N^\pi_{h,(\gamma_0,\delta)})\frac{ab}{2}(\frac{q_1(h)^m}{q_2(\gamma_0)^m}+\frac{q_2(\gamma_0)^m}{q_1(h)^m})\\
 &+\sum_{\xi\in \Gamma_*,\delta}(N^g_{(\gamma_0,\delta),\xi}- N^\pi_{(\gamma_0,\delta),\xi})\frac{b^2}{2}(\frac{q_2(\gamma_0)^m}{q_2(\xi)^m}+\frac{q_2(\xi)^m}{q_2(\gamma_0)^m})\\
 &+\sum_{h,h'\in G_*}(N^g_{h,h'}-N^\pi_{h,h'})a^2\frac{q_1(h)^m}{q_1(h')^{m}}
 +\sum_{\xi,\xi'\in \Gamma_*}(N^g_{\xi,\xi'}-N^\pi_{\xi,\xi'})b^2\frac{q_2(\xi)^m}{q_2(\xi')^{m}}\\
 &=\frac{a(a-b)}{2}(q_1(g)^m+q_1(g)^{-m})-\frac{a^2-b^2}{2}
 +\frac{a(a+b)}{2}(q_1(g)^m+q_1(g)^{-m}) -\frac{a^2}{2}+\frac{b^2}{2}\\
 &+a^2\sum_{h\in G_*}(\delta_{g_0+g+h,0}+\delta_{g_0+g-h,0})(\frac{q_1(g_0)^m}{q_1(h)^m}+\frac{q_1(h)^m}{q_1(g_0)^m})\\
 &+a^2\sum_{h,h'\in G_*}(\delta_{g+h+h',0}+\delta_{-g+h+h',0}+\delta_{g-h+h',0}+\delta_{g+h-h',0}-\delta_{h,h'})\frac{q_1(h)^m}{q_1(h')^{m}}\\
 &-b^2\sum_{\xi,\xi'\in \Gamma_*}\delta_{\xi,\xi'}\frac{q_2(\xi)^m}{q_2(\xi')^{m}}\\
  &=a^2(q_1(g)^m+q_1(g)^{-m})-a^2+b^2-a^2|G_*|+b^2|\Gamma_*|
 +a^2(\frac{q_1(g_0)^m}{q_1(g_0+g)^m}+\frac{q_1(g_0+g)^m}{q_1(g_0)^m})\\
 &+a^2\sum_{h,h'\in G_*}(\delta_{g+h+h',0}+\delta_{-g+h+h',0}+\delta_{g-h+h',0}+\delta_{g+h-h',0})\frac{q_1(h)^m}{q_1(h')^{m}}\\
 &=\delta_{mg,0}q_1(g)^m.
\end{align*}

\begin{align*}
\lefteqn{2\nu_m((\gamma_0,\varepsilon))-\nu_m(\pi)} \\
 &=2N^{(\gamma_0,\varepsilon)}_{0,(\gamma_0,\varepsilon)}\frac{b(a-b)}{4}(q_1(\gamma_0)^m+q_1(\gamma_0)^{-m})-N^\pi_{0,\pi}\frac{a^2-b^2}{2}\\
 &+\sum_{\delta}(2N^{(\gamma_0,\varepsilon)}_{\pi,(g_0,\delta)}-N^\pi_{\pi,(g_0,\delta)})\frac{a(a+b)}{4}(q_1(g_0)^m+q_1(g_0)^{-m}) \\
 &+\sum_{\delta}(2N^{(\gamma_0,\varepsilon)}_{\pi,(\gamma_0,\delta)}-N^\pi_{\pi,(\gamma_0,\delta)})\frac{b(a+b)}{4}(q_1(\gamma_0)^m+q_1(\gamma_0)^{-m})  \\
 &+\sum_{\delta,\delta'}(2N^{(\gamma_0,\varepsilon)}_{(g_0,\delta),(g_0,\delta')}- N^\pi_{(g_0,\delta),(g_0,\delta')})\frac{a^2}{4}
  +\sum_{\delta,\delta'}(2N^{(\gamma_0,\varepsilon)}_{(\gamma_0,\delta),(\gamma_0,\delta')}- N^\pi_{(\gamma_0,\delta),(\gamma_0,\delta')})\frac{b^2}{4}\\
 &+\sum_{\delta,\delta'}(2N^{(\gamma_0,\varepsilon)}_{(g_0,\delta),(\gamma_0,\delta')}- N^\pi_{(g_0,\delta),(\gamma_0,\delta')})\frac{ab}{4}(\frac{q_1(g_0)^m}{q_2(\gamma_0)^m}+\frac{q_2(\gamma_0)^m}{q_1(g_0)^m})\\
 &+\sum_{h\in G_*,\delta}(2N^{(\gamma_0,\varepsilon)}_{(g_0,\delta),h}- N^\pi_{(g_0,\delta),h})\frac{a^2}{2}(\frac{q_1(g_0)^m}{q_1(h)^m}+\frac{q_1(h)^m}{q_1(g_0)^m})\\
 &+\sum_{\xi\in \Gamma_*,\delta}(2N^{(\gamma_0,\varepsilon)}_{(g_0,\delta),\xi}- N^\pi_{(g_0,\delta),\xi})\frac{ab}{2}(\frac{q_1(g_0)^m}{q_2(\xi)^m}+\frac{q_2(\xi)^m}{q_1(g_0)^m})\\
 &+\sum_{h\in G_*,\delta}(2N^{(\gamma_0,\varepsilon)}_{h,(\gamma_0,\delta)}- N^\pi_{h,(\gamma_0,\delta)})\frac{ab}{2}(\frac{q_1(h)^m}{q_2(\gamma_0)^m}+\frac{q_2(\gamma_0)^m}{q_1(h)^m})\\
 &+\sum_{\xi\in \Gamma_*,\delta}(2N^{(\gamma_0,\varepsilon)}_{(\gamma_0,\delta),\xi}- N^\pi_{(\gamma_0,\delta),\xi})\frac{b^2}{2}(\frac{q_2(\gamma_0)^m}{q_2(\xi)^m}+\frac{q_2(\xi)^m}{q_2(\gamma_0)^m})\\
 &+\sum_{h,h'\in G_*}(2N^{(\gamma_0,\varepsilon)}_{h,h'}-N^\pi_{h,h'})a^2\frac{q_1(h)^m}{q_1(h')^{m}}
 +\sum_{\xi,\xi'\in \Gamma_*}(2N^{(\gamma_0,\varepsilon)}_{\xi,\xi'}-N^\pi_{\xi,\xi'})b^2\frac{q_2(\xi)^m}{q_2(\xi')^{m}}\\
 &=\frac{b(a-b)}{2}(q_1(\gamma_0)^m+q_1(\gamma_0)^{-m})-\frac{a^2-b^2}{2}\\
 &-\frac{b(a+b)}{2}(q_1(\gamma_0)^m+q_1(\gamma_0)^{-m})-\frac{a^2}{2}+\frac{b^2}{2}\\
 &-a^2|G_*|
 -2b^2\sum_{\xi,\xi'\in \Gamma_*}(\delta_{\gamma_0+\xi+\xi',0}+\delta_{\gamma_0+\xi-\xi',0})\frac{q_2(\xi)^m}{q_2(\xi')^{m}}+b^2|\Gamma_*| \\
 &=-b^2(q_1(\gamma_0)^m+q_1(\gamma_0)^{-m})-a^2+b^2-a^2|G_*|+b^2|\Gamma_*|-2b^2\sum_{\xi\in \Gamma_*}\frac{q_2(\xi)^m}{q_2(\gamma_0+\xi)^{m}} \\
 &=-\delta_{m\gamma_0,0}q_2(\gamma_0)^m. 
\end{align*}

\begin{align*}
\lefteqn{\nu_m(\gamma)-\nu_m(\pi)} \\
 &=N^\gamma_{0,\gamma}\frac{b(a-b)}{2}(q_2(\gamma)^m+q_2(\gamma)^{-m})-N^\pi_{0,\pi}\frac{a^2-b^2}{2}\\
 &+\sum_{\delta}(N^\gamma_{\pi,(g_0,\delta)}-N^\pi_{\pi,(g_0,\delta)})\frac{a(a+b)}{4}(q_1(g_0)^m+q_1(g_0)^{-m}) \\
 &+\sum_{h\in G_*}(N^\gamma_{\pi,h}-N^\pi_{\pi,h})\frac{a(a+b)}{2}(q_1(h)^m+q_1(h)^{-m}) \\
 &+\sum_{\delta}(N^\gamma_{\pi,(\gamma_0,\delta)}-N^\pi_{\pi,(\gamma_0,\delta)})\frac{b(a+b)}{4}(q_1(\gamma_0)^m+q_1(\gamma_0)^{-m})  \\
 &+\sum_{\xi\in \Gamma_*}(N^\gamma_{\pi,\xi}-N^\pi_{\pi,\xi})\frac{b(a+b)}{2}(q_1(\xi)^m+q_1(\xi)^{-m})  \\
 &+\sum_{\delta,\delta'}(N^\gamma_{(g_0,\delta),(g_0,\delta')}- N^\pi_{(g_0,\delta),(g_0,\delta')})\frac{a^2}{4}
  +\sum_{\delta,\delta'}(N^\gamma_{(\gamma_0,\delta),(\gamma_0,\delta')}- N^\pi_{(\gamma_0,\delta),(\gamma_0,\delta')})\frac{b^2}{4}\\
 &+\sum_{\delta,\delta'}(N^\gamma_{(g_0,\delta),(\gamma_0,\delta')}- N^\pi_{(g_0,\delta),(\gamma_0,\delta')})\frac{ab}{4}(\frac{q_1(g_0)^m}{q_2(\gamma_0)^m}+\frac{q_2(\gamma_0)^m}{q_1(g_0)^m})\\
 &+\sum_{h\in G_*,\delta}(N^\gamma_{(g_0,\delta),h}- N^\pi_{(g_0,\delta),h})\frac{a^2}{2}(\frac{q_1(g_0)^m}{q_1(h)^m}+\frac{q_1(h)^m}{q_1(g_0)^m})\\
 &+\sum_{\xi\in \Gamma_*,\delta}(N^\gamma_{(g_0,\delta),\xi}- N^\pi_{(g_0,\delta),\xi})\frac{ab}{2}(\frac{q_1(g_0)^m}{q_2(\xi)^m}+\frac{q_2(\xi)^m}{q_1(g_0)^m})\\
 &+\sum_{h\in G_*,\delta}(N^\gamma_{h,(\gamma_0,\delta)}- N^\pi_{h,(\gamma_0,\delta)})\frac{ab}{2}(\frac{q_1(h)^m}{q_2(\gamma_0)^m}+\frac{q_2(\gamma_0)^m}{q_1(h)^m})\\
 &+\sum_{\xi\in \Gamma_*,\delta}(N^\gamma_{(\gamma_0,\delta),\gamma}- N^\pi_{(\gamma_0,\delta),\gamma})\frac{b^2}{2}(\frac{q_2(\gamma_0)^m}{q_2(\gamma)^m}+\frac{q_2(\gamma)^m}{q_2(\gamma_0)^m})\\
 &+\sum_{h,h'\in G_*}(N^\gamma_{h,h'}-N^\pi_{h,h'})a^2\frac{q_1(h)^m}{q_1(h')^{m}}
 +\sum_{\xi,\xi'\in \Gamma_*}(N^\gamma_{\xi,\xi'}-N^\pi_{\xi,\xi'})b^2\frac{q_2(\xi)^m}{q_2(\xi')^{m}}\\
 &=\frac{b(a-b)}{2}(q_2(\gamma)^m+q_2(\gamma)^{-m})-\frac{a^2-b^2}{2}-\frac{a^2}{2}+\frac{b^2}{4}-\frac{b(a+b)}{2}(q_2(\gamma)^m+q_2(\gamma)^{-m})\\
 &-\sum_{\xi\in \Gamma_*,\delta}(\delta_{\gamma+\gamma_0+\xi,0}+\delta_{\gamma+\gamma_0-\xi})\frac{b^2}{2}(\frac{q_2(\gamma_0)^m}{q_2(\xi)^m}+\frac{q_2(\xi)^m}{q_2(\gamma_0)^m})\\
 &-a^2|G_*|+b^2|\Gamma_*|
 -b^2\sum_{\xi,\xi'\in \Gamma_*}(\delta_{\gamma,\xi,\xi',0}+\delta_{-\gamma+\xi+\xi',0}+\delta_{-\gamma-\xi+\xi',0}+\delta_{\gamma+\xi-\xi',0})\frac{q_2(\xi)^m}{q_2(\xi')^{m}}\\
 &=-b^2(q_2(\gamma)^m+q_2(\gamma)^{-m})
 -b^2(\frac{q_2(\gamma_0)^m}{q_2(\gamma+\gamma_0)^m}+\frac{q_2(\gamma+\gamma_0)^m}{q_2(\gamma_0)^m})\\
 &-b^2\sum_{\xi,\xi'\in \Gamma_*}(\delta_{\gamma,\xi,\xi',0}+\delta_{-\gamma+\xi+\xi',0}+\delta_{-\gamma-\xi+\xi',0}+\delta_{\gamma+\xi-\xi',0})\frac{q_2(\xi)^m}{q_2(\xi')^{m}}\\
 &=-\delta_{m\gamma,0}q_2(\gamma_0)^m.
\end{align*}
\end{proof}

\section{Calculations for Section 5}
\begin{proof}[Proof of Lemma 5.4]
\begin{align*}
\lefteqn{\sum_{x}S_{(u,0),x}\overline{S_{(u',0),x}}=\sum_{x}S_{(u,\pi),x}\overline{S_{(u',\pi),x}}} \\
 &=\frac{a^2+b^2}{2}\sum_{v\in U}\inpr{u'-u}{v}+\frac{a^2}{2}\sum_{k\in K_*}\inpr{u'-u}{k}+
 a^2\sum_{g\in G_*}\inpr{u'-u}{g}\\
 &+\frac{b^2}{2}\sum_{\sigma\in \Sigma_*}\inpr{u'-u}{\sigma}
 +b^2\sum_{\gamma\in \Gamma_*}\inpr{u'-u}{\gamma} \\
 &=\frac{a^2}{2}\sum_{g\in G}\inpr{u'-u}{g}+ \frac{b^2}{2}\sum_{\gamma\in \Gamma}\inpr{u'-u}{\gamma} \\
 &=\delta_{u,u'}. 
\end{align*}
\begin{align*}
\lefteqn{\sum_{x}S_{(u,0),x}\overline{S_{(u',\pi),x}}} \\
 &=\frac{a^2-b^2}{2}\sum_{v\in U}\inpr{u'-u}{v}+\frac{a^2}{2}\sum_{k\in K_*}\inpr{u'-u}{k}+
 a^2\sum_{g\in G_*}\inpr{u'-u}{g}\\
 &-\frac{b^2}{2}\sum_{\sigma\in \Sigma_*}\inpr{u'-u}{\sigma}
 -b^2\sum_{\gamma\in \Gamma_*}\inpr{u'-u}{\gamma} \\
 &=\frac{a^2}{2}\sum_{g\in G}\inpr{u'-u}{g}- \frac{b^2}{2}\sum_{\gamma\in \Gamma}\inpr{u'-u}{\gamma} \\
 &=0. 
\end{align*}
\begin{align*}
\lefteqn{\sum_{x}S_{(u,0),x}\overline{S_{(k,\varepsilon),x}}=\sum_{x}S_{(u,\pi),x}\overline{S_{(k,\varepsilon),x}}} \\
 &=\frac{a^2}{2}\sum_{v\in U}\inpr{k-u}{v}+\frac{a^2}{2}\sum_{l\in K_*}\inpr{k-u}{l}+a^2\sum_{g\in G_*}\inpr{k-u}{g} \\
 &=\frac{a^2}{2}\sum_{g\in G_*}\inpr{k-u}{g}=\frac{\delta_{u,k}}{2}=0. 
\end{align*}
\begin{align*}
\lefteqn{\sum_{x}S_{(u,0),x}\overline{S_{g,x}}=\sum_{x}S_{(u,\pi),x}\overline{S_{g,x}}} \\
 &=a^2\sum_{v\in U}\inpr{g-u}{v}+a^2\sum_{k\in K_*}\inpr{g-u}{k}+
 a^2\sum_{h\in G_*}(\inpr{g-u}{h}+\inpr{g-u}{\theta(h)})\\
 &=a^2\sum_{h\in G}\inpr{g-u}{h}=\delta_{u,g}=0. \\
\end{align*}
\begin{align*}
\lefteqn{\sum_{x}S_{(u,0),x}\overline{S_{(\sigma,\varepsilon),x}}=-\sum_{x}S_{(u,\pi),x}\overline{S_{(\sigma,\varepsilon),x}}} \\
 &=-\frac{b^2}{2}\sum_{v\in U}\inpr{\sigma-u}{v}
 -\frac{b^2}{2}\sum_{\tau\in \Sigma_*}\inpr{\sigma-u}{\tau}-b^2\sum_{\gamma\in \Gamma_*}\inpr{\sigma-u}{\gamma} \\
 &=-\frac{b^2}{2}\sum_{\gamma\in \Gamma_*}\inpr{\sigma-u}{\gamma}=-\frac{\delta_{u,\sigma}}{2}=0. 
\end{align*}
\begin{align*}
\lefteqn{\sum_{x}S_{(u,0),x}\overline{S_{\gamma,x}}=-\sum_{x}S_{(u,\pi),x}\overline{S_{\gamma,x}}} \\
 &=-b^2\sum_{v\in U}\inpr{\gamma-u}{v}-b^2\sum_{\sigma\in \Sigma_*}\inpr{\gamma-u}{\sigma}
 -b^2\sum_{\xi\in \Gamma_*}(\inpr{\gamma-u}{\xi}+\inpr{\gamma-u}{\xi})\\
 &=-b^2\sum_{\xi\in \Gamma}\inpr{\gamma-u}{\xi}=-\delta_{u,\gamma}=0. \\
\end{align*}
\begin{align*}
\lefteqn{\sum_{x}S_{(k,\varepsilon),x}\overline{S_{(k',\varepsilon'),x}}} \\
 &=\frac{a^2}{2}\sum_{u\in U}\inpr{k'-k}{u}
 +\sum_{(l,\delta)\in J_1}\inpr{k'-k}{l}
 (\frac{a}{2}+\frac{\varepsilon \delta s}{4}) 
 (\frac{a}{2}+\frac{\varepsilon'\delta\overline{s}}{4})\\
&+a^2\sum_{g\in G_*}\inpr{k'-k}{g}
+\sum_{\sigma\in \Sigma_*}
\frac{\varepsilon\varepsilon' f(k,\sigma)\overline{f(k',\sigma)} }{8} 
\\
 &=\frac{a^2}{2}\sum_{g\in G}\inpr{k'-k}{g}+\frac{\varepsilon\varepsilon'}{8}(\sum_{l\in K_*}\inpr{k'-k}{l}
 +\sum_{\sigma\in \Sigma_*}f(k,\sigma)\overline{f(k',\sigma)})\\
 &=\frac{\delta_{k,k'}}{2}+\frac{\varepsilon\varepsilon'}{8}(\sum_{l\in K_*}\inpr{k'-k}{l}
 +\sum_{\sigma\in \Sigma_*}f(k,\sigma)\overline{f(k',\sigma)})\\
 &=\delta_{k,k'}\delta_{\varepsilon,\varepsilon'}. 
\end{align*}
$\sum_{x}S_{(\sigma,\varepsilon),x}\overline{S_{(\sigma',\varepsilon'),x}}=\delta_{\sigma,\sigma'}\delta_{\varepsilon,\varepsilon'}$ 
can be shown in the same way. 
\begin{align*}
\lefteqn{\sum_{x}S_{(k,\varepsilon),x}\overline{S_{g,x}}} \\
 &=a^2\sum_{u\in U}\inpr{k-g}{u}+a^2\sum_{l\in K_*}\inpr{k-g}{l}+a^2\sum_{h\in G_*}(\inpr{k-g}{h}+\inpr{k+g}{h})\\
 &=a^2\sum_{h\in G}\inpr{k-g}{h}=\delta_{k,g}=0. 
\end{align*}
$\sum_{x}S_{(\sigma,\varepsilon),x}\overline{S_{\gamma,x}}=0$ can be shown in the same way. 
\begin{align*}
\lefteqn{\sum_{x} S_{(k,\varepsilon),x} \overline{S_{(\sigma,\epsilon'),x}}   } \\
 &=\sum_{(l,\delta)\in J_1}\overline{\inpr{k}{l}}(\frac{a}{2}+\frac{\varepsilon s\delta}{4})\frac{\varepsilon'\delta \overline{f(l,\sigma)}}{4}
 -\sum_{(\tau,\delta)\in J_2}\frac{\varepsilon \delta f(k,\tau)}{4}
 \inpr{\sigma}{\tau}(\frac{b}{2}+\frac{-\varepsilon'\varepsilon''\overline{s}}{4}) \\
 &=\frac{\varepsilon\varepsilon'}{8}s\sum_{l\in K_*}\overline{\inpr{k}{l}}\overline{f(l,\sigma)}
 +\frac{\varepsilon\varepsilon'}{8}\overline{s}\sum_{\tau\in \Sigma_*}\inpr{\sigma}{\tau}f(k,\tau)
 =0. 
\end{align*}
$\sum_{x} S_{(k,\varepsilon),x} \overline{S_{\gamma,x}}=0$, 
$\sum_{x}S_{g,x}\overline{S_{(\sigma,\varepsilon),x}}=0$, and 
$\sum_{x}S_{g,x}\overline{S_{\gamma,x}}=0$ are obvious.   
\begin{align*}
\lefteqn{\sum_{x}S_{g,x}\overline{S_{g',x}}} \\
 &=2a^2\sum_{u\in U}\inpr{g'-g}{u}+2a^2\sum_{l\in K_*}\inpr{g'-g}{l}
 +a^2\sum_{h\in G_*}(\inpr{-g}{h}+\inpr{-g}{\theta(h)})(\inpr{g'}{h}+\inpr{g'}{\theta(h)}) \\
 &=a^2(\sum_{h\in G}\inpr{g'-g}{h}+\sum_{h\in G}\inpr{g'-\theta(g)}{h})\\
 &=\delta_{g',g}+\delta_{g',\theta(g)}=\delta_{g,g'}. 
\end{align*}
$\sum_{x}S_{\gamma,x}\overline{S_{\gamma',x}}=\delta_{\gamma,\gamma'}$ can be shown in the same way. 
Thus $S$ is unitary. 

It is obvious that $T$ and $C$ are unitary.  
Since $\overline{S_{x,y}}=S_{\overline{x},y}=S_{x,\overline{y}}$ and $S$ is symmetric, we have $S^2=C$. 
Since $T_{x,x}=T_{\overline{x},\overline{x}}$, we have $CT=T C$. 

The only remaining condition is $(ST)^3=cC$, and it suffices to verify 
$$\sum_{x}S_{j,x}S_{j',x}T_{x,x}=cS_{j,j'}\overline{T_{j,j}T_{j',j'}}$$
for all $j,j'\in J$. 
\begin{align*}
\lefteqn{\sum_{x}S_{(u,0),x}S_{(u',0,),x}T_{x,x}=\sum_{x}S_{(u,\pi),x}S_{(u',\pi),x}T_{x,x}} \\
 &=\frac{a^2+b^2}{2}\sum_{v\in U}\overline{\inpr{u+u'}{v}}q_0(v)
 +\frac{a^2}{2}\sum_{k\in K_*}\overline{\inpr{u+u'}{k}}q_1(k)
 +a^2\sum_{g\in G_*}\overline{\inpr{u+u'}{g}}q_1(g)\\
 &+\frac{b^2}{2}\sum_{\sigma\in \Sigma_*}\overline{\inpr{u+u'}{\sigma}}q_2(\sigma)+b^2\sum_{\gamma\in \Gamma_*}\overline{\inpr{u+u'}{\gamma}}q_2(\gamma) \\
 &=\frac{a^2}{2}\sum_{g\in G}\inpr{-(u+v)}{g}q_1(g)+\frac{b^2}{2}\sum_{\gamma\in G}\inpr{-(u+v)}{\gamma}q_2(\gamma)\\
 &=\frac{a^2}{2}\sum_{g\in G} q_1(g-u-u')\overline{q_0(u+u')}
  +\frac{b^2}{2}\sum_{\gamma\in \Gamma}q_2(\gamma-u-u')\overline{q_0(u+u')} \\
 &=(\frac{a}{2}\cG(q_1)+\frac{b}{2}\cG(q_2))\overline{\inpr{u}{u'}q_0(u)q_0(u')}\\
 &=cS_{(u,0),(u',0)}\overline{T_{(u,0),(u,0)}T_{(u',0),(u',0)}}
 =cS_{(u,\pi),(u',\pi)}\overline{T_{(u,\pi),(u,\pi)}T_{(u',\pi),(u'\pi)}}.
\end{align*}
\begin{align*}
\lefteqn{\sum_{x}S_{(u,0),x}S_{(u',\pi,),x}T_{x,x}} \\
 &=\frac{a^2-b^2}{2}\sum_{v\in U}\overline{\inpr{u+u'}{v}}q_0(v)
 +\frac{a^2}{2}\sum_{k\in K_*}\overline{\inpr{u+u'}{k}}q_1(k)
 +a^2\sum_{g\in G_*}\overline{\inpr{u+u'}{g}}q_1(g)\\
 &-\frac{b^2}{2}\sum_{\sigma\in \Sigma_*}\overline{\inpr{u+u'}{\sigma}}q_2(\sigma)-b^2\sum_{\gamma\in \Gamma_*}\overline{\inpr{u+u'}{\gamma}}q_2(\gamma) \\
 &=\frac{a^2}{2}\sum_{g\in G} q_1(g-u-u')\overline{q_0(u+u')}
  -\frac{b^2}{2}\sum_{\gamma\in \Gamma}q_2(\gamma-u-u')\overline{q_0(u+u')} \\
 &=(\frac{a}{2}\cG(q_1)-\frac{b}{2}\cG(q_2))\overline{\inpr{u}{u'}q_0(u)q_0(u')}\\
 &=cS_{(u,0),(u',\pi)}\overline{T_{(u,0),(u,0)}T_{(u',\pi),(u',\pi)}}.
\end{align*}
\begin{align*}
\lefteqn{\sum_{x}S_{(u,0),x}S_{(k,\varepsilon),x}T_{x,x}=\sum_{x}S_{(u,\pi),x}S_{(k,\varepsilon),x}T_{x,x}} \\
 &=\frac{a^2}{2}\sum_{v\in U}\overline{\inpr{u+k}{v}}q_0(v)+\frac{a^2}{2}\sum_{l\in K_*}\overline{\inpr{u+k}{l}}q_1(l)
 +a^2\sum_{g\in G_*}\overline{\inpr{u+k}{g}}q_1(g) \\
 &=\frac{a^2}{2}\sum_{g\in G}q_1(g-u-k)\overline{q_1(u+k)} \\
 &=\frac{a}{2}\cG(q_1)\overline{\inpr{u}{k}q_0(u)q_1(k)}\\
 &=cS_{(u,0),(k,\varepsilon)}\overline{T_{(u,0),(u,0)}T_{(k,\varepsilon),(k,\varepsilon)}}
 =cS_{(u,\pi),(k,\varepsilon)}\overline{T_{(u,\pi),(u,\pi)}T_{(k,\varepsilon),(k,\varepsilon)}}.
\end{align*}
In a similar way, we can show
\begin{align*}
\lefteqn{\sum_{x}S_{(u,0),x}S_{(\sigma,\varepsilon),x}T_{x,x}=-\sum_{x}S_{(u,\pi),x}S_{(\sigma,\varepsilon),x}T_{x,x}} \\
 &=cS_{(u,0),(\sigma,\varepsilon)}\overline{T_{(u,0),(u,0)}T_{(\sigma,\varepsilon),(\sigma,\varepsilon)}}
 =-cS_{(u,\pi),(\sigma,\varepsilon)}\overline{T_{(u,\pi),(u,\pi)}T_{(\sigma,\varepsilon),(\sigma,\varepsilon)}}.
\end{align*}
\begin{align*}
\lefteqn{\sum_{x}S_{(u,0),x}S_{g,x}T_{x,x}=S_{(u,\pi),x}S_{g,x}T_{x,x}} \\
 &=a^2\sum_{v\in U}\overline{\inpr{u+g}{v}}q_0(v)+a^2\sum_{l\in K_*}\overline{\inpr{u+g}{l}}q_1(l)
 +a^2\sum_{h\in G_*}(\overline{\inpr{u+g}{h}}+\overline{\inpr{u+g}{\theta(h)}})q_1(h)\\
 &=a^2\sum_{h\in G}q_1(h-u-g)\overline{q_1(u+g)}
 =a\cG(q_1)\overline{\inpr{u}{g}q_0(u)q_1(g)}\\
 &=cS_{(u,0),g}\overline{T_{(u,0),(u,0)}T_{g,g}}
 =cS_{(u,\pi),g}\overline{T_{(u,\pi),(u,\pi)}T_{g,g}}.
\end{align*}
In a similar way, we can show 
\begin{align*}
\lefteqn{\sum_{x}S_{(u,0),x}S_{\gamma,x}T_{x,x}=-S_{(u,\pi),x}S_{\gamma,x}T_{x,x}} \\
 &=cS_{(u,0),\gamma}\overline{T_{(u,0),(u,0)}T_{\gamma,\gamma}}
 =-cS_{(u,\pi),\gamma}\overline{T_{(u,\pi),(u,\pi)}T_{\gamma,\gamma}}.
\end{align*}
\begin{align*}
\lefteqn{\sum_{x}S_{(k,\varepsilon),x}S_{(k',\varepsilon'),x}T_{x,x}} \\
 &=\frac{a^2}{2}\sum_{u\in U}\overline{\inpr{k+k'}{u}}q_0(u)
 +\sum_{(l,\delta)\in J_1}\overline{\inpr{k+k'}{l}}(\frac{a}{2}
 +\frac{\varepsilon\delta s}{4}) (\frac{a}{2}+\frac{\varepsilon'\delta s}{4})q_1(l)\\
 &+a^2\sum_{g\in G_*}\overline{\inpr{k+k'}{g}}q_1(g)
 +\frac{\varepsilon\varepsilon'}{8}\sum_{\sigma\Sigma_*}f(k,\sigma)f(k',\sigma)q_2(\sigma)\\
 &=\frac{a^2}{2}\sum_{g\in G}\overline{\inpr{k+k'}{g}}q_1(g)
 +\frac{\varepsilon\varepsilon' s^2}{8}\sum_{l\in K_*}\overline{\inpr{k+k'}{l}}q_1(l)
 +\frac{\varepsilon\varepsilon'}{8}\sum_{\sigma\in \Sigma_*}f(k,\sigma)f(k',\sigma) q_2(\sigma)\\
 &=c\frac{a}{2}\overline{\inpr{k}{k'}q_1(k)q_1(k')}
 +\frac{\varepsilon\varepsilon'}{4}(\frac{s^2}{2}\sum_{l\in K_*}\overline{\inpr{k+k'}{l}}q_1(l)
 +\frac{1}{2}\sum_{\sigma\in \Sigma_*}f(k,\sigma)f(k',\sigma) q_2(\sigma))\\
 &=cS_{(k,\varepsilon),(k',\varepsilon')}\overline{T_{(k,\varepsilon),(k,\varepsilon)}T_{(k',\varepsilon'),(k',\varepsilon')}}
\end{align*}
\begin{align*}
\lefteqn{\sum_{x}S_{(k,\varepsilon),x}S_{g,x}T_{x,x}} \\
 &=a^2\sum_{u\in U}\overline{\inpr{k+g}{u}}q_0(u)+a^2\sum_{l\in K_*}\overline{\inpr{k+g}{l}}q_1(l)
 +a^2\sum_{h\in G_*}(\overline{\inpr{k+g}{h}}+\overline{\inpr{k+g}{\theta(h)}})q_1(h)\\
 &=a^2\sum_{h\in G}\overline{\inpr{k+g}{h}}q_1(h)=a\cG(q_1)\overline{q_1}(k+g)=ca\overline{\inpr{k}{g}q_1(k)q_1(g)}\\
 &=cS_{(k,\varepsilon),g}\overline{T_{(k,\varepsilon),(k,\varepsilon)}T_{g,g}}.
\end{align*}
In a similar way, we can show
$\sum_{x}S_{(\sigma,\varepsilon),x}S_{\gamma,x}T_{x,x}
=cS_{(\sigma,\varepsilon),\gamma}\overline{T_{(\sigma,\varepsilon),(\sigma,\varepsilon)}T_{\gamma,\gamma}}$.
\begin{align*}
\lefteqn{\sum_{x}S_{(k,\varepsilon),x}S_{(\sigma,\varepsilon'),x}T_{x,x}} \\
 &=\sum_{(l,\delta)\in J_1}(\frac{a}{2}+\frac{\varepsilon\delta s}{4})\overline{\inpr{k}{l}}
 \frac{\varepsilon'\delta f(l,\sigma)}{4}q_1(l)
 -\sum_{(\tau,\delta)\in J_2}\frac{\varepsilon\delta f(k,\tau)}{4}
 (\frac{b}{2}+\frac{-\varepsilon'\delta s}{4})\overline{\inpr{\sigma}{\tau}}q_2(\tau)\\
 &=\frac{\varepsilon\varepsilon'}{8}(s\sum_{l\in K_*}f(l,\sigma)\overline{\inpr{k}{l}}q_1(l)
 +s\sum_{\tau\in \Sigma_*}f(k,\tau)\overline{\inpr{\sigma}{\tau}}q_2(\tau))\\
 &=cS_{(k,\varepsilon),(\sigma,\varepsilon')}\overline{T_{(k,\varepsilon),(k,\varepsilon)}T_{(\sigma,\varepsilon'),(\sigma,\varepsilon')}}.
\end{align*}
It is easy to show 
$$\sum_{x}S_{(k,\varepsilon),x}S_{\gamma,x}T_{x,x}=0
=cS_{(k,\varepsilon),\gamma}\overline{T_{(k,\varepsilon),(k,\varepsilon)}T_{\gamma,\gamma}},$$
$$\sum_{x}S_{g,x}S_{(\sigma,\varepsilon),x}T_{x,x}=0
=cS_{(g,(\sigma,\varepsilon)}\overline{T_{g,g}T_{\sigma,\varepsilon),(\sigma,\varepsilon)}}.$$
$$\sum_{x}S_{g,x}S_{\gamma,x}T_{x,x}=0
=cS_{g,\gamma}\overline{T_{g,g}T_{\gamma,\gamma}}.$$
\begin{align*}
\lefteqn{\sum_{x}S_{g,x}S_{g',x}T_{x,x}} \\
 &=2a^2\sum_{u\in U}\overline{\inpr{g+g'}{u}}q_0(u)
 +2a^2\sum_{l\in K_*}\overline{\inpr{g+g'}{l}}q_1(l)\\
 &+a^2\sum_{h\in G_*}(\overline{\inpr{g}{h}}+\overline{\inpr{g}{\theta(h)}})(\overline{\inpr{g'}{h}}+\overline{\inpr{g'}{\theta(h)}})q_1(h)\\
 &=a^2\sum_{h\in G}(\overline{\inpr{g+g'}{h}}+\overline{\inpr{g+\theta(g')}{h}})q_1(h)\\
 &=a^2\sum_{h\in G}(q_1(g+g'-h)\overline{q_1(g+g')}+q_1(g+\theta(g')-h)\overline{q_1(g+\theta(g'))})\\
 &=ca\overline{(\inpr{g}{g'}q_1(g)q_1(g')+\inpr{g}{\theta(g')}q_1(g)q_1(g')}\\
 &=cS_{g,g'}\overline{T_{g,g}T_{g',g'}}.
\end{align*}
In a similar way, we can show 
$\sum_{x}S_{\gamma,x}S_{\gamma',x}T_{x,x}
=cS_{\gamma,\gamma'}\overline{T_{\gamma,\gamma}T_{\gamma',\gamma'}}.$
\begin{align*}
\lefteqn{\sum_{x}S_{(\sigma,\varepsilon),x}S_{(\sigma',\varepsilon'),x}T_{x,x}} \\
 &=\frac{b^2}{2}\sum_{u\in U}\overline{\inpr{\sigma+\sigma'}{u}}q_0(u)
 +\frac{\varepsilon\varepsilon'}{8}\sum_{l\in K_*}f(l,\sigma)f(l,\sigma')q_1(l)\\
 &+\sum_{(\tau,\delta)\in J_2}\overline{\inpr{\sigma+\sigma'}{\tau}}(\frac{b}{2}
 -\frac{\varepsilon\delta s}{4}) 
 (\frac{b}{2}-\frac{\varepsilon'\delta s}{4})q_2(\tau)\\
 &+b^2\sum_{\gamma\in \Gamma_*}\overline{\inpr{\sigma+\sigma'}{\gamma}}q_2(\gamma)\\
 &=\frac{b^2}{2}\sum_{\gamma\in \Gamma}\overline{\inpr{\sigma+\sigma'}{\gamma}}q_2(\gamma)
 +\frac{\varepsilon\varepsilon'}{8}\sum_{l\in K_*}f(l,\sigma)f(l,\sigma')q_1(l)
 +\frac{\varepsilon\varepsilon'}{8}s^2\sum_{\tau \in \Sigma_*}\overline{\inpr{\sigma+\sigma'}{\tau}}q_2(\tau)\\
 &=-c\frac{b}{2}\overline{\inpr{\sigma}{\sigma'}}\overline{q_2(\sigma)q_2(\sigma')}
 +\frac{\varepsilon\varepsilon'}{4}(\frac{1}{2}\sum_{l\in K_*}f(l,\sigma)f(l,\sigma')q_1(l)
 +\frac{1}{2}s^2\sum_{\tau \in \Sigma_*}\overline{\inpr{\sigma+\sigma'}{\tau}}q_2(\tau))\\
 &=cS_{(\sigma,\varepsilon),(\sigma',\varepsilon')}
 \overline{T_{(\sigma,\varepsilon),(\sigma,\varepsilon)}T_{(\sigma',\varepsilon'),(\sigma',\varepsilon')}}. 
\end{align*}
\end{proof}

\begin{proof}[Proof of Lemma 5.5] Since $S_{(u,0),x}=\inpr{u}{x}S_{(0,0),x}$ in an obvious sense, 
those equations involving $(u,0)$ can be shown as in the proof of the unitarity of $S$.  
Recall $\inpr{u}{v}=1$ for all $u,v\in U$. 

\begin{align*}
\lefteqn{N_{(u,\pi),(u',\pi),(u'',\pi)}} \\
 &=(\frac{(a+b)^3}{4(a-b)}+\frac{(a-b)^3}{4(a+b)})\sum_{v\in U}\overline{\inpr{u+u'+u''}{v}} 
 +\frac{a^2}{2}\sum_{k\in K_*}\overline{\inpr{u+u'+u''}{k}} \\
 &+a^2\sum_{g\in G_*}\overline{\inpr{u+u'+u''}{g}}-\frac{b^2}{2}\sum_{\sigma\in \Sigma_*}\overline{\inpr{u+u'+u''}{\sigma}}
 -b^2\sum_{\gamma\in \Gamma_*}\overline{\inpr{u+u'+u''}{\gamma}}\\
 &=(\frac{(a+b)^4+(a-b)^4}{4(a^2-b^2)}-\frac{a^2-b^2}{2})\sum_{v\in U}\overline{\inpr{u+u'+u''}{v}}\\
 &+\frac{a^2}{2}\sum_{g\in G}\overline{\inpr{u+u'+u''}{g}}-\frac{b^2}{2}\sum_{\gamma\in \Gamma}\overline{\inpr{u+u'+u''}{\gamma}}\\
 &=\frac{4a^2b^2}{a^2-b^2}\sum_{v\in U}\overline{\inpr{u+u'+u''}{v}}+\frac{\delta_{u+u'+u'',0}}{2}-\frac{\delta_{u+u'+u'',0}}{2}
 =\frac{8}{|\Gamma|-|G|}. 
\end{align*}
\begin{align*}
\lefteqn{N_{(u,\pi),(u',\pi),(k,\varepsilon)}} \\
 &=(\frac{a(a+b)^2}{4(a-b)}+\frac{a(a-b)^2}{4(a+b)})\sum_{v\in U}\overline{\inpr{u+u'+k}{v}}
 +\frac{a^2}{2}\sum_{l\in K_*}\overline{\inpr{u+u'+k}{l}} \\
 &+a^2\sum_{h\in G_*}\overline{\inpr{u+u'+k}{h}}\\
 &=(a\frac{(a+b)^3+(a-b)^3}{4(a^2-b^2)}-\frac{a^2}{2})\sum_{v\in U}\overline{\inpr{u+u'+k}{v}}
 +\frac{a^2}{2}\sum_{h\in G}\overline{\inpr{u+u'+k}{h}} \\
 &=\frac{2a^2b^2}{a^2-b^2}\sum_{v\in U}\overline{\inpr{u+u'+k}{v}}+\frac{1}{2}\delta_{u+u'+k,0} 
 =\frac{2}{|\Gamma|-|G|}(1+\overline{\inpr{k}{u_0}}).
\end{align*}
\begin{align*}
\lefteqn{N_{(u,\pi),(u',\pi),g}} \\
 &(\frac{a(a+b)^2}{2(a-b)}+\frac{a(a-b)^2}{2(a+b)})\sum_{v\in U}\overline{\inpr{u+u'+g}{v}}+a^2\sum_{l\in K_*}\overline{\inpr{u+u'+g}{l}} \\
 &+a^2\sum_{h\in G_*}(\overline{\inpr{u+u'+g}{h}}+\overline{\inpr{u+u'+g}{\theta(h)}})\\
 &=(a\frac{(a+b)^3+(a-b)^3}{2(a^2-b^2)}-a^2)\sum_{v\in U}\overline{\inpr{u+u'+g}{v}}+a^2\sum_{h\in G}\overline{\inpr{u+u'+g}{h}} \\
 &=\frac{4a^2b^2}{a^2-b^2}\sum_{v\in U}\overline{\inpr{u+u'+g}{v}}+\delta_{u+u'+g,0} \\
 &=\frac{4}{|\Gamma|-|G|}(1+\overline{\inpr{g}{u_0}}).
\end{align*}
\begin{align*}
\lefteqn{N_{(u,\pi),(u',\pi),(\sigma,\varepsilon)}} \\
 &=(\frac{b(a+b)^2}{4(a-b)}-\frac{b(a-b)^2}{4(a+b)})\sum_{v\in U}\overline{\inpr{u+u'+\sigma}{v}}-\frac{b^2}{2}\sum_{\tau \in \Sigma_*}\overline{\inpr{u+u'+\sigma}{\tau}} \\
 &-b^2\sum_{\gamma\in \Gamma_*}\overline{\inpr{u+u'+\sigma}{\gamma}}\\
 &=(\frac{b^2(3a^2+b^2)}{2(a^2-b^2)}+\frac{b^2}{2})\sum_{v\in U}\overline{\inpr{u+u'+\sigma}{v}}-\frac{b^2}{2}\sum_{\gamma\in \Gamma}
 \overline{\inpr{u+u'+\sigma}{\gamma}}\\
 &=\frac{2a^2b^2}{a^2-b^2}\sum_{v\in U}\overline{\inpr{u+u'+\sigma}{v}}-\frac{\delta_{u+u'+\gamma,0}}{2}\\
 &=\frac{2}{|\Gamma|-|G|}(1+\overline{\inpr{\sigma}{u_0}}).
\end{align*}
\begin{align*}
\lefteqn{N_{(u,\pi),(u',\pi),\gamma}} \\
 &=(\frac{b(a+b)^2}{2(a-b)}-\frac{b(a-b)^2}{2(a+b)})\sum_{v\in U}\overline{\inpr{u+u'+\gamma}{v}}
 -b^2\sum_{\tau\in \Sigma_*}\overline{\inpr{u+u'+\gamma}{\tau}} \\
 &-b^2\sum_{\xi\in \Gamma_*}(\overline{\inpr{u+u'+\gamma}{\xi}}+\overline{\inpr{u+u'+\gamma}{\theta(\xi)}})\\
 &=(b\frac{(a+b)^3-(a-b)^3}{2(a^2-b^2)}+b^2)\sum_{v\in U}\overline{\inpr{u+u'+\gamma}{v}}-b^2\sum_{\xi\in \Gamma}\overline{\inpr{u+u'+\gamma}{\xi}} \\
 &=\frac{4a^2b^2}{a^2-b^2}\sum_{v\in U}\overline{\inpr{u+u'+\gamma}{v}}-\delta_{u+u'+\gamma,0} \\
 &=\frac{4}{|\Gamma|-|G|}(1+\overline{\inpr{\gamma}{u_0}}).\\
\end{align*}
\begin{align*}
\lefteqn{N_{(u,\pi),(k,\varepsilon),(k,\varepsilon')}} \\
 &=(\frac{a^2(a+b)}{4(a-b)}+\frac{a^2(a-b)}{4(a+b)})\sum_{v\in U}\overline{\inpr{u+k+k'}{v}}
 +\sum_{(l,\delta)\in J_1}\overline{\inpr{u+k+k'}{l}}(\frac{a}{2}+\frac{\varepsilon\delta s}{4}) (\frac{a}{2}+\frac{\varepsilon'\delta s}{4})\\
 &+a^2\sum_{h\in G_*}\overline{\inpr{u+k+k'}{h}}-\sum_{(\sigma,\delta)\in J_2}\overline{\inpr{u}{\sigma}}\frac{\varepsilon\delta f(k,\sigma)}{4} \frac{\varepsilon'\delta f(k',\sigma)}{4}\\
 &=(a^2\frac{(a+b)^2+(a-b)^2}{4(a^2-b^2)}-\frac{a^2}{2})\sum_{v\in U}\overline{\inpr{u+k+k'}{v}}+\frac{a^2}{2}\sum_{h\in G}\overline{\inpr{u+k+k'}{h}}\\
 &+\frac{\varepsilon\varepsilon'}{8}(s^2\sum_{l\in K_*}\overline{\inpr{u+k+k'}{l}}-\sum_{\sigma\in \Sigma_*}f(k,\sigma)f(k',\sigma)\overline{\inpr{u}{\sigma}})\\
 &=\frac{1}{|\Gamma|-|G|}(1+\overline{\inpr{k+k'}{u_0}})+\frac{\delta_{u+k+k',0}}{2}\\
 &+\frac{\varepsilon\varepsilon's^2}{8}(\sum_{l\in K_*}\overline{\inpr{u+k+k'}{l}}-\sum_{\sigma\in \Sigma_*}\overline{\inpr{k'-k}{\sigma}\inpr{u}{\sigma}})\\
 &=\frac{2}{|\Gamma|-|G|}+\frac{\delta_{u+k+k',0}}{2}+
 \frac{\varepsilon\varepsilon's^2}{4}(\overline{\inpr{u+k+k'}{k_1}}-\overline{\inpr{u+k+k'}{\sigma_1}}\inpr{2k}{\sigma_1}).
\end{align*}
\begin{align*}
\lefteqn{N_{(u,\pi),(k,\varepsilon),g}} \\
 &=(\frac{a^2(a+b)}{2(a-b)}+\frac{a^2(a-b)}{2(a+b)})\sum_{v\in U}\overline{\inpr{u+k+g}{v}}
 +a\sum_{(l,\delta)\in J_1}\overline{\inpr{u+k+g}{l}}(\frac{a}{2}+\frac{\varepsilon \delta s}{4})\\
 &+a^2\sum_{h\in G_*}\overline{\inpr{u+k}{h}}(\overline{\inpr{g}{h}}+\overline{\inpr{g}{\theta(h)}})\\
 &=a^2(\frac{(a+b)^2+(a-b)^2}{2(a^2-b^2)}-1)\sum_{v\in U}\overline{\inpr{u+k+g}{v}}+a^2\sum_{h\in G}\overline{\inpr{u+k+g}{h}}\\
 &=\frac{2}{|\Gamma|-|G|}(1+\overline{\inpr{k+g}{u_0}})+\delta_{u+k+g,0}\\
 &=\frac{2}{|\Gamma|-|G|}(1+\overline{\inpr{k+g}{u_0}}).
\end{align*}
\begin{align*}
\lefteqn{N_{(u,\pi),(k,\varepsilon),(\sigma,\varepsilon')}} \\
 &=(\frac{ab(a+b)}{4(a-b)}-\frac{ab(a-b)}{4(a+b)})\sum_{v\in U}\overline{\inpr{u}{v}\inpr{k}{v}\inpr{\sigma}{v}}
 +\sum_{(l,\delta)\in J_1}(\frac{a}{2}+\frac{\varepsilon\delta s}{4})\overline{\inpr{u+k}{l}}\frac{\varepsilon'\delta f(l,\sigma)}{4} \\
 &+\sum_{(\tau,\delta)\in J_2}\frac{\varepsilon\delta f(k,\tau)}{4}(\frac{b}{2}-\frac{\varepsilon'\delta s}{4})\overline{\inpr{u+\sigma}{\tau}} \\
 &=\frac{a^2b^2}{a^2-b^2}\sum_{v\in U}\overline{\inpr{k}{v}\inpr{\sigma}{v}}
 +\frac{s\varepsilon\varepsilon'}{8}(\sum_{l\in K_*}f(l,\sigma)\overline{\inpr{u+k}{l}}-\sum_{\tau\in \Sigma_*}f(k,\tau)\overline{\inpr{u+\sigma}{\tau}})\\
 &=\frac{1}{|\Gamma|-|G|}\sum_{v\in U}\overline{\inpr{k}{v}\inpr{\sigma}{v}}
  +\frac{s\varepsilon\varepsilon'}{8}\sum_{v\in U}(f(k+v,\sigma)\overline{\inpr{u+k}{k+v}}-f(k,\sigma+v)\overline{\inpr{u+\sigma}{\sigma+v}})\\
  &=\frac{1}{|\Gamma|-|G|}\sum_{v\in U}\overline{\inpr{k}{v}\inpr{\sigma}{v}}
  +\frac{sf(k,\sigma)\varepsilon\varepsilon'}{8}(\overline{\inpr{k}{k}\inpr{u}{k}}-\overline{\inpr{\sigma}{\sigma}\inpr{u}{\sigma}})
  \sum_{v\in U}\overline{\inpr{v}{k}\inpr{v}{\sigma}}\\
   &=(1+\overline{\inpr{u_0}{k}\inpr{u_0}{\sigma}})(\frac{1}{|\Gamma|-|G|}+\frac{sf(k,\sigma)\varepsilon\varepsilon'}{8}(\overline{\inpr{k}{k}\inpr{u}{k}}-\overline{\inpr{\sigma}{\sigma}\inpr{u}{\sigma}}))\\
   &=0.
\end{align*}
\begin{align*}
\lefteqn{N_{(u,\pi),(k,\varepsilon),\gamma}} \\
 &=\frac{ab}{2}(\frac{a+b}{a-b}-\frac{a-b}{a+b})\sum_{v\in U}\overline{\inpr{u+k}{v}\inpr{\gamma}{v}}
 +b\sum_{(\sigma,\delta)\in J_2}\frac{\varepsilon \delta f(k,\sigma)}{4}\overline{\inpr{\gamma}{\sigma}} \\
 &=\frac{2}{|\Gamma|-|G|}(1+\overline{\inpr{k}{u_0}\inpr{\gamma}{u_0}}).
\end{align*}
\begin{align*}
\lefteqn{N_{(u,\pi),g,g'}} \\
 &=(a^2\frac{a+b}{a-b}+a^2\frac{a-b}{a+b})\sum_{v\in U}\overline{\inpr{u+g+g'}{v}}+2a^2\sum_{l\in K_*}\overline{\inpr{u+g+g'}{l}}\\
 &+a^2\sum_{h\in G_*}\overline{\inpr{u}{h}}(\overline{\inpr{g}{h}}+\overline{\inpr{g}{\theta(h)}})(\overline{\inpr{g'}{h}}+\overline{\inpr{g'}{\theta(h)}}) \\
 &=a^2(\frac{(a+b)^2+(a-b)^2}{a^2-b^2}-2)\sum_{v\in U}\overline{\inpr{u+g+g'}{v}}
 +a^2\sum_{h\in G}(\overline{\inpr{u+g+g'}{h}}+\overline{\inpr{u+g+\theta(g')}{h}})\\
 &=\frac{4a^2b^2}{a^2-b^2}\sum_{v\in U}\overline{\inpr{u+g+g'}{v}} +\delta_{u+g+g',0}+\delta_{u+g+\theta(g'),0}\\
 &=\frac{4}{|\Gamma|-|G|}\sum_{v\in U}\overline{\inpr{u+g+g'}{v}} +\delta_{u+g+g',0}+\delta_{u+g+\theta(g'),0}.
\end{align*}
\begin{align*}
\lefteqn{N_{(u,\pi),g,(\sigma,\varepsilon)}} \\
 &=\frac{ab}{2}(\frac{a+b}{a-b}-\frac{a-b}{a+b})\sum_{v\in U}\overline{\inpr{g}{v}\inpr{u+\sigma}{v}}+a\sum_{(l,\delta)\in J_1}\overline{\inpr{g}{l}\inpr{u}{l}}\frac{\varepsilon \delta f(l,\sigma)}{4} \\
 &=\frac{2}{|\Gamma|-|G|}(1+\overline{\inpr{g}{u_0}\inpr{\sigma}{u_0}}). 
\end{align*}
\begin{align*}
\lefteqn{N_{(u,\pi),g,\gamma}} \\
 &=(\frac{ab(a+b)}{a-b}-\frac{ab(a-b)}{a+b})\sum_{v\in U}\overline{\inpr{u}{v}\inpr{g}{v}\inpr{\gamma}{v}}\\
 &=ab\frac{(a+b)^2-(a-b)^2}{a^2-b^2}\sum_{v\in U}\overline{\inpr{u}{v}\inpr{g}{v}\inpr{\gamma}{v}}\\
 &=\frac{4a^2b^2}{a^2-b^2}\sum_{v\in U}\overline{\inpr{u}{v}\inpr{g}{v}\inpr{\gamma}{v}}\\
 &=\frac{4}{|\Gamma|-|G|}\sum_{v\in U}\overline{\inpr{u}{v}\inpr{g}{v}\inpr{\gamma}{v}}.
\end{align*}
\begin{align*}
\lefteqn{N_{(u,\pi),(\sigma,\varepsilon),(\sigma',\varepsilon')}} \\
 &=\frac{b^2}{4}(\frac{a+b}{a-b}+\frac{a-b}{a+b})\sum_{v\in U}\overline{\inpr{u+\sigma+\sigma'}{v}}
 +\sum_{(l,\delta)\in J_1}\frac{\varepsilon \delta f(l,\sigma)}{4}\frac{\varepsilon'\delta f(l,\sigma')}{4}\overline{\inpr{u}{l}} \\
 &-\sum_{(\tau,\delta)\in J_2}(\frac{b}{2}-\frac{\varepsilon\delta s}{4}) (\frac{b}{2}-\frac{\varepsilon\delta s}{4}) \overline{\inpr{u+\sigma+\sigma'}{\tau}}
 -b^2\sum_{\xi\in \Gamma_*}\overline{\inpr{u+\sigma+\sigma'}{\xi}}\\
 &=\frac{b^2}{2}(\frac{a^2+b^2}{a^2-b^2}+1)\sum_{v\in U}\overline{\inpr{u+\sigma+\sigma'}{v}}
 -\frac{b^2}{2}\sum_{\xi\in \Gamma}\overline{\inpr{u+\sigma+\sigma'}{\xi}}\\
 &+\frac{\varepsilon\varepsilon'}{8}\sum_{l\in K_*}f(l,\sigma)f(l,\sigma')\overline{\inpr{u}{l}}
 -\frac{\varepsilon\varepsilon s^2}{8}\sum_{\tau\in \Sigma_*}\overline{\inpr{u+\sigma+\sigma'}{\tau}}\\
 &=\frac{1}{|\Gamma|-|G|}(1+\overline{\inpr{u+\sigma+\sigma'}{u_0}})-\frac{\delta_{u+\sigma+\sigma',0}}{2}\\
 &+\frac{\varepsilon\varepsilon' s^2}{8}(\sum_{l\in K_*}\overline{\inpr{u+\sigma+\sigma'}{l}}\inpr{2\sigma}{l}-\sum_{\tau\in \Sigma_*}\overline{\inpr{u+\sigma+\sigma'}{\tau}})\\
 &=\frac{2}{|\Gamma|-|G|}-\frac{\delta_{u+\sigma+\sigma',0}}{2}
 +\frac{\varepsilon\varepsilon' s^2}{4}(\overline{\inpr{u+\sigma+\sigma'}{k_1}}\inpr{2\sigma}{k_1}-\overline{\inpr{u+\sigma+\sigma'}{\sigma}})\\
\end{align*}
\begin{align*}
\lefteqn{N_{(u,\pi),(\sigma,\varepsilon),\gamma}} \\
 &=\frac{b^2}{2}(\frac{a+b}{a-b}+\frac{a-b}{a+b})\sum_{v\in U}\overline{\inpr{u+\sigma+\gamma}{v}}
 -b\sum_{(\tau,\delta)\in J_2}(\frac{b}{2}-\frac{\varepsilon \delta s}{4})\overline{\inpr{u+\sigma+\gamma}{\tau}} \\
 &-b^2\sum_{\xi\in \Gamma_*}\overline{\inpr{u+\sigma}{\xi}}(\overline{\inpr{\gamma}{\xi}}+\overline{\inpr{\gamma}{\theta(\xi)}}) \\
 &=\frac{2}{|\Gamma|-|G|}(1+\overline{\inpr{u+\sigma+\gamma}{u_0}})-\delta_{u+\sigma+\gamma,0}. 
\end{align*}
\begin{align*}
\lefteqn{N_{(u,\pi),\gamma,\gamma'}} \\
 &=(b^2\frac{a+b}{a-b}++b^2\frac{a-b}{a+b})\sum_{v\in U}\overline{\inpr{u+\gamma+\gamma'}{v}}-2b^2 \sum_{\tau\in \Sigma_*}\overline{\inpr{u+\gamma+\gamma'}{\tau}}\\
 &-b^2\sum_{\xi\in \Gamma_*}\overline{\inpr{u}{\xi}}(\overline{\inpr{\gamma}{\xi}}+\overline{\inpr{\gamma}{\theta(\xi)}})(\overline{\inpr{\gamma'}{\xi}}+\overline{\inpr{\gamma'}{\theta(\xi)}}) \\
 &=b^2(\frac{(a+b)^2+(a-b)^2}{a^2-b^2}+2)\sum_{v\in U}\overline{\inpr{u+\gamma+\gamma'}{v}}
 -b^2\sum_{\xi\in \Gamma}(\overline{\inpr{u+\gamma+\gamma'}{\xi}}+\overline{\inpr{u+\gamma+\theta(\gamma')}{\xi}})\\
 &=\frac{4a^2b^2}{a^2-b^2}\sum_{v\in U}\overline{\inpr{u+\gamma+\gamma'}{v}}-\delta_{u+\gamma+\gamma',0}-\delta_{u+\gamma+\theta(\gamma'),0}\\
 &=\frac{4}{|\Gamma|-|G|}\sum_{v\in U}\overline{\inpr{u+\gamma+\gamma'}{v}}
 -\delta_{u+\gamma+\gamma',0}-\delta_{u+\gamma+\theta(\gamma'),0}.
\end{align*}
\begin{align*}
\lefteqn{N_{(k,\varepsilon),(k',\varepsilon'),(k'',\varepsilon'')}} \\
 &=(\frac{a^3}{4(a-b)}+\frac{a^3}{4(a+b)})\sum_{v\in U}\overline{\inpr{k+k'+k''}{v}}\\
 &+\frac{2}{a}\sum_{(l,\delta)\in J_1}(\frac{a}{2}+\frac{\varepsilon \delta s}{4})(\frac{a}{2}+\frac{\varepsilon' \delta s}{4}) 
 (\frac{a}{2}+\frac{\varepsilon'' \delta s}{4})\overline{\inpr{k+k'+k''}{l}}\\
 &+a^2\sum_{g\in G_*}\overline{\inpr{k+k'+k''}{g}} 
 +\frac{2}{b}\sum_{(\tau,\delta)\in J_2}\frac{\varepsilon\delta f(k,\tau)}{4}\frac{\varepsilon'\delta f(k',\tau)}{4}\frac{\varepsilon''\delta f(k'',\tau)}{4}\\
 &=(\frac{a^4}{2(a^2-b^2)}-\frac{a^2}{2})\sum_{v\in U}\overline{\inpr{k+k'+k''}{v}}+\frac{a^2}{2}\sum_{g\in G}\overline{\inpr{k+k'+k''}{g}} \\
 &+\frac{s^2(\varepsilon\varepsilon'+\varepsilon'\varepsilon''+\varepsilon''\varepsilon)}{8}\sum_{l\in K_*}\overline{\inpr{k+k'+k''}{l}} \\
 &=\frac{a^2b^2}{2(a^2-b^2)}\sum_{v\in U}\overline{\inpr{k+k'+k''}{v}}+\frac{\delta_{k+k'+k'',0}}{2}
 +\frac{s^2(\varepsilon\varepsilon'+\varepsilon'\varepsilon''+\varepsilon''\varepsilon)}{8}\sum_{l\in K_*}\overline{\inpr{k+k'+k''}{l}}\\
 &=(1+\overline{\inpr{u_0}{k+k'+k''}})(\frac{1}{2(|\Gamma|-|G|)}
 +\frac{s^2(\varepsilon\varepsilon'+\varepsilon'\varepsilon''+\varepsilon''\varepsilon)}{8}\overline{\inpr{k+k'+k''}{k+k'+k''}})\\
\end{align*}
\begin{align*}
\lefteqn{N_{(k,\varepsilon),(k',\varepsilon'),g}} \\
 &=(\frac{a^3}{2(a-b)}+\frac{a^3}{2(a+b)})\sum_{v\in U}\overline{\inpr{v}{k+k'+g}}\\
 &+2\sum_{(l,\delta)\in J_1}(\frac{a}{2}+\frac{\varepsilon\delta s}{4})(\frac{a}{2}
 +\frac{\varepsilon'\delta s}{4})\overline{\inpr{l}{k+k'+g}}
 +a^2\sum_{h\in G_*}\overline{\inpr{h}{k+k'}}(\overline{\inpr{h}{g}}+\overline{\inpr{h}{\theta(g)}}) \\
 &=(\frac{a^4}{a^2-b^2}-a^2) \sum_{v\in U}\overline{\inpr{v}{k+k'+g}}+a^2\sum_{h\in G}\overline{\inpr{h}{k+k'+g}}\\
 &+\frac{s^2\varepsilon\varepsilon'}{4}\sum_{l\in K_*}\overline{\inpr{l}{k+k'+g}} \\
 &=\frac{1}{|\Gamma|-|G|}\sum_{v\in U}\overline{\inpr{v}{k+k'+g}}+\delta_{k+k'+g,0}+\frac{s^2\varepsilon\varepsilon'}{4}\sum_{l\in K_*}\overline{\inpr{l}{k+k'+g}}\\
 &=\frac{1}{|\Gamma|-|G|}\sum_{v\in U}\overline{\inpr{v}{k+k'+g}}+\frac{s^2\varepsilon\varepsilon'}{4}\sum_{v\in U}\overline{\inpr{k+v}{k+k'+g}}\\
 &=(\frac{1}{|\Gamma|-|G|}+\frac{s^2\varepsilon\varepsilon'\overline{\inpr{k}{k+k'+g}}}{4})(1+\inpr{g}{u_0}).
\end{align*}
\begin{align*}
\lefteqn{N_{(k,\varepsilon),(k',\varepsilon'),(\sigma,\varepsilon'')}} \\
 &=(\frac{a^2b}{4(a-b)}-\frac{a^2b}{4(a+b)})\sum_{v\in U}\overline{\inpr{k+k'}{v}\inpr{\sigma}{v}}\\
 &+\frac{2}{a}\sum_{(l,\delta)\in J_1}\overline{\inpr{k+k'}{l}}(\frac{a}{2}+\frac{\varepsilon\delta s}{4})(\frac{a}{2}+\frac{\varepsilon'\delta s}{4})
 \frac{\varepsilon''\delta f(l,\sigma)}{4} \\
 &-\frac{2}{b}\sum_{(\tau,\delta)\in J_2}\frac{\varepsilon\delta f(k,\tau)}{4}\frac{\varepsilon'\delta f(k',\tau)}{4}
 (\frac{b}{2}-\frac{\varepsilon''\delta s}{4})\overline{\inpr{\sigma}{\tau}} \\
 &=\frac{a^2b^2}{2(a^2-b^2)}\sum_{v\in U}\overline{\inpr{k+k'}{v}\inpr{\sigma}{v}}
 +\frac{\varepsilon''(\varepsilon+\varepsilon')s}{8}\sum_{l\in K_*}\overline{\inpr{k+k'}{l}}f(l,\sigma) \\
 &-\frac{\varepsilon \varepsilon'}{8}\sum_{\tau \in \Sigma_*}f(k,\tau)f(k',\tau)\overline{\inpr{\sigma}{\tau}}\\
 &=\frac{1}{2(|\Gamma|-|G|)}(1+\overline{\inpr{\sigma}{u_0}})
 +\frac{\varepsilon''(\varepsilon+\varepsilon')s}{8}\sum_{v\in U}\overline{\inpr{k+k'}{k+v}}f(k+v,\sigma) \\
 &-\frac{\varepsilon \varepsilon'}{8}\sum_{v \in U}f(k,\sigma+v)f(k',\sigma+v)\overline{\inpr{\sigma}{\sigma+v}}.\\
 &=(1+\overline{\inpr{\sigma}{u_0}})(\frac{1}{2(|\Gamma|-|G|)}+\frac{\varepsilon''(\varepsilon+ \varepsilon')s}{8}\overline{\inpr{k+k'}{k}}f(k,\sigma)
 -\frac{\varepsilon\varepsilon'}{8}f(k,\sigma)f(k',\sigma)\overline{\inpr{\sigma}{\sigma}})\\
  &=(1+\overline{\inpr{\sigma}{u_0}})(\frac{1}{2(|\Gamma|-|G|)}+\frac{\varepsilon''(\varepsilon+ \varepsilon')s}{8}\overline{\inpr{k+k'}{k}}f(k,\sigma)
 -\frac{s^2\varepsilon\varepsilon'}{8}\overline{\inpr{k'-k}{\sigma}\inpr{\sigma}{\sigma}}).
\end{align*}
\begin{align*}
\lefteqn{N_{(k,\varepsilon),(k',\varepsilon'),\gamma}} \\
 &=(\frac{a^2b}{2(a-b)}-\frac{a^2b}{2(a+b)})\sum_{v\in U}\overline{\inpr{v}{k+k'}\inpr{v}{\gamma}}
 -2\sum_{(\tau,\delta)\in J_2}\frac{\varepsilon\delta f(k,\tau)}{4}\frac{\varepsilon'\delta f(k',\tau)}{4}\overline{\inpr{\tau}{\gamma}}
  \\
 &=\frac{a^2b^2}{a^2-b^2}\sum_{v\in U}\overline{\inpr{v}{k+k'}\inpr{v}{\gamma}}-\frac{\varepsilon\varepsilon' }{4}\sum_{\tau\in \Sigma_*}
 f(k,\tau)f(k',\tau)\overline{\inpr{\tau}{\gamma}} \\
 &=\frac{1}{|\Gamma|-|G|}\sum_{v\in U}\overline{\inpr{v}{\gamma}}-\frac{\varepsilon\varepsilon' s^2}{4}\sum_{\tau\in \Sigma_*}
 \overline{\inpr{k'-k}{\tau}\inpr{\tau}{\gamma}}  \\
 &=(1+\overline{\inpr{\gamma}{u_0}})(\frac{1}{|\Gamma|-|G|}-\frac{\varepsilon\varepsilon's^2}{4}\overline{\inpr{k'-k}{\sigma_1}\inpr{\gamma}{\sigma_1}}).
\end{align*}
\begin{align*}
\lefteqn{N_{(k,\varepsilon),g,g'}} \\
 &=(\frac{a^3}{a-b}+\frac{a^3}{a+b})\sum_{v\in U}\overline{\inpr{k+g+g'}{v}}+2a\sum_{(l,\delta)\in J_1}(\frac{a}{2}+\frac{\varepsilon \delta s}{4})\overline{\inpr{k+g+g'}{l}} \\
 &+a^2\sum_{h\in G_*}\overline{\inpr{k}{h}}(\overline{\inpr{g}{h}}+\overline{\inpr{g}{\theta(h)}})(\overline{\inpr{g'}{h}}+\overline{\inpr{g'}{\theta(h)}}) \\
 &=a^2(\frac{2a^2}{a^2-b^2}-2)\sum_{v\in U}\overline{\inpr{k+g+g'}{v}}+a^2\sum_{h\in G}(\overline{\inpr{k+g+g'}{h}}+\overline{\inpr{k+g+\theta(g')}{h}}) \\
 &=\frac{2}{|\Gamma|-|G|}(1+\overline{\inpr{k+g+g'}{u_0}})+\delta_{k+g+g',0}+\delta_{k+g-g',0}. 
\end{align*}
\begin{align*}
\lefteqn{N_{(k,\varepsilon),g,(\sigma,\varepsilon')}} \\
 &=(\frac{a^2b}{2(a-b)}-\frac{a^2b}{2(a+b)})\sum_{v\in U}\overline{\inpr{k+g}{v}\inpr{\sigma}{v}}
 +2\sum_{(\delta,l)\in J_1}(\frac{a}{2}+\frac{\varepsilon \delta s}{4})\frac{\varepsilon'\delta f(l,\sigma)}{4}\overline{\inpr{k+g}{l}} \\
 &=\frac{1}{|\Gamma|-|G|}\sum_{v\in U}\overline{\inpr{k+g}{v}\inpr{\sigma}{v}}+\frac{\varepsilon \varepsilon' s}{4}\sum_{l\in K_*}f(l,\sigma)\overline{\inpr{k+g}{l}} \\
 &=(\frac{1}{|\Gamma|-|G|}+\frac{\varepsilon\varepsilon' s f(k,\sigma)\overline{\inpr{k+g}{k}}}{4})(1+\overline{\inpr{k+g}{u_0}\inpr{\sigma}{u_0}}).
\end{align*}
$$N_{(k,\varepsilon),g,\gamma}
=(\frac{a^2b}{a-b}-\frac{a^2b}{a+b})\sum_{v\in U}\overline{\inpr{k+g}{v}\inpr{\gamma}{v}}
=\frac{2}{|\Gamma|-|G|}(1+\overline{\inpr{k+g}{u_0}\inpr{\gamma}{u_0}}).$$
\begin{align*}
\lefteqn{N_{(k,\varepsilon''),(\sigma,\varepsilon),(\sigma',\varepsilon')}} \\
 &=(\frac{ab^2}{4(a-b)}+\frac{ab^2}{4(a+b)})\sum_{v\in U}\overline{\inpr{k}{u}\inpr{\sigma+\sigma'}{u}}\\
 &+\frac{2}{a}\sum_{(l,\delta)\in J_1}(\frac{a}{2}+\frac{\varepsilon''\delta s}{4})\overline{\inpr{k}{l}}
 \frac{\varepsilon \delta f(l,\sigma)}{4}\frac{\varepsilon' \delta f(l,\sigma')}{4}\\
 &+\frac{2}{b}\sum_{(\tau,\delta)J_2}\frac{\varepsilon''\delta f(k,\tau)}{4}(\frac{b}{2}-\frac{\varepsilon\delta s}{4}) (\frac{b}{2}-\frac{\varepsilon'\delta s}{4})\overline{\inpr{\sigma+\sigma'}{\tau}}\\
 &=\frac{a^2b^2}{2(a^2-b^2)}\sum_{v\in U}\overline{\inpr{k}{u}\inpr{\sigma+\sigma'}{u}}\\
 &+\frac{\varepsilon\varepsilon'}{8}\sum_{l\in K_*}f(l,\sigma)f(l,\sigma')\overline{\inpr{k}{l}}
 -\frac{s\varepsilon''(\varepsilon+\varepsilon')}{8}\sum_{\tau\in \Sigma_*}f(k,\tau)\overline{\inpr{\sigma+\sigma'}{\tau}} \\
 &=\frac{1}{2(|\Gamma|-|G|)}\sum_{v\in U}\overline{\inpr{k}{u}\inpr{\sigma+\sigma'}{u}}\\
 &+\frac{\varepsilon\varepsilon'}{8}\sum_{v\in U}f(k,\sigma)f(k,\sigma')\overline{\inpr{\sigma+\sigma'}{v}\inpr{k}{k+v}}
 -\frac{s\varepsilon''(\varepsilon+\varepsilon')}{8}\sum_{v\in U}f(k,\sigma)\overline{\inpr{k}{v}\inpr{\sigma+\sigma'}{\sigma+v}}\\
 &=(\frac{1}{2(|\Gamma|-|G|)}+\frac{\varepsilon\varepsilon's^2\overline{\inpr{\sigma'-\sigma}{k}\inpr{k}{k}}}{8}  
 -\frac{s\varepsilon''(\varepsilon+\varepsilon')f(k,\sigma)\overline{\inpr{\sigma+\sigma'}{\sigma}}}{8}) (1+\overline{\inpr{k}{u_0}}).
\end{align*}
\begin{align*}
\lefteqn{N_{(k,\varepsilon),(\sigma,\varepsilon'),\gamma}} \\
 &=(\frac{ab^2}{2(a-b)}+\frac{ab^2}{2(a+b)})\sum_{v\in U}\overline{\inpr{k}{v}\inpr{\sigma+\gamma}{v}}
 +2\sum_{(\tau,\delta)\in J_2}\frac{\varepsilon\delta f(k,\tau)}{4}(\frac{b}{2}-\frac{\varepsilon'\delta s}{4})\overline{\inpr{\sigma+\gamma}{\tau}} \\
 &=\frac{1}{|\Gamma|-|G|}\sum_{v\in U}\overline{\inpr{k}{v}\inpr{\sigma+\gamma}{v}}-\frac{\varepsilon\varepsilon's}{4}\sum_{\tau \in \Sigma_*}f(k,\tau)\overline{\inpr{\sigma+\gamma}{\tau}} \\
 &=(\frac{1}{|\Gamma|-|G|}-\frac{\varepsilon \varepsilon' s f(k,\sigma) \overline{\inpr{\sigma+\gamma}{\sigma}}}{4})(1+\overline{\inpr{k}{u_0}\inpr{\sigma+\gamma}{u_0}}).
\end{align*}
\begin{align*}
\lefteqn{N_{(k,\varepsilon),\gamma,\gamma'}} \\
 &=(\frac{ab^2}{a-b}+\frac{ab^2}{a+b})\sum_{v\in U}\overline{\inpr{k}{v}\inpr{\gamma+\gamma'}{v}}+\frac{2b^2}{a}\sum_{(\sigma,\delta)\in J_2}\frac{\varepsilon\delta f(k,\sigma)}{4}\overline{\inpr{\gamma+\gamma'}{\sigma}} \\
 &=\frac{2}{|\Gamma|-|G|}(1+\overline{\inpr{k}{u_0}\inpr{\gamma+\gamma'}{u_0}}).
\end{align*}
\begin{align*}
\lefteqn{N_{g,g',g''}} \\
 &=(\frac{2a^3}{a-b}+\frac{2a^3}{a+b})\sum_{v\in U}\overline{\inpr{g+g'+g''}{v}}+4a^2\sum_{l\in K_*}\overline{\inpr{g+g'+g''}{k}}\\
 &+a^2\sum_{h\in G_*}\overline{(\inpr{g}{h}+\inpr{g}{\theta(h)})(\inpr{g'}{h}+\inpr{g'}{\theta(h)})(\inpr{g''}{h}+\inpr{g''}{\theta(h)})} \\
 &=(\frac{4a^4}{a^2-b^2}-4a^2)\sum_{v\in U}\overline{\inpr{g+g'+g''}{v}}\\
 &+a^2\sum_{h\in G}\overline{(\inpr{g+g'+g''}{h}+\inpr{\theta(g)+g'+g''}{h}+\inpr{g+\theta(g')+g''}{h}+\inpr{g+g'+\theta(g'')}{h})}\\
 &=\frac{4a^2b^2}{a^2-b^2}\sum_{v\in U}\overline{\inpr{g+g'+g''}{v}}+\delta_{g+g'+g'',0}+\delta_{\theta(g)+g'+g'',0}
 +\delta_{g+\theta(g')+g'',0}+\delta_{g+g'+\theta(g''),0}\\
 &=\frac{4}{|\Gamma|-|G|}(1+\overline{\inpr{g+g'+g''}{u_0}})
 +\delta_{g+g'+g'',0}+\delta_{\theta(g)+g'+g'',0}+\delta_{g+\theta(g')+g'',0}+\delta_{g+g'+\theta(g''),0}.
\end{align*}
\begin{align*}
\lefteqn{N_{g,g',(\sigma,\varepsilon)}} \\
 &=(\frac{a^2b}{a-b}-\frac{a^2b}{a+b})\sum_{v\in U}\overline{\inpr{g+g'}{v}\inpr{\sigma}{v}}
 +2a\sum_{(l,\delta)\in J_1}\overline{\inpr{g+g'}{l}}\frac{\delta\varepsilon f(l,\sigma)}{4} \\
 &=\frac{2}{|\Gamma|-|G|}(1+\overline{\inpr{g+g'}{u_0}\inpr{\sigma}{u_0}}).
\end{align*}
\begin{align*}
\lefteqn{N_{g,g',\gamma}} \\
 &=(\frac{2a^2b}{a-b}-\frac{2a^2b}{a+b})\sum_{v\in U}\overline{\inpr{g+g'}{v}\inpr{\gamma}{v}}\\
 &=\frac{4a^2b^2}{a^2-b^2}\sum_{v\in U}\overline{\inpr{g+g'}{v}\inpr{\gamma}{v}}\\
 &=\frac{4}{|\Gamma|-|G|}(1+\overline{\inpr{g+g'}{u_0}\inpr{\gamma}{u_0}}).
\end{align*}
\begin{align*}
\lefteqn{N_{g,(\sigma,\varepsilon),(\sigma',\varepsilon')}} \\
 &=(\frac{ab^2}{2(a-b)}+\frac{ab^2}{2(a+b)})\sum_{v\in U}\overline{\inpr{g}{v}\inpr{\sigma+\sigma'}{v}}
 +2\sum_{(l,\delta)\in J_1}\overline{\inpr{g}{l}}\frac{\varepsilon\delta f(l,\sigma)}{4}\frac{\varepsilon' \delta f(l,\sigma')}{4} \\
 &=\frac{1}{|\Gamma|-|G|}\sum_{v\in U}\overline{\inpr{g}{v}}
 +\frac{\varepsilon\varepsilon'}{4}\sum_{l\in K_*}\overline{\inpr{g}{l}}f(l,\sigma)f(l,\sigma') \\
 &=\frac{1}{|\Gamma|-|G|}\sum_{v\in U}\overline{\inpr{g}{v}}
 +\frac{\varepsilon\varepsilon's^2}{4}\sum_{v\in U}\overline{\inpr{g}{k_1+v}\inpr{k_1+v}{\sigma'-\sigma}} \\
 &=(\frac{1}{|\Gamma|-|G|}+\frac{\varepsilon\varepsilon's^2\overline{\inpr{g}{k_1}\inpr{k_1}{\sigma'-\sigma}} }{4})
 (1+\overline{\inpr{g}{u_0}}).
\end{align*}
$$N_{g,(\sigma,\varepsilon),\gamma}=(\frac{ab^2}{a-b}+\frac{ab^2}{a+b})\sum_{v\in U}\overline{\inpr{g}{v}\inpr{\sigma+\gamma}{v}}
=\frac{2}{|\Gamma|-|G|}(1+\inpr{g}{u_0}\overline{\inpr{\sigma+\gamma}{u_0}}).$$
\begin{align*}
\lefteqn{N_{g,\gamma,\gamma'}} \\
 &=(\frac{2ab^2}{a-b}+\frac{2ab^2}{a+b})\sum_{v\in U}\overline{\inpr{g}{v}\inpr{\gamma+\gamma'}{v}} \\
 &=\frac{4a^2b^2}{a^2-b^2}\sum_{v\in U}\overline{\inpr{g}{v}\inpr{\gamma+\gamma'}{v}}\\
 &=\frac{4}{|\Gamma|-|G|}(1+\overline{\inpr{g}{u_0}\inpr{\gamma+\gamma'}{u_0}}).
\end{align*}
\begin{align*}
\lefteqn{N_{(\sigma,\varepsilon),(\sigma',\varepsilon'),(\sigma'',\varepsilon'')}} \\
 &=(\frac{b^3}{4(a-b)}-\frac{b^3}{4(a+b)})\sum_{v\in U}\overline{\inpr{\sigma+\sigma'+\sigma''}{v}}\\
 &+\frac{2}{a}\sum_{(l,\delta)\in J_1}\frac{\varepsilon\delta f(l,\sigma)}{4}\frac{\varepsilon'\delta f(l,\sigma')}{4}\frac{\varepsilon''\delta f(l,\sigma'')}{4} \\
 &-\frac{2}{b}\sum_{(\tau,\delta)\in J_2}(\frac{b}{2}-\frac{\varepsilon\delta s}{4})(\frac{b}{2}-\frac{\varepsilon'\delta s}{4})(\frac{b}{2}-\frac{\varepsilon''\delta s}{4})
 \overline{\inpr{\sigma+\sigma'+\sigma''}{\tau}} 
 -b^2\sum_{\gamma\in \Gamma_*} \overline{\inpr{\sigma+\sigma'+\sigma''}{\gamma}} \\
 &=(\frac{b^4}{2(a^2-b^2)}+\frac{b^2}{2})\sum_{v\in U}\overline{\inpr{\sigma+\sigma'+\sigma''}{v}}
 -\frac{b^2}{2}\sum_{\gamma\in \Gamma}\overline{\inpr{\sigma+\sigma'+\sigma''}{\gamma}}\\
 &-\frac{s^2(\varepsilon\varepsilon'+\varepsilon'\varepsilon''+\varepsilon''\varepsilon)}{8}\sum_{\tau\in \Sigma_*}\overline{\inpr{\sigma+\sigma'+\sigma''}{\tau}}\\
 &=(1+\overline{\inpr{\sigma+\sigma'+\sigma''}{u_0}})(\frac{1}{2(|\Gamma|-|G|)}- \frac{s^2(\varepsilon\varepsilon'+\varepsilon'\varepsilon''+\varepsilon''\varepsilon)}{8}\overline{\inpr{\sigma+\sigma'+\sigma''}{\sigma+\sigma'+\sigma''}}).
\end{align*}
\begin{align*}
\lefteqn{N_{(\sigma,\varepsilon),(\sigma',\varepsilon'),\gamma}} \\
 &=(\frac{b^3}{2(a-b)}-\frac{b^3}{2(a+b)})\sum_{v\in U}\overline{\inpr{\sigma+\sigma'+\gamma}{v}}
 -2\sum_{(\tau,\delta)\in J_2}(\frac{b}{2}-\frac{\varepsilon \delta s}{4})(\frac{b}{2}-\frac{\varepsilon'\delta s}{4})\overline{\inpr{\sigma+\sigma'+ \gamma}{\tau}} \\
 &-b^2\sum_{\xi\in \Gamma_*}\overline{\inpr{\sigma+\sigma'}{\xi}}(\overline{\inpr{\gamma}{\xi}}+\overline{\inpr{\gamma}{\theta(\xi)}}) \\
 &=(\frac{b^4}{a^2-b^2}+b^2)\sum_{v\in U}\overline{\inpr{\sigma+\sigma'+\gamma}{v}}-b^2\sum_{\xi\in \Gamma}\overline{\inpr{\sigma+\sigma'+\gamma}{\xi}}
 -\frac{\varepsilon\varepsilon's^2}{4}\sum_{\tau\in \Sigma_*}\overline{\inpr{\sigma+\sigma'+\gamma}{\tau}}\\
 &=(\frac{1}{|\Gamma|-|G|}-\frac{\varepsilon\varepsilon's^2\overline{\inpr{\sigma+\sigma'+\gamma}{\sigma}}}{4})(1+\overline{\inpr{\sigma+\sigma'+\gamma}{u_0}}).
\end{align*}
\begin{align*}
\lefteqn{N_{(\sigma,\varepsilon),\gamma,\gamma'}} \\
 &=(\frac{b^3}{a-b}-\frac{b^3}{a+b})\sum_{v\in U}\overline{\inpr{\sigma+\gamma+\gamma'}{v}}
 -2b\sum_{(\tau,\delta)\in J_2}(\frac{b}{2}-\frac{\varepsilon\delta s}{4})\overline{\inpr{\sigma+\gamma+\gamma'}{\tau}} \\
 &-b^2\sum_{\xi\in \Gamma_*}\overline{\inpr{\sigma}{\xi}}(\overline{\inpr{\gamma}{\xi}}+\overline{\inpr{\gamma}{\theta(\xi)}})(\overline{\inpr{\gamma'}{\xi}}+\overline{\inpr{\gamma'}{\theta(\xi)}}) \\
 &=(\frac{2b^4}{a^2-b^2}+2b^2)\sum_{v\in U}\overline{\inpr{\sigma+\gamma+\gamma'}{v}}-b^2\sum_{\xi\in \Gamma}(\overline{\inpr{\sigma+\gamma+\gamma'}{\xi}}+\overline{\inpr{\sigma+\gamma+\theta(\gamma')}{\xi}})\\
 &=\frac{2}{|\Gamma|-|G|}(1+\overline{\inpr{\sigma+\gamma+\gamma'}{u_0}})-\delta_{\sigma+\gamma+\gamma',0}-\delta_{\sigma+\gamma+\theta(\gamma'),0}.
\end{align*}
\begin{align*}
\lefteqn{N_{\gamma,\gamma',\gamma''}} \\
 &=(\frac{2b^3}{a-b}-\frac{2b^3}{a+b})\sum_{v\in U}\overline{\inpr{\gamma+\gamma'+\gamma''}{v}}-4b^2\sum_{\tau \in \Sigma_*}\overline{\inpr{\gamma+\gamma'+\gamma''}{\tau}} \\
 &-b^2\sum_{\xi\in \Gamma_*}\overline{(\inpr{\gamma}{\xi}+\inpr{\gamma}{\theta(\xi)})(\inpr{\gamma'}{\xi}+\inpr{\gamma'}{\theta(\xi)})(\inpr{\gamma}{\xi}+\inpr{\gamma''}{\theta(\xi)})} \\
 &=(\frac{4b^4}{a^2-b^2}+4b^2)\sum_{v\in U}\overline{\inpr{\gamma+\gamma'+\gamma''}{v}}\\
 &-b^2\sum_{\xi\in \Gamma}(\inpr{\gamma+\gamma'+\gamma''}{\xi}+\inpr{\theta(\gamma)+\gamma'+\gamma''}{\xi}
 +\inpr{\gamma+\theta(\gamma')+\gamma''}{\xi}+\inpr{\gamma+\gamma'+\theta(\gamma'')}{\xi})\\
 &=\frac{4a^2b^2}{a^2-b^2}\sum_{v\in U}\overline{\inpr{\gamma+\gamma'+\gamma''}{v}}
 -\delta_{\gamma+\gamma'+\gamma'',0}-\delta_{\theta(\gamma)+\gamma'+\gamma'',0}
 -\delta_{\gamma+\theta(\gamma')+\gamma'',0}-\delta_{\gamma+\gamma'+\theta(\gamma''),0}\\
 &=\frac{4}{|\Gamma|-|G|}(1+\overline{\inpr{\gamma+\gamma'+\gamma''}{u_0}})
 -\delta_{\gamma+\gamma'+\gamma'',0}-\delta_{\theta(\gamma)+\gamma'+\gamma'',0}
 -\delta_{\gamma+\theta(\gamma')+\gamma'',0}-\delta_{\gamma+\gamma'+\theta(\gamma''),0}.
\end{align*}
\end{proof}

\begin{proof}[Proof of Lemma 5.7]   
\begin{align*}
\lefteqn{\nu_m((u,0))} \\
 &=\sum_{u'\in U}N^{(u,0)}_{(u',0),(u-u',0)}\frac{(a-b)^2}{4}\frac{q_0(u')^m}{q_0(u-u')^m}
  +\sum_{u'\in U}N^{(u,0)}_{(u',\pi),(u-u',\pi)}\frac{(a+b)^2}{4}\frac{q_0(u')^m}{q_0(u-u')^m}\\
 &+\sum_{g\in G_*}N^{(u,0)}_{g,u-g}a^2\frac{q_1(g)^m}{q_1(u-g)^m}
 +\sum_{\gamma\in \Gamma_*}N^{(u,0)}_{\gamma,u-\gamma}b^2\frac{q_2(\gamma)^m}{q_2(u-\gamma)^m} \\
 &+\sum_{(k,\varepsilon)\in J_1}N^{(u,0)}_{(k,\varepsilon),(k-u,\varepsilon)}\frac{a^2}{4}\frac{q_1(k)^m}{q_1(k+u)^m}
 +\sum_{(\sigma,\varepsilon)\in J_2}N^{(u,0)}_{(\sigma,\varepsilon),(\sigma-u,\varepsilon)}\frac{b^2}{4}\frac{q_2(\sigma)^m}{q_1(\sigma+u)^m}\\
 &=\frac{a^2}{2}\sum_{g\in G}\frac{q_0(g)^m}{q_0(u-g)^m}
 +\frac{b^2}{2}\sum_{\gamma\in \Gamma}\frac{q_1(\gamma)^m}{q_2(u-\gamma)^m}. 
\end{align*}
Since $q_1(g)/q_1(u-g)=\inpr{u}{g}\overline{q_0(u)}$, we have 
$$\frac{a^2}{2}\sum_{g\in G}\frac{q_0(g)^m}{q_0(u-g)^m}=\frac{1}{2|G|}\sum_{g\in G}\inpr{mu}{g}q_0(u)^{-m}=\frac{\delta_{mu,0}q_0(u)^m}{2},$$
and $\nu_m((u,0))=\delta_{mu,0}q_0(u)^m$. 

Note that we have $\cG(q_1,q_2,v,m)=\cG(q_1,q_2,-v,m)$ and $\cG(q_1,q_2,v,-m)=\overline{\cG(q_1,q_2,v,m)}$. 
We set 
$$A(g,m)=2a\sum_{h\in G_*}\inpr{g}{h}q_1(h)^m,\quad B(\gamma,m)=2b\sum_{\xi\in \Gamma_*}\inpr{\gamma}{\xi}q_2(\xi)^m.$$
$$C(g,m)=a\sum_{k\in K_*}\inpr{g}{k}q_0(k)^m,\quad D(\gamma,m)=b\sum_{\sigma\in \Sigma_*}\inpr{\gamma}{\sigma}q_2(\sigma)^m.$$
Then 
$$\cG(q_1,q_2,v,m)=(a+b)(1+q_0(u_0)^m)+A(v,m)+B(v,m)+C(v,m)+D(v,m).$$

In the computation below, we often use the following facts: 
\begin{enumerate}
\item $\inpr{u}{v}=1$ for any $u,v\in U$, 
\item $\inpr{k}{u_0}\inpr{\sigma}{u_0}=-1$ for any $k\in K_*$ and $\sigma\in \Sigma_*$, 
\item  $\inpr{u}{k+l}=1$ for any $u\in U$ and $k,l\in K_*$, 
\item $\inpr{u}{\sigma+\tau}=1$ for any $\sigma,\tau\in \Sigma_*$.  
\end{enumerate}
For $\nu_m((u,\pi))$, we have 
\begin{align*}
\lefteqn{\nu_m((u,\pi))} \\
 &=\sum_{v\in U}N^{(u,\pi)}_{(v,0),(u-v,\pi)}\frac{a^2-b^2}{4}(\frac{q_0(v)^m}{q_0(u-v)^m}+\frac{q_0(u-v)^m}{q_0(v)^m}) 
 +\sum_{v,v'\in U}N^{(u,\pi)}_{(v,\pi),(v',\pi)}\frac{(a+b)^2}{4}\frac{q_0(v)^m}{q_0(v')^m} \\
 &+\sum_{v\in U,\;g\in G_*}N^{(u,\pi)}_{(v,\pi),g}\frac{a(a+b)}{2}(\frac{q_0(v)^m}{q_1(g)^m}+\frac{q_1(g)^m}{q_0(v)^m}) 
 +\sum_{v\in U,\;\xi\in \Gamma_*}N^{(u,\pi)}_{(v,\pi),\xi}\frac{b(a+b)}{2}(\frac{q_0(v)^m}{q_2(\xi)^m}+\frac{q_2(\xi)^m}{q_0(v)^m})\\
 &+\sum_{v\in U,\;(l,\varepsilon)\in J_1}N^{(u,\pi)}_{(v,\pi),(l,\varepsilon)}\frac{a(a+b)}{4}(\frac{q_0(v)^m}{q_1(l)^m}+\frac{q_1(l)^m}{q_0(v)^m})+\\ &\sum_{v\in U,\;(\tau,\varepsilon)\in J_2}N^{(u,\pi)}_{(v,\pi),(\tau,\varepsilon)}\frac{b(a+b)}{4}(\frac{q_0(v)^m}{q_2(\tau)^m}+\frac{q_2(\tau)^m}{q_0(v)^m})
 +\sum_{(l,\varepsilon),(l',\varepsilon')\in J_1}N^{(u,\pi)}_{(l,\varepsilon),(l',\varepsilon')}\frac{a^2}{4}\frac{q_1(l)^m}{q_1(l')^m} +\\& \sum_{(\tau,\varepsilon),(\tau',\varepsilon')\in J_2}N^{(u,\pi)}_{(\tau,\varepsilon),(\tau',\varepsilon')}\frac{b^2}{4}\frac{q_2(\tau)^m}{q_2(\tau')^m}+\sum_{(l,\varepsilon)\in J_1,\;(\tau,\varepsilon')\in J_2}N^{(u,\pi)}_{(l,\varepsilon),(\tau,\varepsilon')}
 \frac{ab}{4}(\frac{q_1(l)^m}{q_2(\tau)^m}+\frac{q_2(\tau)^m}{q_1(l)^m})\\
 &+\sum_{(l,\varepsilon)\in J_1,\;\xi\in \Gamma_*}N^{(u,\pi)}_{(l,\varepsilon),\xi}\frac{ab}{2}(\frac{q_1(l)^m}{q_2(\xi)^m}+\frac{q_2(\xi)^m}{q_1(l)^m})
 +\sum_{h\in G_*,\;(\tau,\varepsilon)\in J_2}N^{(u.\pi)}_{h,(\tau,\varepsilon)}\frac{ab}{2}(\frac{q_1(h)^m}{q_2(\tau)^m}+\frac{q_2(\tau)}{q_1(h)^m})\\
 &+\sum_{(l,\varepsilon)\in J_1,\;h\in G_*}N^{(u,\pi)}_{(l,\varepsilon),h}\frac{a^2}{2}(\frac{q_1(l)^m}{q_1(h)^m}+\frac{q_1(h)^m}{q_1(l)^m})
 +\sum_{(\tau,\varepsilon)\in J_2,\;\xi\in \Gamma_*}N^{(u,\pi)}_{(\tau,\varepsilon),\xi}\frac{b^2}{2}(\frac{q_2(\tau)^m}{q_2(\xi)^m}+\frac{q_2(\xi)^m}{q_2(\tau)^m})\\
 &+\sum_{h,h'\in G_*}N^{(u,\pi)}_{h,h'}a^2\frac{q_1(h)^m}{q_1(h')^m}
 +\sum_{h\in G_*,\;\xi\in \Gamma_*}N^{(u,\pi)}_{h,\xi}ab(\frac{q_1(h)^m}{q_2(\xi)^m}+\frac{q_2(\xi)^m}{q_1(h)^m})
 +\sum_{\xi,\xi'\in \Gamma_*}N^{(u,\pi)}_{\xi,\xi'}b^2\frac{q_2(\xi)^m}{q_2(\xi')^m}\\
 &=\frac{a^2-b^2}{4}\sum_{v\in U}(\frac{q_0(v)^m}{q_0(u-v)^m}+\frac{q_0(u-v)^m}{q_0(v)^m}) 
 +\frac{2(a+b)^2}{|\Gamma|-|G|}\sum_{v,v'\in U}\frac{q_0(v)^m}{q_0(v')^m} \\
 &+\frac{2a(a+b)}{|\Gamma|-|G|}\sum_{v\in U,\;h\in G_*}(1+\inpr{u_0}{h})(\frac{q_0(v)^m}{q_1(h)^m}+\frac{q_1(h)^m}{q_0(v)^m}) 
 \\&+\frac{2b(a+b)}{|\Gamma|-|G|}\sum_{v\in U,\;\xi\in \Gamma_*}(1+\inpr{u_0}{\xi})(\frac{q_0(v)^m}{q_2(\xi)^m}+\frac{q_2(\xi)^m}{q_0(v)^m})\\
 &+\frac{a(a+b)}{|\Gamma|-|G|}\sum_{v\in U,\;l\in K_*}(1+\inpr{u_0}{l})(\frac{q_0(v)^m}{q_1(l)^m}+\frac{q_1(l)^m}{q_0(v)^m})
 \\&+\frac{b(a+b)}{|\Gamma|-|G|}\sum_{v\in U,\;\tau\in \Sigma_*}(1+\inpr{u_0}{\tau})(\frac{q_0(v)^m}{q_2(\tau)^m}+\frac{q_2(\tau)^m}{q_0(v)^m})\\
 &+a^2\sum_{l,l'\in K_*}(\frac{2}{|\Gamma|-|G|}+\frac{\delta_{u+l+l',0}}{2})\frac{q_1(l)^m}{q_1(l')^m}
 +b^2\sum_{\tau,\tau'\in \Sigma_*}(\frac{2}{|\Gamma|-|G|}-\frac{\delta_{u+\tau+\tau',0}}{2})\frac{q_2(\tau)^m}{q_2(\tau')^m}\\
 &+\frac{2ab}{|\Gamma|-|G|}\sum_{l\in K_*,\;\xi\in \Gamma_*}(1+\inpr{u_0}{l}\inpr{u_0}{\xi})(\frac{q_1(l)^m}{q_2(\xi)^m}+\frac{q_2(\xi)^m}{q_1(l)^m})\\
 &+\frac{2ab}{|\Gamma|-|G|}\sum_{h\in G_*,\;\tau\in \Sigma_*}(1+\inpr{u_0}{h}\inpr{u_0}{\tau})(\frac{q_1(h)^m}{q_2(\tau)^m}+\frac{q_2(\tau)}{q_1(h)^m})\\
 &+\frac{2a^2}{|\Gamma|-|G|}\sum_{l\in K_*,\;h\in G_*}(1+\inpr{u_0}{l+h})(\frac{q_1(l)^m}{q_1(h)^m}+\frac{q_1(h)^m}{q_1(l)^m})\\
 &+\frac{2b^2}{|\Gamma|-|G|}\sum_{\tau\in \Sigma_*,\;\xi\in \Gamma_*}(1+\inpr{u_0}{\tau+\xi})(\frac{q_2(\tau)^m}{q_2(\xi)^m}+\frac{q_2(\xi)^m}{q_2(\tau)^m})\\
 &+a^2\sum_{h,h'\in G_*}(\frac{4(1+\inpr{u_0}{h+h'})}{|\Gamma|-|G|}+\delta_{u+h+h',0}+\delta_{u+h+\theta(h'),0})\frac{q_1(h)^m}{q_1(h')^m}\\
 &+\frac{4ab}{|\Gamma|-|G|}\sum_{h\in G_*,\;\xi\in \Gamma_*}(1+\inpr{u_0}{h}\inpr{u_0}{\xi})(\frac{q_1(h)^m}{q_2(\xi)^m}+\frac{q_2(\xi)^m}{q_1(h)^m})\\
 &+b^2\sum_{\xi,\xi'\in \Gamma_*}(\frac{4(1+\inpr{u_0}{\xi+\xi'})}{|\Gamma|-|G|}-\delta_{u+\xi+\xi',0}-\delta_{u+\xi+\theta(\xi'),0})\frac{q_2(\xi)^m}{q_2(\xi')^m},
\end{align*}
and 
\begin{align*}
\lefteqn{\nu_m((u,\pi))} \\
 &=\frac{a^2-b^2}{4}\sum_{v\in U}(\frac{q_0(v)^m}{q_0(u-v)^m}+\frac{q_0(u-v)^m}{q_0(v)^m})
 +\frac{a^2}{2}\sum_{l \in K_*}\frac{q_1(l)^m}{q_1(u+l)^m}
 -\frac{b^2}{2}\sum_{\tau \in \Sigma_*}\frac{q_2(\tau)^m}{q_2(u+\tau)^m}\\
 &+a^2\sum_{h\in G_*}\frac{q_1(h)^m}{q_1(u+h)^m}-b^2\sum_{\xi\in \Gamma_*}\frac{q_2(\xi)^m}{q_2(u+\xi)^m}\\ 
 &+\frac{a+b}{|\Gamma|-|G|}\sum_{w\in U}\sum_{v\in U}(q_0(v)^m(A(w,-m)+B(w,-m))+(A(w,m)+B(w,m))q_0(v)^{-m})\\
 &+\frac{a+b}{|\Gamma|-|G|}\sum_{w\in U}\sum_{v\in U}(q_0(v)^m(C(w,-m)+D(w,-m))+(C(w,m)+D(w,m))q_0(v)^{-m})\\
 &+\frac{1}{|\Gamma|-|G|}\sum_{w\in U}(C(w,m)C(w,-m)+D(w,m)D(w,-m))\\
 &+\frac{1}{|\Gamma|-|G|}\sum_{w\in U}(A(w,m)C(w,-m)+C(w,m)A(w,-m)+A(w,m)D(w,-m)+D(w,m)A(w,-m))\\
 &+\frac{1}{|\Gamma|-|G|}\sum_{w\in U}(B(w,m)C(w,-m)+C(w,m)B(w,-m)+B(w,m)D(w,-m)+D(w,m)B(w,-m))\\
 &+\frac{1}{|\Gamma|-|G|}\sum_{w\in U}(A(w,m)A(w,-m)+A(w,m)B(w,-m)+B(w,m)A(w,-m)+B(w,m)B(w,-m))\\
 &=\frac{a^2}{2}\sum_{h\in G}\frac{q_1(h)^m}{q_1(u+h)^m}-\frac{b^2}{2}\sum_{\xi\in \Gamma}\frac{q_2(\xi)^m}{q_2(u+\xi)^m}
 +\frac{1}{|\Gamma|-|G|}\sum_{w\in U}|\cG(q_1,q_2,w,m)|^2\\
 &=\frac{1}{|\Gamma|-|G|}\sum_{w\in U}|\cG(q_1,q_2,w,m)|^2.
\end{align*}

\begin{align*}
\lefteqn{\nu_m((k,\varepsilon))} \\
 &=\sum_{v\in U}N^{(k,\varepsilon)}_{(v,0),(k-v,\varepsilon)}\frac{a(a-b)}{4}(\frac{q_0(v)^m}{q_0(k-v)^m}+\frac{q_0(k-v)^m}{q_0(v)^m}) 
 +\sum_{v,v'\in U}N^{(k,\varepsilon)}_{(v,\pi),(v',\pi)}\frac{(a+b)^2}{4}\frac{q_0(v)^m}{q_0(v')^m} \\
 &+\sum_{v\in U,\;g\in G_*}N^{(k,\varepsilon)}_{(v,\pi),g}\frac{a(a+b)}{2}(\frac{q_0(v)^m}{q_1(g)^m}+\frac{q_1(g)^m}{q_0(v)^m}) 
 +\sum_{v\in U,\;\xi\in \Gamma_*}N^{(k,\varepsilon)}_{(v,\pi),\xi}\frac{b(a+b)}{2}(\frac{q_0(v)^m}{q_2(\xi)^m}+\frac{q_2(\xi)^m}{q_0(v)^m})\\
 &+\sum_{v\in U,\;(l,\delta)\in J_1}N^{(k,\varepsilon)}_{(v,\pi),(l,\delta)}\frac{a(a+b)}{4}(\frac{q_0(v)^m}{q_1(l)^m}+\frac{q_1(l)^m}{q_0(v)^m})
 \\& +\sum_{v\in U,\;(\tau,\delta)\in J_2}N^{(k,\varepsilon)}_{(v,\pi),(\tau,\delta)}\frac{b(a+b)}{4}(\frac{q_0(v)^m}{q_2(\tau)^m}+\frac{q_2(\tau)^m}{q_0(v)^m})+\sum_{(l,\delta),(l',\delta')\in J_1}N^{(k,\varepsilon)}_{(l,\delta),(l',\delta')}\frac{a^2}{4}\frac{q_1(l)^m}{q_1(l')^m}
 \\&+\sum_{(\tau,\delta),(\tau',\delta')\in J_2}N^{(k,\varepsilon)}_{(\tau,\delta),(\tau',\delta')}\frac{b^2}{4}\frac{q_2(\tau)^m}{q_2(\tau')^m}+\sum_{(l,\delta)\in J_1,\;(\tau,\delta')\in J_2}N^{(k,\varepsilon)}_{(l,\delta),(\tau,\delta')}
 \frac{ab}{4}(\frac{q_1(l)^m}{q_2(\tau)^m}+\frac{q_2(\tau)^m}{q_1(l)^m})\\
 &+\sum_{(l,\delta)\in J_1,\;\xi\in \Gamma_*}N^{(k,\varepsilon)}_{(l,\delta),\xi}\frac{ab}{2}(\frac{q_1(l)^m}{q_2(\xi)^m}+\frac{q_2(\xi)^m}{q_1(l)^m})
 +\sum_{h\in G_*,\;(\tau,\delta)\in J_2}N^{(u,\pi)}_{h,(\tau,\delta)}\frac{ab}{2}(\frac{q_1(h)^m}{q_2(\tau)^m}+\frac{q_2(\tau)^m}{q_1(h)^m})\\
 &+\sum_{(l,\delta)\in J_1,\;h\in G_*}N^{(k,\varepsilon)}_{(l,\delta),h}\frac{a^2}{2}(\frac{q_1(l)^m}{q_1(h)^m}+\frac{q_1(h)^m}{q_1(l)^m})
 +\sum_{(\tau,\delta)\in J_2,\;\xi\in \Gamma_*}N^{(k,\varepsilon)}_{(\tau,\delta),\xi}\frac{b^2}{2}(\frac{q_2(\tau)^m}{q_2(\xi)^m}+\frac{q_2(\xi)^m}{q_2(\tau)^m})\\
 &+\sum_{h,h'\in G_*}N^{(k,\varepsilon)}_{h,h'}a^2\frac{q_1(h)^m}{q_1(h')^m}
 +\sum_{h\in G_*,\;\xi\in \Gamma_*}N^{(k,\varepsilon)}_{h,\xi}ab(\frac{q_1(h)^m}{q_2(\xi)^m}+\frac{q_2(\xi)^m}{q_1(h)^m})
 +\sum_{\xi,\xi'\in \Gamma_*}N^{(k,\varepsilon)}_{\xi,\xi'}b^2\frac{q_2(\xi)^m}{q_2(\xi')^m}\\
  &=\frac{a(a-b)}{4}\sum_{v\in U}(\frac{q_0(v)^m}{q_1(k-v)^m}+\frac{q_1(k-v)^m}{q_0(v)^m}) 
 +\frac{(a+b)^2}{2(|\Gamma|-|G|)}\sum_{v,v'\in U}(1+\inpr{k}{u_0})\frac{q_0(v)^m}{q_0(v')^m} \\
 &+\frac{a(a+b)}{|\Gamma|-|G|}\sum_{v\in U,\;g\in G_*}(1+\inpr{k+g}{u_0})(\frac{q_0(v)^m}{q_1(g)^m}+\frac{q_1(g)^m}{q_0(v)^m}) 
 \\&+\frac{b(a+b)}{|\Gamma|-|G|}\sum_{v\in U,\;\xi\in \Gamma_*}(1+\inpr{k+\xi}{u_0})(\frac{q_0(v)^m}{q_2(\xi)^m}+\frac{q_2(\xi)^m}{q_0(v)^m})\\
 &+\frac{a(a+b)}{4}\sum_{v\in U,\;l\in K_*}(\frac{2(1+\inpr{u_0}{k+l})}{|\Gamma|-|G|}+\delta_{v-k+l,0})(\frac{q_0(v)^m}{q_1(l)^m}+\frac{q_1(l)^m}{q_0(v)^m})\\
 &+\frac{a^2}{2(|\Gamma|-|G|)}\sum_{l,l'\in K_*}(1+\inpr{k+l+l'}{u_0})  \frac{q_1(l)^m}{q_1(l')^m}
 \\&+\frac{b^2}{2(|\Gamma|-|G|)}\sum_{\tau,\tau'\in \Sigma_*}(1+\inpr{k}{u_0}\inpr{\tau+\tau'}{u_0})\frac{q_2(\tau)^m}{q_2(\tau')^m}\\
 &+\frac{ab}{2(|\Gamma|-|G|)}\sum_{l\in K_*,\tau\in \Sigma_*}(1+\inpr{k+l}{u_0}\inpr{\tau}{u_0}) (\frac{q_1(l)^m}{q_2(\tau)^m}+\frac{q_2(\tau)^m}{q_1(l)^m})\\
 &+\frac{ab}{|\Gamma|-|G|}\sum_{l\in K_*,\;\xi\in \Gamma_*}(1+\inpr{k+l}{u_0}\inpr{\xi}{u_0})(\frac{q_1(l)^m}{q_2(\xi)^m}+\frac{q_2(\xi)^m}{q_1(l)^m})\\
 &+\frac{ab}{|\Gamma|-|G|}\sum_{h\in G_*,\;\tau\in \Sigma_*}(1+\inpr{k+h}{u_0}\inpr{\tau}{u_0})(\frac{q_1(h)^m}{q_2(\tau)^m}+\frac{q_2(\tau)^m}{q_1(h)^m})\\
 &+\frac{a^2}{|\Gamma|-|G|}\sum_{l\in K_*,\;h\in G_*}(1+\inpr{k+l+h}{u_0})(\frac{q_1(l)^m}{q_1(h)^m}+\frac{q_1(h)^m}{q_1(l)^m})\\
 &+\frac{b^2}{|\Gamma|-|G|}\sum_{\tau\in \Sigma_*,\;\xi\in \Gamma_*}(1+\inpr{k}{u_0}\inpr{\tau+\xi}{u_0})(\frac{q_2(\tau)^m}{q_2(\xi)^m}+\frac{q_2(\xi)^m}{q_2(\tau)^m})\\
 &+a^2\sum_{h,h'\in G_*}(\frac{2(1+\inpr{k+h+h'}{u_0})}{|\Gamma|-|G|}+\delta_{-k+h+h',0}+\delta_{-k+h+\theta(h'),0})\frac{q_1(h)^m}{q_1(h')^m}\\
 &+\frac{2ab}{|\Gamma|-|G|}\sum_{h\in G_*,\;\xi\in \Gamma_*}(1+\inpr{k+h}{u_0}\inpr{\xi}{u_0})(\frac{q_1(h)^m}{q_2(\xi)^m}+\frac{q_2(\xi)^m}{q_1(h)^m})\\
 &+\frac{2b^2}{|\Gamma|-|G|}\sum_{\xi,\xi'\in \Gamma_*}(1+\inpr{k}{u_0}\inpr{\xi+\xi'}{u_0})\frac{q_2(\xi)^m}{q_2(\xi')^m},
 \end{align*}
 and
 \begin{align*}
\lefteqn{\nu_m((k,\varepsilon))} \\
 &=\frac{a^2}{2}\sum_{v\in U}(\frac{q_0(v)^m}{q_1(k-v)^m}+\frac{q_1(k-v)^m}{q_0(v)^m})
 +a^2\sum_{h\in G_*}\frac{q_1(h)^m}{q_1(k+h)^m} \\
 &+\frac{(a+b)^2}{2(|\Gamma|-|G|)}\sum_{v,v'\in U}(1+\inpr{k}{u_0})\frac{q_0(v)^m}{q_0(v')^m} \\
 &+\frac{1}{2(|\Gamma|-|G|)}\sum_{w\in U}\inpr{k}{w}((A(w,m)+B(w,m)+C(w,m)+D(w,m))q_0(w)^{-m})\\
 &+\frac{1}{2(|\Gamma|-|G|)}\sum_{w\in U}\inpr{k}{w}((A(w,-m)+B(w,-m)+C(w,-m)+D(w,-m))q_0(w)^m)\\
 &+\frac{1}{2(|\Gamma|-|G|)}\sum_{w\in U}\inpr{k}{w}\\&(A(w,m)C(w,-m)+C(w,m)A(w,-m)+A(w,b)D(w,-m)+D(w,m)A(w,-m))\\
 &+\frac{1}{2(|\Gamma|-|G|)}\sum_{w\in U}\inpr{k}{w}\\&(B(w,m)C(w,-m)+C(w,m)B(w,-m)+B(w,b)D(w,-m)+D(w,m)B(w,-m))\\
 &+\frac{1}{2(|\Gamma|-|G|)}\sum_{w\in U}\inpr{k}{w}\\&(A(w,m)A(w,-m)+A(w,m)B(w,-m)+B(w,b)B(w,-m)+B(w,m)B(w,-m))\\
 &=\frac{a^2}{2}\sum_{v\in U}(\frac{q_0(0)^m}{q_1(k)^m}+\frac{q_1(k)^m}{q_0(0)^m}+\frac{q_0(u_0)^m}{q_1(k-u_0)^m}+\frac{q_1(k-u_0)^m}{q_0(u_0)^m})
 +\frac{a^2}{2}\sum_{h\in G\setminus G_2}\frac{q_1(h)^m}{q_1(k+h)^m}\\
 &+\frac{1}{2(|\Gamma|-|G|)}\sum_{w\in U}\inpr{k}{w}|\cG(q_1,q_2,w,m)|^2\\
 &=\frac{a^2}{2}\sum_{h\in G}\frac{q_1(h)^m}{q_1(k+h)^m}\\
 &+\frac{1}{2(|\Gamma|-|G|)}\sum_{w\in U}\inpr{k}{w}|\cG(q_1,q_2,w,m)|^2\\
 &=\frac{1}{2(|\Gamma|-|G|)}\sum_{w\in U}\inpr{k}{w}|\cG(q_1,q_2,w,m)|^2+\frac{\delta_{mk,0}q_1(k)^m}{2}.\\
 \end{align*}

\begin{align*}
\lefteqn{\nu_m(g)}\\
 &=\sum_{v\in U}N^g_{(v,0),g-v}\frac{a(a-b)}{2}(\frac{q_0(v)^m}{q_1(g-v)^m}+\frac{q_1(g-v)^m}{q_0(v)^m}) 
 +\sum_{v,v'\in U}N^g_{(v,\pi),(v',\pi)}\frac{(a+b)^2}{4}\frac{q_0(v)^m}{q_0(v')^m} \\
 &+\sum_{v\in U,\;h\in G_*}N^g_{(v,\pi),h}\frac{a(a+b)}{2}(\frac{q_0(v)^m}{q_1(h)^m}+\frac{q_1(h)^m}{q_0(v)^m}) 
 +\sum_{v\in U,\;\xi\in \Gamma_*}N^g_{(v,\pi),\xi}\frac{b(a+b)}{2}(\frac{q_0(v)^m}{q_2(\xi)^m}+\frac{q_2(\xi)^m}{q_0(v)^m})\\
 &+\sum_{v\in U,\;(l,\delta)\in J_1}N^g_{(v,\pi),(l,\delta)}\frac{a(a+b)}{4}(\frac{q_0(v)^m}{q_1(l)^m}+\frac{q_1(l)^m}{q_0(v)^m})
 \\&+\sum_{v\in U,\;(\tau,\delta)\in J_2}N^g_{(v,\pi),(\tau,\delta)}\frac{b(a+b)}{4}(\frac{q_0(v)^m}{q_2(\tau)^m}+\frac{q_2(\tau)^m}{q_0(v)^m})+\sum_{(l,\delta),(l',\delta')\in J_1}N^g_{(l,\delta),(l',\delta')}\frac{a^2}{4}\frac{q_1(l)^m}{q_1(l')^m}
 \\&+\sum_{(\tau,\delta),(\tau',\delta')\in J_2}N^g_{(\tau,\delta),(\tau',\delta')}\frac{b^2}{4}\frac{q_2(\tau)^m}{q_2(\tau')^m}+\sum_{(l,\delta)\in J_1,\;(\tau,\delta)\in J_2}N^g_{(l,\delta),(\tau,\delta)}
 \frac{ab}{4}(\frac{q_1(l)^m}{q_2(\tau)^m}+\frac{q_2(\tau)^m}{q_1(l)^m})\\
 &+\sum_{(l,\delta)\in J_1,\;\xi\in \Gamma_*}N^g_{(l,\delta),\xi}\frac{ab}{2}(\frac{q_1(l)^m}{q_2(\xi)^m}+\frac{q_2(\xi)^m}{q_1(l)^m})
 +\sum_{h\in G_*,\;(\tau,\delta)\in J_2}N^g_{h,(\tau,\delta)}\frac{ab}{2}(\frac{q_1(h)^m}{q_2(\tau)^m}+\frac{q_2(\tau)^m}{q_1(h)^m})\\
 &+\sum_{(l,\delta)\in J_1,\;h\in G_*}N^g_{(l,\delta),h}\frac{a^2}{2}(\frac{q_1(l)^m}{q_1(h)^m}+\frac{q_1(h)^m}{q_1(l)^m})
 +\sum_{(\tau,\delta)\in J_2,\;\xi\in \Gamma_*}N^g_{(\tau,\delta),\xi}\frac{b^2}{2}(\frac{q_2(\tau)^m}{q_2(\xi)^m}+\frac{q_2(\xi)^m}{q_2(\tau)^m})\\
 &+\sum_{h,h'\in G_*}N^g_{h,h'}a^2\frac{q_1(h)^m}{q_1(h')^m}
 +\sum_{h\in G_*,\;\xi\in \Gamma_*}N^g_{h,\xi}ab(\frac{q_1(h)^m}{q_2(\xi)^m}+\frac{q_2(\xi)^m}{q_1(h)^m})
 +\sum_{\xi,\xi'\in \Gamma_*}N^g_{\xi,\xi'}b^2\frac{q_2(\xi)^m}{q_2(\xi')^m}\\
 \end{align*}
 \begin{align*}
 &=\frac{a(a-b)}{2}\sum_{v\in U}(\frac{q_0(v)^m}{q_1(g-v)^m}+\frac{q_1(g-v)^m}{q_0(v)^m}) 
 +\frac{(a+b)^2}{|\Gamma|-|G|}\sum_{v,v'\in U}(1+\inpr{u_0}{g}) \frac{q_0(v)^m}{q_0(v')^m} \\
 &+\frac{a(a+b)}{2}\sum_{v\in U,\;h\in G_*}(\frac{4(1+\inpr{u_0}{g+h})}{|\Gamma|-|G|}+\delta_{v-g+h,0}+\delta_{v-g+\theta(h),0})(\frac{q_0(v)^m}{q_1(h)^m}+\frac{q_1(h)^m}{q_0(v)^m})\\ 
 &+\frac{2b(a+b)}{|\Gamma|-|G|}\sum_{v\in U,\;\xi\in \Gamma_*}(1+\inpr{u_0}{g}\inpr{u_0}{\xi})(\frac{q_0(v)^m}{q_2(\xi)^m}+\frac{q_2(\xi)^m}{q_0(v)^m})\\
 &+\frac{a(a+b)}{|\Gamma|-|G|}\sum_{v\in U,\;l\in K_*}(1+\inpr{u_0}{l+g})(\frac{q_0(v)^m}{q_1(l)^m}+\frac{q_1(l)^m}{q_0(v)^m})\\
 &+\frac{b(a+b)}{|\Gamma|-|G|}\sum_{v\in U,\;\tau\in \Sigma_*}(1+\inpr{u_0}{g}\inpr{u_0}{\tau}) (\frac{q_0(v)^m}{q_2(\tau)^m}+\frac{q_2(\tau)^m}{q_0(v)^m})
\\& +\frac{a^2}{|\Gamma|-|G|}\sum_{l,l'\in K_*}(1+\inpr{u_0}{g})\frac{q_1(l)^m}{q_1(l')^m}\\
 &+\frac{b^2}{|\Gamma|-|G|}\sum_{\tau,\tau'\in \Sigma_*}(1+\inpr{u_0}{g})\frac{q_2(\tau)^m}{q_2(\tau')^m}
\\& + \frac{ab}{|\Gamma|-|G|}\sum_{l\in K_*,\;\tau\in \Sigma_*}(1+\inpr{u_0}{l+g}\inpr{u_0}{\tau})
 (\frac{q_1(l)^m}{q_2(\tau)^m}+\frac{q_2(\tau)^m}{q_1(l)^m})\\
 &+\frac{2ab}{|\Gamma|-|G|}\sum_{l\in K_*,\;\xi\in \Gamma_*}(1+\inpr{u_0}{l+g}\inpr{u_0}{\xi})(\frac{q_1(l)^m}{q_2(\xi)^m}+\frac{q_2(\xi)^m}{q_1(l)^m})\\
 &+\frac{2ab}{|\Gamma|-|G|}\sum_{h\in G_*,\;\tau\in \Sigma}(1+\inpr{u_0}{g+h}\inpr{u_0}{\tau})(\frac{q_1(h)^m}{q_2(\tau)^m}+\frac{q_2(\tau)^m}{q_1(h)^m})\\
 &+a^2\sum_{l\in K_*,\;h\in G_*}(\frac{2(1+\inpr{u_0}{k+g+h})}{|\Gamma|-|G|}+\delta_{l-g+h,0}+\delta_{l-g+\theta(h),0})
 (\frac{q_1(l)^m}{q_1(h)^m}+\frac{q_1(h)^m}{q_1(l)^m})\\
 &+\frac{2b^2}{|\Gamma|-|G|}\sum_{\tau\in \Sigma_*,\;\xi\in \Gamma_*}(1+\inpr{u_0}{g}\inpr{u_0}{\tau+\xi})(\frac{q_2(\tau)^m}{q_2(\xi)^m}+\frac{q_2(\xi)^m}{q_2(\tau)^m})\\
 &+a^2\sum_{h,h'\in G_*}(\frac{4(1+\inpr{u_0}{g+h+h'})}{|\Gamma|-|G|})+\delta_{-g+h+h',0}+\delta_{-\theta(g)+h+h',0}+\delta_{-g+\theta(h)+h',0}+\delta_{-g+h+\theta(h')})\frac{q_1(h)^m}{q_1(h')^m}\\
 &+\frac{4ab}{|\Gamma|-|G|}\sum_{h\in G_*,\;\xi\in \Gamma_*}(1+\inpr{u_0}{g+h}\inpr{u_0}{\xi})(\frac{q_1(h)^m}{q_2(\xi)^m}+\frac{q_2(\xi)^m}{q_1(h)^m})\\
 &+\frac{4b^2}{|\Gamma|-|G|}\sum_{\xi,\xi'\in \Gamma_*}(1+\inpr{u_0}{g}\inpr{u_0}{\xi+\xi'})\frac{q_2(\xi)^m}{q_2(\xi')^m},\\
 \end{align*} 
 and 
 \begin{align*}
\lefteqn{\nu_m(g)}\\
 &=a^2\sum_{v\in U}(\frac{q_0(v)^m}{q_1(g-v)^m}+\frac{q_1(g-v)^m}{q_0(v)^m})+a^2\sum_{l\in K_*}(\frac{q_1(l)^m}{q_1(g-l)^m}+\frac{q_1(g-l)^m}{q_1(l)^m})\\
 &+a^2\sum_{h,h'\in G_*}(\delta_{-g+h+h',0}+\delta_{-\theta(g)+h+h',0}+\delta_{-g+\theta(h)+h',0}+\delta_{-g+h+\theta(h'),0})\frac{q_1(h)^m}{q_1(h')^m}\\
 &+\frac{(a+b)^2}{|\Gamma|-|G|}\sum_{v,v'\in U}(1+\inpr{u_0}{g}) \frac{q_0(v)^m}{q_0(v')^m} \\
 &+\frac{2a(a+b)}{|\Gamma|-|G|}\sum_{v\in U,\;h\in G_*}(1+\inpr{u_0}{g+h})(\frac{q_0(v)^m}{q_1(h)^m}+\frac{q_1(h)^m}{q_0(v)^m})\\ 
 &+\frac{2b(a+b)}{|\Gamma|-|G|}\sum_{v\in U,\;\xi\in \Gamma_*}(1+\inpr{u_0}{g}\inpr{u_0}{\xi})(\frac{q_0(v)^m}{q_2(\xi)^m}+\frac{q_2(\xi)^m}{q_0(v)^m})\\
 &+\frac{a(a+b)}{|\Gamma|-|G|}\sum_{v\in U,\;l\in K_*}(1+\inpr{u_0}{l+g})(\frac{q_0(v)^m}{q_1(l)^m}+\frac{q_1(l)^m}{q_0(v)^m})\\
 &+\frac{b(a+b)}{|\Gamma|-|G|}\sum_{v\in U,\;\tau\in \Sigma_*}(1+\inpr{u_0}{g}\inpr{u_0}{\tau}) (\frac{q_0(v)^m}{q_2(\tau)^m}+\frac{q_2(\tau)^m}{q_0(v)^m})
 \\&+\frac{a^2}{|\Gamma|-|G|}\sum_{l,l'\in K_*}(1+\inpr{u_0}{g})\frac{q_1(l)^m}{q_1(l')^m}
 \\&+\frac{b^2}{|\Gamma|-|G|}\sum_{\tau,\tau'\in \Sigma_*}(1+\inpr{u_0}{g})\frac{q_2(\tau)^m}{q_2(\tau')^m}
 \\&+ \frac{ab}{|\Gamma|-|G|}\sum_{l\in K_*,\;\tau\in \Sigma_*}(1+\inpr{u_0}{l+g}\inpr{u_0}{\tau})
 (\frac{q_1(l)^m}{q_2(\tau)^m}+\frac{q_2(\tau)^m}{q_1(l)^m})\\
 &+\frac{2ab}{|\Gamma|-|G|}\sum_{l\in K_*,\;\xi\in \Gamma_*}(1+\inpr{u_0}{l+g}\inpr{u_0}{\xi})(\frac{q_1(l)^m}{q_2(\xi)^m}+\frac{q_2(\xi)^m}{q_1(l)^m})\\
 &+\frac{2ab}{|\Gamma|-|G|}\sum_{h\in G_*,\;\tau\in \Sigma}(1+\inpr{u_0}{g+h}\inpr{u_0}{\tau})(\frac{q_1(h)^m}{q_2(\tau)^m}+\frac{q_2(\tau)^m}{q_1(h)^m})\\
 &+\frac{2a^2}{|\Gamma|-|G|}\sum_{l\in K_*,\;h\in G_*}(1+\inpr{u_0}{k+g+h})(\frac{q_1(l)^m}{q_1(h)^m}+\frac{q_1(h)^m}{q_1(l)^m})\\
 &+\frac{2b^2}{|\Gamma|-|G|}\sum_{\tau\in \Sigma_*,\;\xi\in \Gamma_*}(1+\inpr{u_0}{g}\inpr{u_0}{\tau+\xi})(\frac{q_2(\tau)^m}{q_2(\xi)^m}+\frac{q_2(\xi)^m}{q_2(\tau)^m})\\
 &+\frac{4a^2}{|\Gamma|-|G|}\sum_{h,h'\in G_*}(1+\inpr{u_0}{g+h+h'})\frac{q_1(h)^m}{q_1(h')^m}\\
 &+\frac{4ab}{|\Gamma|-|G|}\sum_{h\in G_*,\;\xi\in \Gamma_*}(1+\inpr{u_0}{g+h}\inpr{u_0}{\xi})(\frac{q_1(h)^m}{q_2(\xi)^m}+\frac{q_2(\xi)^m}{q_1(h)^m})\\
 &+\frac{4b^2}{|\Gamma|-|G|}\sum_{\xi,\xi'\in \Gamma_*}(1+\inpr{u_0}{g}\inpr{u_0}{\xi+\xi'})\frac{q_2(\xi)^m}{q_2(\xi')^m}
 \end{align*}
 \begin{align*}
 &=a^2\sum_{l\in K}(\frac{q_1(l)^m}{q_1(g-l)^m}+\frac{q_1(g-l)^m}{q_1(l)^m})+\frac{1}{|\Gamma|-|G|}\sum_{w\in U}\inpr{g}{w}|\cG(q_1,q_2,w,m)|^2\\
 &+a^2\sum_{h,h'\in G_*}(\delta_{-g+h+h',0}+\delta_{-\theta(g)+h+h',0}+\delta_{-g+\theta(h)+h',0}+\delta_{-g+h+\theta(h'),0})\frac{q_1(h)^m}{q_1(h')^m}\\
 &=\delta_{mg,0}q_1(g)^m+\frac{1}{|\Gamma|-|G|}\sum_{w\in U}\inpr{g}{w}|\cG(q_1,q_2,w,m)|^2.
 \end{align*} 
 
\begin{align*}
\lefteqn{\nu_m((\sigma,\varepsilon))}\\
 &=\sum_{v\in U}N^{(\sigma,\varepsilon)}_{(v,0),(\sigma-v,\varepsilon)}\frac{b(a-b)}{4}(\frac{q_0(v)^m}{q_2(\sigma-v)^m}+\frac{q_2(\sigma-v)^m}{q_0(v)^m}) 
 +\sum_{v,v'\in U}N^{(\sigma,\varepsilon)}_{(v,\pi),(v',\pi)}\frac{(a+b)^2}{4}\frac{q_0(v)^m}{q_0(v')^m} \\
 &+\sum_{v\in U,\;h\in G_*}N^{(\sigma,\varepsilon)}_{(v,\pi),h}\frac{a(a+b)}{2}(\frac{q_0(v)^m}{q_1(h)^m}+\frac{q_1(h)^m}{q_0(v)^m}) 
 +\sum_{v\in U,\;\xi\in \Gamma_*}N^{(\sigma,\varepsilon)}_{(v,\pi),\xi}\frac{b(a+b)}{2}(\frac{q_0(v)^m}{q_2(\xi)^m}+\frac{q_2(\xi)^m}{q_0(v)^m})\\
 &+\sum_{v\in U,\;(l,\delta)\in J_1}N^{(\sigma,\varepsilon)}_{(v,\pi),(l,\delta)}\frac{a(a+b)}{4}(\frac{q_0(v)^m}{q_1(l)^m}+\frac{q_1(l)^m}{q_0(v)^m})
 \\&+\sum_{v\in U,\;(\tau,\delta)\in J_2}N^{(\sigma,\varepsilon)}_{(v,\pi),(\tau,\delta)}\frac{b(a+b)}{4}(\frac{q_0(v)^m}{q_2(\tau)^m}+\frac{q_2(\tau)^m}{q_0(v)^m})+\sum_{(l,\delta),(l',\delta')\in J_1}N^{(\sigma,\varepsilon)}_{(l,\delta),(l',\delta')}\frac{a^2}{4}\frac{q_1(l)^m}{q_1(l')^m}
 \\&+\sum_{(\tau,\delta),(\tau',\delta')\in J_2}N^{(\sigma,\varepsilon)}_{(\tau,\delta),(\tau',\delta')}\frac{b^2}{4}\frac{q_2(\tau)^m}{q_2(\tau')^m}+\sum_{(l,\delta)\in J_1,\;(\tau,\delta)\in J_2}N^{(\sigma,\varepsilon)}_{(l,\delta),(\tau,\delta)}
 \frac{ab}{4}(\frac{q_1(l)^m}{q_2(\tau)^m}+\frac{q_2(\tau)^m}{q_1(l)^m})\\
 &+\sum_{(l,\delta)\in J_1,\;\xi\in \Gamma_*}N^{(\sigma,\varepsilon)}_{(l,\delta),\xi}\frac{ab}{2}(\frac{q_1(l)^m}{q_2(\xi)^m}+\frac{q_2(\xi)^m}{q_1(l)^m})
 +\sum_{h\in G_*,\;(\tau,\delta)\in J_2}N^{(\sigma,\varepsilon)}_{h,(\tau,\delta)}\frac{ab}{2}(\frac{q_1(h)^m}{q_2(\tau)^m}+\frac{q_2(\tau)^m}{q_1(h)^m})\\
 &+\sum_{(l,\delta)\in J_1,\;h\in G_*}N^{(\sigma,\varepsilon)}_{(l,\delta),h}\frac{a^2}{2}(\frac{q_1(l)^m}{q_1(h)^m}+\frac{q_1(h)^m}{q_1(l)^m})
 +\sum_{(\tau,\delta)\in J_2,\;\xi\in \Gamma_*}N^{(\sigma,\varepsilon)}_{(\tau,\delta),\xi}\frac{b^2}{2}(\frac{q_2(\tau)^m}{q_2(\xi)^m}+\frac{q_2(\xi)^m}{q_2(\tau)^m})\\
 &+\sum_{h,h'\in G_*}N^{(\sigma,\varepsilon)}_{h,h'}a^2\frac{q_1(h)^m}{q_1(h')^m}
 +\sum_{h\in G_*,\;\xi\in \Gamma_*}N^{(\sigma,\varepsilon)}_{h,\xi}ab(\frac{q_1(h)^m}{q_2(\xi)^m}+\frac{q_2(\xi)^m}{q_1(h)^m})
 +\sum_{\xi,\xi'\in \Gamma_*}N^{(\sigma,\varepsilon)}_{\xi,\xi'}b^2\frac{q_2(\xi)^m}{q_2(\xi')^m}\\
 \end{align*}
 \begin{align*}
 &=\frac{b(a-b)}{4}\sum_{v\in U}(\frac{q_0(v)^m}{q_2(\sigma-v)^m}+\frac{q_2(\sigma-v)^m}{q_0(v)^m}) 
\\& +\frac{(a+b)^2}{2(|\Gamma|-|G|)}\sum_{v,v'\in U}(1+\inpr{\sigma}{u_0}) \frac{q_0(v)^m}{q_0(v')^m} \\
 &+\frac{a(a+b)}{|\Gamma|-|G|}\sum_{v\in U,\;h\in G_*}(1+\inpr{h}{u_0}\inpr{\sigma}{u_0})(\frac{q_0(v)^m}{q_1(h)^m}+\frac{q_1(h)^m}{q_0(v)^m}) \\
 &+\frac{b(a+b)}{|\Gamma|-|G|}\sum_{v\in U,\;\xi\in \Gamma_*}(1+\inpr{\sigma+\xi}{u_0})(\frac{q_0(v)^m}{q_2(\xi)^m}+\frac{q_2(\xi)^m}{q_0(v)^m})\\
 &+\frac{b(a+b)}{4}\sum_{v\in U,\;\tau\in \Sigma_*}(\frac{4}{|\Gamma|-|G|}-\delta_{v-\sigma+\tau})(\frac{q_0(v)^m}{q_2(\tau)^m}+\frac{q_2(\tau)^m}{q_0(v)^m})\\
 &+\frac{a^2}{2(|\Gamma|-|G|)}\sum_{l,l'\in K_*}(1+\inpr{u_0}{\sigma})\frac{q_1(l)^m}{q_1(l')^m}
 \\&+\frac{b^2}{2(|\Gamma|-|G|)}\sum_{\tau,\tau'\in \Sigma_*}(1+\inpr{u_0}{\sigma+\tau+\tau'})\frac{q_2(\tau)^m}{q_2(\tau')^m}\\
 &+\frac{ab}{2(|\Gamma|-|G|)}\sum_{l\in K_*,\;\tau\in \Sigma_*}(1+\inpr{u_0}{l})(\frac{q_1(l)^m}{q_2(\tau)^m}+\frac{q_2(\tau)^m}{q_1(l)^m})\\
 &+\frac{ab}{|\Gamma|-|G|}\sum_{l,\in K_*,\;\xi\in \Gamma_*}(1+\inpr{u_0}{l}\inpr{u_0}{\sigma+\xi})(\frac{q_1(l)^m}{q_2(\xi)^m}+\frac{q_2(\xi)^m}{q_1(l)^m})\\
 &+\frac{ab}{|\Gamma|-|G|}\sum_{h\in G_*,\;\tau\in \Sigma_*}(1+\inpr{u_0}{h})(\frac{q_1(h)^m}{q_2(\tau)^m}+\frac{q_2(\tau)^m}{q_1(h)^m})\\
 &+\frac{a^2}{|\Gamma|-|G|}\sum_{l\in K_*,\;h\in G_*}(1+\inpr{u_0}{l+h}\inpr{u_0}{\sigma})(\frac{q_1(l)^m}{q_1(h)^m}+\frac{q_1(h)^m}{q_1(l)^m})\\
 &+\frac{b^2}{|\Gamma|-|G|}\sum_{\tau\in \Sigma_*,\;\xi\in \Gamma_*}(1+\inpr{u_0}{\gamma})(\frac{q_2(\tau)^m}{q_2(\xi)^m}+\frac{q_2(\xi)^m}{q_2(\tau)^m})\\
 &+\frac{2a^2}{|\Gamma|-|G|}\sum_{h,h'\in G_*}(1+\inpr{u_0}{h+h'}\inpr{u_0}{\sigma})\frac{q_1(h)^m}{q_1(h')^m}\\
 &+\frac{2ab}{|\Gamma|-|G|}\sum_{h\in G_*,\;\xi\in \Gamma_*}(1+\inpr{u_0}{h}\inpr{u_0}{\sigma+\xi})(\frac{q_1(h)^m}{q_2(\xi)^m}+\frac{q_2(\xi)^m}{q_1(h)^m})\\
 &+b^2\sum_{\xi,\xi'\in \Gamma_*}(\frac{2}{|\Gamma|-|G|}-\delta_{-\sigma+\xi+\xi',0}-\delta_{-\sigma+\xi+\theta(\xi),0})\frac{q_2(\xi)^m}{q_2(\xi')^m}.
 \end{align*}

\begin{align*}
\lefteqn{\nu_m(\gamma)=\sum_{v\in U}N^\gamma_{(v,0),\gamma-v}\frac{b(a-b)}{2}(\frac{q_0(v)^m}{q_2(\gamma-v)^m}+\frac{q_2(\gamma-v)^m}{q_0(v)^m}) 
 +\sum_{v,v'\in U}N^\gamma_{(v,\pi),(v',\pi)}\frac{(a+b)^2}{4}\frac{q_0(v)^m}{q_0(v')^m} }\\
 &+\sum_{v\in U,\;h\in G_*}N^\gamma_{(v,\pi),h}\frac{a(a+b)}{2}(\frac{q_0(v)^m}{q_1(h)^m}+\frac{q_1(h)^m}{q_0(v)^m}) 
 +\sum_{v\in U,\;\xi\in \Gamma_*}N^\gamma_{(v,\pi),\xi}\frac{b(a+b)}{2}(\frac{q_0(v)^m}{q_2(\xi)^m}+\frac{q_2(\xi)^m}{q_0(v)^m})\\
 &+\sum_{v\in U,\;(l,\delta)\in J_1}N^\gamma_{(v,\pi),(l,\delta)}\frac{a(a+b)}{4}(\frac{q_0(v)^m}{q_1(l)^m}+\frac{q_1(l)^m}{q_0(v)^m})
 \\&+\sum_{v\in U,\;(\tau,\delta)\in J_2}N^\gamma_{(v,\pi),(\tau,\delta)}\frac{b(a+b)}{4}(\frac{q_0(v)^m}{q_2(\tau)^m}+\frac{q_2(\tau)^m}{q_0(v)^m})
 +\sum_{(l,\delta),(l',\delta')\in J_1}N^\gamma_{(l,\delta),(l',\delta')}\frac{a^2}{4}\frac{q_1(l)^m}{q_1(l')^m}
 \\&+\sum_{(\tau,\delta),(\tau',\delta')\in J_2}N^\gamma_{(\tau,\delta),(\tau',\delta')}\frac{b^2}{4}\frac{q_2(\tau)^m}{q_2(\tau')^m}+\sum_{(l,\delta)\in J_1,\;(\tau,\delta)\in J_2}N^\gamma_{(l,\delta),(\tau,\delta)}
 \frac{ab}{4}(\frac{q_1(l)^m}{q_2(\tau)^m}+\frac{q_2(\tau)^m}{q_1(l)^m})\\
 &+\sum_{(l,\delta)\in J_1,\;\xi\in \Gamma_*}N^\gamma_{(l,\delta),\xi}\frac{ab}{2}(\frac{q_1(l)^m}{q_2(\xi)^m}+\frac{q_2(\xi)^m}{q_1(l)^m})
 +\sum_{h\in G_*,\;(\tau,\delta)\in J_2}N^\gamma_{h,(\tau,\delta)}\frac{ab}{2}(\frac{q_1(h)^m}{q_2(\tau)^m}+\frac{q_2(\tau)^m}{q_1(h)^m})\\
 &+\sum_{(l,\delta)\in J_1,\;h\in G_*}N^\gamma_{(l,\delta),h}\frac{a^2}{2}(\frac{q_1(l)^m}{q_1(h)^m}+\frac{q_1(h)^m}{q_1(l)^m})
 +\sum_{(\tau,\delta)\in J_2,\;\xi\in \Gamma_*}N^\gamma_{(\tau,\delta),\gamma}\frac{b^2}{2}(\frac{q_2(\tau)^m}{q_2(\xi)^m}+\frac{q_2(\xi)^m}{q_2(\tau)^m})\\
 &+\sum_{h,h'\in G_*}N^\gamma_{h,h'}a^2\frac{q_1(h)^m}{q_1(h')^m}
 +\sum_{h\in G_*,\;\xi\in \Gamma_*}N^\gamma_{h,\xi}ab(\frac{q_1(h)^m}{q_2(\xi)^m}+\frac{q_2(\xi)^m}{q_1(h)^m})
 +\sum_{\xi,\xi'\in \Gamma_*}N^\gamma_{\xi,\xi'}b^2\frac{q_2(\xi)^m}{q_2(\xi')^m}\\
\end{align*}
\begin{align*}
&=\frac{b(a-b)}{2}\sum_{v\in U}(\frac{q_0(v)^m}{q_2(\gamma-v)^m}+\frac{q_2(\gamma-v)^m}{q_0(v)^m}) 
 +\frac{(a+b)^2}{|\Gamma|-|G|}\sum_{v,v'\in U}(1+\inpr{u_0}{\gamma})\frac{q_0(v)^m}{q_0(v')^m} \\
 &+\frac{2a(a+b)}{|\Gamma|-|G|}\sum_{v\in U,\;h\in G_*}(1+\inpr{u_0}{h}\inpr{u_0}{\gamma})(\frac{q_0(v)^m}{q_1(h)^m}+\frac{q_1(h)^m}{q_0(v)^m})\\ 
 &+\frac{b(a+b)}{2}\sum_{v\in U,\;\xi\in \Gamma_*}(\frac{4(1+\inpr{u_0}{\gamma+\xi})}{|\Gamma|-|G|}-\delta_{v-\gamma+\xi,0}-\delta_{v-\gamma+\theta(\xi)})
 (\frac{q_0(v)^m}{q_2(\xi)^m}+\frac{q_2(\xi)^m}{q_0(v)^m})\\
 &+\frac{a(a+b)}{|\Gamma|-|G|}\sum_{v\in U,l\in K_*}(1+\inpr{u_0}{l}\inpr{u_0}{\gamma})(\frac{q_0(v)^m}{q_1(l)^m}+\frac{q_1(l)^m}{q_0(v)^m})\\
 &+\frac{b(a+b)}{|\Gamma|-|G|}\sum_{v\in U,\tau\in \Sigma_*}(1+\inpr{u_0}{\tau+\gamma})(\frac{q_0(v)^m}{q_2(\tau)^m}+\frac{q_2(\tau)^m}{q_0(v)^m})\\
 &+\frac{a^2}{|\Gamma|-|G|}\sum_{l,l'\in K_*}(1+\inpr{u_0}{\gamma})\frac{q_1(l)^m}{q_1(l')^m}
 +\frac{b^2}{|\Gamma|-|G|}\sum_{\tau,\tau'\in \Sigma_*}(1+\inpr{u_0}{\gamma})\frac{q_2(\tau)^m}{q_2(\tau')^m}\\
 &+\frac{ab}{|\Gamma|-|G|}\sum_{l\in K_*,\tau\in \Sigma_*}(1+\inpr{u_0}{l}\inpr{u_0}{\tau+\gamma})
 (\frac{q_1(l)^m}{q_2(\tau)^m}+\frac{q_2(\tau)^m}{q_1(l)^m})\\
 &+\frac{2ab}{|\Gamma|-|G|}\sum_{l\in K_*,\xi\in \Gamma_*}(1+\inpr{u_0}{l}\inpr{u_0}{\gamma+\xi})(\frac{q_1(l)^m}{q_2(\xi)^m}+\frac{q_2(\xi)^m}{q_1(l)^m})\\
 &+\frac{2ab}{|\Gamma|-|G|}\sum_{h\in G_*,\tau\in \Sigma_*}(1+\inpr{u_0}{h}\inpr{u_0}{\gamma+\tau})(\frac{q_1(h)^m}{q_2(\tau)^m}+\frac{q_2(\tau)^m}{q_1(h)^m})\\
 &+\frac{2a^2}{|\Gamma|-|G|}\sum_{l\in K_*,h\in G_*}(1+\inpr{u_0}{l+h}\inpr{u_0}{\gamma})(\frac{q_1(l)^m}{q_1(h)^m}+\frac{q_1(h)^m}{q_1(l)^m})\\
 &+b^2\sum_{\tau\in \Sigma_*,\xi\in \Gamma_*}(\frac{2(1+\inpr{u_0}{\gamma+\tau+\xi})}{|\Gamma|-|G|}-\delta_{-\gamma+\tau+\xi,0}-\delta_{-\gamma+\tau+\theta(\xi),0})(\frac{q_2(\tau)^m}{q_2(\xi)^m}+\frac{q_2(\xi)^m}{q_2(\tau)^m})\\
 &+\frac{4a^2}{|\Gamma|-|G|}\sum_{h,h'\in G_*}(1+\inpr{u_0}{h+h'}\inpr{u_0}{\gamma})\frac{q_1(h)^m}{q_1(h')^m}\\
 &+\frac{4ab}{|\Gamma|-|G|}\sum_{h\in G_*,\;\xi\in \Gamma_*}(1+\inpr{u_0}{h}\inpr{u_0}{\gamma+\xi})(\frac{q_1(h)^m}{q_2(\xi)^m}+\frac{q_2(\xi)^m}{q_1(h)^m})\\
 &+b^2\sum_{\xi,\xi'\in \Gamma_*}(\frac{4(1+\inpr{u_0}{\gamma+\xi+\xi'})}{|\Gamma|-|G|}-\delta_{-\gamma+\xi+\xi'}-\delta_{-\theta(\gamma)+\xi+\xi'}-\delta_{-\gamma+\theta(\xi)+\xi'}-\delta_{\gamma+\xi+\theta(\xi')})\frac{q_2(\xi)^m}{q_2(\xi')^m}.
 \end{align*}
 The rest of computation is the same as in the previous cases.  
\end{proof}
\section{Calculations for Section 6}

\begin{proof}[Proof of Lemma 6.2]
We first check the unitarity of $S$. 
$$\sum_{x}|S_{0,x}|^2=\sum_{x}|S_{\pi,x}|^2=\frac{a^2+b^2}{2}+\frac{a^2}{2}(|K|-1)+a^2\frac{|G|-|K|}{2}+b^2\frac{|\Gamma|-1}{2}
=1.$$
$$\sum_{x}S_{0,x}\overline{S_{\pi,x}}=\frac{a^2-b^2}{2}+\frac{a^2}{2}(|K|-1)+a^2\frac{|G|-|K|}{2}-b^2\frac{|\Gamma|-1}{2}
=0.$$
\begin{align*}
\lefteqn{\sum_{x}S_{0,x}\overline{S_{(k,\varepsilon),x}}=\sum_{x}S_{\pi,x}\overline{S_{(k,\varepsilon),x}}} \\
 &=\frac{a^2}{2}+\frac{a^2}{2}\sum_{l\in K\setminus \{0\}}\inpr{k}{l}_1+a^2\sum_{g\in G_*}\inpr{k}{g}_1
 =\frac{a^2|G|\delta_{k,0}}{2}=0. \\
\end{align*}
\begin{align*}
\lefteqn{\sum_{x}S_{0,x}\overline{S_{g,x}}=\sum_{x}S_{\pi,x}\overline{S_{g,x}}} \\
 &=a^2+a^2\sum_{l\neq 0}\inpr{k}{l}_1+a^2\sum_{h\in G_*}(\inpr{g}{h}_1+\overline{\inpr{g}{h}_1})
 =a^2|G|\delta_{g,0}=0. \\
\end{align*}
$$\sum_{x}S_{0,x}\overline{S_{\gamma,x}}=-\sum_{x}S_{\pi,x}\overline{S_{\gamma,x}}
=-b^2-b^2\sum_{\gamma\in \Gamma_*}(\inpr{\gamma}{\xi}_2+\overline{\inpr{\gamma}{\xi}}_2)=-b^2|\Gamma|\delta_{\gamma,0}=0.$$
\begin{align*}
\lefteqn{\sum_{x}S_{(k,\varepsilon),x}\overline{S_{(k',\varepsilon'),x}}} \\
 &=\frac{a^2}{2}+\frac{1}{4}\sum_{(l,t)\in J_2}(a^2\inpr{k-k'}{l}+\varepsilon\varepsilon'q_1(k)\overline{q_1(k')}\delta_{k,l}\delta_{k',l})
 +a^2\sum_{g\in G_*}\inpr{k-k'}{g} \\
 &=\frac{a^2|G|\delta_{k,k'}}{2}+\frac{\varepsilon\varepsilon'\delta_{k,k'}}{2}=\delta_{k,k'}\delta_{\varepsilon,\varepsilon'}. 
\end{align*}
\begin{align*}
\lefteqn{\sum_{x}S_{(k,\varepsilon),x}\overline{S_{g,x}}} \\
 &=a^2+a^2\sum_{l\in K\setminus \{0\}}\inpr{k-g}{l}_1+a^2\sum_{h\in G_*}(\inpr{k+g}{h}_1+\overline{\inpr{-k+g}{h}_1}) \\
 &=a^2|G|\delta_{k,g}=0. 
\end{align*}
$\sum_{x}S_{(k,\varepsilon),x}\overline{S_{\gamma,x}}=0$ is obvious. 
\begin{align*}
\lefteqn{\sum_{x}S_{g,x}\overline{S_{g',x}}=2a^2+2a^2\sum_{l\in K\setminus \{0\}}\inpr{g-g'}{l}_1} \\
 &+a^2\sum_{h\in G_*}(\inpr{g}{h}_1+\overline{\inpr{g}{h}}_1)(\inpr{g'}{h}_1+\overline{\inpr{g'}{h}}_1) \\
 &=a^2(\sum_{l\in K}\inpr{g-g'}{l}_1+\sum_{l\in K}\inpr{g+g'}{l}_1\\
 &+\sum_{h\in G_*}(\inpr{g+g'}{h}_1+\overline{\inpr{g+g'}{h}}_1+\inpr{g-g'}{h}_1+\overline{\inpr{g-g'}{h}}_1)\\
 &=\delta_{g+g',0}+\delta_{g,g'}=\delta_{g,g'}. 
\end{align*}
$\sum_{x}S_{g,x}\overline{S_{\gamma,x}}=0$ is obvious. 
\begin{align*}
\lefteqn{\sum_{x}S_{\gamma,x}\overline{S_{\gamma',x}}} \\
 &=2b^2+b^2\sum_{\xi\in \Gamma_*}(\inpr{\gamma}{\xi}_2+\overline{\inpr{\gamma}{\xi}_2}) (\inpr{\gamma'}{\xi}_2+\overline{\inpr{\gamma'}{\xi}_2})\\
 &=\delta_{\gamma+\gamma',0}+\delta_{\gamma,\gamma'}=\delta_{\gamma,\gamma'}. 
\end{align*}
Thus $S$ is unitary. 
It is obvious that $T$ and $C$ are unitary.  

Since $T_{x,x}=T_{\overline{x},\overline{x}}$, we have $CT=T C$. 
We claim $\overline{S_{x,y}}=S_{\overline{x},y}=S_{x,\overline{y}}$, which implies $S^2=C$. 
Indeed, the only block of $S$ that may not be real is the $J_1$-$J_1$ block, and for other blocks, we have  
$$\overline{S_{x,y}}=S_{x,y}=S_{\overline{x},y}=S_{x,\overline{y}}.$$
Using $c^2, q_1(k)^2=\inpr{k}{k}_1\in \{1,-1\}$, we get   
$$\overline{S_{(k,\varepsilon),(k',\varepsilon')}}=\frac{a\inpr{k}{k'}_1
+\overline{c}\varepsilon\varepsilon'\overline{q_1(k)}\delta_{k,k'}}{2}
=\frac{a\inpr{k}{k'}_1
+c^2\inpr{k}{k}_1c\varepsilon\varepsilon'q_1(k)\delta_{k,k'}}{2},$$
and we get 
$$\overline{S_{(k,\varepsilon),(k',\varepsilon')}}=S_{(k,c^2\inpr{k}{k}_1\varepsilon),(k',\varepsilon')}
=S_{(k,\varepsilon),(k',c^2\inpr{k'}{k'}_1\varepsilon')}.$$ 
The claim is shown. 

The only remaining condition is $(ST)^3=cC$, or equivalently  
$STS=cC(TST)^*$. 
Since $S$ is symmetric and $C$ commutes with $S$ and $\overline{T}$, we always have 
$$C(TST)^*=C\overline{TST}=\overline{T}S\overline{T}.$$
Thus it suffices to verify 
$$\sum_{x}S_{j,x}S_{j',x}T_{x,x}=cS_{j,j'}\overline{T_{j,j}T_{j',j'}}$$
for all $j,j'\in J$. 
\begin{align*}
\lefteqn{\sum_{x}S_{0,x}^2T_{x,x}} \\
 &=\frac{a^2+b^2}{2}+\frac{a^2}{2}\sum_{k\in K\setminus \{0\}}q_1(k)+a^2\sum_{g\in G_*}q_1(g)
 +b^2\sum_{\gamma\in \Gamma_*}q_2(\gamma) \\
 &=\frac{a\cG(q_1)}{2}+\frac{b\cG(q_2)}{2}
 =c\frac{a-b}{2}=cS_{0,0}\overline{T_{0,0}^2}.
\end{align*}
\begin{align*}
\lefteqn{\sum_{x}S_{0,x}S_{\pi,x}T_{x,x}} \\
 &=\frac{a^2-b^2}{2}+\frac{a^2}{2}\sum_{k\in K\setminus\{0\}}q_1(k)+a^2\sum_{g\in G_*}q_1(g)
 -b^2\sum_{\gamma\in \Gamma_*}q_2(\gamma) \\
 &=\frac{a\cG(q_1)}{2}-\frac{b\cG(q_2)}{2}
 =c\frac{a+b}{2}=cS_{0,\pi}\overline{T_{0,0}T_{\pi,\pi}}.
\end{align*}
\begin{align*}
\lefteqn{\sum_{x}S_{0,x}S_{(k,\varepsilon),x}T_{x,x}=S_{\pi,x}S_{(k,\varepsilon),x}T_{x,x}} \\
 &=\frac{a^2}{2}+\frac{a^2}{2}\sum_{l\in K\setminus\{0\}}\inpr{k}{l}q_1(l)+a^2\sum_{h\in G_*}\inpr{k}{h}_1q_1(h)
 =\frac{a^2}{2}\sum_{h\in G}q_1(k+h)\overline{q_1(k)}\\
 & =\frac{a\cG(q_1)\overline{q_1(k)}}{2}=c\frac{a}{2}\overline{q_1(k)}\\
 &=cS_{0,(k,\varepsilon)}\overline{T_{0,0}T_{(k,\varepsilon),(k,\varepsilon)}}
 =cS_{\pi,(k,\varepsilon)}\overline{T_{\pi,\pi}T_{(k,\varepsilon),(k,\varepsilon)}}.
\end{align*}
\begin{align*}
\lefteqn{\sum_{x}S_{0,x}S_{g,x}T_{x,x}=S_{\pi,x}S_{g,x}T_{x,x}} \\
 &=a^2+a^2\sum_{l\in K\setminus\{0\}}\inpr{g}{l}q_1(l)+a^2\sum_{h\in G_*}(\inpr{g}{h}_1+\overline{\inpr{g}{h}_1})q_1(h)\\
 &=a^2\sum_{h\in G}q_1(g+h)\overline{q_1(g)}
 =a\cG(q_1)\overline{q_1(g)}\\
 &=cS_{0,g}\overline{T_{0,0}T_{g,g}}
 =cS_{\pi,g}\overline{T_{\pi,\pi}T_{g,g}}.
\end{align*}
\begin{align*}
\lefteqn{\sum_{x}S_{0,x}S_{\gamma,x}T_{x,x}=-\sum_{x}S_{\pi,x}S_{\gamma,x}T_{x,x}} \\
 &=-b^2-b^2\sum_{\xi\in \Gamma_*}(\inpr{\gamma}{\xi}_2+\overline{\inpr{\gamma}{\xi}_2})q_2(\xi)
 =-b^2\sum_{\xi\in \Gamma}\inpr{\xi}{\gamma}_2q_2(\xi)=-b\cG(q_2)\overline{q_2(\gamma)}\\
 &=cS_{0,\gamma}\overline{T_{0,0}T_{\gamma,\gamma}}=-cS_{\pi,\gamma}\overline{T_{0,0}T_{\gamma,\gamma}}.
\end{align*}
\begin{align*}
\lefteqn{\sum_{x}S_{(k,\varepsilon),x}S_{(k',\varepsilon'),x}T_{x,x}} \\
 &\frac{a^2}{2}+\frac{1}{2}\sum_{l\in K\setminus\{0\}}(a^2\inpr{k+k'}{l}_1
 +c^2\varepsilon\varepsilon'q_1(k)q_1(k')\delta_{k,l}\delta_{k',l})q_1(l)
 +a^2\sum_{h\in G_*}\inpr{k+k'}{h}_1q_1(h)\\
 &=\frac{a\cG(q_1)\overline{q_1(k+k')}}{2}+\frac{c^2\varepsilon\varepsilon'q_1(k)^3\delta_{k,k'}}{2}\\
 &=c\frac{a\overline{\inpr{k}{k'}_1q_1(k)q_1(k')}+c\varepsilon\varepsilon'q_1(k)^3\delta_{k,k'}}{2}
 =c\frac{a\inpr{k}{k'}_1+c\varepsilon\varepsilon'q_1(k)\delta_{k,k'}}{2}\overline{q_1(k)q_1(k')}\\
 &=cS_{(k,\varepsilon),(k',\varepsilon')}\overline{T_{(k,\varepsilon),(k,\varepsilon)}T_{(k',\varepsilon'),(k',\varepsilon')}},
\end{align*}
where we used $\inpr{k}{k'}_1\in \R$ and $q_1(k)^4=\inpr{k}{k}_1^2=1$. 
\begin{align*}
\lefteqn{\sum_{x}S_{(k,\varepsilon),x}S_{g,x}T_{x,x}} \\
 &=a^2+a^2\sum_{l\in K\setminus\{0\}}\inpr{k+g}{l}_1q_1(l)
 +a^2\sum_{h\in G_*}\inpr{k}{h}_1(\inpr{g}{h}+\overline{\inpr{g}{h}})q_1(h)\\
 &=a^2\sum_{h\in G}q_1(k+g+h)\overline{q_1(k+g)}=a\cG(q_1)\overline{q_1(k+g)}\\
 &=ca\overline{\inpr{k}{g}_1q_1(k)q_1(g)}\\
 &=cS_{(k,\varepsilon),g}\overline{T_{(k,\varepsilon),(k,\varepsilon)}T_{g,g}}.
\end{align*}
\begin{align*}
\lefteqn{\sum_{x}S_{g,x}S_{g',x}T_{x,x}=2a^2+2a^2\sum_{l\in K\setminus\{0\}}\inpr{g+g'}{l}_1q_1(l)} \\
 &+a^2\sum_{h\in G_*}(\inpr{g}{h}+\overline{\inpr{g}{h}})(\inpr{g'}{h}+\overline{\inpr{g'}{h}})q_1(h)\\
 &=a^2\sum_{h\in G}(q_1(g+g'+h)\overline{q_1(g+g')}+q_1(g-g'+h)\overline{q_1(g-g')})\\
 &=a\cG(q_1)\overline{(q_1(g+g')+q_1(g-g')}
 =ca\overline{(\inpr{g}{g'}_1+\overline{\inpr{g}{g'}_1})  q_1(g)q_1(g')}\\
 &=cS_{g,g'}\overline{T_{g,g}T_{g',g'}}.
\end{align*}
\begin{align*}
\lefteqn{\sum_{x}S_{\gamma,x}S_{\gamma',x}T_{x,x}} \\
 &=2b^2+b^2\sum_{\xi\in \Gamma_*}(\inpr{\gamma}{\xi}_2+\overline{\inpr{\gamma}{\xi}_2})(\inpr{\gamma'}{\xi}_2+\overline{\inpr{\gamma'}{\xi}_2})q_2(\xi)\\
 &=b\cG(q_2)(\overline{q_2(\gamma+\gamma')+q_2(\gamma-\gamma')})
 =-cb(\inpr{\gamma}{\xi}_2+\overline{\inpr{\gamma}{\xi}_2})\overline{q_2(\gamma)q_2(\gamma')}\\
 &=cS_{\gamma,\gamma'}\overline{T_{\gamma,\gamma}T_{\gamma',\gamma'}}.
\end{align*}
\begin{align*}
\lefteqn{\sum_{x}S_{(k,\varepsilon),x}S_{\gamma,x}T_{x,x}=\sum_{x}S_{g,x}S_{\gamma,x}T_{x,x}} \\
 &=S_{(k,\varepsilon),\gamma}\overline{T_{(k,\varepsilon),(k,\varepsilon)}T_{\gamma,\gamma}}
=S_{g,\gamma}\overline{T_{g,g}T_{\gamma,\gamma}}=0.
\end{align*}

\end{proof}
\begin{proof}[Proof of Lemma 6.3] 
Since $S_{0,x}=S_{\pi,x}$ for $x\neq 0,\pi,\gamma$, we have  
\begin{align*}
\lefteqn{N_{\pi,y,z}-N_{0,y,z}=\sum_{x}\frac{(S_{\pi,x}-S_{0,x})S_{y,x}S_{z,x}}{S_{0,x}}} \\
 &=\frac{(S_{\pi,0}-S_{0,0})S_{y,0}S_{z,0}}{S_{0,0}}
 +\frac{(S_{\pi,\pi}-S_{0,\pi})S_{y,\pi}S_{z,\pi}}{S_{0,\pi}}
 +\sum_{\gamma\in \Gamma_*}\frac{(S_{\pi,\gamma}-S_{0,\gamma})S_{y,\gamma}S_{z,\gamma}}{S_{0,\gamma}}\\
 &=b(\frac{S_{y,0}S_{z,0}}{S_{0,0}}-\frac{S_{y,\pi}S_{z,\pi}}{S_{0,\pi}})
 -2\sum_{\gamma\in \Gamma_*}S_{y,\gamma}S_{z,\gamma}.
\end{align*}
Thus
\begin{align*}
\lefteqn{N_{\pi,\pi,\pi}=1+b(\frac{S_{0,\pi}^2}{S_{0,0}}-\frac{S_{0,0}^2}{S_{0,\pi}})-b^2(|\Gamma|-1)}\\
&=b^2\frac{S_{0,0}^2+S_{0,0}S_{0,\pi}+S_{0,\pi}^2}{S_{0,0}S_{0,\pi}}+b^2
=b^2\frac{(S_{0,0}+S_{0,\pi})^2}{S_{0,0}S_{0,\pi}}=\frac{4a^2b^2}{a^2-b^2}=\frac{4}{|\Gamma|-|G|},
\end{align*}
$$N_{\pi,\pi,\gamma}
=b^2(\frac{S_{\pi,0}}{S_{0,0}}+\frac{S_{\pi,\pi}}{S_{0,\pi}})
 -2b^2\sum_{\xi\in \Gamma_*}(\inpr{\gamma}{\xi}+\overline{\inpr{\gamma}{\xi}})=\frac{4a^2b^2}{a^2-b^2}=\frac{4}{|\Gamma|-|G|}.$$
\begin{align*}\lefteqn{
N_{\pi,\gamma,\gamma'}
=\delta_{\gamma,\gamma'}+b^3(\frac{1}{S_{0,0}}-\frac{1}{S_{0,\pi}})
-2\sum_{\xi\in \Gamma_*}S_{\gamma,\xi}S_{\gamma',\xi}}\\
 &=\delta_{\gamma,\gamma'}+\frac{b^4}{S_{0,0}S_{0,\pi}}-2(\delta_{\gamma,\gamma'}-S_{\gamma,0}S_{\gamma',0}-S_{\gamma,\pi}S_{\gamma',\pi}) \\
 &=\frac{4a^2b^2}{a^2-b^2}-\delta_{\gamma,\gamma'}=\frac{4}{|\Gamma|-|G|}-\delta_{\gamma,\gamma'}.
\end{align*}

For $z\neq 0,\pi,\gamma$, we get 
$$N_{\pi,\pi,z}=b(\frac{S_{\pi,0}}{S_{0,0}}-\frac{S_{0,0}}{S_{0,\pi}})S_{z,0}
=b^2\frac{S_{0,0}+S_{0,\pi}}{S_{0,0}S_{0,\pi}}S_{z,0}=\frac{4ab^2}{a^2-b^2}S_{z,0},$$
$$N_{\pi,\gamma,z}=N_{\gamma,\gamma',z}=b^2(\frac{1}{S_{0,0}}+\frac{1}{S_{0,\pi}})S_{z,0}=\frac{4ab^2}{a^2-b^2}S_{z,0},$$
and in particular, we get 
$$N_{\pi,\pi,(k,\varepsilon)}=N_{\pi,\gamma,(k,\varepsilon)}=N_{\gamma,\gamma',(k,\varepsilon)}=\frac{2a^2b^2}{a^2-b^2}=\frac{2}{|\Gamma|-|G|},$$
$$N_{\pi,\pi,g}=N_{\pi,\gamma,g}=N_{\gamma,\gamma',g}=\frac{4a^2b^2}{a^2-b^2}=\frac{4}{|\Gamma|-|G|}.$$

For $y,z\neq 0,\pi,\gamma$, 
$$N_{\pi,y,z}=\delta_{y,\overline{z}}+b(\frac{1}{S_{0,0}}-\frac{1}{S_{0,\pi}})S_{y,0}S_{z,0}=
\delta_{y,\overline{z}}+\frac{4b^2}{b^2-a^2}S_{y,0}S_{z,0},$$
$$N_{\gamma,y,z}=b(\frac{1}{S_{0,0}}-\frac{1}{S_{0,\pi}})S_{y,0}S_{z,0}=\frac{4b^2}{b^2-a^2}S_{y,0}S_{z,0},$$
and in particular, 
$$N_{\pi,(k,\varepsilon),(k',\varepsilon')}=\delta_{(k,\varepsilon),\overline{(k',\varepsilon')}}+\frac{1}{|\Gamma|-|G|},
\quad N_{\gamma,(k,\varepsilon),(k',\varepsilon')}=\frac{1}{|\Gamma|-|G|},$$
$$N_{\pi,(k,\varepsilon),g}=N_{\gamma,(k,\varepsilon),g}=\frac{2}{|\Gamma|-|G|},$$
$$N_{\pi,g,g'}=\delta_{g,g'}+\frac{4}{|\Gamma|-|G|},
\quad N_{\gamma,g,g'}=\frac{4}{|\Gamma|-|G|}.$$
\begin{align*}
\lefteqn{N_{\gamma,\gamma',\gamma''}=b^3 (\frac{1}{S_{0,0}}-\frac{1}{S_{0,\pi}})} \\
 &-b^2\sum_{\xi\in \Gamma_*}(\inpr{\gamma}{\xi}_2+\overline{\inpr{\gamma}{\xi}_2}) 
 (\inpr{\gamma'}{\xi}_2+\overline{\inpr{\gamma'}{\xi}_2})(\inpr{\gamma''}{\xi}_2+\overline{\inpr{\gamma''}{\xi}_2})\\
 &=\frac{4b^4}{a^2-b^2}-b^2(|\Gamma|(\delta_{\gamma+\gamma'+\gamma'',0}+\delta_{\gamma+\gamma',\gamma''}
 +\delta_{\gamma'+\gamma'',\gamma}+\delta_{\gamma''+\gamma,\gamma'})-4)\\
 &=\frac{4a^2b^2}{a^2-b^2}-(\delta_{\gamma+\gamma'+\gamma'',0}+\delta_{\gamma+\gamma',\gamma''}
 +\delta_{\gamma'+\gamma'',\gamma}+\delta_{\gamma''+\gamma,\gamma'})\\
 &=\frac{4}{|\Gamma|-|G|}-(\delta_{\gamma+\gamma'+\gamma'',0}+\delta_{\gamma+\gamma',\gamma''}
 +\delta_{\gamma'+\gamma'',\gamma}+\delta_{\gamma''+\gamma,\gamma'}).
\end{align*}

For $y,z,w\neq 0,\pi,\gamma$, we have 
\begin{align*}
\lefteqn{N_{y,z,w}=\frac{4a}{a^2-b^2}S_{y,0}S_{z,0}S_{w,0} } \\
 &+\frac{2}{a}\sum_{(l,t)\in J_1}S_{y,(l,t)}S_{z,(l,t)}S_{w,(l,t)}
 +\frac{1}{a}\sum_{h\in G_*}S_{y,h}S_{z,h}S_{w,h},
\end{align*}
and 
\begin{align*}
\lefteqn{
N_{(k,\varepsilon),(k',\varepsilon'),(k'',\varepsilon'')}
=\frac{a^4}{2(a^2-b^2)}}\\
&+\frac{1}{4a}\sum_{(l,\varepsilon)\in J_1} (a\inpr{k}{l}_1+c\varepsilon tq_1(k)\delta_{k,l})
(a\inpr{k'}{l}_1+c\varepsilon' tq_1(k')\delta_{k',l})\\
&\times (a\inpr{k''}{l}_1+c\varepsilon'' tq_1(k'')\delta_{k'',l})
+a^2\sum_{h\in G_*}\inpr{k+k'+k''}{h}_1\\
&=\frac{a^2b^2}{2(a^2-b^2)}+\frac{a^2}{2}\sum_{l\in K}\inpr{k+k'+k''}{l}_1\\
&+c^2\frac{\varepsilon\varepsilon'\inpr{k}{k+k''}_1\delta_{k,k'}+\varepsilon'\varepsilon''\inpr{k'}{k'+k}_1\delta_{k',k''}+
\varepsilon''\varepsilon\inpr{k''}{k''+k'}_1\delta_{k'',k}}{2}\\ 
&+a^2\sum_{h\in G_*}\inpr{k+k'+k''}{h}\\
 &=\frac{1}{2}(\frac{1}{|\Gamma|-|G|}+\delta_{k+k'+k'',0})\\
 &+c^2\frac{\varepsilon\varepsilon'\inpr{k}{k+k''}_1\delta_{k,k'}+\varepsilon'\varepsilon''\inpr{k'}{k'+k}_1\delta_{k',k''}+
\varepsilon''\varepsilon\inpr{k''}{k''+k'}_1\delta_{k'',k}}{2},
\end{align*}
\begin{align*}
\lefteqn{N_{(k,\varepsilon),(k',\varepsilon'),g}=\frac{a^4}{a^2-b^2} } \\
 &+\frac{1}{2}\sum_{(l,t)\in J_1}  (a\inpr{k}{l}_1+c\varepsilon tq_1(k)\delta_{k,l})
(a\inpr{k'}{l}_1+c\varepsilon' tq_1(k')\delta_{k',l})    \inpr{g}{l}_1\\
 &+a^2\sum_{h\in G_*}\inpr{k+k'}{h}_1(\inpr{g}{h}_1+\overline{\inpr{g}{h}_1})\\
 &=\frac{a^2b^2}{a^2-b^2}+a^2\sum_{l\in K}\inpr{k+k'+g}{l}_1+
 a^2\sum_{h\in G_*}(\inpr{k+k'+g}{h}_1+\inpr{k+k'+g}{-h}_1)\\
 &+c^2\varepsilon\varepsilon'\inpr{k}{k+g}_1\delta_{k,k'}\\
 &=\frac{1}{|\Gamma|-|G|}+c^2\varepsilon\varepsilon'\inpr{k}{k+g}_1\delta_{k,k'},\\
\end{align*}
\begin{align*}
\lefteqn{N_{(k,\varepsilon),g,g'}=\frac{2a^4}{a^2-b^2}} \\
 &+a\sum_{(l,t)\in J_1} (a\inpr{k}{l}_1+c\varepsilon tq_1(k)\delta_{k,l})\inpr{g+g'}{l}_1 \\ 
 &+a^2\sum_{h\in G_*}\inpr{k}{h}_1(\inpr{g}{h}_1+\overline{\inpr{g}{h}_1})(\inpr{g'}{h}_1+\overline{\inpr{g'}{h}_1})\\
 &=\frac{2a^2b^2}{a^2-b^2}+
 2a^2\sum_{l\in K}\inpr{k+g+g'}{l}_1
 +a^2\sum_{h\in G_*}\inpr{k}{h}_1(\inpr{g}{h}_1+\overline{\inpr{g}{h}_1})(\inpr{g'}{h}_1+\overline{\inpr{g'}{h}_1}))\\
 &=\frac{2}{|\Gamma|-|G|}+\delta_{k+g+g',0}+\delta_{k+g-g'}. 
\end{align*}
\begin{align*}
\lefteqn{N_{g,g',g''}=\frac{4a^4}{a^2-b^2} } \\
 &+4a^2\sum_{l\in K\setminus\{0\}}\inpr{g+g'+g''}{l}_1\\
 &+a^2\sum_{h\in G_*}
 (\inpr{g}{h}_1+\overline{\inpr{g}{h}_1})(\inpr{g'}{h}_1+\overline{\inpr{g'}{h}_1}))(\inpr{g'}{h}_1+\overline{\inpr{g'}{h}_1})) \\
 &=\frac{4a^2b^2}{a^2-b^2}+4a^2\sum_{l\in K\setminus\{0\}}\inpr{g+g'+g''}{l}_1
 +a^2\sum_{h\in G_*}(\inpr{g+g'+g''}{h}+\overline{\inpr{g+g'+g''}{h}}) \\
 &+a^2\sum_{h\in G_*}(\inpr{g+g'-g''}{h}+\overline{\inpr{g+g'-g''}{h}})
 +a^2\sum_{h\in G_*}(\inpr{g-g'+g''}{h}+\overline{\inpr{g-g'+g''}{h}})\\
 &+a^2\sum_{h\in G_*}(\inpr{-g+g'+g''}{h}+\overline{\inpr{-g+g'+g''}{h}})\\
 &=\frac{4}{|\Gamma|-|G|}+\delta_{g+g'+g'',0}+\delta_{g+g',g''}+\delta_{g'+g'',g}+\delta_{g''+g,g'}.
\end{align*}
\end{proof}

\begin{proof}[Proof of Lemma 6.8]
We set 
$$A_m=2a\sum_{g\in G_*}q_1(g)^m,\quad B_m=2b\sum_{\gamma\in \Gamma_*}q_2(\gamma)^m.$$
$$C_m=a\sum_{k\in K\setminus\{0\}}q_0(k)^m.$$
Then 
$$\cG(q_1,m)+\cG(q_2,m)=a+b+A_m+B_m+C_m.$$

For $\nu_m((k,\pi))$, we have 
We have 
\begin{align*}
\lefteqn{\nu_m(\pi)} \\
 &=N^{\pi}_{0,\pi}\frac{a^2-b^2}{2} 
 +N^{\pi}_{\pi,\pi}\frac{(a+b)^2}{4}+\sum_{(k,\varepsilon)\in J_1}N^\pi_{\pi,(k,\varepsilon)}\frac{a(a+b)}{4}(q_1(k)^m+q_1(k)^{-m}) \\
 &+\sum_{g\in G_*}N^{\pi}_{\pi,g}\frac{a(a+b)}{2}(q_1(g)^m+q_1(g)^{-m})+\sum_{\gamma\in \Gamma_*}N^{\pi}_{\pi,\gamma}\frac{b(a+b)}{2}(q_2(\gamma)^m+q_2(\gamma)^{-m})  \\
 &+\sum_{(k,\varepsilon),(k',\varepsilon')\in J_1}N^\pi_{(k,\varepsilon),(k',\varepsilon')}\frac{a^2}{4}\frac{q_1(k)^m}{q_1(k')^m}
 +\sum_{(k,\varepsilon)\in J_1,\; g\in G_*}N^\pi_{(k,\varepsilon),g}\frac{a^2}{2}(\frac{q_1(k)^m}{q_1(g)^m}+\frac{q_1(g)^m}{q_1(k)})\\
 &+\sum_{(k,\varepsilon)\in J_1,\;\gamma\in \Gamma_*}N^\pi_{(k,\varepsilon),\gamma}\frac{ab}{2}(\frac{q_1(k)^m}{q_2(\gamma)^m}+\frac{q_2(\gamma)^m}{q_1(k)^m})\\
 &+\sum_{g,h\in G_*}N^{\pi}_{g,h}a^2\frac{q_1(g)^m}{q_1(h)^m}
 +\sum_{g\in G_*,\;\gamma\in \Gamma_*}N^{\pi}_{g,\gamma}ab(\frac{q_1(g)^m}{q_2(\gamma)^m}+\frac{q_2(\gamma)^m}{q_1(g)^m})
 +\sum_{\gamma,\xi\in \Gamma_*}N^{\pi}_{\gamma,\xi}b^2\frac{q_2(\gamma)^m}{q_2(\xi)^m}\\
 &=\frac{a^2-b^2}{2}+ \frac{(a+b)^2}{|\Gamma|-|G|}
 +\frac{a(a+b)}{|\Gamma|-|G|}\sum_{k\in K\setminus\{0\}}(q_1(k)^m+q_1(k)^{-m}) \\
 &+\frac{2a(a+b)}{|\Gamma|-|G|}\sum_{g\in G_*}(q_1(g)^m+q_1(g)^{-m})
 +\frac{2b(a+b)}{|\Gamma|-|G|}\sum_{\gamma\in \Gamma_*}(q_2(\gamma)^m+q_2(\gamma)^{-m})  \\
 &+\frac{a^2}{4}\sum_{(k,\varepsilon),(k',\varepsilon')\in J_1}(\frac{1}{|\Gamma|-|G|}+\delta_{(k,\varepsilon),\overline{(k,\varepsilon)}})
 \frac{q_1(k)^m}{q_1(k')^m}\\
 &+\frac{2a^2}{|\Gamma|-|G|}\sum_{k\in K\setminus|\{0\},\; g\in G_*}(\frac{q_1(k)^m}{q_1(g)^m}+\frac{q_1(m)}{q_1(k)})
 +\frac{2ab}{|\Gamma|-|G|}\sum_{k\in K\setminus\{0\},\;\gamma\in \Gamma_*}(\frac{q_1(k)^m}{q_2(\gamma)^m}+\frac{q_2(\gamma)^m}{q_1(k)^m})\\
 &+a^2\sum_{g,h\in G_*}(\frac{4}{|\Gamma|-|G|}+\delta_{g,h})\frac{q_1(g)^m}{q_1(h)^m}
 +b^2\sum_{\gamma,\xi\in \Gamma_*}(\frac{4}{|\Gamma|-|G|}-\delta_{\gamma,\xi})\frac{q_2(\gamma)^m}{q_2(\xi)^m}\\
 &+\frac{4ab}{|\Gamma|-|G|}\sum_{g\in G_*,\;\gamma\in \Gamma_*}(\frac{q_1(g)^m}{q_2(\gamma)^m}+\frac{q_2(\gamma)^m}{q_1(g)^m})\\
 &=\frac{a^2-b^2}{2}+ \frac{(a+b)^2}{|\Gamma|-|G|} +\frac{(a+b)(C_m+C_{-m})}{|\Gamma|-|G|} 
 +\frac{(a+b)(A_m+A_{-m})}{|\Gamma|-|G|}
 +\frac{(a+b)(B_{m}+B_{-m})}{|\Gamma|-|G|}  \\
 &+\frac{C_mC_{-m}}{|\Gamma|-|G|}+\frac{3a^2}{2}
 +\frac{C_mA_{-m}+A_mC_{-m}}{|\Gamma|-|G|}
 +\frac{C_mB_{-m}+B_mC_{-m}}{|\Gamma|-|G|}\\
 &+ \frac{A_mA_{-m}}{|\Gamma|-|G|}+a^2|G_*|+ \frac{B_mB_{-m}}{|\Gamma|-|G|}-b^2|\Gamma_*|
 +\frac{A_mB_{-m}+B_mA_{-m}}{|\Gamma|-|G|}\\
 &=\frac{|a+b+A_m+B_m+C_m|^2}{|\Gamma|-|G|}.
\end{align*}

\begin{align*}
\lefteqn{\nu_m((k,\varepsilon))} \\
 &=N^{(k,\varepsilon)}_{0,(k,\varepsilon)}\frac{a(a-b)}{4}(q_1(k)^m+q_1(k)^{-m})+N^{(k,\epsilon)}_{\pi,\pi}\frac{(a+b)^2}{4}\\
 & +\sum_{(l,\delta)\in J_1}N^{(k,\varepsilon)}_{\pi,(l,\delta)}\frac{a(a+b)}{4}(q_1(l)^m+q_1(l)^{-m})
 +\sum_{g\in G_*}N^{(k,\varepsilon)}_{\pi,g}\frac{a(a+b)}{2}(q_1(g)^m+q_1(g)^{-m})\\
 &+\sum_{\gamma\in \Gamma_*}N^{(k,\varepsilon)}_{\pi,\gamma}\frac{b(a+b)}{2}(q_2(\gamma)^m+q_2(\gamma)^{-m})
 +\sum_{(l,\delta),(l',\delta')\in J_1}N^{(k,\varepsilon)}_{(l,\delta),(l',\delta')}\frac{a^2}{4}\frac{q_1(l)^m}{q_1(l')^m}\\
 &+\sum_{(l,\delta)\in J_1,\;g\in G_*}N^{(k,\varepsilon)}_{(l,\delta),g}\frac{a^2}{2}
 (\frac{q_1(l)^m}{q_1(g)^m}+\frac{q_1(g)^m}{q_1(l)^m})
 +\sum_{(l,\delta)\in J_1,\; \gamma\in \Gamma_*}N^{(k,\varepsilon)}_{(l,\delta),\gamma}\frac{ab}{2}(\frac{q_1(l)^m}{q_2(\gamma)^m}+\frac{q_2(\gamma)^m}{q_1(l)^m}) \\
 &+\sum_{g,h\in G_*}N^{(k,\varepsilon)}_{g,h}a^2\frac{q_1(g)^m}{q_1(h)^m}
 +\sum_{g\in G_*,\;\gamma\in \Gamma}N^{(k,\varepsilon)}_{g,\gamma}ab(\frac{q_1(g)^m}{q_2(\gamma)^m}
 +\frac{q_2(\gamma)^m}{q_1(g)^m})
 +\sum_{\gamma,\xi\in \Gamma_*}N^{(k,\varepsilon)}_{\gamma,\xi}b^2\frac{q_2(\gamma)^m}{q_2(\xi)^m}\\
 &=\frac{a(a-b)}{4}(q_1(k)^m+q_1(k)^{-m})+\frac{(a+b)^2}{2(|\Gamma|-|G|)}\\
 & +\frac{a(a+b)}{4}\sum_{(l,\delta)\in J_1}(\frac{1}{|\Gamma|-|G|}+\delta_{(k,\varepsilon),(l,\delta)}) (q_1(l)^m+q_1(l)^{-m})
 +\frac{a(a+b)}{|\Gamma|-|G|}\sum_{g\in G_*}(q_1(g)^m+q_1(g)^{-m})\\
 &+\frac{b(a+b)}{|\Gamma|-|G|}\sum_{\gamma\in \Gamma_*}(q_2(\gamma)^m+q_2(\gamma)^{-m})
 +\frac{a^2}{8}\sum_{(l,\delta),(l',\delta')\in J_1}(\frac{1}{|\Gamma|-|G|}+\delta_{l+l',k}) \frac{q_1(l)^m}{q_1(l')^m}\\
 &+\frac{a^2c^2}{8}\sum_{(l,\delta),(l',\delta')\in J_1}(\delta\delta'\inpr{l}{l+k}\delta_{l,l'}+\delta'\overline{\varepsilon}\inpr{l'}{l'+l}\delta_{l',k}
 +\overline{\varepsilon}\delta\inpr{k}{k+l'}\delta_{k,l}) \frac{q_1(l)^m}{q_1(l')^m}\\
 &+\frac{a^2}{2}\sum_{(l,\delta)\in J_1,\;g\in G_*}(\frac{1}{|\Gamma|-|G|}+c^2\overline{\varepsilon}\delta\inpr{k}{l+g}\delta_{l,k})
 (\frac{q_1(l)^m}{q_1(g)^m}+\frac{q_1(g)^m}{q_1(l)^m})\\
 &+\frac{ab}{2(|\Gamma|-|G|)}\sum_{(l,\delta)\in J_1,\; \gamma\in \Gamma_*}(\frac{q_1(l)^m}{q_2(\gamma)^m}+\frac{q_2(\gamma)^m}{q_1(l)^m})
 +a^2\sum_{g,h\in G_*}(\frac{2}{|\Gamma|-|G|}+\delta_{g+h,k}+\delta_{g-h,k})\frac{q_1(g)^m}{q_1(h)^m}\\
 &+\frac{2ab}{|\Gamma|-|G|}\sum_{g\in G_*,\;\gamma\in \Gamma}(\frac{q_1(g)^m}{q_2(\gamma)^m}+\frac{q_2(\gamma)^m}{q_1(g)^m})
 +\frac{2b^2}{|\Gamma|-|G|}\sum_{\gamma,\xi\in \Gamma_*}\frac{q_2(\gamma)^m}{q_2(\xi)^m}\\
 &=\frac{a(a-b)}{4}(q_1(k)^m+q_1(k)^{-m})+\frac{(a+b)^2}{2(|\Gamma|-|G|)}
  + \frac{(a+b)(C_m+C_{-m}+A_m+A_{-m}+B_m+B_{-m} )}{2(|\Gamma|-|G|)}\\
  &+ \frac{a(a+b)}{4}(q_1(k)^m+q_1(k)^{-m})
 +\frac{C_mC_{-m}}{2(|\Gamma|-|G|)}+\frac{a^2}{2}\frac{q_1(l)^m}{q_1(k-l)^m}-\frac{a^2}{2}(q_1(k)^m+q_1(k)^{-m})\\
 &+\frac{A_mC_{-m}+C_mA_{-m}+B_mC_{-m}+C_mB_{-m}}{2(|\Gamma|-|G|)}\\
 &+\frac{A_mA_{-m}+A_mB_{-m}+B_mA_{-m}+B_mB_{-m}}{2(|\Gamma|-|G|)}+a^2\sum_{g\in G_*}\frac{q_1(g)^m}{q_1(g-k)^m}\\
 \end{align*}
 \begin{align*}
 &=\frac{|\cG(q_1,m)+\cG(q_2,m)|^2}{2(|\Gamma|-|G|)}+\frac{a^2}{2}\sum_{g\in G}\frac{q_1(g)^m}{q_{1}(k-g)^m} \\
 &=\frac{|\cG(q_1,m)+\cG(q_2,m)|^2}{2(|\Gamma|-|G|)}+\frac{\delta_{mk,0}q_1(k)^m}{2}.\\
\end{align*}

\begin{align*}
\lefteqn{\nu_m(g)=N^g_{0,g}\frac{a(a-b)}{2}(q_1(g)^m+q_1(g)^{-m})+N^g_{\pi,\pi}\frac{(a+b)^2}{4}} \\
 &+\sum_{(k,\varepsilon)\in J_1}N^g_{\pi,(k,\varepsilon)}\frac{a(a+b)}{4}(q_1(k)^m+q_1(k)^{-m}) 
 +\sum_{h\in G_*}N^g_{\pi,h}\frac{a(a+b)}{2}(q_1(h)^m+q_1(h)^{-m})\\
 &+\sum_{\gamma\in \Gamma_*}N^g_{\pi,\gamma}\frac{b(a+b)}{2}(q_2(\gamma)^m+q_2(\gamma)^{-m})
 +\sum_{(k,\varepsilon),(k',\varepsilon')}N^g_{(k,\varepsilon),(k',\varepsilon')}\frac{a^2}{4}\frac{q_1(k)^m}{q_1(k')^m}\\
 &+\sum_{(k,\varepsilon)\in J_1;h\in G_*}N^g_{(k,\varepsilon),h}\frac{a^2}{2}(\frac{q_1(k)^m}{q_1(h)^m}+\frac{q_1(h)^m}{q_1(k)^m})
 +\sum_{(k,\varepsilon)\in J_1;\gamma\in \Gamma_*}N^g_{(k,\varepsilon),\gamma}\frac{ab}{2}(\frac{q_1(k)^m}{q_2(\gamma)^m}+\frac{q_2(\gamma)^m}{q_1(k)^m})\\
 &+\sum_{h,h'\in G_*}N^g_{h,h'}a^2\frac{q_1(h)^m}{q_1(h')^m}
 +\sum_{h\in G_*,\;\gamma\in \Gamma_*}N^g_{h,\gamma}ab(\frac{q_1(h)^m}{q_2(\gamma)^m}+\frac{q_2(\gamma)^m}{q_1(h)^m})
 +\sum_{\gamma,\xi\in \Gamma_*}N^g_{\gamma,\xi}b^2\frac{q_2(\gamma)^m}{q_2(\xi)^m}\\
 &=\frac{a(a-b)}{2}(q_1(g)^m+q_1(g)^{-m})+\frac{(a+b)^2}{|\Gamma|-|G|}
 +\frac{a(a+b)}{|\Gamma|-|G|}\sum_{k\in K\setminus\{0\}}(q_1(k)^m+q_1(k)^{-m})\\ 
 &+\frac{a(a+b)}{2}\sum_{h\in G_*}(\frac{4}{|\Gamma|-|G|}+\delta_{g,h})(q_1(h)^m+q_1(h)^{-m})
 +\frac{2b(a+b)}{|\Gamma|-|G|}\sum_{\gamma\in \Gamma_*}(q_2(\gamma)^m+q_2(\gamma)^{-m})\\
 &+\frac{a^2}{4}\sum_{(k,\varepsilon),(k',\varepsilon')}(\frac{1}{|\Gamma|-|G|}+c^2\varepsilon\varepsilon'\inpr{k}{k+g}\delta_{k,k'})\frac{q_1(k)^m}{q_1(k')^m}\\
 &+\frac{a^2}{2}\sum_{(k,\varepsilon)\in J_1;h\in G_*}(\frac{2}{|\Gamma|-|G|}+\delta_{k+g+h,0}+\delta_{k+g-h,0})(\frac{q_1(k)^m}{q_1(h)^m}+\frac{q_1(h)^m}{q_1(k)^m})\\
 &+\frac{2ab}{|\Gamma|-|G|}\sum_{k\in K\setminus\{0\};\gamma\in \Gamma_*}(\frac{q_1(k)^m}{q_2(\gamma)^m}+\frac{q_2(\gamma)^m}{q_1(k)^m})\\
 &+a^2\sum_{h,h'\in G_*}(\frac{4}{|\Gamma|-|G|}+\delta_{g+h+h',0}+\delta_{-g+h+h',0}+\delta_{g-h+h',0}+\delta_{g+h-h',0})\frac{q_1(h)^m}{q_1(h')^m}\\
 &+\frac{4ab}{|\Gamma|-|G|}\sum_{h\in G_*,\;\gamma\in \Gamma_*}(\frac{q_1(h)^m}{q_2(\gamma)^m}+\frac{q_2(\gamma)^m}{q_1(h)^m})+\frac{4b^2}{|\Gamma|-|G|}\sum_{\gamma,\xi\in \Gamma_*}\frac{q_2(\gamma)^m}{q_2(\xi)^m}\\
 \end{align*}
 \begin{align*}
 &=\frac{a(a-b)}{2}(q_1(g)^m+q_1(g)^{-m})+\frac{(a+b)^2}{|\Gamma|-|G|}
 +\frac{(a+b)(C_m+C_{-m}+A_m+A_{-m}+B_m+B_{-m})}{|\Gamma|-|G|}\\ 
 &+\frac{a(a+b)}{2}(q_1(g)^m+q_1(h)^{-g})+\frac{C_mC_{-m}+C_mA_{-m}+A_mC_{-m}+C_mA_{-m}+A_mC_{-m}}{|\Gamma|-|G|}\\
 &+\frac{a^2}{2}\sum_{(k,\varepsilon)\in J_1;h\in G_*}(\delta_{k+g+h,0}+\delta_{k+g,h})(\frac{q_1(k)^m}{q_1(h)^m}+\frac{q_1(h)^m}{q_1(k)^m})\\
 &+a^2\sum_{h,h'\in G_*}(\delta_{g+h+h',0}+\delta_{h+h',g}+\delta_{g+h',h}+\delta_{g+h,h'})\frac{q_1(h)^m}{q_1(h')^m}\\
 &+\frac{A_mA_{-m}+A_mB_{-m}+A_{-m}B_m+B_mB_{-m}}{|\Gamma|-|G|}\\
 &=\frac{|\cG(q_1)+\cG(q_2)|^2}{|\Gamma|-|G|}
 +a^2\sum_{k\in K}(\frac{q_1(k)^m}{q_1(g+k)^m}+\frac{q_1(g+k)^m}{q_1(k)^m})\\
 &+a^2\sum_{h,h'\in G_*}(\delta_{g+h+h',0}+\delta_{h+h',g}+\delta_{g+h',h}+\delta_{g+h,h'})\frac{q_1(h)^m}{q_1(h')^m}\\
 &=\frac{|\cG(q_1,m)+\cG(q_2,m)|^2}{|\Gamma|-|G|}+\delta_{mg,0}q_1(g)^m.
\end{align*}

\begin{align*}
\lefteqn{\nu_m(\gamma)=N^\gamma_{0,\gamma}\frac{b(a-b)}{2}(q_2(\gamma)^m+q_2(\gamma)^{-m})+N^\gamma_{\pi,\pi}\frac{(a+b)^2}{4}} \\
 &+\sum_{(k,\varepsilon)\in J_2}N^\gamma_{\pi,(k,\varepsilon)}\frac{a(a+b)}{4}(q_1(k)^m+q_1(k)^{-m}) 
 +\sum_{g\in G_*}N^\gamma_{\pi,g}\frac{a(a+b)}{2}(q_1(g)^m+q_1(g)^{-m})\\
 &+\sum_{\xi\in \Gamma_*}N^\gamma_{\pi,\xi}\frac{b(a+b)}{2}(q_2(\xi)^m+q_2(\xi)^{-m}) 
 +\sum_{(k,\varepsilon),(k',\varepsilon')\in J_1}N^\gamma_{(k,\varepsilon),(k',\varepsilon')}\frac{a^2}{4}\frac{q_1(k)^m}{q_1(k')^m}\\
 &+\sum_{(k,\varepsilon)\in J_1,\;g\in G_*}N^\gamma_{(k,\varepsilon),g}\frac{a^2}{2}(\frac{q_1(k)^m}{q_1(g)^m}+\frac{q_1(g)^m}{q_1(k)^m})
 +\sum_{(k,\varepsilon)\in J_1,\;\xi\in \Gamma_*}N^\gamma_{(k,\varepsilon),\xi}\frac{ab}{2}(\frac{q_1(k)^m}{q_2(\xi)^m}+\frac{q_2(\xi)^m}{q_1(k)^m})\\
 &+\sum_{g,h\in G_*}N^\gamma_{g,h}a^2\frac{q_1(g)^m}{q_1(h)^m}
 +\sum_{g\in G_*,\;\xi\in \Gamma_*}N^\gamma_{g,\xi}ab(\frac{q_1(g)^m}{q_2(\xi)^m}+\frac{q_2(\xi)^m}{q_1(g)^m})
 +\sum_{\xi,\xi'\in \Gamma_*}N^\gamma_{\xi,\xi'}b^2\frac{q_2(\xi)^m}{q_2(\xi')^m}\\
 &=\frac{b(a-b)}{2}(q_2(\gamma)^m+q_2(\gamma)^{-m})+\frac{(a+b)^2}{|\Gamma|-|G|}
 +\frac{a(a+b)}{|\Gamma|-|G|}\sum_{k\in K\setminus\{0\}}(q_1(k)^m+q_1(k)^{-m})\\ 
 &+\frac{2a(a+b)}{|\Gamma|-|G|}\sum_{g\in G_*}(q_1(g)^m+q_1(g)^{-m})
 +\frac{b(a+b)}{2}\sum_{\xi\in \Gamma_*}(\frac{4}{|\Gamma|-|G|}-\delta_{\xi,\gamma})(q_2(\xi)^m+q_2(\xi)^{-m}) \\
 &+\frac{a^2}{|\Gamma|-|G|}\sum_{k,k'\in K\setminus\{0\}}\frac{q_1(k)^m}{q_1(k')^m}
 +\frac{2a^2}{|\Gamma|-|G|}\sum_{k\in K\setminus\{0\},\;g\in G_*}(\frac{q_1(k)^m}{q_1(g)^m}+\frac{q_1(g)^m}{q_1(k)^m})\\
 &+\frac{2ab}{|\Gamma|-|G|}\sum_{k\in K\setminus\{0\},\;\xi\in \Gamma_*}(\frac{q_1(k)^m}{q_2(\xi)^m}+\frac{q_2(\xi)^m}{q_1(k)^m})\\
 &+\frac{4a^2}{|\Gamma|-|G|}\sum_{g,h\in G_*}\frac{q_1(g)^m}{q_1(h)^m}
 +\frac{4ab}{|\Gamma|-|G|}\sum_{g\in G_*,\;\xi\in \Gamma_*}(\frac{q_1(g)^m}{q_2(\xi)^m}+\frac{q_2(\xi)^m}{q_1(g)^m})\\
 &+b^2\sum_{\xi,\xi'\in \Gamma_*}(\frac{4}{|\Gamma|-|G|}-\delta_{\gamma+\xi+\xi'}-\delta_{-\gamma+\xi+\xi'}-\delta_{\gamma-\xi+\xi'}-\delta_{\gamma+\xi-\xi'})\frac{q_2(\xi)^m}{q_2(\xi')^m}\\
 &=\frac{b(a-b)}{2}(q_2(\gamma)^m+q_2(\gamma)^{-m})
 +\frac{(a+b)^2+(a+b)(C_m+C_{-m}+A_m+A_{-m}+B_m+B_{-m})}{|\Gamma|-|G|}\\ 
 &-\frac{b(a+b)}{2}(q_2(\gamma)^m+q_2(\gamma)^{-m}) 
 +\frac{C_mC_{-m}+C_mA_{-m}+A_mC_{-m}+C_{m}B_{-m}+B_mC_{-m}}{|\Gamma|-|G|}\\
 &+\frac{A_mA_{-m}+A_mB_{-m}+B_mA_{-m}+B_mB_{-m}}{|\Gamma|-|G|}\\
 &-b^2\sum_{\xi,\xi'\in \Gamma_*}(\delta_{\gamma+\xi+\xi'}+\delta_{-\gamma+\xi+\xi'}+\delta_{\gamma-\xi+\xi'}+\delta_{\gamma+\xi-\xi'})\frac{q_2(\xi)^m}{q_2(\xi')^m}\\
 &=\frac{|\cG(q_1,m)+\cG(q_2,m)|^2}{|\Gamma|-|G|}-b^2(q_2(\gamma)^m+q_2(\gamma)^{-m})\\
 &-b^2\sum_{\xi,\xi'\in \Gamma_*}(\delta_{\gamma+\xi+\xi'}+\delta_{-\gamma+\xi+\xi'}+\delta_{\gamma-\xi+\xi'}+\delta_{\gamma+\xi-\xi'})\frac{q_2(\xi)^m}{q_2(\xi')^m}\\
 &=\frac{|\cG(q_1,m)+\cG(q_2,m)|^2}{|\Gamma|-|G|}-\delta_{m\gamma,0}q_2(\gamma)^m. 
\end{align*}
\end{proof}

\section{Verification of Conjecture 4.5 for $G=\Z_4$.}
\subsection{Realization in $\cO_3\rtimes \Z_2$.}
Assume that  a fusion category $\cC$ with the following fusion rules 
is realized in $\End(M)$ for a type III factor $M$. 
$$[\alpha^2]=[\id],$$
$$[\alpha][\rho]=[\rho][\alpha],$$
$$[\rho^2]=[\alpha]+[\alpha][\rho].$$
We also assume that $\alpha$ has a non-trivial associator. 
Such a fusion category certainly exists (see \cite{MR3635673}, \cite{MR3306607}). 

We may assume that there exists a unitary $U\in M$ satisfying $\alpha^2=\Ad U$, 
$\alpha(U)=-U$. 
Moreover, we may assume $U^2=1$, and $\alpha$ as a $\Z_4$-action is stable (see \cite{MR0448101}). 
From $[\alpha][\rho]=[\rho][\alpha]$, there exists a unitary $V\in M$ satisfying 
$\Ad V\circ \alpha \circ \rho=\rho\circ \alpha$. 
Then multiplying $V$ by a complex number of modulus 1 if necessary, we may assume that $V$ is a $\alpha$-cocycle 
as a $\Z_4$-action, and stability of $\alpha$ implies that it is a cobounday. 
Thus there exists a unitary $W\in M$ satisfying $V=W^*\alpha(W)$, 
which means that $\Ad W\circ \rho$ commutes with $\alpha$. 
Therefore from the beginning we may assume $\alpha\circ \rho=\rho\circ \alpha$. 

We choose isometries $s_0\in (\alpha,\rho^2)$, $s_1\in (\rho,\rho^2)$, $s_2\in (\alpha\circ \rho,\rho^2)$. 
Since $\alpha$ commutes with $\rho$, we have $\alpha(s_0)\in (\alpha,\rho^2)$, $\alpha(s_1)\in (\rho,\rho^2)$, 
$\alpha(s_2)\in (\alpha\circ \rho,\rho^2)$, and there exist 4-th roots of unity $\xi_i$, $i=0,1,2$, 
satisfying 
$\alpha(s_0)=\xi_0 s_0$, $\alpha(s_1)=\xi_1 s_1$, $\alpha(s_2)=\xi_2 s_2$. 
Choosing a unitary $Z\in M$ satisfying $\alpha(Z)=\xi_2^{-1}Z$, and replacing $\rho$ with $\Ad Z\circ \rho$ 
if necessary, we may and do always assume $\xi_2=1$, and hence $\alpha(s_2)=s_2$. 

Since $U\rho(U)\in (\rho,\rho)$, there exists $\epsilon\in \{1,-1\}$ satisfying $\rho(U)=\epsilon U$. 
On one hand, we have $\rho^2(U)=\epsilon^2U=U$, and on the other hand, 
$$\rho^2(U)=s_0\alpha(U)s_0^*+s_1\rho(U)s_1^*+s_2\alpha(\rho(U))s_2^*
=-s_0Us_0^*+\epsilon s_1Us_1^*-\epsilon s_2Us_2^*,$$
which implies $\xi_0^2=-1$, $\xi_1^2=\epsilon$, $\xi_2^2=-\epsilon$, 
and so we get $\epsilon=-1$ and $\xi_0^2=\xi_1^2=-1$. 

Let $\brho=\alpha^{-1}\circ \rho$. 
Then $\brho$ is the dual of $\rho$, and we may choose $R_\rho=s_0$. 
Since $\brho\circ\rho=\rho\circ\brho$, we have $R_{\brho}=\zeta s_0$ with $|\zeta|=1$. 
Let $d=1+\sqrt{2}$. 
Then since $R_{\brho}^*\rho(R_\rho)=1/d$, we get $s_0^*\rho(s_0)=\zeta/d$, 
which implies 
$$\frac{\zeta}{d}s_0=\rho(s_0)^*\rho^2(s_0)s_0=\rho(s_0)^*s_0\alpha(s_0)=\frac{\overline{\zeta}\xi}{d}s_0.$$
Thus we get $\xi_1=\zeta^2$, and $\zeta^4=-1$. 

Note that Frobenius reciprocity implies 
$\sqrt{d}s_1^*\rho(s_0)\in (\rho,\rho\brho)=(\alpha\rho,\rho^2)$ and 
$\sqrt{d}s_2^*\rho(s_0)\in (\alpha\rho,\rho\brho)=(\rho,\rho^2)$ are isometries.  
We may assume $\sqrt{d}s_1^*\rho(s_0)=s_2$ by redefining $s_2$, and 
there exists $\nu\in \T$ satisfying $\sqrt{d}s_2^*\rho(s_0)=\nu s_1$. 
Applying $\alpha$ to these, we get $\xi_0=\xi_1$, and $\xi_1=\zeta^2$.

So far we have shown that we can choose $\alpha$, $\rho$, $U$, $s_i$, $i=0,1,2$, satisfying 
$\alpha(s_0)=\zeta^2 s_0$, $\alpha(s_1)=\zeta^2 s_1$, $\alpha(s_2)=s_2$, $\alpha(U)=\rho(U)=-U$, 
$\alpha^2=\Ad U$, $Us_0=-s_0U$, $Us_1=-s_1U$, $Us_2=s_2U$, and 
$$\rho(s_0)=\frac{\zeta}{d}s_0+\frac{1}{\sqrt{d}}(s_1s_2+\nu s_2s_1),$$
where $\zeta$ is a number satisfying $\zeta^4=-1$. 

Frobenius reciprocity implies that $\sqrt{d}\rho(s_1)^*s_0\in (\rho,\brho\rho)=(\alpha\rho,\rho^2)$ 
is an isometry, and there exists $\eta_1\in \T$ satisfying $\sqrt{d}s_0^*\rho(s_1)=\eta_1s_2^*$.  
Since $\rho(s_1)s_0$ is an isometry in $(\id,\rho^3\alpha^{-1})=\C s_1s_0$, 
there exists $\eta_2\in \T$ satisfying $\rho(s_1)s_0=\eta_2 s_1s_0$. 
Since $s_1^*\rho(s_1)\in (\rho^2,\rho^2)$ and 
$s_2^*\rho(s_1)\in (\rho^2,\alpha\rho^2)=\C s_1s_2^*+ \C s_2Us_1^*$, 
there exist $A,B,C,D\in \C$ satisfying 
$$\rho(s_1)=\frac{\eta_1}{\sqrt{d}}s_0s_2^*+\eta_2 s_1s_0s_0^*+
A s_1s_1s_1^*+B s_1s_2s_2^*+C s_2s_1s_2^*+Ds_2s_2Us_1^*.$$

Since $s_2=\sqrt{d}s_1^*\rho(s_0)$, we get 
\begin{align*}
\lefteqn{ \rho(s_2)=\sqrt{d}\rho(s_1^*)\rho^2(s_0)              } \\
 &=\sqrt{d}\rho(s_1)^*(\zeta^2 s_0s_0s_0^*+s_1\rho(s_0)s_1^*+\zeta^2 s_2\rho(s_0)s_2^*) \\
 &=\zeta^2\overline{\eta_1}s_2s_0s_0^*+\sqrt{d}(\overline{\eta_2}s_0s_0^*+\overline{A}s_1s_1^*+\overline{B}s_2s_2^*)\rho(s_0)s_1^*
 +\zeta^2\sqrt{d}(\overline{C}s_2s_1^*+\overline{D}s_1Us_2^*)\rho(s_0)s_2^* \\
 &=\frac{\zeta\overline{\eta_2}}{\sqrt{d}}s_0s_1^*+\zeta^2\overline{\eta_1}s_2s_0s_0^*+\zeta^2\overline{C}s_2s_2s_2^*+\nu\overline{B}s_2s_1s_1^*
 +\overline{A}s_1s_2s_1^*-\zeta^2\nu\overline{D}s_1s_1Us_2^*.
\end{align*}

On the other hand, since $s_1=\overline{\nu}\sqrt{d}s_2^*\rho(s_0)$, 
\begin{align*}
\lefteqn{\rho(s_1)=\overline{\nu}\sqrt{d}\rho(s_2)^*\rho^2(s_0)} \\
 &=\overline{\nu}\sqrt{d}\rho(s_2)^* (\zeta^2 s_0s_0s_0^*+s_1\rho(s_0)s_1^*+\zeta^2 s_2\rho(s_0)s_2^*) \\
 &=\overline{\nu}\zeta \eta_2 s_1s_0s_0^*+ \overline{\nu}\sqrt{d}(As_1s_2^*-\overline{\zeta}^2\overline{\nu}Ds_2Us_1^*)\rho(s_0)s_1^*\\
 &+\zeta^2\overline{\nu}\sqrt{d}(\overline{\zeta}^2\eta_1 s_0s_0^*
 +\overline{\nu}Bs_1s_1^*+\overline{\zeta}^2C s_2s_2^*)\rho(s_0)s_2^*\\
 &=\frac{\zeta\overline{\nu}\eta_1}{\sqrt{d}}s_0s_2^*+\zeta\overline{\nu}\eta_2s_1s_0s_0^*
 +\zeta\overline{\nu}As_1s_1s_1^*+\zeta^2\overline{\nu}^2B s_1s_2s_2^*+Cs_2s_1s_2^*-\overline{\zeta}^2\overline{\nu}^2 s_2Us_2s_1^*,
\end{align*}
which shows $\nu=\zeta$. 

We summarize what we have gotten so far: 
$$\rho(s_0)=\frac{\zeta}{d}s_0+\frac{1}{\sqrt{d}}(s_1s_2+\zeta s_2s_1),$$
$$\rho(s_1)=\frac{\eta_1}{\sqrt{d}}s_0s_2^*+\eta_2 s_1s_0s_0^*+
A s_1s_1s_1^*+B s_1s_2s_2^*+C s_2s_1s_2^*+Ds_2s_2Us_1^*,$$
$$\rho(s_2)=\frac{\zeta\overline{\eta_2}}{\sqrt{d}}s_0s_1^*+\zeta^2\overline{\eta_1}s_2s_0s_0^*+\zeta^2\overline{C}s_2s_2s_2^*+\zeta\overline{B}s_2s_1s_1^*
 +\overline{A}s_1s_2s_1^*+\overline{\zeta}\overline{D}s_1s_1Us_2^*.$$

\begin{theorem} Let the notation be as above. 
Then $\zeta=\zeta_8^{\pm 1}$, $A=\frac{1-\sqrt{2}\mp i}{2}=\frac{\zeta_{16}^{\pm 11}}{\sqrt{2+\sqrt{2}}}$, $|D|=2^{-\frac{1}{4}}$, 
$Us_0=-s_0U$, $Us_1=-s_1U$, $Us_2=s_2U$, $U^2=1$, and 
$\alpha(s_0)=\zeta^2s_0$, $\alpha(s_1)=\zeta^2s_1$, $\alpha(s_2)=s_2$, $\alpha(U)=-U$, 
$\rho(U)=-U$, 
$$\rho(s_0)=\frac{\zeta}{d}s_0+\frac{1}{\sqrt{d}}(s_1s_2+\zeta s_2s_1),$$
$$\rho(s_1)=\frac{\overline{\zeta}}{\sqrt{d}}s_0s_2^*+\zeta^3 s_1s_0s_0^*+
A s_1s_1s_1^*+\zeta^3 A s_1s_2s_2^*+\overline{\zeta}A s_2s_1s_2^*+Ds_2s_2Us_1^*,$$
$$\rho(s_2)=\frac{\overline{\zeta}^2}{\sqrt{d}}s_0s_1^*+\zeta^3s_2s_0s_0^*
+\zeta^3\overline{A}s_2s_2s_2^*+\overline{\zeta}^2\overline{A}s_2s_1s_1^*
 +\overline{A}s_1s_2s_1^*+\overline{\zeta}\overline{D}s_1s_1Us_2^*,$$
$\alpha^2=\Ad U$, $U^2=1$. 
\end{theorem}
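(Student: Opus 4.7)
\medskip
\noindent\textbf{Proof proposal.} The plan is to impose on the ansatz for $\rho(s_0),\rho(s_1),\rho(s_2)$ the three basic compatibility requirements forced by $\rho$ being a unital $*$\nobreakdash-endomorphism with $\rho^2$ decomposed by the chosen intertwiners, and to read off the constants $\zeta,\eta_1,\eta_2,A,B,C,D$ by matching coefficients in the resulting operator equations.

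First I would use the decomposition formula
\[
\rho^2(x)=s_0\alpha(x)s_0^*+s_1\rho(x)s_1^*+s_2\alpha\rho(x)s_2^*,\qquad x\in M,
\]
applied to each of $x=s_0,s_1,s_2,U$. On the other hand $\rho^2(s_i)=\rho(\rho(s_i))$ can be expanded directly from the given ans\"atze for $\rho(s_0),\rho(s_1),\rho(s_2)$ together with $\alpha\rho(s_i)$ (obtained from $\alpha(s_0)=\zeta^2s_0$, $\alpha(s_1)=\zeta^2s_1$, $\alpha(s_2)=s_2$ and $\alpha(U)=-U$). Matching the coefficients of each monomial in $s_0,s_1,s_2,U,s_0^*,s_1^*,s_2^*$ in these two expressions for $\rho^2(s_0)$, $\rho^2(s_1)$ and $\rho^2(s_2)$ should yield a string of scalar equations. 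A preliminary pass for $\rho^2(s_0)$ already forces $\nu=\zeta$ as the excerpt observed; the same game for $\rho^2(s_1)$ should immediately give $\eta_2=\zeta^3$, $B=\zeta^3 A$, $C=\overline{\zeta}A$, and similarly $\rho^2(s_2)$ will force $\eta_1=\overline{\zeta}$ and the analogous identifications on the $s_2$ side (consistent with the complex-conjugate symmetry of the given formulae).

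Second I would exploit the $*$-structure. Since $s_0,s_1,s_2$ form a Cuntz system with $s_0s_0^*+s_1s_1^*+s_2s_2^*=1$, their images must satisfy
\[
\rho(s_i)^*\rho(s_j)=\delta_{ij},\qquad \sum_{i=0}^{2}\rho(s_i)\rho(s_i)^*=1.
\]
Substituting the (by now reduced) ans\"atze and collecting coefficients of the nine diagonal monomials $s_is_js_k^*s_\ell^*$ (and their $U$-twists) should give polynomial identities in $\zeta,A,D$ alone. The isometry condition $\rho(s_1)^*\rho(s_1)=1$ will in particular produce a quadratic relation of the form $|A|^2+|D|^2+\tfrac{1}{d}+|A|^2+\cdots =1$ that pins down $|A|^2+|D|^2$; the cross condition $\rho(s_1)^*\rho(s_2)=0$ will relate $\overline{A}D$ to the inner-product data on $\zeta$; and the completeness condition $\sum_i\rho(s_i)\rho(s_i)^*=1$ will fix $|D|$ and the real/imaginary decomposition of $A$, yielding $|D|^2=1/\sqrt{2}$ and the stated value of $A$.

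Third, the remaining freedom in $\zeta$ (a priori any eighth root of $-1$, since only $\zeta^4=-1$ has been used) is cut down by imposing consistency of $\rho(U)$ with $\alpha^2=\Ad U$. Applying $\rho$ to $\alpha^2(s_i)=Us_iU^*$ and using $\rho(U)=-U$ together with $\alpha(s_i)$ produces a sign identity that forces $\zeta^2=\pm i$, i.e.\ $\zeta=\zeta_8^{\pm 1}$; the two choices are exchanged by complex conjugation, matching the two values of $A=\tfrac{1-\sqrt{2}\mp i}{2}=\zeta_{16}^{\pm 11}/\sqrt{2+\sqrt 2}$ in the statement. The hardest step will be the second one: the bookkeeping of some sixty coefficient-matchings across the nine Cuntz monomials (with their $U$-twists) is tedious, and one has to be careful that the phase choices made earlier (normalizing $\xi_2=1$ and $\sqrt{d}\,s_1^*\rho(s_0)=s_2$) do not secretly re-introduce a free phase in $D$; I expect only $|D|$ to be determined, with the phase of $D$ absorbable into a further unitary gauge transformation and therefore not appearing in the statement.
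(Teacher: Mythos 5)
Your first two steps are essentially the paper's own argument: the paper derives the relations $\eta_1=\overline{\zeta}$, $\eta_2=\zeta^3$, $B=\zeta^3A$, $C=\overline{\zeta}A$, $D=\zeta^3\overline{\eta_2}D$ by applying $\rho$ to the Frobenius-reciprocity identity $s_1=\zeta\overline{\eta_2}\sqrt{d}\,\rho(s_2)^*s_0$ and expanding $\rho^2(s_2^*)$ via the decomposition $\rho^2(x)=s_0\alpha(x)s_0^*+s_1\rho(x)s_1^*+s_2\alpha\rho(x)s_2^*$, which is the same consistency condition you propose to impose by computing $\rho^2(s_i)$ two ways; it then uses $\rho(s_1)^*\rho(s_1)=1$ and the remaining Cuntz relations exactly as in your second step. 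Your expectation that only $|D|$ is determined, the phase being a gauge freedom, is also confirmed by the paper's remark following the theorem.

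The genuine gap is your third step. You claim that after the Cuntz relations are imposed, $\zeta$ is still ``a priori any eighth root of $-1$'' and must be pinned down by consistency of $\rho(U)=-U$ with $\alpha^2=\mathrm{Ad}\,U$. That consistency check is vacuous: since $\alpha$ and $\rho$ commute, $\rho(\alpha^2(s_i))=\alpha^2(\rho(s_i))=U\rho(s_i)U^*$, while $\rho(Us_iU^*)=(-U)\rho(s_i)(-U)^*=U\rho(s_i)U^*$ as well, so no new relation on $\zeta$ is produced; moreover the conclusion you draw from it, $\zeta^2=\pm i$, is already equivalent to $\zeta^4=-1$ and does not exclude $\zeta=\zeta_8^{\pm 3}$. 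The exclusion of $\zeta_8^{\pm 3}$ in fact comes from the Cuntz relations of your second step: they yield $\frac{1}{d}+(\zeta^5+1)A=0$ and $\frac{1}{d}+(\zeta^2+1)A^2=0$ together with $|A|^2=\frac{1}{2+\sqrt{2}}$, and $|1+\zeta^5|=\sqrt{2-\sqrt{2}}$ (forcing $\zeta=\zeta_8^{\pm1}$) versus $|1+\zeta^5|=\sqrt{2+\sqrt{2}}$ (for $\zeta=\zeta_8^{\pm3}$, which contradicts the required modulus of $A$). So your step 2, carried out completely, already determines $\zeta$ and $A$; as written, your outline both underuses step 2 and relies on a step 3 that cannot do the job assigned to it.
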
 

\begin{proof} 
Since $s_1=\zeta\overline{\eta_2}\sqrt{d}\rho(s_2)^*s_0$, 
\begin{align*}
\lefteqn{\rho(s_1)=\zeta\overline{\eta_2}\sqrt{d}\rho^2(s_2^*)\rho(s_0)} \\
 &=\zeta\overline{\eta_2}\sqrt{d}\rho^2(s_2^*)(\frac{\zeta}{d}s_0+\frac{1}{\sqrt{d}}s_1s_2+\frac{\zeta}{\sqrt{d}}s_2s_1) \\
 &=\frac{\zeta^2\overline{\eta_2}}{\sqrt{d}}s_0\alpha(s_2^*)+\zeta\overline{\eta_2}s_1\rho(s_2)^*s_2+\zeta^2\overline{\eta_2}s_2\rho(\alpha(s_2))^*s_1\\
 &=\frac{\zeta^2\overline{\eta_2}}{\sqrt{d}}s_0s_2^*+\zeta\overline{\eta_2}s_1
 (\overline{\zeta}^2\eta_1s_0s_0^*+\overline{\zeta}Bs_1s_1^*+\overline{\zeta}^2Cs_2s_2^*)
 +\zeta^2\overline{\eta_2}s_2(As_1s_2^*+\zeta D s_2Us_1^*).
\end{align*} 
Thus we get $\eta_1=\zeta^2\overline{\eta_2}$, $\eta_2=\overline{\zeta}\overline{\eta_2}\eta_1$, 
$A=\overline{\eta_2}B$, $B=\overline{\zeta}\overline{\eta_2}C$, $C=\zeta^2\overline{\eta_2}A$, $D=\zeta^3\overline{\eta_2}D$. 
This implies $|A|=|B|=|C|$, and $\rho(s_1)^*\rho(s_1)=1$ implies 
$$1=\frac{1}{d}+2|A|^2,\quad 1=|A|^2+|D|^2.$$
Since neither $A$ nor $D$ is 0, and we get  $\eta_1=\overline{\zeta}$, 
$\eta_2=\zeta^3$, $B=\zeta^3A$, and $C=\overline{\zeta}A$.  

Now the Cuntz algebra relation of $\rho(s_0)$, $\rho(s_1)$, $\rho(s_2)$ is equivalent to 
$$\frac{1}{d}+(\zeta^5+1)A=0,$$
$$\frac{1}{d}+(\zeta^2+1)A^2=0,$$
$$|A|^2=\frac{1}{2+\sqrt{2}},$$
$$|A|^2+|D|^2=1.$$
Solving these, we get the statement. 
\end{proof}

\begin{remark} The phase of $D$ cannot be determined from $\cC$ because replacing $s_1$ with $cs_1$ and 
$s_2$ with $\overline{c}s_2$ changes $D$ into $c^4D$ though other coefficients do not change. 
In other words, we may always assume $D=2^{-1/4}$. 
\end{remark}
\subsection{$\Tube \cC$}
Let 
$$\Lambda=\dim \cC=2+2d^2=2(2+2d)=4(2+\sqrt{2}).$$

It is easy to show that $\alpha$ has a unique half-braiding $\cE_\alpha(\alpha)=\zeta^2$, $\cE_\alpha(\rho)=\zeta^2$, 
$\cE_\alpha(\alpha\rho)=-1$, and there is a unique extension $\tilde{\alpha}\in \cZ(\cC)$ of $\alpha$. 
For simplicity, we denote $\alpha=\tilde{\alpha}$ if there there is no possibility of confusion. 
Since  
$$\left(
\begin{array}{cc}
S_{0,0} &S_{0,\alpha}  \\
S_{\alpha,0} &S_{\alpha,\alpha} 
\end{array}
\right)=\frac{1}{\Lambda}\left(
\begin{array}{cc}
1 &1  \\
1 &-1 
\end{array}
\right),
$$
the category generated by $\alpha$ is non-degenerate. 
We set $\cD=\cZ(\cC)\cap \{\alpha\}'$, which is the modular tensor category with 
$$\cZ(\cC)=\{0,\alpha\}\boxtimes \cD.$$
Thus the modular data $(S,T)$ of $\cZ(\cC)$ has the tensor product decomposition 
$$(S,T)=(\left(
\begin{array}{cc}
\frac{1}{\sqrt{2}} &\frac{1}{\sqrt{2}}  \\
\frac{1}{\sqrt{2}} &-\frac{1}{\sqrt{2}} 
\end{array}
\right)\otimes S',
\left(
\begin{array}{cc}
1 &0  \\
0 &\zeta^2 
\end{array}
\right)\otimes T'),$$
where $(S',T')$ are the modular data of $\cD$. 
For $X,Y\in \Irr(\cD)$, we have $S'_{X,Y}=\sqrt{2}S_{X,Y}$. 
We set $s(\alpha,X)=\frac{S_{\alpha,X}}{|S_{\alpha,X}|}$. 
Then by definition, 
$$\Irr (\cD)=\{X\in \Irr(\cZ(\cC));\;s(\alpha,X)=1\}.$$

We determine the structure of $\cA_0$, which is an abelian algebra of dimension 4. 
Let $p(0,0)=\frac{1}{2}(1_0+(0\;\alpha|1|\alpha\;0))$, $p(0,1)=\frac{1}{2}(1_0-(0\;\alpha|1|\alpha\;0))$, 
which are projections. 
There exists a unique half-braiding for $\id$, and  
$$z(0)=\frac{1}{4(2+\sqrt{2})}(1_0+(0\;\alpha|1|\alpha\;0)+(1+\sqrt{2})(0\;\rho|1|\rho\;0)+(1+\sqrt{2})(0\;\alpha\rho|1|\alpha\rho)\;0)$$
is a minimal central projection of $\Tube \cC$. 
Let 
$$E(0,0)=\frac{1}{4(2-\sqrt{2})}(1_0+(0\;\alpha|1|\alpha\;0)+(1-\sqrt{2})(0\;\rho|1|\rho\;0)+(1-\sqrt{2})(0\;\alpha\rho|1|\alpha\rho)\;0),$$
$$E(0,1)_\epsilon=\frac{1}{2}p(0,1)(1+\varepsilon\zeta^2 (0\;\rho|1|\rho\;0)),\quad \varepsilon \in \{1,-1\},$$
which are projections. 
Then we have 
$$\cA_0=\C z(0)\oplus \C E(0,0)\oplus \C E(0,1)_+\oplus \C E(0,1)_-$$
as an algebra, and $E(0,0)(0\;\rho|1|\rho\;0)=(1-\sqrt{2})E(0,0)$, 
$E(0,1)_\epsilon (0;\rho|1|\rho\;0)=-\epsilon\zeta^2E(0,1)_\epsilon$. 

The left $\cA_0$ modules $\cA_{0,\rho}$ and $\cA_{0,\alpha\rho}$ have bases 
$\{p(0,0)(0\;\rho|s_1|\rho\;\rho),p(0,1)(0\;\rho|s_1|\rho\;\rho)\}$ 
and $\{p(0,0)(0\;\rho|s_2U|\rho\;\alpha\rho),p(0,1)(0\;\rho|s_2U|\rho\;\alpha\rho)\}$ 
respectively. 
Since $z(0)$ acts on $\cA_{0,\rho}$ and $\cA_{0,\alpha\rho}$ trivially, 
$E(0,0)$ acts on $p(0,0)\cA_{0,\rho}$ and $p(0,0)\cA_{0,\alpha\rho}$ with multiplicity 1. 

On the other hand, we have 
 \begin{align*}
\lefteqn{(0\;\rho|1|\rho\;0)p(0,1)(0\;\rho|s_1|\rho\;\rho)=p(0,1)(0\;\rho|1|\rho\;0)(0\;\rho|s_1|\rho\;\rho)} \\
 &=p(0,1)((0\;\rho|s_1^*\rho(s_1)s_1|\rho\;\rho)+(0\;\alpha\rho|s_2^*\rho(s_1)s_2|\alpha\rho\;\rho)) \\
 &=Ap(0,1)((0\;\rho|s_1|\rho\;\rho)+\overline{\zeta}(0\;\alpha\rho|s_1|\alpha\rho\;\rho))  \\
 &=Ap(0,1)((0\;\rho|s_1|\rho\;\rho)+\zeta^5(0\;\alpha|1|\alpha\;0)(0\;\rho|s_1|\rho\;\rho))  \\
 &=A(1-\zeta^5)p(0,1)(0\;\rho|s_1|\rho\;\rho)\\
 &=-\zeta^2  p(0,1)(0\;\rho|s_1|\rho\;\rho)
\end{align*}
and
 \begin{align*}
\lefteqn{(0\;\rho|1|\rho\;0)p(0,1)(0\;\rho|s_2U|\rho\;\alpha\rho)
=p(0,1)(0\;\rho|1|\rho\;0)(0\;\rho|s_2U|\rho\;\alpha\rho)} \\
 &=p(0,1)((0\;\rho|s_1^*\rho(s_2U)s_1|\rho\;\alpha\rho)+(0\;\alpha\rho|s_2^*\rho(s_2U)s_2|\alpha\rho\;\alpha\rho)) \\
 &=p(0,1)((0\;\rho|s_1^*\rho(s_2)s_1U|\rho\;\alpha\rho)+(0\;\alpha\rho|-s_2^*\rho(s_2)s_2U|\alpha\rho\;\alpha\rho)) \\
 &=\overline{A}p(0,1)((0\;\rho|s_2U|\rho\;\alpha\rho)-\zeta^3(0\;\alpha\rho|s_2U|\alpha\rho\;\alpha\rho)) \\
 &=\overline{A}p(0,1)((0\;\rho|s_2U|\rho\;\alpha\rho)+\zeta^3(0\;\alpha|1|\alpha\;0)(0\;\rho|s_2U|\rho\;\alpha\rho)) \\
 &=\overline{A}(1-\zeta^3)p(0,1)((0\;\rho|s_2U|\rho\;\alpha\rho)) \\
 &=\zeta^2p(0,1)((0\;\rho|s_2U|\rho\;\alpha\rho)) .\\
\end{align*}

Summing up the above argument, we have the following.

\begin{lemma} Let $\pi=\id\oplus \rho\oplus \alpha\rho$, $\varphi=\id\oplus \rho$, and $\psi=\id\oplus \alpha\rho$.
\begin{itemize}
\item[$(1)$] $\pi$ has a unique half-braiding with $e(\tilde{\pi})_{0,0}=E(0,0)$, and $\cE_\pi(0)_{0,0}=1$, 
$\cE_\pi(\alpha)_{0,0}=1$, $\cE_\pi(\rho)_{0,0}=$ 
$\cE_\pi(\alpha\rho)_{0,0}=-\frac{1}{d^2}$.
\item[$(2)$] $\varphi$ has a unique half-braiding with $e(\tilde{\varphi})_{0,0}=E(0,1)_+$, and 
$\cE_\varphi(\alpha)_{0,0}=-1$, $\cE_\varphi(\rho)_{0,0}=\zeta^2/d$. 
\item[$(3)$] $\psi$ has a unique half-braiding with $e(\tilde{\psi})_{0,0}=E(0,1)_-$, and 
$\cE_\psi(\alpha)_{0,0}=-1$, $\cE_\psi(\rho)_{0,0}=-\zeta^2/d$.
\end{itemize}
The entries of $S$ and $T$ for these half-braidings are 
$$S_{0,0}=S_{\tilde{\pi},\tilde{\pi}}=\frac{2-\sqrt{2}}{8},\quad S_{0,\tilde{\pi}}=\frac{2+\sqrt{2}}{8},\quad 
S_{0,\tvarphi}=S_{0,\tpsi}=\frac{1}{4},$$
$$S_{\tpi,\tvarphi}=S_{\tpi,\tpsi}=\frac{1}{4},$$
$$S_{\tvarphi,\tvarphi}=\frac{1\mp i}{4},\quad S_{\tvarphi,\tpsi}=\frac{1\pm i}{4},\quad S_{\tpsi,\tpsi}=\frac{1\mp i}{4}.$$
$$T_{0,0}=T_{\tpi,\tpi}=T_{\tvarphi,\tvarphi}=T_{\tpsi,\tpsi}=1.$$
\end{lemma}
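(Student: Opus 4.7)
The plan is to use the tube algebra machinery to identify the simple objects $\tvarphi$, $\tpi$, $\tpsi\in\cZ(\cC)$ and their half-braidings from the three minimal projections $E(0,0)$, $E(0,1)_+$, $E(0,1)_-$ already located inside $\cA_0$, and then read off the relevant entries of $S$ and $T$ from that data.

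First I would establish uniqueness. Any half-braiding on an object $X\in\{\pi,\varphi,\psi\}$ produces a simple summand of the corresponding $\tilde X$ in $\cZ(\cC)$ whose underlying object contains $\id$ with multiplicity one, so by Frobenius reciprocity the forgetful image has exactly one copy of $\id$; such simple objects are in bijection with the nonzero minimal central projections of the endpoint algebra $\cA_0$ other than $z(0)$, and there are exactly three of those, namely $E(0,0)$, $E(0,1)_+$, $E(0,1)_-$. Thus each of $\pi$, $\varphi$, $\psi$ admits at most one half-braiding, and it remains to match each projection with the correct underlying object and to compute the matrix coefficients.

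Next I would use the eigenvalues of the tube generators $(0\;\rho|1|\rho\;0)$ and its $\alpha\rho$-analogue on $\cA_{0,\rho}$ and $\cA_{0,\alpha\rho}$, already computed in the excerpt: $E(0,0)$ acts via $1-\sqrt 2=-1/d$ on both sectors (so the associated simple object contains $\rho$ and $\alpha\rho$ each once, giving the underlying object $\pi$), while $E(0,1)_+$ acts via $-\zeta^2$ on $p(0,1)\cA_{0,\rho}$ and via $0$ on $p(0,1)\cA_{0,\alpha\rho}$ (so it corresponds to $\varphi=\id\oplus\rho$), and similarly $E(0,1)_-$ corresponds to $\psi=\id\oplus\alpha\rho$. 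The half-braiding coefficients $\cE_X(Y)_{0,0}$ are then obtained by dividing these eigenvalues by the quantum dimension $d_Y$ in Izumi's normalization (cf.\ \cite{MR1832764}), yielding $\cE_\pi(\rho)_{0,0}=(1-\sqrt 2)/d=-1/d^2$ and similarly for the $\alpha\rho$ entry, $\cE_\varphi(\rho)_{0,0}=\zeta^2/d$, $\cE_\psi(\rho)_{0,0}=-\zeta^2/d$, while the $\alpha$-values $\pm 1$ come directly from the fact that $p(0,0)$ and $p(0,1)$ are the spectral projections of $(0\;\alpha|1|\alpha\;0)$ with eigenvalues $+1$ and $-1$.

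Finally, the $S$- and $T$-entries are read off by standard tube-algebra formulae. Each $T_{\tilde X,\tilde X}$ is the ribbon twist of $\tilde X$, computable from $\cE_X(X)$ restricted to the $\id$-summand; a direct calculation using $\zeta^4=-1$ and the explicit half-braidings above shows all four twists equal $1$. The $S$-entries follow from $S_{\tilde X,\tilde Y}=\Lambda^{-1}\sum_{Z\in\Irr(\cC)} d_Z\,\Tr(\cE_X(Z)\cE_Y(Z))$, and the values $S_{0,\tpi}=(2+\sqrt 2)/8$, $S_{0,\tvarphi}=S_{0,\tpsi}=1/4$ etc.\ are forced by the dimensions $d_{\tpi}=d_\pi=1+d^2=2+\sqrt 2$ and $d_{\tvarphi}=d_{\tpsi}=d_\varphi=1+d$ together with the unitarity of $S$ and the factorization $\cZ(\cC)=\{0,\alpha\}\boxtimes\cD$. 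The main obstacle will be bookkeeping: carefully tracking the ambiguity $\zeta\in\{\zeta_8,\zeta_8^{-1}\}$ (inherited from the constant $A$ in the Cuntz-algebra realization) through the diagonal entries $S_{\tvarphi,\tvarphi}=(1\mp i)/4$, $S_{\tvarphi,\tpsi}=(1\pm i)/4$, $S_{\tpsi,\tpsi}=(1\mp i)/4$, and making sure that the quantum-dimension normalization used in passing from tube algebra eigenvalues to the $\cE_X(Y)_{0,0}$ matches the convention under which Definition of $S^{(G,q)}$, $T^{(G,q)}$ in Section 2 was set up.
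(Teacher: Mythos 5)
Your overall route is the same as the paper's: the lemma there is presented as a summary of exactly the computations you invoke, namely the decomposition $\cA_0=\C z(0)\oplus\C E(0,0)\oplus\C E(0,1)_+\oplus\C E(0,1)_-$, the eigenvalues of $(0\;\rho|1|\rho\;0)$ on these projections, and the action on the modules $\cA_{0,\rho}$ and $\cA_{0,\alpha\rho}$, after which the $S$- and $T$-entries are read off from the standard tube-algebra formulas. So the strategy is right, but two concrete points in your write-up would fail as stated. First, $d_\pi=1+2d=3+2\sqrt{2}$, not $1+d^2=2+\sqrt{2}$; with your value the relation $S_{0,\tpi}=d_\pi/\Lambda$ would give $1/4$ instead of the correct $(2+\sqrt{2})/8$, so the dimension slip is not harmless. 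Second, the formula $S_{\tilde X,\tilde Y}=\Lambda^{-1}\sum_{Z}d_Z\Tr(\cE_X(Z)\cE_Y(Z))$ is not well-formed, since $\cE_X(Z)\in(ZX,XZ)$ and $\cE_Y(Z)\in(ZY,YZ)$ cannot be composed; the correct expression is the normalized categorical trace of the double braiding, $\Lambda^{-1}\Tr\bigl(\cE_Y(X)\,\cE_X(Y)\bigr)$, whose expansion involves the full matrix coefficients of the half-braidings and not only the $(0,0)$-corners listed in the lemma. Consequently the diagonal entries $S_{\tvarphi,\tvarphi}=(1\mp i)/4$, $S_{\tvarphi,\tpsi}=(1\pm i)/4$ are genuinely computed from that data (tracking $\zeta\in\{\zeta_8,\zeta_8^{-1}\}$, as you note); they are not forced by the quantum dimensions and unitarity alone. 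A smaller point: in the paper's normalization the generator acts by $d\,\overline{\cE_X(\rho)_{0,0}}$ rather than $d\,\cE_X(\rho)_{0,0}$, which is why the eigenvalue $-\zeta^2$ on $E(0,1)_+$ yields $\cE_\varphi(\rho)_{0,0}=\overline{-\zeta^{2}}/d=\zeta^2/d$; you flagged the normalization issue, and this is how it resolves.
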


The half-braiding equations for $\rho$ are 
$$\cE_\rho(\alpha)^2=-1,$$
$$\rho(s_0)\cE_\rho(\alpha)=\cE_\rho(\rho)\rho(\cE_\rho(\rho))s_0,$$
$$\rho(s_1)\cE_\rho(\rho)=\cE_\rho(\rho)\rho(\cE_\rho(\rho))s_1,$$
$$\rho(s_2)\cE_\rho(\alpha\rho)=\cE_\rho(\rho)\rho(\cE_\rho(\rho))s_2,$$
where $\cE_\rho(\alpha)\in \T$, $\cE_\rho(\rho)\in (\rho^2,\rho^2)=\C s_0s_0^*+\C s_1s_1^*+\C s_2s_2^*$, 
and $\cE_{\rho}(\alpha\rho)=\cE_\rho(\alpha)\alpha(\cE_\rho(\rho))=\cE_\rho(\alpha)\cE_\rho(\rho)$.
Direct computation shows the following. 

\begin{lemma}
There are exactly two half-braidings  
$$\cE^0_\rho(\alpha)=-\zeta^2,\quad \cE^0_\rho(\rho)=\zeta^3s_0s_0^*+\zeta^3s_1s_1^*+\zeta^6s_2s_2^*,$$
$$\cE^1_\rho(\alpha)=\zeta^2,\quad \cE^1_\rho(\rho)=\zeta^5s_0s_0^*+s_1s_1^*+\zeta^5s_2s_2^*,$$
for $\rho$. 
The entries of $S$ and $T$ for them are
$$S_{0,\trho^0}=S_{0,\trho^1}=\frac{1}{4\sqrt{2}},\quad s(\alpha,\trho^0)=1,\quad s(\alpha,\trho^1)=-1, \quad S_{\tpi,\trho^0}=S_{\tpi,\trho^1}=-\frac{1}{4\sqrt{2}},$$
$$S_{\tvarphi,\trho^i}=\frac{\mp i}{4},\quad S_{\tpsi,\trho^i}=\frac{\pm i}{4}.$$
$$S_{\trho^0,\trho^0}=\frac{-1\pm\sqrt{2} i}{4\sqrt{2}},\quad 
S_{\trho^0,\trho^1}=\frac{-1\mp\sqrt{2} i}{4\sqrt{2}},\quad S_{\trho^1,\trho^1}=\frac{1\mp\sqrt{2} i}{4\sqrt{2}}$$
$$T_{\trho^0,\trho^0}=-1,\quad T_{\trho^1,\trho^1}=\mp i.$$ 
\end{lemma}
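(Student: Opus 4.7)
My plan is to solve the half-braiding equations directly using the explicit model set up in the preceding theorem. First, since $\alpha\rho$ is simple, $\cE_\rho(\alpha)\in(\rho\alpha,\alpha\rho)=\C$ is a scalar, and the equation $\cE_\rho(\alpha)^2=-1$ forces $\cE_\rho(\alpha)\in\{\zeta^2,-\zeta^2\}$ (recall $\zeta^4=-1$). Next, $\cE_\rho(\rho)$ lies in the commutative algebra $(\rho^2,\rho^2)=\C s_0 s_0^*\oplus\C s_1 s_1^*\oplus\C s_2 s_2^*$, and unitarity forces $\cE_\rho(\rho)=\lambda_0 s_0 s_0^*+\lambda_1 s_1 s_1^*+\lambda_2 s_2 s_2^*$ with each $\lambda_i\in\T$; the component $\cE_\rho(\alpha\rho)=\cE_\rho(\alpha)\cE_\rho(\rho)$ is then forced by tensoriality. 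The problem thus reduces to four parameters subject to the three intertwiner equations.

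The next step is to substitute the explicit formulas for $\rho(s_0),\rho(s_1),\rho(s_2)$ into each of the intertwiner equations $\rho(s_j)\cE_\rho(\beta_j)=\cE_\rho(\rho)\rho(\cE_\rho(\rho))s_j$ (with $\beta_0=\alpha$, $\beta_1=\rho$, $\beta_2=\alpha\rho$). One expands $\rho(\cE_\rho(\rho))=\sum_i\lambda_i\rho(s_i)\rho(s_i)^*$ in the Cuntz monomial basis, multiplies by $\cE_\rho(\rho)$, and then matches coefficients on both sides of each equation. Using the constraint $A^2(\zeta^2+1)=-1/d$ and $\zeta^4=-1$ from the classification, the $j=0$ equation determines $\lambda_0$ in terms of $\cE_\rho(\alpha)$, the $j=1$ equation pins down $\lambda_1/\lambda_0$ as a power of $\zeta$, and the $j=2$ equation does the same for $\lambda_2/\lambda_0$. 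The resulting finite system has exactly two solutions, matching the two half-braidings $\cE_\rho^0$ and $\cE_\rho^1$ in the statement.

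Finally, with the two half-braidings in hand, the modular data entries follow from standard formulas. The twist $T_{\trho^j,\trho^j}=\theta_{\trho^j}$ is the Perron–Frobenius weighted average $d_\rho^{-1}\sum_i d_{\beta_i}\lambda_i$ of the eigenvalues of $\cE_\rho(\rho)$ against the dimensions of the summands of $\rho^2$. Each $S$-matrix entry $S_{X,\trho^j}$ for $X\in\{0,\tpi,\tvarphi,\tpsi\}$ is obtained by pairing the half-braiding components tabulated in the previous lemma against the newly found $\cE_{\trho^j}(\alpha)$ and $\cE_{\trho^j}(\rho)$, via Ocneanu's tube-algebra character formula; the entries $S_{\trho^i,\trho^j}$ likewise reduce to a diagonal trace on the three Cuntz projections $s_i s_i^*$ after evaluating the double braiding of $\rho$ with itself. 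The main obstacle is the bookkeeping in expanding $\rho(\cE_\rho(\rho))$: since $\rho(s_1)$ and $\rho(s_2)$ each contain six summands and involve the unitary $U$ nontrivially, the product $\cE_\rho(\rho)\rho(\cE_\rho(\rho))$ produces many cross-terms on different Cuntz monomials, and organizing the coefficient matching so that the four unknowns are pinned down without overdetermination is the only non-mechanical part of the argument.
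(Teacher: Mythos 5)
Your proposal follows exactly the route the paper takes: the paper lists the same four half-braiding constraints ($\cE_\rho(\alpha)^2=-1$ together with the three intertwiner equations for $s_0,s_1,s_2$, with $\cE_\rho(\rho)$ a diagonal unitary in $\C s_0s_0^*+\C s_1s_1^*+\C s_2s_2^*$ and $\cE_\rho(\alpha\rho)=\cE_\rho(\alpha)\cE_\rho(\rho)$) and then simply states that direct computation yields the two solutions and the tabulated $S$ and $T$ entries, which is precisely the coefficient-matching and tube-algebra evaluation you describe. Your outline is a faithful (and somewhat more explicit) account of that same direct computation, and your twist formula $d_\rho^{-1}\sum_i d_{\beta_i}\lambda_i$ does reproduce the stated values $-1$ and $\mp i$.
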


So far we have obtained simple objects 
$$0,\tpi, \alpha\tvarphi, \alpha\tpsi,  \trho^0,  \alpha\trho^1, 
\alpha,\alpha\tpi,\tvarphi,\tpsi,\alpha\trho^0,\trho^2$$
of $\cZ(\cC)$, and the first half belong to $\cD$. 
Since $\dim \cA_{\rho}=\dim_{\alpha\rho}=8$ and $\dim \cA_{\rho,\alpha\rho}=4$, 
there are only two missing, and they are $\tmu^0$ and $\tmu^1$ with $\mu=\rho\oplus \alpha\rho$. 
We may assume $\tmu^0\in \cD$. 

Using $S(\alpha X,Y)=s(\alpha,Y)S_{X,Y}$ and $T_{\alpha X,\alpha X}=T_{\alpha,\alpha}T_{X,X}\overline{s(\alpha,X)}$, 
we can show from our computation so far that $S'$ and $T'$ with respect to 
$$0,\tpi,\alpha\tpsi,  \alpha\tvarphi,  \trho^0,  \alpha\trho^1,\tmu^0 $$
are as follows: 
$$S'=\left(
\begin{array}{ccccccc}
\frac{\sqrt{2}-1}{4} &\frac{\sqrt{2}+1}{4} &\frac{1}{2\sqrt{2}} &\frac{1}{2\sqrt{2}} &\frac{1}{4} &\frac{1}{4}&\frac{1}{2}   \\
\frac{\sqrt{2}+1}{4} &\frac{\sqrt{2}-1}{4} &\frac{1}{2\sqrt{2}} &\frac{1}{2\sqrt{2}} &-\frac{1}{4} &-\frac{1}{4}&-\frac{1}{2}   \\
\frac{1}{2\sqrt{2}} &\frac{1}{2\sqrt{2}} &\frac{-1\mp i}{2\sqrt{2}} &\frac{-1\pm i}{2\sqrt{2}} &\frac{\pm i}{2\sqrt{2}} &\frac{\mp i}{2\sqrt{2}} &0  \\
\frac{1}{2\sqrt{2}} &\frac{1}{2\sqrt{2}} &\frac{-1\pm i}{2\sqrt{2}} &\frac{-1\mp i}{2\sqrt{2}} &\frac{\mp i}{2\sqrt{2}} &\frac{\pm i}{2\sqrt{2}} &0  \\
\frac{1}{4} &-\frac{1}{4} &\frac{\pm i}{2\sqrt{2}} &\frac{\mp i}{2\sqrt{2}} &-\frac{1}{4}+\frac{\pm i}{2\sqrt{2}} &-\frac{1}{4}+\frac{\mp i}{2\sqrt{2}} &* \\
\frac{1}{4} &-\frac{1}{4} &\frac{\mp i}{2\sqrt{2}} &\frac{\pm i}{2\sqrt{2}} &-\frac{1}{4}+\frac{\mp i}{2\sqrt{2}} &-\frac{1}{4}+\frac{\pm i}{2\sqrt{2}} &*  \\
\frac{1}{2} &-\frac{1}{2} &0 &0 &* &* &* \\
\end{array}
\right),
$$
$$T'=\mathrm{Diag}(1,1,\mp i,\mp i,-1,-1,*).$$

\begin{theorem} Let the notation be as above. 
Then
$$S'=\left(
\begin{array}{ccccccc}
\frac{\sqrt{2}-1}{4} &\frac{\sqrt{2}+1}{4} &\frac{1}{2\sqrt{2}} &\frac{1}{2\sqrt{2}} &\frac{1}{4} &\frac{1}{4}&\frac{1}{2}   \\
\frac{\sqrt{2}+1}{4} &\frac{\sqrt{2}-1}{4} &\frac{1}{2\sqrt{2}} &\frac{1}{2\sqrt{2}} &-\frac{1}{4} &-\frac{1}{4}&-\frac{1}{2}   \\
\frac{1}{2\sqrt{2}} &\frac{1}{2\sqrt{2}} &\frac{-1\mp i}{2\sqrt{2}} &\frac{-1\pm i}{2\sqrt{2}} &\frac{\pm i}{2\sqrt{2}} &\frac{\mp i}{2\sqrt{2}} &0  \\
\frac{1}{2\sqrt{2}} &\frac{1}{2\sqrt{2}} &\frac{-1\pm i}{2\sqrt{2}} &\frac{-1\mp i}{2\sqrt{2}} &\frac{\mp i}{2\sqrt{2}} &\frac{\pm i}{2\sqrt{2}} &0  \\
\frac{1}{4} &-\frac{1}{4} &\frac{\pm i}{2\sqrt{2}} &\frac{\mp i}{2\sqrt{2}} &-\frac{1}{4}+\frac{\pm i}{2\sqrt{2}} &-\frac{1}{4}+\frac{\mp i}{2\sqrt{2}} &\frac{1}{2}  \\
\frac{1}{4} &-\frac{1}{4} &\frac{\mp i}{2\sqrt{2}} &\frac{\pm i}{2\sqrt{2}} &-\frac{1}{4}+\frac{\mp i}{2\sqrt{2}} &-\frac{1}{4}+\frac{\pm i}{2\sqrt{2}} &\frac{1}{2}  \\
\frac{1}{2} &-\frac{1}{2} &0 &0 &\frac{1}{2} &\frac{1}{2} &0 \\
\end{array}
\right),
$$
$$T'=\mathrm{Diag}(1,1,\mp i,\mp i,-1,-1,\zeta_8^{\pm 3}).$$
\end{theorem}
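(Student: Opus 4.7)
The plan is to determine the four entries of $(S', T')$ that are not settled by the preceding lemmas, namely
$a := S'_{\trho^0, \tmu^0}$, $b := S'_{\alpha\trho^1, \tmu^0}$, $d := S'_{\tmu^0, \tmu^0}$ and $t := T'_{\tmu^0, \tmu^0}$.
Rather than re-solving the half-braiding equations for $\mu = \rho \oplus \alpha\rho$ from scratch, I would extract these four values from the modular axioms for $\cD$ combined with the entries already computed in the preceding lemmas.

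The first step is to establish that $\tmu^0$ is self-dual. Since $\overline{\rho} = \alpha\rho$ in $\cC$, the object $\mu$ is self-dual; of its two half-braiding lifts $\tmu^0$ and $\tmu^1 = \alpha\tmu^0$ in $\cZ(\cC)$, exactly one lies in $\cD = \cZ(\cC) \cap \{\alpha\}'$. Duality preserves $\cD$, forcing $\overline{\tmu^0} = \tmu^0$, so the $\tmu^0$-row and $\tmu^0$-column of $S'$ coincide. Then I would read off three equations from $(S')^2 = C'$ applied to the $\tmu^0$-column against the previously-known rows: the $(\alpha\tpsi, \tmu^0)$-entry reduces to $\pm i(a-b)/(2\sqrt{2}) = 0$, forcing $a = b$; the $(0, \tmu^0)$-entry gives $a + d = 1/2$; and the $(\trho^0, \tmu^0)$-entry, after substituting $a = b$, collapses to $a(1-2d) = 1/2$. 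Eliminating $a$ yields $d(d-1) = 0$, and then the diagonal identity $(S')^2_{\tmu^0, \tmu^0} = 1$, i.e.\ $\tfrac{1}{2} + 2a^2 + d^2 = 1$, selects $d = 0$ and hence $a = b = 1/2$.

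For $t$, I would use the Gauss-sum identity $\sum_{X \in \Irr(\cD)} d_X^2\, T'_{X,X} = \mathcal{D}'\, c'$, with $\mathcal{D}' = 1/S'_{0,0} = 4(1+\sqrt{2})$. The factorization $\cZ(\cC) = \{0,\alpha\} \boxtimes \cD$, together with the vanishing of the central charge of a Drinfeld center, gives $c' = c_{\{0,\alpha\}}^{-1} = ((1+\zeta^2)/\sqrt{2})^{-1} = \zeta^{-1} = \zeta_8^{\mp 1}$ (using $\zeta \in \{\zeta_8, \zeta_8^{-1}\}$). Substituting the known dimensions and twists and using $12+8\sqrt{2} = 4(1+\sqrt{2})^2$ reduces the identity to
$$(1+\sqrt{2})(1 \mp i + t) = \zeta_8^{\mp 1},$$
which, after rationalizing $1/(1+\sqrt{2}) = \sqrt{2}-1$, simplifies to $t = -\zeta_8^{\mp 1} = \zeta_8^{\pm 3}$.

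The main obstacle is justifying cleanly, without constructing the half-braidings on $\mu$ explicitly, that $\tmu^0$ is self-dual and that it is uniquely determined inside $\cD$; the brute-force alternative is to solve the half-braiding equations for $\mu$ in the Cuntz-algebra model $\cO_3 \rtimes \Z_2$ and read off $T'_{\tmu^0, \tmu^0}$ directly from the twist, but this yields a substantially longer calculation.
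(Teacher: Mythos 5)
Your proposal is correct and follows essentially the same route as the paper, whose proof consists of exactly the two observations you elaborate: the missing $S'$-entries are forced by symmetry and unitarity (your computation via $(S')^2=C'$ together with the self-duality of $\tmu^0$ is a valid instance of this, and the resulting values $a=b=\tfrac12$, $d=0$ check out), and the last diagonal entry of $T'$ is pinned down by the Gauss sum formula using the triviality of the central charge of $\cZ(\cC)$ and the factorization $\cZ(\cC)=\{0,\alpha\}\boxtimes\cD$. I verified your arithmetic for both steps, including $(1+\sqrt{2})(1\mp i+t)=\zeta_8^{\mp 1}$ giving $t=-\zeta_8^{\mp 1}=\zeta_8^{\pm 3}$, so no further work is needed.
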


\begin{proof} Since $S$ is a symmetric unitary matrix, the remaining entries are determined as above. 
The Gauss sum formula for $T$ determines the last corner of $T'$. 
\end{proof}

\newcommand{\urlprefix}{}
\bibliographystyle{alpha}
\bibliography{newbib}

\end{document}